\providecommand{\tabularnewline}{\\}
\numberwithin{equation}{section}
\numberwithin{figure}{section}
\theoremstyle{plain}
\newtheorem{thm}{\protect\theoremname}[section]
\theoremstyle{definition}
\newtheorem{defn}[thm]{\protect\definitionname}
\theoremstyle{remark}
\newtheorem{rem}[thm]{\protect\remarkname}
\theoremstyle{plain}
\newtheorem{cor}[thm]{\protect\corollaryname}
\theoremstyle{remark}
\newtheorem*{rem*}{\protect\remarkname}
\theoremstyle{definition}
\newtheorem*{defn*}{\protect\definitionname}
\theoremstyle{plain}
\newtheorem{lem}[thm]{\protect\lemmaname}
\theoremstyle{plain}
\newtheorem{prop}[thm]{\protect\propositionname}
\theoremstyle{plain}
\newtheorem*{thm*}{\protect\theoremname}
\providecommand{\corollaryname}{Corollary}
\providecommand{\definitionname}{Definition}
\providecommand{\lemmaname}{Lemma}
\providecommand{\propositionname}{Proposition}
\providecommand{\remarkname}{Remark}
\providecommand{\theoremname}{Theorem}
\begin{document}
\title{On self-affine measures associated to strongly irreducible and proximal
systems}
\author{\noindent Ariel Rapaport}
\subjclass[2000]{\noindent 28A80, 37C45.}
\keywords{self-affine measure, dimension of measures, Lyapunov dimension, strong irreducibility, proximality.}
\thanks{This research was supported by the Israel Science Foundation (grant
No. 619/22). The author is a Horev Fellow at the Technion -- Israel Institute of
Technology.}
\begin{abstract}
Let $\mu$ be a self-affine measure on $\mathbb{R}^{d}$ associated
to an affine IFS $\Phi$ and a positive probability vector $p$. Suppose
that the maps in $\Phi$ do not have a common fixed point, and that
standard irreducibility and proximality assumptions are satisfied
by their linear parts. We show that $\dim\mu$ is equal to the Lyapunov
dimension $\dim_{L}(\Phi,p)$ whenever $d=3$ and $\Phi$ satisfies
the strong separation condition (or the milder strong open set condition).
This follows from a general criteria ensuring $\dim\mu=\min\{d,\dim_{L}(\Phi,p)\}$,
from which earlier results in the planar case also follow. Additionally,
we prove that $\dim\mu=d$ whenever $\Phi$ is Diophantine (which
holds e.g. when $\Phi$ is defined by algebraic parameters) and the
entropy of the random walk generated by $\Phi$ and $p$ is at least
$(\chi_{1}-\chi_{d})\frac{(d-1)(d-2)}{2}-\sum_{k=1}^{d}\chi_{k}$,
where $0>\chi_{1}\ge...\ge\chi_{d}$ are the Lyapunov exponents. We
also obtain results regarding the dimension of orthogonal projections
of $\mu$.
\end{abstract}

\maketitle

\section{\label{sec:Introduction}Introduction}

\subsection{\label{subsec:Background}Background}

Fix $d\ge1$ and let $\Phi=\{\varphi_{i}(x)=A_{i}x+a_{i}\}_{i\in\Lambda}$
be a finite collection of invertible affine contractions of $\mathbb{R}^{d}$.
That is, $a_{i}\in\mathbb{R}^{d}$, $A_{i}\in\mathrm{GL}(d,\mathbb{R})$
and $\Vert A_{i}\Vert_{op}<1$ for each $i\in\Lambda$, where $\Vert\cdot\Vert_{op}$
is the operator norm. Such a collection is called an affine iterated
function system (IFS). It is well known (see \cite{Hut}) that there
exists a unique nonempty compact $K_{\Phi}\subset\mathbb{R}^{d}$
with $K_{\Phi}=\cup_{i\in\Lambda}\varphi_{i}(K_{\Phi})$. It is called
the attractor or self-affine set corresponding to $\Phi$.

Certain natural measures are associated to $\Phi$. For a topological
space $X$ denote the collection of compactly supported Borel probability
measures on $X$ by $\mathcal{M}(X)$. Given a probability vector
$p=(p_{i})_{i\in\Lambda}$ there exists a unique $\mu\in\mathcal{M}(\mathbb{R}^{d})$
which satisfies the relation $\mu=\sum_{i\in\Lambda}p_{i}\cdot\varphi_{i}\mu$
(again, see \cite{Hut}), where $\varphi_{i}\mu:=\mu\circ\varphi_{i}^{-1}$
is the pushforward of $\mu$ via $\varphi_{i}$. The measure $\mu$,
which is supported on $K_{\Phi}$, is called the self-affine measure
corresponding to $\Phi$ and $p$. From now on we shall always assume
that $p_{i}>0$ for each $i\in\Lambda$, in which case $\mathrm{supp}(\mu)=K_{\Phi}$.

Many mathematical problems surround self-affine sets and measures,
but perhaps the most natural one is to determine their dimension.
It has been studied by many authors, and its computation is one of
the major open problems in fractal geometry. In what follows $\dim_{H}E$
stands for the Hausdorff dimension of a set $E$.

In the 1980's Falconer \cite{falconer1988hausdorff} introduced a
natural upper bound for $\dim_{H}K_{\Phi}$, which is called the affinity
dimension. It is denoted by $\dim_{A}\Phi$ and depends only on the
linear parts $\{A_{i}\}_{i\in\Lambda}$. Falconer has shown that when
$\Vert A_{i}\Vert_{op}<1/2$ for $i\in\Lambda$, and under a natural
randomization of the translations $\{a_{i}\}_{i\in\Lambda}$, the
equality
\begin{equation}
\dim_{H}K_{\Phi}=\min\{d,\dim_{A}\Phi\}\label{eq:dim K =00003D dim_A}
\end{equation}
holds almost surely. (In fact Falconer proved this with $1/3$ as
the upper bound on the norms; it was subsequently shown by Solomyak
\cite{So} that $1/2$ suffices.)

We turn to discuss the dimension of self-affine measures, which is
the main topic of this paper. First we provide the necessary background
regarding dimension of general measures. Given $\theta\in\mathcal{M}(\mathbb{R}^{d})$
we say that $\theta$ is exact dimensional if there exists a nonnegative number
$\dim\theta$, called the dimension of $\theta$, so that
\[
\underset{\epsilon\downarrow0}{\lim}\:\frac{\log\theta(B(x,\epsilon))}{\log\epsilon}=\dim\theta\:\text{ for }\theta\text{-a.e. }x,
\]
where $B(x,\epsilon)$ is the closed ball with centre $x$ and radius
$\epsilon$. The number $\dim\theta$ is equal to the value given
to $\theta$ by other commonly used notions of dimension (see \cite{FLR}).
In particular, $\dim\theta$ is equal to the Hausdorff and entropy
dimensions of $\theta$, which are defined as follows
\[
\dim_{H}\theta:=\inf\{\dim_{H}E\::\:E\subset\mathbb{R}^{d}\text{ is Borel with }\theta(E)>0\}
\]
and
\begin{equation}
\dim_{e}\theta:=\underset{n\rightarrow\infty}{\lim}\:\frac{1}{n}H(\theta,\mathcal{D}_{n}).\label{eq:ent dim}
\end{equation}
Here $H(\theta,\mathcal{D}_{n})$ denotes the entropy of $\theta$
with respect to the level-$n$ dyadic partition $\mathcal{D}_{n}$
of $\mathbb{R}^{d}$ (see Sections \ref{subsec:Dyadic-partitions}
and \ref{subsec:Entropy} below).

Self-affine measures are always exact dimensional. This important
fact has first been proven by Bárány and Käenmäki \cite{BK} under
additional assumptions. The general case was later obtained by Feng
\cite{Fe}.

Since $\mu$ is supported on $K_{\Phi}$, it follows that $\dim\mu\le\dim_{H}K_{\Phi}$.
For that reason, it is usually the case that in order to study the
dimension of a self-affine set, one needs to study the dimension of
the self-affine measures it supports.

There is a natural upper bound for $\dim\mu$ which is analogous to
the affinity dimension. In order to define it, let $H(p):=-\sum_{i\in\Lambda}p_{i}\log p_{i}$
be the entropy of $p$ and denote by $0>\chi_{1}\ge...\ge\chi_{d}$
the Lyapunov exponents corresponding to $\{A_{i}\}_{i\in\Lambda}$
and $p$ (see Section \ref{subsec:Lyapunov-exponents-and Fur meas}
below). Now set
\[
m=\max\{0\le j\le d\::\:0\le H(p)+\chi_{1}+...+\chi_{j}\},
\]
and define,
\[
\dim_{L}(\Phi,p):=\begin{cases}
m-\frac{H(p)+\chi_{1}+...+\chi_{m}}{\chi_{m+1}} & ,\text{ if }m<d\\
-d\frac{H(p)}{\chi_{1}+...+\chi_{d}} & ,\text{ if }m=d
\end{cases}.
\]
The number $\dim_{L}(\Phi,p)$ is called the Lyapunov dimension corresponding
to $\Phi$ and $p$.

It has been shown by Jordan, Pollicott and Simon \cite{JPS} that
$\dim_{L}(\Phi,p)$ is always an upper bound for $\dim\mu$. They
also showed that when $\Vert A_{i}\Vert_{op}<1/2$ for $i\in\Lambda$, and under a natural
randomization of the translations $\{a_{i}\}_{i\in\Lambda}$, the equality
\begin{equation}
\dim\mu=\min\{d,\dim_{L}(\Phi,p)\}\label{eq:dim mu =00003D dim_L}
\end{equation}
holds almost surely.

The aforementioned results show that (\ref{eq:dim K =00003D dim_A})
and (\ref{eq:dim mu =00003D dim_L}) hold typically, but do not provide
any explicit examples. It is of course desirable to find
explicit and verifiable conditions under which these equalities would
hold. We start with some separation conditions.

\begin{defn}
\label{def:OSC,  SOSC and SSC }It is said that $\Phi$ satisfies
the open set condition (OSC) if there exists a nonempty bounded open
set $U\subset\mathbb{R}^{d}$ so that $\cup_{i\in\Lambda}\varphi_{i}(U)\subset U$
with the union disjoint. If additionally $U\cap K_{\Phi}\ne\emptyset$,
we say that $\Phi$ satisfies the strong open set condition (SOSC).
When the sets $\{\varphi_{i}(K_{\Phi})\}_{i\in\Lambda}$ are disjoint,
we say that $\Phi$ satisfies the strong separation condition (SSC). It is clear that the SSC implies the SOSC.
\end{defn}

We say that $\Phi$ is $p$-conformal if $\chi_{1}=\chi_{d}$. It
is easy to show that $\dim\mu=\dim_{L}(\Phi,p)$
whenever $\Phi$ is $p$-conformal and the SSC holds.
When $\Phi$ consists of contracting similarities it is said to be a self-similar
IFS. In the self-similar
case $\Phi$ is always $p$-conformal, and we have $\dim\mu=\dim_{L}(\Phi,p)$ and $\dim_{H}K_{\Phi}=\dim_{A}\Phi$ under the OSC (see e.g. \cite{Edg-integral}).

When $d=1$, in which case $\Phi$ is always self-similar, Hochman
was able replace the OSC with a much milder Diophantine condition which
we now define. Denote by $\Lambda^{*}$ the set of finite words over
$\Lambda$. Given $i_{1}...i_{n}=u\in\Lambda^{*}$ we write $\varphi_{u}$
in place of $\varphi_{i_{1}}\circ...\circ\varphi_{i_{n}}$.
\begin{defn}
\label{def:Diophantine =000026 ESC}Let $\Vert\cdot\Vert$ be a norm
on the vector space of affine maps from $\mathbb{R}^{d}$ into itself.
We say that $\Phi$ is Diophantine if there exists $\epsilon>0$ so
that $\Vert\varphi_{u_{1}}-\varphi_{u_{2}}\Vert\ge\epsilon^{n}$ for
all $n\ge1$ and $u_{1},u_{2}\in\Lambda^{n}$ with $\varphi_{u_{1}}\ne\varphi_{u_{2}}$.
We say that $\Phi$ generates a free semigroup if $\varphi_{u_{1}}\ne\varphi_{u_{2}}$
for all distinct $u_{1},u_{2}\in\Lambda^{*}$. Finally, we say that
$\Phi$ is exponentially separated, or that it satisfies the exponential
separation condition (ESC), if it is Diophantine and generates a free
semigroup.
\end{defn}

It is not difficult to see that $\Phi$ is always Diophantine when
it is defined by algebraic parameters. That is, when the entries of
$\{A_{i}\}_{i\in\Lambda}$ and $\{a_{i}\}_{i\in\Lambda}$ are all
algebraic numbers.
\begin{thm}[Hochman, \cite{Ho1}]
\label{thm:Ho1}Suppose that $d=1$ and $\Phi$ satisfies the ESC.
Then (\ref{eq:dim K =00003D dim_A}) and (\ref{eq:dim mu =00003D dim_L})
are satisfied.
\end{thm}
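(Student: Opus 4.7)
The bound $\dim\mu\le\min\{1,\dim_{L}(\Phi,p)\}$ is already given by Jordan--Pollicott--Simon, so the plan is to prove the matching lower bound. The set statement (\ref{eq:dim K =00003D dim_A}) will then follow by applying the measure statement to the Bernoulli weights $p_{i}=|r_{i}|^{\dim_{A}\Phi}$, which realise $\dim_{L}(\Phi,p)=\dim_{A}\Phi$, together with $\dim\mu\le\dim_{H}K_{\Phi}$. Since $d=1$, every $\varphi_{i}$ is a similarity $x\mapsto r_{i}x+a_{i}$, and since $\mu$ is exact dimensional it suffices to bound the entropy dimension $\lim_{N\to\infty}\frac{1}{N\log2}H(\mu,\mathcal{D}_{N})$ from below.

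The starting point is the $n$-fold self-similar identity $\mu=\sum_{u\in\Lambda^{n}}p_{u}\varphi_{u}\mu$. By the law of large numbers, typical $u\in\Lambda^{n}$ have $\log|r_{u}|\approx n\chi_{1}$. Choosing $N\approx n|\chi_{1}|/\log 2$ and discarding atypical words, one obtains, up to errors controlled at scale $2^{-N}$, an approximate convolution identity
\[
\mu\approx\nu_{n}*\tilde{\mu}_{n},
\]
where $\nu_{n}:=\sum_{u\in\Lambda^{n}}p_{u}\delta_{a_{u}}$ records the random walk of translations and $\tilde{\mu}_{n}$ is a rescaled copy of $\mu$ whose diameter matches the typical contraction $e^{n\chi_{1}}$. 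A direct calculation, using the Diophantine hypothesis to control the separation of the atoms $a_{u}$, gives $H(\nu_{n},\mathcal{D}_{N})\approx nH(p)$.

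The main technical ingredient is Hochman's inverse theorem for entropy of convolutions on $\mathbb{R}$ from \cite{Ho1}: if $H(\nu*\eta,\mathcal{D}_{N})-H(\eta,\mathcal{D}_{N})$ fails to be essentially $H(\nu,\mathcal{D}_{N})$, then at a positive proportion of scales $1\le k\le N$ the measure $\eta$ is close to uniform on scale $2^{-k}$ while $\nu$ is close to atomic on that scale. Assume, for contradiction, that $\dim\mu<\min\{1,\dim_{L}(\Phi,p)\}$. Applied to the decomposition $\mu\approx\nu_{n}*\tilde{\mu}_{n}$, the smallness of $\dim\mu$ prevents $\tilde{\mu}_{n}$ from being near-uniform at too many scales, and therefore forces $\nu_{n}$ to exhibit near-atomic concentration at a positive proportion of dyadic scales as $n\to\infty$. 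Transcribed back to $\Phi$, this produces, for every $\epsilon>0$, infinitely many $n$ and distinct $u_{1},u_{2}\in\Lambda^{n}$ with $\varphi_{u_{1}}$ and $\varphi_{u_{2}}$ within $\epsilon^{n}$ in any fixed norm, contradicting the Diophantine hypothesis in the ESC; the free-semigroup part of the ESC ensures that the pairs produced are genuinely distinct as maps.

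The hard step is the inverse theorem itself, which is the core technical contribution of \cite{Ho1} and relies on a delicate multi-scale analysis based on a component-measure decomposition across dyadic scales. A secondary obstacle is the passage from the exact self-similar identity to the clean convolution form: one must truncate to words with $\log|r_{u}|$ in a narrow window around $n\chi_{1}$ and verify that the atypical contribution is negligible in both entropy and dimension.
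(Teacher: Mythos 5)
Your outline is a faithful high-level account of Hochman's original argument in \cite{Ho1}, but note that the paper does not itself prove this statement: Theorem~\ref{thm:Ho1} is stated purely as a citation to \cite{Ho1}, and the only internal comment the paper makes is (after the statement of Theorem~\ref{thm:proj onto 1 =000026 2 dim subspaces}) that the $\dim\mu$ part of Theorems~\ref{thm:Ho1}, \ref{thm:BHR}, \ref{thm:HR} ``all follow immediately'' from Theorem~\ref{thm:proj onto 1 =000026 2 dim subspaces}. So the two routes really are different. Your route is Hochman's one-dimensional argument: self-similar identity $\mu=\sum_{u}p_{u}\varphi_{u}\mu$, restriction to a stopping-time cut-set so that the contraction is approximately constant, identification of $\mu$ as an approximate convolution of the translation distribution with a rescaled $\mu$, the inverse theorem for entropy of convolutions on $\mathbb{R}$, and the observation that entropy drop forces super-exponential concentration of cylinders, which the Diophantine/free-semigroup hypotheses preclude. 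When $d=1$ this is much lighter than the paper's machinery because everything is conformal (no slices, no Ledrappier--Young, no Furstenberg boundary maps, no factorization of $L_{m-1}$), and the only heavy tool is the $\mathbb{R}$-valued inverse theorem. The paper's route, specialized to $d=1$, is that same argument wrapped in the general framework of Sections~\ref{sec:Entropy-estimates}--\ref{sec:Proofs-of-the main} (entropy increase on $\mathrm{A}_{d,m}$, Theorem~\ref{thm:=00003DRW ent}, the sub-linear-diameter decompositions), which degenerates gracefully in one dimension but is overkill for this special case. Two small imprecisions in your sketch, neither a gap since you flag the difficulty: (i) the paper's conventions put $\log$ in base~2 throughout, so $N\approx n|\chi_1|$ rather than $n|\chi_1|/\log 2$; and (ii) $\mu$ is never an exact convolution $\nu_n*\tilde\mu_n$ when the $r_i$ are not all equal — what holds exactly is $p_\Phi^{*n}\ldotp\mu=\mu$, the pushforward under the action map, and the approximation you want is obtained by passing to a cut-set $\Psi_1(n)$ where $\alpha_1(A_u)\asymp 2^{-n}$ rather than a fixed word length, as Hochman does (and as this paper's Section~\ref{subsec:Symbolic-notations} sets up).
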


We point out that this result is actually obtained in \cite{Ho1}
under a milder version of the ESC, in which the exponential separation
is only assumed to hold for infinitely many integers $n\ge1$. In
this paper we shall use the more restrictive definition given above.

In higher dimensions, exponentially separated self-similar sets and
measures were studied by Hochman in \cite{Ho}.

In the general self-affine setting, when $d\ge2$ it is possible
for (\ref{eq:dim K =00003D dim_A}) and (\ref{eq:dim mu =00003D dim_L})
to fail even when the SSC holds. It is easy to construct such examples
in which there exits a proper linear subspace of $\mathbb{R}^d$ which is invariant under
all of the linear parts $\{A_{i}\}_{i\in\Lambda}$. We would
like to avoid such situations, and the following definition is useful
in this context.
\begin{defn}
Let $V$ be a finite dimensional real vector space and let $\mathbf{S}$
be a subsemigroup of $\mathrm{GL}(V)$. We say that $\mathbf{S}$
is strongly irreducible if for every proper linear subspace $W$ of
$V$ the orbit $\{S(W)\::\:S\in\mathbf{S}\}$ is infinite.
\end{defn}

We would also like to avoid the simpler but quite different conformal
case. For this we make the following definition.
\begin{defn}
Let $V$ be a finite dimensional real vector space and let $\mathbf{S}$
be a subsemigroup of $\mathrm{GL}(V)$. We say that $\mathbf{S}$
is proximal if there exist $S_{1},S_{2},...\in\mathbf{S}$ and $\alpha_{1},\alpha_{2},...\in\mathbb{R}$
so that $\{\alpha_{n}S_{n}\}_{n\ge1}$ converges to an endomorphism
of $V$ of rank $1$.
\end{defn}

Denote by $\mathbf{S}_{\Phi}^{\mathrm{L}}\subset\mathrm{GL}(d,\mathbb{R})$
the semigroup generated by the linear parts $\{A_{i}\}_{i\in\Lambda}$.
In what follows we restrict to the case in which $\mathbf{S}_{\Phi}^{\mathrm{L}}$
is strongly irreducible and proximal. Under these assumptions it is
always the case that $\chi_{1}>\chi_{2}$ and there exists a
unique $\nu_{1}^{*}\in\mathcal{M}(\mathrm{P}(\mathbb{R}^{d}))$ which
is stationary with respect to $\theta^{*}:=\sum_{i\in\Lambda}p_{i}\delta_{A_{i}^{*}}$. We refer to \cite{BQ} and \cite{BL} for the proofs of these facts.
Here $\mathrm{P}(\mathbb{R}^{d})$ is the projective space of $\mathbb{R}^{d}$,
$\delta_{A_{i}^{*}}$ is the Dirac mass at $A_{i}^{*}$, and by saying
that $\nu_{1}^{*}$ is $\theta^{*}$-stationary we mean that $\nu_{1}^{*}=\sum_{i\in\Lambda}p_{i}\cdot A_{i}^{*}\nu_{1}^{*}$, where $A_{i}^{*}\nu_{1}^{*}$ is the pushforward of $\nu_{1}^{*}$ via the map $\overline{x}\rightarrow\overline{A_i^*x}$.
The measure $\nu_{1}^{*}$ is called the Furstenberg measure corresponding
to $\theta^{*}$.

We turn to discuss the planar case. First we have the following
result.
\begin{thm}[Bárány--Hochman--Rapaport, \cite{BHR}]
\label{thm:BHR}Suppose that $d=2$, $\mathbf{S}_{\Phi}^{\mathrm{L}}$
is strongly irreducible and proximal, and $\Phi$ satisfies the SOSC.
Then $\dim_{H}K_{\Phi}=\dim_{A}\Phi$ and $\dim\mu=\dim_{L}(\Phi,p)$.
\end{thm}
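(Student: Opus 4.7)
\noindent The plan is to establish the measure identity $\dim\mu=\dim_L(\Phi,p)$ for every positive probability vector $p$; the equality $\dim_H K_\Phi=\dim_A\Phi$ then follows by applying it to a K\"aenm\"aki (subadditive equilibrium) vector $p^{\mathrm{eq}}$, for which $\dim_L(\Phi,p^{\mathrm{eq}})$ attains $\dim_A\Phi$, and sandwiching $\dim\mu\le\dim_H K_\Phi\le\dim_A\Phi$.

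For the measure identity, I would first reduce to a one-dimensional projection statement via a Ledrappier--Young-type formula. Under the strong irreducibility and proximality of $\mathbf{S}_\Phi^{\mathrm{L}}$ one has $\chi_1>\chi_2$ and $\nu_1^*$ is non-atomic. For $\nu_1^*$-a.e. direction $\bar{x}\in\mathrm{P}(\mathbb{R}^2)$ let $\pi_{\bar{x}}:\mathbb{R}^2\to\mathbb{R}$ denote the orthogonal projection onto $\bar{x}^{\perp}$. Exact dimensionality of $\mu$ (Feng; B\'ar\'any--K\"aenm\"aki in the planar case) together with an Oseledets-type fibred analysis yields
\begin{equation*}
\dim\mu \;=\; \dim\pi_{\bar{x}}\mu \;+\; \min\Bigl\{1,\tfrac{H(p)-|\chi_1|\cdot\dim\pi_{\bar{x}}\mu}{|\chi_2|}\Bigr\}.
\end{equation*}
Since in addition $\dim\pi_{\bar{x}}\mu\le\min\{1,H(p)/|\chi_1|\}$, comparing with the definition of $\dim_L(\Phi,p)$ reduces the theorem to
\begin{equation*}
\dim\pi_{\bar{x}}\mu \;=\; \min\Bigl\{1,\tfrac{H(p)}{|\chi_1|}\Bigr\}\quad\text{for }\nu_1^*\text{-a.e. }\bar{x}.
\end{equation*}

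Next I would attack this projection identity by an entropy-at-exponential-scales argument in the spirit of Hochman. A short geometric argument shows that the SOSC implies exponential separation for $\Phi$: it yields a constant $c>0$ with $\Vert\varphi_{u_1}-\varphi_{u_2}\Vert\ge c\cdot\max\{\Vert A_{u_1}\Vert_{op},\Vert A_{u_2}\Vert_{op}\}$ for distinct $u_1,u_2\in\Lambda^n$, and the right-hand side decays at most exponentially in $n$. Fixing a $\nu_1^*$-typical $\bar{x}$ and using a stopping time $T_n$ that equalizes $\Vert A_u\Vert_{op}\approx 2^{-n}$, the projected cylinders $\pi_{\bar{x}}\varphi_u\mu$ have diameter $\approx 2^{-n}$, because the action of $A_u$ on $\bar{x}^{\perp}$ is governed by the top exponent $\chi_1$ along the Furstenberg-driven cocycle. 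Consequently $\tfrac{1}{n}H(\pi_{\bar{x}}\mu,\mathcal{D}_n)$ is comparable to the entropy of the atomic sum $\sum_{u\in T_n}p_u\delta_{\pi_{\bar{x}}\varphi_u(0)}$ at scale $2^{-n}$.

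The main obstacle, and the technical core of \cite{BHR}, is to rule out an entropy dimension gap via Hochman's inverse theorem for the entropy of convolutions. If $\dim\pi_{\bar{x}}\mu$ were strictly less than $\min\{1,H(p)/|\chi_1|\}$, the inverse theorem would force, on a positive-density set of scales, the translates $\{\pi_{\bar{x}}\varphi_u(0)\}_{u\in T_n}$ to concentrate on an approximate arithmetic progression at scale $2^{-n}$. The crucial upgrade specific to the non-conformal setting is to promote this one-dimensional concentration to a two-dimensional one: because $\bar{x}$ itself evolves under the projective random walk driven by $\theta^{*}$, which equidistributes to the non-atomic $\nu_1^*$ by strong irreducibility and proximality, the same concentration must hold along \emph{every} $\nu_1^*$-typical direction simultaneously, and hence at the level of the planar translates $\{a_u-a_{u'}\}$. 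Paired with the exponential separation coming from the SOSC, this contradicts the Hochman arithmetic concentration, establishing the projection identity and completing the proof.
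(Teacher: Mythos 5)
Your outline of the measure statement $\dim\mu=\dim_L(\Phi,p)$ is broadly in line with how \cite{BHR} actually proceeds and with how this paper summarises that work: reduce, via the Ledrappier--Young formula and the SOSC (which forces $\mathrm{H}_2=0$), to a statement about $\nu_1^*$-typical one-dimensional projections; prove the projection statement by a multi-scale convolution-entropy argument built on Hochman's inverse theorem, using that the SOSC yields exponential separation and that the projection direction equidistributes under the Furstenberg dynamics, so any forced arithmetic concentration propagates to all typical directions and produces a contradiction. The precise projection statement established in \cite{BHR} is the Marstrand-type identity $\dim P_V\mu=\min\{1,\dim\mu\}$ for $\nu_1^*$-a.e.\ $V$, which under SOSC is equivalent to the Lyapunov-form identity $\dim\pi_{\bar x}\mu=\min\{1,H(p)/|\chi_1|\}$ you state; this is a cosmetic rather than substantive difference.

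The genuine gap is in your deduction of $\dim_H K_\Phi=\dim_A\Phi$. You propose to apply the measure identity to a ``K\"aenm\"aki (subadditive equilibrium) vector $p^{\mathrm{eq}}$'' with $\dim_L(\Phi,p^{\mathrm{eq}})=\dim_A\Phi$ and sandwich. No such probability vector exists in general: the K\"aenm\"aki equilibrium state for the singular-value pressure is an ergodic shift-invariant measure that is typically not Bernoulli, and the supremum of $\dim_L(\Phi,p)$ over (strictly positive) Bernoulli vectors $p$ is in general strictly less than $\dim_A\Phi$. Since the measure identity proved in \cite{BHR} (and reproved here as a special case of Theorem~\ref{thm:proj onto 1 =000026 2 dim subspaces}) applies only to Bernoulli self-affine measures, this sandwich does not close. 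As the paper explains after Theorem~\ref{thm:HR}, the correct route is the Morris--Shmerkin construction \cite{MoSh}: for each $\epsilon>0$ one builds a sub-IFS $\Phi'\subset\{\varphi_u:u\in\Lambda^n\}$ for large $n$, still strongly irreducible, proximal and satisfying the SOSC, together with a strictly positive Bernoulli vector $p'$ on $\Phi'$ such that $\dim_L(\Phi',p')>\dim_A\Phi-\epsilon$; applying the measure identity to $(\Phi',p')$ and using $K_{\Phi'}\subset K_\Phi$ then gives $\dim_H K_\Phi\ge\dim_A\Phi-\epsilon$. Without this approximation step, the set-dimension claim does not follow from the measure-dimension claim.
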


For a linear subspace $V\subset\mathbb{R}^{d}$ write $P_{V}$ for
its orthogonal projection. Theorem \ref{thm:BHR} is obtained in \cite{BHR}
by showing that
\begin{equation}
\dim P_{V}\mu=\min\{1,\dim\mu\}\text{ for }\nu_{1}^{*}\text{-a.e. }V\in\mathrm{P}(\mathbb{R}^{2}).\label{eq:BHR a.e.  V}
\end{equation}
This together with the Ledrappier-Young formula for self-affine measures
(see Section \ref{subsec:Ledrappier-Young-formula} below) and the
SOSC assumption yields the theorem.
\begin{rem}
\label{rem:all projs to lines}Understanding the projections of concrete
fractal sets and measures is an important strand of research in fractal
geometry (see e.g. the surveys \cite{FFJ_proj_survey,Sh_proj_survey}).
In particular, one would like to go beyond general results such as
Marstrand\textquoteright s Theorem, and determine the dimension of
every projection (or at least every projection outside some very small
set).

In this connection, it is actually shown in \cite{BHR} that $P_{V}\mu$
is exact dimensional with dimension $\min\{1,\allowbreak\dim\mu\}$
for all $V\in\mathrm{P}(\mathbb{R}^{d})$. This is achieved by combining
(\ref{eq:BHR a.e.  V}) with an argument of Falconer and Kempton \cite{FK},
which relies on the machinery developed by Hochman and Shmerkin \cite{HoSh}.
\end{rem}

In the next theorem we were able to replace the SOSC assumption with the much
milder ESC. For this studying the dimension of projections is not
enough, and the non-conformality plays a much more significant role.

We point out that when $d\ge2$ it is possible for the ESC to be
satisfied while the maps in $\Phi$ all have a common fixed point.
In this situation $K_{\Phi}$ is a singleton, and (\ref{eq:dim K =00003D dim_A})
and (\ref{eq:dim mu =00003D dim_L}) do not hold unless $\Phi$ consists
of a single map. Thus, when only the ESC is assumed we need to explicitly
rule out this possibility.
\begin{thm}[Hochman--Rapaport, \cite{HR}]
\label{thm:HR}Suppose that $d=2$, the maps in $\Phi$ do not have
a common fixed point, $\mathbf{S}_{\Phi}^{\mathrm{L}}$ is strongly
irreducible and proximal, and $\Phi$ satisfies the ESC. Then (\ref{eq:dim K =00003D dim_A})
and (\ref{eq:dim mu =00003D dim_L}) are satisfied.
\end{thm}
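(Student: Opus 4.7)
The plan is to deduce the two equalities from a single measure-dimension statement. Since Falconer provides the upper bound $\dim_{H}K_{\Phi}\le\min\{2,\dim_{A}\Phi\}$ and $\dim\mu\le\dim_{H}K_{\Phi}$, the set equality (\ref{eq:dim K =00003D dim_A}) will follow from the measure equality (\ref{eq:dim mu =00003D dim_L}) together with the standard variational identity $\sup_{p}\dim_{L}(\Phi,p)=\min\{2,\dim_{A}\Phi\}$. Accordingly, I fix a positive probability vector $p$ and focus on proving (\ref{eq:dim mu =00003D dim_L}).

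The first step is to apply the planar Ledrappier--Young formula for self-affine measures (B\'ar\'any--K\"aenm\"aki, Feng), available under SI$+$P, which decomposes $\dim\mu$ into a projected part $\dim P_{V}\mu$ and a fibre part $D_{V}(\mu)$ for $\nu_{1}^{*}$-a.e.\ $V\in\mathrm{P}(\mathbb{R}^{2})$, subject to constraints involving $\chi_{1},\chi_{2}$ and $H(p)$. Combined with the trivial upper bound $\dim P_{V}\mu\le\min\{1,-H(p)/\chi_{1}\}$, a direct computation shows that matching the LY upper bound on $\dim\mu$ with $\dim_{L}(\Phi,p)$ reduces (\ref{eq:dim mu =00003D dim_L}) to
\begin{equation}
\dim P_{V}\mu=\min\{1,-H(p)/\chi_{1}\}\quad\text{for }\nu_{1}^{*}\text{-a.e.\ }V.\tag{$*$}\label{eq:HR-reduction}
\end{equation}
In other words, the typical projection must achieve its entropy-over-expansion bound.

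To prove (\ref{eq:HR-reduction}) I would run a Hochman-style entropy-growth argument on the projected family $\{P_{V}\circ\varphi_{u}:u\in\Lambda^{n}\}$. Unlike in the self-similar situation of Theorem \ref{thm:Ho1}, the effective contractions at stage $n$ are not scalar but are given by the action of the cocycle $A_{u}^{*}$ on $V$; by Oseledets and Furstenberg these contractions have logarithmic size $n\chi_{1}+o(n)$. Two inputs are required. First, an exponential separation statement at the projected level: for $\nu_{1}^{*}$-a.e.\ $V$, the maps $P_{V}\circ\varphi_{u}$ with $u\in\Lambda^{n}$ are separated at rate $e^{-Cn}$ in a suitable norm. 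This transfers the ESC from $\Phi$ through orthogonal projection, using strong irreducibility of $\mathbf{S}_{\Phi}^{\mathrm{L}}$ to ensure that a typical projection does not collapse differences in the four-dimensional affine parameter space. Second, a non-concentration statement ruling out the structural alternative in the inverse theorem --- namely, that $\mu$ is supported on a point or on a proper affine subspace. The former is excluded by the no-common-fixed-point hypothesis, the latter by strong irreducibility.

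The hardest part is running the entropy-growth machinery in this non-stationary and non-conformal setting. The contractions at scale $n$ depend on the sampled word through the matrix cocycle, so Hochman's combinatorial/entropy analysis must be coupled with quantitative ergodic theory of $\mathbf{S}_{\Phi}^{\mathrm{L}}$ on $\mathrm{P}(\mathbb{R}^{2})$: large deviations for the top Lyapunov exponent, H\"older regularity of $\nu_{1}^{*}$, and quantitative simplicity of the Lyapunov spectrum. Of all steps, transferring exponential separation from $\Phi$ to the projected compositions $\{P_{V}\circ\varphi_{u}\}$ uniformly over a full-$\nu_{1}^{*}$-measure set of $V$ is the most delicate; it is precisely here that both non-conformality and strong irreducibility enter in an essential way, and where the 2D argument meets its principal technical cost.
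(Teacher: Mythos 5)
Your reduction has two gaps, one at each end.

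\textbf{From measures to sets.} You assert a \emph{standard variational identity} $\sup_{p}\dim_{L}(\Phi,p)=\min\{2,\dim_{A}\Phi\}$. For genuinely self-affine systems this is false: the measure achieving the affinity dimension is the K\"aenm\"aki equilibrium state of the singular-value pressure, which is in general \emph{not} a Bernoulli measure, so Bernoulli Lyapunov dimensions can have a strictly smaller supremum. The actual bridge is the Morris--Shmerkin construction: under the same SI$+$P and separation hypotheses one can find a subsystem $\Phi'\subset\Phi^{*}$, still satisfying those hypotheses, whose attractor lies in $K_{\Phi}$ and which supports a Bernoulli self-affine measure with Lyapunov dimension arbitrarily close to $\dim_{A}\Phi$. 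Applying the measure statement to $\Phi'$ then yields the set statement. Without this subsystem step the implication does not go through.

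\textbf{From the measure statement to $(*)$.} Your claim that $(*)$ suffices is only correct when $\dim_{L}(\Phi,p)\le 1$. In Feng's Ledrappier--Young formula $\dim\mu=\Delta_{1}+\Delta_{2}$ with $\Delta_{1}=(\mathrm{H}_{1}-H(p))/\chi_{1}$, $\Delta_{2}=(\mathrm{H}_{2}-\mathrm{H}_{1})/\chi_{2}$, and $\dim P_{V}\mu=\Delta_{1}$ for $\nu_{1}^{*}$-a.e.\ $V$. If $H(p)+\chi_{1}<0$ then $(*)$ forces $\mathrm{H}_{1}=0$, hence $\mathrm{H}_{2}=0$ and $\Delta_{2}=0$, and you recover $\dim\mu=\dim_{L}$. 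But if $H(p)+\chi_{1}\ge 0$ then $(*)$ only says $\Delta_{1}=1$ (i.e.\ $\mathrm{H}_{1}=H(p)+\chi_{1}$) and leaves $\mathrm{H}_{2}$, hence $\Delta_{2}$, entirely undetermined; $\dim\mu=\dim_{L}$ requires in addition that $\mathrm{H}_{2}=0$ (or $\Delta_{2}=1$ in the full-dimension regime). This is precisely the case that matters for $\dim_{A}\Phi>1$, and it is the case where BHR needed the SOSC (to get $\mathrm{H}_{2}=0$ for free) and where HR had to do something else. The extra input cannot come from a one-dimensional projection argument: the entropy-growth machinery must be run on $\mu$ itself, i.e.\ on convolutions in the two-dimensional affine group, not only on $P_{V}\mu$. (In the present paper this shows up as Theorem \ref{thm:proj onto 1 =000026 2 dim subspaces} allowing $\dim V=2$, and as Theorems \ref{thm:factor of proj of L_m-1} and \ref{thm:gen criteria} supplying the factorization of $L_{m-1}$ that replaces $\mathrm{H}_{2}=0$.) Your proposed inverse-theorem argument for projected families is aimed at establishing $(*)$ and so, even if carried out, would only close the case $\dim_{L}\le 1$.
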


In \cite{BHR} and \cite{HR} we actually only proved (\ref{eq:dim mu =00003D dim_L}).
The fact that (\ref{eq:dim mu =00003D dim_L}) implies (\ref{eq:dim K =00003D dim_A})
in the strongly irreducible and proximal case in the plane follows
from the earlier work of Morris and Shmerkin \cite{MoSh}. They showed
that under the hypotheses of Theorems \ref{thm:BHR} and \ref{thm:HR},
one can construct a self-affine IFS satisfying the same hypotheses,
so that its attractor is contained in $K_{\Phi}$ and it generates
some self-affine measure whose Lyapunov dimension is arbitrarily close
to the affinity dimension of the original system.

When $d\ge3$ conditions for the equality $\dim\mu=\dim_{L}(\Phi,p)$
have been obtained in \cite{BK,Fe,Ra}. In all of these works it is
assumed that the SSC or SOSC are satisfied, and that various Furstenberg
measures have large enough dimension. The last assumption is particularly
restrictive, as it is usually very hard to verify. Up to this point
and to the best of the author's knowledge, when $d\ge3$ these are
the only explicit examples in the non-conformal strongly irreducible case for
which (\ref{eq:dim mu =00003D dim_L}) has been verified.

The main objective of the present work is to make progress with the
verification of (\ref{eq:dim mu =00003D dim_L}) in higher dimensions.
New results regarding the dimension of projections of $\mu$ are also
obtained. We combine ideas from \cite{BHR}, used there for studying
$1$-dimensional projections, with the non-conformal methods
developed in \cite{HR}. In order to deal with new difficulties arising
in higher dimensions, our arguments involve significant new ideas.
We state our results in Section \ref{subsec:Statement-of-results},
and provide an overview of the proofs in Section \ref{subsec:About-the-proof}.

\subsection{\label{subsec:Statement-of-results}Statement of results}

In Sections \ref{subsec:SOSC in R^3} and \ref{subsec:Full-dimensionality}
we present our two main theorems, which verify (\ref{eq:dim mu =00003D dim_L})
in new concrete situations. In Section \ref{subsec:A-general-criteria}
we present a general criteria ensuring (\ref{eq:dim mu =00003D dim_L}).

We continue to use the notations from the previous subsection. In
particular, $\Phi=\{\varphi_{i}(x)=A_{i}x+a_{i}\}_{i\in\Lambda}$ is an affine IFS on $\mathbb{R}^{d}$, $K_{\Phi}$ is the attractor of $\Phi$, $\mathbf{S}_{\Phi}^{\mathrm{L}}$ is the semigroup generated by the linear parts $\{A_{i}\}_{i\in\Lambda}$, $p=(p_{i})_{i\in\Lambda}$ is a positive
probability vector, and $\mu$ is the self-affine measure corresponding
to $\Phi$ and $p$.

\subsubsection{\label{subsec:SOSC in R^3}Systems with the SOSC in $\mathbb{R}^{3}$
and low dimensional projections}

Our first main theorem is the following.
\begin{thm}
\label{thm:main d=00003D3}Suppose that $d=3$, $\mathbf{S}_{\Phi}^{\mathrm{L}}$
is strongly irreducible and proximal, and $\Phi$ satisfies the SOSC.
Then $\dim\mu=\dim_{L}(\Phi,p)$.
\end{thm}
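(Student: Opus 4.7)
The plan is to deduce Theorem~\ref{thm:main d=00003D3} from the general criterion referred to in Section~\ref{subsec:A-general-criteria}. That criterion should reduce the equality $\dim\mu=\dim_{L}(\Phi,p)$ (given the a priori Lyapunov-dimension upper bound) to verifying, for each $1\le k\le d-1$, that $\dim P_{V}\mu$ attains its maximal possible value for $\nu_{k}^{*}$-a.e.\ $k$-dimensional subspace $V\subset\mathbb{R}^{d}$, where $\nu_{k}^{*}$ is the Furstenberg measure on $k$-planes. Combined with the Ledrappier--Young formula for self-affine measures (Section~\ref{subsec:Ledrappier-Young-formula}), which expresses $\dim\mu$ as a sum of projection-dimension contributions along a Furstenberg-random flag, and with the SOSC (which provides the matching entropy contribution at the top level), maximality of the $P_{V}\mu$ will deliver the desired equality. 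So the task specializes, when $d=3$, to verifying projection-dimension maximality in two generic cases: projections onto $1$-planes and projections onto $2$-planes.

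For the $1$-plane case I would extend the argument of~\cite{BHR} used to prove~(\ref{eq:BHR a.e.  V}). That scheme combines Hochman's entropy-growth theorem for $1$-dimensional measures, stationarity of $\nu_{1}^{*}$ under the random walk driven by $\{A_{i}^{*}\}_{i\in\Lambda}$, and strong irreducibility and proximality of $\mathbf{S}_{\Phi}^{\mathrm{L}}$ (which force $\nu_{1}^{*}$ to be non-atomic and supported on no proper subvariety) to rule out positive-$\nu_{1}^{*}$-measure sets of exceptional lines on which $\dim P_{L}\mu$ would drop. Replacing $\mathrm{P}(\mathbb{R}^{2})$ by $\mathrm{P}(\mathbb{R}^{3})$ in this argument should be essentially routine, since $P_{L}\mu$ remains a $1$-dimensional measure regardless of the ambient dimension, and all inputs (Hochman's theorem, BQ/BL results on Furstenberg measures) are available in this generality.

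The main new difficulty, and I expect it to be the principal obstacle, is proving $\dim P_{V}\mu=\min\{2,\dim\mu\}$ for $\nu_{2}^{*}$-a.e.\ $2$-plane $V\subset\mathbb{R}^{3}$. The measure $P_{V}\mu$ is genuinely two-dimensional and non-conformal, and since a $\nu_{2}^{*}$-typical $V$ is not $\mathbf{S}_{\Phi}^{\mathrm{L}}$-invariant, $P_{V}\mu$ is not itself self-affine, so neither the line-projection argument of~\cite{BHR} nor the non-conformal planar argument of~\cite{HR} transfers directly. My plan is to run an HR-style entropy-growth analysis on the induced dynamics along $V$, fed by the $1$-plane projection maximality already established through the identity $P_{L}P_{V}\mu=P_{L}\mu$ for $L\subset V$, and to exploit the identification $\mathrm{Gr}_{2}(\mathbb{R}^{3})\cong\mathrm{P}(\Lambda^{2}\mathbb{R}^{3})$, under which $\nu_{2}^{*}$ corresponds to the $1$-dimensional Furstenberg measure for the exterior-square representation $\{\Lambda^{2}A_{i}\}_{i\in\Lambda}$, which is again strongly irreducible and proximal. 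Simultaneously orchestrating the $1$-dimensional BHR-type projection machinery, the $2$-dimensional non-conformal HR-type entropy-growth machinery, and the random Oseledets flag along which these are to be applied is precisely where the \emph{significant new ideas} alluded to in the introduction must enter, and controlling the conditional measures on the fibres of $P_{V}$ with sufficient precision to exclude a dimension drop is the step I expect to be genuinely hardest.
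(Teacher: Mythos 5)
Your overall strategy matches the paper's: show that $\dim P_{V}\mu$ is maximal for $\nu_{k}^{*}$-a.e.\ $V$ with $k=1,2$ (the content of Theorem~\ref{thm:proj onto 1 =000026 2 dim subspaces}), then combine with the Ledrappier--Young formula and the SOSC to conclude. However there are two genuine gaps.

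First, and principally: you correctly flag the $\nu_{2}^{*}$-generic case as the hard one, but you do not supply the idea that actually resolves it, and your sketch for it would not succeed as written. The general criterion (Theorem~\ref{thm:gen criteria}) requires verifying the factorization condition~(\ref{eq:factor cond in gen criteria thm}) for each $1\le k<m$; when $m=2$ this is precisely the statement that $P_{V}L_{1}\beta_{\omega}^{V}=\delta_{P_{V}L_{1}(\omega)}$ for $\nu_{2}^{*}$-a.e.\ $V$ --- equivalently, that the projected Oseledets direction $\pi_{V}L_{1}$ is measurable with respect to the projected attractor coordinate $\pi_{V}\Pi$. This is Theorem~\ref{thm:factor of proj of L_m-1}, which the paper proves via a separate singularity argument (Proposition~\ref{prop:sing proj of meas}) rather than by ``running an HR-style entropy-growth analysis on the induced dynamics along $V$.'' That latter phrase describes the entropy-increase machinery of Section~\ref{sec:Entropy-growth-under conv}, which is used \emph{after} the factorization has been established (it feeds into Theorem~\ref{thm:ent increase result} and Theorem~\ref{thm:=00003DRW ent}), and it is the factorization theorem itself that makes the general criterion applicable at $m=2$. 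Without identifying this ingredient, the proposal lacks the key new idea of the paper.

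Second, your parenthetical ``the SOSC (which provides the matching entropy contribution at the top level)'' is in the right direction but too vague to close the argument. The precise mechanism is: if $\Sigma_{1}^{2}=2<\dim_{L}(\Phi,p)$, one needs $\Sigma_{1}^{3}=\dim_{L}(\Phi,p)$, and this follows because SOSC forces $\mathrm{H}_{3}=0$ (by~\cite[Corollary~2.8]{BK}) together with Lemma~\ref{lem:H_m=00003D0 imp dim of proj =00003D dim_L} and the Ledrappier--Young formula. Note that this step cannot be bypassed by an appeal to the general criterion alone, since the general criterion at $m=3$ would require verifying~(\ref{eq:factor cond in gen criteria thm}) for $k=1,2$, and the paper explicitly cannot do this for $k=1$ when $m=3$; the SOSC is what rescues the full-dimensional case $\dim_{L}(\Phi,p)>2$.
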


\begin{rem*}
In a forthcoming paper, Morris and Sert \cite{MS} plan to extend
the work of Morris and Shmerkin \cite{MoSh} to higher dimensions.
This together with Theorem \ref{thm:main d=00003D3} will imply the
equality $\dim_{H}K_{\Phi}=\dim_{A}\Phi$ whenever $d=3$, $\mathbf{S}_{\Phi}^{\mathrm{L}}$
is strongly irreducible and proximal, and $\Phi$ satisfies the SOSC.
\end{rem*}

Theorem \ref{thm:main d=00003D3} will follow from a different result
we now state. This result is valid for all $d\ge1$, and deals with projections
of $\mu$ onto subspaces of dimension at most $2$. This is in analogy
with the fact that Theorem \ref{thm:BHR} follows from (\ref{eq:BHR a.e.  V}),
which is a statement about projections onto lines.

In higher dimensions we shall need additional irreducibility and proximality
assumptions. For $1\le m\le d$ denote by $\wedge^{m}\mathbb{R}^{d}$
the vector space of alternating $m$-forms on the dual of $\mathbb{R}^{d}$
(see Section \ref{subsec:spaces of alt forms}). Given $A\in\mathrm{GL}(d,\mathbb{R})$
let $\wedge^{m}A\in\mathrm{GL}(\wedge^{m}\mathbb{R}^{d})$ be with
\[
\wedge^{m}A(x_{1}\wedge...\wedge x_{m})=(Ax_{1})\wedge...\wedge(Ax_{m})\text{ for }x_{1},...,x_{m}\in\mathbb{R}^{d},
\]
and write $\wedge^{m}\mathbf{S}_{\Phi}^{\mathrm{L}}:=\{\wedge^{m}S\::\:S\in\mathbf{S}_{\Phi}^{\mathrm{L}}\}$.
\begin{defn}
Given $1\le m<d$ we say that $\mathbf{S}_{\Phi}^{\mathrm{L}}$ is
$m$-strongly irreducible if $\wedge^{m}\mathbf{S}_{\Phi}^{\mathrm{L}}$
is strongly-irreducible as a subsemigroup of $\mathrm{GL}(\wedge^{m}\mathbb{R}^{d})$.
Similarly, we say that $\mathbf{S}_{\Phi}^{\mathrm{L}}$ is $m$-proximal
if $\wedge^{m}\mathbf{S}_{\Phi}^{\mathrm{L}}$ is proximal.
\end{defn}

In what follows we shall always make the following assumptions:
\begin{equation}
\text{The maps in \ensuremath{\Phi\ }do not have a common fixed point;}\label{eq:no com fix asump}
\end{equation}
\begin{equation}
\mathbf{S}_{\Phi}^{\mathrm{L}}\text{ is }m\text{-strongly irreducible and }m\text{-proximal for each }1\le m<d.\label{eq:m-ired and m-prox assump}
\end{equation}

Assumption (\ref{eq:no com fix asump}) is equivalent to $K_{\Phi}$
not being a singleton. From (\ref{eq:no com fix asump}) together
with the irreducibility of $\mathbf{S}_{\Phi}^{\mathrm{L}}$, it actually
follows that there does not exist a proper affine subspace $V$ of
$\mathbb{R}^{d}$ so that $K_{\Phi}\subset V$ (see Lemma \ref{lem:K not cont in V}).

Assumption (\ref{eq:m-ired and m-prox assump}) is standard in the
theory of products of random matrices. It is satisfied for instance
when $\mathbf{S}_{\Phi}^{\mathrm{L}}$ is Zariski dense in $\mathrm{GL}(d,\mathbb{R})$,
and implies many desirable properties. In particular, it implies that
$\chi_{1}>...>\chi_{d}$ (see \cite[Theorem IV.1.2]{BL}), where recall
that $\chi_{1},...,\chi_{d}$ are the Lyapunov exponents corresponding
to $\{A_{i}\}_{i\in\Lambda}$ and $p$.
\begin{rem}
\label{rem:m-con equiv to (d-m)-cond}Clearly $\mathbf{S}_{\Phi}^{\mathrm{L}}$
is $1$-strongly irreducible and $1$-proximal if and only if it is
strongly irreducible and proximal. Moreover, from \cite[page 87]{BL}
and \cite[Lemmata 6.15 and 6.23]{BQ}, it follows that $\mathbf{S}_{\Phi}^{\mathrm{L}}$
is $m$-strongly irreducible and $m$-proximal if and only if it is
$(d-m)$-strongly irreducible and $(d-m)$-proximal. This is why in
Theorem \ref{thm:main d=00003D3} we only assume strong irreducibility
and proximality of $\mathbf{S}_{\Phi}^{\mathrm{L}}$ instead of assuming
(\ref{eq:m-ired and m-prox assump}).
\end{rem}

Recall from (\ref{eq:ent dim}) that we denote the entropy dimension
of $\theta\in\mathcal{M}(\mathbb{R}^{d})$ by $\dim_{e}\theta$ (assuming
it exists). Also recall that for a linear subspace $V\subset\mathbb{R}^{d}$
we write $P_{V}$ for its orthogonal projection. We can now state
the aforementioned result about low dimensional projections of $\mu$.
\begin{thm}
\label{thm:proj onto 1 =000026 2 dim subspaces}Suppose that (\ref{eq:no com fix asump})
and (\ref{eq:m-ired and m-prox assump}) hold and that $\Phi$ satisfies
the ESC. Then for every linear subspace $V\subset\mathbb{R}^{d}$
with $\dim V\le2$,
\[
\dim_{e}P_{V}\mu=\min\left\{ \dim V,\dim_{L}(\Phi,p)\right\} .
\]
\end{thm}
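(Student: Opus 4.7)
The upper bound is immediate: $\dim_e P_V\mu\le\dim V$ trivially, while the exact dimensionality of $\mu$ (Feng) together with the Jordan--Pollicott--Simon bound give $\dim_e P_V\mu\le \dim \mu\le \dim_L(\Phi,p)$. The task is therefore to establish the matching lower bound under the ESC; the case $\dim V=0$ is vacuous, so I would treat $\dim V\in\{1,2\}$ separately.

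For the lower bound I would follow an entropy-increase strategy in the spirit of Hochman, as carried out for planar self-affine measures in \cite{BHR,HR}. The basic object is $\frac{1}{n}H(P_V\mu,\mathcal{D}_n)$, whose liminf is $\dim_e P_V\mu$. Decomposing $\mu=\sum_{u\in\Lambda^n}p_u\,\varphi_u\mu$ and projecting, one writes $P_V\mu$ as a convex combination of small pieces $P_V\varphi_u\mu$. Assumption (\ref{eq:m-ired and m-prox assump}), combined with the Furstenberg--Oseledets theory applied to $\mathbf{S}_\Phi^{\mathrm{L}}$, pins down, for $u$ drawn according to $p^{\otimes n}$, the full flag of expanding directions of $A_u$ and the singular values at scales $e^{\chi_1 n},\ldots,e^{\chi_d n}$. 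Strong irreducibility at levels $1$ and $2$ (needed via $\wedge^2$) ensures that a fixed $V$ with $\dim V\le 2$ is almost surely in general position with respect to this random flag, so each $P_V\varphi_u\mu$ essentially looks like an anisotropic affine image of $\mu$ into $V$ whose ``ideal'' dimension is $\min\{\dim V,\dim_L(\Phi,p)\}$.

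Suppose for contradiction that $\dim_e P_V\mu<\min\{\dim V,\dim_L(\Phi,p)\}$. Applying the entropy-porosity/increase machinery of Hochman to the self-similar decomposition above, after averaging across scales a positive proportion of the pieces $P_V\varphi_u\mu$ must exhibit concentration stronger than what the geometry dictated by the $A_u$ permits. Translated back through the IFS, this forces many pairs $u_1,u_2\in\Lambda^n$ to satisfy $\Vert\varphi_{u_1}-\varphi_{u_2}\Vert<\varepsilon^n$ for every $\varepsilon>0$, contradicting the ESC. For $\dim V=1$ this argument is morally the one of \cite{BHR}, with the SOSC replaced by the ESC using the non-conformal entropy-transport techniques of \cite{HR}. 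For $\dim V=2$, the same scheme is run in the anisotropic geometry of ellipses in $V$, where $2$-proximality provides a Furstenberg measure on $\mathrm{Gr}(2,d)$ playing the role that $\nu_1^*$ played for lines.

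The main obstacle is the $2$-dimensional projection case when $d\ge 3$, which has no analogue in the prior work. Here each $P_V\varphi_u\mu$ is supported on an ellipse whose two non-trivial semi-axes decay at distinct rates $e^{\chi_1 n}$ and $e^{\chi_2 n}$, and the entropy-increase argument has to track both scales simultaneously; the scalar transversality and ``detachment'' estimates of \cite{HR} must be upgraded to two-scale analogues in which $2$-strong irreducibility and $2$-proximality guarantee that the intermediate exponent $\chi_2$ is visible after projection onto a generic $V$. A further technical hurdle is upgrading the conclusion from a Furstenberg-typical subspace to \emph{every} $V$, which in the planar case was achieved through a Falconer--Kempton style argument building on \cite{HoSh}; its extension to $2$-planes in higher dimensions is where several of the ``significant new ideas'' alluded to in Section \ref{subsec:Background} will have to be introduced.
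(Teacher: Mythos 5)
Your proposal captures the right general flavour (entropy-increase in the Hochman style, ESC ruling out excess concentration, $2$-proximality feeding a Furstenberg measure on $\mathrm{Gr}_2(d)$), but it misses the architectural insight that actually makes the paper's proof work, and one of your proposed steps would be a dead end.

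The gap is in how the result for \emph{all} $V$ is obtained. You suggest establishing the lower bound for Furstenberg-typical $V$ by entropy increase and then upgrading to every $V$ via a Falconer--Kempton/Hochman--Shmerkin argument. That is not how the paper proceeds, and trying to push that route in dimension $\ge 3$ is exactly the obstacle the paper signals in the remark after Theorem \ref{thm:main all d} — in fact the paper explicitly declines to pursue the Hausdorff-dimension upgrade for $\dim V>1$ because of the non-conformality. What the paper does instead is split the lower bound into two pieces of a quite different nature. First, for \emph{every} $V\in\mathrm{Gr}_m(d)$ one has $\underline{\dim}_e P_V\mu\ge\Sigma_1^m$, where $\Sigma_1^m$ is the Ledrappier--Young constant of Theorem \ref{thm:cons of led-you intro}; this is Lemma \ref{lem:lb on ent of psi mu}, a deterministic inequality obtained from the inductive entropy estimates of Proposition \ref{prop:inductive main proj prop} combined with the Ledrappier--Young formula, and it uses neither the ESC nor any entropy-increase argument, so no ``typical $\to$ all'' upgrade is required. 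Second, the hard part — showing $\Sigma_1^m=\min\{m,\dim_L(\Phi,p)\}$ — concerns the Ledrappier--Young constants themselves, and these are defined via $\nu_m^*$-a.e. $V$, so the entropy-increase machinery (Theorem \ref{thm:gen criteria}) only ever needs to be invoked at Furstenberg-typical subspaces. Your proposal conflates these two roles of the lower bound.

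You are also missing the specific mechanism that lets one apply the general criteria at $m=2$. Theorem \ref{thm:gen criteria} requires the conditional-measure factorization $P_VL_k\beta_\omega^V=\delta_{P_VL_k(\omega)}$ for $1\le k<m$; when $m=2$ this is just $k=1$, and it is delivered by Theorem \ref{thm:factor of proj of L_m-1} together with Theorem \ref{thm:cons of led-you intro}. This factorization theorem (a singularity-of-projections argument in the spirit of \cite[Theorem 1.5]{HR}, but substantially more delicate here because the entropy lower bounds for projections involve more averaging — see Remark \ref{rem:stronger version}) is one of the genuinely new components, and your sketch does not identify it or anything that could substitute for it. Without it the reduction to the entropy-increase theorem would not go through, and the case split between $\Delta_1<1$ (apply the criteria at $m=1$, trivially) and $\Delta_1=1,\Delta_2<1$ (apply the criteria at $m=2$, requiring the factorization) would not be available.
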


\begin{rem*}
Given a bounded nonempty subset $E$ of $\mathbb{R}^{d}$, denote
its box counting dimension by $\dim_{B}E$ (assuming it exists). By
combining Theorem \ref{thm:proj onto 1 =000026 2 dim subspaces} with
the forthcoming work of Morris and Sert \cite{MS}, it should be possible
to show that for every linear subspace $V\subset\mathbb{R}^{d}$ with
$\dim V\le2$
\[
\dim_{B}P_{V}K_{\Phi}=\min\left\{ \dim V,\dim_{A}\Phi\right\} ,
\]
whenever (\ref{eq:no com fix asump}) and (\ref{eq:m-ired and m-prox assump})
hold and $\Phi$ satisfies the ESC.
\end{rem*}

Since $\mu$ is exact dimensional we have $\dim\mu=\dim_{e}\mu$.
Thus, the statements made in Theorems \ref{thm:Ho1}, \ref{thm:BHR}
and \ref{thm:HR} regarding $\dim\mu$ all follow immediately from
Theorem \ref{thm:proj onto 1 =000026 2 dim subspaces}. Theorem \ref{thm:main d=00003D3} will follow almost directly from
Theorem \ref{thm:proj onto 1 =000026 2 dim subspaces} and the Ledrappier-Young
formula. Theorem \ref{thm:proj onto 1 =000026 2 dim subspaces} in
turn will be deduced from a general criteria ensuring (\ref{eq:dim mu =00003D dim_L}),
which we present below.

Recall that $\dim_{L}(\Phi,p)$ is always an upper bound for $\dim\mu$,
and note that $\dim_{L}(\Phi,p)\le2$ whenever $H(p)\le-\chi_{1}-\chi_{2}$.
Moreover, $\dim\mu\ge\dim_{e}P_{V}\mu$ for every linear subspace
$V\subset\mathbb{R}^{d}$ for which $\dim_{e}P_{V}\mu$ exists. The
following corollary follows directly from Theorem \ref{thm:proj onto 1 =000026 2 dim subspaces}
and these observations.
\begin{cor}
\label{cor:dim mu =00003D dim_L when H(p)<=00003D2 exp}Suppose that
(\ref{eq:no com fix asump}) and (\ref{eq:m-ired and m-prox assump})
hold, $\Phi$ satisfies the ESC, $d\ge2$ and $H(p)\le-\chi_{1}-\chi_{2}$.
Then $\dim\mu=\dim_{L}(\Phi,p)$.
\end{cor}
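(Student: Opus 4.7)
The plan is to deduce the corollary directly from Theorem \ref{thm:proj onto 1 =000026 2 dim subspaces} by combining it with the two observations already highlighted in the preceding paragraph: under $H(p)\le-\chi_{1}-\chi_{2}$ one has $\dim_{L}(\Phi,p)\le 2$; and for any linear subspace $V\subset\mathbb{R}^{d}$ with $\dim_{e}P_{V}\mu$ defined, one has $\dim\mu\ge\dim_{e}P_{V}\mu$ (orthogonal projection cannot increase entropy dimension). The Jordan--Pollicott--Simon upper bound $\dim\mu\le\dim_{L}(\Phi,p)$, which holds unconditionally, supplies the reverse inequality.

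First I would verify the bound $\dim_{L}(\Phi,p)\le 2$ by a short case analysis of the piecewise definition of $\dim_{L}(\Phi,p)$. Since all $\chi_{i}<0$, the hypothesis $H(p)+\chi_{1}+\chi_{2}\le 0$ forces the integer
\[
m=\max\{0\le j\le d\::\:H(p)+\chi_{1}+\ldots+\chi_{j}\ge 0\}
\]
to be at most $2$. If $m=2$ then equality must hold and $\dim_{L}(\Phi,p)=2-0/\chi_{3}=2$. If $m=1$, then $H(p)+\chi_{1}\le -\chi_{2}$ combined with $\chi_{2}<0$ gives $-(H(p)+\chi_{1})/\chi_{2}\le 1$ and hence $\dim_{L}(\Phi,p)\le 2$. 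If $m=0$, then $H(p)\le -\chi_{1}$ and so $\dim_{L}(\Phi,p)=-H(p)/\chi_{1}\le 1$. Since $d\ge 2$ the case $m=d$ of the Lyapunov-dimension formula cannot occur under our hypothesis, so these exhaust the possibilities.

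Next I would choose a linear subspace $V\subset\mathbb{R}^{d}$ with $\dim V=2$ (taking $V=\mathbb{R}^{2}$ when $d=2$, so that $P_{V}=\mathrm{id}$ and $P_{V}\mu=\mu$). The hypotheses of Theorem \ref{thm:proj onto 1 =000026 2 dim subspaces} are exactly those being assumed here, so the theorem yields
\[
\dim_{e}P_{V}\mu=\min\{2,\dim_{L}(\Phi,p)\}=\dim_{L}(\Phi,p),
\]
using the first step. Combining this with $\dim\mu\ge\dim_{e}P_{V}\mu$ (or, when $d=2$, with $\dim\mu=\dim_{e}\mu=\dim_{e}P_{V}\mu$ via exact-dimensionality of $\mu$) and the Jordan--Pollicott--Simon upper bound completes the proof. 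There is essentially no obstacle: all the real work lives in Theorem \ref{thm:proj onto 1 =000026 2 dim subspaces}, and the corollary is just a packaging observation, with the mildly annoying but routine point being the case analysis in the Lyapunov-dimension formula.
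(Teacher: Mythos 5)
Your argument is correct and is precisely the argument the paper intends: bound $\dim_{L}(\Phi,p)$ by $2$, pick a $2$-dimensional subspace $V$, apply Theorem~\ref{thm:proj onto 1 =000026 2 dim subspaces} and the monotonicity $\dim\mu\ge\dim_{e}P_{V}\mu$, and close with the Jordan--Pollicott--Simon upper bound.

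One small slip in the case analysis: the claim that $m=d$ cannot occur under the hypothesis is false when $d=2$. If $d=2$ and $H(p)=-\chi_{1}-\chi_{2}$ exactly, then $m=2=d$, and the formula you invoke (with $\chi_{3}$ in the denominator) does not apply; the $m=d$ branch of the definition gives $\dim_{L}(\Phi,p)=-2H(p)/(\chi_{1}+\chi_{2})=2$, so the bound $\dim_{L}(\Phi,p)\le 2$ still holds, but the branch is reached. For $d\ge 3$, $m=d$ would force $H(p)\ge -\sum_{k=1}^{d}\chi_{k}>-\chi_{1}-\chi_{2}$, contradicting the hypothesis, so your exclusion is correct only in that range.
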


\subsubsection{\label{subsec:Full-dimensionality}Full dimensionality of $\mu$
and its projections}

Our second main theorem provides conditions for $\mu$ and its projections
to be of full dimension. In order to state it we need to define the
entropy of the random walk generated by $p$ and $\Phi$.

Denote by $\mathrm{A}_{d,d}$ the affine group of $\mathbb{R}^{d}$,
and write $p_{\Phi}$ for the probability measure on $\mathrm{A}_{d,d}$
determined by $p$ and $\Phi$. That is, $p_{\Phi}:=\sum_{i\in\Lambda}p_{i}\delta_{\varphi_{i}}$.
For $n\ge1$ denote by $p_{\Phi}^{*n}$ the convolution of $p_{\Phi}$
with itself $n$ times.
\begin{defn}
\label{def:of rw ent} The random walk entropy of $p_{\Phi}$ is denoted by $h(p_{\Phi})$
and defined as follows
\begin{equation}
h(p_{\Phi}):=\underset{n\rightarrow\infty}{\lim}\:\frac{1}{n}H(p_{\Phi}^{*n}),\label{eq:def of rw_ent}
\end{equation}
where $H(p_{\Phi}^{*n})$ is the Shannon entropy of the discrete probability
measure $p_{\Phi}^{*n}$.
\end{defn}

The limit in (\ref{eq:def of rw_ent}) exists by sub-additivity. Note
that $h(p_{\Phi})=H(p)$ whenever $\Phi$ generates a free semigroup.

We can now state our second main result. For $1\le m\le d$ set
\[
\varrho_{m}:=(\chi_{1}-\chi_{m})\frac{(m-1)(m-2)}{2}-\sum_{k=1}^{m}\chi_{k},
\]
and denote by $\mathrm{Gr}_{m}(d)$ the Grassmannian manifold of $m$-dimensional
linear subspaces of $\mathbb{R}^{d}$. Recall from Definition \ref{def:Diophantine =000026 ESC}
the definition of a Diophantine system.
\begin{thm}
\label{thm:main all d}Suppose that (\ref{eq:no com fix asump}) and
(\ref{eq:m-ired and m-prox assump}) are satisfied and that $\Phi$
is Diophantine. Then for all $1\le m\le d$ with $h(p_{\Phi})\ge\varrho_{m}$,
\[
\dim_{e}P_{V}\mu=m\text{ for each }V\in\mathrm{Gr}_{m}(d).
\]
In particular, if $h(p_{\Phi})\ge\varrho_{d}$ then $\dim\mu=d$.
\end{thm}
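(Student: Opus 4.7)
The plan is to prove the stronger projection statement $\dim_e P_V\mu = m$ for every $V \in \mathrm{Gr}_m(d)$, from which the case $V = \mathbb{R}^d$ together with exact dimensionality of $\mu$ yields $\dim\mu = d$. Since $\dim_e P_V\mu \le m$ is automatic from $\mathrm{supp}(P_V\mu) \subset V$, the real work is the matching lower bound. I would approach it via a Hochman-style entropy-increase argument carried out at dyadic scales, extended to the non-conformal affine setting in the spirit of \cite{HR} and to higher dimensions through the exterior-power hypotheses encoded in (\ref{eq:m-ired and m-prox assump}).

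Fix $V \in \mathrm{Gr}_m(d)$ and, for each $n \ge 1$, expand $P_V\mu = \sum_{u \in \Lambda^n} p_u\cdot (P_V\circ\varphi_u)\mu$. The natural scale to work at is $k_n$ with $2^{-k_n} \approx e^{n\chi_m}$: this is the threshold at which each summand $(P_V\circ\varphi_u)\mu$ still looks genuinely $m$-dimensional inside $V$, while the bottom $d-m$ singular directions of $A_u$ have collapsed below resolution. The Diophantine hypothesis supplies $\Vert\varphi_{u_1}-\varphi_{u_2}\Vert \ge \epsilon^n$ whenever $\varphi_{u_1}\ne\varphi_{u_2}$, and combined with (\ref{eq:no com fix asump}) and (\ref{eq:m-ired and m-prox assump}) this should produce, uniformly in $V$, a bound of the form
\[
H(P_V\mu,\mathcal{D}_{k_n}) \;\ge\; h(p_\Phi)\,n \,+\, \sum_{u\in\Lambda^n} p_u\, H\bigl((P_V\circ\varphi_u)\mu,\mathcal{D}_{k_n}\bigr) \,-\, o(n),
\]
via a separation-of-atoms argument at a coarser scale $\epsilon^n$ combined with concavity of dyadic entropy.

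The heart of the proof is then to bootstrap the inner expectation $\sum_u p_u H((P_V\circ\varphi_u)\mu,\mathcal{D}_{k_n})$ up to a quantity comparable with the target $m k_n$. Using singular value decompositions of $P_V A_u$ together with the control that (\ref{eq:m-ired and m-prox assump}) provides on the interaction between $V$ and the $A_u$-invariant flags through $\wedge^j\mathbb{R}^d$ for $j \le m$, one should be able to reduce these inner entropies to entropies of $\mu$ or its lower-dimensional projections at finer scales, then iterate. The threshold $\varrho_m = (\chi_1-\chi_m)\binom{m-1}{2} - \sum_{k=1}^m\chi_k$ is precisely what makes the recursion close: the second summand is the naive volume/stretching contribution one would expect for a Furstenberg-generic $V$, while the first represents the accumulated distortion measured by the $\binom{m-1}{2}$ unordered pairs of singular-value indices in $\{1,\ldots,m-1\}$ that arises when $V$ fails to align with the expanding flag of $A_u$.

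The main obstacle, and where most of the novelty will lie, is carrying out this step uniformly in $V \in \mathrm{Gr}_m(d)$ rather than for a Furstenberg-generic $V$, where a direct Lyapunov computation would already yield the sharper threshold $-\sum_{k=1}^m\chi_k$. The correction $(\chi_1-\chi_m)\binom{m-1}{2}$ precisely quantifies the worst-case entropy loss when $V$ is tilted relative to the contracting and expanding flags of the $A_u$, and ruling out such pathological alignments for every single $V$ simultaneously requires the full strength of $m$-proximality and $m$-strong irreducibility, together with a transfer principle from generic $V$ to every $V$ modelled on the argument of Falconer--Kempton \cite{FK} used to upgrade Theorem \ref{thm:BHR} as described in Remark \ref{rem:all projs to lines}.
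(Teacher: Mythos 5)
Your proposal captures some correct high-level intuitions — the relevant dyadic scale is indeed the one at which $\alpha_m(A_u)\approx 2^{-n}$, the Diophantine hypothesis does enter via a separation-of-atoms argument, and the overall flavour is a Hochman-style entropy increase in the non-conformal setting — but the central mechanism you describe is not how the theorem is actually proved, and the sketch has gaps that would not close as written.

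The paper does not prove a direct recursive lower bound on $H(P_V\mu,\mathcal{D}_{k_n})$ of the form you write, and there is no "bootstrap" or "iteration" reducing the inner cylinder entropies back to entropies of $\mu$ at finer scales. Instead, the uniform-in-$V$ lower bound $\underline{\dim}_e\pi_V\mu\ge\Sigma_1^m$ is established once and for all in Proposition \ref{prop:inductive main proj prop} and Lemma \ref{lem:lb on ent of psi mu}, and the real content of Theorem \ref{thm:main all d} is the arithmetic statement that $h(p_\Phi)\ge\varrho_l$ forces $\Sigma_1^l=l$. That is proved by contradiction: if $\Sigma_1^{m-1}=m-1$ and $\Delta_m<1$, then Theorem \ref{thm:factor of proj of L_m-1} gives the factorization of $\pi_VL_{m-1}$ for $\nu_m^*$-a.e.\ $V$, which allows an application of Theorem \ref{thm:gen dim result} with $\mathcal{K}=\{m-1\}$; combined with Feng's Lemma \ref{lem:by =00005BFe, lem 4.6=00005D} relating $\frac{1}{n}\int H(\beta_\omega^V,\mathcal{P}_n)\,d\beta$ to $\mathrm{H}_m$, and with $\mathrm{H}_m>H(p)+\sum_{k=1}^m\chi_k$ (which follows from $\Delta_m<1$), one lands on $\varrho_m>h(p_\Phi)$, contradicting the hypothesis. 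None of this Ledrappier-Young bookkeeping or the use of the Furstenberg boundary map factorization appears in your sketch, and I do not see how your purely multiscale recursion would reproduce it without effectively rediscovering these pieces.

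Two specific misattributions are worth flagging. First, your interpretation of the correction term $(\chi_1-\chi_m)\binom{m-1}{2}$ as "accumulated distortion from $\binom{m-1}{2}$ unordered pairs of singular-value indices" is not what is going on: the exponent $q=\binom{m-1}{2}$ arises in Proposition \ref{prop:sub lin decomp} as the dimension of the flag manifold $\mathrm{F}(W;k_{b+1},\ldots,k_1)$ used to partition $\beta_\omega^V$ into pieces with sub-linear-diameter images under $\pi_V\Pi_n$, and the factor $(\chi_1-\chi_m)$ comes from the rate at which one needs to refine the dyadic-like partition of that flag manifold. The numerical coincidence with the count of unordered pairs is just that. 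Second, the uniformity over all $V\in\mathrm{Gr}_m(d)$ in the lower bound on $\underline{\dim}_e\pi_V\mu$ is \emph{not} obtained by a Falconer--Kempton transfer (which the paper explicitly defers, cf.\ the remark after Corollary \ref{cor:dim mu =00003D dim_L when H(p)<=00003D2 exp}, and which would be needed only for Hausdorff rather than entropy dimension); it comes from the uniform-in-$V$ entropy estimates of Section \ref{sec:Entropy-estimates}, which rely on the Ledrappier-Young formula and the fact that the Furstenberg measures avoid proper subvarieties, not on a generic-to-all transfer.
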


Let us demonstrate how this theorem can be used in order to construct
self-affine measures of full dimension. As pointed out above, $\Phi$
is always Diophantine whenever it is defined by algebraic parameters.
Additionally, if $\zeta_{1},\zeta_{2}\in(0,1)$ are algebraic conjugates
and for $j=1,2$ we have an IFS $\Psi_{j}=\{x\rightarrow\zeta_{j}B_{i}x+b_{i}\}_{i\in\Lambda}$,
where $B_{i}\in\mathrm{GL}(d,\mathbb{Q})$ and $b_{i}\in\mathbb{Q}^{d}$
for each $i\in\Lambda$, then $\Psi_{1}$ generates a free semigroup
if and only if $\Psi_{2}$ generates a free semigroup. In particular,
if $\Psi_{1}$ satisfies the SSC then $\Psi_{2}$ generates a free
semigroup.

By using these observations it is easy to construct explicit examples
in which (\ref{eq:no com fix asump}) and (\ref{eq:m-ired and m-prox assump})
hold, $\Phi$ satisfies the ESC, and $H(p)\ge\varrho_{d}$. In this
situation Theorem \ref{thm:main all d} implies that $\dim\mu=d$.
When $d\ge3$, these seem to be the first explicit examples in the
strongly irreducible non-conformal case for which the equality $\dim\mu=d$
has been verified.

It is desirable to establish (\ref{eq:dim mu =00003D dim_L}) whenever
(\ref{eq:no com fix asump}) and (\ref{eq:m-ired and m-prox assump})
hold and $\Phi$ satisfies the ESC. In particular, under these conditions
one would like to show that $\dim\mu=d$ whenever $H(p)\ge-\sum_{k=1}^{d}\chi_{k}$.
Later on, after we state our general criteria, it will become clear why at present we are unable to obtain these stronger statements.
\begin{rem*}
Suppose that $d=3$, (\ref{eq:no com fix asump}) and (\ref{eq:m-ired and m-prox assump})
hold, and $\Phi$ satisfies the ESC. From Theorem \ref{thm:main all d}
it follows that $\dim\mu=3$ if $H(p)\ge-\chi_{2}-2\chi_{3}$. Moreover,
from Corollary \ref{cor:dim mu =00003D dim_L when H(p)<=00003D2 exp}
we obtain that $\dim\mu=\dim_{L}(\Phi,p)$ if $H(p)\le-\chi_{1}-\chi_{2}$.
Thus, in the present situation in $\mathbb{R}^{3}$ it remains to
verify (\ref{eq:dim mu =00003D dim_L}) when $-\chi_{1}-\chi_{2}<H(p)<-\chi_{2}-2\chi_{3}$.
\end{rem*}

Given Remark \ref{rem:all projs to lines}, it is natural to try to
use the machinery developed in \cite{HoSh} in order to replace $\dim_{e}P_{V}\mu$
with $\dim_{H}P_{V}\mu$ in Theorems \ref{thm:proj onto 1 =000026 2 dim subspaces}
and \ref{thm:main all d}. This should be relatively straightforward
when the subspace $V$ is of dimension $1$. On the other hand, due
to the non-conformality of $\mu$, the situation becomes much more
complicated when $\dim V>1$. In any case, we do not pursue this improvement
in the present work.

The following corollary for self-affine sets and their projections
follows directly from Theorem \ref{thm:main all d}, together with
standard relations between the dimensions of sets and measures.
\begin{cor}
\label{cor:dim of set and proj of set}Suppose that (\ref{eq:no com fix asump})
and (\ref{eq:m-ired and m-prox assump}) are satisfied and that $\Phi$
is Diophantine. Then for all $1\le m\le d$ with $h(p_{\Phi})\ge\varrho_{m}$,
\[
\dim_{B}P_{V}(K_{\Phi})=m\text{ for each }V\in\mathrm{Gr}_{m}(d).
\]
Moreover, if $h(p_{\Phi})\ge\varrho_{d}$ then $\dim_{H}K_{\Phi}=d$.
\end{cor}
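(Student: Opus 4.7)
The plan is to deduce this corollary directly from Theorem \ref{thm:main all d} by combining it with two routine dimension inequalities: that entropy dimension is bounded above by lower box-counting dimension of the support, and that Hausdorff dimension of a set dominates the dimension of any measure supported on it.

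For the first statement, fix $1 \le m \le d$ with $h(p_{\Phi}) \ge \varrho_{m}$ and $V \in \mathrm{Gr}_{m}(d)$. Theorem \ref{thm:main all d} gives $\dim_{e} P_{V}\mu = m$. Since $\mathrm{supp}(P_{V}\mu) \subseteq P_{V}(K_{\Phi})$, for every $n \ge 1$ we have $H(P_{V}\mu, \mathcal{D}_{n}) \le \log N_{n}(P_{V}(K_{\Phi}))$, where $N_{n}(E)$ denotes the number of level-$n$ dyadic cells meeting $E$. Dividing by $n$ and letting $n \to \infty$ yields $\dim_{e} P_{V}\mu \le \underline{\dim}_{B} P_{V}(K_{\Phi})$. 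On the other hand, $P_{V}(K_{\Phi})$ is a bounded subset of the $m$-dimensional space $V$, so $\overline{\dim}_{B} P_{V}(K_{\Phi}) \le m$. Chaining these,
\[
m = \dim_{e} P_{V}\mu \le \underline{\dim}_{B} P_{V}(K_{\Phi}) \le \overline{\dim}_{B} P_{V}(K_{\Phi}) \le m,
\]
so upper and lower box dimensions agree and $\dim_{B} P_{V}(K_{\Phi}) = m$.

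For the second statement, assume $h(p_{\Phi}) \ge \varrho_{d}$. Theorem \ref{thm:main all d} (taking $V = \mathbb{R}^{d}$, $P_{V} = \mathrm{Id}$) gives $\dim_{e}\mu = d$. Since $\mu$ is exact dimensional, $\dim\mu = \dim_{e}\mu = d$, and then $\dim_{H}\mu = d$ by the discussion following \eqref{eq:ent dim}. Because $\mu$ is supported on $K_{\Phi}$, the definition of $\dim_{H}\mu$ as an infimum of Hausdorff dimensions of positive-measure Borel sets yields $\dim_{H} K_{\Phi} \ge \dim_{H}\mu = d$, and the reverse inequality is trivial. Hence $\dim_{H} K_{\Phi} = d$.

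There is no serious obstacle: all content is in Theorem \ref{thm:main all d}, and the present corollary is a clean packaging of that result in terms of sets via the standard measure-to-set dimension transfer. The only mild point to be careful about is invoking the entropy/box-counting inequality in the correct direction, which as above uses only that the measure is supported inside the set and that there are at most $N_{n}$ atoms in the dyadic discretisation.
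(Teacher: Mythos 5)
Your proof is correct and follows exactly the route the paper indicates: the corollary is stated to follow from Theorem \ref{thm:main all d} "together with standard relations between the dimensions of sets and measures," and your argument — bounding scale-$n$ entropy by the log of the box-counting number to get $\dim_e P_V\mu \le \underline{\dim}_B P_V(K_\Phi)$, bounding $\overline{\dim}_B P_V(K_\Phi) \le m$ since $P_V(K_\Phi)$ is a bounded subset of the $m$-dimensional space $V$, and using $\dim_H K_\Phi \ge \dim_H\mu$ since $\mu$ is supported on $K_\Phi$ — is precisely that standard transfer.
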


Note that the assumptions of Corollary \ref{cor:dim of set and proj of set}
are independent of $p$. Thus, its conclusion holds whenever $h(p_{\Phi})\ge\varrho_{m}$
for some choice of strictly positive probability vector $p$.

\subsubsection{\label{subsec:A-general-criteria}A general criteria}

Our next result is a general criteria ensuring (\ref{eq:dim mu =00003D dim_L}).
In order to state it we need some preparations.

Write $\beta$ for the Bernoulli measure on $\Lambda^{\mathbb{N}}$
corresponding to $p$, and let $\Pi:\Lambda^{\mathbb{N}}\rightarrow K_{\Phi}$
be the coding map corresponding to $\Phi$. That is $\beta:=p^{\mathbb{N}}$,
and
\[
\Pi\omega:=\underset{n\rightarrow\infty}{\lim}\:\varphi_{\omega_{0}}\circ...\circ\varphi_{\omega_{n}}(0)\text{ for }\omega\in\Lambda^{\mathbb{N}}.
\]
Note that $\mu=\Pi\beta$.

For $0\le m\le d$ recall that $\mathrm{Gr}_{m}(d)$ denotes the Grassmannian
of $m$-dimensional linear subspaces of $\mathbb{R}^{d}$. As we note
in Section \ref{subsec:Lyapunov-exponents-and Fur meas} below, from
(\ref{eq:m-ired and m-prox assump}) it follows that there exists
a unique $\nu_{m}\in\mathcal{M}(\mathrm{Gr}_{m}(d))$ which is stationary
with respect to $\theta:=\sum_{i\in\Lambda}p_{i}\delta_{A_{i}}$.
That is, $\nu_{m}$ is the unique element in $\mathcal{M}(\mathrm{Gr}_{m}(d))$
which satisfies $\nu_{m}=\sum_{i\in\Lambda}p_{i}\cdot A_{i}\nu_{m}$,
where $A_{i}\nu_{m}$ is the pushforward of $\nu_{m}$ via the map
taking $V\in\mathrm{Gr}_{m}(d)$ to $A_{i}(V)$. Similarly, there
exists a unique $\nu_{m}^{*}\in\mathcal{M}(\mathrm{Gr}_{m}(d))$ which
is stationary with respect to $\theta^{*}:=\sum_{i\in\Lambda}p_{i}\delta_{A_{i}^{*}}$.
The measures $\nu_{m}$ and $\nu_{m}^{*}$ are called the Furstenberg
measures on $\mathrm{Gr}_{m}(d)$ corresponding to $\theta$ and $\theta^{*}$
respectively.

Additionally, as we point out in Section \ref{subsec:Coding and Furstenberg maps},
from (\ref{eq:m-ired and m-prox assump}) it also follows that for
each $0\le m\le d$ there exists a Borel map $L_{m}:\Lambda^{\mathbb{N}}\rightarrow\mathrm{Gr}_{m}(d)$
so that $L_{m}\beta=\nu_{m}$ and for $\beta$-a.e. $\omega\in\Lambda^{\mathbb{N}}$
the sequence
\[
\{A_{\omega_{0}}...A_{\omega_{n}}\nu_{m}\}_{n\ge1}\subset\mathcal{M}(\mathrm{Gr}_{m}(d))
\]
converges weakly to $\delta_{L_{m}(\omega)}$. The maps $L_{0},...,L_{d}$
are called the Furstenberg boundary maps.

Write $\mathcal{B}_{\mathbb{R}^{d}}$ for the Borel $\sigma$-algebra
of $\mathbb{R}^{d}$. Given a linear subspace $V$ of $\mathbb{R}^{d}$,
denote by $\{\beta_{\omega}^{V}\}_{\omega\in\Lambda^{\mathbb{N}}}$
the disintegration of $\beta$ with respect to $\Pi^{-1}(P_{V}^{-1}\mathcal{B}_{\mathbb{R}^{d}})$
(see Section \ref{subsec:Disintegrations}). From Lemma \ref{lem:nu=00007Bkappa<del=00007D<eps}
below we get that for each $1\le k\le m\le d$ and $V\in\mathrm{Gr}_{m}(d)$
\[
P_{V}L_{k}(\omega)\in\mathrm{Gr}_{k}(V)\text{ for }\beta\text{-a.e. }\omega,
\]
where $\mathrm{Gr}_{k}(V)$ denotes the Grassmannian of $k$-dimensional
linear subspaces of $V$. From this and since $\beta=\int\beta_{\omega}^{V}\:d\beta(\omega)$,
we obtain that for each $1\le k\le m\le d$ and $V\in\mathrm{Gr}_{m}(d)$
\[
P_{V}L_{k}\beta_{\omega}^{V}\in\mathcal{M}(\mathrm{Gr}_{k}(V))\text{ for }\beta\text{-a.e. }\omega.
\]

We shall also need the following consequence of the Ledrappier-Young
formula (see Section \ref{subsec:Ledrappier-Young-formula}). Under
our assumptions, it was first proven by Bárány and Käenmäki \cite[Proposition 5.9]{BK}.
The general case was later obtained by Feng \cite[Theorem 1.4]{Fe}.
\begin{thm}
\label{thm:cons of led-you intro}Suppose that (\ref{eq:m-ired and m-prox assump})
holds. Then there exist numbers $\Delta_{1},...,\Delta_{d}\in[0,1]$
so that for each $1\le m\le d$ and $\nu_{m}^{*}$-a.e. $V\in\mathrm{Gr}_{m}(d)$
the measure $P_{V}\mu$ is exact dimensional with $\dim P_{V}\mu=\Sigma_{1}^{m}$,
where $\Sigma_{1}^{m}:=\sum_{k=1}^{m}\Delta_{k}$.
\end{thm}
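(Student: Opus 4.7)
The plan is to deduce this theorem from the Ledrappier--Young formula for self-affine measures. The latter is a deep result, established in the generality required here by B\'ar\'any--K\"aenm\"aki \cite{BK} and Feng \cite{Fe}; the present statement is essentially a repackaging of that formula, so what is proposed is more a matter of identifying the right objects than of carrying out new work.

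First, I would appeal to Oseledec's theorem. Under assumption (\ref{eq:m-ired and m-prox assump}) the Lyapunov spectrum is simple, so that applying Oseledec to the adjoint cocycle $A_{\omega_{n-1}}^{*}\cdots A_{\omega_{0}}^{*}$ yields, for $\beta$-a.e.\ $\omega$, a measurable flag
\[
\{0\} = W_{0}(\omega) \subsetneq W_{1}(\omega) \subsetneq \cdots \subsetneq W_{d}(\omega) = \mathbb{R}^{d},
\]
with $\dim W_{m}(\omega) = m$. The standard theory of random matrix products under strong irreducibility and proximality (see \cite{BL, BQ}) identifies the law of $W_{m}(\omega)$ under $\beta$ with the Furstenberg measure $\nu_{m}^{*}$; equivalently, $\omega \mapsto W_{m}(\omega)$ coincides $\beta$-a.s.\ with the boundary map $L_{m}^{*}$ of the adjoint system, and $L_{m}^{*}\beta = \nu_{m}^{*}$.

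Second, I would invoke the Ledrappier--Young formula itself. Its content in this setting is the existence of constants $\Delta_{1}, \ldots, \Delta_{d} \in [0,1]$, independent of $\omega$, such that for each $1 \le m \le d$ and $\beta$-a.e.\ $\omega$ the projection $P_{W_{m}(\omega)}\mu$ is exact dimensional with dimension $\sum_{k=1}^{m}\Delta_{k} = \Sigma_{1}^{m}$. Each $\Delta_{k}$ is defined as a ratio of a conditional-entropy term to $|\chi_{k}|$ at the $k$th level of the flag, and the bound $\Delta_{k}\le 1$ is an intrinsic part of the Ledrappier--Young machinery (reflecting the fact that passing from $W_{k-1}(\omega)$ to $W_{k}(\omega)$ adds at most one unit of dimension). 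Pushing this statement forward by $L_{m}^{*}$ and using $L_{m}^{*}\beta = \nu_{m}^{*}$ yields the assertion for $\nu_{m}^{*}$-a.e.\ $V \in \mathrm{Gr}_{m}(d)$.

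The main obstacle is, of course, the Ledrappier--Young formula itself, whose proof for self-affine systems required substantial new ideas in \cite{BK, Fe} and which is used here as a black box. Once one accepts that formula, the only subtlety is matching conventions between the intrinsic Oseledec flag $W_{m}(\omega)$ produced by the formula and the extrinsic Grassmannian distribution $\nu_{m}^{*}$ appearing in the statement. This identification is routine given the uniqueness of the stationary measure on $\mathrm{Gr}_{m}(d)$ and the convergence properties of the boundary maps $L_{m}^{*}$ recalled earlier in this section.
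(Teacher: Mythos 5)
Your proposal is correct and essentially coincides with the paper's own treatment: the paper states the Ledrappier--Young formula (Theorem \ref{thm:LY formula for SA}, due to Feng \cite{Fe} and Bárány--Käenmäki \cite{BK}) in terms of the Furstenberg measure $\nu^*$ on the full flag manifold, obtains $\Delta_m\in[0,1]$ from the observation that $P_{V_m}\mu_x^{V_{m-1}}$ is supported on an affine line (Remark \ref{rem:0<=00003DDelta_m<=00003D1}), and recovers the statement for $\nu_m^*$-a.e.\ $V$ via the relation $\nu_m^*(\cdot)=\nu^*\{(V_i)_i:V_m\in\cdot\}$, exactly matching your pushforward via the boundary map. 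Your Oseledec-flag phrasing is an equivalent repackaging of the flag $\nu^*$ and its marginals, so the two arguments are the same in substance.
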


For convenience we also set $\Sigma_{1}^{0}:=0$.
\begin{rem}
\label{rem:lb all proj}By Lemma \ref{lem:lb on ent of psi mu} below,
it in fact holds that $\underline{\dim}_{e}P_{V}\mu\ge\Sigma_{1}^{m}$
for all $1\le m\le d$ and all $V\in\mathrm{Gr}_{m}(d)$. Here $\underline{\dim}_{e}$
denotes the lower entropy dimension, which is defined by replacing
$\lim$ with $\liminf$ in (\ref{eq:ent dim}).
\end{rem}

\begin{rem}
\label{rem:exists unique m}Note that $\dim\mu=\Sigma_{1}^{d}$, and
that when $\dim\mu<d$ there exists a unique $1\le m\le d$ so that
$\Sigma_{1}^{m-1}=m-1$ and $\Delta_{m}<1$.
\end{rem}

We can now state our general criteria.
\begin{thm}
\label{thm:gen criteria}Suppose that (\ref{eq:no com fix asump})
and (\ref{eq:m-ired and m-prox assump}) hold, $\Phi$ satisfies the
ESC, and $\dim\mu<d$. Let $1\le m\le d$ be such that $\Sigma_{1}^{m-1}=m-1$
and $\Delta_{m}<1$, and assume that
\begin{equation}
P_{V}L_{k}\beta_{\omega}^{V}=\delta_{P_{V}L_{k}(\omega)}\text{ for each }1\le k<m,\:\nu_{m}^{*}\text{-a.e. }V\text{ and }\beta\text{-a.e. }\omega.\label{eq:factor cond in gen criteria thm}
\end{equation}
Then $\dim\mu=\Sigma_{1}^{m}=\dim_{L}(\Phi,p)$.
\end{thm}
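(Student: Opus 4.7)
\emph{Proof plan.} The plan is to prove $\Sigma_1^m = \dim_L(\Phi,p)$; combined with the a priori bound $\dim\mu \le \dim_L(\Phi,p)$ and the identity $\dim\mu = \Sigma_1^d \ge \Sigma_1^m$, this forces $\Delta_k = 0$ for $k > m$ and yields $\dim\mu = \Sigma_1^m = \dim_L(\Phi,p)$. I would argue by contradiction: assume $\Sigma_1^m < \dim_L(\Phi,p)$, which means $\Delta_m$ is strictly below the Lyapunov-compatible maximum $-(H(p)+\chi_1+\dots+\chi_{m-1})/\chi_m$, and derive a contradiction with the ESC through a Hochman-style entropy-increase argument.

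Fix a $\nu_m^*$-typical $V \in \mathrm{Gr}_m(d)$ and study $H(P_V\mu,\mathcal{D}_n)$ via the cylinder decomposition $\mu = \sum_{u \in \Lambda^n} p_u \cdot \varphi_u\mu$. By $m$-proximality and $m$-irreducibility, for $\beta$-typical $\omega$ the matrix $A_{\omega|_n}$ has singular directions asymptotic to the full Furstenberg flag $\{L_k(\omega)\}_{k=1}^m$ with corresponding log-singular values $n\chi_1,\dots,n\chi_m$. Since $\Sigma_1^{m-1} = m-1$, the projection of $\mu$ onto the $(m-1)$-dimensional subspace $P_V L_{m-1}(\omega) \in \mathrm{Gr}_{m-1}(V)$ is already of full dimension, so the quantity in question reduces to the contribution $\Delta_m$ coming from the one-dimensional transversal direction in $V$ complementary to $P_V L_{m-1}(\omega)$.

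The factor condition (\ref{eq:factor cond in gen criteria thm}) is decisive here: it asserts that $P_V L_k(\omega)$ is a deterministic function of $P_V\Pi\omega$ for every $k < m$, so in particular the transversal direction at each fibre of $P_V\Pi$ is a well-defined one-dimensional line rather than a $\beta$-random object. This lets me realise $P_V\mu$, at every dyadic scale $2^{-n}$, as a genuine one-dimensional convolution in that coordinate of a dimension-$(m-1)$ horizontal factor with a dimension-$\Delta_m$ transversal factor. If $\Delta_m$ were strictly below the Lyapunov-compatible maximum, then the scale-$n$ entropy of this convolution would fall short of $n\log 2 \cdot \Sigma_1^m$ by a definite linear amount on a positive-density set of $n$. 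I would then invoke the one-dimensional inverse theorem for entropy of convolutions (in the transversal direction) to extract approximate atomicity of the relevant cylinder components, and lift this via the $m$-proximality structure to produce, for infinitely many $n$, two distinct words $u_1, u_2 \in \Lambda^n$ with $\|\varphi_{u_1} - \varphi_{u_2}\|$ smaller than any prescribed exponential rate, contradicting the ESC.

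The main obstacle is transplanting the Hochman inverse theorem from its isotropic one-dimensional self-similar setting to the present anisotropic self-affine $m$-dimensional environment, in particular reconciling Euclidean dyadic scales with the affine scales $e^{n\chi_k}$. The combination of the saturation $\Sigma_1^{m-1} = m-1$ with the factor hypothesis (\ref{eq:factor cond in gen criteria thm}) is precisely what reduces the problem to a one-dimensional inverse-theorem problem along a deterministic transversal line inside $V$; without (\ref{eq:factor cond in gen criteria thm}) the transversal direction would genuinely vary with $\omega$, and the entropy identity needed to feed the Hochman step would degrade into an inequality whose error term is of the same order as the gap one is trying to rule out. Making this reduction rigorous and executing the inverse-theorem step in the anisotropic setting is the delicate heart of the proof.
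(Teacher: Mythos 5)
Your plan captures the broad spirit of the paper's strategy (factorization plus Hochman-style entropy increase plus exploiting ESC) but diverges from the actual route in ways that leave real gaps, and the framing in the middle step does not quite hold together.

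\textbf{The framing issue.} You propose to realise $P_V\mu$ as ``a genuine one-dimensional convolution of a dimension-$(m-1)$ horizontal factor with a dimension-$\Delta_m$ transversal factor'' and then apply a one-dimensional inverse theorem in the transversal direction. This is not how the mechanism works here. The convolutions whose entropy must grow are of the form $\theta\ldotp\mu$ with $\theta\in\mathcal{M}(\mathrm{A}_{d,m})$ — convolutions of a measure on the space of affine maps with the self-affine measure — and the inverse theorem that gets invoked is the $m$-dimensional one (Theorem \ref{thm:Hoch inv thm R^m}), decomposing the ambient $\mathbb{R}^m$ into saturated/concentrated subspace pairs, not a one-dimensional one in a single transversal coordinate. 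Moreover, the quantity you say ``would fall short of $n\Sigma_1^m$'' is the scale-$n$ entropy of $P_V\mu$, but by the Ledrappier--Young formula this entropy is asymptotically equal to $n\Sigma_1^m$ regardless; it is not free to fall short. What actually has to happen is the opposite: the convolution $\theta\ldotp\mu$ with a non-atomic $\theta$ must have entropy strictly larger than $\Sigma_1^m$, and the combinatorics of ESC plus the sub-linear-diameter decompositions of $\beta_\omega^V$ must force this not to happen unless $\mathrm{H}_m=0$.

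\textbf{What is missing.} The paper's conclusion $\Sigma_1^m=\dim_L(\Phi,p)$ does not come from directly exhibiting two exponentially close words $\varphi_{u_1},\varphi_{u_2}$ contradicting ESC. It comes from the chain: the factor hypothesis with $\mathcal{K}=\{1,\dots,m-1\}$ gives, via Proposition \ref{prop:sub lin decomp}, a decomposition of $\beta_\omega^V$ into $2^{o(n)}$ pieces whose $\pi_V\Pi_n$-images have sub-linear diameter; the entropy increase result (Theorem \ref{thm:ent increase result}), through the asymptotic-entropy statement $\frac1n H(\pi_V p_\Phi^{*n},\mathcal{D}_0)\to h(p_\Phi)$ (Theorem \ref{thm:=00003DRW ent}), then forces $\limsup\frac1n\int H(\Pi_n\beta_\omega^V)\,d\beta=0$ (Theorem \ref{thm:gen dim result} with $q=0$); since ESC gives $\Pi_n^{-1}\mathcal{S}^{\mathrm A}=\mathcal{P}_n$ this means $\frac1n\int H(\beta_\omega^V,\mathcal{P}_n)\,d\beta\to 0$, and this limit is exactly $\mathrm{H}_m$ by Lemma \ref{lem:by =00005BFe, lem 4.6=00005D}; finally $\mathrm{H}_m=0$ gives $\Sigma_1^m=\dim_L(\Phi,p)$ by a short Ledrappier--Young computation (Lemma \ref{lem:H_m=00003D0 imp dim of proj =00003D dim_L}). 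None of the pivot objects here — the conditional entropy $\mathrm{H}_m$, the random walk entropy $h(p_\Phi)$, the decompositions of $\beta_\omega^V$ in the group $\mathrm{A}_{d,m}$, and the need to project the Diophantine bound from $\mathrm{A}_{d,d}$ to $\mathrm{A}_{d,m}$ (Proposition \ref{prop:diophantine --> diophantine for projs}) — appear in your outline, yet it is precisely their interaction that supplies the rigidity you want but cannot reach with a single transversal one-dimensional inverse-theorem step. So what you honestly flag as ``the delicate heart of the proof'' is in fact an entire architecture of intermediate results, not an execution detail, and the proposal as written does not provide a workable alternative route through it.
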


\begin{rem}
\label{rem:equiv form of cond in gen criteria}Condition (\ref{eq:factor cond in gen criteria thm})
is equivalent to the factorization of $P_{V}L_{k}$ via $P_{V}\Pi$
for each $1\le k<m$ and $\nu_{m}^{*}$-a.e. $V$. Indeed, given $1\le k<m\le d$
and $V\in\mathrm{Gr}_{m}(d)$, the assumption
\[
P_{V}L_{k}\beta_{\omega}^{V}=\delta_{P_{V}L_{k}(\omega)}\text{ for }\beta\text{-a.e. }\omega
\]
is easily seen to be equivalent to the existence of a Borel map $L_{k,V}:V\rightarrow\mathrm{Gr}_{k}(V)$
so that $P_{V}L_{k}(\omega)=L_{k,V}(P_{V}\Pi\omega)$ for $\beta$-a.e.
$\omega$.
\end{rem}

As noted after the statement of Theorem \ref{thm:main all d}, ideally
we would have liked to establish Theorem \ref{thm:gen criteria} without
assuming (\ref{eq:factor cond in gen criteria thm}). Unfortunately,
at this point we are unable to deduce this condition from the other
assumptions made in the theorem. On the other hand, we can prove the
following theorem which verifies (\ref{eq:factor cond in gen criteria thm})
for $k=m-1$. When $d=m=2$ this statement was obtained in \cite[Theorem 1.5]{HR}.
\begin{thm}
\label{thm:factor of proj of L_m-1}Suppose that $d\ge2$ and that
(\ref{eq:no com fix asump}) and (\ref{eq:m-ired and m-prox assump})
are satisfied. Let $2\le m\le d$ and $V\in\mathrm{Gr}_{m}(d)$ be
such that $\Sigma_{1}^{m-1}=m-1$ and $P_{V}\mu$ is exact dimensional
with $\dim P_{V}\mu<m$. Then $P_{V}L_{m-1}\beta_{\omega}^{V}=\delta_{P_{V}L_{m-1}(\omega)}$
for $\beta$-a.e. $\omega$.
\end{thm}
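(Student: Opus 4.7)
The plan is to argue by contradiction, following the strategy used for the $d=m=2$ case in \cite[Theorem 1.5]{HR} but adapted to an $m$-dimensional projection. By Remark \ref{rem:equiv form of cond in gen criteria}, the desired identity $P_V L_{m-1}\beta_\omega^V=\delta_{P_V L_{m-1}(\omega)}$ for $\beta$-a.e.\ $\omega$ is equivalent to $P_V L_{m-1}$ being a function, modulo $\beta$-null sets, of $P_V\Pi$. I would suppose this fails on a set of $\omega$ of positive $\beta$-measure, so that the conditional distribution $P_V L_{m-1}\beta_\omega^V$ is not almost surely a point mass, and derive a contradiction with $\dim P_V\mu<m$.

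First, I would extract the implications of the hypotheses for the projected measure. Applying Remark \ref{rem:lb all proj} together with $\Sigma_1^{m-1}=m-1$ gives $\underline{\dim}_e P_W\mu\ge m-1$ for \emph{every} $W\in\mathrm{Gr}_{m-1}(d)$; in particular, for every $(m-1)$-dimensional $W\subset V$ one has $\underline{\dim}_e P_W(P_V\mu)=\underline{\dim}_e P_W\mu\ge m-1$. Together with the exactness of $P_V\mu$ and $\dim P_V\mu<m$, this places $P_V\mu$ in a critical regime on $V$: every hyperplane projection inside $V$ is of full lower entropy dimension, yet $P_V\mu$ itself falls short of $\dim V$.

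The next step is a Ledrappier--Young-type decomposition of $\dim P_V\mu$ along the flag $\{P_V L_k(\omega)\}_{k=0}^m\subset V$, which by Lemma \ref{lem:nu=00007Bkappa<del=00007D<eps} is a.s.\ a complete flag of $V$. Adapting the machinery behind Theorem \ref{thm:cons of led-you intro} to this projected flag yields a decomposition $\dim P_V\mu=\sum_{k=1}^m\Delta_k^V$ with each $\Delta_k^V\in[0,1]$ recording the conditional dimension gained at the $k$-th level. The hyperplane-projection lower bounds from the previous paragraph force $\Delta_k^V=1$ for $1\le k\le m-1$, so the entire deficit of $P_V\mu$ is concentrated at the top level: $\Delta_m^V=\dim P_V\mu-(m-1)\in[0,1)$. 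It then remains to show that a non-atomic conditional distribution $P_V L_{m-1}\beta_\omega^V$ must add a strictly positive extra contribution (essentially proportional to the Rokhlin conditional entropy of $P_V L_{m-1}$ given $P_V\Pi$) to $\Delta_m^V$, forcing $\Delta_m^V=1$ and hence $\dim P_V\mu=m$, contradicting the hypothesis.

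The main obstacle is making this final entropy-transfer step rigorous. The difficulty is that $P_V\mu$ is not itself a self-affine measure on $V$ (since $V$ is typically not preserved by $\mathbf{S}_\Phi^{\mathrm{L}}$), so the IFS structure cannot be applied directly to $P_V\mu$. Instead, one must work with $\mu$ and $P_V$ simultaneously, exploiting the equivariances $\Pi(i\omega)=\varphi_i\Pi\omega$ and $L_{m-1}(i\omega)=A_i L_{m-1}(\omega)$, together with the stationarity of $\nu_{m-1}^*$, to show that extra randomness in $P_V L_{m-1}$ relative to $P_V\Pi$ is genuinely transverse to the lower levels of the flag rather than absorbed by them. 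The $(m-1)$-strong irreducibility and $(m-1)$-proximality of $\mathbf{S}_\Phi^{\mathrm{L}}$ are the crucial inputs here, ensuring that $L_{m-1}$ behaves sufficiently generically under the random walk. This is the principal new ingredient beyond the planar setting treated in \cite{HR}.
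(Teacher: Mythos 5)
Your proposal correctly identifies the overall contradiction strategy (adapt \cite[Theorem 1.5]{HR}) and correctly flags where the difficulty lies, but the key step remains a gap rather than a proof. The central issue is that you posit a Ledrappier--Young-type decomposition $\dim P_V\mu=\sum_{k=1}^m\Delta_k^V$ for the projected measure along the projected flag $\{P_VL_k(\omega)\}_{k=0}^m$, and then assert that a non-atomic conditional distribution $P_VL_{m-1}\beta_\omega^V$ would add a ``strictly positive extra contribution'' to $\Delta_m^V$. Neither claim is established: $P_V\mu$ is not self-affine, Theorem~\ref{thm:LY formula for SA} only controls slices of $\mu$ along $\nu^*$-typical flags (not along $P_VL_{m-1}$ for a \emph{fixed} $V$), and you yourself concede in the last paragraph that you do not know how to make the ``entropy-transfer'' rigorous. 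As written, the argument is a wish-list of implications, not a derivation.

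The paper's actual proof avoids any Ledrappier--Young formula for $P_V\mu$. It proves (Proposition~\ref{prop:sing proj of meas}) that if $B_1,B_2\subset\mathrm{Gr}_{m-1}(m)$ are at positive distance and have positive-measure preimages $\tilde B_1=\Pi^{-1}\pi_V^{-1}B_1$, etc., then $\pi_V\Pi\beta_{\tilde B_1}$ and $\pi_V\Pi\beta_{\tilde B_2}$ are \emph{mutually singular}. The mechanism is entirely entropy-counting via Lemmas~\ref{lem:ent > xi(Z)} and~\ref{lem:cond ent > (m-1)xi(Z)}: conditioned on being near some hyperplane $W$, a cylinder's projection picks up $m-1$ bits of entropy within $W$ (since $\Sigma_1^{m-1}=m-1$); conditioned on being near a transversal hyperplane, it picks up a full bit in the complementary line $W^\perp$. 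If the two projections were not singular, one could match pieces (via Lemma~\ref{lem:decomp of xi} and Lemma~\ref{lem:existance of measures with small tv dist}) so that both contributions are realised simultaneously, forcing $\dim\pi_V\mu\ge m$, contradicting $\dim P_V\mu<m$. The theorem then follows by a soft argument: if $\pi_VL_{m-1}\beta_\omega^V$ were non-atomic on a positive-measure set of $\omega$, one could find disjoint open $B_1,B_2$ violating this singularity. Your proposal never articulates this singularity dichotomy, nor the two complementary entropy lower bounds that power it, which is where the actual work happens. The Ledrappier--Young language in your sketch obscures rather than captures this: what is needed is not a dimension decomposition of $P_V\mu$ but a sharp contradiction squeezed between a hypothesised $\dim P_V\mu<m$ and an entropy gain from two transversal directions.
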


For $m=1$ condition (\ref{eq:factor cond in gen criteria thm}) holds trivially. When $m=2$ in Theorem \ref{thm:gen criteria}, the condition follows from the last statement and Theorem \ref{thm:cons of led-you intro}. These facts will enable us to deduce Theorem \ref{thm:proj onto 1 =000026 2 dim subspaces} from Theorem \ref{thm:gen criteria}.

\begin{rem*}
It seems that Theorem \ref{thm:gen criteria} can be used in order to make further progress with the verification of (\ref{eq:dim mu =00003D dim_L}). Suppose that $d=3$, (\ref{eq:m-ired and m-prox assump}) holds, and $\nu_2$ is not of full dimension (i.e. $\dim\nu_2<\dim\mathrm{Gr}_{2}(3)=2$). The author believes that under these assumptions, it should be possible to use the arguments in the proofs of Theorem \ref{thm:factor of proj of L_m-1} and \cite[Theorem 1.5]{HR} in order to establish the factorization of $L_1$ via $L_2$. That is, it should be possible to show that there exists a Borel map $\widetilde{L}_1:\mathrm{Gr}_{2}(3)\rightarrow\mathrm{Gr}_{1}(3)$ so that $L_1(\omega)=\widetilde{L}_1(L_2(\omega))$ for $\beta$-a.e. $\omega$. It is not difficult to see that this, together with Theorems \ref{thm:gen criteria} and \ref{thm:factor of proj of L_m-1}, will give (\ref{eq:dim mu =00003D dim_L}) whenever $d=3$, (\ref{eq:no com fix asump}) and (\ref{eq:m-ired and m-prox assump}) hold, $\Phi$ satisfies the
ESC, and $\dim\nu_2<2$.
\end{rem*}

\subsection{\label{subsec:About-the-proof}An outline of the proofs}

We next provide an outline of the proofs of the results stated above.
We do not give full details here, and some of our discussions are
imprecise. Everything will be repeated in a rigorous manner in later
parts of the paper. Our aim here is to guide the reader in understanding
the roles played by the various parts of the proofs. We also state
a number of intermediate results which we find to be interesting in
their own right. We continue to use the notations from the previous
subsections, and to assume that (\ref{eq:no com fix asump}) and (\ref{eq:m-ired and m-prox assump})
are satisfied.

\subsubsection{\label{subsec:Entropy-estimates}Entropy estimates}

Given a probability space $(X,\mathcal{F},\theta)$ and countable
partitions $\mathcal{D},\mathcal{E}\subset\mathcal{F}$, write $H(\theta,\mathcal{D})$
for the entropy of $\theta$ with respect to $\mathcal{D}$ and $H(\theta,\mathcal{D}\mid\mathcal{E})$
for the conditional entropy given $\mathcal{E}$ (see Section \ref{subsec:Entropy}).

For $1\le m\le d$ and $n\ge0$ denote by $\mathcal{D}_{n}^{m}$ the level-$n$
dyadic partition of $\mathbb{R}^{m}$, where we usually omit the superscript
$m$ when it is clear from the context. Given $\theta\in\mathcal{M}(\mathbb{R}^{m})$ and $D\in\mathcal{D}_{n}$ with $\theta(D)>0$, we call $\theta_{D}:=\frac{1}{\theta(D)}\theta|_{D}$
an $n$th level component of $\theta$. Given a word $u\in\Lambda^{*}$
we refer to $\varphi_{u}\mu$ as a cylinder measure of $\mu$.

Our first task is to bound from below the entropy of projections of
components of cylinders of $\mu$. In order to formulate this more
precisely, for $1\le m\le d$ and $n\ge1$ denote by $\Psi_{m}(n)$
the set of words $i_{0}...i_{k}\in\Lambda^{*}$ so that
\[
\alpha_{m}(A_{i_{0}}...A_{i_{k}})\le2^{-n}<\alpha_{m}(A_{i_{0}}...A_{i_{k-1}}),
\]
where $\alpha_{1}(A)\ge...\ge\alpha_{d}(A)>0$ are the singular values
of $A\in\mathrm{GL}(d,\mathbb{R})$. Given $1\le m\le d$, $\epsilon>0$,
sufficiently large $k\ge1$, and $V\in\mathrm{Gr}_{m}(d)$, in Proposition
\ref{prop:inductive main proj prop} we show that for most scales
$n\ge1$, most words $u\in\Psi_{m}(n)$, and most $D\in\mathcal{D}_{n}^{d}$
with respect to $\varphi_{u}\mu$, we have
\[
\frac{1}{k}H\left(P_{V}(\varphi_{u}\mu)_{D},\mathcal{D}_{n+k}\right)>\Sigma_{1}^{m}-\epsilon,
\]
where $\Sigma_{1}^{m}$ is defined in Theorem \ref{thm:cons of led-you intro}.

The proof of this proposition is carried out by induction on $m$,
and relies on the statement made in the Ledrappier-Young formula (Theorem
\ref{thm:LY formula for SA} below) regarding the dimension of projections
of slices of $\mu$. From the proposition we also derive lower bounds
on the entropy of projections of $\mu$ and its components.
\begin{rem}
\label{rem:stronger version}When $m=1$ it is possible to obtain
a stronger version of Proposition \ref{prop:inductive main proj prop},
in which no randomization of the scales or components is involved.
In the case $d=2$ and $m=1$, such a statement was obtained in \cite[Lemma 3.4]{HR}
(see also \cite[Proposition 3.16]{BHR} for an earlier version).
\end{rem}

\subsubsection{\label{subsec:Factorization-of}Factorization of $L_{m-1}$}

Equipped with the aforementioned entropy estimates, we turn to prove
Theorem \ref{thm:factor of proj of L_m-1}. The core idea of the proof
is taken from the proof of \cite[Theorem 1.5]{HR}, which deals with
the case $d=m=2$. In higher dimensions however we are faced with
new technical difficulties. Loosely speaking, these difficulties
come up mainly since our entropy lower bounds for projections involve more randomization compared to the bounds from \cite{BHR,HR} (see Remark \ref{rem:stronger version}). Nevertheless, we are able to extend the argument to the present situation.

For the rest of this outline, suppose that $\dim\mu<d$ and let $1\le m\le d$
be such that $\Sigma_{1}^{m-1}=m-1$ and $\Delta_{m}<1$ (see Remark
\ref{rem:exists unique m}). Otherwise, if $\dim\mu=d$ then all of
our results follow easily from the Ledrappier-Young formula and the
entropy lower bounds for projections of $\mu$.

We can use Theorem \ref{thm:factor of proj of L_m-1} in order to
establish the Factorization of $L_{m-1}$ via $\Pi$. In case $m<d$,
denote by $\iota_{d-m}$ the Plücker embedding of $\mathrm{Gr}_{d-m}(d)$
into $\mathrm{P}(\wedge^{d-m}\mathbb{R}^{d})$ (see Section \ref{subsec:spaces of alt forms}).
From the $(d-m)$-strong irreducibility of $\mathbf{S}_{\Phi}^{\mathrm{L}}$,
it follows that for every proper linear subspace $Z$ of $\wedge^{d-m}\mathbb{R}^{d}$
\[
\nu_{m}^{*}\left\{ V\in\mathrm{Gr}_{m}(d)\::\:\iota_{d-m}(V^{\perp})\subset Z\right\} =0.
\]
Moreover, when $m\ge2$ it follows from Theorem \ref{thm:cons of led-you intro}
that the assumptions of Theorem \ref{thm:factor of proj of L_m-1}
are satisfied for $\nu_{m}^{*}$-a.e. $V\in\mathrm{Gr}_{m}(d)$. The
following theorem will be deduced by using these facts.
\begin{thm}
\label{thm:L_m-1 factors via Pi}Suppose that (\ref{eq:no com fix asump})
and (\ref{eq:m-ired and m-prox assump}) hold and that $\dim\mu<d$.
Let $1\le m\le d$ be such that $\Sigma_{1}^{m-1}=m-1$ and $\Delta_{m}<1$.
Then there exists a Borel map $\widehat{L}_{m-1}:\mathbb{R}^{d}\rightarrow\mathrm{Gr}_{m-1}(d)$
so that $L_{m-1}(\omega)=\widehat{L}_{m-1}(\Pi\omega)$ for $\beta$-a.e.
$\omega$.
\end{thm}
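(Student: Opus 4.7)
The plan is to upgrade the factorization of the projected maps $P_V L_{m-1}$ (supplied by Theorem~\ref{thm:factor of proj of L_m-1}) to the factorization of $L_{m-1}$ itself, via a Fubini-type coupling over $\Pi$ together with a non-concentration statement for $\nu_m^*$ on certain Schubert-type loci in $\mathrm{Gr}_m(d)$ that is read off from the $(d-m)$-strong irreducibility. The case $m = 1$ is vacuous since $L_0$ takes values in $\mathrm{Gr}_0(d) = \{0\}$, so I restrict attention to $m \ge 2$.

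First I would verify the hypotheses of Theorem~\ref{thm:factor of proj of L_m-1} for $\nu_m^*$-a.e.\ $V$: from $\Sigma_1^{m-1} = m-1$ and $\Delta_m < 1$, Theorem~\ref{thm:cons of led-you intro} gives that $P_V\mu$ is exact dimensional with $\dim P_V\mu = \Sigma_1^m < m$ for $\nu_m^*$-a.e.\ $V$. Theorem~\ref{thm:factor of proj of L_m-1} then yields $P_V L_{m-1}\beta_\omega^V = \delta_{P_V L_{m-1}(\omega)}$ for $\beta$-a.e.\ $\omega$, which by Remark~\ref{rem:equiv form of cond in gen criteria} is equivalent to saying that $P_V L_{m-1}$ is $\Pi$-measurable modulo $\beta$-null sets.

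The main new content is the following separation lemma: for any distinct $W_1, W_2 \in \mathrm{Gr}_{m-1}(d)$, the set $\mathcal{S}(W_1,W_2) := \{V \in \mathrm{Gr}_m(d) : P_V W_1 = P_V W_2\}$ has $\nu_m^*$-measure zero. To prove it, I rewrite the condition as $W_1 + V^\perp = W_2 + V^\perp$ and split into cases. If $V^\perp \cap W_i \ne 0$ for some $i$, then $\iota_{d-m}(V^\perp)$ lies in the proper linear subspace $W_i \wedge \wedge^{d-m-1}\mathbb{R}^d \subset \wedge^{d-m}\mathbb{R}^d$. Otherwise, both $P_V|_{W_i}$ are injective; then $V \cap W_1^\perp = V \cap W_2^\perp$, both nonzero by the dimension count $\dim V + \dim W_i^\perp > d$, and contained in $(W_1+W_2)^\perp$. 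Setting $k := \dim(W_1+W_2) \ge m$, this forces $\dim(V^\perp \cap (W_1+W_2)) \ge k - m + 1$, which as a Schubert condition translates to $\iota_{d-m}(V^\perp) \in \wedge^{k-m+1}(W_1+W_2) \wedge \wedge^{d-k-1}\mathbb{R}^d$; a direct basis count shows that this is again a proper linear subspace of $\wedge^{d-m}\mathbb{R}^d$. The quoted $(d-m)$-strong irreducibility then annihilates the $\nu_m^*$-measure of both loci.

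With the lemma in hand, form the coupling $\tilde\beta := \int \beta_x \otimes \beta_x \, d\mu(x)$ arising from the disintegration of $\beta$ over $\Pi$, so that $\tilde\beta$-a.e.\ pair $(\omega,\omega')$ satisfies $\Pi(\omega) = \Pi(\omega')$. By the first step, each $V$ in a $\nu_m^*$-full set satisfies $P_V L_{m-1}(\omega) = P_V L_{m-1}(\omega')$ $\tilde\beta$-a.s.; swapping the order of integration via Fubini, for $\tilde\beta$-a.e.\ $(\omega,\omega')$ this equality holds for $\nu_m^*$-a.e.\ $V$, and the separation lemma then forces $L_{m-1}(\omega) = L_{m-1}(\omega')$ $\tilde\beta$-a.s. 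This is exactly the statement that $L_{m-1}$ is constant on $\beta$-a.e.\ fibre of $\Pi$, yielding the desired Borel $\widehat{L}_{m-1}$ by a standard measurable-selection argument. The hardest part is the separation lemma: when $m \ge 3$ the typical value of $k$ is $2m-2 > m$, so the naive codimension-one condition $V^\perp \cap (W_1+W_2) \ne 0$ is automatic and useless, and one must pass to the deeper Schubert condition of depth $k - m + 1$ and verify that its Plücker image remains properly contained in $\wedge^{d-m}\mathbb{R}^d$ in order to apply the hypothesis on $\nu_m^*$.
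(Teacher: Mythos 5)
Your proposal is correct, and the argument is genuinely different from the paper's, though it rests on the same pair of ingredients: Theorem~\ref{thm:factor of proj of L_m-1} applied for $\nu_m^*$-a.e.\ $V$, and the fact (a consequence of $(d-m)$-strong irreducibility via the Pl\"ucker embedding) that $\nu_m^*\{V : \iota_{d-m}(V^\perp)\subset Z\}=0$ for every proper linear subspace $Z\subset\wedge^{d-m}\mathbb{R}^d$. Where the paper proceeds by picking finitely many test subspaces $V_1,\ldots,V_b$ for which the $\phi_i:=\iota_{d-m}(V_i^\perp)$ form a basis of $\wedge^{d-m}\mathbb{R}^d$, and then carrying out a hands-on wedge-product contradiction ($\gamma_\omega\wedge\phi_i=t_i\gamma_\eta\wedge\phi_i$ and $\gamma_\eta\wedge\phi_i\wedge x=0$), you instead prove a separation lemma asserting $\nu_m^*\{V: P_V W_1 = P_V W_2\}=0$ for distinct $W_1,W_2\in\mathrm{Gr}_{m-1}(d)$ directly, then apply Fubini over the coupling $\tilde\beta$ built from the disintegration of $\beta$ over $\Pi$. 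Your separation lemma is exactly where the work sits: after observing that $P_V W_1 = P_V W_2$ is equivalent to $W_1+V^\perp = W_2+V^\perp$, the injective case reduces to the Schubert condition $\dim(V^\perp\cap(W_1+W_2))\ge k-m+1$ with $k=\dim(W_1+W_2)$; you correctly recognize that for $m\ge 3$ one cannot get away with the codimension-one condition $V^\perp\cap(W_1+W_2)\ne 0$, and your translation to $\iota_{d-m}(V^\perp)\subset\wedge^{k-m+1}(W_1+W_2)\wedge\wedge^{d-k-1}\mathbb{R}^d$, together with the basis count showing this subspace is proper, checks out. The trade-off: your route cleanly isolates the needed non-concentration property of $\nu_m^*$ as a standalone statement and avoids the explicit wedge manipulation, at the cost of a more elaborate Schubert-cell analysis; the paper's route is shorter once one accepts the choice of a Pl\"ucker basis and the $t_i$-manipulation. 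Both correctly need (and you implicitly use, via the "$\nu_m^*$-full set" phrasing) that the finitely or $\nu_m^*$-generically chosen $V$ can be taken so that $L_{m-1}(\omega)\cap V^\perp$, $L_{m-1}(\eta)\cap V^\perp$ are trivial, which in the paper is Lemma~\ref{lem:nu=00007Bkappa<del=00007D<eps} plus $L_{m-1}\beta=\nu_{m-1}$.
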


\begin{rem*}
In what follows we write $L_{m-1}$ in place of $\widehat{L}_{m-1}$.
Which $L_{m-1}$ is intended will be clear from the context.
\end{rem*}
There is no reason to expect for the map $L_{m-1}:\mathbb{R}^{d}\rightarrow\mathrm{Gr}_{m-1}(d)$
to be continuous. On the other hand, since it is measurable it is
nearly continuous by Lusin\textquoteright s theorem. We use this fact
in order to get more refined lower bounds on the entropy of projections
of components of $\mu$. More concretely, given $1\le l<m$, sufficiently
large $k\ge1$, and $V\in\mathrm{Gr}_{m}(d)$, we show in Proposition
\ref{prop:L =000026 ent of comp} that for most scales $n\ge1$, most
$x\in\mathbb{R}^{d}$ with respect to $\mu$, and all $W\in\mathrm{Gr}_{l}(V)$
not containing a line too close to $(P_{V}L_{m-1}(x))^{\perp}\cap V$,
it holds that $\frac{1}{k}H(P_{W}P_{V}\mu_{\mathcal{D}_{n}(x)},\mathcal{D}_{n+k})$
is close to its maximal possible value (which is $l+O(1/k)$ since
$\dim W=l$). Here $\mathcal{D}_{n}(x)$ denotes the unique $D\in\mathcal{D}_{n}$
with $x\in D$.

\subsubsection{\label{subsec:An-entropy-increase}An entropy increase result}

For $1\le k\le l$ denote by $\mathrm{A}_{l,k}$ the space of surjective
affine maps from $\mathbb{R}^{l}$ to $\mathbb{R}^{k}$. With the
refined lower bounds of Proposition \ref{prop:L =000026 ent of comp}
in hand, we shall prove an entropy increase result for convolutions
of $\mu$ with measures in $\mathcal{M}(\mathrm{A}_{d,m})$. This
result is valid under relatively mild assumptions, and we find it
to be one of the main contributions of this paper. In order to state
it and describe the rest of our arguments, some additional notations
are needed.

Let $d_{\mathrm{A}_{d,m}}$ be an $\mathrm{A}_{m,m}$-invariant metric
on $\mathrm{A}_{d,m}$, which is induced by a Riemannian metric (see Section
\ref{subsec:An-invariant-metric}). That is,
\[
d_{\mathrm{A}_{d,m}}(\varphi\circ\psi_{1},\varphi\circ\psi_{2})=d_{\mathrm{A}_{d,m}}(\psi_{1},\psi_{2})\text{ for all }\psi_{1},\psi_{2}\in\mathrm{A}_{d,m}\text{ and }\varphi\in\mathrm{A}_{m,m}.
\]
The topology induced by $d_{\mathrm{A}_{d,m}}$ is equal to the subspace
topology induced by the norm topology of $\mathrm{A}_{d,m}^{\mathrm{vec}}$,
where $\mathrm{A}_{d,m}^{\mathrm{vec}}$ is the vector space of affine
maps from $\mathbb{R}^{d}$ into $\mathbb{R}^{m}$. For $\theta\in\mathcal{M}(\mathrm{A}_{d,m}^{\mathrm{vec}})$
and $\sigma\in\mathcal{M}(\mathbb{R}^{d})$ we denote by $\theta.\sigma\in\mathcal{M}(\mathbb{R}^{m})$
the pushforward of $\theta\times\sigma$ via the map $(\psi,x)\rightarrow\psi(x)$.
The measure $\theta.\sigma$ should be regarded as the convolution
of $\theta$ with $\sigma$.

The space $(\mathrm{A}_{d,m},d_{\mathrm{A}_{d,m}})$ is equipped with
a sequence $\{\mathcal{D}_{n}^{\mathrm{A}_{d,m}}\}_{n\ge0}$ of dyadic-like
partitions (see Section \ref{subsec:Dyadic-partitions}). When there
is no risk of confusion, we write $\mathcal{D}_{n}$ in place of $\mathcal{D}_{n}^{\mathrm{A}_{d,m}}$.
We can now state our entropy increase result.
\begin{thm}
\label{thm:ent increase result}Suppose that (\ref{eq:no com fix asump})
and (\ref{eq:m-ired and m-prox assump}) hold and that $\dim\mu<d$.
Let $1\le m\le d$ be such that $\Sigma_{1}^{m-1}=m-1$ and $\Delta_{m}<1$,
let $Q\subset\mathrm{A}_{d,m}$ be compact, and let $\epsilon>0$.
Then there exists $\delta=\delta(Q,\epsilon)>0$ so that for all $n\ge N(Q,\epsilon,\delta)$
and $\theta\in\mathcal{M}(\mathrm{A}_{d,m})$ with $\mathrm{supp}(\theta)\subset Q$
and $\frac{1}{n}H(\theta,\mathcal{D}_{n})\ge\epsilon$, we have $\frac{1}{n}H(\theta\ldotp\mu,\mathcal{D}_{n})>\Sigma_{1}^{m}+\delta$.
\end{thm}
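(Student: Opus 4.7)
The plan is to argue by contradiction, combining an inverse-type theorem for entropy of convolutions on $\mathrm{A}_{d,m}$ with the refined entropy lower bounds of Proposition \ref{prop:L =000026 ent of comp} and the factorization $L_{m-1}=\widehat{L}_{m-1}\circ\Pi$ supplied by Theorem \ref{thm:L_m-1 factors via Pi}. As a baseline, Remark \ref{rem:lb all proj} gives $\underline{\dim}_{e}\psi\mu\ge\Sigma_{1}^{m}$ for every $\psi\in\mathrm{A}_{d,m}$ of full rank, whence $\frac{1}{n}H(\theta.\mu,\mathcal{D}_{n})\ge\Sigma_{1}^{m}-o(1)$ as $n\to\infty$, uniformly in $\theta$ supported on $Q$. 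So suppose for contradiction that there exist $\delta_{j}\downarrow0$, $n_{j}\uparrow\infty$ and $\theta_{j}\in\mathcal{M}(\mathrm{A}_{d,m})$ with $\mathrm{supp}(\theta_{j})\subset Q$, $\frac{1}{n_{j}}H(\theta_{j},\mathcal{D}_{n_{j}})\ge\epsilon$ and $\frac{1}{n_{j}}H(\theta_{j}.\mu,\mathcal{D}_{n_{j}})\le\Sigma_{1}^{m}+\delta_{j}$.

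Fix a large auxiliary scale gap $k$. Standard concavity identities for convolution entropy reduce the near-equality at global scale $n_{j}$ to a local assertion: for a fraction at least $1-o_{k,n}(1)$ of scales $1\le n'\le n_{j}$, most pairs of components $(\theta_{j,\psi,n'},\mu_{x,n'})$, weighted by $\theta_{j}\times\mu$, satisfy
\[
\frac{1}{k}H\!\left(\theta_{j,\psi,n'}.\mu_{x,n'},\mathcal{D}_{n'+k}\right)-\frac{1}{k}H(\mu_{x,n'},\mathcal{D}_{n'+k})\le O(\delta_{j}^{1/2}).
\]
To each such pair I apply an inverse theorem of Hochman type, adapted to convolutions of measures on the nonabelian space $\mathrm{A}_{d,m}$ with measures on $\mathbb{R}^{d}$: linearising at $\psi$ via the $\mathrm{A}_{m,m}$-invariant metric on $\mathrm{A}_{d,m}$, convolution with $\psi\mu$ is locally identified with a Euclidean convolution on $\mathbb{R}^{m}$, and the inverse theorem forces $\theta_{j,\psi,n'}$ to concentrate in a $2^{-n'}$-neighbourhood of a proper affine subvariety $\mathcal{H}_{\psi,x,n'}\subset\mathrm{A}_{d,m}$. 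Its tangent direction is dictated by those $W\subset V_{\psi}:=\mathrm{image}(\psi^{\mathrm{L}})$ along which $P_{W}P_{V_{\psi}}\mu_{x,n'}$ fails to have near-maximal entropy; by Proposition \ref{prop:L =000026 ent of comp} these are exactly the $W$ containing a line in a small neighbourhood of $(P_{V_{\psi}}L_{m-1}(x))^{\perp}\cap V_{\psi}$. Hence, via Theorem \ref{thm:L_m-1 factors via Pi}, $\mathcal{H}_{\psi,x,n'}$ is determined up to small error by $P_{V_{\psi}}\widehat{L}_{m-1}(x)$.

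By Lusin's theorem there is a compact $E\subset\mathbb{R}^{d}$ with $\mu(E)>1-\eta$ on which $\widehat{L}_{m-1}$ is continuous, so that $x\mapsto P_{V_{\psi}}\widehat{L}_{m-1}(x)$ is jointly uniformly continuous in $(x,\psi)\in E\times Q$. For a typical $\psi$, the $m$-strong irreducibility of $\mathbf{S}_{\Phi}^{\mathrm{L}}$, together with $L_{m-1}\beta=\nu_{m-1}$ and the Lusin-continuity of $\widehat{L}_{m-1}$, guarantees that the set $\{P_{V_{\psi}}\widehat{L}_{m-1}(x):x\in E\cap\mathrm{supp}(\mu)\}$ contains many $\nu_{m-1}^{*}$-separated lines in $V_{\psi}$. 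The tangent direction of $\mathcal{H}_{\psi,x,n'}$ would then have to be simultaneously close to many distinct lines, which is only possible if $\theta_{j,\psi,n'}$ is essentially a single atom at scale $n'+k$ for the good pairs $(\psi,x,n')$. Integrating the resulting per-scale entropy drop back up, using the same component identities as in the second paragraph, yields $\frac{1}{n_{j}}H(\theta_{j},\mathcal{D}_{n_{j}})=o_{j}(1)$, contradicting our standing assumption.

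The main obstacle is the inverse theorem invoked in the second paragraph. Hochman's classical theorem applies to convolutions in Euclidean $\mathbb{R}^{m}$, whereas here $\theta\in\mathcal{M}(\mathrm{A}_{d,m})$ lives in a nonabelian affine space and the action on $\mu$ couples the linear and translational parts of $\psi$ in a nontrivial way. Carrying out the linearisation at each $\psi\in Q$ uniformly, and cleanly separating the contributions of the linear and translation components, is the core technical work. A secondary difficulty is that, compared with the planar case treated in \cite{HR}, the entropy lower bounds of Proposition \ref{prop:L =000026 ent of comp} involve extra randomisation over both scales $n'$ and points $x$; this precludes isolating a single direction along which $\theta_{j,\psi,n'}$ must concentrate, so the geometric contradiction has to be extracted globally across many $(x,n')$ simultaneously rather than pointwise.
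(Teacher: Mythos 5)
The opening of your proposal matches the paper's strategy: argue by contradiction, pass to components at many scales, linearise the action at each $\psi\in\mathrm{A}_{d,m}$ (this is precisely Lemma \ref{lem:linearization} of the paper; no genuinely nonabelian inverse theorem is needed), invoke Hochman's Theorem \ref{thm:Hoch inv thm R^m} on $\mathbb{R}^{m}$, use Proposition \ref{prop:L =000026 ent of comp} together with the factorization $L_{m-1}=\widehat{L}_{m-1}\circ\Pi$ to pin the concentration direction to $\pi_{V_{\psi}}L_{m-1}(x)$, and transfer the concentration back to $\mathrm{A}_{d,m}$. Up to that point you are reconstructing Proposition \ref{prop:key prep for ent inc} faithfully.

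The gap is in the final step, where you claim that because $\{P_{V_{\psi}}\widehat{L}_{m-1}(x):x\in E\cap\mathrm{supp}(\mu)\}$ contains many separated lines, the components $\theta_{j,\psi,n'}$ must collapse to single atoms, giving $\frac{1}{n_{j}}H(\theta_{j},\mathcal{D}_{n_{j}})=o(1)$. This does not follow. Each constraint ``$\theta_{j,\psi,n'}$ is concentrated near $\{\chi:\chi(x)\in T(x)+\pi_{V_{\psi}}L_{m-1}(x)\}$'' is concentration near a \emph{codimension-one} hyperplane in $\mathrm{A}_{d,m}^{\mathrm{vec}}$ (the evaluation map $\chi\mapsto\chi(x)$ has huge kernel), not near a thin tube; having many such constraints with varying $L_{m-1}(x)$ only forces concentration near their \emph{intersection}. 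That intersection is exactly
\[
\mathfrak{W}:=\left\{\chi\in\mathrm{A}_{d,m}^{\mathrm{vec}}\::\:\chi(x)\in\pi L_{m-1}(x)\text{ for }\mu\text{-a.e.\ }x\right\},
\]
and the spread of the Furstenberg measure $\nu_{m-1}^{*}$ gives no a priori reason for $\mathfrak{W}$ to be trivial: a measure $\xi$ supported on a segment $\{\psi_{0}+t\chi\}_{t}$ with $0\neq\chi\in\mathfrak{W}$ satisfies all your concentration requirements while having positive entropy. What the paper actually does (Lemmata \ref{lem:limit step in ent inc pf} and \ref{lem:xi eval to lines}) is to take a weak limit $\xi$ of the $\xi_{k}$, observe that $\xi$ is not a mass point, and extract precisely such a nonzero $\chi\in\mathfrak{W}$; the contradiction then rests entirely on Proposition \ref{prop:no psi exist}, which proves $\mathfrak{W}=\{0\}$ via the Zariski-closure argument (Lemmata \ref{lem:exist map with scal lin part}--\ref{lem:T_a in Z_Phi all a}, showing that all translations $T_{a}$ lie in $\mathbf{Z}_{\Phi}$, and a polynomial-identity argument on $\mathbf{Z}_{\Phi}$). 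Your proposal contains no substitute for this algebraic step, and the ``many separated lines force an atom'' heuristic would, if carried through, have to re-derive Proposition \ref{prop:no psi exist} in disguise.
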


Let us explain the ideas which go into the proof of Theorem \ref{thm:ent increase result}.
For this we need the following definition.
\begin{defn*}
Let $X$ be a normed space, and let $\epsilon>0$, $\theta\in\mathcal{M}(X)$
and a linear subspace $V\subset X$ be given. We say that $\theta$
is $(V,\epsilon)$-concentrated if there exists $x\in X$ so that
$\theta(x+V^{(\epsilon)})\ge1-\epsilon$, where $V^{(\epsilon)}$
denotes the closed $\epsilon$-neighbourhood of $V$.
\end{defn*}
Now assume by contradiction that the theorem fails for some compact
$Q\subset\mathrm{A}_{d,m}$ and $\epsilon>0$. Then it can be shown that
there exist $\epsilon'=\epsilon'(Q,\epsilon)>0$, compact sets $B_{1}\subset\mathrm{A}_{d,m}^{\mathrm{vec}}$
and $B_{2}\subset\mathrm{A}_{d,m}$, measures $\xi_{1},\xi_{2},...\in\mathcal{M}(B_{1})$,
and linear maps $\pi_{1},\pi_{2},...\in B_{2}$, so that for each $n\ge1$
\begin{enumerate}
\item $\mu\left\{ x\::\:\xi_{n}\ldotp x\mbox{ is }(\pi_{n}L_{m-1}(x),1/n)\text{-concentrated}\right\} >1-1/n$,
where $\xi_{n}\ldotp x$ is the pushforward of $\xi_{n}$ via the
map $\psi\rightarrow\psi(x)$;
\item $\xi_{k}$ is not $(\{0\},\epsilon')$-concentrated in $(\mathrm{A}_{d,m}^{\mathrm{vec}},\Vert\cdot\Vert)$,
where $\Vert\cdot\Vert$ is some fixed norm on $\mathrm{A}_{d,m}^{\mathrm{vec}}$.
\end{enumerate}
This part of the argument is carried out in Proposition \ref{prop:key prep for ent inc}.
It uses the aforementioned entropy lower bounds obtained in Propositions \ref{prop:L =000026 ent of comp}
and \ref{prop:inductive main proj prop}, an entropy increase result
of Hochman \cite{Ho} for linear convolutions in $\mathbb{R}^{m}$,
and ideas previously appearing in \cite{BHR,Ho,HR}.

By a compactness argument, it is now possible to show that there exist
$\xi\in\mathcal{M}(\mathrm{A}_{d,m}^{\mathrm{vec}})$, a linear map $\pi\in\mathrm{A}_{d,m}$,
and a map $T:\mathbb{R}^{d}\rightarrow\mathbb{R}^{m}$, so that $\xi$
is not a mass point and
\[
\xi\ldotp x\left(T(x)+\pi L_{m-1}(x)\right)=1\text{ for }\mu\text{-a.e. }x.
\]
This in turn will enable us to deduce that there exist $0\ne\psi\in\mathrm{A}_{d,m}^{\mathrm{vec}}$,
$x\in\mathbb{R}^{d}$ and independent $x_{1},...,x_{m-1}\in L_{m-1}(x)$,
so that

\begin{equation}
\psi(\varphi_{u}(x))\wedge\pi A_{\varphi_{u}}x_{1}\wedge...\wedge\pi A_{\varphi_{u}}x_{m-1}=0\text{ for all }u\in\Lambda^{*},\label{eq:psi(phi(Pi(om))) in piALom =0000231}
\end{equation}
where $A_{\varphi_{u}}$ is the linear part of $\varphi_{u}$.

Write $\mathbf{Z}_{\Phi}$ for the Zariski closure in $\mathrm{A}_{d,d}$
of the semigroup generated by the maps in $\Phi$ (see Section \ref{subsec:The-non-affinity-of L_m-1}
for details). The relation appearing in (\ref{eq:psi(phi(Pi(om))) in piALom =0000231})
is easily seen to be polynomial in $\varphi_{u}$. Hence, by the definition
of the Zariski topology,
\begin{equation}
\psi(\varphi(x))\wedge\pi A_{\varphi}x_{1}\wedge...\wedge\pi A_{\varphi}x_{m-1}=0\text{ for all }\varphi\in\mathbf{Z}_{\Phi}.\label{eq:psi(phi(Pi(om))) in piALom =0000232}
\end{equation}

It is well known that $\mathbf{Z}_{\Phi}$ is a group (see \cite[Lemma 6.15]{BQ}).
Moreover, by a classical result of Whitney \cite{Wh}, it follows
that $\mathbf{Z}_{\Phi}$ has finitely many connected components with
respect to the standard metric topology. Thus, since $\mathbf{Z}_{\Phi}$
contains $\Phi$ (which consists of strict contractions), it is not
hard to verify that the Lie algebra of $\mathbf{Z}_{\Phi}$ cannot
be semisimple. From this together with assumptions (\ref{eq:no com fix asump})
and (\ref{eq:m-ired and m-prox assump}) we are able to deduce (see
Lemma \ref{lem:T_a in Z_Phi all a}) that $T_{a}\in\mathbf{Z}_{\Phi}$
for all $a\in\mathbb{R}^{d}$, where $T_{a}x=x+a$ for $x\in\mathbb{R}^{d}$.
It is now easy to see that (\ref{eq:psi(phi(Pi(om))) in piALom =0000232})
is not possible, which shows that Theorem \ref{thm:ent increase result}
must be true.

\subsubsection{\label{subsec:Asymptotic-entropies-of}Asymptotic entropies of convolutions}

By applying Theorem \ref{thm:ent increase result}, we will be able
to establish the following statement. It resembles \cite[Theorem 1.4]{Ho1}
from which Theorem \ref{thm:Ho1}, dealing with the case $d=1$, follows
almost directly.

Recall from Section \ref{subsec:Full-dimensionality} that $p_{\Phi}^{*n}\in\mathcal{M}(\mathrm{A}_{d,d})$
denotes the convolution of $p_{\Phi}:=\sum_{i\in\Lambda}p_{i}\delta_{\varphi_{i}}$
with itself $n$ times. For each $V\in\mathrm{Gr}_{m}(d)$ fix $\pi_{V}\in\mathrm{A}_{d,m}$
for which there exists a linear isometry $F:V\rightarrow\mathbb{R}^{m}$
so that $F\circ P_{V}=\pi_{V}$. In what follows, we write $\pi_{V}p_{\Phi}^{*n}\in\mathcal{M}(\mathrm{A}_{d,m})$
for the pushforward of $p_{\Phi}^{*n}$ via the map $\psi\rightarrow\pi_{V}\circ\psi$.
\begin{thm}
\label{thm:asympt of cond ent}Suppose that (\ref{eq:no com fix asump})
and (\ref{eq:m-ired and m-prox assump}) hold and that $\dim\mu<d$.
Let $1\le m\le d$ be with $\Sigma_{1}^{m-1}=m-1$ and $\Delta_{m}<1$,
and let $V\in\mathrm{Gr}_{m}(d)$ be such that $\pi_{V}\mu$ is exact
dimensional with $\dim\pi_{V}\mu=\Sigma_{1}^{m}$. Then for all $M\ge1$,
\[
\underset{n\rightarrow\infty}{\lim}\:\frac{1}{n}H\left(\pi_{V}p_{\Phi}^{*n},\mathcal{D}_{Mn}\mid\mathcal{D}_{0}\right)=0.
\]
\end{thm}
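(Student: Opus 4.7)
The plan is to argue by contradiction, extracting the conclusion from the entropy increase result (Theorem \ref{thm:ent increase result}) via an algebraic identity that expresses $\pi_{V}\mu$ as a convolution of $\mu$ with $\pi_{V}p_{\Phi}^{*n}$. Iterating the self-affinity relation $\mu=p_{\Phi}.\mu$ gives $\mu=p_{\Phi}^{*n}.\mu$ for every $n\ge1$, and pushing forward by $\pi_{V}$ yields
\[
\pi_{V}\mu=\bigl(\pi_{V}p_{\Phi}^{*n}\bigr).\mu\qquad\text{for all }n\ge1.
\]
Moreover the measures $\{\pi_{V}p_{\Phi}^{*n}\}_{n\ge1}$ are all supported in a single compact set $Q\subset\mathrm{A}_{d,m}$: for $u\in\Lambda^{n}$ the translation parts $\varphi_{u}(0)$ lie in a bounded neighbourhood of $K_{\Phi}$, the linear parts $A_{u}$ are uniformly contracting, and post-composing with the fixed affine map $\pi_{V}$ preserves this uniform boundedness. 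So the family $\{\pi_{V}p_{\Phi}^{*n}\}_{n\ge1}$ is an admissible input for Theorem \ref{thm:ent increase result} with a single fixed compact set.

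Assume toward contradiction that the conclusion fails, so for some $M\ge1$ and $\epsilon>0$ there is an infinite subsequence of $n$'s along which $\frac{1}{n}H(\pi_{V}p_{\Phi}^{*n},\mathcal{D}_{Mn}\mid\mathcal{D}_{0})\ge\epsilon$. Since supports lie in $Q$, the entropy $H(\pi_{V}p_{\Phi}^{*n},\mathcal{D}_{0})$ is bounded by a constant depending only on $Q$, so along the subsequence
\[
\frac{1}{Mn}H(\pi_{V}p_{\Phi}^{*n},\mathcal{D}_{Mn})\ge\frac{\epsilon}{2M}
\]
once $n$ is large enough. Theorem \ref{thm:ent increase result}, applied at scale $Mn$ with the compact $Q$ and the positive constant $\epsilon/(2M)$, then supplies $\delta=\delta(Q,\epsilon/(2M))>0$ with
\[
\frac{1}{Mn}H\bigl((\pi_{V}p_{\Phi}^{*n}).\mu,\mathcal{D}_{Mn}\bigr)>\Sigma_{1}^{m}+\delta
\]
for all sufficiently large $n$ in the subsequence. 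By the identity established above, the left-hand side equals $\frac{1}{Mn}H(\pi_{V}\mu,\mathcal{D}_{Mn})$, which converges to $\Sigma_{1}^{m}$ as $n\to\infty$ because $\pi_{V}\mu$ is exact dimensional with $\dim\pi_{V}\mu=\Sigma_{1}^{m}$ (so its entropy dimension equals $\Sigma_{1}^{m}$). This contradicts the strict lower bound and completes the proof.

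The main difficulty has already been absorbed into the hypothesis: Theorem \ref{thm:ent increase result} is the substantive input, and the present argument is essentially a scale-matching deduction in the spirit of the self-similar derivation of Theorem \ref{thm:Ho1} from the corresponding entropy increase statement in \cite{Ho1}. The only technical point worth flagging is that the entropy of $\pi_{V}p_{\Phi}^{*n}$ is measured at scale $Mn$ rather than $n$, so the relevant per-scale rate is $\epsilon/M$ and Theorem \ref{thm:ent increase result} must correspondingly be invoked at scale $Mn$; the size of $Q$ and the constant $\delta$ are independent of $n$, which is exactly what makes the contradiction possible.
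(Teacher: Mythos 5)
Your argument has a critical gap in the claim that the measures $\{\pi_{V}p_{\Phi}^{*n}\}_{n\ge1}$ are all supported in a single compact set $Q\subset\mathrm{A}_{d,m}$. This is false, and it is precisely the main obstacle that the paper's proof is designed to overcome. You are arguing boundedness with respect to the norm $\Vert\cdot\Vert$ on $\mathrm{A}_{d,m}^{\mathrm{vec}}$: the translation parts $\pi_{V}\varphi_{u}(0)$ are bounded, and the linear parts $\pi_{V}A_{u}$ have operator norm at most $1$. But $\mathrm{A}_{d,m}$ is the open subset of \emph{surjective} maps, equipped with the $\mathrm{A}_{m,m}$-invariant Riemannian metric $d_{\mathrm{A}_{d,m}}$, and it is this metric (not $\Vert\cdot\Vert$) in which closed and bounded sets are compact and in which the partitions $\mathcal{D}_{n}^{\mathrm{A}_{d,m}}$ are built. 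In that metric the points $\pi_{V}\varphi_{u}$ for $u\in\Lambda^{n}$ escape to infinity at linear rate in $n$: by Lemma \ref{lem:d(pi_d,m,T)=00003DO()} and the second part of Lemma \ref{lem: def of inv met and prop}, $d_{\mathrm{A}_{d,m}}(\pi_{d,m},\pi_{V}\varphi_{u})$ is comparable to $1+\max\{\log\alpha_{1}(\pi_{V}A_{u}),\log\alpha_{m}(\pi_{V}A_{u})^{-1}\}$ up to a bounded translation adjustment, and by Lemma \ref{lem:sing vals of proj} the singular value $\alpha_{m}(\pi_{V}A_{u})$ decays like $2^{n\chi_{m}}$ for typical $u$ of length $n$, so the distance grows like $-n\chi_{m}$. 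Hence there is no fixed compact $Q$ containing all the supports, and Theorem \ref{thm:ent increase result} cannot be invoked with a single $\delta(Q,\cdot)$ that is uniform over $n$.

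Once the false compactness claim is removed, the rest of your argument collapses: the bound $H(\pi_{V}p_{\Phi}^{*n},\mathcal{D}_{0})=O(1)$, used to pass from conditional entropy to the unconditional lower bound $\frac{1}{Mn}H(\pi_{V}p_{\Phi}^{*n},\mathcal{D}_{Mn})\ge\epsilon/(2M)$, also fails for the same reason, and one cannot apply Theorem \ref{thm:ent increase result} to the full measure $\pi_{V}p_{\Phi}^{*n}$ at all. The paper's proof copes with this by decomposing $\theta:=\pi_{V}p_{\Phi}^{*n}$ into its scale-$0$ components $\theta_{\psi,0}$, which do have diameter $O(1)$ in $d_{\mathrm{A}_{d,m}}$, and then for each $\psi$ choosing $f_{\psi}\in\mathrm{A}_{m,m}$ so that $f_{\psi}^{-1}\theta_{\psi,0}$ lands in a \emph{fixed} ball $B(\pi_{d,m},R)$ — this is exactly where the $\mathrm{A}_{m,m}$-invariance of the metric earns its keep. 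The price is that one must then compare $H(\theta_{\psi,0}.\mu,\cdot)$ to $H(f_{\psi}^{-1}\theta_{\psi,0}.\mu,\cdot)$, and since $f_{\psi}^{-1}$ is a strongly anisotropic expansion, this comparison requires the non-conformal partitions $\mathcal{D}_{Mn}^{\psi}$ and Proposition \ref{prop:non conf par}, which relates entropy with respect to $\mathcal{D}_{Mn}^{\psi}$ to the conditional entropy $\frac{1}{Mn}H(\theta_{\psi,0}.\mu,\mathcal{D}_{(M+c_{m})n}\mid\mathcal{D}_{c_{m}n})$. None of this machinery can be bypassed by treating $\pi_{V}p_{\Phi}^{*n}$ as living in a fixed compact set.
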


Note that by Theorem \ref{thm:cons of led-you intro} it holds that
$\pi_{V}\mu$ is exact dimensional with $\dim\pi_{V}\mu=\Sigma_{1}^{m}$
for $\nu_{m}^{*}$-a.e. $V\in\mathrm{Gr}_{m}(d)$. Also note that
$\mathcal{D}_{Mn}$ and $\mathcal{D}_{0}$ are the dyadic-like partitions
of $\mathrm{A}_{d,m}$, which were referred to before the statement
of Theorem \ref{thm:ent increase result}.

The main ingredient in the proof of Theorem \ref{thm:asympt of cond ent}
is Theorem \ref{thm:ent increase result}. The proof also uses certain
non-conformal partitions of $\mathbb{R}^{m}$, which were previously
introduced in \cite[Section 7]{HR} in the case $d=m=2$.

From Theorem \ref{thm:asympt of cond ent} we are able to deduce the
following statement. Recall from Definition \ref{def:of rw ent}
that $h(p_{\Phi})$ denotes the random walk entropy of $p_{\Phi}$.
\begin{thm}
\label{thm:=00003DRW ent}Suppose that (\ref{eq:no com fix asump})
and (\ref{eq:m-ired and m-prox assump}) hold, $\Phi$ is Diophantine,
and $\dim\mu<d$. Let $1\le m\le d$ be with $\Sigma_{1}^{m-1}=m-1$
and $\Delta_{m}<1$. Then for $\nu_{m}^{*}$-a.e. $V\in\mathrm{Gr}_{m}(d)$,
\[
\underset{n\rightarrow\infty}{\lim}\:\frac{1}{n}H\left(\pi_{V}p_{\Phi}^{*n},\mathcal{D}_{0}\right)=h(p_{\Phi}).
\]
\end{thm}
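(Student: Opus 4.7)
The upper bound $\limsup_n \tfrac{1}{n}H(\pi_V p_\Phi^{*n},\mathcal{D}_0)\le h(p_\Phi)$ is immediate from entropy monotonicity: pushforward by $\pi_V$ and passage to the coarser partition $\mathcal{D}_0$ only decrease entropy, and by definition $\tfrac{1}{n}H(p_\Phi^{*n})\to h(p_\Phi)$. For the matching lower bound, my plan is to exploit the refinement identity
\[
\tfrac{1}{n}H(\pi_V p_\Phi^{*n},\mathcal{D}_0)=\tfrac{1}{n}H(\pi_V p_\Phi^{*n},\mathcal{D}_{Mn})-\tfrac{1}{n}H(\pi_V p_\Phi^{*n},\mathcal{D}_{Mn}\mid\mathcal{D}_0),
\]
valid for each $M\ge 1$ since $\mathcal{D}_{Mn}$ refines $\mathcal{D}_0$. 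For $\nu_m^*$-a.e.\ $V$, Theorem~\ref{thm:cons of led-you intro} gives that $\pi_V\mu$ is exact dimensional of dimension $\Sigma_1^m$, so Theorem~\ref{thm:asympt of cond ent} applies and the conditional term vanishes as $n\to\infty$ for each $M$. It thus suffices to show, for some $M$ large enough and $\nu_m^*$-a.e.\ $V$, that $\liminf_n \tfrac{1}{n}H(\pi_V p_\Phi^{*n},\mathcal{D}_{Mn})\ge h(p_\Phi)$.

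The Diophantine hypothesis now enters. Fix $\epsilon>0$ so that $\|\varphi_{u_1}-\varphi_{u_2}\|\ge\epsilon^n$ whenever $u_1,u_2\in\Lambda^n$ and $\varphi_{u_1}\ne\varphi_{u_2}$. I would transfer this to a quantitative separation in $\mathrm{A}_{d,m}$ under the linear map $\pi_V^\#\colon\psi\mapsto\pi_V\circ\psi$: the goal is to show that for $\nu_m^*$-a.e.\ $V$ there is $c_V>0$ with
\[
\|\pi_V\circ\varphi_{u_1}-\pi_V\circ\varphi_{u_2}\|\ge c_V\epsilon^n
\]
whenever $\varphi_{u_1}\ne\varphi_{u_2}$. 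Granting this and choosing $M>\log_2(1/\epsilon)$ gives $2^{-Mn}<c_V\epsilon^n$ for all large $n$, so the distinct atoms of $\pi_V p_\Phi^{*n}$ occupy distinct $\mathcal{D}_{Mn}$-cells in $\mathrm{A}_{d,m}$. Combined with qualitative injectivity of $\varphi\mapsto\pi_V\circ\varphi$ on $\bigcup_n\mathrm{supp}(p_\Phi^{*n})$, this yields $H(\pi_V p_\Phi^{*n},\mathcal{D}_{Mn})=H(\pi_V p_\Phi^{*n})=H(p_\Phi^{*n})$, and dividing by $n$ produces $h(p_\Phi)$ in the limit.

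The main obstacle is the uniform transfer step. Qualitative injectivity of $\pi_V^\#$ on differences of compositions is straightforward: for each fixed nonzero $\psi\in\mathrm{A}_{d,d}^{\mathrm{vec}}$, $\pi_V^\#\psi=0$ is equivalent to $\mathrm{image}(\psi)\subset V^\perp$, which defines a $\nu_m^*$-null set of $V$ by the $(d-m)$-strong irreducibility of $\mathbf{S}_\Phi^{\mathrm{L}}$ (equivalent via Remark~\ref{rem:m-con equiv to (d-m)-cond} to the $m$-strong irreducibility assumed in~\eqref{eq:m-ired and m-prox assump}); a countable union over all pairs $(\varphi_{u_1},\varphi_{u_2})$ then gives injectivity for $\nu_m^*$-a.e.\ $V$. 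Upgrading to the uniform constant $c_V$ is the delicate point: since $\Phi$ is contractive, $\bigcup_n\mathrm{supp}(p_\Phi^{*n})$ lies in a compact subset of $\mathrm{A}_{d,d}$, and I would argue by compactness that the projective closure of the normalized differences $(\varphi_{u_1}-\varphi_{u_2})/\|\varphi_{u_1}-\varphi_{u_2}\|$ in $\mathrm{P}(\mathrm{A}_{d,d}^{\mathrm{vec}})$ is, for $\nu_m^*$-a.e.\ $V$, disjoint from $\mathrm{P}(\ker\pi_V^\#)$. The strong irreducibility and proximality assumptions~\eqref{eq:m-ired and m-prox assump}, together with the non-concentration properties they force on $\nu_m^*$, carry the real weight of this step.
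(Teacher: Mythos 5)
Your overall skeleton matches the paper's proof: decompose $\tfrac{1}{n}H(\pi_V p_\Phi^{*n},\mathcal{D}_0)$ via the refinement $\mathcal{D}_{Mn}\succeq\mathcal{D}_0$, kill the conditional term $\tfrac{1}{n}H(\pi_V p_\Phi^{*n},\mathcal{D}_{Mn}\mid\mathcal{D}_0)$ using Theorem~\ref{thm:asympt of cond ent} together with the exact dimensionality of $\pi_V\mu$ from Theorem~\ref{thm:cons of led-you intro}, and then argue that $H(\pi_V p_\Phi^{*n},\mathcal{D}_{Mn})=H(p_\Phi^{*n})$ for large $n$ by transferring the Diophantine separation to the projection $\pi_V$. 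The paper does exactly this; the substance of the proof is entirely in the transfer step, which is Proposition~\ref{prop:diophantine --> diophantine for projs}.

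That transfer step is where your proposal has a genuine gap. Your plan is a soft compactness argument: pass to the projective closure $K$ of the normalized differences $(\varphi_{u_1}-\varphi_{u_2})/\Vert\varphi_{u_1}-\varphi_{u_2}\Vert$ and show that for $\nu_m^*$-a.e.\ $V$ the set $K$ is disjoint from $\mathrm{P}(\ker\pi_V^\#)$. The obstruction is that $K$ is in general uncountable: for each fixed $\psi\in K$, $(d-m)$-strong irreducibility gives $\pi_V\psi\ne 0$ for $\nu_m^*$-a.e.\ $V$, but you cannot intersect a.e.\ conditions over an uncountable family. Unlike the qualitative injectivity you correctly reduce to a countable union over pairs $(u_1,u_2)$, the uniformity needed here lives on the closure $K$, which may contain limits of arbitrarily degenerate directions (proximality pushes the normalized linear parts toward rank-one maps, and there is no obvious finiteness or algebraic structure on $K$ that would let you quantify the exceptional set). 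You flag the missing ingredient yourself (``non-concentration properties''), but do not supply it, and without it the constant $c_V$ is not established.

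What actually closes the gap in the paper is a quantitative regularity estimate for the Furstenberg measure on lines due to Guivarc'h (Lemma~\ref{lem:There-exist-alpha =000026 C}): there are $\alpha,C>1$ with $\nu_1^*\{V:\lvert\pi_V x\rvert\le r^\alpha\}<Cr$ uniformly over unit vectors $x$. Combined with the Diophantine bound $\lvert\varphi_{u_1}(x_{u_1,u_2})-\varphi_{u_2}(x_{u_1,u_2})\rvert\ge\epsilon^n$ and a Borel--Cantelli argument over the $\lvert\Lambda\rvert^{2n}$ pairs at scale $n$ (taking $r\asymp(2\lvert\Lambda\rvert^2)^{-n}$ so the exceptional probabilities are summable), one obtains Lemma~\ref{lem:diophantine for proj to lines}: for $\nu_1^*$-a.e.\ $V$ and all large $n$, $\Vert\pi_V\varphi_{u_1}-\pi_V\varphi_{u_2}\Vert\ge\delta^n$. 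The passage from lines to $m$-planes is then easy via the flag measure $\nu^*$, since $\Vert\pi_{V_m}\psi\Vert\ge\Vert\pi_{V_1}\psi\Vert$ for nested $V_1\subset V_m$, and Lemma~\ref{lem:inv met >=00003D norm} converts the norm separation to a separation in $d_{\mathrm{A}_{d,m}}$. Your compactness idea would need to be replaced by (or reduced to) a quantitative, measure-level non-concentration statement of this kind; a purely topological disjointness argument does not appear to go through.
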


In order to deduce Theorem \ref{thm:=00003DRW ent} from Theorem \ref{thm:asympt of cond ent},
we show that projections of Diophantine systems remain Diophantine
almost surely. More precisely, in Proposition \ref{prop:diophantine --> diophantine for projs}
we prove that when $\Phi$ is Diophantine there exists $\epsilon>0$
so that for $\nu_{m}^{*}$-a.e. $V$ and all $n\ge N(V)\ge1$,
\[
d_{\mathrm{A}_{d,m}}(\pi_{V}\varphi_{u},\pi_{V}\varphi_{w})\ge\epsilon^{n}\text{ for all }u,w\in\Lambda^{n}\text{ with }\varphi_{u}\ne\varphi_{w}.
\]
When $d=2$, $m=1$ and $\Phi$ satisfies the ESC, this was deduced
in \cite[Section 5]{BHR} from the positive dimensionality of $\nu_{1}^{*}$.
In our higher dimensional situation we need to use a stronger regularity
property of $\nu_{1}^{*}$, which is due to Guivarc\textquoteright h
\cite{Gu}. For $x\in\mathbb{R}^{d}$ with $|x|=1$ and small $r>0$,
this property will enable us to control the $\nu_{1}^{*}$-measure
of the set of lines $V\in\mathrm{Gr}_{1}(d)$ so that $|\pi_{V}x|\le r$
(see Lemma \ref{lem:There-exist-alpha =000026 C}).

\subsubsection{\label{subsec:Decompositions-of-sub-linear}Decompositions of sub-linear
diameter}

Recall that for $V\in\mathrm{Gr}_{m}(d)$ we denote by $\{\beta_{\omega}^{V}\}_{\omega\in\Lambda^{\mathbb{N}}}$
the disintegration of $\beta$ with respect to $\Pi^{-1}(P_{V}^{-1}\mathcal{B}_{\mathbb{R}^{d}})$
(see Section \ref{subsec:Disintegrations}). In order to deduce our
main results from Theorem \ref{thm:=00003DRW ent}, for $\nu_{m}^{*}$-a.e.
$V$ and $\beta$-a.e. $\omega$ we shall need to construct decompositions of 
$\beta_{\omega}^{V}$ into a controllable number of pieces, so that most pieces project onto a subset of $\mathrm{A}_{d,m}$ of sub-linear diameter.

Let us formulate this more precisely. Write $\mathcal{K}$ for the
set of integers $1\le k<m$ for which (\ref{eq:factor cond in gen criteria thm})
is satisfied. That is, $\mathcal{K}$ is the set of all $1\le k<m$
so that $P_{V}L_{k}\beta_{\omega}^{V}=\delta_{P_{V}L_{k}(\omega)}$
for $\nu_{m}^{*}$-a.e. $V$ and $\beta$-a.e. $\omega$. Let $1\le k_{1}<...<k_{b}<m$
be an enumeration of $\{1,...,m-1\}\setminus\mathcal{K}$, and set
$q=0$ if $b=0$ and $q=\sum_{l=1}^{b}k_{l}(k_{l+1}-k_{l})$ with
$k_{b+1}:=k_{b}+1$ if $b>0$. Additionally, for $n\ge1$ let $\Pi_{n}:\Lambda^{\mathbb{N}}\rightarrow\mathrm{A}_{d,d}$
be such that $\Pi_{n}(\omega)=\varphi_{\omega_{0}}\circ...\circ\varphi_{\omega_{n-1}}$
for $\omega\in\Lambda^{\mathbb{N}}$.

In Proposition \ref{prop:sub lin decomp}
we show that for $\nu_{m}^{*}$-a.e. $V$, $\beta$-a.e. $\omega$,
each $\epsilon>0$, and all $n\ge N(V,\omega,\epsilon)\ge1$, there
exist a Borel set $S\subset\Lambda^{\mathbb{N}}$ and a Borel partition
$\mathcal{E}$ of $S$ so that $\beta_{\omega}^{V}(S)>1-\epsilon$,
$|\mathcal{E}|\le2^{n(\epsilon+q(\chi_{1}-\chi_{m}))}$, and
\[
d_{\mathrm{A}_{d,m}}(\pi_{V}\Pi_{n}\eta,\pi_{V}\Pi_{n}\zeta)\le\epsilon n\text{ for all }E\in\mathcal{E}\text{ and }\eta,\zeta\in E.
\]
The aforementioned decompositions are obtained by conditioning $\beta_{\omega}^{V}$
on the atoms of the various partitions $\mathcal{E}\cup\{\Lambda^{\mathbb{N}}\setminus S\}$.

The appearance of the constant $q$ in the statement of Proposition \ref{prop:sub lin decomp} can be explained as follows. When $b>0$ and for each $W\in\mathrm{Gr}_{k_{b+1}}(d)$, the flag space
\[
\mathrm{F}:=\left\{ (V_{l})_{l=1}^{b+1}\in\mathrm{Gr}_{k_{1}}(W)\times...\times\mathrm{Gr}_{k_{b+1}}(W)\::\:V_{1}\subset...\subset V_{b+1}\right\}
\]
is a compact smooth manifold of dimension $q$. Thus, there exist a sequence $\left\{ \mathcal{D}_{n}^{\mathrm{F}}\right\} _{n\ge0}$ of dyadic-like partitions of $\mathrm{F}$ so that $|\mathcal{D}_{n}^{\mathrm{F}}|= O(2^{qn})$ for each $n\ge0$.  Such dyadic-like partitions are used in the proof of the proposition.

The proof of Proposition \ref{prop:sub lin decomp} also relies on certain
distance estimates in the space $(\mathrm{A}_{d,m},d_{\mathrm{A}_{d,m}})$.
These estimates in turn rely on a lemma from Ruelle\textquoteright s proof of the multiplicative ergodic theorem \cite[Lemma I.4]{Ru}. This Lemma will enable us to control the rate of convergence in the
limit appearing in the definition of the Furstenberg boundary maps.

When $d=m=2$ and $\mathcal{K}=\{1\}$, similar decompositions and distance
estimates were previously obtained in \cite[Section 8]{HR}. In that case a stronger version of Proposition \ref{prop:sub lin decomp}
is established (see the proof of \cite[Proposition 8.5]{HR}), in
which the images of the sets $E\in\mathcal{E}$ into $\mathrm{A}_{d,m}$
are of diameter $O(1)$ instead of $\le\epsilon n$.
The fact that our version is weaker in this sense is not very significant,
since sets of diameter $\le\epsilon n$ in $\mathrm{A}_{d,m}$
can intersect at most $2^{O(\epsilon n)}$ atoms of $\mathcal{D}_{0}^{\mathrm{A}_{d,m}}$
(see Lemma \ref{lem:ball int at most exp many 0-atoms}). This follows
from the fact that the volume of balls of radius $R$ in $(\mathrm{A}_{d,m},g)$
grows at most exponentially in $R$, where $g$ is the Riemannian
metric on $\mathrm{A}_{d,m}$ from which $d_{\mathrm{A}_{d,m}}$ is
induced. This in turn follows from $g$ being $\mathrm{A}_{m,m}\times\mathrm{SO}(d)$-invariant (see the proof of Lemma \ref{lem: def of inv met and prop}),
and hence of bounded curvature.

\subsubsection{\label{subsec:Proofs-of-the}Proofs of the main results}

All of our main results will follow from the following statement.
Given a discrete probability measure $\theta$ we denote its Shannon
entropy by $H(\theta)$.
\begin{thm}
\label{thm:gen dim result}Suppose that (\ref{eq:no com fix asump})
and (\ref{eq:m-ired and m-prox assump}) hold, $\Phi$ is Diophantine,
and $\dim\mu<d$. Let $1\le m\le d$ be with $\Sigma_{1}^{m-1}=m-1$
and $\Delta_{m}<1$, let $0\le b<m$ and $1\le k_{1}<...<k_{b}<m$
be integers, and set
\[
\mathcal{K}:=\{1,...,m-1\}\setminus\{k_{1},...,k_{b}\}.
\]
Assume that for $\nu_{m}^{*}$-a.e. $V$,
\[
P_{V}L_{k}\beta_{\omega}^{V}=\delta_{P_{V}L_{k}(\omega)}\text{ for }\beta\text{-a.e. }\omega\text{ and each }k\in\mathcal{K}.
\]
Then for $\nu_{m}^{*}$-a.e. $V$
\[
\underset{n\rightarrow\infty}{\limsup}\int\frac{1}{n}H\left(\Pi_{n}\beta_{\omega}^{V}\right)\:d\beta(\omega)\le q(\chi_{1}-\chi_{m}),
\]
where $q=0$ if $b=0$ and $q=\sum_{l=1}^{b}k_{l}(k_{l+1}-k_{l})$
with $k_{b+1}:=k_{b}+1$ if $b>0$.

\end{thm}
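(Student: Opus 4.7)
The plan is to reduce $\int\frac{1}{n}H(\Pi_n\beta_\omega^V)\,d\beta(\omega)$ via a chain rule to a scale-$\mathcal{D}_0$ entropy on $\mathrm{A}_{d,m}$, and then control that entropy using the sub-linear decomposition of Proposition~\ref{prop:sub lin decomp} together with the asymptotics of Theorems~\ref{thm:asympt of cond ent} and~\ref{thm:=00003DRW ent}.

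First I would exploit that $\pi_V\Pi_n\omega$ is a deterministic function of $\Pi_n\omega$, so the chain rule for conditional entropy yields
\begin{equation*}
\int H(\Pi_n\beta_\omega^V)\,d\beta(\omega) \;=\; H(\pi_V\Pi_n\omega\mid P_V\Pi\omega) + H(\Pi_n\omega\mid P_V\Pi\omega,\pi_V\Pi_n\omega).
\end{equation*}
Conditioning on more reduces entropy, so the last term is at most $H(\Pi_n\omega\mid\pi_V\Pi_n\omega)=H(p_\Phi^{*n})-H(\pi_V p_\Phi^{*n})$. By the definition of $h(p_\Phi)$, $\frac{1}{n}H(p_\Phi^{*n})\to h(p_\Phi)$, while Theorem~\ref{thm:=00003DRW ent} (applicable because $\Phi$ is Diophantine) gives $\frac{1}{n}H(\pi_V p_\Phi^{*n},\mathcal{D}_0)\to h(p_\Phi)$ for $\nu_m^*$-a.e.\ $V$; since $H(\pi_V p_\Phi^{*n})\ge H(\pi_V p_\Phi^{*n},\mathcal{D}_0)$, this term is $o(n)$. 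The first term equals $\int H(\pi_V\Pi_n\beta_\omega^V)\,d\beta(\omega)$ by disintegration.

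To pass from Shannon entropy to $\mathcal{D}_0$, I would invoke Proposition~\ref{prop:diophantine --> diophantine for projs}, which yields for $\nu_m^*$-a.e.\ $V$ some $\epsilon_0>0$ with $d_{\mathrm{A}_{d,m}}(\pi_V\varphi_u,\pi_V\varphi_w)\ge\epsilon_0^n$ whenever $\varphi_u\ne\varphi_w$ and $u,w\in\Lambda^n$, $n\ge N(V)$. Choosing $M>\log_2(1/\epsilon_0)$ makes each atom of $\mathcal{D}_{Mn}$ on $\mathrm{A}_{d,m}$ contain at most one atom of $\pi_V\Pi_n\beta_\omega^V$, so its Shannon entropy equals $H(\pi_V\Pi_n\beta_\omega^V,\mathcal{D}_{Mn})=H(\pi_V\Pi_n\beta_\omega^V,\mathcal{D}_0)+H(\pi_V\Pi_n\beta_\omega^V,\mathcal{D}_{Mn}\mid\mathcal{D}_0)$. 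Concavity of conditional entropy as a function of the joint distribution, combined with Theorem~\ref{thm:asympt of cond ent}, then gives
\begin{equation*}
\int H(\pi_V\Pi_n\beta_\omega^V,\mathcal{D}_{Mn}\mid\mathcal{D}_0)\,d\beta(\omega) \;\le\; H(\pi_V p_\Phi^{*n},\mathcal{D}_{Mn}\mid\mathcal{D}_0) \;=\; o(n).
\end{equation*}

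The main step is to show $\limsup_n\frac{1}{n}\int H(\pi_V\Pi_n\beta_\omega^V,\mathcal{D}_0)\,d\beta\le q(\chi_1-\chi_m)$. Fix $\epsilon>0$. For $\nu_m^*$-a.e.\ $V$, $\beta$-a.e.\ $\omega$, and $n\ge N(V,\omega,\epsilon)$, Proposition~\ref{prop:sub lin decomp} produces a partition $\mathcal{E}'=\mathcal{E}\cup\{\Lambda^{\mathbb{N}}\setminus S\}$ with $|\mathcal{E}|\le 2^{n(\epsilon+q(\chi_1-\chi_m))}$, $\beta_\omega^V(S)>1-\epsilon$, and $\mathrm{diam}_{\mathrm{A}_{d,m}}(\pi_V\Pi_n E)\le\epsilon n$ for each $E\in\mathcal{E}$. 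Expressing $\pi_V\Pi_n\beta_\omega^V$ as the convex combination of its restrictions to atoms of $\mathcal{E}'$ and applying almost-convexity of $H(\cdot,\mathcal{D}_0)$: the entropy of the mixing weights is at most $\log|\mathcal{E}'|\le nq(\chi_1-\chi_m)+n\epsilon+O(1)$; each good atom $E\in\mathcal{E}$ contributes $O(\epsilon n)$ because a ball of radius $\epsilon n$ in $\mathrm{A}_{d,m}$ meets at most $2^{O(\epsilon n)}$ atoms of $\mathcal{D}_0$ by Lemma~\ref{lem:ball int at most exp many 0-atoms}; and the bad piece contributes at most $\epsilon\cdot n\log|\Lambda|$. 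Reverse Fatou, using the uniform dominator $n\log|\Lambda|$, transfers the resulting pointwise $\limsup\le q(\chi_1-\chi_m)+O(\epsilon)$ to the integral, and sending $\epsilon\to 0$ concludes the argument. The main obstacle is the joint handling of three limiting regimes (integration in $\omega$, $n\to\infty$ under the $\omega$-dependent threshold $N(V,\omega,\epsilon)$, and $\epsilon\to 0$), which is accommodated by this reverse Fatou step.
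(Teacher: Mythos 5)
Your proposal is correct. The conclusion is valid and the key tools are the right ones, but the organisation of the argument differs from the paper's in a few ways that are worth noting.

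\textbf{Where your route diverges.} The paper works with the full symbolic entropy $H(\Pi_n\beta_\omega^V)$ directly: it first decomposes via Proposition~\ref{prop:sub lin decomp} into good pieces $\theta_j$ and a bad piece, then applies the chain rule \emph{within each good piece}, writing $H(\theta_j)=H(\theta_j,\mathcal{S}^{\mathrm{A}}\mid\pi_V^{-1}\mathcal{D}_0)+H(\pi_V\theta_j,\mathcal{D}_0)$, bounds the second summand by the ball lemma, and transfers the first summand to $H(p_\Phi^{*n},\mathcal{S}^{\mathrm{A}}\mid\pi_V^{-1}\mathcal{D}_0)=H(p_\Phi^{*n})-H(\pi_V p_\Phi^{*n},\mathcal{D}_0)$ via concavity and Theorem~\ref{thm:=00003DRW ent}. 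You instead apply the chain rule \emph{globally} first, isolating $H(\Pi_n\omega\mid\pi_V\Pi_n\omega)=H(p_\Phi^{*n})-H(\pi_V p_\Phi^{*n})$ and treating it by Theorem~\ref{thm:=00003DRW ent} alone, and then invoke Proposition~\ref{prop:diophantine --> diophantine for projs} a second time (plus Theorem~\ref{thm:asympt of cond ent}) to split $H(\pi_V\Pi_n\beta_\omega^V)$ into a $\mathcal{D}_0$-term and an $o(n)$ conditional term. The $\mathcal{D}_0$-term is then bounded by the piece decomposition exactly as in the paper. So both routes need Proposition~\ref{prop:sub lin decomp}, Lemma~\ref{lem:ball int at most exp many 0-atoms}, almost-convexity, and Theorem~\ref{thm:=00003DRW ent}; your version also pulls in Proposition~\ref{prop:diophantine --> diophantine for projs} and Theorem~\ref{thm:asympt of cond ent} explicitly, which the paper only uses indirectly through Theorem~\ref{thm:=00003DRW ent}. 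The paper is thus a bit more economical, while your version has the advantage that the conditional ``leftover'' term $H(p_\Phi^{*n})-H(\pi_V p_\Phi^{*n})$ is killed cleanly and globally in one line before any decomposition is introduced.

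\textbf{The measure-theoretic step.} Your reverse-Fatou argument is a genuine simplification over the paper's handling of the same issue. The paper introduces $F_n:=\{\omega\in F:N(V,\omega,\epsilon)\le n\}$, an upper-integral $\int^*$ (to dodge a possible non-measurability of the piece-wise conditional entropy as a function of $\omega$), and then uses $\beta(\Lambda^\mathbb{N}\setminus F_n)\to 0$. Because you apply the chain rule first, the integrand $\omega\mapsto\frac{1}{n}H(\pi_V\Pi_n\beta_\omega^V,\mathcal{D}_0)$ is automatically Borel, uniformly bounded by $\log|\Lambda|$, and has pointwise $\limsup$ controlled by Proposition~\ref{prop:sub lin decomp}; reverse Fatou then works without fuss. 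This sidesteps the $\int^*$ bookkeeping entirely.

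\textbf{Two small technical caveats.} (i) The threshold $M>\log_2(1/\epsilon_0)$ should strictly be $M\ge\log_2(1/\epsilon_0)+O(1)$, to absorb the global constant in the bound $\mathrm{diam}(D)=O(2^{-Mn})$ for $D\in\mathcal{D}_{Mn}^{\mathrm{A}_{d,m}}$; this is cosmetic. (ii) When you invoke Theorem~\ref{thm:asympt of cond ent}, you should say explicitly that you have restricted to the full-measure set of $V$ for which $\pi_V\mu$ is exact dimensional with dimension $\Sigma_1^m$ (Theorem~\ref{thm:cons of led-you intro}), in addition to the set where the Diophantine projection estimate and the theorem's hypothesis hold; taking the intersection of finitely many full-measure sets is fine, but it should be stated.
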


Theorem \ref{thm:gen dim result} will be deduced from Theorem \ref{thm:=00003DRW ent}
by using the decompositions obtained in Proposition \ref{prop:sub lin decomp}.

Regarding the proofs of the main results, our general criteria Theorem
\ref{thm:gen criteria} will follow by applying Theorem \ref{thm:gen dim result}
with $\mathcal{K}=\{1,...,m-1\}$ and from the Ledrappier-Young formula.
Note that when $m\ge2$ we always have $m-1\in\mathcal{K}$ by Theorems
\ref{thm:cons of led-you intro} and \ref{thm:factor of proj of L_m-1}.
This together with Theorem \ref{thm:gen criteria} will imply the
result for low dimensional projections (Theorem \ref{thm:proj onto 1 =000026 2 dim subspaces}),
and then the result for systems with the SOSC in $\mathbb{R}^{3}$
(Theorem \ref{thm:main d=00003D3}). Our result regarding Full dimensionality
of $\mu$ and its projections (Theorem \ref{thm:main all d}) will
be obtained by applying Theorem \ref{thm:gen dim result} with $\mathcal{K}=\{m-1\}$.

\subsubsection*{\textbf{\emph{Structure of the paper}}}

In Section \ref{sec:Preliminaries} we develop necessary notations
and background. In Section \ref{sec:Entropy-estimates} we establish
the entropy lower bounds described in Section \ref{subsec:Entropy-estimates}.
In Section \ref{sec:Factorization-of} we prove Theorems \ref{thm:factor of proj of L_m-1}
and \ref{thm:L_m-1 factors via Pi} regarding the factorization of
$P_{V}L_{m-1}$ and $L_{m-1}$, and obtain the refined lower bounds described after the statement of Theorem \ref{thm:L_m-1 factors via Pi}. In Section
\ref{sec:Entropy-growth-under conv} we prove our entropy increase
result. In Section \ref{sec:Asymptotic-entropies-of} we prove Theorems
\ref{thm:asympt of cond ent} and \ref{thm:=00003DRW ent} concerning
asymptotic entropies of convolutions. In Section \ref{sec:Decompositions-of-sub-linear}
we construct the decompositions described in Section \ref{subsec:Decompositions-of-sub-linear}.
In Section \ref{sec:Proofs-of-the main} we prove Theorem \ref{thm:gen dim result}
and deduce from it our main results.$\newline$

We include a summary of our main notation:

\begin{longtable}[c]{|c|>{\centering}p{7.5cm}|}
\hline 
$\Phi=\{\varphi_{i}(x)=A_{i}x+a_{i}\}_{i\in\Lambda}$ & Affine IFS on $\mathbb{R}^{d}$, Sec. \ref{subsec:The-setup}\tabularnewline
\hline 
$p=(p_{i})_{i\in\Lambda}$ & Probability vector, Sec. \ref{subsec:The-setup}\tabularnewline
\hline 
$K_{\Phi}$ & Attractor of $\Phi$, Sec. \ref{subsec:The-setup}\tabularnewline
\hline 
$\mu$ & Self-affine measure, Sec. \ref{subsec:The-setup}\tabularnewline
\hline 
$\mathbf{S}_{\Phi}^{\mathrm{L}}$ & Semigroup generated by $\{A_{i}\}_{i\in\Lambda}$, Sec. \ref{subsec:The-setup}\tabularnewline
\hline 
$p_{\Phi}$, $p_{\Phi}^{*n}$ & Probabilities on $\mathrm{A}_{d,d}$, Sec. \ref{subsec:Full-dimensionality}\tabularnewline
\hline 
$h(p_{\Phi})$ & Random walk entropy, Sec. \ref{subsec:Full-dimensionality}\tabularnewline
\hline 
$\varphi_{u}$,$A_{u}$,$p_{u}$ with $u\in\Lambda^{*}$ & Compositions and products, Sec. \ref{subsec:General-notations}\tabularnewline
\hline 
$\mathcal{M}(X)$ & Compactly supported probabilities, Sec. \ref{subsec:General-notations}\tabularnewline
\hline 
$\Vert\cdot\Vert_{TV}$, $d_{TV}$ & Total variation norm and distance, Sec. \ref{subsec:General-notations}\tabularnewline
\hline 
$E^{(r)}$ & Closed $r$-neighbourhood of $E$, Sec. \ref{subsec:General-notations}\tabularnewline
\hline 
$\{e_{m,i}\}_{i=1}^{m}$ & Standard basis of $\mathbb{R}^{m}$, Sec. \ref{subsec:Algebraic-notations}\tabularnewline
\hline 
$P_{V}$ & Orthogonal projection, Sec. \ref{subsec:Algebraic-notations}\tabularnewline
\hline 
$\mathrm{Gr}_{k}(m)$, $\mathrm{Gr}_{l}(V)$ & Grassmannians, Sec. \ref{subsec:Grassmannians-and-the flag space}\tabularnewline
\hline 
$d_{\mathrm{Gr}_{k}}$ & Metric on Grassmannian, Sec. \ref{subsec:Grassmannians-and-the flag space}\tabularnewline
\hline 
$\kappa(V_{1},V_{2})$ & Distance between $\mathrm{Gr}_{1}(V_{1})$, $\mathrm{Gr}_{1}(V_{2})$,
Sec. \ref{subsec:Grassmannians-and-the flag space}\tabularnewline
\hline 
$\mathrm{F}(m)$ & Manifold of complete flags, Sec. \ref{subsec:Grassmannians-and-the flag space}\tabularnewline
\hline 
$\wedge^{k}\mathbb{R}^{m}$  & Space of alternating $k$-forms, Sec. \ref{subsec:spaces of alt forms}\tabularnewline
\hline 
$\iota_{k}$ & Plücker embedding, Sec. \ref{subsec:spaces of alt forms}\tabularnewline
\hline 
$\wedge^{k}T$ & Linear map on $\wedge^{k}\mathbb{R}^{m}$, Sec. \ref{subsec:spaces of alt forms}\tabularnewline
\hline 
$\mathrm{A}_{m,k}^{\mathrm{vec}}$ & Affine maps from $\mathbb{R}^{m}$ into $\mathbb{R}^{k}$, Sec. \ref{subsec:Spaces-of-affine maps}\tabularnewline
\hline 
$A_{\psi}$, $a_{\psi}$  & Linear and translation parts of $\psi$, Sec. \ref{subsec:Spaces-of-affine maps}\tabularnewline
\hline 
$\mathrm{A}_{m,k}$ & Surjective elements in $\mathrm{A}_{m,k}^{\mathrm{vec}}$, Sec. \ref{subsec:Spaces-of-affine maps} \tabularnewline
\hline 
$\theta\ldotp\sigma$, $\theta\ldotp x$ & Pushforwards via action map, Sec. \ref{subsec:Spaces-of-affine maps}\tabularnewline
\hline 
$S_{c}$, $T_{a}$ & Scaling and translation maps, Sec. \ref{subsec:Spaces-of-affine maps}\tabularnewline
\hline 
$\pi_{m,k}$, $\pi_{V}$ & Elements in $\mathrm{A}_{m,k}$, Sec. \ref{subsec:Spaces-of-affine maps}\tabularnewline
\hline 
$\mathrm{diag}_{k,m}(t_{1},...,t_{k})$ & Diagonal $k\times m$ matrix, Sec. \ref{subsec:Singular-values-and SVD}\tabularnewline
\hline 
$\alpha_{1}(A)\ge...\ge\alpha_{k}(A)$ & Singular values of $A$, Sec. \ref{subsec:Singular-values-and SVD}\tabularnewline
\hline 
$L_{l}(A)$ & $l$-dimensional subspace, Sec. \ref{subsec:Singular-values-and SVD}\tabularnewline
\hline 
$d_{\mathrm{A}_{d,m}}$ & Invariant metric on $\mathrm{A}_{d,m}$, Sec. \ref{subsec:An-invariant-metric}\tabularnewline
\hline 
$\mathcal{D}_{n}^{m}$, $\mathcal{D}_{n}^{\mathrm{A}_{d,m}}$, $\mathcal{D}_{n}$ & Level-$n$ dyadic partitions, Sec. \ref{subsec:Dyadic-partitions}\tabularnewline
\hline 
$\theta_{x,n}$, $\theta^{x,n}$ & Component \& re-scaled component, Sec. \ref{subsec:Component-measures}\tabularnewline
\hline 
$\mathcal{N}_{k,n}$, $\mathcal{N}_{n}$ & Sets of integers, Sec. \ref{subsec:Component-measures}\tabularnewline
\hline 
$\lambda_{k,n}$, $\lambda_{n}$ & Uniform probabilities on $\mathcal{N}_{k,n}$, $\mathcal{N}_{n}$,
Sec. \ref{subsec:Component-measures}\tabularnewline
\hline 
$H(\xi)$, $H(\theta,\mathcal{D})$, $H(\theta,\mathcal{D}\mid\mathcal{E})$ & Entropy \& conditional entropy, Sec. \ref{subsec:Entropy}\tabularnewline
\hline 
$\dim_{e}$, $\overline{\dim}_{e}\theta$, $\underline{\dim}_{e}\theta$ & Entropy dimensions, Sec. \ref{subsec:Entropy-in-Rd}\tabularnewline
\hline 
$\omega|_{n}$ & $n$-prefix of $\omega$, Sec. \ref{subsec:Symbolic-notations}\tabularnewline
\hline 
$[u]$  & Cylinder set in $\Lambda^{\mathbb{N}}$, Sec. \ref{subsec:Symbolic-notations}\tabularnewline
\hline 
$\mathcal{P}_{n}$ & Partition of $\Lambda^{\mathbb{N}}$ into $n$-cylinders, Sec. \ref{subsec:Symbolic-notations}\tabularnewline
\hline 
$\beta$  & Bernoulli measure associated to $p$, Sec. \ref{subsec:Symbolic-notations}\tabularnewline
\hline 
$\Psi_{1}(n),...,\Psi_{d}(n)$ & Minimal cut-sets for $\Lambda^*$, Sec. \ref{subsec:Symbolic-notations}\tabularnewline
\hline 
$\Psi_{m}(n;\omega)$ & unique $u\in\Psi_{m}(n)$ with $\omega\in[u]$, Sec. \ref{subsec:Symbolic-notations}\tabularnewline
\hline 
$\mathbf{I}_{1}(n),...,\mathbf{I}_{d}(n),\mathbf{U}(n)$ & Random words, Sec. \ref{subsec:Symbolic-notations}\tabularnewline
\hline 
$\chi_{1}>...>\chi_{d}$ & Lyapunov exponents, Sec. \ref{subsec:Lyapunov-exponents-and Fur meas}\tabularnewline
\hline 
$\nu,\nu_{m},\nu^{*},\nu_{m}^{*}$ & Furstenberg measures, Sec. \ref{subsec:Lyapunov-exponents-and Fur meas}\tabularnewline
\hline 
$\Pi:\Lambda^{\mathbb{N}}\rightarrow K_{\Phi}$ & Coding map, Sec. \ref{subsec:Coding and Furstenberg maps}\tabularnewline
\hline 
$\Pi_{n}:\Lambda^{\mathbb{N}}\rightarrow\mathrm{A}_{d,d}$ & $\Pi_{n}(\omega):=\varphi_{\omega|_{n}}$ for $\omega\in\Lambda^{\mathbb{N}}$,
Sec. \ref{subsec:Coding and Furstenberg maps}\tabularnewline
\hline 
$L_{m}:\Lambda^{\mathbb{N}}\rightarrow\mathrm{Gr}_{m}(d)$ & Furstenberg boundary maps, Sec. \ref{subsec:Coding and Furstenberg maps}\tabularnewline
\hline 
$\{\theta_{x}^{V}\}_{x\in\mathbb{R}^{d}}$, $\{\xi_{\omega}^{V}\}_{\omega\in\Lambda^{\mathbb{N}}}$ & Disintegrations, Sec. \ref{subsec:Disintegrations}\tabularnewline
\hline 
$\mathrm{H}_{m}$, $\Delta_{m}$, $\Sigma_{l}^{m}$ & Constants appearing in LY formula, Sec. \ref{subsec:Ledrappier-Young-formula}\tabularnewline
\hline 
\end{longtable}

\subsubsection*{\textbf{\emph{Acknowledgment}}}

I would like to thank Mike Hochman for many inspiring discussions around the themes of this paper.

\section{\label{sec:Preliminaries}Preliminaries}

\subsection{\label{subsec:The-setup}The setup}

Throughout the rest of this paper, fix an integer $d\ge1$ and an
affine IFS $\Phi=\{\varphi_{i}(x)=A_{i}x+a_{i}\}_{i\in\Lambda}$ on
$\mathbb{R}^{d}$. Recall that $a_{i}\in\mathbb{R}^{d}$, $A_{i}\in\mathrm{GL}(d,\mathbb{R})$
and $\Vert A_{i}\Vert_{op}<1$ for each $i\in\Lambda$, where $\Vert\cdot\Vert_{op}$
is the operator norm. Also fix a strictly positive probability vector
$p=(p_{i})_{i\in\Lambda}$.

Let $K_{\Phi}$ be the attractor of $\Phi$, and let $\mu$ be the
self-affine measure corresponding to $\Phi$ and $p$. That is, $K_{\Phi}$
is the unique nonempty compact subset of $\mathbb{R}^{d}$ which satisfies
$K_{\Phi}=\cup_{i\in\Lambda}\varphi_{i}(K_{\Phi})$, and $\mu$ is
the unique Borel probability measure on $\mathbb{R}^{d}$ with $\mu=\sum_{i\in\Lambda}p_{i}\cdot\varphi_{i}\mu$.
Recall that $\varphi_{i}\mu:=\mu\circ\varphi_{i}^{-1}$ is the pushforward
of $\mu$ via $\varphi_{i}$.

Let $\mathbf{S}_{\Phi}^{\mathrm{L}}\subset\mathrm{GL}(d,\mathbb{R})$
denote the semigroup generated by the linear parts $\{A_{i}\}_{i\in\Lambda}$.
As noted above, we shall always assume that (\ref{eq:no com fix asump})
and (\ref{eq:m-ired and m-prox assump}) hold.
\begin{rem}
\label{rem:semi of trans also SI =000026 prox}By (\ref{eq:m-ired and m-prox assump})
and \cite[Lemma III.3.3]{BL}, it follows that the semigroup generated
by $\{A_{i}^{*}\}_{i\in\Lambda}$ is also $m$-strongly irreducible
and $m$-proximal for each $1\le m<d$.
\end{rem}

\subsection{\label{subsec:General-notations}General notations}

Throughout this paper the base of the $\log$ function is always $2$.

We shall use repeatedly the standard big-$O$ notation. Given parameters
$s_{1},...,s_{k}$ and a positive real number $M$, we write $O_{s_{1},...,s_{k}}(M)$
in place of an unspecified $Q\in\mathbb{R}$ with $|Q|\le CM$, where
$C$ is a positive constant depending only on $s_{1},...,s_{k}$.
We write $O(M)$ when no parameters are involved. We do not indicate
dependence on the objects fixed in Section \ref{subsec:The-setup}.
For instance, we write $O(M)$ in place of $O_{d}(M$).

Let $\Lambda^{*}$ be the set of finite words over $\Lambda$. Given
a group $G$, indexed elements $\{g_{i}\}_{i\in\Lambda}\subset G$,
and a word $i_{1}...i_{n}=u\in\Lambda^{*}$, we often write $g_{u}$
in place of $g_{i_{1}}\cdot...\cdot g_{i_{n}}$. For the empty word
$\emptyset$ we write $g_{\emptyset}$ in place of $1_{G}$, where
$1_{G}$ is the identity of $G$.

For a topological space $X$ denote the collection of compactly supported
Borel probability measures on $X$ by $\mathcal{M}(X)$. For $\theta\in\mathcal{M}(X)$
and a Borel set $E\subset X$ we write for $\theta|_{E}$ the restriction
of $\theta$ to $E$. That is,
\[
\theta|_{E}(F):=\theta(E\cap F)\text{ for all }F\subset X\text{ Borel.}
\]
When $\theta(E)>0$ we set $\theta_{E}:=\frac{1}{\theta(E)}\theta|_{E}$.
If $Y$ is another topological space and $f:X\rightarrow Y$ is Borel
measurable, we write $f\theta:=\theta\circ f^{-1}$ for the pushforward
of $\theta$ via $f$.

Given a signed Borel measure $\sigma$ on $X$, we denote its total
variation norm by $\Vert\sigma\Vert_{TV}$. That is,
\[
\Vert\sigma\Vert_{TV}:=\sup\{\sigma(E)-\sigma(X\setminus E)\::\:E\subset X\text{ is Borel}\}.
\]
For positive and finite Borel measures $\sigma_{1},\sigma_{2}$ on
$X$ we write $d_{TV}(\sigma_{1},\sigma_{2})$ in place of $\Vert\sigma_{1}-\sigma_{2}\Vert_{TV}$.

If $(X,\rho)$ is a metric space, given $\emptyset\ne E\subset X$ and $r>0$ set
\[
\mathrm{diam}(E):=\sup\{\rho(x_{1},x_{2})\::\:x_{1},x_{2}\in E\}\text{ and }E^{(r)}:=\{x\in X\::\:\rho(x,E)\le r\}.
\]
For $x\in X$ we denote by $B(x,r)$ the closed ball in $X$ with
centre $x$ and radius $r$.

Given $C\ge1$, two partitions $\mathcal{D},\mathcal{E}$ of the same set are said to be $C$-commensurable
if for each $D\in\mathcal{D}$ and $E\in\mathcal{E}$
\[
\#\{E'\in\mathcal{E}\::\:E'\cap D\ne\emptyset\}\le C\text{ and }\#\{D'\in\mathcal{D}\::\:D'\cap E\ne\emptyset\}\le C.
\]

\subsection{\label{subsec:Algebraic-notations}Algebraic notations}

For $m\ge1$ denote by $\left\langle \cdot,\cdot\right\rangle $ and
$|\cdot|$ the Euclidean inner product and norm of $\mathbb{R}^{m}$.
Let $\{e_{m,i}\}_{i=1}^{m}$ be the standard basis of $\mathbb{R}^{m}$.
Given a linear subspace $V$ of $\mathbb{R}^{m}$ write $P_{V}$ for
its orthogonal projection.

\subsubsection{\label{subsec:Grassmannians-and-the flag space}Grassmannians and
the flag space}

For $0\le k\le m$ denote by $\mathrm{Gr}_{k}(m)$ the Grassmannian
manifold of $k$-dimensional linear subspaces of $\mathbb{R}^{m}$.
For $W_{1},W_{2}\in\mathrm{Gr}_{k}(m)$ set
\[
d_{\mathrm{Gr}_{k}}(W_{1},W_{2}):=\Vert P_{W_{1}}-P_{W_{2}}\Vert_{op},
\]
which defines a metric on $\mathrm{Gr}_{k}(m)$.

For $V\in\mathrm{Gr}_{k}(m)$ and $0\le l\le k$ denote by $\mathrm{Gr}_{l}(V)$
the Grassmannian of $l$-dimensional linear subspaces of $V$. For
linear subspaces $V_{1},V_{2}$ of $\mathbb{R}^{m}$ we write $\kappa(V_{1},V_{2})$
in place of $d_{\mathrm{Gr}_{1}}(\mathrm{Gr}_{1}(V_{1}),\mathrm{Gr}_{1}(V_{2}))$,
that is
\[
\kappa(V_{1},V_{2}):=\inf\left\{ d_{\mathrm{Gr}_{1}}(W_{1},W_{2})\::\:W_{1}\in\mathrm{Gr}_{1}(V_{1})\text{ and }W_{2}\in\mathrm{Gr}_{1}(V_{2})\right\} .
\]
We define $\kappa(V_{1},V_{2})$ to be $\infty$ if $\dim V_{1}=0$ or $\dim V_{2}=0$.

Let $\mathrm{F}(m)$ denote the manifold of complete flags in $\mathbb{R}^{m}$,
that is
\[
\mathrm{F}(m):=\left\{ (V_{j})_{j=0}^{m}\in\mathrm{Gr}_{0}(m)\times...\times\mathrm{Gr}_{m}(m)\::\:V_{0}\subset...\subset V_{m}\right\}.
\]

\subsubsection{\label{subsec:spaces of alt forms}The spaces of alternating forms}

For $1\le k\le m$ denote by $\wedge^{k}\mathbb{R}^{m}$ the vector
space of alternating $k$-forms on the dual of $\mathbb{R}^{m}$ (see
e.g. \cite[Section 14]{Le}). Given $x_{1},...,x_{k}\in\mathbb{R}^{m}$,
let $x_{1}\wedge...\wedge x_{k}\in\wedge^{k}\mathbb{R}^{m}$ be with
\[
x_{1}\wedge...\wedge x_{k}(f_{1},...,f_{k})=\det\left((f_{i}(x_{j}))_{i,j=1}^{k}\right)\text{ for }f_{1},...,f_{k}\in\left(\mathbb{R}^{m}\right)^{*}.
\]
We have,
\begin{equation}
x_{1}\wedge...\wedge x_{k}\ne0\text{ if and only if }x_{1},...,x_{k}\text{ are linearly independent.}\label{eq:equiv cond for lin indep}
\end{equation}
Moreover, for linearly independent $y_{1},...,y_{k}\in\mathbb{R}^{m}$
\begin{equation}
x_{1}\wedge...\wedge x_{k}\mathbb{R}=y_{1}\wedge...\wedge y_{k}\mathbb{R}\text{ if and only if }\mathrm{span}\{x_{1},...,x_{k}\}=\mathrm{span}\{y_{1},...,y_{k}\},\label{eq:equiv cond for lin span}
\end{equation}
where $x_{1}\wedge...\wedge x_{k}\mathbb{R}$ denotes the line spanned by
$x_{1}\wedge...\wedge x_{k}$.

Let $\left\langle \cdot,\cdot\right\rangle $ be the inner product
on $\wedge^{k}\mathbb{R}^{m}$ which satisfies
\[
\left\langle x_{1}\wedge...\wedge x_{k},y_{1}\wedge...\wedge y_{k}\right\rangle =\det\left(\left\{ \left\langle x_{i},y_{j}\right\rangle \right\} _{i,j=1}^{k}\right)\text{ for }x_{1},...,x_{k},y_{1},...,y_{k}\in\mathbb{R}^{m}.
\]
We denote the norm induced by this inner product by $\Vert\cdot\Vert$.
Given $x_{1},...,x_{k}\in\mathbb{R}^{m}$, note that $\Vert x_{1}\wedge...\wedge x_{k}\Vert$
is equal to the $k$-dimensional volume of the parallelotope formed
by $x_{1},...,x_{k}$. Thus,
\begin{equation}
\Vert x_{1}\wedge...\wedge x_{k}\Vert\le\prod_{j=1}^{k}|x_{j}|\text{ for }x_{1},...,x_{k}\in\mathbb{R}^{m}.\label{eq:ub on norm of wedge}
\end{equation}
Moreover, given unit vectors $x,y\in\mathbb{R}^{m}$, the modulus
of the sine of the angle between $x\mathbb{R}$ and $y\mathbb{R}$
is equal to $\Vert x\wedge y\Vert$. Hence, there exists $C>1$ so
that for all nonzero $x,y\in\mathbb{R}^{m}$
\begin{equation}
C^{-1}d_{\mathrm{Gr}_{1}}(x\mathbb{R},y\mathbb{R})\le|x|^{-1}|y|^{-1}\Vert x\wedge y\Vert\le Cd_{\mathrm{Gr}_{1}}(x\mathbb{R},y\mathbb{R}).\label{eq:alt def for dist of lines}
\end{equation}

Write $\mathrm{P}(\wedge^{k}\mathbb{R}^{m})$ for the projective space
of $\wedge^{k}\mathbb{R}^{m}$, and let $\iota_{k}$ denote the Plücker
embedding of $\mathrm{Gr}_{k}(m)$ into $\mathrm{P}(\wedge^{k}\mathbb{R}^{m})$.
That is,
\[
\iota_{k}(V)=x_{1}\wedge...\wedge x_{k}\mathbb{R}\text{ for all }V\in\mathrm{Gr}_{k}(m)\text{ and independent }x_{1},...,x_{k}\in V.
\]

Given $l\ge k$ and a linear map $T:\mathbb{R}^{m}\rightarrow\mathbb{R}^{l}$,
let $\wedge^{k}T:\wedge^{k}\mathbb{R}^{m}\rightarrow\wedge^{k}\mathbb{R}^{l}$
be the linear map with
\[
\wedge^{k}T(x_{1}\wedge...\wedge x_{k})=(Tx_{1})\wedge...\wedge(Tx_{k})\text{ for }x_{1},...,x_{k}\in\mathbb{R}^{m}.
\]

\subsubsection{\label{subsec:Spaces-of-affine maps}Spaces of affine maps}

For $1\le k\le m$ denote by $\mathrm{A}_{m,k}^{\mathrm{vec}}$ the
vector space of affine maps from $\mathbb{R}^{m}$ into $\mathbb{R}^{k}$.
For $\psi\in\mathrm{A}_{m,k}^{\mathrm{vec}}$ set
\[
\Vert\psi\Vert:=\sup\{|\psi(x)|\::\:x\in\mathbb{R}^{m}\text{ and }|x|\le1\},
\]
which defines a norm on $\mathrm{A}_{m,k}^{\mathrm{vec}}$. Let $A_{\psi}$
and $a_{\psi}$ be the linear and translation parts of $\psi$ respectively,
that is $\psi(x)=A_{\psi}x+a_{\psi}$ for $x\in\mathbb{R}^{m}$.

Write $\mathrm{A}_{m,k}$ for the set of surjective elements in $\mathrm{A}_{m,k}^{\mathrm{vec}}$.
That is, $\mathrm{A}_{m,k}$ is the set of all $\psi\in\mathrm{A}_{m,k}^{\mathrm{vec}}$
so that $\mathrm{rank}(A_{\psi})=k$.

Unless stated otherwise explicitly, the spaces $\mathrm{A}_{m,k}^{\mathrm{vec}}$
and $\mathrm{A}_{m,k}$ are always assumed to be equipped with the
topology induced by $\Vert\cdot\Vert$.

Given $\theta\in\mathcal{M}(\mathrm{A}_{m,k}^{\mathrm{vec}})$ and
$\sigma\in\mathcal{M}(\mathbb{R}^{m})$, let $\theta\ldotp\sigma\in\mathcal{M}(\mathbb{R}^{k})$
be with
\[
\theta\ldotp\sigma(f)=\int\int f(\psi(x))\:d\sigma(x)\:d\theta(\psi)\text{ for all continuous }f:\mathbb{R}^{k}\rightarrow\mathbb{R}.
\]
That is, $\theta.\sigma$ is the pushforward of $\theta\times\sigma$
via the map $(\psi,x)\rightarrow\psi(x)$. For $x\in\mathbb{R}^{m}$
we write $\theta\ldotp x$ in place of $\theta.\delta_{x}$, where
$\delta_{x}$ is the Dirac mass at $x$. Given $1\le l\le k$ and
$\pi\in\mathrm{A}_{k,l}^{\mathrm{vec}}$, denote by $\pi\theta\in\mathcal{M}(\mathrm{A}_{m,l}^{\mathrm{vec}})$
the pushforward of $\theta$ via the map $\psi\rightarrow\pi\circ\psi$.
For $E\subset\mathrm{A}_{m,l}^{\mathrm{vec}}$ set
\[
\pi^{-1}E:=\{\psi\in\mathrm{A}_{m,k}^{\mathrm{vec}}\::\:\pi\circ\psi\in E\}.
\]
Thus, $\pi\theta(E)=\theta(\pi^{-1}E)$ for every Borel set $E\subset\mathrm{A}_{m,l}^{\mathrm{vec}}$.

Given $c>0$ and $a\in\mathbb{R}^{m}$, let $S_{c},T_{a}\in\mathrm{A}_{m,m}$
be with $S_{c}x=cx$ and $T_{a}x=x+a$ for $x\in\mathbb{R}^{m}$.

Let $\pi_{m,k}\in\mathrm{A}_{m,k}$ be with $\pi_{m,k}(x)=(x_{1},...,x_{k})$
for $(x_{1},...,x_{m})=x\in\mathbb{R}^{m}$. For each $V\in\mathrm{Gr}_{k}(m)$
fix some orthonormal basis $\{u_{V,1},...,u_{V,k}\}$ of $V$, and
let $\pi_{V}\in\mathrm{A}_{m,k}$ be with
\[
\pi_{V}(x)=\sum_{1\le i\le k}\left\langle x,u_{V,i}\right\rangle e_{k,i}\text{ for }x\in\mathbb{R}^{m},
\]
where recall that $\{e_{k,i}\}_{i=1}^{k}$ is the standard basis of
$\mathbb{R}^{k}$. Note that there exists a linear isometry $F:V\rightarrow\mathbb{R}^{k}$
so that $F\circ P_{V}=\pi_{V}$. Also note that for each $\psi\in\mathrm{A}_{m,k}$,
\begin{equation}
\psi=\varphi\pi_{(\ker A_{\psi})^{\perp}}\text{ for some }\varphi\in\mathrm{A}_{k,k}.\label{eq:rep of psi via pi_V}
\end{equation}

\subsubsection{\label{subsec:Singular-values-and SVD}Singular values and SVD}

Given $1\le k\le m$ and $t_{1},...,t_{k}\in\mathbb{R}$, we denote
by $\mathrm{diag}_{k,m}(t_{1},...,t_{k})$ the $k\times m$ real matrix
$(d_{i,j})$ with $d_{i,i}=t_{i}$ for $1\le i\le k$ and $d_{i,j}=0$
for $1\le i\le k$ and $1\le j\le m$ with $i\ne j$.

If $A$ is a $k\times m$ real matrix or a linear operator from $\mathbb{R}^{m}$
into $\mathbb{R}^{k}$, denote by $\alpha_{1}(A)\ge...\ge\alpha_{k}(A)\ge0$
its singular values. That is, $\alpha_{1}(A),...,\alpha_{k}(A)$ are
the square roots of the eigenvalues of $AA^{*}$, where $A^{*}$ is
the transpose of $A$. It is easy to verify that for each $1\le l\le k$,
\begin{equation}
\alpha_{l}(A)=\min\left\{ \max\left\{ |Ax|\::\:x\in V\text{ and }|x|=1\right\} \::\:V\in\mathrm{Gr}_{m+1-l}(m)\right\} .\label{eq:sing val as min max}
\end{equation}
Moreover, as shown in \cite[Lemma III.5.3]{BL},
\begin{equation}
\Vert\wedge^{l}A\Vert_{op}=\alpha_{1}(A)\cdot...\cdot\alpha_{l}(A),\label{eq:norm =00003D prod of sing vals}
\end{equation}
where $\Vert\cdot\Vert_{op}$ is the operator norm corresponding to
the norms defined in Section \ref{subsec:spaces of alt forms}.

Let $A=U_{1}DU_{2}$ be a singular value decomposition (SVD) of $A$. That is, $U_{1}\in\mathrm{O}(k)$, $U_{2}\in\mathrm{O}(m)$ and
\[
D=\mathrm{diag}_{k,m}(\alpha_{1}(A),...,\alpha_{k}(A)),
\]
where $\mathrm{O}(l)$ denotes the orthogonal group of $\mathbb{R}^{l}$.
For $1\le l<k$ with $\alpha_{l}(A)>\alpha_{l+1}(A)$ we write
\[
L_{l}(A):=\mathrm{span}\{U_{1}e_{k,1},...,U_{1}e_{k,l}\}.
\]
Since $\alpha_{l}(A)>\alpha_{l+1}(A)$, the subspace $L_{l}(A)\in\mathrm{Gr}_{l}(k)$ is easily seen to be independent
of the specific choice of the SVD. We also write $L_{0}(A)$ for the
zero subspace $\{0\}\subset\mathbb{R}^{k}$ and $L_{k}(A)$ for $\mathbb{R}^{k}$.

The following simple linear algebraic lemma will be needed later on.
\begin{lem}
\label{lem:bounds on sing vals of prod}Let $m\ge l\ge1$ and $A,B\in\mathrm{GL}(m,\mathbb{R})$
be given. Then
\[
\alpha_{l}(A)\alpha_{m}(B)\le\alpha_{l}(AB)\le\alpha_{l}(A)\alpha_{1}(B).
\]
\end{lem}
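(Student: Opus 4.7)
The plan is to derive both inequalities directly from the min-max characterization (\ref{eq:sing val as min max}), first proving the upper bound and then obtaining the lower bound as a formal consequence by applying the upper bound to $(AB)B^{-1}$.

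For the upper bound $\alpha_{l}(AB)\le \alpha_{l}(A)\alpha_{1}(B)$, I would pick a subspace $V^{*}\in\mathrm{Gr}_{m+1-l}(m)$ attaining the minimum in (\ref{eq:sing val as min max}) for $\alpha_{l}(A)$, so that $|Ay|\le \alpha_{l}(A)|y|$ for every $y\in V^{*}$. Set $W:=B^{-1}V^{*}$. Since $B\in\mathrm{GL}(m,\mathbb{R})$, we have $W\in\mathrm{Gr}_{m+1-l}(m)$. For any unit vector $x\in W$, the vector $Bx$ lies in $V^{*}$ and satisfies $|Bx|\le \alpha_{1}(B)$, hence
\[
|ABx| = |A(Bx)| \le \alpha_{l}(A)\,|Bx| \le \alpha_{l}(A)\alpha_{1}(B).
\]
Applying (\ref{eq:sing val as min max}) to $AB$ with the particular subspace $W$ yields $\alpha_{l}(AB)\le \alpha_{l}(A)\alpha_{1}(B)$.

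For the lower bound, I would use that $B$ is invertible and the elementary fact $\alpha_{1}(B^{-1})=\alpha_{m}(B)^{-1}$ (which follows from the SVD, since the singular values of $B^{-1}$ are the reciprocals of those of $B$ in reverse order). Writing $A=(AB)B^{-1}$ and applying the upper bound just proved, with $A$ replaced by $AB$ and $B$ replaced by $B^{-1}$, gives
\[
\alpha_{l}(A) = \alpha_{l}\bigl((AB)B^{-1}\bigr) \le \alpha_{l}(AB)\,\alpha_{1}(B^{-1}) = \frac{\alpha_{l}(AB)}{\alpha_{m}(B)}.
\]
Rearranging produces $\alpha_{l}(A)\alpha_{m}(B)\le \alpha_{l}(AB)$, completing the proof.

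There is no real obstacle here; the whole argument reduces to choosing the right test subspace in the min-max formula and exploiting invertibility of $B$ to bootstrap the lower bound from the upper bound. The only small care needed is to verify that $B^{-1}$ preserves the dimension of $V^{*}$ (immediate from $B\in\mathrm{GL}(m,\mathbb{R})$) and to recall the reciprocal relationship between the singular values of $B$ and of $B^{-1}$.
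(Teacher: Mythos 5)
Your proof is correct and follows essentially the same route as the paper's: choose the optimal $(m+1-l)$-dimensional test subspace for $\alpha_l(A)$ from the min-max formula, pull it back by $B^{-1}$ to bound $\alpha_l(AB)$ from above, then obtain the lower bound by applying the upper bound to the factorization $A=(AB)B^{-1}$ together with $\alpha_1(B^{-1})=\alpha_m(B)^{-1}$. The only cosmetic difference is that you spell out the vector estimates where the paper writes them compactly via restricted operator norms $\Vert A|_V\Vert_{op}$.
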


\begin{proof}
By (\ref{eq:sing val as min max}) there exists $V\in\mathrm{Gr}_{m+1-l}(m)$
so that $\alpha_{l}(A)=\Vert A|_{V}\Vert_{op}$, where $A|_{V}$ denotes
the restriction of $A$ to $V$. By (\ref{eq:sing val as min max})
it also follows that,
\[
\alpha_{l}(AB)\le\Vert(AB)|_{B^{-1}(V)}\Vert_{op}\le\Vert A|_{V}\Vert_{op}\cdot\Vert B|_{B^{-1}(V)}\Vert_{op}\le\alpha_{l}(A)\alpha_{1}(B).
\]
By applying this inequality with the matrices $AB$ and $B^{-1}$, we
obtain
\[
\alpha_{l}(A)=\alpha_{l}(ABB^{-1})\le\alpha_{l}(AB)\alpha_{1}(B^{-1}).
\]
Since $\alpha_{1}(B^{-1})^{-1}=\alpha_{m}(B)$, this completes the
proof of the lemma.
\end{proof}

\subsection{\label{subsec:An-invariant-metric}An invariant metric on $\mathrm{A}_{d,m}$}

We next equip the spaces $\mathrm{A}_{d,m}$ with an $\mathrm{A}_{m,m}$-invariant
metric.
\begin{lem}
\label{lem: def of inv met and prop}For every $1\le m\le d$ there
exists a metric $d_{\mathrm{A}_{d,m}}$ on $\mathrm{A}_{d,m}$ so
that,
\begin{enumerate}
\item $d_{\mathrm{A}_{d,m}}(\varphi\circ\psi_{1},\varphi\circ\psi_{2})=d_{\mathrm{A}_{d,m}}(\psi_{1},\psi_{2})$
for all $\psi_{1},\psi_{2}\in\mathrm{A}_{d,m}$ and $\varphi\in\mathrm{A}_{m,m}$;
\item for every compact $Q\subset\mathrm{A}_{d,m}$ there exists $C=C(Q)>1$
with
\[
C^{-1}\Vert\psi_{1}-\psi_{2}\Vert\le d_{\mathrm{A}_{d,m}}(\psi_{1},\psi_{2})\le C\Vert\psi_{1}-\psi_{2}\Vert\text{ for all }\psi_{1},\psi_{2}\in Q\:;
\]
\item $(\mathrm{A}_{d,m},d_{\mathrm{A}_{d,m}})$ is a complete metric space;
\item $(\mathrm{A}_{d,m},d_{\mathrm{A}_{d,m}})$ satisfies the Heine--Borel
property (i.e. closed and bounded subsets of $\mathrm{A}_{d,m}$ are
compact).
\end{enumerate}
\end{lem}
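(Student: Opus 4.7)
The plan is to construct $d_{\mathrm{A}_{d,m}}$ as the Riemannian distance of a suitable invariant Riemannian metric on $\mathrm{A}_{d,m}$. Since $\mathrm{A}_{d,m}$ is the full-rank locus in the finite-dimensional vector space $\mathrm{A}_{d,m}^{\mathrm{vec}}$, it is an open submanifold of the latter. I would consider the smooth action of $G:=\mathrm{A}_{m,m}\times\mathrm{SO}(d)$ on $\mathrm{A}_{d,m}$ given by $(\chi,R)\cdot\psi:=\chi\circ\psi\circ R^{-1}$. A singular value decomposition of the linear part shows that every $\psi\in\mathrm{A}_{d,m}$ lies in the $G$-orbit of the base point $\psi_{0}:=\pi_{d,m}$, so the action is transitive; a short computation then identifies the stabilizer $H$ of $\psi_{0}$ in $G$ with $\mathrm{S}(\mathrm{O}(m)\times\mathrm{O}(d-m))$, which is compact.

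Compactness of $H$ would let me average an arbitrary positive-definite bilinear form on $T_{\psi_{0}}\mathrm{A}_{d,m}$ over $H$ to obtain an $H$-invariant inner product, which I then transport by the $G$-action to produce a well-defined $G$-invariant Riemannian metric $g$ on $\mathrm{A}_{d,m}$. I would define $d_{\mathrm{A}_{d,m}}$ to be the Riemannian distance associated to $g$. With this choice property (1) is immediate from the $\mathrm{A}_{m,m}$-invariance of $g$. For properties (3) and (4), transitivity of the isometric $G$-action means that any maximal geodesic can be translated indefinitely by group elements, so $(\mathrm{A}_{d,m},g)$ is geodesically complete, and the Hopf--Rinow theorem then delivers both metric completeness and the Heine--Borel property.

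The step I expect to require the most care is property (2), the uniform equivalence of $d_{\mathrm{A}_{d,m}}$ and $\Vert\cdot\Vert$ on a compact set $Q\subset\mathrm{A}_{d,m}$. My approach would be to fix a slightly larger compact neighborhood $Q'\subset\mathrm{A}_{d,m}$ of $Q$; on $Q'$ the smooth positive-definite form $g$ is sandwiched between constant multiples of the ambient Euclidean inner product on $\mathrm{A}_{d,m}^{\mathrm{vec}}$. Using the straight-line segment as a test path then gives the upper bound $d_{\mathrm{A}_{d,m}}(\psi_{1},\psi_{2})\le C\Vert\psi_{1}-\psi_{2}\Vert$ whenever $\psi_{1},\psi_{2}\in Q$ are close enough for the segment to lie in $Q'$, while pairs that are not close automatically satisfy the estimate because both sides are bounded away from $0$ and $\infty$ by constants depending only on $Q$. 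The reverse inequality $d_{\mathrm{A}_{d,m}}(\psi_{1},\psi_{2})\ge c\Vert\psi_{1}-\psi_{2}\Vert$ I would obtain by a standard Riemannian argument: normal coordinates around each point of $Q'$ give local comparability on small balls, and a finite subcover of $Q$ by such balls extends this to a uniform bound on $Q$.
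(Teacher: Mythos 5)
Your construction is correct and reaches the same goal, but it takes a cleaner route than the paper does. The paper splits the argument into two cases: for $m=d$ it realises $\mathrm{A}_{d,d}$ as a closed subgroup of $\mathrm{SL}(d+2,\mathbb{R})$ and restricts a left-invariant Riemannian distance, while for $m<d$ it first applies a cited result to the \emph{connected} group $\mathrm{A}_{m,m}^{+}\times\mathrm{SO}(d)$ (affine maps with orientation-preserving linear part) to obtain an invariant metric $\tilde g$, and then symmetrizes by setting $g:=\tilde g+\varphi^{*}\tilde g$ for a fixed orientation-reversing $\varphi\in\mathrm{A}_{m,m}$ in order to upgrade to full $\mathrm{A}_{m,m}$-invariance. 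You instead act by the (disconnected) group $\mathrm{A}_{m,m}\times\mathrm{SO}(d)$ from the start, identify the stabilizer of $\pi_{d,m}$ as compact, and produce the invariant metric by averaging over the stabilizer and translating; this is the standard homogeneous-space construction carried out by hand, it does not require connectedness, and it makes property (1) automatic with no symmetrization step. It also handles $m=d$ and $m<d$ uniformly (for $m=d$ the stabilizer is the diagonal $\mathrm{SO}(d)$, which is $\mathrm{S}(\mathrm{O}(d)\times\mathrm{O}(0))$), so the special embedding into $\mathrm{SL}(d+2,\mathbb{R})$ is unnecessary. Your arguments for properties (2)--(4) — local comparability of a smooth Riemannian metric with the ambient vector-space norm on compact sets, and Hopf--Rinow after geodesic completeness — match what the paper asserts without proof. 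The only place I would tighten the wording is the geodesic-completeness step: rather than ``translating a maximal geodesic indefinitely,'' the clean statement is that by homogeneity the injectivity radius is bounded below by some $\epsilon_{0}>0$ uniformly over the manifold, so a geodesic defined on a bounded interval always extends by at least $\epsilon_{0}$; this is what you mean, but phrased as written it is slightly loose.
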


\begin{proof}
Suppose first that $m=d$, and note that $\mathrm{A}_{d,d}$ can be
identified with a closed subgroup of the connected group $G:=\mathrm{SL}(d+2,\mathbb{R})$
in a natural way. Let $d_{G}$ be the Riemannian distance function
induced by a left-invariant Riemannian metric on $G$, and let $d_{\mathrm{A}_{d,d}}$
be the restriction of $d_{G}$ to $\mathrm{A}_{d,d}$. It is easy
to verify that the first three properties in the statement of the
lemma are satisfied. The fourth property follows by the Hopf--Rinow
theorem (see e.g. \cite[Chapter 7]{dC}).

Suppose next that $m<d$, and write
\[
\mathrm{A}_{m,m}^{+}:=\{\varphi\in\mathrm{A}_{m,m}\::\:\det(A_{\varphi})>0\}\text{ and }\mathrm{SO}(d):=\{U\in\mathrm{O}(d)\::\:\det(U)=1\}.
\]
The connected group $G:=\mathrm{A}_{m,m}^{+}\times\mathrm{SO}(d)$
acts on $\mathrm{A}_{d,m}$ from the left as follows,
\[
(\varphi,U).\psi:=\varphi\circ\psi\circ U^{-1}\text{ for }\psi\in\mathrm{A}_{d,m}\text{ and }(\varphi,U)\in G.
\]
It is easy to verify that this action is transitive with compact stabilizers.
Thus, by \cite[Chapter 3,  Proposition 3.16]{CE}, there exists a
$G$-invariant Riemannian metric $\tilde{g}$ on $\mathrm{A}_{d,m}$.

Fix some $\varphi\in\mathrm{A}_{m,m}$ with $\det(A_{\varphi})<0$,
and denote by $\varphi^{*}\tilde{g}$ the pullback of $\tilde{g}$
via the map $\psi\rightarrow\varphi\circ\psi$ from $\mathrm{A}_{d,m}$
onto itself. Set $g:=\tilde{g}+\varphi^{*}\tilde{g}$ and let $d_{\mathrm{A}_{d,m}}$
be the Riemannian distance function induced by $g$. It is now easy
to verify that $d_{\mathrm{A}_{d,m}}$ satisfies all of the required
properties, where the last one follows again from the Hopf--Rinow
theorem.
\end{proof}
From now on, unless stated otherwise explicitly, all metric concepts
in $\mathrm{A}_{d,m}$ are with respect to $d_{\mathrm{A}_{d,m}}$.
In particular, for $E\subset\mathrm{A}_{d,m}$ we write $\mathrm{diam}(E)$
for the diameter of $E$ with respect to $d_{\mathrm{A}_{d,m}}$,
and for $\psi\in\mathrm{A}_{d,m}$ and $r>0$ we write $B(\psi,r)$
for the closed ball in $(\mathrm{A}_{d,m},d_{\mathrm{A}_{d,m}})$
with centre $\psi$ and radius $r$. Note that by Lemma \ref{lem: def of inv met and prop}
it follows that $B(\psi,r)$ is compact.

\subsection{\label{subsec:Dyadic-partitions}Dyadic partitions}

For $m\ge1$ and $n\in\mathbb{Z}$ denote by $\mathcal{D}_{n}^{m}$
the level-$n$ dyadic partition of $\mathbb{R}^{m}$. That is,
\[
\mathcal{D}_{n}^{m}:=\left\{ [\frac{k_{1}}{2^{n}},\frac{k_{1}+1}{2^{n}})\times...\times[\frac{k_{m}}{2^{n}},\frac{k_{m}+1}{2^{n}})\::\:k_{1},...,k_{m}\in\mathbb{Z}\right\} .
\]
We usually omit the superscript $m$ when it is clear from the context.
For $t\in\mathbb{R}$ we write $\mathcal{D}_{t}$ in place of $\mathcal{D}_{\left\lfloor t\right\rfloor }$.

Given $\psi\in\mathrm{A}_{d,m}$, the following lemma controls the $\psi\mu$-measure of points which are
very close to the boundary of some $D\in\mathcal{D}_{n}^{m}$.
\begin{lem}
\label{lem:small mass of pts close to bd}Let $1\le m\le d$ be given. Then for every $\epsilon,R>0$ there exists $\delta>0$ so that,
\[
\psi\mu\left(\cup_{D\in\mathcal{D}_{n}^m}(\partial D)^{(2^{-n}\delta)}\right)<\epsilon\text{ for all }\psi\in B(\pi_{d,m},R)\text{ and }n\ge0.
\]
\end{lem}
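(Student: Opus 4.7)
The plan is to reduce the lemma to a one-dimensional non-concentration statement for linear images of $\mu$, and then argue by contradiction via compactness, using that $\mu$ gives zero mass to every affine hyperplane of $\mathbb{R}^d$. For the reduction, note that a point $x\in\mathbb{R}^m$ lies in $\cup_{D\in\mathcal{D}_n^m}(\partial D)^{(2^{-n}\delta)}$ only if some coordinate $x_i$ is within $2^{-n}\delta$ of $2^{-n}\mathbb{Z}$, so by a union bound
\[
\psi\mu\bigl(\cup_D(\partial D)^{(2^{-n}\delta)}\bigr)\le\sum_{i=1}^m(f_{i,\psi}\mu)(A_{n,\delta}),
\]
where $f_{i,\psi}(y):=\langle A_\psi^*e_{m,i},y\rangle+(a_\psi)_i$ is an affine functional on $\mathbb{R}^d$ with nonzero linear part (since $A_\psi$ has rank $m$), and $A_{n,\delta}:=\{t\in\mathbb{R}:\mathrm{dist}(t,2^{-n}\mathbb{Z})\le 2^{-n}\delta\}$. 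It therefore suffices to find, for each fixed $1\le i\le m$ and $\epsilon',R>0$, a $\delta>0$ with $(f_{i,\psi}\mu)(A_{n,\delta})<\epsilon'$ for all $\psi\in B(\pi_{d,m},R)$ and all $n\ge 0$.

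Suppose this reduced claim fails for some $\epsilon'$ and $R$. Then there are $\psi_k\in B(\pi_{d,m},R)$, $n_k\ge 0$, $\delta_k\downarrow 0$ and an index $i$ with $(f_{i,\psi_k}\mu)(A_{n_k,\delta_k})\ge\epsilon'$. Passing to a subsequence we may assume $\psi_k\to\psi$, so $f_{i,\psi_k}\mu\to f_{i,\psi}\mu$ weakly and $w_k:=A_{\psi_k}^*e_{m,i}\to w:=A_\psi^*e_{m,i}\ne 0$. If $\{n_k\}$ is bounded (further subseq $n_k\equiv n$), then $A_{n,\delta_k}\downarrow 2^{-n}\mathbb{Z}$ and applying the Portmanteau theorem to the closed neighborhoods $(2^{-n}\mathbb{Z})^{(\eta)}$ (then letting $\eta\downarrow 0$) yields $(f_{i,\psi}\mu)(2^{-n}\mathbb{Z})\ge\epsilon'$. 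Since $f_{i,\psi}$ has nonzero linear part, this forces positive $\mu$-mass on some affine hyperplane of $\mathbb{R}^d$. Under (\ref{eq:no com fix asump}) and (\ref{eq:m-ired and m-prox assump}) however $\mu$ charges no affine hyperplane: iterating $\mu=\sum_ip_i\varphi_i\mu$ produces a maximal-mass hyperplane which, by strong irreducibility of $\mathbf{S}_\Phi^{\mathrm{L}}$, has an infinite orbit of hyperplanes carrying the same mass---absurd. This settles the bounded case.

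The remaining case $n_k\to\infty$ is the main obstacle, since the sets $A_{n_k,\delta_k}$ become dense in $\mathbb{R}$ at a vanishing scale and the weak-limit argument above is unavailable. I would proceed by exploiting the self-affine structure: decompose $\mu=\sum_{u\in\Psi_1(n_k)}p_u\,\varphi_u\mu$, so every $\varphi_u\mu$ has support of diameter $O(2^{-n_k})$ and meets only $O(1)$ dyadic hyperplanes at scale $n_k$. Averaging yields words $u_k\in\Psi_1(n_k)$ of non-negligible $p_u$-mass for which
\[
\mu\bigl\{z:\mathrm{dist}(\langle\widetilde w_k,z\rangle+\widetilde c_k,\mathbb{Z})\le\delta_k\bigr\}\gtrsim\epsilon',
\]
where $\widetilde w_k:=2^{n_k}A_{u_k}^*w_k$ and $\widetilde c_k\in[0,1)$ is the fractional part of $2^{n_k}((a_{\psi_k})_i+\langle w_k,a_{u_k}\rangle)$. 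The upper bound $|\widetilde w_k|\le|w_k|$ is immediate from $\|A_{u_k}^*\|_{\mathrm{op}}\le 2^{-n_k}$; the delicate point is keeping $\widetilde w_k$ bounded away from zero. For this I would invoke the Furstenberg theory of products of random matrices (applicable under (\ref{eq:m-ired and m-prox assump})): for $p|_{\Psi_1(n_k)}$-typical $u_k$, the direction $A_{u_k}^*w_k/|A_{u_k}^*w_k|$ converges in distribution to $\nu_1^*$ and $|A_{u_k}^*w_k|\asymp 2^{-n_k}|w_k|$. Along such a subsequence $\widetilde w_k\to\widetilde w\ne 0$ and $\widetilde c_k\to\widetilde c$, and Portmanteau as in the bounded case produces $\mu\{z:\langle\widetilde w,z\rangle+\widetilde c\in\mathbb{Z}\}>0$, positive $\mu$-mass on a countable union of affine hyperplanes---the contradiction sought.
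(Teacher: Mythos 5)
Your argument is essentially correct, but it follows a genuinely different path from the paper's. The paper first establishes (via an argument identical to \cite[Lemma 3.13]{BHR}) the uniform multiplicative decay estimate $\pi_V\mu(B(t,r\delta))\le\epsilon\,\pi_V\mu(B(t,r))$ valid for all $V\in\mathrm{Gr}_1(d)$, $t\in\mathbb{R}$ and $0<r\le 1$; once this is in hand the lemma follows in one line by covering $\cup_D(\partial D)^{(2^{-n}\delta)}$ by the intervals $B(k2^{-n},2^{-n}\delta)$ and summing (each point lies in at most $O(1)$ of the comparison intervals $B(k2^{-n},2^{-n})$). You instead bypass BHR and run a compactness / contradiction argument from scratch, effectively re-deriving the relevant scale-invariance: the bounded-$n_k$ branch is pure Portmanteau, and the unbounded branch renormalizes via $\mu=\sum_{u\in\Psi_1(n_k)}p_u\varphi_u\mu$ together with the Furstenberg non-concentration of $L_1(A_{\mathbf{I}_1(n)})$ near $w_k^\perp$ (Lemma \ref{lem:P=00007Bkappa<delta=00007D<epsilon}) so as to produce a scale-$0$ counterexample. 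Both routes ultimately rest on $\mu$ charging no affine hyperplane. The one delicate step you pass over quickly is the simultaneous selection of $u_k$ that carries a non-negligible conditional mass \emph{and} satisfies the Furstenberg lower bound $2^{n_k}|A_{u_k}^*w_k|\ge c$; this is fine (one needs $\beta(G_k)\ge 1-\epsilon'/4$ while the words with conditional mass $\ge\epsilon'/4$ already have $\beta$-measure $\ge 3\epsilon'/4$, so the intersection is nonempty), but it is worth spelling out since the two constraints compete. Net comparison: the paper's proof is shorter and more modular because it invokes BHR as a black box; yours is self-contained and exposes explicitly the role of the Furstenberg theory, at the cost of a somewhat more involved two-case structure.
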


\begin{proof}
From  assumptions (\ref{eq:no com fix asump}) and (\ref{eq:m-ired and m-prox assump}) it follows easily that $K_{\Phi}$ is not contained in a proper
affine subspace of $\mathbb{R}^{d}$ (see Lemma \ref{lem:K not cont in V} below). From this and by an argument
identical to the one given in \cite[Lemma 2.5]{Ra_Rajchman}, we get that
$\pi\mu\{t\}=0$ for all $\pi\in\mathrm{A}_{d,1}$ and $t\in\mathbb{R}$.
Thus, by compactness, it follows that for each $\epsilon>0$ there
exists $\delta>0$ so that $\pi_V\mu(B(t,\delta))<\epsilon$ for all $V\in\mathrm{Gr}_1(d)$ and $t\in\mathbb{R}$. Now by an argument identical to the one given in \cite[Lemma 3.13]{BHR}, it follows that for every $\epsilon>0$ there exists $\delta>0$ so that
\begin{equation}\label{from_BHR_proj_lem}
\pi_V\mu(B(t,r\delta))\le\epsilon\cdot \pi_V\mu(B(t,r))\text{ for all }V\in\mathrm{Gr}_1(d),\:t\in \mathbb{R}\text{ and }0<r\le1.
\end{equation}

For each $1\le i\le m$ let $f_i\in\mathrm{A}_{m,1}$ be with $f_i(x)=x_i$ for $(x_1,...,x_m)=x\in\mathbb{R}^m$. From (\ref{from_BHR_proj_lem}) and (\ref{eq:rep of psi via pi_V}) it follows easily that for every $\epsilon,R>0$ there exists $\delta>0$ so that for all $\psi\in B(\pi_{d,m},R)$,
\[
f_i\psi\mu(B(t,r\delta))\le\epsilon\cdot f_i\psi\mu(B(t,r))\text{ for all }1\le i\le m,\:t\in \mathbb{R}\text{ and }0<r\le1.
\]
Since for each $\delta>0$ and $n\ge0$
\[
\cup_{D\in\mathcal{D}_{n}^m}(\partial D)^{(2^{-n}\delta)}=\cup_{i=1}^m\cup_{k\in\mathbb{Z}}f_i^{-1}\left(B(k2^{-n},2^{-n}\delta)\right),
\]
this completes the proof of the lemma.
\end{proof}

Let $1\le m\le d$ be given. We also need to introduce dyadic-like
partitions for $\mathrm{A}_{d,m}$. By \cite[Remark 2.2]{KRS} there
exists a sequence $\{\mathcal{D}_{n}^{\mathrm{A}_{d,m}}\}_{n\ge0}$
of Borel partitions of $\mathrm{A}_{d,m}$ so that,
\begin{enumerate}
\item $\mathcal{D}_{n+1}^{\mathrm{A}_{d,m}}$ refines $\mathcal{D}_{n}^{\mathrm{A}_{d,m}}$
for each $n\ge0$. That is, for each $D\in\mathcal{D}_{n+1}^{\mathrm{A}_{d,m}}$
there exists $D'\in\mathcal{D}_{n}^{\mathrm{A}_{d,m}}$ with $D\subset D'$;
\item There exists a constant $C>1$ so that for each $n\ge0$ and $D\in\mathcal{D}_{n}^{\mathrm{A}_{d,m}}$
there exists $\psi_{D}\in D$ with
\[
B(\psi_{D},C^{-1}2^{-n})\subset D\subset B(\psi_{D},C2^{-n}).
\]
\end{enumerate}
When there is no risk of confusion we write $\mathcal{D}_{n}$
in place of $\mathcal{D}_{n}^{\mathrm{A}_{d,m}}$. We next derive
some useful properties of these partitions.
\begin{lem}
\label{lem:commens partiti of A_d,m}Let $1\le m\le d$ be given.
Then there exists a constant $C>1$ so that,
\begin{enumerate}
\item $\#\{D'\in\mathcal{D}_{n+1}^{\mathrm{A}_{d,m}}\::\:D'\subset D\}\le C$
for all $n\ge0$ and $D\in\mathcal{D}_{n}^{\mathrm{A}_{d,m}}$;
\item the partitions $\mathcal{D}_{n}^{\mathrm{A}_{d,m}}$ and $\varphi^{-1}\mathcal{D}_{n}^{\mathrm{A}_{d,m}}:=\{\varphi^{-1}(D)\::\:D\in\mathcal{D}_{n}^{\mathrm{A}_{d,m}}\}$
are $C$-commensurable for all $n\ge0$ and $\varphi\in\mathrm{A}_{m,m}$.
\end{enumerate}
\end{lem}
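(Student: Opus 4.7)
The plan is to deduce both statements from a single doubling property of $(\mathrm{A}_{d,m}, d_{\mathrm{A}_{d,m}})$, combined with the $\mathrm{A}_{m,m}$-invariance of $d_{\mathrm{A}_{d,m}}$ established in Lemma~\ref{lem: def of inv met and prop}. Recall from the proof of that lemma that $d_{\mathrm{A}_{d,m}}$ arises as the distance function of a Riemannian metric $g$ on $\mathrm{A}_{d,m}$ invariant under a Lie group $G$ that acts isometrically and transitively. Let $v(r)$ denote the $g$-volume of any ball of radius $r$; by transitivity this quantity is independent of the center. The Heine--Borel property (Lemma~\ref{lem: def of inv met and prop}(4)) forces $v(r) < \infty$ for every $r > 0$, while $v$ is positive, continuous on $(0,\infty)$, and satisfies $v(r) \sim c\, r^{N}$ as $r \to 0^{+}$, where $N = \dim \mathrm{A}_{d,m}$. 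Consequently, for any fixed $\alpha > \beta > 0$,
\[
M(\alpha,\beta) := \sup_{0 < r \le 1} \frac{v(\alpha r)}{v(\beta r)} < \infty.
\]
By homogeneity, any family of pairwise disjoint balls of radius $\beta r$ contained in a single ball of radius $\alpha r$ (with $r \le 1$) therefore has cardinality at most $M(\alpha,\beta)$.

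For part~(1), fix $n \ge 0$ and $D \in \mathcal{D}_n^{\mathrm{A}_{d,m}}$. For every $D' \in \mathcal{D}_{n+1}^{\mathrm{A}_{d,m}}$ with $D' \subset D$, the construction of the partitions supplies $\psi_{D'} \in D'$ with $B(\psi_{D'}, C^{-1} 2^{-n-1}) \subset D'$. These balls are pairwise disjoint since atoms of a refinement are disjoint, and all of them lie inside $D \subset B(\psi_{D}, C \cdot 2^{-n})$. The doubling bound applied with $\alpha = C$, $\beta = C^{-1}/2$, and $r = 2^{-n} \le 1$ then caps their number by $M(C, C^{-1}/2)$, which is independent of $n$ and $D$.

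For part~(2), fix $\varphi \in \mathrm{A}_{m,m}$ and $n \ge 0$. By Lemma~\ref{lem: def of inv met and prop}(1) the map $L_{\varphi}(\psi) := \varphi \circ \psi$ is an isometry of $(\mathrm{A}_{d,m}, d_{\mathrm{A}_{d,m}})$, so every atom $E$ of $\varphi^{-1}\mathcal{D}_n^{\mathrm{A}_{d,m}} = L_{\varphi^{-1}}\mathcal{D}_n^{\mathrm{A}_{d,m}}$ admits some $\widetilde\psi_E \in E$ with $B(\widetilde\psi_E, C^{-1}2^{-n}) \subset E \subset B(\widetilde\psi_E, C \cdot 2^{-n})$. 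Given $D \in \mathcal{D}_n^{\mathrm{A}_{d,m}}$, any $E$ meeting $D$ has $d_{\mathrm{A}_{d,m}}(\widetilde\psi_E, \psi_D) \le 2C \cdot 2^{-n}$, so the disjoint inner balls $B(\widetilde\psi_E, C^{-1}2^{-n})$ are all contained in $B(\psi_D, 3C \cdot 2^{-n})$. The doubling bound bounds the number of such $E$ by $M(3C, C^{-1})$. The symmetric count, with $\mathcal{D}_n^{\mathrm{A}_{d,m}}$ and $\varphi^{-1}\mathcal{D}_n^{\mathrm{A}_{d,m}}$ interchanged, is identical, and together these yield the claimed $C$-commensurability.

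The only technical step is establishing the doubling bound, which reduces, via transitivity of the $G$-action, to the simple observation that a continuous positive function on $(0,1]$ with Euclidean-type asymptotics at the origin has a bounded ratio $v(\alpha r)/v(\beta r)$; both ingredients are furnished directly by Lemma~\ref{lem: def of inv met and prop}.
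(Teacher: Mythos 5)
Your proof is correct and follows essentially the same strategy as the paper's: establish a uniform local doubling/packing property for $(\mathrm{A}_{d,m},d_{\mathrm{A}_{d,m}})$ and then combine it with the inner/outer-ball containment in the defining property of the partitions $\{\mathcal{D}_n^{\mathrm{A}_{d,m}}\}$, using $\mathrm{A}_{m,m}$-invariance of the metric for part (2). The only (cosmetic) difference is in how the doubling property is obtained: the paper transfers it from the flat normed space $(\mathrm{A}_{d,m}^{\mathrm{vec}},\Vert\cdot\Vert)$ via the local bi-Lipschitz comparison of Lemma~\ref{lem: def of inv met and prop}(2), while you work directly with the Riemannian volume and the small-radius asymptotics $v(r)\asymp r^N$ on the homogeneous space; both routes are standard and equivalent in substance (one small caveat: for $m=d$ the paper defines $d_{\mathrm{A}_{d,d}}$ as the \emph{restriction} of the ambient Riemannian distance on a larger group rather than as an intrinsic Riemannian distance, but this is locally bi-Lipschitz to the intrinsic one, so your volume estimates still hold).
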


\begin{proof}
By volume considerations in the normed space $(\mathrm{A}_{d,m}^{\mathrm{vec}},\Vert\cdot\Vert)$, for each $\epsilon>0$ there exists $C(\epsilon)>1$ so that every ball of radius $r>0$ in $\mathrm{A}_{d,m}^{\mathrm{vec}}$ contains at most $C(\epsilon)$ disjoint
balls of radius $\epsilon r$ in $\mathrm{A}_{d,m}^{\mathrm{vec}}$. From this and Lemma \ref{lem: def of inv met and prop}, it follows that for each $R>1$ and $\epsilon>0$ there exists $C(R,\epsilon)>1$ so that every ball of radius $r\in(0,R]$ in $\mathrm{A}_{d,m}$ contains at most $C(R,\epsilon)$ disjoint balls of radius $\epsilon r$ in $\mathrm{A}_{d,m}$.
The lemma now follows easily from this fact together with the second defining property of the partitions $\{\mathcal{D}_{n}^{\mathrm{A}_{d,m}}\}_{n\ge0}$.
\end{proof}
\begin{lem}
\label{lem:ball int at most exp many 0-atoms}Let $1\le m\le d$ be
given. Then there exists a constant $C>1$ so that for all $\psi\in\mathrm{A}_{d,m}$
and $R>0$
\[
\#\left\{ D\in\mathcal{D}_{0}^{\mathrm{A}_{d,m}}\::\:D\cap B(\psi,R)\ne\emptyset\right\} \le C2^{CR}.
\]
\end{lem}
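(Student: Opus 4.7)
The plan is to reduce the counting estimate to a volume-packing argument in the Riemannian manifold $(\mathrm{A}_{d,m},g)$ underlying $d_{\mathrm{A}_{d,m}}$, and then to invoke exponential volume growth of balls, which holds because $g$ has bounded curvature.

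First I would fix the constant $C_0>1$ from the construction of $\{\mathcal{D}_n^{\mathrm{A}_{d,m}}\}_{n\ge 0}$, so that each atom $D\in\mathcal{D}_0^{\mathrm{A}_{d,m}}$ contains a ball $B(\psi_D,C_0^{-1})$ and is contained in $B(\psi_D,C_0)$. Suppose $B(\psi,R)$ meets the atoms $D_1,\ldots,D_N\in\mathcal{D}_0^{\mathrm{A}_{d,m}}$. The triangle inequality puts each centre $\psi_{D_i}$ in $B(\psi,R+C_0)$; the balls $B(\psi_{D_i},C_0^{-1})$ are pairwise disjoint since the atoms are; and each is contained in $B(\psi,R+C_0+C_0^{-1})\subset B(\psi,R+2C_0)$. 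Writing $\mathrm{vol}_g$ for the Riemannian volume, this yields
\[
\sum_{i=1}^{N}\mathrm{vol}_g\bigl(B(\psi_{D_i},C_0^{-1})\bigr)\le\mathrm{vol}_g\bigl(B(\psi,R+2C_0)\bigr).
\]

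Next I would use that the metric $g$ constructed in the proof of Lemma \ref{lem: def of inv met and prop} is invariant under a transitive Lie-group action on $\mathrm{A}_{d,m}$: by left multiplication of $\mathrm{A}_{d,d}$ on itself when $m=d$, and by $G=\mathrm{A}_{m,m}^{+}\times\mathrm{SO}(d)$ when $m<d$. In the latter case the symmetrised metric $\tilde g+\varphi^{*}\tilde g$ remains $G$-invariant because conjugation by the reflection $\psi\mapsto\varphi\circ\psi$ sends $(\psi_0,U_0)\in G$ to $(\varphi\psi_0\varphi^{-1},U_0)$, and $\det A_{\varphi\psi_0\varphi^{-1}}=\det A_{\psi_0}>0$, so it stays in $G$. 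Since transitive isometries carry balls to balls of the same volume, $V(r):=\mathrm{vol}_g(B(\psi,r))$ is independent of $\psi$, and the previous display becomes $N\cdot V(C_0^{-1})\le V(R+2C_0)$.

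Finally, being $G$-invariant on a connected homogeneous space, $g$ has sectional curvature bounded in absolute value (in particular, Ricci curvature bounded below). The Bishop volume comparison theorem then furnishes constants $A,B>0$ with $V(r)\le A e^{Br}$ for all $r\ge 0$. Substituting and rewriting $e^{Br}=2^{Br/\ln 2}$ gives
\[
N\le\frac{A e^{B(R+2C_0)}}{V(C_0^{-1})}\le C\cdot 2^{CR}
\]
for a suitable constant $C>1$, as required. The only non-routine point is the $G$-invariance of the symmetrised metric in the case $m<d$, verified by the short conjugation computation above; everything else is a packing argument plus the standard bounded-geometry estimate.
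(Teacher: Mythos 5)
Your proposal is correct and follows essentially the same route as the paper: a volume-packing argument using disjoint balls inside the atoms, uniform bounds on atom diameter and volume, and a Bishop-type volume comparison from the Ricci lower bound of the $G$-invariant metric. The only cosmetic difference is that you make the homogeneity (transitivity of the $G$-action, and the invariance of the symmetrised metric) explicit, while the paper simply cites the construction in Lemma \ref{lem: def of inv met and prop} and asserts the uniform lower bound $\mathrm{vol}(D)\ge\epsilon$ directly.
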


\begin{proof}
We carry out the proof in the case $m<d$. The case $m=d$ follows by
similar considerations. Set $G:=\mathrm{A}_{m,m}\times\mathrm{SO}(d)$,
and let $g$ be the $G$-invariant Riemannian metric on $\mathrm{A}_{d,m}$
from which $d_{\mathrm{A}_{d,m}}$ is induced (see the proof of Lemma
\ref{lem: def of inv met and prop}). Denote respectively by $\mathrm{vol}$
and $\mathrm{Ric}$ the Riemannian volume measure and Ricci curvature
tensor (see \cite[Chapter 2]{Pe}) corresponding to $g$.

Since $g$ is $G$-invariant it is clear that there exists $0>k>-\infty$
so that
\[
\mathrm{Ric}(v,v)\ge(n-1)k\cdot g(v,v)\text{ for }\psi\in A_{d,m}\text{ and }v\in T_{\psi}A_{d,m},
\]
where $n:=\dim A_{d,m}$ and $T_{\psi}A_{d,m}$ is the tangent space
of $A_{d,m}$ at $\psi$. From this and by \cite[Lemma 35]{Pe} it
follows that
\[
\mathrm{vol}(B(\psi,R))\le\upsilon(n,k,R)\text{ for }\psi\in A_{d,m}\text{ and }R>0,
\]
where $\upsilon(n,k,R)$ is the volume of the ball of radius $R$
in the space form $S_{k}^{n}$ of dimension $n$ and constant sectional
curvature $k$. By the definition of the metric on $S_{k}^{n}$ (see
\cite[Chapter 3]{Pe}), there exists $C_{1}=C_{1}(n,k)>1$ so that
$\upsilon(n,k,R)\le2^{C_{1}R}$ for $R>0$. Hence, $\mathrm{vol}(B(\psi,R))\le2^{C_{1}R}$
for $\psi\in A_{d,m}$ and $R>0$.

By the second defining property of the partitions $\{\mathcal{D}_{n}^{\mathrm{A}_{d,m}}\}_{n\ge0}$,
there exist $\epsilon>0$ and $C_{2}>1$ so that $\mathrm{diam}(D)\le C_{2}$
and $\mathrm{vol}(D)\ge\epsilon$ for all $D\in\mathcal{D}_{0}^{\mathrm{A}_{d,m}}$.
Let $\psi\in A_{d,m}$ and $R>0$, set
\[
\mathcal{Q}:=\left\{ D\in\mathcal{D}_{0}^{\mathrm{A}_{d,m}}\::\:D\cap B(\psi,R)\ne\emptyset\right\} ,
\]
and note that $D\subset B(\psi,R+C_{2})$ for $D\in\mathcal{Q}$.
Thus,
\[
\epsilon|\mathcal{Q}|\le\sum_{D\in\mathcal{Q}}\mathrm{vol}(D)\le\mathrm{vol}(B(\psi,R+C_{2}))\le2^{C_{1}(R+C_{2})},
\]
which completes the proof of the lemma. 
\end{proof}

\subsection{\label{subsec:Component-measures}Component measures}

In this subsection we introduce the necessary notations for component measures.
For more details and examples we refer the reader to \cite[Section 2.2]{Ho1}
or \cite[Section 2.3]{Ho}.

Suppose that $X$ is either $\mathbb{R}^{m}$ or $\mathrm{A}_{d,m}$
for some $1\le m\le d$. Given $x\in X$ and a Borel partition $\mathcal{D}$
of $X$ we denote by $\mathcal{D}(x)$ the unique $D\in\mathcal{D}$
with $x\in D$. As indicated in Section \ref{subsec:Dyadic-partitions},
the space $X$ is equipped with a sequence of refining partitions
$\{\mathcal{D}_{n}\}_{n\ge0}$. For $\theta\in\mathcal{M}(X)$ and
$n\ge0$ we define a measure valued random element $\theta_{x,n}$
such that $\theta_{x,n}=\theta_{\mathcal{D}_{n}(x)}$ with probability
$\theta(\mathcal{D}_{n}(x))$ for each $x\in X$. Thus, for an event
$\mathcal{U}\subset\mathcal{M}(X)$
\[
\mathbb{P}\left(\theta_{x,n}\in\mathcal{U}\right):=\theta\left\{ x\in X\::\:\theta_{\mathcal{D}_{n}(x)}\in\mathcal{U}\right\}.
\]
We call $\theta_{x,n}$ an $n$th level component of $\theta$. For
a given $x\in X$ with $\theta(\mathcal{D}_{n}(x))>0$, we often write
$\theta_{x,n}$ in place of $\theta_{\mathcal{D}_{n}(x)}$ even when
no randomization is involved.

Sometimes $n$ is chosen randomly as well, usually uniformly in some
range. For example, for integers $n_{2}\geq n_{1}$
\[
\mathbb{P}_{n_{1}\leq i\leq n_{2}}\left(\theta_{x,i}\in\mathcal{U}\right):=\frac{1}{n_{2}-n_{1}+1}\sum_{i=n_{1}}^{n_{2}}\mathbb{P}(\theta_{x,i}\in\mathcal{U}).
\]
This notation implicitly defines $x$ and $i$ as random variables. For
instance, for a Borel set $E\subset X$
\[
\mathbb{P}_{n_{1}\leq i\leq n_{2}}\left(\theta_{x,i}\in\mathcal{U}\text{ and }x\in E\right):=\frac{1}{n_{2}-n_{1}+1}\sum_{i=n_{1}}^{n_{2}}\theta\left\{ x:\theta_{x,i}\in\mathcal{U}\text{ and }x\in E\right\} .
\]

If $\sigma$ is another probability measure on $X$ and $\theta_{x,n}$
and $\sigma_{y,n}$ appear in the same probabilistic expression, we
assume $x$ and $y$ are chosen independently unless stated otherwise.

We denote by $\mathbb{E}$ and $\mathbb{E}_{n_{1}\leq i\leq n_{2}}$
the expected value with respect to the probabilities $\mathbb{P}$
and $\mathbb{P}_{n_{1}\leq i\leq n_{2}}$.

We also introduce notation for randomly chosen integers in interval
ranges. Given integers $n\ge k\ge1$ let $\mathcal{N}_{k,n}:=\{k,k+1,..,n\}$
and denote the normalized counting measure on $\mathcal{N}_{k,n}$
by $\lambda_{k,n}$, i.e. $\lambda_{k,n}\{i\}=\frac{1}{n-k+1}$ for
each $k\le i\le n$. We write $\mathcal{N}_{n}$ and $\lambda_{n}$
in place of $\mathcal{N}_{1,n}$ and $\lambda_{1,n}$.

Component distributions have the convenient property that they are
almost invariant under repeated sampling, i.e. choosing components
of components. More precisely, for $\theta\in\mathcal{M}(X)$ and
$k,n\ge1$, let $\mathbb{P}_{n}^{\theta}$ denote the distribution
of components $\theta_{x,i}$, $0\leq i\leq n$, as defined above;
and let $\mathbb{Q}_{n,k}^{\theta}$ denote the distribution on components
obtained by first choosing a random component $\theta_{x,i}$, $0\leq i\leq n$,
and then, conditionally on $\sigma=\theta_{x,i}$, choosing a component
$\sigma_{y,j}$, $i\leq j\leq i+k$ with the usual distribution (note
that $\sigma_{y,j}=\theta_{y,j}$ is indeed a component of $\theta$).
For a proof of the following lemma see \cite[Lemma 2.7]{Ho}.
\begin{lem}
\label{lem:distribution-of-components-of-components}Given $\theta\in\mathcal{M}(X)$
and $k,n\ge1$, the total variation distance between $\mathbb{P}_{n}^{\theta}$
and $\mathbb{Q}_{n,k}^{\theta}$ satisfies 
\[
d_{TV}(\mathbb{P}_{n}^{\theta},\mathbb{Q}_{n,k}^{\theta})=O(k/n).
\]
\end{lem}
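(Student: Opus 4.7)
The plan is a direct bookkeeping argument at the level of atoms, since both $\mathbb{P}_n^\theta$ and $\mathbb{Q}_{n,k}^\theta$ are discrete probability measures on $\mathcal{M}(X)$ whose atoms are the component measures $\theta_D$, indexed by dyadic cells $D \in \bigcup_{j \ge 0} \mathcal{D}_j$ of positive $\theta$-mass. Since the partitions $\{\mathcal{D}_j\}_{j\ge0}$ are refining, a component of a component of $\theta$ is itself a component of $\theta$, so $\mathbb{Q}_{n,k}^\theta$ is supported on the same set of atoms. Thus it suffices to compare the two distributions atom by atom.

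First I would compute the two masses assigned to a fixed atom $\theta_D$ with $D \in \mathcal{D}_j$. Under $\mathbb{P}_n^\theta$, one selects $i$ uniformly in $\{0,\ldots,n\}$ and then $x \sim \theta$, obtaining $\theta_D$ exactly when $i=j$ and $x\in D$; hence the mass is $\theta(D)/(n+1)$ for $0\le j\le n$ and zero otherwise. Under $\mathbb{Q}_{n,k}^\theta$, one first samples a level $i \in \{0,\ldots,n\}$ and a component $\theta_{D_i}$ with $D_i\in\mathcal{D}_i$, then a level $j'\in\{i,\ldots,i+k\}$ and a point $y\sim \theta_{D_i}$; obtaining $\theta_D$ requires $j'=j$, $D\subset D_i$, and $y\in D$. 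Summing over admissible $i$ gives
\[
\mathbb{Q}_{n,k}^\theta(\theta_D) \;=\; \sum_{i=\max(0,j-k)}^{\min(n,j)} \frac{1}{n+1}\cdot\theta(D_i)\cdot\frac{1}{k+1}\cdot\frac{\theta(D)}{\theta(D_i)} \;=\; \frac{N(j)}{(n+1)(k+1)}\,\theta(D),
\]
where $N(j) := \min(n,j) - \max(0,j-k) + 1$ and $D_i$ is the unique cell in $\mathcal{D}_i$ containing $D$.

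The crucial observation is that $N(j)=k+1$ whenever $k\le j\le n$, so the two distributions agree exactly on the atoms sitting at these "interior" levels. Let $S$ denote the set of such atoms. It then remains to bound the total mass the two distributions place outside $S$, i.e.\ on levels $0\le j<k$ and $n<j\le n+k$. Using $\sum_{D\in\mathcal{D}_j}\theta(D)=1$, a direct arithmetic summation yields $\mathbb{P}_n^\theta(S^c) = k/(n+1)$ (only the lower boundary contributes) and $\mathbb{Q}_{n,k}^\theta(S^c) = k/(n+1)$ (lower and upper boundaries each contribute $k/(2(n+1))$ via $\sum_{j=0}^{k-1}(j+1) = k(k+1)/2$). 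Since $\mathbb{P}_n^\theta$ and $\mathbb{Q}_{n,k}^\theta$ agree on $S$, the total variation is bounded by $\mathbb{P}_n^\theta(S^c)+\mathbb{Q}_{n,k}^\theta(S^c) = O(k/n)$, as required.

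There is no real obstacle here: the statement is an elementary combinatorial identity extracted by unfolding the definitions. The only point that requires a little care is the careful accounting of the summation range for $i$ at the boundary levels $j<k$ and $j>n$, which is where the discrepancy between the two distributions is concentrated.
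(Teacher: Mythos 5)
Your proof is correct, and since the paper does not give its own argument but refers to \cite[Lemma 2.7]{Ho}, whose proof is the same direct bookkeeping on the indexed space of pairs $(j,D)$, your approach is essentially the cited one. One minor remark: since the map $(j,D)\mapsto\theta_D$ need not be injective, your computation lives on the indexed space, and what you have really shown is that the total variation distance there is $O(k/n)$; this suffices because pushing forward under $(j,D)\mapsto\theta_D$ cannot increase total variation, and also one should dismiss the case $k>n$ as trivial since $d_{TV}\le 2$.
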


In Euclidean space we also introduce re-scaled components. For $m\ge1$,
$\theta\in\mathcal{M}(\mathbb{R}^{m})$ and $n\ge0$, denote by $\theta^{x,n}$
the push-forward of $\theta_{x,n}$ by the unique homothety which
maps $\mathcal{D}_{n}(x)$ onto $[0,1)^{m}$. We view these as random
variables using the same conventions as above.

\subsection{\label{subsec:Entropy}Entropy}

In this subsection we provide some useful properties of entropy. These
properties will be used repeatedly throughout the rest of the paper,
often without further reference. For more details on entropy we refer
the reader to \cite[Section 2.2]{Pa}.

Let $(X,\mathcal{F})$ be a measurable space. Given a probability
measure $\theta$ on $X$ and a countable partition $\mathcal{D}\subset\mathcal{F}$
of $X$, the entropy of $\theta$ with respect to $\mathcal{D}$ is
defined by
\[
H(\theta,\mathcal{D}):=-\sum_{D\in\mathcal{D}}\theta(D)\log\theta(D).
\]
If $\mathcal{E}\subset\mathcal{F}$ is another countable partition
of $X$, the conditional entropy given $\mathcal{E}$ is defined as
follows
\[
H(\theta,\mathcal{D}\mid\mathcal{E}):=\sum_{E\in\mathcal{E}}\theta(E)\cdot H(\theta_{E},\mathcal{D}).
\]
For a sub-$\sigma$-algebra $\mathcal{A}$ of $\mathcal{F}$, the
conditional entropy given $\mathcal{A}$ is defined by
\[
H(\theta,\mathcal{D}\mid\mathcal{A}):=\int-\sum_{D\in\mathcal{D}}\theta(D\mid\mathcal{A})\log\theta(D\mid\mathcal{A})\:d\theta,
\]
where $\theta(D\mid\mathcal{A})$ is the conditional probability of
$D$ given $\mathcal{A}$. Note that
\[
H(\theta,\mathcal{D}\mid\mathcal{E})=H(\theta,\mathcal{D}\mid\sigma(\mathcal{E})),
\]
where $\sigma(\mathcal{E})$ is the sigma algebra generated by $\mathcal{E}$.
If $\xi$ is a discrete probability measure on $X$, its Shannon entropy
is given by
\[
H(\xi):=-\sum_{x\in X}\xi\{x\}\log\xi\{x\}.
\]

In what follows all entropies are assumed to be finite. Note that
we always have the following upper bound,
\begin{equation}
H(\theta,\mathcal{D})\le\log\left(\#\{D\in\mathcal{D}\::\:\theta(D)>0\}\right).\label{eq:card ub for ent}
\end{equation}

Given another countable partition $\mathcal{Q}\subset\mathcal{F}$
of $X$, conditional entropy satisfies the formula
\begin{equation}
H(\theta,\mathcal{D}\vee\mathcal{E}\mid\mathcal{Q})=H(\theta,\mathcal{E}\mid\mathcal{Q})+H(\theta,\mathcal{D}\mid\mathcal{Q}\vee\mathcal{E}),\label{eq:extended cond ent form}
\end{equation}
where
\[
\mathcal{D}\vee\mathcal{E}:=\{D\cap E\::\:D\in\mathcal{D}\text{ and }E\in\mathcal{E}\}.
\]
By taking $\mathcal{Q}$ to be the trivial partition, this gives
\begin{equation}
H(\theta,\mathcal{D}\mid\mathcal{E})=H(\theta,\mathcal{D}\vee\mathcal{E})-H(\theta,\mathcal{E}).\label{eq:cond ent form}
\end{equation}

If $(Y,\mathcal{G})$ is another measurable space, $\mathcal{C}\subset\mathcal{G}$
is a countable partition of $Y$, and $f:X\rightarrow Y$ is measurable, we have
\[
H(\theta,f^{-1}(\mathcal{C}))=H(f\theta,\mathcal{C}),
\]
where $f^{-1}(\mathcal{C}):=\{f^{-1}(C)\::\:C\in\mathcal{C}\}$.

The conditional entropy $H(\theta,\mathcal{D}\mid\mathcal{E})$ is
increasing in the $\mathcal{D}$-argument and decreasing in the $\mathcal{E}$-argument.
More precisely, if $\mathcal{D}',\mathcal{E}'\subset\mathcal{F}$
are countable partitions refining $\mathcal{D},\mathcal{E}$ respectively,
we have
\[
H(\theta,\mathcal{D}\mid\mathcal{E}')\le H(\theta,\mathcal{D}\mid\mathcal{E})\le H(\theta,\mathcal{D}'\mid\mathcal{E}).
\]
Similarly
\begin{equation}
H(\theta,\mathcal{D}\mid\mathcal{A}_{1})\le H(\theta,\mathcal{D}\mid\mathcal{A}_{2}),\label{eq:monot of ent in cond sig-alg}
\end{equation}
for sub-$\sigma$-algebras of $\mathcal{A}_{2}\subset\mathcal{A}_{1}\subset\mathcal{F}$.

The entropy and conditional entropy functions are concave and almost
convex in the measure argument. That is, given probability measures
$\theta_{1},...,\theta_{k}$ on $X$ and a probability vector $q=(q_{i})_{i=1}^{k}$
so that $\theta=\sum_{i=1}^{k}q_{i}\theta_{i}$, we have
\begin{equation}
\sum_{i=1}^{k}q_{i}H(\theta_{i},\mathcal{D})\le H(\theta,\mathcal{D})\le\sum_{i=1}^{k}q_{i}H(\theta_{i},\mathcal{D})+H(q),\label{eq:conc =000026 almo conv of ent}
\end{equation}
where $H(q):=-\sum q_{i}\log q_{i}$. These inequalities remain true
with $H(\cdot,\mathcal{D}\mid\mathcal{E})$ in place of $H(\cdot,\mathcal{D})$.

Recall that given $C\ge1$ we say that $\mathcal{D},\mathcal{E}$ are $C$-commensurable
if for each $D\in\mathcal{D}$ and $E\in\mathcal{E}$,
\[
\#\{E'\in\mathcal{E}\::\:E'\cap D\ne\emptyset\}\le C\text{ and }\#\{D'\in\mathcal{D}\::\:D'\cap E\ne\emptyset\}\le C.
\]
From (\ref{eq:cond ent form}) and (\ref{eq:card ub for ent}), it
follows easily that 
\[
\left|H(\theta,\mathcal{D})-H(\theta,\mathcal{E})\right|\le2\log C
\]
whenever $\mathcal{D}$ and $\mathcal{E}$ are $C$-commensurable.

Entropy is continuous in the total variation distance $d_{TV}$. In
particular we have the following lemma. As explained in \cite[Lemma 3.4]{Ho},
its proof follows easily from the concavity and almost convexity of conditional
entropy.
\begin{lem}
\label{lem:ent cont wrt tot var}Let $\theta,\xi$ be probability
measures on the measurable space $(X,\mathcal{F})$, and let $\mathcal{D},\mathcal{E}\subset\mathcal{F}$
be countable partitions of $X$. Suppose that there exists $C\ge1$
so that
\[
\#\{D\in\mathcal{D}\::\:D\cap E\ne\emptyset\}\le C\text{ for each }E\in\mathcal{E}.
\]
Then,
\[
\left|H(\theta,\mathcal{D}\mid\mathcal{E})-H(\xi,\mathcal{D}\mid\mathcal{E})\right|\le1+d_{TV}(\theta,\xi)\log C.
\]
\end{lem}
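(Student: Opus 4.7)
The plan is to prove this by combining the Jordan decomposition of the signed measure $\theta-\xi$ with the concavity and almost convexity of conditional entropy recorded in (\ref{eq:conc =000026 almo conv of ent}).

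First I would set $t:=d_{TV}(\theta,\xi)$ and dispose of the trivial case $t=0$. Assuming $t>0$, I apply the Jordan decomposition to $\theta-\xi$ to write
\[
\theta=(1-t)\rho+t\theta'\qquad\text{and}\qquad\xi=(1-t)\rho+t\xi',
\]
where $\rho,\theta',\xi'\in\mathcal{M}(X)$ (so $\rho$ captures the common mass $\min\{\theta,\xi\}$, whose total mass is $1-t$ by definition of $d_{TV}$, and $\theta',\xi'$ are the renormalized positive/negative parts).

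Next, I would observe the uniform a priori bound
\[
H(\sigma,\mathcal{D}\mid\mathcal{E})\le\log C\qquad\text{for every }\sigma\in\mathcal{M}(X),
\]
which is immediate from the hypothesis: for each $E\in\mathcal{E}$ with $\sigma(E)>0$, the measure $\sigma_E$ is supported on at most $C$ atoms of $\mathcal{D}$, so $H(\sigma_E,\mathcal{D})\le\log C$ by (\ref{eq:card ub for ent}), and one averages over $E$. Then applying the almost-convexity inequality from (\ref{eq:conc =000026 almo conv of ent}) to the decomposition of $\theta$ (with the two-element weight vector $q=(1-t,t)$, so that $H(q)\le 1$) and the concavity inequality from the same display to the decomposition of $\xi$, we obtain
\begin{align*}
H(\theta,\mathcal{D}\mid\mathcal{E}) &\le (1-t)H(\rho,\mathcal{D}\mid\mathcal{E})+tH(\theta',\mathcal{D}\mid\mathcal{E})+H(q),\\
H(\xi,\mathcal{D}\mid\mathcal{E}) &\ge (1-t)H(\rho,\mathcal{D}\mid\mathcal{E})+tH(\xi',\mathcal{D}\mid\mathcal{E}).
\end{align*}

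Subtracting, the $\rho$-terms cancel and the $tH(\xi',\cdot)$ term is nonnegative, so
\[
H(\theta,\mathcal{D}\mid\mathcal{E})-H(\xi,\mathcal{D}\mid\mathcal{E})\le tH(\theta',\mathcal{D}\mid\mathcal{E})+H(q)\le t\log C+1,
\]
using the a priori bound on $H(\theta',\mathcal{D}\mid\mathcal{E})$ and $H(q)\le\log 2=1$. Swapping the roles of $\theta$ and $\xi$ gives the reverse inequality, yielding the stated bound. There is no real obstacle here; the only subtlety is keeping track of which of the two inequalities in (\ref{eq:conc =000026 almo conv of ent}) is applied to which measure so that the $H(q)$ term lands on the correct side and drops out as $+1$ in the final estimate.
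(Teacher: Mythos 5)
Your approach matches the one the paper intends (it cites \cite[Lemma 3.4]{Ho} and says the proof follows from concavity and almost convexity of conditional entropy), and the argument is sound in outline. One correction is needed in the Jordan decomposition step: the paper's convention sets $\Vert\sigma\Vert_{TV}:=\sup_{E}\bigl(\sigma(E)-\sigma(X\setminus E)\bigr)$, which for $\sigma=\theta-\xi$ with Jordan decomposition $\sigma^{+}-\sigma^{-}$ equals $\sigma^{+}(X)+\sigma^{-}(X)=|\theta-\xi|(X)$. For probability measures $\sigma^{+}(X)=\sigma^{-}(X)$, so $t:=d_{TV}(\theta,\xi)=2\sigma^{+}(X)$, and the common part $\min\{\theta,\xi\}$ has total mass $1-\sigma^{+}(X)=1-t/2$, not $1-t$. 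The correct decomposition is therefore $\theta=(1-t/2)\rho+(t/2)\theta'$ and $\xi=(1-t/2)\rho+(t/2)\xi'$. Running your argument with the weight vector $(1-t/2,\,t/2)$ still gives $H(q)\le 1$ and the a priori bound $H(\cdot,\mathcal{D}\mid\mathcal{E})\le\log C$, and produces the even sharper estimate $\bigl|H(\theta,\mathcal{D}\mid\mathcal{E})-H(\xi,\mathcal{D}\mid\mathcal{E})\bigr|\le 1+(t/2)\log C$, which of course implies the stated inequality; so the conclusion stands, but the decomposition as you wrote it does not exist under the paper's convention for $d_{TV}$.
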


\subsection{\label{subsec:Entropy-in-Rd}Entropy in $\mathbb{R}^{m}$}

Let $1\le m\le d$, $\theta\in\mathcal{M}(\mathbb{R}^{m})$ and $n\ge1$
be given. We call $H(\theta,\mathcal{D}_{n})$ the scale-$n$ entropy
of $\theta$, where recall that $\mathcal{D}_{n}$ is the level-$n$
dyadic partition of $\mathbb{R}^{m}$. We collect here some basic
properties of this quantity. As in the previous subsection, these properties
will be used repeatedly, often without further reference.

We often normalize by $n$, in which case
\[
\frac{1}{n}H(\theta,\mathcal{D}_{n})\leq m+O\left(\frac{\log(2+\mathrm{diam}(\mathrm{supp}(\theta)))}{n}\right).
\]
Note that $m$ does not appear in the subscript of the big-$O$. This
is so since $m\le d$ and $d$ is considered as global.

By the definition of the distribution on components, for $k\ge1$
\begin{equation}
H(\theta,\mathcal{D}_{n+k}|\mathcal{D}_{n})=\mathbb{E}_{i=n}(H(\theta_{x,i},\mathcal{D}_{i+k})).\label{eq:cond ent as avg of ent of comp}
\end{equation}
Hence we have the bound,
\[
\frac{1}{k}H(\theta,\mathcal{D}_{n+k}|\mathcal{D}_{n})\leq m.
\]

Scale-$n$ entropy transforms nicely under similarity maps. Indeed,
for any similarity $f:\mathbb{R}^{m}\rightarrow\mathbb{R}^{m}$ with
Lipschitz constant $\mathrm{Lip}(f)$,
\begin{equation}
H(f\theta,\mathcal{D}_{n})=H(\theta,\mathcal{D}_{n+\log\mathrm{Lip}(f)})+O(1).\label{eq:ent under sim}
\end{equation}
Recalling the notation $T_{a},S_{c}$ for translation and scaling
from Section \ref{subsec:Spaces-of-affine maps}, it follows from
(\ref{eq:ent under sim}) that
\begin{align}
H(T_{a}\theta,\mathcal{D}_{n}) & =H(\theta,\mathcal{D}_{n})+O(1)\text{ for }a\in\mathbb{R}^{m},\label{eq:entropy-under-translation}\\
H(S_{c}\theta,\mathcal{D}_{n}) & =H(\theta,\mathcal{D}_{n+\log c})+O(1)\text{ for }c>0.\nonumber 
\end{align}
By (\ref{eq:entropy-under-translation}) and the concavity of entropy,
it follows that for any $\sigma\in\mathcal{M}(\mathbb{R}^{m})$
\begin{equation}
H(\sigma*\theta,\mathcal{D}_{n})\geq H(\theta,\mathcal{D}_{n})-O(1),\label{eq:conv does not dec ent}
\end{equation}
where $\sigma*\theta$ denotes the convolution of $\sigma$ and $\theta$.

Lipschitz maps, with bounded Lipschitz constant, do not increase entropy
too much. Indeed, if $1\le l\le d$ and $f:\mathrm{supp}(\theta)\rightarrow\mathbb{R}^{l}$
is Lipschitz,
\begin{equation}
H(f\theta,\mathcal{D}_{n})\le H(\theta,\mathcal{D}_{n})+O_{\mathrm{Lip}(f)}(1).\label{eq:ent under lip map}
\end{equation}
Consequently, if $f$ is bi-Lipschitz onto its image,
\begin{equation}
H(f\theta,\mathcal{D}_{n})=H(\theta,\mathcal{D}_{n})+O_{\mathrm{Lip}(f)}(1).\label{eq:ent under bi-lip map}
\end{equation}
From (\ref{eq:ent under lip map}) it also follows that for any $c>0$,
\begin{equation}
H(A\theta,\mathcal{D}_{n})\ge H(\theta,\mathcal{D}_{n})-O_{c}(1)\text{ for all }A\in\mathrm{GL}(m,\mathbb{R})\text{ with }\alpha_{m}(A)\ge c.\label{eq:ent under expand lin map}
\end{equation}

Given $1\le l\le m$ and a compact subset $K$ of $\mathrm{A}_{m,l}$,
it follows from (\ref{eq:rep of psi via pi_V}) and (\ref{eq:ent under bi-lip map}) that
\begin{equation}
H(\psi\theta,\mathcal{D}_{n})=H(\pi_{(\ker A_{\psi})^{\perp}}\theta,\mathcal{D}_{n})+O_{K}(1)\text{ for all }\psi\in K.\label{eq:ent of psi comp to ent of proj}
\end{equation}

The entropy of images is nearly continuous in the map. Indeed, if
$1\le l\le d$ and $f,g:\mathrm{supp}(\theta)\rightarrow\mathbb{R}^{l}$
satisfy $|f(x)-g(x)|\le2^{-n}$, then
\begin{equation}
|H(f\theta,\mathcal{D}_{n})-H(g\theta,\mathcal{D}_{n})|=O(1).\label{eq:ent is cont in map}
\end{equation}

Scale-$n$ entropy is insensitive to coordinate changes. Given $1\le l<m$
and $V\in\mathrm{Gr}_{l}(m)$, it is easy to see that the partitions
$\mathcal{D}_{n}^{m}$ and $\pi_{V}^{-1}\mathcal{D}_{n}^{l}\vee\pi_{V^{\perp}}^{-1}\mathcal{D}_{n}^{m-l}$
are $O(1)$-commensurable. Consequently,
\begin{eqnarray}
H(\theta,\mathcal{D}_{n}) & = & H(\theta,\pi_{V}^{-1}\mathcal{D}_{n}\vee\pi_{V^{\perp}}^{-1}\mathcal{D}_{n})+O(1)\label{eq:ent =000026 coordinate change}\\
 & = & H(\pi_{V}\theta,\mathcal{D}_{n})+H(\theta,\mathcal{D}_{n}\mid\pi_{V}^{-1}\mathcal{D}_{n})+O(1).\nonumber 
\end{eqnarray}
Also note that $\pi_{V}^{-1}\mathcal{D}_{n}^{l}$ and $P_{V}^{-1}\mathcal{D}_{n}^{m}$
are $O(1)$-commensurable, and so
\[
H(P_{V}\theta,\mathcal{D}_{n})=H(\pi_{V}\theta,\mathcal{D}_{n})+O(1).
\]

The following lemma express entropy in terms of the contribution of
different ``scales''. Its proof is identical to the proof of \cite[Lemma 3.4]{Ho1}
and is therefore omitted.
\begin{lem}
\label{lem:multiscale-entropy-formula}Let $1\le m\le d$, $\theta\in\mathcal{M}(\mathbb{R}^{m})$,
$n\ge k\ge1$, $l\ge0$ and $C\ge1$ be given. Suppose that $\mathrm{diam}(\mathrm{supp}(\theta))\le C2^{-l}$,
then
\[
\frac{1}{n}H(\theta,\mathcal{D}_{l+n})=\mathbb{E}_{l\le i\le l+n}\left(\frac{1}{k}H\left(\theta_{x,i},\mathcal{D}_{i+k}\right)\right)+O_{C}\left(\frac{k}{n}\right).
\]
\end{lem}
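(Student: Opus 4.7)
The plan is to unfold the right-hand side using the chain rule, recognize a telescoping sum, and then absorb the two boundary blocks into the $O_C(k/n)$ error via the diameter hypothesis. The argument is purely bookkeeping; (\ref{eq:cond ent as avg of ent of comp}) and (\ref{eq:cond ent form}) do all the conceptual work.

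First, applying (\ref{eq:cond ent as avg of ent of comp}) at each scale and then (\ref{eq:cond ent form}), and using that $\mathcal{D}_{i+k}$ refines $\mathcal{D}_i$, I would rewrite
\[
\mathbb{E}_{l\le i\le l+n}H(\theta_{x,i},\mathcal{D}_{i+k})=\frac{1}{n+1}\sum_{i=l}^{l+n}H(\theta,\mathcal{D}_{i+k}\mid\mathcal{D}_i)=\frac{1}{n+1}\sum_{i=l}^{l+n}(a_{i+k}-a_i),
\]
where $a_j:=H(\theta,\mathcal{D}_j)$. The sum on the right telescopes to
\[
\sum_{i=l}^{l+n}(a_{i+k}-a_i)=\sum_{j=l+n+1}^{l+n+k}a_j-\sum_{j=l}^{l+k-1}a_j,
\]
since the intermediate terms $a_j$ with $l+k\le j\le l+n$ cancel (this uses $k\le n$).

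Next I would control each boundary block using the diameter hypothesis $\mathrm{diam}(\mathrm{supp}(\theta))\le C2^{-l}$. This hypothesis implies $\mathrm{supp}(\theta)$ meets $O_C(1)$ atoms of $\mathcal{D}_l$, hence $a_l=O_C(1)$ by (\ref{eq:card ub for ent}); since $\mathcal{D}_{j+1}$ refines $\mathcal{D}_j$ with bounded branching, $0\le a_{j+1}-a_j\le m$, so $a_{l+t}=O_C(mt+1)$ for every $t\ge 0$. Consequently
\[
\sum_{j=l}^{l+k-1}a_j=O_C(k^2),\qquad \sum_{j=l+n+1}^{l+n+k}a_j=k\,a_{l+n}+O(k^2).
\]

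Dividing both sides by $k(n+1)$, the leading term is $\tfrac{1}{n+1}a_{l+n}$, and the boundary contributions aggregate to $O_C(k/n)$. Finally, using $a_{l+n}=O_C(n)$ together with $\tfrac{1}{n+1}=\tfrac{1}{n}+O(1/n^2)$, the difference between $\tfrac{1}{n+1}a_{l+n}$ and $\tfrac{1}{n}H(\theta,\mathcal{D}_{l+n})$ is $O_C(1/n)$, which is absorbed into $O_C(k/n)$. This yields the claimed identity. The only thing to watch carefully is the dependence of the boundary bounds on $C$; beyond that there is no real obstacle.
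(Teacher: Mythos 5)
Your proof is correct, and it uses the same telescoping mechanism that underlies the proof of \cite[Lemma 3.4]{Ho1}, to which the paper defers: the chain rule $H(\theta,\mathcal{D}_{i+k}\mid\mathcal{D}_i)=a_{i+k}-a_i$, cancellation of the middle terms, and the diameter hypothesis to control the two boundary blocks of length $k$. All the error bookkeeping ($a_l=O_C(1)$, $0\le a_{j+1}-a_j\le m\le d$, replacing $n+1$ by $n$) is handled correctly, and the leading term $\tfrac{1}{n}a_{l+n}=\tfrac{1}{n}H(\theta,\mathcal{D}_{l+n})$ comes out as claimed.
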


Given $\theta\in\mathcal{M}(\mathbb{R}^{m})$, the upper and lower
entropy dimensions of $\theta$ are defined as
\[
\overline{\dim}_{e}\theta:=\underset{n\rightarrow\infty}{\limsup}\:\frac{1}{n}H(\theta,\mathcal{D}_{n})\text{ and }\underline{\dim}_{e}\theta:=\underset{n\rightarrow\infty}{\liminf}\:\frac{1}{n}H(\theta,\mathcal{D}_{n}).
\]
If $\overline{\dim}_{e}\theta=\underline{\dim}_{e}\theta$ we denote
the common value by $\dim_{e}\theta$, which is called the entropy
dimension of $\theta$. The proof of the following lemma can be found
e.g. in \cite{FLR}.
\begin{lem}
\label{lem:dim_e=00003Ddim}If $\theta\in\mathcal{M}(\mathbb{R}^{m})$
is exact dimensional then $\dim_{e}\theta$ exists and is equal to
$\dim\theta$.
\end{lem}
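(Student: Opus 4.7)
Set $s := \dim \theta$, so that by exact dimensionality,
\[
\lim_{\epsilon \downarrow 0} \frac{\log \theta(B(x,\epsilon))}{\log \epsilon} = s \quad\text{for } \theta\text{-a.e.\ } x.
\]
The plan is to sandwich the entropy dimension by proving $\underline{\dim}_e \theta \ge s$ and $\overline{\dim}_e \theta \le s$ separately; both conclusions of the lemma (existence and the identification with $\dim\theta$) follow.

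For the lower bound, I would use the trivial inclusion $\mathcal{D}_n(x) \subset B(x, \sqrt{m}\,2^{-n})$, which comes from the fact that every level-$n$ dyadic cube has diameter $\sqrt{m}\,2^{-n}$ and contains $x$. Dividing $-\log\theta(\mathcal{D}_n(x)) \ge -\log\theta(B(x, \sqrt{m}\,2^{-n}))$ by $n$ and invoking exact dimensionality yields
\[
\liminf_{n\to\infty} \frac{-\log\theta(\mathcal{D}_n(x))}{n} \ge s \quad\text{for } \theta\text{-a.e.\ } x.
\]
Since $H(\theta, \mathcal{D}_n) = \int -\log\theta(\mathcal{D}_n(x))\,d\theta(x)$ and the integrands are non-negative, Fatou's lemma delivers $\underline{\dim}_e \theta \ge s$.

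For the upper bound, fix $\delta,\eta > 0$. Exact dimensionality also implies $\theta(B(x,r)) \ge r^{s+\delta}$ for all sufficiently small $r$, for $\theta$-a.e.\ $x$. Egoroff's theorem then produces an integer $n_0$ and a Borel set $G \subset \mathrm{supp}(\theta)$ with $\theta(G) \ge 1-\eta$ such that
\[
\theta(B(x, 2^{-n})) \ge 2^{-n(s+\delta)} \quad\text{for all } x \in G \text{ and } n \ge n_0.
\]
A standard Vitali argument then bounds the number of atoms of $\mathcal{D}_n$ meeting $G$ by $O(2^{n(s+\delta)})$: extract a maximal collection of pairwise disjoint balls $\{B(x_i, 2^{-n})\}_{i \in I}$ with centres in $G$; the lower bound on their $\theta$-masses together with $\sum_i \theta(B(x_i, 2^{-n})) \le 1$ forces $|I| \le 2^{n(s+\delta)}$; the enlarged balls $B(x_i, 2\cdot 2^{-n})$ cover $G$; and each such ball meets $O(1)$ atoms of $\mathcal{D}_n$. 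Decomposing $\theta = \theta(G)\theta_G + \theta(G^c)\theta_{G^c}$ and combining the almost-convexity of entropy with the trivial bound $H(\theta_{G^c}, \mathcal{D}_n) \le mn + O(1)$ (which uses that $\mathrm{supp}(\theta)$ is compact), I obtain
\[
\tfrac{1}{n} H(\theta, \mathcal{D}_n) \le (s+\delta) + m\eta + O(1/n).
\]
Letting $\delta,\eta \downarrow 0$ yields $\overline{\dim}_e \theta \le s$, completing the proof.

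The only step with any real content is the Vitali-type covering count in the upper bound; everything else is a routine manipulation of the entropy inequalities collected in Section~\ref{subsec:Entropy} together with the definition of exact dimensionality. This is essentially the classical Billingsley--Young-type linkage between local dimension and entropy dimension.
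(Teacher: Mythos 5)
Your proof is correct. The paper does not give its own argument for this lemma but instead cites \cite{FLR}; your two-sided estimate---Fatou's lemma for the lower bound, and Egoroff plus a disjointification/covering count for the upper bound---is precisely the standard argument linking a.e.\ local dimension to entropy dimension that one finds there, so there is nothing to reconcile.
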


\subsection{\label{subsec:Symbolic-notations}Symbolic notations}

Recall that $\Lambda$ is the finite index set of the IFS $\Phi$ (see Section \ref{subsec:The-setup}). Write $\Lambda^{\mathbb{N}}$ for the set of sequences $\omega=(\omega_{k})_{k\ge0}$
with $\omega_{k}\in\Lambda$ for each $k\ge0$. We equip $\Lambda^{\mathbb{N}}$
with the product topology, where each copy of $\Lambda$ is equipped
with the discrete topology. 

For $n\ge0$ and $\omega\in\Lambda^{\mathbb{N}}$ write $\omega|_{n}$
for the prefix of $\omega$ of length $n$. That is, $\omega|_{n}:=\omega_{0}...\omega_{n-1}$
and $\omega|_{0}$ is the empty word $\emptyset$. Given a word $u\in\Lambda^{n}$
denote by $[u]$ the cylinder set in $\Lambda^{\mathbb{N}}$ corresponding
to $u$, i.e.
\[
[u]:=\left\{ \omega\in\Lambda^{\mathbb{N}}\::\:\omega|_{n}=u\right\} .
\]
Let $\mathcal{P}_{n}$ be the partition of $\Lambda^{\mathbb{N}}$
into $n$-cylinders, that is $\mathcal{P}_{n}:=\{[u]\}_{u\in\Lambda^{n}}$.
Given $\xi\in\mathcal{M}(\Lambda^{\mathbb{N}})$ and $\mathcal{U}\subset\Lambda^{*}$
we often write $\xi(\mathcal{U})$ in place of $\xi\left(\cup_{u\in\mathcal{U}}[u]\right)$.

Denote by $\beta$ the Bernoulli measure on $\Lambda^{\mathbb{N}}$
corresponding to $p$. That is, $\beta$ is the unique element in
$\mathcal{M}(\Lambda^{\mathbb{N}})$ so that $\beta([u])=p_{u}$ for
each $u\in\Lambda^{*}$.

We say that a finite set of words $\mathcal{U}\subset\Lambda^{*}$
is a minimal cut-set for $\Lambda^{*}$ if for every $\omega\in\Lambda^{\mathbb{N}}$
there exists a unique $u\in\mathcal{U}$ with $\omega\in[u]$. Note
that in this case, by iterating the relation $\mu=\sum_{i\in\Lambda}p_{i}\cdot\varphi_{i}\mu$, we get
\begin{equation}
\mu=\sum_{u\in\mathcal{U}}p_{u}\cdot\varphi_{u}\mu.\label{eq:self-sim rel for cut set}
\end{equation}
We sometimes refer to measures of the form $\varphi_{u}\mu$ as cylinder
measures of $\mu$.

We shall need to consider minimal cut-sets for $\Lambda^{*}$,
which are defined in terms of the singular values of elements in $\mathbf{S}_{\Phi}^{\mathrm{L}}$.
For $1\le m\le d$ and $n\ge1$ we define
\[
\Psi_{m}(n):=\left\{ i_{0}...i_{k}\in\Lambda^{*}\::\:\alpha_{m}(A_{i_{0}...i_{k}})\le2^{-n}<\alpha_{m}(A_{i_{0}...i_{k-1}})\right\} .
\]
By Lemma \ref{lem:bounds on sing vals of prod} and since $\Vert A_{i}\Vert_{op}<1$
for $i\in\Lambda$, it follows that $\Psi_{m}(n)$ is indeed a minimal
cut-set for $\Lambda^{*}$. By Lemma \ref{lem:bounds on sing vals of prod}
it also follows that,
\begin{equation}
\min_{i\in\Lambda}\alpha_{d}(A_{i})\le\alpha_{m}(A_{u})2^{n}\le1\text{ for all }u\in\Psi_{m}(n).\label{eq:words in Phi_m are Theta(2^n)}
\end{equation}
Given $\omega\in\Lambda^{\mathbb{N}}$ we denote by $\Psi_{m}(n;\omega)$
the unique $u\in\Psi_{m}(n)$ with $\omega\in[u]$.

It will often be useful to choose words from $\Psi_{m}(n)$ at random.
For this, let $\mathbf{I}_{m}(n)$ be the random word with
\[
\mathbb{P}\{\mathbf{I}_{m}(n)=u\}=\begin{cases}
p_{u} & \text{ if }u\in\Psi_{m}(n)\\
0 & \text{ otherwise}
\end{cases}.
\]
Note that,
\begin{equation}\label{eq:conn bet I_m(n) & beta}
\mathbb{P}\{\mathbf{I}_{m}(n)\in\mathcal{U}\}=\beta(\mathcal{U})\text{ for all }\mathcal{U}\subset\Psi_{m}(n).
\end{equation}
Additionally, let $\mathbf{U}(n)$ be the random word with
\[
\mathbb{P}\{\mathbf{U}(n)=u\}=\begin{cases}
p_{u} & \text{ if }u\in\Lambda^{n}\\
0 & \text{ otherwise}
\end{cases}.
\]
From (\ref{eq:self-sim rel for cut set}) it follows that
\begin{equation}
\mu=\mathbb{E}(\varphi_{\mathbf{I}_{m}(n)}\mu)=\mathbb{E}(\varphi_{\mathbf{U}(n)}\mu).\label{eq:decomp of mu according to cut sets}
\end{equation}

We shall often randomize $n$ in the same way as we do in the case
of components. For instance, given $1\le m\le d$, $n_{2}\ge n_{1}\ge1$
and $F:\mathcal{M}(\mathbb{R}^{d})\rightarrow\mathbb{R}$, we write
\[
\mathbb{E}_{n_{1}\leq i\leq n_{2}}(F(\varphi_{\mathbf{I}_{m}(i)}\mu)):=\frac{1}{n_{2}-n_{1}+1}\sum_{i=n_{1}}^{n_{2}}\mathbb{E}(F(\varphi_{\mathbf{I}_{m}(i)}\mu)).
\]

The following lemma shows that events holding with high probability
over many scales with respect to $\{\mathbf{U}(n)\}_{n\ge1}$, also
hold with high probability with respect to $\{\mathbf{I}_{m}(n)\}_{n\ge1}$.
\begin{lem}
\label{lem:ac of stopping times}There exist constants $C_{1},C_{2},N\ge1$,
which depend only on $\{A_{i}\}_{i\in\Lambda}$, so that for every
$1\le m\le d$ and $n\ge N$
\[
\mathbb{E}_{1\le i\le n}\left(\delta_{\mathbf{I}_{m}(i)}\right)\ll\mathbb{E}_{1\le i\le C_{1}n}\left(\delta_{\mathbf{U}(i)}\right),
\]
with Radon-Nykodym derivative bounded by $C_{2}$.
\end{lem}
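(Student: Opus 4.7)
The plan is to verify the inequality pointwise on the countable discrete set $\Lambda^{*}$, which reduces the entire lemma to two elementary estimates: (a) a uniform bound on the cardinality of $\{i\ge 1 : u\in\Psi_{m}(i)\}$ for a fixed word $u$, and (b) an upper bound on the length of words that can appear in $\Psi_{m}(i)$ for some $i\le n$. Unwinding the definitions, for each $u\in\Lambda^{*}$ one has
\[
\mathbb{E}_{1\le i\le n}\bigl(\delta_{\mathbf{I}_{m}(i)}\bigr)(\{u\})=\frac{p_{u}}{n}\cdot\#\{1\le i\le n\::\:u\in\Psi_{m}(i)\}
\]
and
\[
\mathbb{E}_{1\le i\le C_{1}n}\bigl(\delta_{\mathbf{U}(i)}\bigr)(\{u\})=\frac{p_{u}}{C_{1}n}\cdot\mathbf{1}\{1\le|u|\le C_{1}n\},
\]
so it will suffice to prove (a) and (b) with constants depending only on $\{A_{i}\}_{i\in\Lambda}$.

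For (a), I would write $u=i_{0}\cdots i_{k}$ and set $u^{-}:=i_{0}\cdots i_{k-1}$, with the convention $\alpha_{m}(A_{\emptyset})=1$. By the definition of $\Psi_{m}(i)$, the condition $u\in\Psi_{m}(i)$ is equivalent to
\[
-\log_{2}\alpha_{m}(A_{u^{-}})<i\le-\log_{2}\alpha_{m}(A_{u}).
\]
Applying Lemma \ref{lem:bounds on sing vals of prod} with $l=m$ to the factorization $A_{u}=A_{u^{-}}A_{i_{k}}$, one obtains $\alpha_{m}(A_{u})\ge\alpha_{m}(A_{u^{-}})\,\alpha_{d}(A_{i_{k}})$, hence
\[
-\log_{2}\alpha_{m}(A_{u})+\log_{2}\alpha_{m}(A_{u^{-}})\le-\log_{2}c_{0},\qquad c_{0}:=\min_{j\in\Lambda}\alpha_{d}(A_{j})>0.
\]
Therefore the interval above has length at most $-\log_{2}c_{0}$, which is independent of $m$ and $u$; it contains at most $c:=\lfloor-\log_{2}c_{0}\rfloor+1$ integers. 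This gives (a) with multiplicity bound $c$.

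For (b), if $u\in\Psi_{m}(i)$ with $i\le n$ then $\alpha_{m}(A_{u^{-}})>2^{-i}\ge 2^{-n}$, while sub-multiplicativity and $\alpha_{m}\le\alpha_{1}=\Vert\cdot\Vert_{op}$ yield $\alpha_{m}(A_{u^{-}})\le\Vert A_{u^{-}}\Vert_{op}\le\rho^{|u|-1}$, where $\rho:=\max_{j\in\Lambda}\Vert A_{j}\Vert_{op}<1$. Combining, $|u|-1<n/\log_{2}(1/\rho)$, so any $C_{1}>1/\log_{2}(1/\rho)+1$ ensures $|u|\le C_{1}n$ once $n\ge N$ for a suitable $N$ depending only on $\rho$. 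Putting (a) and (b) together, whenever the left-hand side at $u$ is nonzero the ratio of the two sides equals $c\cdot C_{1}$, so the claimed domination holds with $C_{2}:=cC_{1}$. The only point to watch is that $c_{0}$ and $\rho$ were defined from the linear parts alone, so neither $C_{1}$, $C_{2}$, nor $N$ depend on $m$; there is no substantive obstacle beyond this bookkeeping.
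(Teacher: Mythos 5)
Your proof is correct and takes the same route as the paper: the paper states exactly the two inequalities $\alpha_{m}(A_{i_{1}\cdots i_{n-1}})\alpha_{d}(A_{i_{n}})\le\alpha_{m}(A_{u})\le\prod_{j}\Vert A_{i_{j}}\Vert_{op}$ and then defers to \cite[Proposition 2.8]{HR}; your step (a) is precisely what the first inequality buys (bounded multiplicity of $i\mapsto u\in\Psi_{m}(i)$), and your step (b) is precisely what the second buys (linear bound on $|u|$), so you have spelled out the deferred argument. One small wording issue: the final ratio is \emph{at most} $cC_{1}$ rather than \emph{equal to} $cC_{1}$, since the multiplicity in (a) is an upper bound, but this does not affect the conclusion.
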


\begin{proof}
By Lemma \ref{lem:bounds on sing vals of prod} it follows that for
all $1\le m\le d$ and $i_{1}...i_{n}=u\in\Lambda^{*}$,
\[
\alpha_{m}(A_{i_{1}...i_{n-1}})\alpha_{d}(A_{i_{n}})\le\alpha_{m}(A_{u})\le\prod_{j=1}^{n}\Vert A_{i_{j}}\Vert_{op}.
\]
The lemma now follows from these inequalities by an argument identical
to the one given in \cite[Proposition 2.8]{HR}.
\end{proof}

\subsection{\label{subsec:Lyapunov-exponents-and Fur meas}Lyapunov exponents
and Furstenberg measures}

Let $0>\chi_{1}\ge...\ge\chi_{d}$ be the Lyapunov exponents corresponding
to $\{A_{i}\}_{i\in\Lambda}$ and $p$. That is,
\[
\chi_{m}=\underset{n\rightarrow\infty}{\lim}\:\frac{1}{n}\log\alpha_{m}(A_{\omega|_{n}})\text{ for }1\le m\le d\text{ and }\beta\text{-a.e. }\omega.
\]
By (\ref{eq:m-ired and m-prox assump}) and \cite[Theorem IV.1.2]{BL},
it follows that $\chi_{m}>\chi_{m+1}$ for all $1\le m<d$.

Assumption (\ref{eq:m-ired and m-prox assump}) also implies the uniqueness
of various stationary measures. Recall from Section \ref{subsec:spaces of alt forms}
that for $1\le m\le d$ the space $\mathrm{Gr}_{m}(d)$ can be identified
with a subset of $\mathrm{P}(\wedge^{m}\mathbb{R}^{d})$ via the Plücker
embedding $\iota_{m}$. Moreover, note that $\iota_{m}\circ A(V)=\wedge^{m}A\circ\iota_{m}(V)$
for each $A\in\mathrm{GL}(d,\mathbb{R})$ and $V\in\mathrm{Gr}_{m}(d)$.
Thus, by \cite[Theorem IV.1.2]{BL} and (\ref{eq:m-ired and m-prox assump}),
it follows that for each $0\le m\le d$ there exist a unique $\nu_{m}\in\mathcal{M}(\mathrm{Gr}_{m}(d))$
with $\nu_{m}=\sum_{i\in\Lambda}p_{i}\cdot A_{i}\nu_{m}$, where $A_{i}\nu_{m}$
is the pushforward of $\nu_{m}$ via the map $V\rightarrow A_{i}(V)$.
Similarly, by Remark \ref{rem:semi of trans also SI =000026 prox},
for each $0\le m\le d$ there exist a unique $\nu_{m}^{*}\in\mathcal{M}(\mathrm{Gr}_{m}(d))$
with $\nu_{m}^{*}=\sum_{i\in\Lambda}p_{i}\cdot A_{i}^{*}\nu_{m}^{*}$.

Additionally, by \cite[Proposition IV.4.1]{BL} and (\ref{eq:m-ired and m-prox assump}),
there exist a unique $\nu\in\mathcal{M}(\mathrm{F}(d))$ with $\nu=\sum_{i\in\Lambda}p_{i}\cdot A_{i}\nu$,
where $A_{i}\nu$ is the pushforward of $\nu$ via the map taking
$(V_{j})_{j=0}^{d}\in\mathrm{F}(d)$ to $(A_{i}(V_{j}))_{j=0}^{d}$.
Similarly, by Remark \ref{rem:semi of trans also SI =000026 prox},
there exists a unique $\nu^{*}\in\mathcal{M}(\mathrm{F}(d))$ with
$\nu^{*}=\sum_{i\in\Lambda}p_{i}\cdot A_{i}^{*}\nu^{*}$. The measures
$\nu,\nu_{0},...,\nu_{d}$ and $\nu^{*},\nu_{0}^{*},...,\nu_{d}^{*}$
are called the Furstenberg measures corresponding to $\sum_{i\in\Lambda}p_{i}\delta_{A_{i}}$
and $\sum_{i\in\Lambda}p_{i}\delta_{A_{i}^{*}}$ respectively.

Given $0\le m\le d$, note that by the uniqueness of $\nu_{m}^{*}$
\begin{equation}
\nu_{m}^{*}(E)=\nu^{*}\left\{ (V_{i})_{i=0}^{d}\::\:V_{m}\in E\right\} \text{ for every Borel set }E\in\mathrm{Gr}_{m}(d),\label{eq:fur =00003D proj of fur}
\end{equation}
and similarly for the measures $\nu_{m}$ and $\nu$.

The following useful lemma will be used several times below. Recall the definition of $\kappa(V,W)$ from Section \ref{subsec:Grassmannians-and-the flag space}.
\begin{lem}
\label{lem:nu=00007Bkappa<del=00007D<eps}Let $0\le m\le d$ and $\epsilon>0$
be given. Then there exists $\delta>0$ so that,
\[
\nu_{m}\left\{ V\::\:\kappa(V,W)\le\delta\right\} <\epsilon\text{ for all }W\in\mathrm{Gr}_{d-m}(d).
\]
In particular,
\[
\nu_{m}\{V\::\:V\cap W\ne\{0\}\}=0\text{ for all }W\in\mathrm{Gr}_{d-m}(d).
\]
\end{lem}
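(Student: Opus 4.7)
The approach splits into three steps: (i) for each fixed $W$, show the qualitative statement $\nu_{m}\{V : V\cap W\ne\{0\}\}=0$ by reducing to a classical result about strongly irreducible stationary measures on projective space; (ii) upgrade to the $\kappa$-version with $W$ still fixed via continuity/monotone convergence; (iii) upgrade to uniformity in $W$ by compactness of $\mathrm{Gr}_{d-m}(d)$. The boundary cases $m=0$ and $m=d$ are vacuous since then $\kappa(V,W)=\infty$ by the convention in Section \ref{subsec:Grassmannians-and-the flag space}, so I assume $1\le m\le d-1$.

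For step (i), I would pass to the Plücker embedding $\iota_{m}$. Fix $W\in\mathrm{Gr}_{d-m}(d)$, pick a nonzero $\eta_{W}\in\wedge^{d-m}\mathbb{R}^{d}$ representing $W$, and set
\[
Z_{W}:=\{\xi\in\wedge^{m}\mathbb{R}^{d}\::\:\xi\wedge\eta_{W}=0\}.
\]
This is a linear subspace of $\wedge^{m}\mathbb{R}^{d}$, and it is proper because any $V\in\mathrm{Gr}_{m}(d)$ complementary to $W$ has $\iota_{m}(V)\wedge\eta_{W}\ne0$. A direct check using (\ref{eq:equiv cond for lin indep}) gives that $\iota_{m}(V)\subset Z_{W}$ iff $V\cap W\ne\{0\}$. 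The pushforward $\iota_{m}\nu_{m}$ is stationary under $\sum_{i\in\Lambda}p_{i}\delta_{\wedge^{m}A_{i}}$, since $\iota_{m}\circ A_{i}=\wedge^{m}A_{i}\circ\iota_{m}$. By the $m$-strong irreducibility in (\ref{eq:m-ired and m-prox assump}), $\wedge^{m}\mathbf{S}_{\Phi}^{\mathrm{L}}$ is strongly irreducible on $\wedge^{m}\mathbb{R}^{d}$. The classical fact that a strongly irreducible stationary measure on projective space gives zero mass to every proper projective subspace (see e.g.\ \cite[Proposition III.2.3]{BL}) then yields $\iota_{m}\nu_{m}(\mathrm{P}(Z_{W}))=0$, which translates back to $\nu_{m}\{V:V\cap W\ne\{0\}\}=0$; this already proves the ``in particular'' clause.

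For step (ii), I would first observe that for $1\le\dim V,\dim W$ we have $\kappa(V,W)=0$ iff $V\cap W\ne\{0\}$: the reverse direction is immediate, and the forward direction uses compactness of $\mathrm{Gr}_{1}(V)$ and $\mathrm{Gr}_{1}(W)$ so that the infimum in the definition of $\kappa$ is attained. Since $V\mapsto\kappa(V,W)$ is continuous, the sets $A_{\delta}(W):=\{V:\kappa(V,W)\le\delta\}$ are closed and decrease as $\delta\downarrow0$ to $\{V:V\cap W\ne\{0\}\}$, so continuity of $\nu_{m}$ from above gives $\nu_{m}(A_{\delta}(W))\to 0$ as $\delta\downarrow0$ for each fixed $W$.

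For step (iii), I would argue by contradiction. If no uniform $\delta$ works, there exist $W_{n}\in\mathrm{Gr}_{d-m}(d)$ and $\delta_{n}\downarrow 0$ with $\nu_{m}(A_{\delta_{n}}(W_{n}))\ge\epsilon$. Extract $W_{n}\to W^{*}$ using compactness of $\mathrm{Gr}_{d-m}(d)$. A short check using the definitions of $\kappa$ and $d_{\mathrm{Gr}_{k}}$ shows that $W\mapsto\kappa(V,W)$ is Lipschitz uniformly in $V$, so for each fixed $\delta>0$ one has $A_{\delta_{n}}(W_{n})\subset A_{\delta}(W^{*})$ for all large $n$. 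Hence $\nu_{m}(A_{\delta}(W^{*}))\ge\epsilon$ for every $\delta>0$, contradicting step (ii) applied to $W^{*}$. The main obstacle is not any single step individually but lining up the exact black-box about strongly irreducible stationary measures on projective space in the semigroup setting with our hypothesis (\ref{eq:m-ired and m-prox assump}); once that input is in place, both the Plücker reduction and the compactness upgrade are routine.
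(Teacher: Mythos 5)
Your proof is correct and follows essentially the same route as the paper's: both reduce the qualitative statement to \cite[Proposition III.2.3]{BL} via the Plücker embedding (your $Z_W$ is exactly the kernel of the paper's wedge map $T$), and both obtain the uniform $\delta$ by a compactness argument on $\mathrm{Gr}_{d-m}(d)$. The only difference is organizational — you make the ``fixed $W$'' continuity step (ii) explicit before the compactness contradiction, whereas the paper folds it into a single sequential argument — but the underlying ideas are identical.
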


\begin{proof}
The lemma holds trivially when $m=0$ or $m=d$. Hence assume that
$1\le m<d$.

Let $W\in\mathrm{Gr}_{d-m}(d)$ and $0\ne\gamma\in\wedge^{d-m}\mathbb{R}^{d}$
be with $\iota_{d-m}(W)=\gamma\mathbb{R}$. Let $T:\wedge^{m}\mathbb{R}^{d}\rightarrow\wedge^{d}\mathbb{R}^{d}$
be with $T(\zeta)=\zeta\wedge\gamma$ for $\zeta\in\wedge^{m}\mathbb{R}^{d}$,
where $\zeta\wedge\gamma$ is the wedge product of $\zeta$ and $\gamma$.
Since $\gamma\ne0$ the linear map $T$ is nonzero. Additionally,
from $\nu_{m}=\sum_{i\in\Lambda}p_{i}\cdot A_{i}\nu_{m}$ it follows
easily that
\[
\iota_{m}\nu_{m}=\sum_{i\in\Lambda}p_{i}\cdot\wedge^{m}A_{i}\iota_{m}\nu_{m}.
\]
From these facts, since $\mathbf{S}_{\Phi}^{\mathrm{L}}$ is $m$-strongly
irreducible, and by \cite[Proposition III.2.3]{BL}, it follows that
\[
\iota_{m}\nu_{m}\{\zeta\mathbb{R}\::\:\zeta\mathbb{R}\subset\ker T\}=0.
\]
Thus, since for $V\in\mathrm{Gr}_{m}(d)$ we have $\iota_{m}(V)\subset\ker T$
if and only if $V\cap W\ne\{0\}$, we obtain
\begin{equation}
\nu_{m}\{V\::\:V\cap W\ne\{0\}\}=0.\label{eq:nu_m=00007Bint not triv=00007D=00003D0}
\end{equation}

Next assume by contradiction that there does not exist $\delta>0$
as in the statement of the lemma. Then for each $n\ge1$ there exists
$W_{n}\in\mathrm{Gr}_{d-m}(d)$ so that
\[
\nu_{m}\left\{ V\::\:\kappa(V,W_{n})\le1/n\right\} \ge\epsilon.
\]
Since $\mathrm{Gr}_{d-m}(d)$ is compact, and by moving to a subsequence
without changing the notation, we may assume that there exists $W\in\mathrm{Gr}_{d-m}(d)$
so that $d_{\mathrm{Gr}_{d-m}}(W,W_{n})\overset{n}{\rightarrow}0$.
By the definition $\kappa$ it now follows easily that
\[
\nu_{m}\{V\::\:V\cap W\ne\{0\}\}\ge\epsilon,
\]
which contradicts (\ref{eq:nu_m=00007Bint not triv=00007D=00003D0})
and completes the proof of the lemma.
\end{proof}

\subsection{\label{subsec:Coding and Furstenberg maps}The coding map and the
Furstenberg boundary maps}

Let $\Pi:\Lambda^{\mathbb{N}}\rightarrow K_{\Phi}$ be the coding
map corresponding to $\Phi$, that is
\[
\Pi\omega:=\underset{n\rightarrow\infty}{\lim}\:\varphi_{\omega|_{n}}(0)\text{ for }\omega\in\Lambda^{\mathbb{N}}.
\]
Given $u\in\Lambda^{*}$
\begin{equation}
\Pi(u\omega)=\varphi_{u}(\Pi\omega)\text{ for }\omega\in\Lambda^{\mathbb{N}},\label{eq:equivar of coding map}
\end{equation}
where $u\omega$ denotes the concatenation of $u$ with $\omega$.
It is easy to verify that $\Pi\beta=\mu$.

Given $n\ge1$ let $\Pi_{n}:\Lambda^{\mathbb{N}}\rightarrow\mathrm{A}_{d,d}$
be with $\Pi_{n}(\omega):=\varphi_{\omega|_{n}}$ for $\omega\in\Lambda^{\mathbb{N}}$.
If we identify $\Pi$ with the map from $\Lambda^{\mathbb{N}}$ to
$\mathrm{A}_{d,d}^{\mathrm{vec}}$ taking $\omega\in\Lambda^{\mathbb{N}}$
to the constant function $\Pi\omega$, then $\{\Pi_{n}\}_{n\ge1}$
converges pointwise to $\Pi$ with respect to the norm metric on $\mathrm{A}_{d,d}^{\mathrm{vec}}$.

Given $A\in\mathrm{GL}(d,\mathbb{R})$ and $0\le m\le d$, recall
the notation $L_{m}(A)$ from Section \ref{subsec:Singular-values-and SVD}.
Also recall that for $1\le m\le d$ the space $\mathrm{Gr}_{m}(d)$
can be identified with a subset of $\mathrm{P}(\wedge^{m}\mathbb{R}^{d})$
via $\iota_{m}$. Thus, by (\ref{eq:m-ired and m-prox assump}) and
\cite[Lemma 2.17 and Proposition 4.7]{BQ} (see also \cite[Proposition III.3.2]{BL}),
it follows that for each $0\le m\le d$ there exists a Borel map $L_{m}:\Lambda^{\mathbb{N}}\rightarrow\mathrm{Gr}_{m}(d)$
so that,
\begin{enumerate}
\item $\{A_{\omega_{0}}...A_{\omega_{n}}\nu_{m}\}_{n\ge1}$ converges weakly
to $\delta_{L_{m}(\omega)}$ for $\beta$-a.e. $\omega$;
\item \label{enu:def of L_m via L_m of matrices}$L_{m}(\omega)=\underset{n\rightarrow\infty}{\lim}\:L_{m}(A_{\omega_{0}}...A_{\omega_{n}})$
for $\beta$-a.e. $\omega$;
\item $L_{m}\beta=\nu_{m}$.
\end{enumerate}
The maps $L_{0},...,L_{d}$ are called the Furstenberg boundary maps.
By property (\ref{enu:def of L_m via L_m of matrices}) and by altering the definition of the boundary maps on a set of $\beta$-measure
zero, we may clearly assume that
\[
L_{0}(\omega)\subset...\subset L_{d}(\omega)\text{ for all }\omega\in\Lambda^{\mathbb{N}}.
\]
It is easy to verify that for all $0\le m\le d$ and $u\in\Lambda^{*}$,
\begin{equation}
L_{m}(u\omega)=A_{u}(L_{m}(\omega))\text{ for }\beta\text{-a.e. }\omega.\label{eq:equivar of bd maps}
\end{equation}

With the aid of the boundary maps we can prove the following lemma.
\begin{lem}
\label{lem:P=00007Bkappa<delta=00007D<epsilon}Let $0\le m\le d$
and $1\le k\le d$ be given. Then for every $\epsilon>0$ there exists
$\delta>0$ and $N\ge1$ so that,
\begin{equation}
\mathbb{P}_{i=n}\left\{ \kappa\left(L_{m}(A_{\mathbf{I}_{k}(i)}),V\right)\le\delta\right\} <\epsilon\text{ for all }V\in\mathrm{Gr}_{d-m}(d)\text{ and }n\ge N.\label{eq:P=00007Bkappa<delta=00007D<epsilon all V all n}
\end{equation}
\end{lem}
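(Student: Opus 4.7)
The plan is to show that the $\mathrm{Gr}_{m}(d)$-valued random variable $L_{m}(A_{\mathbf{I}_{k}(n)})$ converges in distribution to $\nu_{m}$ as $n\to\infty$, and then to promote the pointwise bound from Lemma \ref{lem:nu=00007Bkappa<del=00007D<eps} to a uniform estimate in $V$ by compactness of $\mathrm{Gr}_{d-m}(d)$. The boundary cases $m=0$ and $m=d$ are trivial since $\kappa(L_{m}(A),V)=\infty$ for the unique $V\in\mathrm{Gr}_{d-m}(d)$, so I focus on $0<m<d$.

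For the distributional convergence, \eqref{eq:conn bet I_m(n) & beta} identifies the law of $\mathbf{I}_{k}(n)$ with that of $\Psi_{k}(n;\omega)$ under $\omega\sim\beta$. Because $\Vert A_{i}\Vert_{op}<1$ for each $i\in\Lambda$ and $\alpha_{k}(A_{u})\le 2^{-n}$ for every $u\in\Psi_{k}(n)$, the length $|\Psi_{k}(n;\omega)|$ tends to infinity for $\beta$-a.e.\ $\omega$ as $n\to\infty$. Combined with property (2) of the Furstenberg boundary maps from Section \ref{subsec:Coding and Furstenberg maps}---namely $L_{m}(A_{\omega|_{N}})\to L_{m}(\omega)$ as $N\to\infty$ for $\beta$-a.e.\ $\omega$---this yields $L_{m}(A_{\Psi_{k}(n;\omega)})\to L_{m}(\omega)$ almost surely. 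Since $L_{m}\beta=\nu_{m}$, the law of $L_{m}(A_{\mathbf{I}_{k}(n)})$ converges weakly to $\nu_{m}$.

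For the uniformization step, Lemma \ref{lem:nu=00007Bkappa<del=00007D<eps} provides $\delta'>0$ such that $\nu_{m}\{W:\kappa(W,V)\le 2\delta'\}<\epsilon/2$ for every $V\in\mathrm{Gr}_{d-m}(d)$. A short estimate based on \eqref{eq:alt def for dist of lines} produces a constant $C>0$ such that $d_{\mathrm{Gr}_{d-m}}(V,V')\le\eta$ implies
\[
\{W\in\mathrm{Gr}_{m}(d):\kappa(W,V)\le\delta\}\ \subset\ \{W\in\mathrm{Gr}_{m}(d):\kappa(W,V')\le\delta+C\eta\};
\]
indeed, a pair of unit vectors $x\in W$, $y\in V$ witnessing $\kappa(W,V)\le\delta$ can be compared with $y':=P_{V'}y$, which differs from $y$ by $O(\eta)$ and spans a line in $V'$ close to $y\mathbb{R}$. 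Fix $\eta>0$ small enough that $C\eta\le\delta'$, cover the compact space $\mathrm{Gr}_{d-m}(d)$ by finitely many closed $\eta$-balls centred at $V_{1},\dots,V_{J}$, and set $\delta:=\delta'$.

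For each $j$ the set $\{W:\kappa(W,V_{j})\le 2\delta'\}\subset\mathrm{Gr}_{m}(d)$ is closed, so the portmanteau theorem applied to the weak convergence established above gives
\[
\limsup_{n\to\infty}\mathbb{P}\{\kappa(L_{m}(A_{\mathbf{I}_{k}(n)}),V_{j})\le 2\delta'\}\ \le\ \nu_{m}\{W:\kappa(W,V_{j})\le 2\delta'\}\ <\ \epsilon/2.
\]
Choosing $N$ so that for all $n\ge N$ each of these finitely many probabilities is below $\epsilon$, and then approximating an arbitrary $V\in\mathrm{Gr}_{d-m}(d)$ by the nearest $V_{j}$, the set-inclusion above yields \eqref{eq:P=00007Bkappa<delta=00007D<epsilon all V all n}. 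The one technical nuisance, rather than a genuine obstacle, is that $L_{m}(A)$ is a priori only defined when $\alpha_{m}(A)>\alpha_{m+1}(A)$; but under \eqref{eq:m-ired and m-prox assump} the ratio $\alpha_{m}(A_{\omega|_{N}})/\alpha_{m+1}(A_{\omega|_{N}})$ grows exponentially for $\beta$-a.e.\ $\omega$ by \cite[Theorem IV.1.2]{BL}, so the exceptional event contributes $o(1)$ to the probability and can be absorbed into $\epsilon$.
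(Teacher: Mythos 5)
Your proof is correct and follows essentially the same route as the paper: establish weak convergence of $L_{m}(A_{\mathbf{I}_{k}(n)})$ to $\nu_{m}$ via property (2) of the Furstenberg boundary maps, apply the portmanteau inequality together with Lemma \ref{lem:nu=00007Bkappa<del=00007D<eps}, and promote the pointwise bound to a uniform one over $V$ by compactness of $\mathrm{Gr}_{d-m}(d)$. The paper compresses the final compactness step (``it now follows easily'') and handles the $\alpha_{m}>\alpha_{m+1}$ definedness issue in a remark, whereas you spell both out with the explicit $\eta$-net and Lipschitz estimate for $\kappa$; the underlying argument is identical.
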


\begin{rem*}
When $1\le m<d$ we implicitly assume that $\alpha_{m}(A_{\mathbf{I}_{k}(i)})>\alpha_{m+1}(A_{\mathbf{I}_{k}(i)})$
in the the definition of the event $\left\{ \kappa\left(L_{m}(A_{\mathbf{I}_{k}(i)}),V\right)\le\delta\right\} $.
This is necessary in order for $L_{m}(A_{\mathbf{I}_{k}(i)})$ to
be well defined. We make similar implicit assumptions below. Note
that since $\chi_{m}>\chi_{m+1}$,
\[
\underset{n\rightarrow\infty}{\lim}\mathbb{P}_{i=n}\left\{ \alpha_{m}(A_{\mathbf{I}_{k}(i)})>\alpha_{m+1}(A_{\mathbf{I}_{k}(i)})\right\} =1.
\]
\end{rem*}

\begin{proof}
We have
\[
L_{m}(\omega)=\underset{n\rightarrow\infty}{\lim}\:L_{m}(A_{\Psi_{k}(n;\omega)})\text{ for }\beta\text{-a.e. }\omega.
\]
 This together with $L_{m}\beta=\nu_{m}$ implies that $\{L_{m}(A_{\mathbf{I}_{k}(n)})\}_{n\ge1}$
converges in distribution to $\nu_{m}$. In particular,
\begin{equation}
\underset{n\rightarrow\infty}{\limsup}\:\mathbb{P}_{i=n}\left\{ L_{m}(A_{\mathbf{I}_{k}(i)})\in F\right\} \le\nu_{m}(F)\text{ for every closed }F\subset\mathrm{Gr}_{m}(d).\label{eq:by conv in dist for cl sets}
\end{equation}

Let $\epsilon>0$ be given. By Lemma \ref{lem:nu=00007Bkappa<del=00007D<eps},
there exists $\delta>0$ so that
\[
\nu_{m}\{W\::\:\kappa(W,V)\le2\delta\}<\epsilon/2\text{ for all }V\in\mathrm{Gr}_{d-m}(d).
\]
Hence by (\ref{eq:by conv in dist for cl sets}), for each $V\in\mathrm{Gr}_{d-m}(d)$
there exists $N_{V}\ge1$ so that for all $n\ge N_{V}$
\[
\mathbb{P}_{i=n}\left\{ \kappa(L_{m}(A_{\mathbf{I}_{k}(i)}),V)\le2\delta\right\} <\epsilon.
\]
By the compactness of $\mathrm{Gr}_{d-m}(d)$ it now follows easily
that there exists $N\ge1$ so that (\ref{eq:P=00007Bkappa<delta=00007D<epsilon all V all n})
holds, which completes the proof of the lemma.
\end{proof}

\subsection{\label{subsec:Disintegrations}Disintegration of measures}

In this subsection we give the necessary facts and notations regarding disintegration
of measures.
Let $X$ be a complete separable metric space, and denote its Borel $\sigma$-algebra
by $\mathcal{B}_{X}$. Let $\mathcal{A}$ be a sub-$\sigma$-algebra
of $\mathcal{B}_{X}$, and suppose that $\mathcal{A}$ is countably
generated. Given $x\in X$ write $[x]_{\mathcal{A}}$ for the intersection
of all $A\in\mathcal{A}$ with $x\in A$. Note that $[x]_{\mathcal{A}}\in\mathcal{A}$
since $\mathcal{A}$ is countably generated.

Let $\theta\in\mathcal{M}(X)$ be given. By \cite[Theorem 5.14]{EiWa}
it follows that there exists a family $\{\theta_{x}^{\mathcal{A}}\}_{x\in X}\subset\mathcal{M}(X)$
with the following properties:
\begin{enumerate}
\item The map taking $x\in X$ to $\int f\:d\theta_{x}^{\mathcal{A}}$ is
$\mathcal{A}$-measurable for every Borel function $f:X\rightarrow[0,\infty]$;
\item For every $f\in L^{1}(\theta)$ and $A\in\mathcal{A}$,
\[
\int_{A}f\:d\theta=\int_{A}\int f\:d\theta_{x}^{\mathcal{A}}\:d\theta(x).
\]
In particular
$\theta=\int\theta_{x}^{\mathcal{A}}\:d\theta(x)$;
\item There exists $A\in\mathcal{A}$ so that $\theta(A)=1$ and $\theta_{x}^{\mathcal{A}}([x]_{\mathcal{A}})=1$
for all $x\in A$.
\end{enumerate}
If $\{\tilde{\theta}_{x}^{\mathcal{A}}\}_{x\in X}\subset\mathcal{M}(X)$
is another family with these properties, then $\theta_{x}^{\mathcal{A}}=\tilde{\theta}_{x}^{\mathcal{A}}$
for $\theta$-a.e. $x$. The family $\{\theta_{x}^{\mathcal{A}}\}_{x\in X}$
is called the disintegration of $\theta$ with respect to $\mathcal{A}$.

Note that by the first and third defining properties,
\begin{equation}
\theta_{y}^{\mathcal{A}}=\theta_{x}^{\mathcal{A}}\text{ for }\theta\text{-a.e. }x\text{ and }\theta_{x}^{\mathcal{A}}\text{-a.e. }y.\label{eq:theta_y=00003Dtheta_x theta-a.e. x}
\end{equation}

It follows directly from the definitions that for a countable partition
$\mathcal{D}\subset\mathcal{B}_{X}$
\begin{equation}
H(\theta,\mathcal{D}\mid\mathcal{A})=\int H(\theta_{x}^{\mathcal{A}},\mathcal{D})\:d\theta(x),\label{eq:cond ent via disinteg}
\end{equation}
where recall from Section \ref{subsec:Entropy} that $H(\theta,\mathcal{D}\mid\mathcal{A})$
is the conditional entropy of $\theta$ with respect to $\mathcal{D}$
given $\mathcal{A}$. Also note that
\begin{equation}
H(\theta_{x}^{\mathcal{A}},\mathcal{D}\mid\mathcal{A})=H(\theta_{x}^{\mathcal{A}},\mathcal{D})\text{ for }\theta\text{-a.e. }x.\label{eq:cond ent =00003D ent for slices}
\end{equation}

Let $\mathcal{A}'$ be another countably generated sub-$\sigma$-algebra
of $\mathcal{B}_{X}$, and suppose that $\mathcal{A}'\subset\mathcal{A}$.
By \cite[Proposition 5.20]{EiWa},
\begin{equation}
(\theta_{x}^{\mathcal{A}'})_{y}^{\mathcal{A}}=\theta_{y}^{\mathcal{A}}\text{ for }\theta\text{-a.e. }x\text{ and }\theta_{x}^{\mathcal{A}'}\text{-a.e. }y.\label{eq:cond of cond =00003D cond of orig}
\end{equation}

Let $Y$ be another complete separable metric space and let $f:Y\rightarrow X$
be Borel measurable. Suppose that there exists $\xi\in\mathcal{M}(Y)$
so that $f\xi=\theta$. Then by \cite[Corollary 5.24]{EiWa},
\begin{equation}
f(\xi_{y}^{f^{-1}\mathcal{A}})=\theta_{f(y)}^{\mathcal{A}}\text{ for }\xi\text{-a.e. }y\in Y.\label{eq:push oh slice =00003D slice of push}
\end{equation}

Denote the Borel $\sigma$-algebra of $\mathbb{R}^{d}$ by $\mathcal{B}_{\mathbb{R}^{d}}$,
and let $V$ be a linear subspace of $\mathbb{R}^{d}$. Given $\theta\in\mathcal{M}(\mathbb{R}^{d})$
we write $\{\theta_{x}^{V}\}_{x\in\mathbb{R}^{d}}$ for the disintegration
of $\theta$ with respect to $P_{V}^{-1}\mathcal{B}_{\mathbb{R}^{d}}$.
Note that,
\[
\mathrm{supp}(\theta_{x}^{V})\subset x+V^{\perp}\text{ for }\theta\text{-a.e. }x.
\]
For $\varphi\in\mathrm{A}_{d,d}$ we have $\varphi^{-1}P_{V}^{-1}\mathcal{B}_{\mathbb{R}^{d}}=P_{A_{\varphi}^{*}(V)}^{-1}\mathcal{B}_{\mathbb{R}^{d}}$,
where recall that $A_{\varphi}$ is the linear part of $\varphi$.
Thus by (\ref{eq:push oh slice =00003D slice of push}),
\begin{equation}
\varphi(\theta_{x}^{A_{\varphi}^{*}(V)})=(\varphi\theta)_{\varphi(x)}^{V}\text{ for }\theta\text{-a.e. }x.\label{eq:push of slive via affine =00003D slice of push via affine}
\end{equation}

Given $\xi\in\mathcal{M}(\Lambda^{\mathbb{N}})$ we
write $\{\xi_{\omega}^{V}\}_{\omega\in\Lambda^{\mathbb{N}}}$ for
the disintegration of $\xi$ with respect to $\Pi^{-1}(P_{V}^{-1}\mathcal{B}_{\mathbb{R}^{d}})$.
By (\ref{eq:push oh slice =00003D slice of push}),
\[
\Pi(\xi_{\omega}^{V})=(\Pi\xi)_{\Pi\omega}^{V}\text{ for }\xi\text{-a.e. }\omega.
\]
Additionally, from (\ref{eq:cond of cond =00003D cond of orig}) it follows that
for a linear subspace $W\subset V$,
\begin{equation}
\xi_{\omega}^{W}=\int\xi_{\eta}^{V}\:d\xi_{\omega}^{W}(\eta)\text{ for }\xi\text{-a.e. }\omega.\label{eq:cond of cond =00003D cond of orig on sym space}
\end{equation}

\subsection{\label{subsec:Ledrappier-Young-formula}Ledrappier-Young formula
for self-affine measures}

Recall that $\mathcal{P}_{1}=\{[i]\}_{i\in\Lambda}$, and for $0\le m\le d$
set
\[
\mathrm{H}_{m}:=\int H(\beta,\mathcal{P}_{1}\mid\Pi^{-1}P_{V}^{-1}\mathcal{B}_{\mathbb{R}^{d}})\:d\nu_{m}^{*}(V).
\]
By (\ref{eq:cond ent via disinteg}),
\[
\mathrm{H}_{m}=\int\int H(\beta_{\omega}^{V},\mathcal{P}_{1})\:d\beta(\omega)\:d\nu_{m}^{*}(V).
\]
For $1\le m\le d$ write
\[
\Delta_{m}:=\frac{\mathrm{H}_{m}-\mathrm{H}_{m-1}}{\chi_{m}},
\]
and for $1\le l\le m\le d$ set $\Sigma_{l}^{m}:=\sum_{i=l}^{m}\Delta_{i}$.
For later use, it will also be convenient to write $\Sigma_{1}^{0}$ in
place of $0$.

The following important result, which is crucial for the present work,
was obtained by Feng \cite[Theorem 1.4]{Fe}. Under our assumption
(\ref{eq:m-ired and m-prox assump}), similar versions of it were
first established by Bárány and Käenmäki \cite{BK}.
\begin{thm}
\label{thm:LY formula for SA}For $\nu^{*}$-a.e. $(V_{i})_{i=0}^{d}$,
$\mu$-a.e. $x$, and each $0\le l<m\le d$, the measure $P_{V_{m}}\mu_{x}^{V_{l}}$
is exact dimensional with $\dim P_{V_{m}}\mu_{x}^{V_{l}}=\Sigma_{l+1}^{m}$.
\end{thm}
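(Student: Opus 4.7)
My plan is to adapt the Ledrappier--Young formalism following the scheme of \cite{Fe}, which refines the earlier partial results of \cite{BK} obtained under stronger assumptions.

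First, I would set up the symbolic framework. The shift $\sigma\colon\Lambda^{\mathbb{N}}\to\Lambda^{\mathbb{N}}$ with Bernoulli measure $\beta$ is ergodic and measure-preserving, and the cocycle $\omega\mapsto A_{\omega_{0}}$ admits, under (\ref{eq:m-ired and m-prox assump}), a complete Oseledets decomposition whose equivariant flags are encoded by the boundary maps $L_{0},\dots,L_{d}$ (and their duals, which encode $\nu^{*}$). For each $0\le l\le d$, let $\mathcal{A}_{l}^{*}$ denote the $\sigma$-algebra $\Pi^{-1}(P_{V_{l}}^{-1}\mathcal{B}_{\mathbb{R}^{d}})$ as a measurable field over $(V_{i})\in\mathrm{F}(d)$; by (\ref{eq:push of slive via affine =00003D slice of push via affine}) and (\ref{eq:equivar of bd maps}), after passing to the natural extension these form a genuine decreasing, shift-compatible filtration. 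Via (\ref{eq:push oh slice =00003D slice of push}), the disintegrations $\beta_{\omega}^{V_{l}}$ push forward under $\Pi$ to the slices $\mu_{\Pi\omega}^{V_{l}}$, so that the dimension problem for $P_{V_{m}}\mu_{x}^{V_{l}}$ translates into an entropy problem for $\beta$ relative to these nested $\sigma$-algebras.

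Second, I would derive the dimension formula from entropy via the cocycle. Words $u\in\Psi_{m}(n)$ correspond to the scale at which the cylinder $\varphi_{u}K_{\Phi}$ has $\alpha_{m}$-radius $\approx 2^{-n}$, and by the multiplicative ergodic theorem the relevant logarithmic rate in the $V_{m}$-direction is $\chi_{m}$. The classical Ledrappier--Young telescoping then formally identifies the local dimension of $P_{V_{m}}\mu_{x}^{V_{l}}$ with
\begin{equation*}
\sum_{i=l+1}^{m}\frac{\mathrm{H}_{i}-\mathrm{H}_{i-1}}{\chi_{i}}=\Sigma_{l+1}^{m},
\end{equation*}
where the $i$-th summand measures the extra entropy released by refining the slicing from the level-$(i-1)$ flag component to the level-$i$ component, normalised by the Lyapunov exponent that governs the corresponding expansion of $\varphi_{u}^{-1}$. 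This step uses the Rokhlin entropy formula, a Shannon--McMillan--Breiman statement conditional on $\mathcal{A}_{i}^{*}$, and the almost-multiplicativity $\alpha_{i}(A_{uv})\asymp\alpha_{i}(A_{u})\alpha_{i}(A_{v})$ provided by (\ref{eq:m-ired and m-prox assump}) together with \cite[Theorem IV.1.2]{BL}.

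The main obstacle, and the reason Feng's argument is needed beyond the Bárány--Käenmäki framework, is upgrading pointwise upper and lower dimension inequalities to true exact dimensionality of every slice $P_{V_{m}}\mu_{x}^{V_{l}}$ simultaneously. The plan here is a martingale argument along the dyadic filtration of $\mathbb{R}^{d}$: using the equivariance (\ref{eq:push of slive via affine =00003D slice of push via affine}), the compatibility between successive conditional measures expressed in (\ref{eq:cond of cond =00003D cond of orig on sym space}), and a careful comparison of the partitions $\mathcal{D}_{n}$ with the singular-value-adapted cylinders indexed by $\Psi_{m}(n)$, one shows that both the upper and lower local dimensions of $P_{V_{m}}\mu_{x}^{V_{l}}$ are shift-invariant functions of $\omega$ and hence almost-surely constant by ergodicity of $\beta$. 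Having secured an almost-sure constant value, its identification with $\Sigma_{l+1}^{m}$ is then forced by the entropy computation of the previous paragraph.
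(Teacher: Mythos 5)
The paper does not prove Theorem \ref{thm:LY formula for SA}; it imports it verbatim from Feng \cite[Theorem 1.4]{Fe}, with the earlier version under extra assumptions attributed to B\'ar\'any--K\"aenm\"aki \cite{BK}. You correctly identify both sources and your sketch is broadly faithful to the overall architecture of Feng's argument: reduce to the symbolic model, exploit the Oseledets flag structure and its equivariance, compute conditional entropies along the dyadic-like filtrations, and then upgrade to exact dimensionality via an invariance/ergodicity argument for the local dimensions. That much is sound as an outline and is essentially what the cited proof does.

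There is, however, one concrete misstatement and one more diffuse gap. The concrete issue is the sentence invoking ``almost-multiplicativity $\alpha_{i}(A_{uv})\asymp\alpha_{i}(A_{u})\alpha_{i}(A_{v})$ provided by (\ref{eq:m-ired and m-prox assump}) together with \cite[Theorem IV.1.2]{BL}.'' That theorem gives simplicity of the Lyapunov spectrum, not a deterministic almost-multiplicativity of intermediate singular values, which in fact fails in general: all one has pointwise is the one-sided estimate of Lemma \ref{lem:bounds on sing vals of prod}, i.e.\ $\alpha_{l}(A)\alpha_{m}(B)\le\alpha_{l}(AB)\le\alpha_{l}(A)\alpha_{1}(B)$. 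Feng's argument does not rely on two-sided pointwise almost-multiplicativity; it relies on the multiplicative ergodic theorem (asymptotics along $\beta$-typical orbits), the convergence $L_{k}(A_{\omega|_{n}})\to L_{k}(\omega)$, and the transversality statements that Lemma \ref{lem:nu=00007Bkappa<del=00007D<eps} type estimates provide. If you keep the almost-multiplicativity claim you have an unjustified and in general false ingredient in the proof. The second, larger gap is that the central part of the argument --- the entropy/dimension comparison that ultimately produces the formula $\Sigma_{l+1}^{m}$, and the martingale/ergodicity upgrade from a.e.\ existence of a limit to exact dimensionality for $\nu^{*}$-a.e.\ flag --- is invoked rather than carried out. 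These are precisely the technical contributions of \cite{Fe} over \cite{BK}, and as written your sketch assumes them. As a citation-and-overview this is fine (and matches the role the theorem plays in the paper), but as a proof proposal it is a scaffold with the load-bearing steps left as black boxes.
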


\begin{rem}
\label{rem:0<=00003DDelta_m<=00003D1}Note that for $(V_{i})_{i=0}^{d}\in\mathrm{F}(d)$,
$1\le m\le d$, and $\mu$-a.e. $x$, the measure $P_{V_{m}}\mu_{x}^{V_{m-1}}$
is supported on an affine $1$-dimensional subspace. Thus, by Theorem
\ref{thm:LY formula for SA}, it follows that $0\le\Delta_{m}\le1$
for each $1\le m\le d$. Moreover, we have $\dim\mu=d$ if and only
if $\Delta_{m}=1$ for each $1\le m\le d$, and if $\dim\mu<d$ then
there exists a unique $1\le m\le d$ so that $\Sigma_{1}^{m-1}=m-1$
and $\Delta_{m}<1$.
\end{rem}

\section{\label{sec:Entropy-estimates}Entropy estimates}

In this section we establish lower bounds on the entropy of projections
of components of cylinders of $\mu$. By using these, we also obtain
lower bounds on the entropy of projections of $\mu$ and its components.
We start with the following definition.
\begin{defn}
\label{def:def of mathcal(Z)}For $\Sigma,\epsilon>0$, $k\ge1$,
$i\ge0$ and a linear subspace $V$ of $\mathbb{R}^{d}$, denote by
$\mathcal{Z}(\Sigma,\epsilon,k,i,V)$ the set of all words $u\in\Lambda^{*}$
so that for $\theta:=\varphi_{u}\mu$ we have
\[
\mathbb{P}_{j=i}\left\{ \frac{1}{k}H\left(\pi_{V}(\theta_{x,j}),\mathcal{D}_{j+k}\right)>\Sigma-\epsilon\right\} >1-\epsilon.
\]
\end{defn}

For $1\le m\le d$ and $n\ge1$ recall the minimal cut-set $\Psi_{m}(n)$
and random word $\mathbf{I}_{m}(n)$ defined in Section \ref{subsec:Symbolic-notations}.
Only for this section, it will be convenient to define these objects
also for $m=0$. Thus we define $\Psi_{0}(n):=\Psi_{1}(n)$ and $\mathbf{I}_{0}(n):=\mathbf{I}_{1}(n)$
for all $n\ge1$. The following proposition is the main result of
this section.
\begin{prop}
\label{prop:inductive main proj prop}Let $0\le m\le d$ be given.
Then for every $\epsilon>0$, $k\ge K(\epsilon)\ge1$, $n\ge N(\epsilon,k)\ge1$
and $V\in\mathrm{Gr}_{m}(d)$,
\[
\mathbb{P}_{1\le i\le n}\left\{ \mathbf{I}_{m}(i)\in\mathcal{Z}(\Sigma_{1}^{m},\epsilon,k,i,V)\right\} >1-\epsilon.
\]
\end{prop}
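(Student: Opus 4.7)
My approach is by induction on $m$. The base case $m=0$ is immediate: $V=\{0\}$, $\pi_V$ takes values in a single point, $\Sigma_1^0=0$, so every $u$ lies in $\mathcal{Z}(0,\epsilon,k,i,\{0\})$.

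For the inductive step, suppose the conclusion holds at level $m-1$ and fix $V\in\mathrm{Gr}_m(d)$. The identity $\Sigma_1^m=\Sigma_1^{m-1}+\Delta_m$ dictates the strategy: for a typical $u\in\Psi_m(i)$ I would choose an $(m-1)$-dimensional $W=W(u)\subset V$ adapted to the geometry of $A_u$, so that $\varphi_u^{-1}W$ is close to the second-largest singular subspace $L_{m-1}(A_u)$; then for $\varphi_u\mu$-typical $x$ and $j\approx i$ use
\[
H\bigl(\pi_V(\varphi_u\mu)_{x,j},\mathcal{D}_{j+k}\bigr)\ge H\bigl(\pi_W(\varphi_u\mu)_{x,j},\mathcal{D}_{j+k}\bigr)+H\bigl(\pi_V(\varphi_u\mu)_{x,j},\mathcal{D}_{j+k}\mid\pi_W^{-1}\mathcal{D}_{j+k}\bigr)-O(1).
\]
The first summand I would bound from below by $\Sigma_1^{m-1}-\epsilon/2$ via the inductive hypothesis applied to $W\in\mathrm{Gr}_{m-1}(d)$. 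The mismatch between $\mathbf{I}_m$- and $\mathbf{I}_{m-1}$-stopping times is absorbed via Lemma \ref{lem:ac of stopping times}, together with the observation that $\alpha_{m-1}(A_u)\gg 2^{-i}$ for $u\in\Psi_m(i)$, so that $\pi_W A_u$ is well-conditioned at scales $\ge 2^{-i}$ and $\pi_W\varphi_u\mu$ is (after a similarity normalization of entropy cost $O(1)$ via (\ref{eq:ent under sim})) bi-Lipschitz comparable to a projection of $\varphi_v\mu$ for an appropriate $v\in\Psi_{m-1}(i)$.

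For the conditional term I would invoke the Ledrappier--Young formula (Theorem \ref{thm:LY formula for SA}): with $W$ chosen so that $\varphi_u^{-1}W$ is close to $L_{m-1}(A_u)$ (which, by the convergence defining the Furstenberg boundary maps, aligns with a $\nu^*$-typical flag for most $(i,u)$), the conditional measures of $\mu$ on hyperplanes transverse to $L_{m-1}(A_u)$ project onto the $m$-th direction with dimension $\Delta_m$. Pushing forward by $\varphi_u$, the level-$j$ components of $\varphi_u\mu$ restricted to $\pi_W$-level sets yield, once Lemma \ref{lem:multiscale-entropy-formula} converts dimension into per-scale entropy, the bound $\Delta_m-\epsilon/2$ at scale $k$. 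Summing produces $\Sigma_1^m-\epsilon$, and averaging in $i\in[1,n]$ absorbs the exceptional scales.

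The principal obstacle is the twofold transversality requirement on $W=W(u)$: it must be uniformly in general position inside $\mathbb{R}^d$ (so the inductive hypothesis applies without degeneracy) and simultaneously satisfy $\varphi_u^{-1}W\approx L_{m-1}(A_u)$ (so the Ledrappier--Young formula is applicable to a $\nu^*$-typical flag). The $m$-strong irreducibility and proximality in assumption (\ref{eq:m-ired and m-prox assump}), together with Lemmata \ref{lem:nu=00007Bkappa<del=00007D<eps} and \ref{lem:P=00007Bkappa<delta=00007D<epsilon}, should allow both conditions to hold on a set of $(i,u)$ of $\mathbb{P}_{1\le i\le n}$-probability $>1-\epsilon$.
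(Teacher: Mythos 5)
Your high-level plan (induction on $m$, split $H(\pi_V\theta)$ into the $\pi_W$-projection part, bounded by the inductive hypothesis, plus a conditional ``slice'' part, bounded via Ledrappier--Young) is in the right spirit, and you correctly identify the transversality tension as the key obstacle. But the paper's proof handles this tension by a different decomposition, and I believe the one you propose has a genuine gap. The paper does \emph{not} work directly inside $V$ with a $u$-dependent $(m-1)$-plane $W(u)\subset V$. Instead, it extracts a single \emph{fixed} $W\in\mathrm{Gr}_{m-1}(d)$ from the $\nu_{m-1}^*$-average in the slice estimate (Lemma \ref{lem:lb on ent of slices}); applies the inductive hypothesis and the slice bound with this fixed $W$ to get a lower bound on the entropy of the \emph{unprojected} components $(\varphi_u\mu)_{x,j}$ in $\mathbb{R}^d$; and only at the very end transfers this to $\pi_V$ using the concentration of $\varphi_u\mu$ near a tube around $L_m(A_u)$ together with the transversality $\kappa(V^\perp,L_m(A_u))>\delta$ (Lemmata \ref{lem:lb on kappa of perps} and \ref{lem:gen proj lb}). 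This two-step ``combine first in $\mathbb{R}^d$, project last'' structure is precisely what avoids the difficulties in your proposal.

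The gap in your version is the application of the inductive hypothesis with $W=W(u)$. The inductive hypothesis controls the \emph{unconditional} probability that $\mathbf{I}_{m-1}(i)\in\mathcal{Z}(\Sigma_1^{m-1},\epsilon,k,i,W)$ for each \emph{fixed} $W$; with $W$ fixed, the passage to conditional probabilities given $\mathbf{I}_m(i)=u$ is a Markov/concavity argument over the nested partition $\Psi_{m-1}\supset\Psi_m$ (this is exactly how the paper handles $\mathcal{U}_2$). When $W=W(u)$ varies with $u$ you lose this, and the obvious repair --- cover $\mathrm{Gr}_{m-1}(d)$ by $\rho$-balls and apply the inductive hypothesis at each center --- runs into a circularity: by (\ref{eq:ent is cont in map}) the condition in $\mathcal{Z}(\cdot,k,\cdot,W)$ is only stable under perturbations of $W$ of order $2^{-k}$, so the cover has $\approx 2^{ck}$ elements, and applying the inductive hypothesis with error $\epsilon/2^{ck}$ requires $k\ge K(\epsilon/2^{ck})$, which may fail since $K$ is not controlled. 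Beyond this, the condition ``$\varphi_u^{-1}W$ close to $L_{m-1}(A_u)$'' is stated imprecisely: the slice formula (\ref{eq:push of slive via affine =00003D slice of push via affine}) shows that the slices of $\varphi_u\mu$ with respect to $W$ are pushforwards of slices of $\mu$ with respect to $A_u^*W$ (the adjoint, not $A_u^{-1}$), and the correct alignment requirement is that $A_u^*W$ be close to a $\nu_{m-1}^*$-typical subspace, which is what Lemma \ref{lem:A(V) close to L(A)} supplies in the paper's argument.
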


Since $\Sigma_{1}^{0}=0$, the case $m=0$ is of course trivial. It
is useful to consider this case since the proposition is proven by
induction on $m$.

\subsection{Entropy of projections of concentrated measures}

The purpose of the present subsection is to prove the following simple
but useful lemma. It will be used during the proof of Proposition
\ref{prop:inductive main proj prop} and in other places below. The
lemma bounds from below the entropy of projections of measures which
are highly concentrated near a proper linear subspace. Given $\emptyset\ne E\subset\mathbb{R}^{d}$
and $r>0$, recall from Section \ref{subsec:General-notations} that
$E^{(r)}$ denotes the closed $r$-neighbourhood of $E$.
\begin{lem}
\label{lem:gen proj lb}Let $1\le q\le l\le m\le d$ and $\epsilon>0$
be given. Then there exists $C=C(\epsilon)>1$ such that the following
holds. Let $V\in\mathrm{Gr}_{l}(m)$, $W\in\mathrm{Gr}_{q}(m)$, $\theta\in\mathcal{M}(\mathbb{R}^{m})$,
and $n,k\ge1$ be such that $\kappa(W,V^{\perp})\ge\epsilon$, $\mathrm{diam}(\mathrm{supp}(\theta))=O(2^{-n})$,
and $\mathrm{supp}(\theta)\subset x+V^{(2^{-n-k})}$ for some $x\in\mathbb{R}^{m}$.
Then,
\[
\frac{1}{k}H(\pi_{W}\theta,\mathcal{D}_{n+k})\ge\frac{1}{k}H(\theta,\mathcal{D}_{n+k})-l+q-C/k.
\]
\end{lem}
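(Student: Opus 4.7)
The plan is to pass from $\theta$ to $P_V\theta$ using the concentration hypothesis, then decompose $V$ into the kernel of $P_W|_V$ and its orthogonal complement, and finally transport the entropy to $\pi_W\theta$ via a bi-Lipschitz identification whose constants depend only on $\epsilon$.

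\textbf{Step 1 (reduction to $V$).} Since $\mathrm{supp}(\theta)\subset x+V^{(2^{-n-k})}$, the pushforward $P_{V^{\perp}}\theta$ is supported in a Euclidean ball of radius $2^{-n-k}$, so it meets only $O(1)$ atoms of $\mathcal{D}_{n+k}$, giving $H(P_{V^{\perp}}\theta,\mathcal{D}_{n+k})=O(1)$. Using the commensurability of $\mathcal{D}_{n+k}^{m}$ with $\pi_{V}^{-1}\mathcal{D}_{n+k}^{l}\vee\pi_{V^{\perp}}^{-1}\mathcal{D}_{n+k}^{m-l}$ (see~(\ref{eq:ent =000026 coordinate change})) and sub-additivity, I get
\[
H(\theta,\mathcal{D}_{n+k})\le H(\pi_{V}\theta,\mathcal{D}_{n+k})+O(1).
\]

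\textbf{Step 2 (decomposition of $V$).} Set $U:=V\cap W^{\perp}$ and $U^{\perp}_{V}:=U^{\perp}\cap V$, so that $V=U\oplus_{\perp}U^{\perp}_{V}$ with $\dim U=l-q$ and $\dim U^{\perp}_{V}=q$ (here I use that $P_{V}|_{W}$ is injective, which follows from $\kappa(W,V^{\perp})\ge\epsilon$, so its adjoint $P_{W}|_{V}$ has rank $q$). The same commensurability argument inside $V$ yields
\[
H(\pi_{V}\theta,\mathcal{D}_{n+k})\le H(P_{U}\theta,\mathcal{D}_{n+k})+H(P_{U^{\perp}_{V}}\theta,\mathcal{D}_{n+k})+O(1).
\]
Since $P_{U}\theta$ has support of diameter $O(2^{-n})$ in an $(l-q)$-dimensional space, the trivial bound (\ref{eq:card ub for ent}) gives $H(P_{U}\theta,\mathcal{D}_{n+k})\le(l-q)k+O(1)$.

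\textbf{Step 3 (bi-Lipschitz transport to $W$).} The condition $\kappa(W,V^{\perp})\ge\epsilon$ forces $|P_{V}w|\ge\epsilon'(\epsilon)$ for every unit $w\in W$; equivalently, the smallest singular value of $P_{W}|_{U^{\perp}_{V}}:U^{\perp}_{V}\to W$ is bounded below by $\epsilon'(\epsilon)$. Hence $P_{W}|_{U^{\perp}_{V}}$ is bi-Lipschitz with constants depending only on $\epsilon$, and by (\ref{eq:ent under bi-lip map})
\[
H(P_{U^{\perp}_{V}}\theta,\mathcal{D}_{n+k})\le H(P_{W}P_{U^{\perp}_{V}}\theta,\mathcal{D}_{n+k})+O_{\epsilon}(1).
\]
Because $U\subset W^{\perp}$, on $V$ we have $P_{W}=P_{W}\circ P_{U^{\perp}_{V}}^{V}$, and because $\mathrm{supp}(\theta)$ lies within distance $2^{-n-k}$ of $x+V$, the maps $P_{W}$ and $P_{W}\circ P_{V}+\text{const}$ agree on $\mathrm{supp}(\theta)$ up to $2^{-n-k}$; by (\ref{eq:entropy-under-translation}) and (\ref{eq:ent is cont in map}) their images have equal entropy at scale $n+k$ up to $O(1)$. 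Combining these identifications and the comparability of $\pi_{W}\theta$ with $P_{W}\theta$ gives $H(P_{U^{\perp}_{V}}\theta,\mathcal{D}_{n+k})\le H(\pi_{W}\theta,\mathcal{D}_{n+k})+O_{\epsilon}(1)$.

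Chaining the three inequalities and dividing by $k$ produces the claimed bound with $C=C(\epsilon)$. The one place that is not completely formal is Step 3: the claim that $\kappa(W,V^{\perp})\ge\epsilon$ controls the smallest singular value of $P_{W}|_{U^{\perp}_{V}}$ uniformly in $W,V$ is the only genuinely geometric input, but it is a short linear-algebra computation using the sine-angle interpretation of $d_{\mathrm{Gr}_{1}}$ recorded in (\ref{eq:alt def for dist of lines}).
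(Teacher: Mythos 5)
Your proof is correct and takes essentially the same approach as the paper: reduce to a measure on $V$, split $V$ orthogonally into $U=V\cap W^{\perp}=\ker(P_W|_V)$ and its complement, discard the $l-q$ kernel directions trivially, and use the lower bound on the smallest singular value of $P_W$ on the complement. The paper packages these last two steps into Lemmata \ref{lem:gen low bd by comp} and \ref{lem:gen low bd step I} and obtains the singular value bound by a compactness argument, whereas you observe directly (via (\ref{eq:alt def for dist of lines})) that $\kappa(W,V^\perp)$ is exactly $\alpha_q(P_W|_V)$, which is a slightly cleaner quantitative input but not a different route.
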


The proof requires some preparations.
\begin{lem}
\label{lem:gen low bd by comp}Let $1\le q\le l\le m\le d$ and $\epsilon>0$
be given. Then there exists $\delta=\delta(\epsilon)>0$ such that
the following holds. Let $V\in\mathrm{Gr}_{l}(m)$ and $W\in\mathrm{Gr}_{q}(m)$
be with $\kappa(W,V^{\perp})\ge\epsilon$. Then $\alpha_{q}(\pi_{W}\circ F)\ge\delta$
for any linear isometry $F:\mathbb{R}^{l}\rightarrow V$.
\end{lem}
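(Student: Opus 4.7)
The plan is to reduce the lemma to a compactness argument on the Grassmannians, after observing that the singular values of $\pi_{W}\circ F$ depend only on $V$ and $W$. Recall from Section \ref{subsec:Spaces-of-affine maps} that $\pi_{W}$ factors as $F_{W}\circ P_{W}$ for some linear isometric isomorphism $F_{W}:W\to\mathbb{R}^{q}$; since $F:\mathbb{R}^{l}\to V$ is also an isometric isomorphism by hypothesis, and pre- and post-composition by isometries preserves singular values, one has $\alpha_{q}(\pi_{W}\circ F)=\alpha_{q}(P_{W}|_{V})$ where $P_{W}|_{V}:V\to W$ is considered intrinsically as a map between Euclidean subspaces. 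Thus it suffices to exhibit $\delta=\delta(\epsilon)>0$ with
\[
\alpha_{q}(P_{W}|_{V})\ge\delta\text{ whenever }V\in\mathrm{Gr}_{l}(m),\:W\in\mathrm{Gr}_{q}(m),\:\kappa(W,V^{\perp})\ge\epsilon.
\]

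Next, I would verify pointwise positivity on the constraint set. Using the adjoint identity $(P_{W}|_{V})^{*}=P_{V}|_{W}$ and the rank-nullity theorem,
\[
\mathrm{rank}(P_{W}|_{V})=\mathrm{rank}(P_{V}|_{W})=q-\dim(W\cap V^{\perp}).
\]
If $\kappa(W,V^{\perp})\ge\epsilon>0$, then any nonzero $x\in W\cap V^{\perp}$ would give a common line $x\mathbb{R}\in\mathrm{Gr}_{1}(W)\cap\mathrm{Gr}_{1}(V^{\perp})$, forcing $\kappa(W,V^{\perp})=0$ and contradicting the hypothesis. Hence $W\cap V^{\perp}=\{0\}$, so $P_{W}|_{V}$ has full rank $q$ and $\alpha_{q}(P_{W}|_{V})>0$.

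Finally, I would invoke compactness. The function $(V,W)\mapsto\kappa(W,V^{\perp})$ is continuous on $\mathrm{Gr}_{l}(m)\times\mathrm{Gr}_{q}(m)$ since it is defined as a minimum of $d_{\mathrm{Gr}_{1}}$ over the compact sets $\mathrm{Gr}_{1}(W)$ and $\mathrm{Gr}_{1}(V^{\perp})$, both of which vary Hausdorff-continuously in $(V,W)$ (using the metric $d_{\mathrm{Gr}_{k}}=\Vert P_{\cdot}-P_{\cdot}\Vert_{op}$ from Section \ref{subsec:Grassmannians-and-the flag space}). Consequently the set
\[
K_{\epsilon}:=\{(V,W)\in\mathrm{Gr}_{l}(m)\times\mathrm{Gr}_{q}(m)\::\:\kappa(W,V^{\perp})\ge\epsilon\}
\]
is a closed subset of a compact manifold, hence compact. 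Similarly $(V,W)\mapsto\alpha_{q}(P_{W}|_{V})$ is continuous, so it attains a positive minimum on $K_{\epsilon}$, which we take as $\delta(\epsilon)$.

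This is essentially a routine qualitative-to-quantitative compactness argument; the only mild point to check is the continuity assertions, which follow from the standard description of the Grassmannians via orthogonal projections. No deeper obstacle is anticipated.
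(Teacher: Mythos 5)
Your proof is correct and takes essentially the same route as the paper: both reduce to a compactness argument over the constraint set $\{\kappa(W,V^{\perp})\ge\epsilon\}$, with the key pointwise fact that $\kappa(W,V^{\perp})>0$ forces $W\cap V^{\perp}=\{0\}$ and hence $P_{W}|_{V}$ has full rank $q$. The only cosmetic difference is that you verify positivity of $\alpha_{q}$ via rank--nullity applied to the adjoint $P_{V}|_{W}$, whereas the paper argues by contradiction using surjectivity and the wedge-product identity $\Vert\wedge^{q}A\Vert_{op}=\alpha_{1}(A)\cdots\alpha_{q}(A)$; both are valid ways of seeing the same full-rank statement.
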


\begin{proof}
Assume by contradiction that the lemma is false. Then by compactness
there exist $V\in\mathrm{Gr}_{l}(m)$, $W\in\mathrm{Gr}_{q}(m)$ and
a linear isometry $F:\mathbb{R}^{l}\rightarrow V$, so that $\kappa(W,V^{\perp})\ge\epsilon$
and $\alpha_{q}(\pi_{W}\circ F)=0$.

Since $\kappa(W,V^{\perp})\ge\epsilon$
we have $W\cap V^{\perp}=\{0\}$. Hence $W^{\perp}+V=\mathbb{R}^{m}$,
which implies that $\pi_{W}\circ F(\mathbb{R}^{l})=\mathbb{R}^{q}$.
Thus, by (\ref{eq:equiv cond for lin indep}) there exists $\zeta\in\wedge^{q}\mathbb{R}^{l}$
so that $\wedge^{q}(\pi_{W}\circ F)(\zeta)\ne0$. On the other hand,
from $\alpha_{q}(\pi_{W}\circ F)=0$ and (\ref{eq:norm =00003D prod of sing vals}),
it follows that $\wedge^{q}(\pi_{W}\circ F)=0$. This contradiction
completes the proof of the lemma.
\end{proof}
\begin{lem}
\label{lem:gen low bd step I}Let $1\le q\le l\le d$, $\delta>0$,
$n,k\ge1$, $\theta\in\mathcal{M}(\mathbb{R}^{l})$ and a linear $A:\mathbb{R}^{l}\rightarrow\mathbb{R}^{q}$
be given. Suppose that $\mathrm{diam}(\mathrm{supp}(\theta))=O(2^{-n})$
and $\alpha_{q}(A)\ge\delta$. Then,
\[
\frac{1}{k}H(A\theta,\mathcal{D}_{n+k})\ge\frac{1}{k}H(\theta,\mathcal{D}_{n+k})-l+q-O_{\delta}\left(\frac{1}{k}\right).
\]
\end{lem}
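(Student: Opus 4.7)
The plan is to reduce to a diagonal map via the singular value decomposition. Write $A=UDW$ with $U\in\mathrm{O}(q)$, $W\in\mathrm{O}(l)$, and $D=\mathrm{diag}_{q,l}(\alpha_{1}(A),\dots,\alpha_{q}(A))$. Since $U$ and $W$ are orthogonal, hence bi-Lipschitz isometries, the invariance property (2.14) applied twice yields
\[
H(A\theta,\mathcal{D}_{n+k})=H(D\sigma,\mathcal{D}_{n+k})+O(1),
\]
where $\sigma:=W\theta$. Note that $\sigma$ still satisfies $\mathrm{diam}(\mathrm{supp}(\sigma))=O(2^{-n})$.

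Next, factor $D=D'P$, where $P:\mathbb{R}^{l}\to\mathbb{R}^{q}$ is the projection onto the first $q$ coordinates and $D'=\mathrm{diag}(\alpha_{1}(A),\dots,\alpha_{q}(A))\in\mathrm{GL}(q,\mathbb{R})$. Since $\alpha_{q}(D')=\alpha_{q}(A)\ge\delta$, the entropy-under-expanding-linear-map estimate (2.20) gives
\[
H(D'P\sigma,\mathcal{D}_{n+k})\ge H(P\sigma,\mathcal{D}_{n+k})-O_{\delta}(1).
\]
Here it is critical that (2.20) is a one-sided bound: the other singular values $\alpha_{1}(A),\dots,\alpha_{q-1}(A)$ are allowed to be arbitrarily large without affecting the estimate.

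The final step is to compare $H(P\sigma,\mathcal{D}_{n+k}^{q})$ with $H(\sigma,\mathcal{D}_{n+k}^{l})$. Let $P':\mathbb{R}^{l}\to\mathbb{R}^{l-q}$ denote the complementary coordinate projection. By (2.26), the partitions $\mathcal{D}_{n+k}^{l}$ and $P^{-1}\mathcal{D}_{n+k}^{q}\vee P'^{-1}\mathcal{D}_{n+k}^{l-q}$ are $O(1)$-commensurable, and so by (2.23) and (2.22),
\[
H(\sigma,\mathcal{D}_{n+k}^{l})=H(P\sigma,\mathcal{D}_{n+k}^{q})+H(\sigma,P'^{-1}\mathcal{D}_{n+k}^{l-q}\mid P^{-1}\mathcal{D}_{n+k}^{q})+O(1),
\]
and the conditional entropy is bounded above by $H(P'\sigma,\mathcal{D}_{n+k}^{l-q})$. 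Since $\mathrm{diam}(\mathrm{supp}(P'\sigma))=O(2^{-n})$, the support of $P'\sigma$ meets at most $O(2^{k(l-q)})$ atoms of $\mathcal{D}_{n+k}^{l-q}$, yielding $H(P'\sigma,\mathcal{D}_{n+k}^{l-q})\le k(l-q)+O(1)$ by (2.22). Combining the three displayed inequalities and dividing by $k$ gives
\[
\frac{1}{k}H(A\theta,\mathcal{D}_{n+k})\ge\frac{1}{k}H(\theta,\mathcal{D}_{n+k})-(l-q)-O_{\delta}(1/k),
\]
as desired. There is no real obstacle here; the argument is a purely linear-algebraic unpacking via the SVD and a straightforward application of the standard entropy formulas collected in Sections 2.8 and 2.9.
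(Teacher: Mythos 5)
Your proof is correct and follows essentially the same route as the paper's: both arguments decompose $\mathbb{R}^{l}$ along $\ker A$ and its orthogonal complement, split $H(\theta,\mathcal{D}_{n+k})$ accordingly, bound the kernel-direction contribution by $k(l-q)+O(1)$ using the $O(2^{-n})$ diameter of $\mathrm{supp}(\theta)$, and apply the expanding-linear-map bound with constant $\alpha_{q}(A)\ge\delta$ to relate the complementary projection to $A\theta$. The only cosmetic difference is that you introduce an SVD rotation $W$ to align $\ker A$ with the last coordinates so you can work with the standard projections $P,P'$, whereas the paper works directly with $\pi_{\ker A}$ and $\pi_{(\ker A)^{\perp}}$ and factors $A=B\circ\pi_{(\ker A)^{\perp}}$ with $B\in\mathrm{GL}(q,\mathbb{R})$, $\alpha_q(B)\ge\delta$; the two are interchangeable.
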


\begin{proof}
When $l=q$ the lemma follows directly from (\ref{eq:ent under expand lin map}),
hence assume that $l>q$. Set $W:=\ker A$, then by (\ref{eq:ent =000026 coordinate change})
and (\ref{eq:cond ent form})
\[
H\left(\theta,\mathcal{D}_{n+k}\right)\le H\left(\pi_{W}\theta,\mathcal{D}_{n+k}\right)+H\left(\pi_{W^{\perp}}\theta,\mathcal{D}_{n+k}\right)+O(1).
\]
Since $\mathrm{diam}(\mathrm{supp}(\theta))=O(2^{-n})$ and $\dim W=l-q$,
\[
\frac{1}{k}H\left(\pi_{W}\theta,\mathcal{D}_{n+k}\right)\le l-q+O\left(\frac{1}{k}\right).
\]
From $\alpha_{q}(A)\ge\delta$ it follows that there exists $B\in\mathrm{GL}(q,\mathbb{R})$
so that $\alpha_{q}(B)\ge\delta$ and $A=B\circ\pi_{W^{\perp}}$.
Thus by (\ref{eq:ent under expand lin map}),
\[
H\left(\pi_{W^{\perp}}\theta,\mathcal{D}_{n+k}\right)\le H(A\theta,\mathcal{D}_{n+k})+O_{\delta}(1).
\]
The lemma now follows by combining all of these inequalities.
\end{proof}
\begin{proof}[Proof of Lemma \ref{lem:gen proj lb}]
Let $\delta>0$ be small with respect to $\epsilon$, let $V$, $W$,
$\theta$, $n$ and $k$ be as in the statement of the lemma, and
let $F:\mathbb{R}^{l}\rightarrow V$ be a linear isometry. Since $\kappa(W,V^{\perp})\ge\epsilon$
and by Lemma \ref{lem:gen low bd by comp}, we have $\alpha_{q}(\pi_{W}\circ F)\ge\delta$.
From this, since
\[
\mathrm{diam}(\mathrm{supp}(F^{-1}P_{V}\theta))=O(2^{-n}),
\]
and by applying Lemma \ref{lem:gen low bd step I} with $A:=\pi_W\circ F$,
\[
\frac{1}{k}H(\pi_{W}P_{V}\theta,\mathcal{D}_{n+k})\ge\frac{1}{k}H(F^{-1}P_{V}\theta,\mathcal{D}_{n+k})-l+q-O_{\delta}\left(\frac{1}{k}\right).
\]
Additionally, since $\mathrm{supp}(\theta)\subset x+V^{(2^{-n-k})}$
and by (\ref{eq:ent under bi-lip map})
and (\ref{eq:ent is cont in map}),
\begin{eqnarray*}
H(F^{-1}P_{V}\theta,\mathcal{D}_{n+k}) & = & H(\theta,\mathcal{D}_{n+k})+O(1),\\
H(\pi_{W}P_{V}\theta,\mathcal{D}_{n+k}) & = & H(\pi_{W}\theta,\mathcal{D}_{n+k})+O(1),
\end{eqnarray*}
which completes the proof of the lemma.
\end{proof}

\subsection{Additional preparations for the proof of Proposition \ref{prop:inductive main proj prop}}

For $n\ge1$ recall the random word $\mathbf{U}(n)$ defined in Section
\ref{subsec:Symbolic-notations}. In what follows, given a word $i_1...i_n=u\in\Lambda^*$ we write $A_u^*$ in place of $(A_u)^*=A_{i_n}^*...A_{i_1}^*$.
\begin{lem}
\label{lem:A(V) close to L(A)}Let $1\le m\le d$ be given. Then for
every $\epsilon>0$ and $n\ge N(\epsilon)$,
\begin{equation}
\int\mathbb{P}_{i=n}\left\{ d_{\mathrm{Gr}_{m}}(A_{\mathbf{U}(i)}^{*}V,L_{m}(A_{\mathbf{U}(i)}^{*}))<\epsilon\right\} \:d\nu_{m}^{*}(V)>1-\epsilon.\label{eq:event with implicit assumption}
\end{equation}
\end{lem}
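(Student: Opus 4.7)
The approach combines three ingredients: a quantitative linear-algebra estimate controlling $d_{\mathrm{Gr}_m}(AV,L_m(A))$ in terms of the singular-value gap of $A$ and the angle between $V$ and the stable subspace of $A$; the regularity of $\nu_m^*$ from Lemma~\ref{lem:nu=00007Bkappa<del=00007D<eps}; and the Lyapunov separation $\chi_m > \chi_{m+1}$ coming from assumption~(\ref{eq:m-ired and m-prox assump}).

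For the linear-algebra step, fix $A\in\mathrm{GL}(d,\mathbb{R})$ with $\alpha_m(A) > \alpha_{m+1}(A)$ and write the SVD $A = U_1 D U_2$. The stable $(d-m)$-dimensional subspace of $A$ in the domain is $W_A := L_m(A^*)^\perp$ (the orthogonal complement of the span of the top $m$ right-singular directions). Conjugating by the orthogonal factors $U_1,U_2$, which preserve both $d_{\mathrm{Gr}_m}$ and $\kappa$, reduces the problem to the diagonal case: whenever $V'\in\mathrm{Gr}_m(d)$ satisfies $\kappa(V',\mathrm{span}\{e_{d,m+1},\ldots,e_{d,d}\})\ge\delta$, the subspace $V'$ is the graph of a linear map from $\mathrm{span}\{e_{d,1},\ldots,e_{d,m}\}$ to $\mathrm{span}\{e_{d,m+1},\ldots,e_{d,d}\}$ of operator norm $O(\delta^{-1})$, and $D$ contracts this graph by the factor $\alpha_{m+1}/\alpha_m$. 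One thereby obtains a constant $C_0 = C_0(d) > 0$ so that
\[
d_{\mathrm{Gr}_m}(AV,L_m(A))\le C_0\,\delta^{-1}\,\frac{\alpha_{m+1}(A)}{\alpha_m(A)} \quad\text{whenever } \kappa(V,W_A)\ge\delta.
\]

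Given $\epsilon>0$, assumption~(\ref{eq:m-ired and m-prox assump}) and Remark~\ref{rem:semi of trans also SI =000026 prox} allow Lemma~\ref{lem:nu=00007Bkappa<del=00007D<eps} to be applied to $\nu_m^*$, yielding $\delta>0$ with
\[
\nu_m^*\{V\in\mathrm{Gr}_m(d):\kappa(V,W)\le\delta\}<\epsilon/2 \quad\text{for every } W\in\mathrm{Gr}_{d-m}(d);
\]
the uniformity in $W$ is crucial here, since the relevant subspace $W_{A_u^*} = L_m(A_u)^\perp$ depends on the random word $u$. Separately, since $\alpha_i(A_u^*) = \alpha_i(A_u)$ and $\chi_m > \chi_{m+1}$ (by \cite[Theorem IV.1.2]{BL}), the $\beta$-a.s.\ convergence $\tfrac{1}{n}\log\alpha_i(A_{\omega|_n})\to\chi_i$ gives
\[
\mathbb{P}_{i=n}\{\alpha_{m+1}(A_{\mathbf{U}(i)}^*)/\alpha_m(A_{\mathbf{U}(i)}^*) > C_0^{-1}\delta\epsilon\} < \epsilon/2
\]
for every $n\ge N(\epsilon)$.

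To conclude, Fubini's theorem applied to the product $\mathbb{P}\times\nu_m^*$ shows that the set of pairs $(u,V)$ on which either the Lyapunov gap estimate fails or $\kappa(V,W_{A_u^*})\le\delta$ has measure less than $\epsilon$; off this bad set the linear-algebra estimate forces $d_{\mathrm{Gr}_m}(A_u^* V, L_m(A_u^*)) \le \epsilon$. Rewriting this conclusion as an iterated integral in $V$ (outer) and $u$ (inner) gives the lemma. The main technical step is the quantitative linear-algebra estimate, with the rest being a routine combination of the two regularity inputs (uniformity of $\nu_m^*$ and the singular-value gap bound).
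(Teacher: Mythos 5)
Your proof is correct, but it takes a genuinely different route from the paper. The paper's argument is soft: it invokes the Furstenberg--boundary theory from \cite{BQ} (via Remark~\ref{rem:semi of trans also SI =000026 prox}) to produce a Borel map $L_m^*:\Lambda^{\mathbb{N}}\to\mathrm{Gr}_m(d)$ with $A_{\omega_0}^*\cdots A_{\omega_n}^*\nu_m^*\to\delta_{L_m^*(\omega)}$ weakly and $L_m(A_{\omega_0}^*\cdots A_{\omega_n}^*)\to L_m^*(\omega)$ for $\beta$-a.e.\ $\omega$, deduces $\int\int d_{\mathrm{Gr}_m}(A_{\omega_0}^*\cdots A_{\omega_{n-1}}^*V,L_m(A_{\omega_0}^*\cdots A_{\omega_{n-1}}^*))\,d\nu_m^*(V)\,d\beta(\omega)\to 0$ by dominated convergence, and then transfers to $\mathbf{U}(n)$ via the observation that $A_{\omega_0}^*\cdots A_{\omega_{n-1}}^*$ and $A_{\mathbf{U}(n)}^*$ have the same law. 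Your argument instead goes quantitative from the start: after conjugating away the orthogonal SVD factors, you represent a $\delta$-transverse $V$ as the graph of a linear map of norm $O(\delta^{-1})$ and observe that the diagonal part multiplies this norm by $\alpha_{m+1}/\alpha_m$, yielding $d_{\mathrm{Gr}_m}(A^*V,L_m(A^*))\le C_0\,\delta^{-1}\alpha_{m+1}(A)/\alpha_m(A)$; you then combine the uniformity of $\nu_m^*$ (Lemma~\ref{lem:nu=00007Bkappa<del=00007D<eps}, applied to the transposed system via Remark~\ref{rem:semi of trans also SI =000026 prox}) with the Lyapunov separation $\chi_m>\chi_{m+1}$ and a Fubini union bound. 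Both routes use roughly the same upstream inputs from \cite{BL,BQ}, but yours replaces the abstract weak-convergence citation with a self-contained linear-algebraic estimate and in exchange produces an explicit rate. Two small points worth spelling out: the case $m=d$ is trivial and must be separated out since $\chi_{m+1}$ is undefined there; and on the complement of the bad event the gap $\alpha_m(A_{\mathbf{U}(i)})>\alpha_{m+1}(A_{\mathbf{U}(i)})$ in particular holds, so $L_m(A_{\mathbf{U}(i)}^*)$ and $W_{A_{\mathbf{U}(i)}^*}=L_m(A_{\mathbf{U}(i)})^\perp$ are well defined, matching the implicit convention noted after Lemma~\ref{lem:P=00007Bkappa<delta=00007D<epsilon}.
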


\begin{proof}
As explained in Section \ref{subsec:Coding and Furstenberg maps}, by Remark \ref{rem:semi of trans also SI =000026 prox} and \cite[Lemma 2.17 and Proposition 4.7]{BQ}, there exists a Borel map $L_{m}^{*}:\Lambda^{\mathbb{N}}\rightarrow\mathrm{Gr}_{m}(d)$
so that for $\beta$-a.e. $\omega$

\[
L_{m}^{*}(\omega)=\underset{n\rightarrow\infty}{\lim}\:L_{m}(A_{\omega_{0}}^{*}\ldots A_{\omega_{n}}^{*})\text{ and }A_{\omega_{0}}^{*}...A_{\omega_{n}}^{*}\nu_{m}^{*}\overset{n}{\rightarrow}\delta_{L_{m}^{*}(\omega)}\text{ weakly.}
\]
This clearly implies,
\[
\underset{n\rightarrow\infty}{\lim}\int\int d_{\mathrm{Gr}_{m}}(A_{\omega_{0}}^{*}...A_{\omega_{n-1}}^{*}V,L_{m}(A_{\omega_{0}}^{*}\ldots A_{\omega_{n-1}}^{*}))\:d\nu_{m}^{*}(V)\:d\beta(\omega)=0.
\]
Thus, since for each $n\ge1$ the distribution of $A_{\omega_{0}}^{*}...A_{\omega_{n-1}}^{*}$
with respect to $\beta$ is equal to the distribution of $A_{\mathbf{U}(n)}^{*}$,
we have
\[
\underset{n\rightarrow\infty}{\lim}\mathbb{E}_{i=n}\left(\int d_{\mathrm{Gr}_{m}}(A_{\mathbf{U}(i)}^{*}V,L_{m}(A_{\mathbf{U}(i)}^{*}))\:d\nu_{m}^{*}(V)\right)=0,
\]
which completes the proof of the lemma.
\end{proof}
In the next lemma we use the Ledrappier-Young formula in order to
bound from below the entropy of slices of cylinders of $\mu$.
\begin{lem}
\label{lem:lb on ent of slices}Let $1\le m\le d$ be given. Then
for every $\epsilon>0$, $k\ge K(\epsilon)\ge1$ and $n\ge N(\epsilon,k)\ge1$,
\[
\int\mathbb{E}_{1\le i\le n}\left(\varphi_{\mathbf{I}_{m}(i)}\mu\left\{ x\::\:\frac{1}{k}H\left((\varphi_{\mathbf{I}_{m}(i)}\mu)_{x}^{V},\mathcal{D}_{i+k}\right)>\Delta_{m}-\epsilon\right\} \right)\:d\nu_{m-1}^{*}(V)>1-\epsilon.
\]
\end{lem}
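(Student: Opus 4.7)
The plan is to deduce the lemma from the Ledrappier--Young formula (Theorem \ref{thm:LY formula for SA}) by converting its dimensional statement about slices of $\mu$ into an entropy statement, and then transferring this to slices of the cylinder measures $\varphi_u\mu$ via the disintegration equivariance (\ref{eq:push of slive via affine =00003D slice of push via affine}) together with the stationarity of the Furstenberg measures under minimal cut-sets.

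First I would convert the dimension to an entropy statement. By Theorem \ref{thm:LY formula for SA}, for $\nu^{*}$-a.e.\ flag $(V_{j})_{j=0}^{d}$ and $\mu$-a.e.\ $y$, the projected slice $P_{V_{m}}\mu_{y}^{V_{m-1}}$ is exact dimensional of dimension $\Delta_{m}$, and is effectively a $1$-dimensional measure on the affine line $P_{V_{m}}(y)+(V_{m}\cap V_{m-1}^{\perp})$. By Lemma \ref{lem:dim_e=00003Ddim} combined with Lemma \ref{lem:multiscale-entropy-formula} and Egorov's theorem, I would upgrade this to: for every $\epsilon>0$ and all $k\ge K_{0}(\epsilon)$ there is a set $\mathcal{G}\subset\mathrm{F}(d)\times\mathbb{R}^{d}$ of $\nu^{*}\times\mu$-measure $>1-\epsilon/4$ so that for $((V_{j}),y)\in\mathcal{G}$ and all $n\ge N_{0}(\epsilon,k)$,
\[
\#\bigl\{j\in[1,n]\::\:\tfrac{1}{k}H\bigl(P_{V_{m}}\mu_{y}^{V_{m-1}},\mathcal{D}_{j+k}\bigr)>\Delta_{m}-\epsilon/2\bigr\}\ge(1-\epsilon/4)\,n.
\]

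Next, use the equivariance to pass to cylinders. For $u\in\Psi_{m}(i)$ and $x$, setting $y=\varphi_{u}^{-1}(x)$ and $\tilde{V}:=A_{u}^{*}V$, equation (\ref{eq:push of slive via affine =00003D slice of push via affine}) gives $(\varphi_{u}\mu)_{x}^{V}=\varphi_{u}\bigl(\mu_{y}^{\tilde{V}}\bigr)$. Choose a $\nu^{*}$-generic extension $V_{m}\supset V$ and set $W:=V_{m}\cap V^{\perp}$. Since entropy is non-increasing under projection and $\pi_{W}\circ\varphi_{u}|_{y+\tilde{V}^{\perp}}$ is an affine map onto $W$ whose linear part, by (\ref{eq:sing val as min max}), has operator norm $\le\alpha_{m}(A_{u})\asymp 2^{-i}$ (using (\ref{eq:words in Phi_m are Theta(2^n)})), the entropy-under-similarity rule (\ref{eq:ent under sim}) together with (\ref{eq:ent under expand lin map}) yields
\[
H\bigl((\varphi_{u}\mu)_{x}^{V},\mathcal{D}_{i+k}\bigr)\ge H\bigl(\pi_{\tilde{W}}\mu_{y}^{\tilde{V}},\mathcal{D}_{k}\bigr)-O(1)
\]
for the appropriate $1$-dimensional $\tilde{W}\subset\tilde{V}^{\perp}$. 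The map $A_{u}|_{\tilde{V}^{\perp}}$ is a genuine rescaling by $\approx 2^{-i}$ along $\tilde{W}$ provided $V$ is aligned with $L_{m-1}(A_{u}^{*})$; this happens with high probability over $V\sim\nu_{m-1}^{*}$ and $u\sim\mathbf{I}_{m}(i)$ by Lemma \ref{lem:A(V) close to L(A)} together with Lemma \ref{lem:ac of stopping times}, which transfers stopping-time estimates from $\mathbf{U}$ to $\mathbf{I}_{m}$.

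Finally, iterating $\nu^{*}=\sum_{i\in\Lambda}p_{i}A_{i}^{*}\nu^{*}$ over the cut-set gives $\nu^{*}=\sum_{u\in\Psi_{m}(i)}p_{u}A_{u}^{*}\nu^{*}$, so the joint law of $(A_{u}^{*}V_{j},y)$ with $(V_{j})\sim\nu^{*}$, $u\sim\mathbf{I}_{m}(i)$, and $y\sim\mu$ all independent, equals the $\nu^{*}\times\mu$ law used in Step 1. Integrating the bound of Step 2 against $V\sim\nu_{m-1}^{*}$ and averaging over $1\le i\le n$, then applying Step 1 to the pushed-forward flag $(A_{u}^{*}V_{j})$ and base point $y$, yields the stated inequality. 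The hard part will be the careful scale-matching in Step 2: one must absorb the $O(1)$ errors from (\ref{eq:ent under sim}), (\ref{eq:ent under expand lin map}) and (\ref{eq:ent is cont in map}), and the finite-probability failure of the Furstenberg alignment, into $\epsilon$, which forces $k$ to be large relative to $1/\epsilon$ and then $n$ to be large relative to both.
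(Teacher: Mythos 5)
The key step you cannot justify is the claimed cut-set stationarity $\nu^{*}=\sum_{u\in\Psi_{m}(i)}p_{u}A_{u}^{*}\nu^{*}$. Recall that for $u=i_{0}\dots i_{k}$ we have $A_{u}^{*}=A_{i_{k}}^{*}\cdots A_{i_{0}}^{*}$: the transpose reverses the product. Iterating $\nu^{*}=\sum_{j}p_{j}A_{j}^{*}\nu^{*}$ always places the \emph{next} letter at the \emph{inner} position, so after a stopping rule based on the outer letters you obtain $\nu^{*}=\sum_{w}p_{w}A_{j_{1}}^{*}\cdots A_{j_{|w|}}^{*}\nu^{*}$ with $w=j_{1}\dots j_{|w|}$ decided by a prefix stopping time in the $j$'s. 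Rewriting $A_{j_{1}}^{*}\cdots A_{j_{|w|}}^{*}=A_{\overleftarrow{w}}^{*}$, this gives a stationarity identity indexed by a \emph{suffix}-defined cut-set in the original alphabet, not by $\Psi_{m}(i)$, which is defined by a \emph{prefix} rule (it compares $\alpha_{m}(A_{i_{0}\dots i_{k}})$ with $\alpha_{m}(A_{i_{0}\dots i_{k-1}})$). A two-letter example already shows the failure: with $\mathcal{U}=\{a,ba,bb\}$ the iteration produces $p_{a}A_{a}^{*}\nu^{*}+p_{b}p_{a}A_{b}^{*}A_{a}^{*}\nu^{*}+p_{b}^{2}A_{b}^{*}A_{b}^{*}\nu^{*}$, whereas your identity would require the middle term to be $p_{b}p_{a}A_{a}^{*}A_{b}^{*}\nu^{*}$; these differ unless the matrices commute. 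Since $M_{n}\nu^{*}$ with $M_{n}=A_{\omega_{n}}^{*}\cdots A_{\omega_{0}}^{*}$ is \emph{not} a $\mathcal{M}(\mathrm{Gr}_{m-1}(d))$-valued martingale, no optional-stopping argument rescues this.

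The paper avoids exactly this pitfall: it first runs the argument with the fixed-length word $\mathbf{U}(i)$, for which $\nu^{*}=\sum_{u\in\Lambda^{i}}p_{u}A_{u}^{*}\nu^{*}$ holds by exchangeability of i.i.d.\ letters, and only afterwards transfers the resulting high-probability statement from $\{\mathbf{U}(i)\}$ to $\{\mathbf{I}_{m}(i)\}$ via the absolute-continuity estimate of Lemma~\ref{lem:ac of stopping times}. You do cite that lemma, but only to handle the Furstenberg alignment $A_{u}^{*}V\approx L_{m}(A_{u}^{*})$; you still invoke the false cut-set stationarity for the final integration, which is the step that must be replaced by the $\mathbf{U}\to\mathbf{I}_{m}$ transfer. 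A secondary (fixable) imprecision in your Step~2: the affine map $\pi_{W}\circ\varphi_{u}|_{y+\tilde V^{\perp}}$ has rank one and there is no reason its nonzero singular value should be comparable to $\alpha_{m}(A_{u})$ without the alignment of both $V$ with $L_{m-1}(A_{u}^{*})$ and of $V_{m}$ with $L_{m}(A_{u}^{*})$; the paper sidesteps this by projecting onto the $m$-dimensional $L_{m}(A_{u}^{*})$, along which $A_{u}$ is uniformly non-contracting after rescaling by $2^{i}$ because $u\in\Psi_{m}(i)$.
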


\begin{proof}
Let $\epsilon>0$, let $k\ge1$ be large with respect to $\epsilon$,
and let $n\ge1$ be large with respect to $k$. Let $C_{1}\ge1$ be
as in Lemma \ref{lem:ac of stopping times}. By Theorem \ref{thm:LY formula for SA} and Lemma \ref{lem:dim_e=00003Ddim} we may assume,
\[
\int\mu\left\{ x\::\:\frac{1}{k}H\left(P_{V_{m}}\mu_{x}^{V_{m-1}},\mathcal{D}_{k}\right)>\Delta_{m}-\epsilon\right\} \:d\nu^{*}(V_{j})_{j=0}^{d}>1-\epsilon.
\]
From this and the decomposition
\[
\nu^{*}=\mathbb{E}_{1\le i\le C_{1}n}\left(A_{\mathbf{U}(i)}^{*}\nu^{*}\right),
\]
we get
\begin{multline*}
\int\mathbb{E}_{1\le i\le C_{1}n}\left(\mu\left\{ x:\frac{1}{k}H\left(P_{A_{\mathbf{U}(i)}^{*}V_{m}}\mu_{x}^{A_{\mathbf{U}(i)}^{*}V_{m-1}},\mathcal{D}_{k}\right)>\Delta_{m}-\epsilon\right\}
\right)d\nu^{*}(V_{j})_{j=0}^{d}\\
>1-\epsilon.
\end{multline*}
Thus, by Lemma \ref{lem:A(V) close to L(A)}, since the entropy of
images is nearly continuous in the map (see (\ref{eq:ent is cont in map})),
and by assuming that $k$ and $n$ are large enough with respect to
the specified parameters,
\begin{multline*}
\int\mathbb{E}_{1\le i\le C_{1}n}\left(\mu\left\{ x:\frac{1}{k}H\left(P_{L_{m}(A_{\mathbf{U}(i)}^{*})}\mu_{x}^{A_{\mathbf{U}(i)}^{*}V},\mathcal{D}_{k}\right)>\Delta_{m}-2\epsilon\right\} \right)d\nu_{m-1}^{*}(V)\\
>1-2\epsilon.
\end{multline*}
By Lemma \ref{lem:ac of stopping times} we now obtain,
\begin{multline}
\int\mathbb{E}_{1\le i\le n}\left(\mu\left\{ x:\frac{1}{k}H\left(P_{L_{m}(A_{\mathbf{I}_{m}(i)}^{*})}\mu_{x}^{A_{\mathbf{I}_{m}(i)}^{*}V},\mathcal{D}_{k}\right)>\Delta_{m}-2\epsilon\right\} \right)d\nu_{m-1}^{*}(V)\\
>1-O(\epsilon).\label{eq:proj of slices greater}
\end{multline}

Next let $V\in\mathrm{Gr}_{m-1}(d)$, $1\le i\le n$ and $u\in\Psi_{m}(i)$
be given, where $\Psi_{m}(i)$ is defined in Section \ref{subsec:Symbolic-notations}.
If $m<d$ assume additionally that $\alpha_{m}(A_{u})>\alpha_{m+1}(A_{u})$.
Let $UDU'$ be a singular value decomposition for $A_{u}^{*}$. By
(\ref{eq:push of slive via affine =00003D slice of push via affine})
it follows that for $\mu$-a.e. $x$,
\begin{eqnarray}
H\left((\varphi_{u}\mu)_{\varphi_{u}(x)}^{V},\mathcal{D}_{i+k}\right) & = & H\left(\varphi_{u}(\mu_{x}^{A_{u}^{*}V}),\mathcal{D}_{i+k}\right)\nonumber \\
 & = & H\left(DU^{*}(\mu_{x}^{A_{u}^{*}V}),\mathcal{D}_{i+k}\right)+O(1)\nonumber \\
 & \ge & H\left(S_{2^{i}}\pi_{d,m}DU^{*}(\mu_{x}^{A_{u}^{*}V}),\mathcal{D}_{k}\right)+O(1),\label{eq:LB on ent of slice of cyl}
\end{eqnarray}
where $\pi_{d,m}\in\mathrm{A}_{d,m}$ is defined in Section \ref{subsec:Spaces-of-affine maps}.
Note that,
\[
S_{2^{i}}\pi_{d,m}D=\mathrm{diag}_{m,m}(2^{i}\alpha_{1}(A_{u}),...,2^{i}\alpha_{m}(A_{u}))\pi_{d,m}.
\]
Thus, from $u\in\Psi_{m}(i)$, (\ref{eq:words in Phi_m are Theta(2^n)}),
(\ref{eq:ent under expand lin map}) and (\ref{eq:LB on ent of slice of cyl}),
\[
H\left((\varphi_{u}\mu)_{\varphi_{u}(x)}^{V},\mathcal{D}_{i+k}\right)\ge H\left(\pi_{d,m}U^{*}(\mu_{x}^{A_{u}^{*}V}),\mathcal{D}_{k}\right)+O(1)\text{ for }\mu\text{-a.e. }x.
\]
Additionally, note that $\tilde{U}\pi_{d,m}U^{*}=P_{L_{m}(A_{u}^{*})}$
for some linear isometry $\tilde{U}:\mathbb{R}^{m}\rightarrow L_{m}(A_{u}^{*})$.
Hence,
\[
H\left((\varphi_{u}\mu)_{\varphi_{u}(x)}^{V},\mathcal{D}_{i+k}\right)\ge H\left(P_{L_{m}(A_{u}^{*})}(\mu_{x}^{A_{u}^{*}V}),\mathcal{D}_{k}\right)+O(1)\text{ for }\mu\text{-a.e. }x.
\]
This together with (\ref{eq:proj of slices greater}) completes the
proof of the lemma.
\end{proof}
In the following lemma we bound from above, with large probability, the
diameter of the supports of the sliced measures appearing in Lemma
\ref{lem:lb on ent of slices}.
\begin{lem}
\label{lem:small diam of slices}Let $1\le m\le d$ be given. Then
for every $\epsilon>0$ there exist $C,N\ge1$ such that for every
$n\ge N$ and $V\in\mathrm{Gr}_{d-m+1}(d)$,
\[
\mathbb{P}_{i=n}\left\{ \mathrm{diam}((x+V)\cap\varphi_{\mathbf{I}_{m}(i)}(K_{\Phi}))>C2^{-n}\text{ for some }x\in\mathbb{R}^{d}\right\} <\epsilon.
\]
\end{lem}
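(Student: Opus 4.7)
The plan is to use that $u\in\Psi_m(n)$ forces $\alpha_m(A_u)\le 2^{-n}$, so $\varphi_u(K_\Phi)$ lies in an $O(2^{-n})$-tube around an affine translate of the $(m-1)$-dimensional subspace $L_{m-1}(A_u)$ --- its ``long direction'' (trivial when $m=1$). A $(d-m+1)$-dimensional subspace $V$ that is uniformly transverse to this long direction must cut such a tube in a set of diameter $O(2^{-n})$, and this transversality holds with probability $>1-\epsilon$ uniformly in $V$ by the Furstenberg-boundary estimate already proved.

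Let $\epsilon>0$. Apply Lemma \ref{lem:P=00007Bkappa<delta=00007D<epsilon} with the roles of its $m$ and $k$ taken to be $m-1$ and $m$ (with the convention $L_0\equiv\{0\}$, so the conclusion is trivially true when $m=1$): since $\mathrm{Gr}_{d-(m-1)}(d)=\mathrm{Gr}_{d-m+1}(d)$, this yields $\delta=\delta(\epsilon)>0$ and $N\ge 1$ such that for every $V\in\mathrm{Gr}_{d-m+1}(d)$ and $n\ge N$,
\[
\mathbb{P}_{i=n}\bigl\{\kappa\bigl(L_{m-1}(A_{\mathbf{I}_m(i)}),V\bigr)\le\delta\bigr\}<\epsilon.
\]
Denote the complementary event by $\mathcal{G}_n$; by the remark following that lemma, after enlarging $N$ we may also assume that $\alpha_{m-1}(A_{\mathbf{I}_m(n)})>\alpha_m(A_{\mathbf{I}_m(n)})$ (hence $L_{m-1}(A_{\mathbf{I}_m(n)})$ is well defined) on $\mathcal{G}_n$.

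Fix $n\ge N$ and a realization in $\mathcal{G}_n$. Write $u=\mathbf{I}_m(n)$, take an SVD $A_u=U_1 D U_2$ with $D=\mathrm{diag}_{d,d}(\alpha_1(A_u),\dots,\alpha_d(A_u))$, and set $E_{m-1}=\mathrm{span}(e_{d,1},\dots,e_{d,m-1})$, so that $L_{m-1}(A_u)=U_1(E_{m-1})$. Because $U_1$ is a Euclidean isometry preserving both $d_{\mathrm{Gr}_1}$ and $\kappa$, the diameter of $(x+V)\cap\varphi_u(K_\Phi)$ equals that of $(\tilde x+V')\cap DU_2(K_\Phi)$ for a suitable $\tilde x$, where $V':=U_1^{-1}V$ satisfies $\kappa(V',E_{m-1})>\delta$. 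This transversality gives the elementary linear-algebra fact
\[
|P_{E_{m-1}^\perp}y|\ge\delta|y|\qquad\text{for every }y\in V',
\]
because decomposing $y=y_1+y_2$ with $y_1\in E_{m-1}$, $y_2\in E_{m-1}^\perp$, one has $d_{\mathrm{Gr}_1}(y\mathbb{R},y_1\mathbb{R})=|y_2|/|y|\ge\kappa(V',E_{m-1})>\delta$ whenever $y_1\ne 0$, and the inequality is trivial when $y_1=0$.

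With $R:=\mathrm{diam}(K_\Phi)$, any two points $p,q\in DU_2(K_\Phi)$ satisfy $|p_j-q_j|\le 2R\alpha_j(A_u)$ coordinatewise, whence
\[
|P_{E_{m-1}^\perp}(p-q)|\le 2R\sqrt{d-m+1}\,\alpha_m(A_u)\le 2R\sqrt{d-m+1}\cdot 2^{-n},
\]
using $u\in\Psi_m(n)$. When additionally $p-q\in V'$, combining the two displays yields $|p-q|\le C 2^{-n}$ with $C=C(\epsilon)$, so the slice diameter is at most $C 2^{-n}$ uniformly in $\tilde x$ (equivalently in $x$) on $\mathcal{G}_n$. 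Since $\mathbb{P}(\mathcal{G}_n)>1-O(\epsilon)$, this completes the proof after rescaling $\epsilon$. The only nontrivial input is Lemma \ref{lem:P=00007Bkappa<delta=00007D<epsilon}; the rest is routine bookkeeping, and no serious obstacle is anticipated.
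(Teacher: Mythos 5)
Your proof is correct and follows essentially the same route as the paper's: both apply Lemma \ref{lem:P=00007Bkappa<delta=00007D<epsilon} (with $m\to m-1$, $k\to m$) to get uniform transversality of $L_{m-1}(A_{\mathbf{I}_m(n)})$ to $V$ with high probability, and then observe that a $(d-m+1)$-plane transverse to the ``long'' subspace of the $O(2^{-n})$-thin cylinder $\varphi_u(K_\Phi)$ intersects it in a set of diameter $O_\delta(2^{-n})$. The only cosmetic difference is that you pass to SVD coordinates and use the identity $d_{\mathrm{Gr}_1}(y\mathbb{R},y_1\mathbb{R})=|y_2|/|y|$ directly, whereas the paper keeps the original coordinates and invokes the wedge-product comparison (\ref{eq:alt def for dist of lines}); these are equivalent.
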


\begin{proof}
Since $\mathrm{diam}(\varphi_{u}(K_{\Phi}))=O(2^{-n})$ for every
$n\ge1$ and $u\in\Psi_{1}(n)$, the lemma clearly holds for $m=1$.
Thus we can assume that $m\ge2$.

Let $\epsilon>0$ be given. By Lemma \ref{lem:P=00007Bkappa<delta=00007D<epsilon},
there exists $\delta>0$ and $N\ge1$ so that
\begin{equation}
\mathbb{P}_{i=n}\left\{ \kappa(L_{m-1}(A_{\mathbf{I}_{m}(i)}),V)\le\delta\right\} <\epsilon\text{ for all }V\in\mathrm{Gr}_{d-m+1}(d)\text{ and }n\ge N.\label{eq:small prob for small kappa}
\end{equation}

Fix $n\ge N$, $V\in\mathrm{Gr}_{d-m+1}(d)$ and $u\in\Psi_{m}(n)$
with $\alpha_{m-1}(A_{u})>\alpha_{m}(A_{u})$ and $\kappa(L_{m-1}(A_{u}),V)\ge\delta$.
Let $x\in\mathbb{R}^{d}$, let $y,z\in(x+V)\cap\varphi_{u}(K_{\Phi})$,
and set $v:=y-z$. By (\ref{eq:small prob for small kappa}), in order
to complete the proof of the lemma it suffices to show that $|v|=O_{\delta}(2^{-n})$.
Since $y,z\in x+V$, we have $v\in V$. Let $C>1$ be a large constant
depending only on $K_{\Phi}$. From $u\in\Psi_{m}(n)$ and $y,z\in\varphi_{u}(K_{\Phi})$,
it follows that there exist $v_{1}\in L_{m-1}(A_{u})$ and $v_{2}\in L_{m-1}(A_{u})^{\perp}$
so that $v=v_{1}+v_{2}$ and $|v_{2}|\le C2^{-n}$. We may clearly
assume that $v_{1}\ne0$. Hence, from $\kappa(L_{m-1}(A_{u}),V)\ge\delta$,
$v\in V$,$v_{1}\in L_{m-1}(A_{u})$ and (\ref{eq:alt def for dist of lines}),
\begin{eqnarray*}
\delta\le d_{\mathrm{Gr}_{1}}(v\mathbb{R},v_{1}\mathbb{R}) & \le & C|v|^{-1}|v_{1}|^{-1}\Vert v\wedge v_{1}\Vert\\
 & = & C|v|^{-1}|v_{1}|^{-1}\Vert v_{2}\wedge v_{1}\Vert=C|v|^{-1}|v_{2}|\le C^{2}2^{-n}|v|^{-1}.
\end{eqnarray*}
This gives $|v|\le\delta^{-1}C^{-2}2^{-n}$, which completes the proof
of the lemma.
\end{proof}
We shall also need the following simple lemma. Its proof follows easily from the
compactness of the Grassmannians and is therefore omitted.
\begin{lem}
\label{lem:lb on kappa of perps}Let $1\le m\le d$ and $\delta>0$
be given. Then there exists $\rho=\rho(\delta)>0$ so that $\kappa(V,L^{\perp})\ge\rho$
for all $V,L\in\mathrm{Gr}_{m}(d)$ with $\kappa(V^{\perp},L)\ge\delta$.
\end{lem}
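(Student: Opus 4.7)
The plan is to argue by compactness and contradiction, reducing the statement to the observation that the nontriviality of $V\cap L^{\perp}$ is symmetric under the exchange with $V^{\perp}\cap L$. Suppose the claim fails for some $\delta>0$. Then there exist sequences $V_{n},L_{n}\in\mathrm{Gr}_{m}(d)$ with $\kappa(V_{n}^{\perp},L_{n})\geq\delta$ and $\kappa(V_{n},L_{n}^{\perp})\to0$. By compactness of $\mathrm{Gr}_{m}(d)$, after passing to subsequences we may assume $V_{n}\to V$ and $L_{n}\to L$; the continuity of $W\mapsto P_{W}$ then gives $V_{n}^{\perp}\to V^{\perp}$ and $L_{n}^{\perp}\to L^{\perp}$ as well.

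By the definition of $\kappa$, I would pick unit vectors $v_{n}\in V_{n}$ and $w_{n}\in L_{n}^{\perp}$ with $d_{\mathrm{Gr}_{1}}(v_{n}\mathbb{R},w_{n}\mathbb{R})\to0$, and after a further subsequence extraction assume $v_{n}\to v$ and $w_{n}\to w$ (unit vectors). Because $v_{n}-P_{V}v_{n}=(P_{V_{n}}-P_{V})v_{n}\to0$, the limit $v$ lies in $V$; similarly $w\in L^{\perp}$; and $v\mathbb{R}=w\mathbb{R}$. Hence $V\cap L^{\perp}\neq\{0\}$.

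The key linear-algebraic input is the identity
\[
\dim(V^{\perp}\cap L)=\dim(V\cap L^{\perp}),
\]
which holds because $(V+L^{\perp})^{\perp}=V^{\perp}\cap L$, and the standard dimension formula $\dim(V+L^{\perp})=\dim V+\dim L^{\perp}-\dim(V\cap L^{\perp})$ combined with $\dim V+\dim L^{\perp}=d$ gives the equality. Consequently $V^{\perp}\cap L$ contains a unit vector $u$. For $n$ large enough the vectors $v_{n}':=P_{V_{n}^{\perp}}(u)/|P_{V_{n}^{\perp}}(u)|\in V_{n}^{\perp}$ and $w_{n}':=P_{L_{n}}(u)/|P_{L_{n}}(u)|\in L_{n}$ are well-defined and both converge to $u$, so $d_{\mathrm{Gr}_{1}}(v_{n}'\mathbb{R},w_{n}'\mathbb{R})\to0$ and therefore $\kappa(V_{n}^{\perp},L_{n})\to0$, contradicting $\kappa(V_{n}^{\perp},L_{n})\geq\delta$. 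No real obstacle is anticipated; the only subtle point is recognizing the duality above, which turns the apparent asymmetry between $\kappa(V,L^{\perp})$ and $\kappa(V^{\perp},L)$ into a symmetric statement about trivial intersections.
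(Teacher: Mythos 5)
Your proof is correct. The paper omits the proof, stating only that it "follows easily by compactness of the Grassmannians," and your compactness-and-contradiction argument---together with the duality observation $\dim(V\cap L^{\perp})=\dim(V^{\perp}\cap L)$, which is precisely what makes $\kappa(V,L^{\perp})=0\iff\kappa(V^{\perp},L)=0$ and hence allows the compactness argument to close---is the argument the paper has in mind.
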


\subsection{Proof of the proposition}
\begin{proof}[Proof of Proposition \ref{prop:inductive main proj prop}]
As mentioned above, the proof is carried out by induction on $m$.
Since $\Sigma_{1}^{0}=0$, the proposition holds trivially for $m=0$.
Let $1\le m\le d$ and suppose that the proposition has already been
proven for $m-1$.

Let $\epsilon>0$ be small, let $\delta>0$ be small with respect
to $\epsilon$, let $l\ge1$ be large with respect to $\delta$, let
$k\ge1$ be large with respect to $l$, and let $n\ge1$ be large
with respect to $k$.

By Lemma \ref{lem:lb on ent of slices}, and since $k$ is large with
respect to $l$, there exists $W\in\mathrm{Gr}_{m-1}(d)$ such that
\begin{equation}
\mathbb{E}_{1\le i\le n}\left(\varphi_{\mathbf{I}_{m}(i+l)}\mu\left\{ x\::\:\frac{1}{k}H\left((\varphi_{\mathbf{I}_{m}(i+l)}\mu)_{x}^{W},\mathcal{D}_{i+k}\right)>\Delta_{m}-\epsilon\right\} \right)>1-\epsilon.\label{eq:ineq1 in ind main proj prop}
\end{equation}
By the induction hypothesis,
\[
\mathbb{P}_{1\le i\le n}\left(\mathbf{I}_{m-1}(i+l)\in\mathcal{Z}(\Sigma_{1}^{m-1},\epsilon,k,i+l,W)\right)>1-\epsilon.
\]
From this, since $k$ is large with respect to $l$, by the concavity
of entropy, and by replacing $\epsilon$ with a larger quantity (which
is still small) without changing the notation, we may assume that
\begin{equation}
\mathbb{P}_{1\le i\le n}\left(\mathbf{I}_{m-1}(i+l)\in\mathcal{Z}(\Sigma_{1}^{m-1},\epsilon,k,i,W)\right)>1-\epsilon.\label{eq:ineq2 in ind main proj prop}
\end{equation}

Recall the measure $\lambda_{n}\in\mathcal{M}(\mathbb{N})$ defined
in Section \ref{subsec:Component-measures}. From (\ref{eq:ineq1 in ind main proj prop})
and (\ref{eq:ineq2 in ind main proj prop}), and by replacing $\epsilon$
with a larger quantity (which is still small) without changing the
notation, we may assume that $\lambda_{n}(\mathcal{Q})>1-\epsilon$,
where $\mathcal{Q}$ is the set of all $1\le i\le n$ with
\begin{equation}
\mathbb{E}_{j=i+l}\left(\varphi_{\mathbf{I}_{m}(j)}\mu\left\{ x\::\:\frac{1}{k}H\left((\varphi_{\mathbf{I}_{m}(j)}\mu)_{x}^{W},\mathcal{D}_{i+k}\right)>\Delta_{m}-\epsilon\right\} \right)>1-\epsilon\label{eq:large ent of slices on avg}
\end{equation}
and
\begin{equation}
\mathbb{P}_{j=i+l}\left(\mathbf{I}_{m-1}(j)\in\mathcal{Z}(\Sigma_{1}^{m-1},\epsilon,k,i,W)\right)>1-\epsilon.\label{eq:cyl meas in Z with h-prob}
\end{equation}
Moreover, since $\chi_{1}>...>\chi_{d}$ and by (\ref{eq:words in Phi_m are Theta(2^n)}), if $m<d$ we may also assume
that for $i\in\mathcal{Q}$
\begin{equation}
\mathbb{P}_{j=i+l}\left(\alpha_{m+1}(A_{\mathbf{I}_{m}(j)})<2^{-i-2k}\right)>1-\epsilon,\label{eq:alpha_m+1 is small}
\end{equation}
while still having $\lambda_{n}(\mathcal{Q})>1-\epsilon$.

Fix $i\in\mathcal{Q}$ until the end of the proof. By Lemma \ref{lem:small mass of pts close to bd},
\begin{equation}
\mu\left(\cup_{D\in\mathcal{D}_{i}}(\partial D)^{(2^{-i}\delta)}\right)<\epsilon.\label{eq:small mass of bnd}
\end{equation}
Additionally, from (\ref{eq:decomp of mu according to cut sets})
and by basic properties of disintegrations (see Section \ref{subsec:Disintegrations}),
\[
\mu=\mathbb{E}_{j=i+l}\left(\int(\varphi_{\mathbf{I}_{m}(j)}\mu)_{x}^{W}\:d\varphi_{\mathbf{I}_{m}(j)}\mu(x)\right).
\]
From this decomposition, by (\ref{eq:small mass of bnd}), by Lemma
\ref{lem:small diam of slices}, and since $l$ is large with respect
to $\delta$,
\begin{equation}
\mathbb{E}_{j=i+l}\left(\varphi_{\mathbf{I}_{m}(j)}\mu\left\{ x\::\:\begin{array}{c}
\text{there exists }D\in\mathcal{D}_{i}\text{ with}\\
\mathrm{supp}(\varphi_{\mathbf{I}_{m}(j)}\mu)_{x}^{W}\subset D
\end{array}\right\} \right)>1-\epsilon.\label{eq:slice contained in cell with high prob}
\end{equation}

Let $\mathcal{U}_{1}$ be the set of all words $u\in\Psi_{m}(i+l)$
with
\[
\varphi_{u}\mu\left\{ x\::\:\frac{1}{k}H\left((\varphi_{u}\mu)_{x}^{W},\mathcal{D}_{i+k}\right)>\Delta_{m}-\epsilon\right\} >1-\epsilon,
\]
and
\[
\varphi_{u}\mu\left\{ x\::\:\begin{array}{c}
\text{there exists }D\in\mathcal{D}_{i}\text{ with}\\
\mathrm{supp}(\varphi_{u}\mu)_{x}^{W}\subset D
\end{array}\right\} >1-\epsilon.
\]
By (\ref{eq:large ent of slices on avg}) and (\ref{eq:slice contained in cell with high prob}),
and by replacing $\epsilon$ with a larger quantity (which is still
small) without changing the notation, we may assume that
\begin{equation}
\mathbb{P}_{j=i+l}\{\mathbf{I}_{m}(j)\in\mathcal{U}_{1}\}>1-\epsilon.\label{eq:ind pf lb on P(U_1)}
\end{equation}
Moreover, by the concavity of conditional entropy, by (\ref{eq:cond ent =00003D ent for slices}),
and by replacing $\epsilon$ with a larger quantity once more, we
may assume that for $u\in\mathcal{U}_{1}$
\begin{equation}
\mathbb{P}_{j=i}\left\{ \frac{1}{k}H\left((\varphi_{u}\mu)_{x,j},\mathcal{D}_{j+k}\mid P_{W}^{-1}(\mathcal{B}_{\mathbb{R}^{d}})\right)>\Delta_{m}-\epsilon\right\} >1-\epsilon.\label{eq:lb on prob of large slices}
\end{equation}

Let $\mathcal{U}_{2}$ be set of all $u\in\Psi_{m}(i+l)$ with,
\[
\beta_{[u]}\left(\bigcup\left\{ [v]\::\:\begin{array}{c}
v\in\Psi_{m-1}(i+l),\:[v]\subset[u]\text{ and }\\
v\in\mathcal{Z}(\Sigma_{1}^{m-1},\epsilon,k,i,W)
\end{array}\right\} \right)>1-\epsilon.
\]
Note that the partition $\{[v]\::\:v\in\Psi_{m-1}(i+l)\}$ of $\Lambda^\mathbb{N}$ refines
the partition $\{[u]\::\:u\in\Psi_{m}(i+l)\}$. From this, from (\ref{eq:cyl meas in Z with h-prob}),
and by replacing $\epsilon$ with a larger quantity, we may assume
that
\begin{equation}
\mathbb{P}_{j=i+l}\{\mathbf{I}_{m}(j)\in\mathcal{U}_{2}\}>1-\epsilon.\label{eq:ind pf lb on P(U_2)}
\end{equation}
Moreover, for every $u\in\Psi_{m}(i+l)$ and $D\in\mathcal{D}_{i}$
with $\varphi_{u}\mu(D)>0$,
\[
(\varphi_{u}\mu)_{D}=\sum_{v\in\Psi_{m-1}(i+l),\:[v]\subset[u]}\frac{p_{v}}{p_{u}}\cdot\frac{\varphi_{v}\mu(D)}{\varphi_{u}\mu(D)}\cdot(\varphi_{v}\mu)_{D}.
\]
From these decompositions, by the concavity of entropy, by the definition
of $\mathcal{Z}(\Sigma_{1}^{m-1},\epsilon,k,i,W)$, and by replacing
$\epsilon$ with a larger quantity, we may assume that for $u\in\mathcal{U}_{2}$
\begin{equation}
\mathbb{P}_{j=i}\left(\frac{1}{k}H\left(\pi_{W}(\varphi_{u}\mu)_{x,j},\mathcal{D}_{j+k}\right)>\Sigma_{1}^{m-1}-\epsilon\right)>1-\epsilon.\label{eq:lb on prob of large proj}
\end{equation}

By (\ref{eq:ent =000026 coordinate change}) and (\ref{eq:monot of ent in cond sig-alg}),
it follows that for every $\theta\in\mathcal{M}(\mathbb{R}^{d})$
\[
\frac{1}{k}H\left(\theta,\mathcal{D}_{i+k}\right)\ge\frac{1}{k}H\left(\pi_{W}\theta,\mathcal{D}_{i+k}\right)+\frac{1}{k}H\left(\theta,\mathcal{D}_{i+k}\mid P_{W}^{-1}(\mathcal{B}_{\mathbb{R}^{d}})\right)-O(1/k).
\]
Thus, by (\ref{eq:lb on prob of large slices}) and (\ref{eq:lb on prob of large proj}),
\begin{equation}
\mathbb{P}_{j=i}\left(\frac{1}{k}\left((\varphi_{u}\mu)_{x,j},\mathcal{D}_{j+k}\right)>\Sigma_{1}^{m}-3\epsilon\right)\ge1-2\epsilon\text{ for }u\in\mathcal{U}_{1}\cap\mathcal{U}_{2}.\label{eq:lb for u in U_1 cap U_2}
\end{equation}

Given $V\in\mathrm{Gr}_{m}(d)$, let $\mathcal{U}_{V}$ be the set
of all $u\in\Psi_{m}(i+l)$ so that $\kappa(V^{\perp},L_{m}(A_{u}))>\delta$
and $\varphi_{u}\mu$ is supported on $x+L_{m}(A_{u})^{(2^{-i-k})}$
for some $x\in\mathbb{R}^{d}$. By Lemma \ref{lem:P=00007Bkappa<delta=00007D<epsilon}
and (\ref{eq:alpha_m+1 is small}), we may assume that
\begin{equation}
\mathbb{P}_{j=i+l}\{\mathbf{I}_{m}(j)\in\mathcal{U}_{V}\}>1-2\epsilon\text{ for all }V\in\mathrm{Gr}_{m}(d).\label{eq:ind pf lb on P(U_V)}
\end{equation}
Moreover, from (\ref{eq:lb for u in U_1 cap U_2}) and by Lemmata
\ref{lem:lb on kappa of perps} and \ref{lem:gen proj lb}, it follows
that for $V\in\mathrm{Gr}_{m}(d)$ and $u\in\mathcal{U}_{1}\cap\mathcal{U}_{2}\cap\mathcal{U}_{V}$
\[
\mathbb{P}_{j=i}\left(\frac{1}{k}\left(\pi_{V}((\varphi_{u}\mu)_{x,j}),\mathcal{D}_{j+k}\right)>\Sigma_{1}^{m}-4\epsilon\right)\ge1-2\epsilon.
\]
Thus, from (\ref{eq:ind pf lb on P(U_1)}), (\ref{eq:ind pf lb on P(U_2)})
and (\ref{eq:ind pf lb on P(U_V)}), we obtain
\begin{equation}
\mathbb{P}\left(\mathbf{I}_{m}(i+l)\in\mathcal{Z}(\Sigma_{1}^{m},4\epsilon,k,i,V)\right)>1-4\epsilon\text{ for all }V\in\mathrm{Gr}_{m}(d).\label{eq:fin stage with I(i+l)}
\end{equation}

Note that for every $u\in\Psi_{m}(i)$ and $D\in\mathcal{D}_{i}$
with $\varphi_{u}\mu(D)>0$,
\[
(\varphi_{u}\mu)_{D}=\sum_{v\in\Psi_{m}(i+l),\:[v]\subset[u]}\frac{p_{v}}{p_{u}}\cdot\frac{\varphi_{v}\mu(D)}{\varphi_{u}\mu(D)}\cdot(\varphi_{v}\mu)_{D}.
\]
From these decompositions, by the concavity of entropy, and by replacing
$\epsilon$ with a larger quantity, we may assume that (\ref{eq:fin stage with I(i+l)})
holds with $\mathbf{I}_{m}(i)$ in place of $\mathbf{I}_{m}(i+l)$.
Hence, since $i$ is an arbitrary member of $\mathcal{Q}$ and $\lambda_{n}(\mathcal{Q})>1-\epsilon$,
we have thus shown that
\[
\mathbb{P}_{1\le i\le n}\left(\mathbf{I}_{m}(i)\in\mathcal{Z}(\Sigma_{1}^{m},4\epsilon,k,i,V)\right)>1-5\epsilon\text{ for all }V\in\mathrm{Gr}_{m}(d).
\]
This completes the induction and the proof of the proposition.
\end{proof}

\subsection{Additional entropy estimates}

We now draw some consequences of Proposition \ref{prop:inductive main proj prop}.
The next lemma provides a lower bound on the entropy of projections
of components of $\mu$.

Given $1\le m\le d$, recall that metric concepts in $\mathrm{A}_{d,m}$
are considered with respect to $d_{\mathrm{A}_{d,m}}$ unless stated
otherwise (see Section \ref{subsec:An-invariant-metric}). In particular,
$B(\pi_{d,m},R)$ denotes the closed ball in $(\mathrm{A}_{d,m},d_{\mathrm{A}_{d,m}})$
with centre $\pi_{d,m}$ and radius $R$.
\begin{lem}
\label{lem:lb on ent of proj of comp of mu}Let $1\le m\le d$ be
given. Then for every $\epsilon,R>0$, $k\ge K(\epsilon,R)\ge1$,
$n\ge N(\epsilon,R,k)\ge1$ and $\psi\in B(\pi_{d,m},R)$,
\[
\mathbb{P}_{1\le i\le n}\left\{ \frac{1}{k}H\left(\psi\mu_{x,i},\mathcal{D}_{i+k}\right)>\Sigma_{1}^{m}-\epsilon\right\} >1-\epsilon.
\]
\end{lem}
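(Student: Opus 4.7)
The approach is (i) to reduce to the case $\psi=\pi_{V}$ with $V:=(\ker A_{\psi})^{\perp}\in\mathrm{Gr}_{m}(d)$ via (\ref{eq:ent of psi comp to ent of proj}); (ii) to pass from components of the cylinders $\varphi_{u}\mu$ to components of $\mu$ by concavity of entropy; and (iii) to make this transfer quantitative by truncating at level $k\Sigma_{1}^{m}$, so that the expected-value bound coming from Proposition~\ref{prop:inductive main proj prop} converts via Markov to the required high-probability bound.

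For (i), the ball $B(\pi_{d,m},R)$ is compact in $\mathrm{A}_{d,m}$ by Lemma~\ref{lem: def of inv met and prop}(4), so (\ref{eq:ent of psi comp to ent of proj}) yields
\[
\left|H(\psi\mu_{x,i},\mathcal{D}_{i+k})-H(\pi_{V}\mu_{x,i},\mathcal{D}_{i+k})\right|=O_{R}(1)
\]
uniformly in $\psi\in B(\pi_{d,m},R)$, $x\in\mathbb{R}^{d}$ and $i\ge1$. For $k\ge K_{0}(\epsilon,R)$ this $O_R(1)$ is at most $k\epsilon/2$, so it suffices to prove the lemma with $\pi_{V}$ in place of $\psi$ and with $\epsilon/2$ in place of $\epsilon$, uniformly over $V\in\mathrm{Gr}_{m}(d)$.

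For (ii), fix $V$ and $i\ge 1$. Using (\ref{eq:decomp of mu according to cut sets}), for each $D\in\mathcal{D}_{i}^{d}$ with $\mu(D)>0$ the component $\mu_{D}$ is the convex combination of $\{(\varphi_{u}\mu)_{D}\}_{u\in\Psi_{m}(i)}$ with weights $\lambda_{u,D}=p_{u}\varphi_{u}\mu(D)/\mu(D)$. Applying $\pi_{V}$ and using concavity in the measure argument of both $H(\cdot,\mathcal{D}_{i+k})$ and the clipping $t\mapsto\min(t,k\Sigma_{1}^{m})$, and then integrating over $D$ weighted by $\mu(D)$, I obtain
\[
\mathbb{E}\bigl(\min(\tfrac{1}{k}H(\pi_{V}\mu_{x,i},\mathcal{D}_{i+k}),\Sigma_{1}^{m})\bigr)\ge\mathbb{E}\bigl(\min(\tfrac{1}{k}H(\pi_{V}(\varphi_{\mathbf{I}_{m}(i)}\mu)_{y,i},\mathcal{D}_{i+k}),\Sigma_{1}^{m})\bigr),
\]
where $y$ is drawn from $\varphi_{\mathbf{I}_{m}(i)}\mu$ conditionally on $\mathbf{I}_{m}(i)$.

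For (iii), I would apply Proposition~\ref{prop:inductive main proj prop} with an auxiliary parameter $\epsilon'=\epsilon'(\epsilon)$ (say $\epsilon'\le c\epsilon^{2}$ for an absolute $c$). For $k\ge K(\epsilon')$ and $n\ge N(\epsilon',k)$ it gives $\mathbb{P}_{1\le i\le n}(\mathbf{I}_{m}(i)\in\mathcal{Z}(\Sigma_{1}^{m},\epsilon',k,i,V))>1-\epsilon'$. On this event the definition of $\mathcal{Z}(\cdot)$ forces the inner expectation on the right above to be at least $(1-\epsilon')(\Sigma_{1}^{m}-\epsilon')$; off this event the truncated quantity is still nonnegative. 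Combining with the previous display,
\[
\mathbb{E}_{1\le i\le n}\,\mathbb{E}\bigl(\Sigma_{1}^{m}-\min(\tfrac{1}{k}H(\pi_{V}\mu_{x,i},\mathcal{D}_{i+k}),\Sigma_{1}^{m})\bigr)=O(\epsilon').
\]
The integrand is nonnegative, so Markov's inequality at threshold $\epsilon/2$ yields $\mathbb{P}_{1\le i\le n}\{\tfrac{1}{k}H(\pi_{V}\mu_{x,i},\mathcal{D}_{i+k})\le\Sigma_{1}^{m}-\epsilon/2\}<\epsilon$ once $\epsilon'$ is chosen small enough, which (together with step (i)) gives the desired bound.

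The main subtlety is step (iii): since $\tfrac{1}{k}H(\pi_{V}\mu_{x,i},\mathcal{D}_{i+k})$ can in principle reach $m$ while $\Sigma_{1}^{m}$ may be strictly less than $m$, a plain Markov argument on the untruncated entropy would not work; the truncation at $k\Sigma_{1}^{m}$ preserves the concavity inequality in (ii) and is what makes the final Markov step effective.
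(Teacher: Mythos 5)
Your proof is correct and follows the same route as the paper: decompose $\mu_D$ into cylinder components over $\Psi_m(i)$, apply concavity of entropy together with Proposition \ref{prop:inductive main proj prop}, and reduce the general $\psi$ to $\pi_V$ via (\ref{eq:ent of psi comp to ent of proj}) and compactness of $B(\pi_{d,m},R)$. The paper's one-line ``Thus, by Proposition \ref{prop:inductive main proj prop} and the concavity of entropy'' leaves implicit exactly the step you isolate, namely converting a high-probability statement about components of cylinders into a high-probability statement about components of $\mu$; your truncation at $k\Sigma_1^m$ followed by Markov is a correct way to close that gap (the equivalent device of bounding the conditional bad-event probability via Markov and then using nonnegativity of entropy on the bad set also works).
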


\begin{proof}
For every $i\ge0$ and $D\in\mathcal{D}_{i}$ with $\mu(D)>0$,
\[
\mu_{D}=\sum_{u\in\Psi_{m}(i)}p_{u}\frac{\varphi_{u}\mu(D)}{\mu(D)}(\varphi_{u}\mu)_{D}.
\]
Thus, by Proposition \ref{prop:inductive main proj prop} and the
concavity of entropy, the lemma holds when $\psi=\pi_{V}$ for some
$V\in\mathrm{Gr}_{m}(d)$. The general case of the lemma follows from
this, from (\ref{eq:ent of psi comp to ent of proj}), and since $B(\pi_{d,m},R)$
is a compact subset of $\mathrm{A}_{d,m}$ (see Lemma \ref{lem: def of inv met and prop}).
\end{proof}
\begin{rem*}
Since $\dim\mu=\Sigma_{1}^{d}$ and by Lemmata \ref{lem:lb on ent of proj of comp of mu}
and \ref{lem:multiscale-entropy-formula}, it follows easily that
most components of $\mu$ have normalized entropy close to $\dim\mu$.
That is, in the terminology of \cite{Ho1,HR}, the measure $\mu$ has uniform entropy dimension $\dim\mu$.
\end{rem*}

For the proof of our entropy increase result (Theorem \ref{thm:ent increase result}),
we shall need the following extension of Lemma \ref{lem:lb on ent of proj of comp of mu}. It bounds from below the entropy of components of projections of components of $\mu$.
\begin{lem}
\label{lem:lb on ent of comp of proj of comp of mu}Let $1\le m\le d$
be given. Then for every $\epsilon,R>0$, $l\ge L(\epsilon,R)\ge1$, $k\ge K(\epsilon,R)\ge1$, $n\ge N(\epsilon,R,k,l)\ge1$ and $\psi\in B(\pi_{d,m},R)$,
\[
\int\mathbb{P}_{i\le j\le i+k}\left\{ \frac{1}{l}H\left((\psi\mu_{x,i})_{y,j},\mathcal{D}_{j+l}\right)>\Sigma_{1}^{m}-\epsilon\right\} \:d\lambda_{n}\times\mu(i,x)>1-\epsilon.
\]
\end{lem}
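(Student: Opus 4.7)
The statement should follow essentially for free from Lemma~\ref{lem:lb on ent of proj of comp of mu} combined with Lemma~\ref{lem:distribution-of-components-of-components}. The conceptual point is that for $j\ge i$ and $y\in\mathcal{D}_{i}^{m}(\psi(x))$, the refinement $\mathcal{D}_{j}^{m}(y)\subset\mathcal{D}_{i}^{m}(\psi(x))$ forces the identification $(\psi\mu_{x,i})_{y,j}=\psi\mu_{y,j}$. Hence the sub-component appearing inside the entropy on the left-hand side is an ordinary component of $\psi\mu$ at scale $j$, merely resampled via a two-step procedure — this is exactly the distribution $\mathbb{Q}_{n,k}^{\psi\mu}$ from Section~\ref{subsec:Component-measures}, as opposed to the one-step distribution $\mathbb{P}_{n}^{\psi\mu}$ to which Lemma~\ref{lem:lb on ent of proj of comp of mu} applies directly.

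Fix $\epsilon,R>0$. First I would apply Lemma~\ref{lem:lb on ent of proj of comp of mu} with $\epsilon/2$ in place of $\epsilon$, choosing $l\ge L(\epsilon/2,R)$; for $n$ sufficiently large and any $\psi\in B(\pi_{d,m},R)$ this yields
\[
\mathbb{P}_{1\le j\le n}\left\{\tfrac{1}{l}H\bigl(\psi\mu_{y,j},\mathcal{D}_{j+l}\bigr)>\Sigma_{1}^{m}-\epsilon/2\right\}>1-\epsilon/2,
\]
i.e. the event
\[
\mathcal{E}:=\left\{(\sigma,j)\::\:\tfrac{1}{l}H(\sigma,\mathcal{D}_{j+l})>\Sigma_{1}^{m}-\epsilon/2\right\}
\]
has $\mathbb{P}_{n}^{\psi\mu}$-probability exceeding $1-\epsilon/2$, where $\mathbb{P}_{n}^{\psi\mu}$ and $\mathbb{Q}_{n,k}^{\psi\mu}$ are viewed as distributions on (component, scale) pairs. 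Invoking Lemma~\ref{lem:distribution-of-components-of-components} then gives
\[
\mathbb{Q}_{n,k}^{\psi\mu}(\mathcal{E})\ge\mathbb{P}_{n}^{\psi\mu}(\mathcal{E})-O(k/n)>1-\epsilon/2-O(k/n),
\]
which exceeds $1-\epsilon$ once $n\ge N(\epsilon,R,k,l)$ is taken large enough that $O(k/n)\le\epsilon/2$.

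Finally, I would unpack the definition of $\mathbb{Q}_{n,k}^{\psi\mu}$: a sample is produced by drawing $(i,x)\sim\lambda_{n}\times\mu$, forming $\sigma=\psi\mu_{x,i}$, and then drawing $j\in\{i,\dots,i+k\}$ uniformly together with $y\sim\sigma$ to return $(\sigma_{y,j},j)=((\psi\mu_{x,i})_{y,j},j)$. Substituting this description into the bound $\mathbb{Q}_{n,k}^{\psi\mu}(\mathcal{E})>1-\epsilon$, and using the identification $(\psi\mu_{x,i})_{y,j}=\psi\mu_{y,j}$ observed above, converts the estimate into the integral inequality stated in the lemma. No genuine difficulty is foreseen; the sole bookkeeping point is the routine measurability check that $\mathcal{E}$ depends measurably on the pair $(\sigma,j)$, which is immediate since $H(\sigma,\mathcal{D}_{j+l})$ is a countable sum of measurable functions of $\sigma$ for each fixed $j$ and $j$ ranges over integers.
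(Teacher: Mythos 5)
The core of your argument rests on the claimed identification $(\psi\mu_{x,i})_{y,j}=\psi\mu_{y,j}$ (or $(\psi\mu)_{y,j}$), and this is false. Unwinding the definitions: $\psi\mu_{x,i}$ is the pushforward under $\psi$ of $\mu$ conditioned on the $d$-dimensional cube $\mathcal{D}_i^d(x)$, so $(\psi\mu_{x,i})_{y,j}$ is the normalized image of $\mu$ restricted to $\mathcal{D}_i^d(x)\cap\psi^{-1}(\mathcal{D}_j^m(y))$. By contrast $(\psi\mu)_{y,j}$ conditions only on $\psi^{-1}(\mathcal{D}_j^m(y))$. When $m<d$ the preimage $\psi^{-1}(\mathcal{D}_j^m(y))$ is an unbounded slab and cannot be contained in the bounded cube $\mathcal{D}_i^d(x)$, so the two conditionings differ; even when $m=d$, the affine map $\psi$ does not carry dyadic cells to dyadic cells, so the identification still fails. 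For the same reason, the two-step sampling that produces $(\psi\mu_{x,i})_{y,j}$ is not the distribution $\mathbb{Q}_{n,k}^{\psi\mu}$: the first stage conditions $\mu$ on a cube in $\mathbb{R}^d$ and then pushes forward, whereas $\mathbb{Q}_{n,k}^{\psi\mu}$ conditions the measure $\psi\mu$ on a cube in $\mathbb{R}^m$. So Lemma~\ref{lem:distribution-of-components-of-components} does not apply in the way you use it.

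What your argument does legitimately control, via Lemma~\ref{lem:lb on ent of proj of comp of mu} plus Lemma~\ref{lem:distribution-of-components-of-components}, is the entropy of $\psi(\mu_{x,i})_{y,j}$ — the pushforward of a deeper $\mathbb{R}^d$-component, since $(\mu_{x,i})_{y,j}=\mu_{y,j}$ is again an honest component of $\mu$. The missing step is to pass from $\psi(\mu_{x,i})_{y,j+q}$ to $(\psi\mu_{x,i})_{y,j}$, and this is where the paper does real work: it shows via the boundary-avoidance Lemma~\ref{lem:small mass of pts close to bd} that, with high probability, the image $\psi(\mu_{x,i})_{y,j+q}$ at a somewhat deeper scale $j+q$ (with $q$ large relative to a parameter $\delta$) has small diameter $O_R(2^{-j-q})$ and avoids the $2^{-j}\delta$-neighbourhood of $\partial\mathcal{D}_j^m$, hence is supported entirely inside a single cell of $\mathcal{D}_j^m$. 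Using the decomposition $\psi\mu_{x,i}=\mathbb{E}_{s=j+q}(\psi(\mu_{x,i})_{y,s})$ and concavity of entropy then lets one transfer the lower bound on $H(\psi(\mu_{x,i})_{y,j+q},\cdot)$ to $(\psi\mu_{x,i})_{y,j}$. Without this step the proof does not close.
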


\begin{proof}
Let $\epsilon>0$ be small, let $R>0$, let $\delta>0$ be small with respect to $\epsilon,R$, let $q\ge1$ be large with respect to $\delta$, let $l,k\ge1$ be large with respect to $q$, let $n\ge1$ be large with respect to $l,k$, and fix $\psi\in B(\pi_{d,m},R)$.

By Lemma \ref{lem:small mass of pts close to bd},
\begin{multline*}
\int\int\psi\mu_{x,i}\left(\cup_{D\in\mathcal{D}_{j}^m}(\partial D)^{(2^{-j}\delta)} \right) d\lambda_{i,i+k}(j)\: d\lambda_n\times\mu(i,x)\\
=\int\int\psi\mu\left(\cup_{D\in\mathcal{D}_{j}^m}(\partial D)^{(2^{-j}\delta)} \right) d\lambda_{i,i+k}(j)\: d\lambda_n(i)
<\epsilon,
\end{multline*}
where $\lambda_{i,i+k}$ is defined in Section \ref{subsec:Component-measures}. By Lemmata \ref{lem:lb on ent of proj of comp of mu}
and \ref{lem:distribution-of-components-of-components},
\begin{multline*}
\int\mathbb{P}_{i\le j\le i+k}\left\{ \frac{1}{l}H\left(\psi(\mu_{x,i})_{y,j},\mathcal{D}_{j+l}\right)>\Sigma_{1}^{m}-\epsilon\right\} \:d\lambda_{n}\times\mu(i,x)\\
>1-\epsilon/2-O\left(k/n\right)>1-\epsilon.
\end{multline*}
Recall from Section \ref{subsec:Component-measures} that $\mathcal{N}_n:=\{1,...,n\}$. From the last two inequalities, since $l,k$ are large with respect to $q$, and by replacing $\epsilon$ with a larger quantity (which is still small) without changing the notation, we may assume that $\lambda_n\times\mu(Z)>1-\epsilon$, where $Z$ is the set of all $(i,x)\in\mathcal{N}_n\times\mathbb{R}^d$ so that
\[
\lambda_{i,i+k}\left\{i\le j\le i+k\::\: \psi\mu_{x,i}\left(\cup_{D\in\mathcal{D}_{j}^m}(\partial D)^{(2^{-j}\delta)} \right)<\epsilon \right\}>1-\epsilon
\]
and
\[
\mathbb{P}_{i+q\le j\le i+q+k}\left\{ \frac{1}{l}H\left(\psi(\mu_{x,i})_{y,j},\mathcal{D}_{j-q+l}\right)>\Sigma_{1}^{m}-\epsilon\right\}>1-\epsilon.
\]

Fix $(i,x)\in Z$, and let $\mathcal{Q}_{i,x}$ be the set of all $i\le j\le i+k$ so that
\begin{equation}\label{psi mu(bd dyad) <eps}
\psi\mu_{x,i}\left(\cup_{D\in\mathcal{D}_{j}^m}(\partial D)^{(2^{-j}\delta)} \right)<\epsilon
\end{equation}
and
\begin{equation}\label{P_j+q>1-epsi}
\mathbb{P}_{s=j+q}\left\{\frac{1}{l}H\left(\psi(\mu_{x,i})_{y,s},\mathcal{D}_{s-q+l}\right)>\Sigma_{1}^{m}-\epsilon\right\}>1-\epsilon.
\end{equation}
Since $(i,x)\in Z$ and by replacing $\epsilon$ with a larger quantity without changing the notation, we may assume that $\lambda_{i,i+k}(\mathcal{Q}_{i,x})>1-\epsilon$.

Fix $j\in\mathcal{Q}_{i,x}$. Since $\psi$ belongs to the compact set $B(\pi_{d,m},R)$,
\[
\mathrm{diam}(\mathrm{supp}(\psi(\mu_{x,i})_{D}))=O_R(2^{-j-q}) \text{ for all }D\in\mathcal{D}_{j+q}^d\text{ with }\mu_{x,i}(D)>0.
\]
Hence, since $q$ is large with respect to $\delta$ and by (\ref{psi mu(bd dyad) <eps}),
\[
\mathbb{P}_{s=j+q}\left\{\mathrm{supp}(\psi(\mu_{x,i})_{y,s})\subset D\text{ for some }D\in\mathcal{D}_j^m\right\}>1-\epsilon.
\]
From this, from (\ref{P_j+q>1-epsi}), since
\[
\mathbb{E}_{s=j}\left((\psi\mu_{x,i})_{y,s}\right)=\psi\mu_{x,i}=\mathbb{E}_{s=j+q}\left(\psi(\mu_{x,i})_{y,s}\right),
\]
by the concavity of entropy, and by replacing $\epsilon$ with a larger quantity, we may assume that
\[
\mathbb{P}_{s=j}\left\{ \frac{1}{l}H\left((\psi\mu_{x,i})_{y,s},\mathcal{D}_{s+l}\right)>\Sigma_{1}^{m}-\epsilon\right\}>1-\epsilon.
\]

Note that the last inequality holds for all $(i,x)\in Z$ and $j\in\mathcal{Q}_{i,x}$.
This, together with $\lambda_n\times\mu(Z)>1-\epsilon$ and $\lambda_{i,i+k}(\mathcal{Q}_{i,x})>1-\epsilon$, completes the proof of the lemma.
\end{proof}
From Lemma \ref{lem:lb on ent of proj of comp of mu} we also get
the following useful statement. It shows in particular that $\underline{\dim}_{e}P_{V}\mu\ge\Sigma_{1}^{m}$
for all $1\le m\le d$ and $V\in\mathrm{Gr}_{m}(d)$, where $\underline{\dim}_{e}P_{V}\mu$
is the lower entropy dimension of $P_{V}\mu$ (see Section \ref{subsec:Entropy-in-Rd}).
\begin{lem}
\label{lem:lb on ent of psi mu}Let $1\le m\le d$ be given. Then
for every $\epsilon,R>0$, $n\ge N(\epsilon,R)\ge1$ and $\psi\in B(\pi_{d,m},R)$,
\[
\frac{1}{n}H(\psi\mu,\mathcal{D}_{n})>\Sigma_{1}^{m}-\epsilon.
\]
\end{lem}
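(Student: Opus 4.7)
The plan is to combine the component-entropy lower bound of Lemma \ref{lem:lb on ent of proj of comp of mu} with the multiscale entropy formula of Lemma \ref{lem:multiscale-entropy-formula}. Since $K_{\Phi}$ is bounded and $\psi\in B(\pi_{d,m},R)$, part (2) of Lemma \ref{lem: def of inv met and prop} gives $\Vert\psi-\pi_{d,m}\Vert=O_{R}(1)$, so $\psi$ is $O_{R}(1)$-Lipschitz and $\mathrm{diam}(\mathrm{supp}(\psi\mu))=O_{R}(1)$. Applying Lemma \ref{lem:multiscale-entropy-formula} to $\psi\mu\in\mathcal{M}(\mathbb{R}^{m})$ with $l=0$, and rewriting the expectation by means of identity (\ref{eq:cond ent as avg of ent of comp}), yields
\[
\frac{1}{n}H(\psi\mu,\mathcal{D}_{n})=\frac{1}{n+1}\sum_{i=0}^{n}\frac{1}{k}H(\psi\mu,\mathcal{D}_{i+k}\mid\mathcal{D}_{i})+O_{R}(k/n).
\]

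Next I would transfer components of $\psi\mu$ to pushforwards of components of $\mu$. From the disintegration identity $\mu=\int\mu_{x,i}\,d\mu(x)$ one obtains $\psi\mu=\int\psi\mu_{x,i}\,d\mu(x)$, so by the concavity of conditional entropy (see (\ref{eq:conc =000026 almo conv of ent}))
\[
H(\psi\mu,\mathcal{D}_{i+k}\mid\mathcal{D}_{i})\ge\int H(\psi\mu_{x,i},\mathcal{D}_{i+k}\mid\mathcal{D}_{i})\,d\mu(x).
\]
Since $\mathrm{supp}(\psi\mu_{x,i})\subset\psi(\mathcal{D}_{i}^{d}(x))$ has diameter $O_{R}(2^{-i})$ in $\mathbb{R}^{m}$, it meets only $O_{R}(1)$ atoms of $\mathcal{D}_{i}^{m}$; combining (\ref{eq:cond ent form}) with (\ref{eq:card ub for ent}) then gives
\[
H(\psi\mu_{x,i},\mathcal{D}_{i+k}\mid\mathcal{D}_{i})\ge H(\psi\mu_{x,i},\mathcal{D}_{i+k})-O_{R}(1).
\]

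Assembling the three displays produces
\[
\frac{1}{n}H(\psi\mu,\mathcal{D}_{n})\ge\mathbb{E}_{0\le i\le n}\int\frac{1}{k}H(\psi\mu_{x,i},\mathcal{D}_{i+k})\,d\mu(x)-O_{R}(1/k)-O_{R}(k/n).
\]
For $k\ge K(\epsilon,R)$ and then $n\ge N(\epsilon,R,k)$, Lemma \ref{lem:lb on ent of proj of comp of mu} says the event $\{\frac{1}{k}H(\psi\mu_{x,i},\mathcal{D}_{i+k})>\Sigma_{1}^{m}-\epsilon/3\}$ has probability $>1-\epsilon/3$ under $\lambda_{n}\times\mu$; together with non-negativity of the integrand this bounds the main term from below by $\Sigma_{1}^{m}-O(\epsilon)$. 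Choosing $k$ first large enough to make $O_{R}(1/k)<\epsilon/3$ and then $n$ large enough to make $O_{R}(k/n)<\epsilon/3$ gives the desired inequality. No substantive obstacle arises; the only point requiring care is the order of quantifiers (fix $\epsilon$, then $k$, then $n$), which is standard in this setting.
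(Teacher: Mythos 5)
Your proof is correct and follows essentially the same route as the paper's: apply the multiscale entropy formula (Lemma \ref{lem:multiscale-entropy-formula}) together with identity (\ref{eq:cond ent as avg of ent of comp}), pass to components of $\mu$ via concavity of conditional entropy, drop the conditioning on $\mathcal{D}_i$ at cost $O_R(1/k)$ using the $O_R(2^{-i})$ diameter bound, and finish with Lemma \ref{lem:lb on ent of proj of comp of mu}. The only differences are cosmetic (whether the scale index starts at $0$ or $1$, and how explicitly the error terms and quantifier order are spelled out).
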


\begin{proof}
Let $\epsilon,R>0$, let $k\ge1$ be large with respect to $\epsilon,R$,
let $n\ge1$ be large with respect to $k$, and let $\psi\in B(\pi_{d,m},R)$.
By Lemma \ref{lem:multiscale-entropy-formula} and (\ref{eq:cond ent as avg of ent of comp}),
\[
\frac{1}{n}H(\psi\mu,\mathcal{D}_{n})=\mathbb{E}_{1\le i\le n}\left(\frac{1}{k}H\left(\psi\mu,\mathcal{D}_{i+k}\mid\mathcal{D}_{i}\right)\right)+O_{R}\left(\frac{k}{n}\right).
\]
Thus, by the concavity of conditional entropy and since $\mathrm{diam}(\psi(D))=O_{R}(2^{-i})$
for all $i\ge0$ and $D\in\mathcal{D}_{i}^{d}$,
\begin{eqnarray*}
\frac{1}{n}H(\psi\mu,\mathcal{D}_{n}) & \ge & \mathbb{E}_{1\le i\le n}\left(\frac{1}{k}H\left(\psi\mu_{x,i},\mathcal{D}_{i+k}\mid\mathcal{D}_{i}\right)\right)+O_{R}\left(\frac{k}{n}\right)\\
 & = & \mathbb{E}_{1\le i\le n}\left(\frac{1}{k}H\left(\psi\mu_{x,i},\mathcal{D}_{i+k}\right)\right)+O_{R}\left(\frac{k}{n}+\frac{1}{k}\right).
\end{eqnarray*}
Now the lemma follows directly from Lemma \ref{lem:lb on ent of proj of comp of mu}.
\end{proof}

\section{\label{sec:Factorization-of}Factorization of $L_{m-1}$}

The main result of this section is Theorem \ref{thm:factor of proj of L_m-1},
which provides conditions under which $\pi_{V}L_{m-1}$ factors via
$\pi_{V}\Pi$ (see Remark \ref{rem:equiv form of cond in gen criteria}).
The proof is carried out in Section \ref{subsec:Factorization-of-projections}.
In Section \ref{subsec:Factorization-of L_m-1 itself} we use Theorem
\ref{thm:factor of proj of L_m-1} in order to deduce Theorem \ref{thm:L_m-1 factors via Pi},
which guarantees the factorization of $L_{m-1}$ via $\Pi$ whenever
$\Sigma_{1}^{m-1}=m-1$ and $\Delta_{m}<1$. In Section \ref{subsec:Projections of components revisited}
we utilize this in order to get additional lower bounds on the entropy
of projections of components of $\mu$. These lower bounds will be
used in Section \ref{sec:Entropy-growth-under conv} when we prove
our entropy increase result.

\subsection{\label{subsec:Factorization-of-projections}Proof of Theorem \ref{thm:factor of proj of L_m-1}}

Throughout this subsection fix $2\le m\le d$ and $V\in\mathrm{Gr}_{m}(d)$
such that $\Sigma_{1}^{m-1}=m-1$ and $\pi_{V}\mu$ is exact dimensional
with $\dim\pi_{V}\mu<m$.

For $F\subset\mathbb{R}^{m}$ and $B\subset\mathrm{Gr}_{m-1}(m)$
write
\[
\tilde{F}:=\Pi^{-1}\pi_{V}^{-1}F\text{ and }\tilde{B}:=L_{m-1}^{-1}\pi_{V}^{-1}B,
\]
where
\[
\pi_{V}^{-1}(B):=\{W\in\mathrm{Gr}_{m-1}(d)\::\:\pi_{V}(W)\in B\}
\]
and $L_{m-1}:\Lambda^{\mathbb{N}}\rightarrow\mathrm{Gr}_{m-1}(d)$ is the $(m-1)$-th Furstenberg boundary map.

The following proposition is the main ingredient in the proof of theorem
\ref{thm:factor of proj of L_m-1}. Recall that for a probability
measure $\theta$ and an event $E$ with $\theta(E)>0$ we write $\theta_{E}:=\frac{1}{\theta(E)}\theta|_{E}$.
\begin{prop}
\label{prop:sing proj of meas}Let $B_{1}$ and $B_{2}$ be Borel
subsets of $\mathrm{Gr}_{m-1}(m)$ with $\beta(\tilde{B}_{1}),\beta(\tilde{B}_{2})>0$
and $d_{\mathrm{Gr}_{m-1}}(B_{1},B_{2})>0$. Then the measures $\pi_{V}\Pi\beta_{\tilde{B}_{1}}$
and $\pi_{V}\Pi\beta_{\tilde{B}_{2}}$ are singular.
\end{prop}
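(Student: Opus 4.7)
The plan is to argue by contradiction. Suppose $\pi_V\Pi\beta_{\tilde B_1}$ and $\pi_V\Pi\beta_{\tilde B_2}$ are not mutually singular; then by inner regularity I may assume $B_1,B_2$ are compact with $d_{\mathrm{Gr}_{m-1}}(B_1,B_2)\ge 2\delta_0>0$, and there is a compact $F\subset\mathbb R^m$ with $\pi_V\Pi\beta_{\tilde B_i\cap\tilde F}(F)>0$ for both $i=1,2$. Writing $\eta_i:=\Pi\beta_{\tilde B_i\cap\tilde F}$, a Radon--Nikodym/differentiation argument produces a positive-$\min(\pi_V\eta_1,\pi_V\eta_2)$-mass set of $y\in V$ at which, for $\lambda_n$-positively many $n$, both scaled components $(\pi_V\eta_1)_{y,n}$ and $(\pi_V\eta_2)_{y,n}$ contribute non-negligibly to $(\pi_V\mu)_{y,n}$. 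The target contradiction is that this forces $\overline{\dim}_e\pi_V\mu=m$, violating the hypothesis $\dim\pi_V\mu<m$.

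To execute this I would combine the entropy estimates of Section \ref{sec:Entropy-estimates} with the geometric fact that cylinders $\varphi_u\mu$ for $u\in\Psi_{m-1}(n)$ are essentially flat in direction $L_{m-1}(A_u)$. Specifically, since $\Sigma_1^{m-1}=m-1$, Proposition \ref{prop:inductive main proj prop} (applied at level $m-1$) guarantees that for large $k$ and $\lambda_n$-most scales, typical cylinder measures $\varphi_u\mu$ with $u\in\Psi_{m-1}(n)$ achieve normalized scale-$(n+k)$ entropy $\ge (m-1)-\epsilon$ under projection along any prescribed $(m-1)$-plane. Lemma \ref{lem:small diam of slices} shows that these cylinders concentrate in a $O(2^{-n})$-tube parallel to $L_{m-1}(A_u)$. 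Conditioning on $\tilde B_i$ and using the convergence $L_{m-1}(A_{\omega|_n})\to L_{m-1}(\omega)$ (compare Lemma \ref{lem:A(V) close to L(A)}), the relevant tubes after projection by $\pi_V$ have direction within $\delta_0/2$ of $B_i$ for all large $n$. Thus, at a positive $\lambda_n$-density of scales and at a positive-mass set of components of $\pi_V\mu$, the component is a nontrivial mixture of two measures, each essentially supported in a $2^{-n-k}$-thickened $(m-1)$-plane of $V$, with the two planes making angle $\ge\delta_0$. A bootstrap via Lemma \ref{lem:gen proj lb} (comparing the entropy of the mixture to its projection along each of the two $\delta_0$-separated perpendicular lines, where the two slab pieces behave oppositely) then lifts $\tfrac1k H((\pi_V\mu)_{y,n},\mathcal D_{n+k})\ge m-O_{\delta_0}(1/k)$ on these components. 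Lemma \ref{lem:multiscale-entropy-formula} upgrades this to $\overline{\dim}_e\pi_V\mu\ge m-O(1/k)$, and sending $k\to\infty$ yields the contradiction.

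The main obstacle, and the point where the argument must improve on the planar proof of \cite[Theorem 1.5]{HR}, lies in making rigorous the claim that a positive-$\lambda_n\times\pi_V\mu$-mass of components of $\pi_V\mu$ receives \emph{genuinely balanced} contributions from $\tilde B_1$ and $\tilde B_2$ with angularly separated tube directions. When $d=m=2$ the identification of components with cylinders is essentially direct; here one must carefully align the $(m-1)$-th singular-value cut-set $\Psi_{m-1}(n)$ (the correct scale for the $L_{m-1}$-directions) with the dyadic scale on $V$ (the correct scale for $\pi_V\mu$), absorb the inherent scale and word randomization in Proposition \ref{prop:inductive main proj prop}, and argue that the distinct tube directions $\pi_V L_{m-1}(A_u)$ survive the averaging within each component rather than smearing out. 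Handling this interaction between the conditioning events $\tilde B_i\cap\tilde F$, the cut-set $\Psi_{m-1}(n)$, and the dyadic component structure on $V$ will be the technical heart of the proof.
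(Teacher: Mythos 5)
Your opening move (Radon--Nikodym decomposition and differentiation to produce a set $F$ on which the two conditioned projections are close in total variation) is essentially the paper's Lemma \ref{lem:existance of measures with small tv dist}, so that part is sound. The gap lies in the bootstrap. Treating $(\pi_V\mu)_{y,n}$ as a nontrivial mixture of two slabs with transverse directions $W_1,W_2$ making angle $\ge\delta_0$ and applying Lemma \ref{lem:gen proj lb} along the two perpendicular lines does \emph{not} lift the entropy to $m$. For $\theta\approx\alpha_1\theta_1+\alpha_2\theta_2$ with $\theta_i$ a slab of parallel entropy $\approx m-1$ in direction $W_i$, concavity gives $\frac{1}{k}H(\pi_{W_1^\perp}\theta,\mathcal D_{n+k}\mid\mathcal D_n)\ge\alpha_2$ and $\frac{1}{k}H(\theta,\mathcal D_{n+k}\mid\mathcal D_n\vee\pi_{W_1^\perp}^{-1}\mathcal D_{n+k})\ge\alpha_1(m-1)+\alpha_2(m-2)$, which sum to $m-1$, not $m$; the same happens along $W_2^\perp$. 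A thin horizontal strip plus a thin vertical strip in the plane has entropy dimension $1$, not $2$: a balanced mixture of angularly separated slabs does not by itself force full dimension.

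What must be used --- and what your bootstrap drops after the first paragraph --- is the total-variation closeness \emph{within a single piece}. The paper (Lemma \ref{lem:decomp of xi}, together with the symbolic approximation Lemma \ref{lem:by the SW thm}) matches pieces $\sigma_j$ of $\pi_V\Pi\beta_{\tilde B_1\cap\tilde F}$, localized near a small-diameter cell $C_j\ni W_j$ of $B_1$, with pieces $\xi_j$ of $\pi_V\Pi\beta_{\tilde B_2\cap\tilde F}$, so that $\sum_j d_{TV}(\sigma_j,\xi_j)$ is small. Lemma \ref{lem:cond ent > (m-1)xi(Z)} gives $\sigma_j$ conditional entropy $\approx m-1$ parallel to $W_j$ (from the slab structure of its $B_1$-cylinders), while Lemma \ref{lem:ent > xi(Z)} gives $\xi_j$ transverse $W_j^\perp$-projection entropy $\approx 1$, since its slab directions are $d_{\mathrm{Gr}_{m-1}}(B_1,B_2)$-separated from $W_j$; Lemma \ref{lem:ent cont wrt tot var} transfers this transverse bound from $\xi_j$ back to $\sigma_j$. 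The \emph{single} measure $\sigma_j$ then carries entropy $\approx m$, which is what contradicts $\dim\pi_V\mu<m$. The contradiction is a cross-constraint on one measure (forced simultaneously to be a $W_j$-slab and to be TV-close to an angularly separated slab), not an additivity phenomenon across a mixture. Your stated concern about aligning $\Psi_{m-1}(n)$ with the dyadic scale on $V$ is real but secondary; the paper sidesteps it by decomposing symbolically (cylinders in $\Psi_{m-1}(n)$ together with the small cells $C_j\subset B_1$) rather than component-by-component on $V$.
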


\subsubsection{Preparations for the proof of Proposition \ref{prop:sing proj of meas}}

For $\epsilon>0$ and $k,n\ge1$ set
\[
\mathcal{Z}(\epsilon,k,n):=\Psi_{m-1}(n)\cap\mathcal{Z}(m-1,\epsilon,k,n,V),
\]
where the notation $\mathcal{Z}(m-1,\epsilon,k,n,V)$ was introduced
in Definition \ref{def:def of mathcal(Z)}. That is, $\mathcal{Z}(\epsilon,k,n)$
is the set of all words $u\in\Psi_{m-1}(n)$ so that for $\theta:=\varphi_{u}\mu$
we have
\begin{equation}
\mathbb{P}_{j=n}\left\{ \frac{1}{k}H\left(\pi_{V}(\theta_{x,j}),\mathcal{D}_{j+k}\right)>m-1-\epsilon\right\} >1-\epsilon.\label{eq:def prop of Z(epsi,k,n)}
\end{equation}

The proof of Proposition \ref{prop:sing proj of meas} relies on the
following two lemmata. Recall that for $\xi\in\mathcal{M}(\Lambda^{\mathbb{N}})$
and $\mathcal{U}\subset\Lambda^{*}$ we write $\xi(\mathcal{U})$
in place of $\xi\left(\cup_{u\in\mathcal{U}}[u]\right)$.
\begin{lem}
\label{lem:ent > xi(Z)}For every $\epsilon,\rho>0$ and $k\ge K(\epsilon,\rho)\ge1$
the following holds. Let $B$ be a Borel subset of $\mathrm{Gr}_{m-1}(m)$
with $\beta(\tilde{B})>0$, and let $\xi\in\mathcal{M}(\Lambda^{\mathbb{N}})$
be with $\xi\ll\beta_{\tilde{B}}$. Then for each $W\in\mathrm{Gr}_{m-1}(m)$
with $d_{\mathrm{Gr}_{m-1}}(W,B)\ge\rho$ and $n\ge N(\epsilon,\rho,k,B,\xi,W)\ge1$,
\[
\frac{1}{k}H\left(\pi_{V}\Pi\xi,\pi_{W^{\perp}}^{-1}\mathcal{D}_{n+k}\mid\mathcal{D}_{n}\right)>\xi(\mathcal{Z}(\epsilon,k,n))-O(\epsilon).
\]
\end{lem}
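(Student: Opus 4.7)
My plan is to decompose $\xi$ over the cut-set $\Psi_{m-1}(n)$ and bound contributions cylinder-by-cylinder; the key initial input is that the decomposition via $\sigma$-algebra conditioning, rather than via a mixture of measures (which would incur an untenable $H((\xi([u]))_u)$ correction), carries no convex-combination loss. Writing $\tilde{\pi} := \pi_V\Pi$ and $\mathcal{Q} := \{[u]:u\in\Psi_{m-1}(n)\}$, a direct disintegration calculation gives
\[
\sum_{u} \xi([u])\, H(\pi_V\Pi\xi_{[u]}, \pi_{W^\perp}^{-1}\mathcal{D}_{n+k}\mid\mathcal{D}_n) = H(\xi, \tilde{\pi}^{-1}\pi_{W^\perp}^{-1}\mathcal{D}_{n+k}\mid\mathcal{Q}\vee\tilde{\pi}^{-1}\mathcal{D}_n),
\]
and the right-hand side is bounded above by $H(\pi_V\Pi\xi,\pi_{W^\perp}^{-1}\mathcal{D}_{n+k}\mid\mathcal{D}_n)$ by monotonicity of conditional entropy. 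Thus it suffices to prove the pointwise estimate $\frac{1}{k}H(\pi_V\Pi\xi_{[u]},\pi_{W^\perp}^{-1}\mathcal{D}_{n+k}\mid\mathcal{D}_n) \ge 1 - O(\epsilon)$ for $u$ in a $\xi$-large subset of $\mathcal{Z}(\epsilon,k,n)$.

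To identify such $u$, I would first truncate the density $f = d\xi/d\beta_{\tilde{B}}$ at some large $M = M(\epsilon,\xi)$, discarding at most $\epsilon$ of $\xi$-mass, so that events typical under $\beta$ remain typical under the truncated $\xi$ up to a factor $M$. For $n$ large enough relative to $k$ (depending on $\xi$), $\xi$-most $u\in\Psi_{m-1}(n)$ then satisfy three typicality properties: (T1) $d_{TV}(\xi_{[u]},\beta_{[u]})<\epsilon/k$, by $L^1$-martingale convergence of $f$ along cylinders combined with the Lebesgue density theorem on $\Lambda^{\mathbb{N}}$ applied to $\tilde{B}$; (T2) $\alpha_m(A_u)\le 2^{-n-k}$, by the strict Lyapunov gap $\chi_{m-1}>\chi_m$ together with a Chebyshev estimate on $\log\alpha_m(A_\cdot) - \log\alpha_{m-1}(A_\cdot)$; and (T3) $d_{\mathrm{Gr}_{m-1}}(\pi_V L_{m-1}(A_u),B)<\rho/2$, by the equivariance $L_{m-1}(u\eta)=A_u L_{m-1}(\eta)$, the strong contraction of $A_u$ toward $L_{m-1}(A_u)$ in the Pl\"ucker model (implied by the spectral gap in $\wedge^{m-1}A_u$ from (T2)), and $\xi(\tilde{B}^c)=0$. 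Using the identity $d_{\mathrm{Gr}_{m-1}}(U_1,U_2) = \kappa(U_1^\perp,U_2^\perp)$, (T3) combined with the hypothesis $d_{\mathrm{Gr}_{m-1}}(W,B)\ge\rho$ gives $\kappa(W^\perp,(\pi_V L_{m-1}(A_u))^\perp)\ge\rho/2$.

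For $u\in\mathcal{Z}(\epsilon,k,n)$ satisfying (T2)--(T3) I would first prove the estimate with $\pi_V\varphi_u\mu = \pi_V\Pi\beta_{[u]}$ in place of $\pi_V\Pi\xi_{[u]}$; (T1) together with Lemma \ref{lem:ent cont wrt tot var} applied to the partitions in question (whose relevant commensurability constant is $C = O(2^k)$, so $\log C = O(k)$, and TV less than $\epsilon/k$ yields normalised entropy error $O(\epsilon)$) then transfers the bound to $\pi_V\Pi\xi_{[u]}$. Refining the conditioning from $\pi_V^{-1}\mathcal{D}_n^m$ to the finer $\mathcal{D}_n^d$ on $\mathbb{R}^d$ yields
\[
H(\pi_V\varphi_u\mu,\pi_{W^\perp}^{-1}\mathcal{D}_{n+k}^1\mid\mathcal{D}_n^m) \ge \mathbb{E}_{y\sim\varphi_u\mu}\bigl[H(\pi_{W^\perp}\pi_V(\varphi_u\mu)_{y,n},\mathcal{D}_{n+k}^1)\bigr].
\]
For $y$ in the $(1-\epsilon)$-mass good set from the definition of $\mathcal{Z}(m-1,\epsilon,k,n,V)$, the measure $\theta := \pi_V(\varphi_u\mu)_{y,n}$ has diameter $O(2^{-n})$, satisfies $\frac{1}{k}H(\theta,\mathcal{D}_{n+k}^m) > m-1-\epsilon$, and by (T2) is supported in an $O(2^{-n-k})$-neighbourhood of the hyperplane $\pi_V L_{m-1}(A_u)\in\mathrm{Gr}_{m-1}(m)$. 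Applying Lemma \ref{lem:gen proj lb} with $l = m-1$, $q = 1$, subspace $\pi_V L_{m-1}(A_u)$, and angle constant $\rho/2$ then gives $\frac{1}{k}H(\pi_{W^\perp}\theta,\mathcal{D}_{n+k}^1) \ge 1 - O(\epsilon + 1/k)$; averaging over $y$ and summing the cylinder contributions (cylinders outside $\mathcal{Z}(\epsilon,k,n)$ or failing typicality contribute nonnegatively) closes the proof.

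The principal obstacle is the slab-thickness condition (T2): without $\alpha_m(A_u)\le 2^{-n-k}$, Lemma \ref{lem:gen proj lb} does not apply directly, and decomposing the $\alpha_m(A_u)$-slab into thinner sub-slabs parallel to $\pi_V L_{m-1}(A_u)$ would require a substantially more delicate argument with additional error terms from the extra conditioning. It is precisely because (T2) demands $n$ large relative to $k$, with threshold governed by the Lyapunov gap $\chi_{m-1}-\chi_m$, that the conclusion of the lemma holds only for $n \ge N(\epsilon,\rho,k,B,\xi,W)$.
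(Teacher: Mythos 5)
Your proposal is essentially correct and structurally close to the paper's argument: the typicality conditions (T2), (T3) match the paper's set $\mathcal{U}$ up to phrasing, the geometric core is the same application of Lemma \ref{lem:gen proj lb} with $l=m-1$, $q=1$, and the final assembly via concavity of conditional entropy is identical. One small point of presentation: the distinction you draw between ``decomposition via $\sigma$-algebra conditioning'' and ``decomposition via a mixture of measures'' is not a real one. The identity you write, followed by monotonicity in the conditioning partition, is exactly the concavity inequality $H\left(\sum_u\xi([u])\,\pi_V\Pi\xi_{[u]},\;\cdot\mid\cdot\right)\ge\sum_u\xi([u])\,H(\pi_V\Pi\xi_{[u]},\cdot\mid\cdot)$; the ``untenable $H((\xi([u]))_u)$ correction'' only enters the reverse (almost-convexity) direction, which you never need.

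The genuine difference is the approximation step. The paper uses a Stone--Weierstrass argument (Lemma \ref{lem:by the SW thm}) to replace $\xi$ by a single measure $\lambda=h\,d\beta$ with $h$ constant on $\Psi_{m-1}(n)$-cylinders, so that $\Pi\lambda_{[u]}=\varphi_u\mu$ exactly; one then applies concavity to $\lambda$, lower-bounds each cylinder term using the defining property of $\mathcal{Z}$, and transfers to $\xi$ via Lemma \ref{lem:ent cont wrt tot var} exactly once, globally, using $d_{TV}(\xi,\lambda)<\epsilon$. You instead apply concavity to $\xi$ directly and handle the discrepancy $\pi_V\Pi\xi_{[u]}\ne\pi_V\varphi_u\mu$ per cylinder via your (T1), which you get from $L^1$-martingale convergence of $d\xi/d\beta$ along the filtration $\{\Psi_{m-1}(n)\}_n$ plus a Markov estimate. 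Both routes work, and the commensurability constant $O(2^k)$ in Lemma \ref{lem:ent cont wrt tot var} gives $O(\epsilon)$ normalized error in either version provided $k\ge K(\epsilon)$ (your $\epsilon/k$ total-variation threshold is stronger than needed --- $\epsilon$ alone suffices). What your route buys: you avoid introducing the auxiliary density $h$ and the Stone--Weierstrass lemma entirely. What the paper's route buys: a cleaner separation between the purely probabilistic approximation and the per-cylinder entropy estimate (the latter is proved for $\varphi_u\mu$, which is the natural object, rather than for $\xi_{[u]}$), and the transfer lemma is invoked exactly once rather than on a varying family of cylinders --- which keeps the dependency structure of the thresholds cleaner, since the paper's $N$ does not need to absorb a rate-of-convergence for the martingale.

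One small caveat to be aware of if you flesh this out: in verifying (T1) you work with the density $f=d\xi/d\beta$, and the relevant identity is $\sum_u\xi([u])\,d_{TV}(\xi_{[u]},\beta_{[u]})=\Vert f-\mathbb{E}[f\mid\mathcal{P}_{\Psi_{m-1}(n)}]\Vert_{L^1(\beta)}$, where $\mathcal{P}_{\Psi_{m-1}(n)}$ is the cylinder partition induced by the cut-set. Your truncation at $M(\epsilon,\xi)$ is indeed the right move to make the Markov step from this $L^1$-smallness uniform enough, and is consistent with the dependence $N(\epsilon,\rho,k,B,\xi,W)$ permitted by the lemma statement.
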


\begin{lem}
\label{lem:cond ent > (m-1)xi(Z)}For every $\epsilon>0$ and $k\ge K(\epsilon)\ge1$
there exists $\delta=\delta(\epsilon,k)>0$ so that the following
holds. Let $B$ be a Borel subset of $\mathrm{Gr}_{m-1}(m)$ satisfying
$\beta(\tilde{B})>0$ and $\mathrm{diam}(B)\le\delta$, and let $\xi\in\mathcal{M}(\Lambda^{\mathbb{N}})$
be with $\xi\ll\beta_{\tilde{B}}$. Then for each $W\in B$ and $n\ge N(\epsilon,k,\delta,B,\xi,W)\ge1$,
\[
\frac{1}{k}H\left(\pi_{V}\Pi\xi,\mathcal{D}_{n+k}\mid\mathcal{D}_{n}\vee\pi_{W^{\perp}}^{-1}\mathcal{D}_{n+k}\right)>(m-1)\xi(\mathcal{Z}(\epsilon,k,n))-O(\epsilon).
\]
\end{lem}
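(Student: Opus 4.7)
The plan is to combine a cylinder decomposition with the chain rule, leveraging the aligned geometry $W\in B$ to show that conditioning on the $W^{\perp}$-coordinate at scale $n+k$ removes only $O(1)$ bits of information per cylinder.

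Writing $\mathcal{F}:=\mathcal{D}_{n}\vee\pi_{W^{\perp}}^{-1}\mathcal{D}_{n+k}$ and decomposing $\xi=\sum_{u\in\Psi_{m-1}(n)}\xi([u])\,\xi_{[u]}$, concavity of conditional entropy reduces the claim to proving, for each $u\in\mathcal{Z}(\epsilon,k,n)$ with $\xi([u])>0$, a per-cylinder bound
\[
H(\pi_{V}\Pi\xi_{[u]},\mathcal{D}_{n+k}\mid\mathcal{F})\ge(m-1-\epsilon)k-O(1).
\]

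This per-cylinder estimate rests on two ingredients. First, by definition of $\mathcal{Z}(\epsilon,k,n)$, $\varphi_{u}\mu$-most $\mathcal{D}_{n}^{d}$-components of $\varphi_{u}\mu$ have $\pi_{V}$-projection entropy at least $(m-1-\epsilon)k$ at scale $n+k$. Second, I would take $\delta(\epsilon,k)\le 2^{-k}$: since $\xi\ll\beta_{\tilde{B}}$, $W\in B$, and $\mathrm{diam}(B)\le\delta$, every $\omega\in\mathrm{supp}(\xi)$ satisfies $\pi_{V}L_{m-1}(\omega)$ within $\delta$ of $W$; hence for $u=\Psi_{m-1}(n;\omega)$ and $n$ large, $d_{\mathrm{Gr}_{m-1}}(\pi_{V}L_{m-1}(A_{u}),W)\le 2\delta$ by the convergence $L_{m-1}(A_{u})\to L_{m-1}(\omega)$. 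This $\delta$-closeness forces the $W^{\perp}$-extent of $\pi_{V}\varphi_{u}\mu$ restricted to any scale-$n$ cell to be at most $O(\delta\cdot 2^{-n}+\alpha_{m}(A_{u}))$, which is $\le 2^{-n-k}$ once $n\ge N$ is large (using $\chi_{m-1}>\chi_{m}$ to suppress the $\alpha_{m}(A_{u})$ term). Consequently conditioning on $\pi_{W^{\perp}}^{-1}\mathcal{D}_{n+k}$ removes only $O(1)$ entropy per scale-$n$ cell, preserving the $(m-1-\epsilon)k$ lower bound modulo $O(1)$.

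To transfer the estimate from $\varphi_{u}\mu$ to $\Pi\xi_{[u]}$, write $\omega=u\omega'$ for $\omega\in[u]$ and let $\xi'_{[u]}$ be the pushforward of $\xi_{[u]}$ under $\omega\mapsto\omega'$, so that $\Pi\xi_{[u]}=\varphi_{u}\Pi\xi'_{[u]}$. The equivariance $L_{m-1}(u\omega')=A_{u}L_{m-1}(\omega')$ shows that the constraint $\omega\in\tilde{B}$ on the tail $\omega'$ reduces to $A_{u}L_{m-1}(\omega')\in\pi_{V}^{-1}(B)$, which for large $n$ is asymptotically automatic due to the strong contraction of $A_{u}$ forcing $A_{u}L_{m-1}(\omega')\to L_{m-1}(A_{u})$ uniformly in $\omega'$ outside a $\beta$-null set. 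Hence $\xi'_{[u]}$ is effectively unrestricted (approximately $\beta$), allowing $\pi_{V}\Pi\xi_{[u]}$ to inherit the entropy lower bound of $\pi_{V}\varphi_{u}\mu$ modulo $O(\epsilon)$.

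The hard part will be executing this transfer rigorously. Since $\xi'_{[u]}$ is merely absolutely continuous with $\beta$, with no control on the Radon--Nikodym derivative, comparing scale-$(n+k)$ entropies of $\pi_{V}\Pi\xi_{[u]}$ and $\pi_{V}\varphi_{u}\mu$ requires a careful averaging argument over $u\in\mathcal{Z}(\epsilon,k,n)$, paralleling the strategy of \cite[Section 6]{HR} for $d=m=2$ but adapted to the higher-dimensional flag-manifold geometry in play here.
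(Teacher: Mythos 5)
The cylinder decomposition you propose is the wrong one, and the per-cylinder estimate you reduce to is false as stated. You claim that concavity reduces the lemma to showing $H(\pi_{V}\Pi\xi_{[u]},\mathcal{D}_{n+k}\mid\mathcal{F})\ge(m-1-\epsilon)k-O(1)$ for each $u\in\mathcal{Z}(\epsilon,k,n)$ with $\xi([u])>0$. But $\xi\ll\beta_{\tilde B}$ gives no lower control on the conditional measures $\xi_{[u]}$: they could be point masses, in which case $\pi_{V}\Pi\xi_{[u]}$ is a Dirac and its entropy is zero. The "approximately $\beta$" heuristic for $\xi'_{[u]}$ addresses only the \emph{support} of $\xi_{[u]}$ (that the membership constraint $\omega\in\tilde B$ becomes asymptotically vacuous on the tail), not the \emph{mass distribution}, and you concede as much when you note there is no control on the Radon--Nikodym derivative. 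That is not a technicality to be absorbed in "a careful averaging argument"; it is the reason the cylinder-by-cylinder route cannot work.

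The paper sidesteps this by never conditioning $\xi$ on cylinders. Instead it first replaces $\xi$ by a surrogate $\lambda=\sum_{u\in\Psi_{m-1}(n)}c_u\,1_{[u]}\,d\beta$, i.e.\ a measure whose density against $\beta$ is \emph{constant on each $n$-cylinder}, with $d_{TV}(\xi,\lambda)<\epsilon$; this is Lemma \ref{lem:by the SW thm}, proved by Stone--Weierstrass. Now $\Pi\lambda=\sum_u c_u p_u\,\varphi_u\mu$ is a genuine convex combination of the measures $\varphi_u\mu$, so the per-cylinder estimate for $\varphi_u\mu$ (your geometric step, which is essentially the paper's, using $d_{\mathrm{Gr}_{m-1}}(\pi_V L_{m-1}(A_u),W)<2\delta$ plus $\alpha_m(A_u)\ll\alpha_{m-1}(A_u)$ to flatten $\pi_V\varphi_u\mu$ into a $W$-slab of width $2^{-n-k}$, combined with the $\mathcal{Z}$-condition to get $\ge m-1-O(\epsilon)$ conditional entropy per scale-$n$ cell) transfers to $\Pi\lambda$ by concavity of conditional entropy, and then to $\Pi\xi$ by total-variation continuity of conditional entropy (Lemma \ref{lem:ent cont wrt tot var}). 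The Stone--Weierstrass approximation is the missing ingredient in your outline; without it, the transfer from $\varphi_u\mu$ to $\pi_V\Pi\xi$ has no bridge.
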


For the proofs we need the following simple lemma.
\begin{lem}
\label{lem:by the SW thm}Let $\xi\in\mathcal{M}(\Lambda^{\mathbb{N}})$
and $\epsilon>0$ be given, and suppose that $\xi\ll\beta$. Then
for every $n\ge N(\xi,\epsilon)\ge1$ there exist nonnegative numbers
$\{c_{u}\}_{u\in\Psi_{m-1}(n)}$ so that for $h:=\sum_{u\in\Psi_{m-1}(n)}c_{u}1_{[u]}$
and $\lambda:=h\:d\beta$ we have $\lambda\in\mathcal{M}(\Lambda^{\mathbb{N}})$
and $d_{TV}(\xi,\lambda)<\epsilon$, where $1_{[u]}$ denotes the
indicator function of $[u]$.
\end{lem}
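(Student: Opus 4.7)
The plan is to invoke the Radon--Nikodym theorem together with the $L^{1}$-martingale convergence theorem (L\'evy's upward theorem). Since $\xi\ll\beta$, let $f:=d\xi/d\beta\in L^{1}(\beta)$, so $f\ge 0$ and $\int f\,d\beta=1$. For each $n\ge 1$ the minimal cut-set $\Psi_{m-1}(n)$ yields a partition $\mathcal{Q}_{n}:=\{[u]:u\in\Psi_{m-1}(n)\}$ of $\Lambda^{\mathbb{N}}$; let $\mathcal{F}_{n}$ be the $\sigma$-algebra it generates. Since $p$ is strictly positive, $\beta([u])>0$ for every $u\in\Lambda^{*}$, so the numbers
\[
c_{u}\;:=\;\frac{\xi([u])}{\beta([u])},\qquad u\in\Psi_{m-1}(n),
\]
are well defined and nonnegative, and $h_{n}:=\sum_{u\in\Psi_{m-1}(n)}c_{u}\,\mathbf{1}_{[u]}$ is precisely the conditional expectation $\mathbb{E}_{\beta}[f\mid\mathcal{F}_{n}]$.

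The first key check is that $(\mathcal{F}_{n})_{n\ge 1}$ is an increasing filtration whose join is the full Borel $\sigma$-algebra of $\Lambda^{\mathbb{N}}$. For nestedness, Lemma \ref{lem:bounds on sing vals of prod} gives $\alpha_{m-1}(A_{vi})\le\Vert A_{i}\Vert_{op}\,\alpha_{m-1}(A_{v})<\alpha_{m-1}(A_{v})$, so $\alpha_{m-1}$ decreases strictly along prefixes of $\omega\in\Lambda^{\mathbb{N}}$; consequently the prefix $\Psi_{m-1}(n+1;\omega)$ is always an extension of $\Psi_{m-1}(n;\omega)$, and $\mathcal{Q}_{n+1}$ refines $\mathcal{Q}_{n}$. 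For generation, the lower bound in (\ref{eq:words in Phi_m are Theta(2^n)}) combined with the upper bound $\alpha_{m-1}(A_{u})\le\Vert A_{u}\Vert_{op}\le\rho^{|u|}$, where $\rho:=\max_{i\in\Lambda}\Vert A_{i}\Vert_{op}<1$, yields $|u|\ge(n-O(1))/\log(1/\rho)$ for every $u\in\Psi_{m-1}(n)$. Hence the minimum length of words in $\Psi_{m-1}(n)$ tends to infinity, so $\mathcal{F}_{n}$ eventually refines each finite cylinder partition $\mathcal{P}_{k}$ and $\bigvee_{n}\mathcal{F}_{n}$ is the Borel $\sigma$-algebra of $\Lambda^{\mathbb{N}}$.

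L\'evy's upward theorem now gives $h_{n}\to f$ in $L^{1}(\beta)$, so there exists $N=N(\xi,\epsilon)$ with $\int|h_{n}-f|\,d\beta<\epsilon$ for every $n\ge N$. Setting $h:=h_{n}$ and $\lambda:=h\,d\beta$, nonnegativity of the $c_{u}$ together with $\sum_{u}c_{u}\beta([u])=\sum_{u}\xi([u])=1$ show $\lambda\in\mathcal{M}(\Lambda^{\mathbb{N}})$, and since $\xi-\lambda=(f-h)\,d\beta$ has total mass zero, its total variation equals $\int|f-h|\,d\beta<\epsilon$, yielding $d_{TV}(\xi,\lambda)<\epsilon$. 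There is essentially no obstacle: the only content beyond routine bookkeeping is the verification that the cut-set partitions $\mathcal{Q}_{n}$ form an increasing, asymptotically refining filtration, which is where (\ref{eq:words in Phi_m are Theta(2^n)}) and $\Vert A_{i}\Vert_{op}<1$ enter essentially.
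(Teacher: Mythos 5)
Your proof is correct, and it takes a genuinely different route from the paper. The paper approximates $f:=d\xi/d\beta$ by a continuous function $g$ (density of $C(\Lambda^{\mathbb{N}})$ in $L^{1}(\beta)$), then approximates $g$ by a cylinder step function $q$ via Stone--Weierstrass, normalizes to get $h$, and finally observes that since the words defining $q$ have bounded length, $q$ is constant on $\Psi_{m-1}(n)$-cylinders once $n$ is large. Your argument instead defines the canonical choice $c_{u}=\xi([u])/\beta([u])$, identifies $h_{n}$ as the conditional expectation $\mathbb{E}_{\beta}[f\mid\mathcal{F}_{n}]$, and invokes L\'{e}vy's upward theorem after checking that the cut-set partitions form an increasing filtration generating the Borel $\sigma$-algebra. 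The martingale route buys you an explicit, natural formula for the coefficients and avoids Stone--Weierstrass entirely; the filtration checks you carry out (monotonicity of $\alpha_{m-1}$ along prefixes via Lemma~\ref{lem:bounds on sing vals of prod}, and the length lower bound from (\ref{eq:words in Phi_m are Theta(2^n)}) giving generation) are exactly the structural facts about $\Psi_{m-1}(n)$ that make the argument go through, and they are spelled out correctly. The identification of $d_{TV}(\xi,\lambda)$ with $\int|f-h_{n}|\,d\beta$ via the zero-total-mass observation matches the paper's convention for $\Vert\cdot\Vert_{TV}$. Both proofs are sound; yours is arguably cleaner, though the two are comparable in length.
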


\begin{proof}
Since $\xi\ll\beta$, there exists a Borel function $f:\Lambda^{\mathbb{N}}\rightarrow[0,\infty)$
with $\xi=f\:d\beta$. Since $C(\Lambda^{\mathbb{N}})$ is dense in
$L^{1}(\beta)$, there exists a continuous $g:\Lambda^{\mathbb{N}}\rightarrow[0,\infty)$
so that $\Vert g-f\Vert_{L^{1}(\beta)}<\epsilon$.

Let $Q$ be the set of all $q:\Lambda^{\mathbb{N}}\rightarrow\mathbb{R}$
of the form $q=\sum_{j=1}^{J}b_{j}1_{[u_{j}]}$ for some $J\ge1$,
$u_{1},...,u_{J}\in\Lambda^{*}$ and $b_{1},...,b_{J}\in\mathbb{R}$.
It is easy to see that $Q$ is a subalgebra of $C(\Lambda^{\mathbb{N}})$
which separates points and vanishes nowhere. Thus, by the Stone--Weierstrass
theorem, there exists $q\in Q$ so that $\Vert q-g\Vert_{L^{1}(\beta)}<\epsilon$,
which implies $\Vert q-f\Vert_{L^{1}(\beta)}<2\epsilon$.

We may clearly assume that $q\ge0$ while still having $\Vert q-f\Vert_{L^{1}(\beta)}<2\epsilon$.
Setting $h:=q/\int q\:d\beta$ and $\lambda:=h\:d\beta$, we obtain
$\lambda\in\mathcal{M}(\Lambda^{\mathbb{N}})$ and $d_{TV}(\xi,\lambda)=O(\epsilon)$.
Moreover, it is easy to see that for every $n\ge1$ large enough there
exist $\{c_{u}\}_{u\in\Psi_{m-1}(n)}\subset[0,\infty)$ so that $h=\sum_{u\in\Psi_{m-1}(n)}c_{u}1_{[u]}$.
This completes the proof of the lemma.
\end{proof}
\begin{proof}[Proof of Lemma \ref{lem:ent > xi(Z)}]
Let $0<\epsilon,\rho<1$, let $k\ge1$ be large with respect to $\epsilon,\rho,$
let $B$ be a Borel subset of $\mathrm{Gr}_{m-1}(m)$ with $\beta(\tilde{B})>0$,
let $\xi\in\mathcal{M}(\Lambda^{\mathbb{N}})$ be with $\xi\ll\beta_{\tilde{B}}$,
let $W\in\mathrm{Gr}_{m-1}(m)$ be with $d_{\mathrm{Gr}_{m-1}}(W,B)\ge\rho$,
and let $n\ge1$ be large with respect to all previous parameters.

Let $\mathcal{U}$ be the set of words $u\in\Psi_{m-1}(n)$ so that,
\begin{itemize}
\item $\alpha_{m}(A_{u})<\alpha_{m-1}(A_{u})2^{-2k}$;
\item $\pi_{V}L_{m-1}(A_{u})\in\mathrm{Gr}_{m-1}(m)$;
\item $d_{\mathrm{Gr}_{m-1}}(\pi_{V}L_{m-1}(A_{u}),W)>\rho/2$.
\end{itemize}
Since $\chi_{m}<\chi_{m-1}$,
\begin{equation}
\underset{n'\rightarrow\infty}{\lim}\:\frac{\alpha_{m}(A_{\Psi_{m-1}(n';\omega)})}{\alpha_{m-1}(A_{\Psi_{m-1}(n';\omega)})}=0\text{ for }\beta\text{-a.e. }\omega,\label{eq:ration of sing vals --> 0}
\end{equation}
where the notation $\Psi_{m-1}(n';\omega)$ was introduced in Section
\ref{subsec:Symbolic-notations}. By (\ref{enu:def of L_m via L_m of matrices})
in Section \ref{subsec:Coding and Furstenberg maps},
\begin{equation}
L_{m-1}(\omega)=\underset{n'\rightarrow\infty}{\lim}\:L_{m-1}(A_{\Psi_{m-1}(n';\omega)})\text{ for }\beta\text{-a.e. }\omega.\label{eq:L_m-1 as lim with Psi}
\end{equation}
By Lemma \ref{lem:nu=00007Bkappa<del=00007D<eps} and since $L_{m-1}\beta=\nu_{m-1}$,
\begin{equation}
L_{m-1}(\omega)\cap\ker\pi_{V}=\{0\}\text{ for }\beta\text{-a.e. }\omega.\label{eq:L_m-1 cup ker =00003D triv}
\end{equation}
From $d_{\mathrm{Gr}_{m-1}}(W,B)\ge\rho$,
\[
d_{\mathrm{Gr}_{m-1}}(\pi_{V}L_{m-1}(\omega),W)\ge\rho\text{ for }\beta_{\tilde{B}}\text{-a.e. }\omega.
\]
From all of this, since $\xi\ll\beta_{\tilde{B}}$, and by assuming
that $n$ is sufficiently large, we get $\xi(\mathcal{U})>1-\epsilon$.

Write $\mathcal{Z}$ in place of $\mathcal{Z}(\epsilon,k,n)$ for
the rest of the proof. Let $u\in\mathcal{U}\cap\mathcal{Z}$ be given,
and set $\theta:=\varphi_{u}\mu$ and $L:=L_{m-1}(A_{u})$. Let $D\in\mathcal{D}_{n}^{d}$
be such that $\theta(D)>0$ and
\[
\frac{1}{k}H\left(\pi_{V}\theta_{D},\mathcal{D}_{n+k}\right)>m-1-\epsilon.
\]
From $u\in\Psi_{m-1}(n)$ and $\alpha_{m}(A_{u})<\alpha_{m-1}(A_{u})2^{-2k}$,
and by assuming that $k$ is sufficiently large, it follows that $\theta$
is supported on $x+L^{(2^{-n-k})}$ for some $x\in\mathbb{R}^{d}$.
Thus, $\pi_{V}\theta_{D}$ is supported on $\pi_{V}x+(\pi_{V}L)^{(2^{-n-k})}$.
From this, since
\[
d_{\mathrm{Gr}_{1}}((\pi_{V}L)^{\perp},W^{\perp})=d_{\mathrm{Gr}_{m-1}}(\pi_{V}L,W)>\rho/2,
\]
and by Lemma \ref{lem:gen proj lb},
\begin{eqnarray*}
\frac{1}{k}H\left(\pi_{V}\theta_{D},\pi_{W^{\perp}}^{-1}\mathcal{D}_{n+k}\mid\mathcal{D}_{n}\right) & \ge & \frac{1}{k}H\left(\pi_{V}\theta_{D},\pi_{W^{\perp}}^{-1}\mathcal{D}_{n+k}\right)-\epsilon\\
 & \ge & \frac{1}{k}H\left(\pi_{V}\theta_{D},\mathcal{D}_{n+k}\right)-m+2-2\epsilon\\
 & > & 1-3\epsilon.
\end{eqnarray*}
Thus, by the concavity of conditional entropy and since (\ref{eq:def prop of Z(epsi,k,n)})
holds for $\theta$,
\[
\frac{1}{k}H\left(\pi_{V}\theta,\pi_{W^{\perp}}^{-1}\mathcal{D}_{n+k}\mid\mathcal{D}_{n}\right)>(1-\epsilon)(1-3\epsilon).
\]
We have thus shown that,
\begin{equation}
\frac{1}{k}H\left(\pi_{V}\varphi_{u}\mu,\pi_{W^{\perp}}^{-1}\mathcal{D}_{n+k}\mid\mathcal{D}_{n}\right)>1-O(\epsilon)\text{ for all }u\in\mathcal{U}\cap\mathcal{Z}.\label{eq:lb ent all u in U cap Z}
\end{equation}

Since $\xi\ll\beta$ and by Lemma \ref{lem:by the SW thm}, there
exist $\{c_{u}\}_{u\in\Psi_{m-1}(n)}\subset[0,\infty)$ so that for
$h:=\sum_{u\in\Psi_{m-1}(n)}c_{u}1_{[u]}$ and $\lambda:=h\:d\beta$
we have $\lambda\in\mathcal{M}(\Lambda^{\mathbb{N}})$ and $d_{TV}(\xi,\lambda)<\epsilon$.
Note that
\begin{equation}
\Pi\lambda=\sum_{u\in\Psi_{m-1}(n)}c_{u}p_{u}\Pi\beta_{[u]}=\sum_{u\in\Psi_{m-1}(n)}c_{u}p_{u}\cdot\varphi_{u}\mu,\label{eq:Pi lambda =00003D}
\end{equation}
and
\begin{equation}
\lambda(\mathcal{U}\cap\mathcal{Z})=\sum_{u\in\Psi_{m-1}(n)}c_{u}p_{u}\beta_{[u]}(\mathcal{U}\cap\mathcal{Z})=\sum_{u\in\mathcal{U}\cap\mathcal{Z}}c_{u}p_{u}.\label{eq:lambda(U cap Z) =00003D}
\end{equation}
Thus, from the concavity of conditional entropy and (\ref{eq:lb ent all u in U cap Z}),
\begin{eqnarray}
\frac{1}{k}H\left(\pi_{V}\Pi\lambda,\pi_{W^{\perp}}^{-1}\mathcal{D}_{n+k}\mid\mathcal{D}_{n}\right) & \ge & \sum_{u\in\Psi_{m-1}(n)}c_{u}p_{u}\cdot\frac{1}{k}H\left(\pi_{V}\varphi_{u}\mu,\pi_{W^{\perp}}^{-1}\mathcal{D}_{n+k}\mid\mathcal{D}_{n}\right)\nonumber \\
 & \ge & (1-O(\epsilon))\sum_{u\in\mathcal{U}\cap\mathcal{Z}}c_{u}p_{u}\label{eq:lb ent of Pi lambda}\\
 & = & (1-O(\epsilon))\lambda(\mathcal{U}\cap\mathcal{Z}).\nonumber 
\end{eqnarray}
Additionally, from $d_{TV}(\xi,\lambda)<\epsilon$ and $\xi(\mathcal{U})>1-\epsilon$,
\[
\lambda(\mathcal{U}\cap\mathcal{Z})>\xi(\mathcal{U}\cap\mathcal{Z})-\epsilon\ge\xi(\mathcal{Z})-2\epsilon.
\]
From this and (\ref{eq:lb ent of Pi lambda}), since
\begin{equation}
d_{TV}(\pi_{V}\Pi\xi,\pi_{V}\Pi\lambda)\le d_{TV}(\xi,\lambda)<\epsilon,\label{eq:d_TV of proj}
\end{equation}
and by Lemma \ref{lem:ent cont wrt tot var}, we obtain
\[
\frac{1}{k}H\left(\pi_{V}\Pi\xi,\pi_{W^{\perp}}^{-1}\mathcal{D}_{n+k}\mid\mathcal{D}_{n}\right)\ge(1-O(\epsilon))(\xi(\mathcal{Z})-2\epsilon)-O(\epsilon),
\]
which completes the proof of the lemma.
\end{proof}
\begin{proof}[Proof of Lemma \ref{lem:cond ent > (m-1)xi(Z)}]
Let $\epsilon>0$, let $k\ge1$ be large with respect to $\epsilon$,
let $\delta>0$ be small with respect to $k$, let $B$ be a Borel
subset of $\mathrm{Gr}_{m-1}(m)$ with $\beta(\tilde{B})>0$ and $\mathrm{diam}(B)\le\delta$,
let $\xi\in\mathcal{M}(\Lambda^{\mathbb{N}})$ be with $\xi\ll\beta_{\tilde{B}}$,
let $W\in B$, and let $n\ge1$ be large with respect to all previous
parameters.

Let $\mathcal{U}$ be the set of words $u\in\Psi_{m-1}(n)$ so that,
\begin{itemize}
\item $\alpha_{m}(A_{u})<\alpha_{m-1}(A_{u})2^{-3k}$;
\item $\pi_{V}L_{m-1}(A_{u})\in\mathrm{Gr}_{m-1}(m)$;
\item $d_{\mathrm{Gr}_{m-1}}(\pi_{V}L_{m-1}(A_{u}),W)<2\delta$.
\end{itemize}
Since $\mathrm{diam}(B)\le\delta$ and $W\in B$,
\[
d_{\mathrm{Gr}_{m-1}}(\pi_{V}L_{m-1}(\omega),W)\le\delta\text{ for }\beta_{\tilde{B}}\text{-a.e. }\omega.
\]
From this, from (\ref{eq:ration of sing vals --> 0}), (\ref{eq:L_m-1 as lim with Psi})
and (\ref{eq:L_m-1 cup ker =00003D triv}), since $\xi\ll\beta_{\tilde{B}}$,
and by assuming that $n$ is sufficiently large, we get $\xi(\mathcal{U})>1-\epsilon$.

Write $\mathcal{Z}:=\mathcal{Z}(\epsilon,k,n)$ for the rest of the
proof. Let $u\in\mathcal{U}\cap\mathcal{Z}$ be given, and set $\theta:=\varphi_{u}\mu$
and $L:=L_{m-1}(A_{u})$. Let $D\in\mathcal{D}_{n}^{d}$ be such that
$\theta(D)>0$ and
\begin{equation}
\frac{1}{k}H\left(\pi_{V}\theta_{D},\mathcal{D}_{n+k}\right)>m-1-\epsilon.\label{eq:H(pi theta_D) > m-1-eps}
\end{equation}
From $u\in\Psi_{m-1}(n)$ and $\alpha_{m}(A_{u})<\alpha_{m-1}(A_{u})2^{-3k}$,
and by assuming that $k$ is sufficiently large, it follows that $\theta$
is supported on $x+L^{(2^{-n-2k})}$ for some $x\in\mathbb{R}^{d}$.
Thus, $\pi_{V}\theta_{D}$ is supported on $\pi_{V}x+(\pi_{V}L)^{(2^{-n-2k})}$.
From $d_{\mathrm{Gr}_{m-1}}(\pi_{V}L,W)<2\delta$ and $\mathrm{diam}(\mathrm{supp}(\pi_{V}\theta_{D}))=O(2^{-n})$, and since $\delta$ is small with respect to $k$,
we obtain that $\pi_{V}\theta_{D}$ is supported on $y+W^{(2^{-n-k})}$
for some $y\in\mathbb{R}^{m}$. This, together with $\mathrm{diam}(\mathrm{supp}(\pi_{V}\theta_{D}))=O(2^{-n})$
and (\ref{eq:H(pi theta_D) > m-1-eps}), gives
\[
\frac{1}{k}H\left(\pi_{V}\theta_{D},\mathcal{D}_{n+k}\mid\mathcal{D}_{n}\vee\pi_{W^{\perp}}^{-1}\mathcal{D}_{n+k}\right)>m-1-2\epsilon.
\]
Thus, by the concavity of conditional entropy and since (\ref{eq:def prop of Z(epsi,k,n)})
holds for $\theta$,
\[
\frac{1}{k}H\left(\pi_{V}\theta,\mathcal{D}_{n+k}\mid\mathcal{D}_{n}\vee\pi_{W^{\perp}}^{-1}\mathcal{D}_{n+k}\right)>(1-\epsilon)(m-1-2\epsilon).
\]
We have thus shown that,
\begin{equation}
\frac{1}{k}H\left(\pi_{V}\varphi_{u}\mu,\mathcal{D}_{n+k}\mid\mathcal{D}_{n}\vee\pi_{W^{\perp}}^{-1}\mathcal{D}_{n+k}\right)>m-1-O(\epsilon)\text{ for all }u\in\mathcal{U}\cap\mathcal{Z}.\label{eq:lb cond ent all u in U cap Z}
\end{equation}

The rest of the proof proceeds as in the proof of Lemma \ref{lem:ent > xi(Z)}.
Since $\xi\ll\beta$ and by Lemma \ref{lem:by the SW thm}, there
exist $\{c_{u}\}_{u\in\Psi_{m-1}(n)}\subset[0,\infty)$ so that for
$h:=\sum_{u\in\Psi_{m-1}(n)}c_{u}1_{[u]}$ and $\lambda:=h\:d\beta$
we have $\lambda\in\mathcal{M}(\Lambda^{\mathbb{N}})$ and $d_{TV}(\xi,\lambda)<\epsilon$.
By the concavity of conditional entropy, from (\ref{eq:Pi lambda =00003D}),
(\ref{eq:lambda(U cap Z) =00003D}) and (\ref{eq:lb cond ent all u in U cap Z}),
and since $d_{TV}(\xi,\lambda)<\epsilon$ and $\xi(\mathcal{U})>1-\epsilon$,
we get
\begin{multline*}
\frac{1}{k}H\left(\pi_{V}\Pi\lambda,\mathcal{D}_{n+k}\mid\mathcal{D}_{n}\vee\pi_{W^{\perp}}^{-1}\mathcal{D}_{n+k}\right)\\
\ge\sum_{u\in\Psi_{m-1}(n)}c_{u}p_{u}\cdot\frac{1}{k}H\left(\pi_{V}\varphi_{u}\mu,\mathcal{D}_{n+k}\mid\mathcal{D}_{n}\vee\pi_{W^{\perp}}^{-1}\mathcal{D}_{n+k}\right)\\
\ge\left(m-1-O(\epsilon)\right)\lambda(\mathcal{U}\cap\mathcal{Z})
\ge\left(m-1-O(\epsilon)\right)\left(\xi(\mathcal{Z})-2\epsilon\right).
\end{multline*}
Thus, from (\ref{eq:d_TV of proj}) and by Lemma \ref{lem:ent cont wrt tot var},
it follows that
\[
\frac{1}{k}H\left(\pi_{V}\Pi\xi,\mathcal{D}_{n+k}\mid\mathcal{D}_{n}\vee\pi_{W^{\perp}}^{-1}\mathcal{D}_{n+k}\right)\ge\left(m-1-O(\epsilon)\right)\left(\xi(\mathcal{Z})-2\epsilon\right)-O(\epsilon),
\]
which completes the proof of the lemma.
\end{proof}
For the proof of Proposition \ref{prop:sing proj of meas} we also
need the following simple lemmas.
\begin{lem}
\label{lem:decomp of xi}Let $X$ be a measurable space, let $\sigma,\xi$
be positive and finite measures on $X$, and let $\sigma_{1},...,\sigma_{k}$
be positive measures on $X$ with $\sigma=\sum_{i=1}^{k}\sigma_{i}$.
Then there exist positive measures $\xi_{1},...,\xi_{k+1}$ on $X$
so that $\xi=\sum_{i=1}^{k+1}\xi_{i}$ and $\sum_{i=1}^{k}d_{TV}(\sigma_{i},\xi_{i})=\frac{1}{2}d_{TV}(\sigma,\xi)$.
\end{lem}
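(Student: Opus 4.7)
The plan is to produce the decomposition of $\xi$ via a Hahn decomposition of the signed measure $\sigma - \xi$, together with a Radon--Nikodym-type splitting on the positive side.

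First I would fix a Hahn decomposition $X = P \sqcup N$ for $\sigma - \xi$, so that $\sigma|_P \ge \xi|_P$ and $\xi|_N \ge \sigma|_N$, which gives $(\sigma - \xi)^+(X) = \sigma(P) - \xi(P)$ and $(\sigma - \xi)^-(X) = \xi(N) - \sigma(N)$. On $P$, the inequality $\xi|_P \le \sigma|_P = \sum_{i=1}^k \sigma_i|_P$ lets one set $f := d\xi|_P / d\sigma|_P \in [0,1]$ ($\sigma|_P$-a.e., extended by $0$ outside the support of $\sigma|_P$) and define $\eta_i := f \cdot \sigma_i|_P$. These are positive measures satisfying $\eta_i \le \sigma_i|_P$ and $\sum_i \eta_i = f \cdot \sigma|_P = \xi|_P$.

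Next I would set $\xi_i := \eta_i + \sigma_i|_N$ for $1 \le i \le k$ and $\xi_{k+1} := (\xi - \sigma)|_N$. Each $\xi_i$ is a positive measure (using $\xi|_N \ge \sigma|_N$ for $\xi_{k+1}$), and
\[
\sum_{i=1}^{k+1} \xi_i \;=\; \xi|_P \,+\, \sigma|_N \,+\, (\xi-\sigma)|_N \;=\; \xi|_P + \xi|_N \;=\; \xi,
\]
as required. Since $\sigma_i - \xi_i = \sigma_i|_P - \eta_i \ge 0$, one obtains $d_{TV}(\sigma_i, \xi_i) = \sigma_i(P) - \eta_i(X)$, and summing gives
\[
\sum_{i=1}^k d_{TV}(\sigma_i, \xi_i) \;=\; \sigma(P) - \xi(P) \;=\; (\sigma - \xi)^+(X).
\]

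Finally, the lemma is to be applied with $\sigma(X) = \xi(X)$ (indeed, to probability measures arising in Proposition~\ref{prop:sing proj of meas}), in which case $(\sigma - \xi)^+(X) = (\sigma - \xi)^-(X) = \tfrac{1}{2}\|\sigma - \xi\|_{TV} = \tfrac{1}{2} d_{TV}(\sigma, \xi)$, yielding the claimed identity. The whole argument is a routine application of the Hahn--Jordan decomposition together with the Radon--Nikodym theorem; there is no substantive obstacle beyond careful bookkeeping to verify the positivity of the $\xi_i$'s and the summed identity.
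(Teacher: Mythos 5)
Your proof is correct and structurally parallel to the paper's own. Both arguments cut $X$ along a Hahn positive set for $\sigma - \xi$ (the paper's $E := \{h_\sigma \ge h_\xi\}$ with $h_\zeta := d\zeta/d(\sigma+\xi)$, your $P$), set $\xi_i := \sigma_i$ on the negative side and let $\xi_{k+1}$ absorb the excess $(\xi - \sigma)|_N$ there, and on the positive side distribute $\xi$ among $\xi_1, \dots, \xi_k$ subject to $\xi_i \le \sigma_i$. The only genuine difference is in that last allocation: the paper uses a greedy sequential minimum, $h_i := \min\{h_\xi - \sum_{j<i} h_j,\, h_{\sigma_i}\}$, whereas you use a proportional split $\eta_i := (d\xi|_P/d\sigma|_P)\,\sigma_i|_P$, which is slightly cleaner since it does not depend on an ordering of the $\sigma_i$. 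Both constructions yield the identity $\sum_{i=1}^k d_{TV}(\sigma_i, \xi_i) = (\sigma - \xi)^+(X)$, and your closing remark is the right instinct: this equals $\tfrac{1}{2} d_{TV}(\sigma, \xi) = \tfrac{1}{2}\bigl((\sigma - \xi)^+(X) + (\sigma - \xi)^-(X)\bigr)$ precisely when $\sigma(X) = \xi(X)$. Indeed, the lemma as stated (without this hypothesis) is false in general — take $\sigma = 2\delta_x$, $\xi = \delta_x$, $k = 1$, for which any admissible $\xi_1 \le \delta_x$ gives $d_{TV}(\sigma_1,\xi_1) \ge 1 > \tfrac12$ — and the paper's own proof silently uses $\sigma(X) = \xi(X)$ in its final equality. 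Since the lemma is invoked only for probability measures (in Proposition~\ref{prop:sing proj of meas}), this is harmless, but the cleanest formulation would either add the hypothesis $\sigma(X) = \xi(X)$ or replace the right-hand side with $(\sigma - \xi)^+(X)$.
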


\begin{proof}
Write $\lambda:=\sigma+\xi$. For a positive measure $\zeta$ on $X$
with $\zeta\ll\lambda$, denote by $h_{\zeta}\ge0$ the Radon-Nikodym
derivative of $\zeta$ with respect to $\lambda$. Set $h_{1}:=\min\{h_{\xi},h_{\sigma_{1}}\}$,
for $1<i\le k$ set
\[
h_{i}:=\min\{h_{\xi}-\sum_{j=1}^{i-1}h_{j},h_{\sigma_{i}}\},
\]
let $h_{k+1}:=h_{\xi}-\sum_{i=1}^{k}h_{i}$, and for $1\le i\le k+1$
write $\xi_{i}:=h_{i}\:d\lambda$.
It is clear that $\xi_{1},...,\xi_{k+1}$
are positive measures on $X$. Since $\sum_{i=1}^{k+1}h_{i}=h_{\xi}$,
we have $\sum_{i=1}^{k+1}\xi_{i}=\xi$.

Write
\[
E:=\{x\in X\::\:h_{\sigma}(x)\ge h_{\xi}(x)\},
\]
and note that $\sum_{i=1}^{k}h_{i}(x)=h_{\xi}(x)$ for $x\in E$ and
$h_{i}(x)=h_{\sigma_{i}}(x)$ for $x\in X\setminus E$ and $1\le i\le k$.
Thus,
\begin{eqnarray*}
\sum_{i=1}^{k}d_{TV}(\sigma_{i},\xi_{i}) & = & \sum_{i=1}^{k}\int|h_{\sigma_{i}}(x)-h_{i}(x)|\:d\lambda(x)\\
 & = & \int_{E}h_{\sigma}(x)-h_{\xi}(x)\:d\lambda(x)=\frac{1}{2}d_{TV}(\sigma,\xi),
\end{eqnarray*}
which completes the proof of the lemma.
\end{proof}
Recall that for $F\subset\mathbb{R}^{m}$ we write $\tilde{F}:=\Pi^{-1}\pi_{V}^{-1}F$.
\begin{lem}
\label{lem:existance of measures with small tv dist}Let $B_{1}$
and $B_{2}$ be Borel subsets of $\mathrm{Gr}_{m-1}(m)$ so that $\beta(\tilde{B}_{1}),\beta(\tilde{B}_{2})>0$
and the measures $\theta_{1}:=\pi_{V}\Pi\beta_{\tilde{B}_{1}}$ and
$\theta_{2}:=\pi_{V}\Pi\beta_{\tilde{B}_{2}}$ are not singular. Then
for each $\epsilon>0$ there exists a Borel set $F\subset\mathbb{R}^{m}$
with $\beta(\tilde{B}_{i}\cap\tilde{F})>0$ for $i=1,2$ and
\[
d_{TV}\left(\pi_{V}\Pi\beta_{\tilde{B}_{1}\cap\tilde{F}},\pi_{V}\Pi\beta_{\tilde{B}_{2}\cap\tilde{F}}\right)<\epsilon.
\]
\end{lem}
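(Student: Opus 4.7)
The plan is to reduce the lemma to a standard measure-theoretic fact about pairs of non-mutually-singular Borel probability measures on $\mathbb{R}^{m}$, and then to establish that fact via the Lebesgue decomposition. For the reduction, a direct unpacking of $\tilde{F}=\Pi^{-1}\pi_{V}^{-1}F$ yields, for every Borel $E\subset\mathbb{R}^{m}$,
\[
\pi_{V}\Pi(\beta|_{\tilde{B}_{i}\cap\tilde{F}})(E)=\beta\bigl(\tilde{B}_{i}\cap\Pi^{-1}\pi_{V}^{-1}(F\cap E)\bigr)=\beta(\tilde{B}_{i})\cdot\theta_{i}(F\cap E),
\]
whence $\beta(\tilde{B}_{i}\cap\tilde{F})=\beta(\tilde{B}_{i})\theta_{i}(F)$ and $\pi_{V}\Pi\beta_{\tilde{B}_{i}\cap\tilde{F}}=(\theta_{i})_{F}$. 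It therefore suffices to find, for every $\epsilon>0$, a Borel set $F\subset\mathbb{R}^{m}$ with $\theta_{1}(F),\theta_{2}(F)>0$ and $d_{TV}((\theta_{1})_{F},(\theta_{2})_{F})<\epsilon$.

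For this reduced statement, I invoke the Lebesgue decomposition of $\theta_{1}$ with respect to $\theta_{2}$: there exist a Borel function $f\colon\mathbb{R}^{m}\to[0,\infty)$ and a Borel set $N\subset\mathbb{R}^{m}$ with $\theta_{2}(N)=0$ so that $\theta_{1}(E)=\int_{E}f\,d\theta_{2}+\theta_{1}(E\cap N)$ for every Borel $E$. The hypothesis that $\theta_{1}$ and $\theta_{2}$ are not singular forces $\int f\,d\theta_{2}>0$, so $\{f>0\}$ has positive $\theta_{2}$-measure. Partitioning $\{f>0\}$ into the countably many dyadic bands $\{2^{k}\le f<2^{k+1}\}$, $k\in\mathbb{Z}$, and then subdividing one such band of positive $\theta_{2}$-measure into narrower sub-intervals, I obtain $0<a<b$ with $(b-a)/a<\epsilon$ and $\theta_{2}(\{a\le f<b\})>0$.

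Set $F:=\{a\le f<b\}\setminus N$, which is Borel. Then $\theta_{2}(F)>0$, $\theta_{1}(F)=\int_{F}f\,d\theta_{2}\ge a\theta_{2}(F)>0$, and $\theta_{1}|_{F}=f\,d\theta_{2}|_{F}$. Writing $\bar{f}:=\theta_{2}(F)^{-1}\int_{F}f\,d\theta_{2}\in[a,b)$, a direct computation with the Radon--Nikodym densities of $(\theta_{1})_{F}$ and $(\theta_{2})_{F}$ relative to $\theta_{2}|_{F}$ gives
\[
d_{TV}((\theta_{1})_{F},(\theta_{2})_{F})=\frac{1}{\bar{f}\,\theta_{2}(F)}\int_{F}|f-\bar{f}|\,d\theta_{2}\le\frac{b-a}{a}<\epsilon,
\]
completing the proof. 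The argument is essentially Lebesgue decomposition combined with a narrow-band truncation, so no genuine obstacle arises; the only point requiring care is the routine identification $\pi_{V}\Pi\beta_{\tilde{B}_{i}\cap\tilde{F}}=(\theta_{i})_{F}$ that effects the reduction in the first paragraph.
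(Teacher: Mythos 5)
Your proof is correct and follows essentially the same route as the paper: reduce to showing $d_{TV}((\theta_{1})_{F},(\theta_{2})_{F})<\epsilon$ via the identity $\pi_{V}\Pi\beta_{\tilde{B}_{i}\cap\tilde{F}}=(\theta_{i})_{F}$, then take $F$ to be a narrow level band of a Radon--Nikodym derivative supplied by the Lebesgue decomposition and estimate the total variation directly. The only differences are cosmetic --- you decompose $\theta_{1}$ with respect to $\theta_{2}$ and locate the band via dyadic subdivision, whereas the paper decomposes $\theta_{2}$ with respect to $\theta_{1}$ and picks a point $t$ in the support of $f\theta_{1}$ --- and you spell out the reduction identity that the paper simply asserts.
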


\begin{proof}
Let $0<\epsilon<1$ be given. Since $\theta_{1}$ and $\theta_{2}$
are not singular, and by the Lebesgue--Radon--Nikodym theorem, there
exist a Borel set $E\subset\mathbb{R}^{m}$ and a Borel function $f:\mathbb{R}^{m}\rightarrow[0,\infty)$
so that $f=0$ on $\mathbb{R}^{m}\setminus E$, $\theta_{i}(E)>0$
for $i=1,2$, and $\theta_{2}|_{E}=f\:d\theta_{1}$. Fix
\[
t\in(0,\infty)\cap\mathrm{supp}(f\theta_{1}),
\]
where, as always, $f\theta_{1}$ denotes the pushforward of $\theta_{1}$
via $f$. Let $0<\delta<t\epsilon/4$ and set $F:=f^{-1}(t-\delta,t+\delta)$.

Since $\delta<t/2$ we have $f(x)>t/2$ for $x\in F$, and so $\theta_{2}(F)\ge t\theta_{1}(F)/2$.
Hence, for all $x\in F$
\[
\left|\frac{1}{\theta_{1}(F)}-\frac{f(x)}{\theta_{2}(F)}\right|\le\frac{1}{\theta_{1}(F)\theta_{2}(F)}\int_{F}|f(y)-f(x)|\:d\theta_{1}(y)\le\frac{2\delta}{\theta_{2}(F)}\le\frac{4\delta}{t\theta_{1}(F)}.
\]
Thus, from $(\theta_{1})_{F}=\frac{1_{F}}{\theta_{1}(F)}\:d\theta_{1}$
and $(\theta_{2})_{F}=\frac{f1_{F}}{\theta_{2}(F)}\:d\theta_{1}$,
we obtain
\[
d_{TV}\left((\theta_{1})_{F},(\theta_{2})_{F}\right)=\int_{F}\left|\frac{1}{\theta_{1}(F)}-\frac{f(x)}{\theta_{2}(F)}\right|\:d\theta_{1}(x)\le4\delta/t<\epsilon.
\]
Since $(\theta_{i})_{F}=\pi_{V}\Pi\beta_{\tilde{B}_{i}\cap\tilde{F}}$
for $i=1,2$, this completes the proof of the lemma.
\end{proof}

\subsubsection{Proofs of Proposition \ref{prop:sing proj of meas} and Theorem \ref{thm:factor of proj of L_m-1}}
\begin{proof}[Proof of Proposition \ref{prop:sing proj of meas}]
Let $B_{1}$ and $B_{2}$ be Borel subsets of $\mathrm{Gr}_{m-1}(m)$
with $\beta(\tilde{B}_{1}),\beta(\tilde{B}_{2})>0$ and $d_{\mathrm{Gr}_{m-1}}(B_{1},B_{2})>0$.
Assume by contradiction that $\pi_{V}\Pi\beta_{\tilde{B}_{1}}$ and
$\pi_{V}\Pi\beta_{\tilde{B}_{2}}$ are not singular. Recall that $\dim\pi_{V}\mu<m$,
and let $\epsilon>0$ be small with respect to $m-\dim\pi_{V}\mu$.
By Lemma \ref{lem:existance of measures with small tv dist}, there
exists a Borel set $F\subset\mathbb{R}^{m}$ with $\beta(\tilde{B}_{i}\cap\tilde{F})>0$
for $i=1,2$ and
\begin{equation}
d_{TV}\left(\pi_{V}\Pi\beta_{\tilde{B}_{1}\cap\tilde{F}},\pi_{V}\Pi\beta_{\tilde{B}_{2}\cap\tilde{F}}\right)<\epsilon.\label{eq:d_TV(,)<epsi}
\end{equation}

Let $k$ be large with respect to $\epsilon$, $\beta(\tilde{B}_{1}\cap\tilde{F})$,
$\beta(\tilde{B}_{2}\cap\tilde{F})$ and $d_{\mathrm{Gr}_{m-1}}(B_{1},B_{2})$,
and let $\delta>0$ be small with respect to $k$. Let $\{C_{1},...,C_{s}\}$
be a Borel partition of $B_{1}$ such that for each $1\le j\le s$
we have $\mathrm{diam}(C_{j})<\delta$. For $1\le j\le s$ set
\[
\sigma_{j}:=\frac{1}{\beta(\tilde{B}_{1}\cap\tilde{F})}\pi_{V}\Pi\beta|_{\tilde{C}_{j}\cap\tilde{F}},
\]
so that $\pi_{V}\Pi\beta_{\tilde{B}_{1}\cap\tilde{F}}=\sum_{j=1}^{s}\sigma_{j}$.
Thus, by Lemma \ref{lem:decomp of xi} and (\ref{eq:d_TV(,)<epsi}),
there exist positive Borel measures $\xi_{1},...,\xi_{s+1}$ on $\mathbb{R}^{m}$
such that $\pi_{V}\Pi\beta_{\tilde{B}_{2}\cap\tilde{F}}=\sum_{j=1}^{s+1}\xi_{j}$
and $\sum_{j=1}^{s}d_{TV}(\sigma_{j},\xi_{j})<\epsilon/2$.

For $1\le j\le s+1$ denote by $h_{j}$ the Radon-Nykodym derivative
of $\xi_{j}$ with respect to $\pi_{V}\Pi\beta_{\tilde{B}_{2}\cap\tilde{F}}$,
and set
\[
\tilde{\xi}_{j}:=h_{j}\circ\pi_{V}\circ\Pi\:d\beta_{\tilde{B}_{2}\cap\tilde{F}}.
\]
It is easy to verify that $\pi_{V}\Pi\tilde{\xi}_{j}=\xi_{j}$ and
$\beta_{\tilde{B}_{2}\cap\tilde{F}}=\sum_{j=1}^{s+1}\tilde{\xi}_{j}$.

Recall from Section \ref{subsec:General-notations} that $\Vert\cdot\Vert_{TV}$
denote the total variation norm. Let $J$ be the set of all $1\le j\le s$
such that $\Vert\sigma_{j}\Vert_{TV}>0$ and $\Vert\xi_{j}\Vert_{TV}>0$.
For each $j\in J$ choose some $W_{j}\in C_{j}$, and set
\[
\sigma_{j}':=\frac{1}{\Vert\sigma_{j}\Vert_{TV}}\sigma_{j}\;\text{ and }\;\xi_{j}':=\frac{1}{\Vert\xi_{j}\Vert_{TV}}\xi_{j}.
\]
Let $N\ge1$ be large with respect to all previous objects in the
proof and let $n\ge N$.

Let $j\in J$ be given. By Lemma \ref{lem:cond ent > (m-1)xi(Z)}
and since $\mathrm{diam}(C_{j})<\delta$ and $\sigma_{j}'=\pi_{V}\Pi\beta_{\tilde{C}_{j}\cap\tilde{F}}$,
\begin{equation}
\frac{1}{k}H\left(\sigma_{j}',\mathcal{D}_{n+k}\mid\mathcal{D}_{n}\vee\pi_{W_{j}^{\perp}}^{-1}\mathcal{D}_{n+k}\right)>(m-1)\beta_{\tilde{C}_{j}\cap\tilde{F}}(\mathcal{Z}(\epsilon,k,n))-O(\epsilon).\label{eq:>m-1 all j in J}
\end{equation}
Since $W_{j}\in C_{j}\subset B_{1}$,
\[
d_{\mathrm{Gr}_{m-1}}(W_{j},B_{2})\ge d_{\mathrm{Gr}_{m-1}}(B_{1},B_{2}).
\]
From this, since $k$ is large with respect to $d_{\mathrm{Gr}_{m-1}}(B_{1},B_{2})$,
from $\xi_{j}'=\Vert\xi_{j}\Vert_{TV}^{-1}\pi_{V}\Pi\tilde{\xi}_{j}$
and $\tilde{\xi}_{j}\ll\beta_{\tilde{B}_{2}}$, and by Lemma \ref{lem:ent > xi(Z)},
\[
\frac{1}{k}H\left(\xi_{j}',\pi_{W_{j}^{\perp}}^{-1}\mathcal{D}_{n+k}\mid\mathcal{D}_{n}\right)>\Vert\xi_{j}\Vert_{TV}^{-1}\tilde{\xi}_{j}(\mathcal{Z}(\epsilon,k,n))-O(\epsilon).
\]
Combining this with Lemma \ref{lem:ent cont wrt tot var}, we obtain
\begin{equation}
\frac{1}{k}H\left(\sigma_{j}',\pi_{W_{j}^{\perp}}^{-1}\mathcal{D}_{n+k}\mid\mathcal{D}_{n}\right)>\Vert\xi_{j}\Vert_{TV}^{-1}\tilde{\xi}_{j}(\mathcal{Z}(\epsilon,k,n))-O(\epsilon+d_{TV}(\sigma_{j}',\xi_{j}')).\label{eq:>xi_j(Z) - O(d_TV) all j in J}
\end{equation}

For $n'\ge1$ set $\mathcal{Y}_{n'}:=\Psi_{m-1}(n')\setminus\mathcal{Z}(\epsilon,k,n')$.
As pointed out in Section \ref{subsec:Entropy-in-Rd}, the partitions
$\mathcal{D}_{n+k}^{m}$ and $\pi_{W_{j}}^{-1}\mathcal{D}_{n+k}^{m-1}\vee\pi_{W_{j}^{\perp}}^{-1}\mathcal{D}_{n+k}^{1}$
are $O(1)$-commensurable. From this, by the conditional entropy formula
(see (\ref{eq:extended cond ent form})), and from (\ref{eq:>m-1 all j in J})
and (\ref{eq:>xi_j(Z) - O(d_TV) all j in J}), we get 
\[
\frac{1}{k}H\left(\sigma_{j}',\mathcal{D}_{n+k}\mid\mathcal{D}_{n}\right)\ge m-O\left(\beta_{\tilde{C}_{j}\cap\tilde{F}}(\mathcal{Y}_{n})+\Vert\xi_{j}\Vert_{TV}^{-1}\tilde{\xi}_{j}(\mathcal{Y}_{n})+\epsilon+d_{TV}(\sigma_{j}',\xi_{j}')\right).
\]

Write $J^{c}:=\{1,...,s\}\setminus J$, and note that the last inequality
holds for each $j\in J$. Thus, since $\pi_{V}\Pi\beta_{\tilde{B}_{1}\cap\tilde{F}}=\sum_{j=1}^{s}\sigma_{j}$
and by the concavity of conditional entropy,
\begin{eqnarray}
\frac{1}{k}H\left(\pi_{V}\Pi\beta_{\tilde{B}_{1}\cap\tilde{F}},\mathcal{D}_{n+k}\mid\mathcal{D}_{n}\right) & \ge & \sum_{j\in J}\Vert\sigma_{j}\Vert_{TV}\frac{1}{k}H\left(\sigma_{j}',\mathcal{D}_{n+k}\mid\mathcal{D}_{n}\right)\nonumber \\
 & \ge & m-O\left(\sum_{j\in J^{c}}\Vert\sigma_{j}\Vert_{TV}+\sum_{j\in J}\Vert\sigma_{j}\Vert_{TV}d_{TV}(\sigma_{j}',\xi_{j}')\right)\label{eq:>=00003D m- O()-O()}\\
 & - & O\left(\epsilon+\sum_{j\in J}\Vert\sigma_{j}\Vert_{TV}\left(\beta_{\tilde{C}_{j}\cap\tilde{F}}(\mathcal{Y}_{n})+\Vert\xi_{j}\Vert_{TV}^{-1}\tilde{\xi}_{j}(\mathcal{Y}_{n})\right)\right)\nonumber \\
 & =: & m-O(\gamma_{1})-O(\gamma_{2}).\nonumber 
\end{eqnarray}

Recall that by definition $d_{TV}(\theta_{1},\theta_{2})=\Vert\theta_{1}-\theta_{2}\Vert_{TV}$
for positive measures $\theta_{1},\theta_{2}$. From this and since
$\sum_{j=1}^{s}d_{TV}(\sigma_{j},\xi_{j})<\epsilon/2$,
\begin{eqnarray*}
\gamma_{1} & \le & \sum_{j\in J^{c}}d_{TV}(\sigma_{j},\xi_{j})\\
 & + & \sum_{j\in J}\Vert\sigma_{j}\Vert_{TV}\left(d_{TV}\left(\sigma_{j}',\frac{1}{\Vert\sigma_{j}\Vert_{TV}}\xi_{j}\right)+d_{TV}\left(\frac{1}{\Vert\sigma_{j}\Vert_{TV}}\xi_{j},\xi_{j}'\right)\right)\\
 & = & \sum_{j=1}^{s}d_{TV}(\sigma_{j},\xi_{j})+\sum_{j\in J}\Vert\sigma_{j}\Vert_{TV}\Vert\xi_{j}\Vert_{TV}\left|\frac{1}{\Vert\sigma_{j}\Vert_{TV}}-\frac{1}{\Vert\xi_{j}\Vert_{TV}}\right|\\
 & < & \epsilon/2+\sum_{j\in J}d_{TV}(\sigma_{j},\xi_{j})<\epsilon.
\end{eqnarray*}

Next, let $\mathcal{Q}$ be the set of all integers $n'\ge1$ so that
\[
\beta(\mathcal{Y}_{n'})<\epsilon\cdot\min\{\beta(\tilde{B}_{1}\cap\tilde{F}),\beta(\tilde{B}_{2}\cap\tilde{F})\}.
\]
Assuming $n\in\mathcal{Q}$, we have $\beta_{\tilde{B}_{i}\cap\tilde{F}}(\mathcal{Y}_{n})<\epsilon$
for $i=1,2$. Thus, by the definition of the measures $\sigma_{j}$
and since $\{C_{1},...,C_{s}\}$ is a partition of $B_{1}$,
\[
\sum_{j\in J}\Vert\sigma_{j}\Vert_{TV}\beta_{\tilde{C}_{j}\cap\tilde{F}}(\mathcal{Y}_{n})\le\beta_{\tilde{B}_{1}\cap\tilde{F}}(\mathcal{Y}_{n})<\epsilon.
\]
Moreover, since $\Vert\xi_{j}\Vert_{TV}^{-1}\tilde{\xi}_{j}(\mathcal{Y}_{n})\le1$
for $j\in J$ and since $\beta_{\tilde{B}_{2}\cap\tilde{F}}=\sum_{j=1}^{s+1}\tilde{\xi}_{j}$,
\begin{eqnarray*}
\sum_{j\in J}\Vert\sigma_{j}\Vert_{TV}\Vert\xi_{j}\Vert_{TV}^{-1}\tilde{\xi}_{j}(\mathcal{Y}_{n}) & \le & \sum_{j\in J}\left|\Vert\sigma_{j}\Vert_{TV}-\Vert\xi_{j}\Vert_{TV}\right|+\sum_{j\in J}\tilde{\xi}_{j}(\mathcal{Y}_{n})\\
 & \le & \sum_{j\in J}d_{TV}(\sigma_{j},\xi_{j})+\beta_{\tilde{B}_{2}\cap\tilde{F}}(\mathcal{Y}_{n})<2\epsilon.
\end{eqnarray*}

From the last two inequalities we obtain that $\gamma_{2}<4\epsilon$ if $n\in\mathcal{Q}$.
From this, $\gamma_{1}<\epsilon$ and (\ref{eq:>=00003D m- O()-O()}),
we get
\begin{equation}
\frac{1}{k}H\left(\pi_{V}\Pi\beta_{\tilde{B}_{1}\cap\tilde{F}},\mathcal{D}_{n+k}\mid\mathcal{D}_{n}\right)\ge m-O(\epsilon)\text{ for all }n\ge N\text{ with }n\in\mathcal{Q}.\label{eq:cond ent >=00003Dm-O(ep)}
\end{equation}
Moreover, since $\Sigma_{1}^{m-1}=m-1$, by Proposition \ref{prop:inductive main proj prop} and (\ref{eq:conn bet I_m(n) & beta}),
and since $k$ is large with respect to $\epsilon$, $\beta(\tilde{B}_{1}\cap\tilde{F})$
and $\beta(\tilde{B}_{2}\cap\tilde{F})$, we may assume that
\begin{equation}
\underset{M\rightarrow\infty}{\liminf}\:\lambda_{M}(\mathcal{Q}\cap\mathcal{N}_{N,M})>1-\epsilon,\label{eq:large mass for mathcal(Q)}
\end{equation}
where $\lambda_{M}$ and $\mathcal{N}_{N,M}$ are as defined in Section
\ref{subsec:Component-measures}.

Now recall that $\pi_{V}\mu=\pi_{V}\Pi\beta$ is exact dimensional
with $\dim\pi_{V}\mu<m$. Since $\pi_{V}\Pi\beta_{\tilde{B}_{1}\cap\tilde{F}}\ll\pi_{V}\Pi\beta$,
it follows that $\pi_{V}\Pi\beta_{\tilde{B}_{1}\cap\tilde{F}}$ is
also exact dimensional with dimension equal to $\dim\pi_{V}\mu$.
From this, by Lemmata \ref{lem:dim_e=00003Ddim} and \ref{lem:multiscale-entropy-formula},
and from (\ref{eq:cond ent >=00003Dm-O(ep)}) and (\ref{eq:large mass for mathcal(Q)}),
we obtain
\[
\dim\pi_{V}\mu=\dim_{e}(\pi_{V}\Pi\beta_{\tilde{B}_{1}\cap\tilde{F}})\ge m-O(\epsilon).
\]
By taking $\epsilon$ to be sufficiently small with respect to $m-\dim\pi_{V}\mu$,
this yields a contradiction and completes the proof of the proposition.
\end{proof}
We can now prove Theorem \ref{thm:factor of proj of L_m-1}, whose
statement we first recall.
\begin{thm*}
Recall that $2\le m\le d$ and $V\in\mathrm{Gr}_{m}(d)$ are such
that $\Sigma_{1}^{m-1}=m-1$ and $\pi_{V}\mu$ is exact dimensional
with $\dim\pi_{V}\mu<m$. Then $\pi_{V}L_{m-1}\beta_{\omega}^{V}=\delta_{\pi_{V}L_{m-1}(\omega)}$
for $\beta$-a.e. $\omega$.
\end{thm*}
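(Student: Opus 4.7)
The plan is to deduce Theorem \ref{thm:factor of proj of L_m-1} from Proposition \ref{prop:sing proj of meas} by a measure-theoretic contradiction argument. Since $\beta_\omega^V$ is the disintegration of $\beta$ over $\Pi^{-1}\pi_V^{-1}\mathcal{B}_{\mathbb{R}^d}$, the measure $\beta_\omega^V$ is $\beta$-a.s. supported on the atom $\{\eta : \pi_V\Pi\eta = \pi_V\Pi\omega\}$, so the conclusion $\pi_V L_{m-1}\beta_\omega^V = \delta_{\pi_V L_{m-1}(\omega)}$ amounts to saying that, on the disintegration fibers, $\pi_V L_{m-1}$ is $\beta$-a.s. constant. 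In other words, the conditional law of $\pi_V L_{m-1}$ given $\pi_V\Pi$ must be a Dirac mass.

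First I would assume for contradiction that this fails. Using the separability of $\mathrm{Gr}_{m-1}(m)$, I would enumerate the countably many pairs $(U_j^1,U_j^2)$ of open balls from a fixed countable basis with $d_{\mathrm{Gr}_{m-1}}(U_j^1,U_j^2)>0$, and observe that whenever $\pi_V L_{m-1}\beta_\omega^V$ is not a Dirac mass, at least one such pair is simultaneously charged. Since failure is assumed on a set of positive $\beta$-measure, by a union-bound there exist Borel $B_1,B_2\subset\mathrm{Gr}_{m-1}(m)$ with $d_{\mathrm{Gr}_{m-1}}(B_1,B_2)>0$ and a set $A\subset\Lambda^{\mathbb{N}}$ with $\beta(A)>0$ such that $\pi_V L_{m-1}\beta_\omega^V(B_i)>0$ for $i=1,2$ and all $\omega\in A$. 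Setting $\tilde B_i:=L_{m-1}^{-1}\pi_V^{-1}(B_i)$, this says $\beta_\omega^V(\tilde B_i)>0$ on $A$; in particular $\beta(\tilde B_i)>0$.

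Next I would translate this into absolute continuity. Because $\beta_\omega^V(\tilde B_i)$ is $\Pi^{-1}\pi_V^{-1}\mathcal{B}_{\mathbb{R}^d}$-measurable, there is a Borel $g_i:\mathbb{R}^m\to[0,1]$ with $\beta_\omega^V(\tilde B_i)=g_i(\pi_V\Pi\omega)$ for $\beta$-a.e.\ $\omega$. Unfolding definitions gives, for every Borel $F\subset\mathbb{R}^m$,
\[
\pi_V\Pi\beta_{\tilde B_i}(F)=\frac{1}{\beta(\tilde B_i)}\int_{\tilde F}\beta_\omega^V(\tilde B_i)\,d\beta(\omega)=\frac{1}{\beta(\tilde B_i)}\int_F g_i\,d(\pi_V\Pi\beta),
\]
so $\pi_V\Pi\beta_{\tilde B_i}=\beta(\tilde B_i)^{-1}g_i\cdot\pi_V\Pi\beta$. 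On $\pi_V\Pi(A)$, both $g_1$ and $g_2$ are strictly positive, and $\pi_V\Pi\beta(\pi_V\Pi(A))\ge\beta(A)>0$. Hence $\pi_V\Pi\beta_{\tilde B_1}$ and $\pi_V\Pi\beta_{\tilde B_2}$ share a common positive absolutely continuous component with respect to $\pi_V\Pi\beta$, so they are not mutually singular. This contradicts Proposition \ref{prop:sing proj of meas} and closes the argument.

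The genuine work has already been done in Proposition \ref{prop:sing proj of meas}, whose proof combined the entropy lower bounds of Section \ref{sec:Entropy-estimates} (via Lemmata \ref{lem:ent > xi(Z)} and \ref{lem:cond ent > (m-1)xi(Z)}) with the hypothesis $\dim\pi_V\mu<m$; the remaining reduction here is essentially bookkeeping with disintegrations. The only mild care needed is in the countable-basis extraction of $B_1,B_2$ and in checking that the null-set ambiguity in the equality $\beta_\omega^V(\tilde B_i)=g_i(\pi_V\Pi\omega)$ does not spoil positivity of $\pi_V\Pi\beta(\{g_1>0\}\cap\{g_2>0\})$, both of which are standard.
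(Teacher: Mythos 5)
Your proof is correct and takes essentially the same route as the paper: both reduce to Proposition \ref{prop:sing proj of meas} by combining the separability of $\mathrm{Gr}_{m-1}(m)$ with basic properties of the disintegration $\{\beta_\omega^V\}$. The only stylistic difference is that you argue contrapositively (a positive-$\beta$-measure set of non-Dirac fibers forces a common absolutely continuous component of $\pi_V\Pi\beta_{\tilde B_1}$ and $\pi_V\Pi\beta_{\tilde B_2}$), while the paper first uses the singularity to show that for a.e.\ $\omega$ at most one of $\beta_\omega^V(\tilde B_{1,k}),\beta_\omega^V(\tilde B_{2,k})$ is positive, and then concludes per-$\omega$ that $\pi_V L_{m-1}\beta_\omega^V$ is a mass point.
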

\begin{proof}
Given that we have already established Proposition \ref{prop:sing proj of meas},
the proof is now similar to the proof of \cite[Proposition 4.4]{HR}.
Clearly it suffices to show that $\pi_{V}L_{m-1}\beta_{\omega}^{V}$
is a mass point for $\beta$-a.e. $\omega$.

It follows easily from Proposition \ref{prop:sing proj of meas} that
there exist sequences $\{B_{1,k}\}_{k\ge1}$ and $\{B_{2,k}\}_{k\ge1}$
so that,
\begin{enumerate}
\item $B_{i,k}$ is an open subset of $\mathrm{Gr}_{m-1}(\mathbb{R}^{m})$
with $\beta(\tilde{B}_{i,k})>0$ for $i=1,2$ and $k\ge1$;
\item for all distinct $W_{1},W_{2}\in\mathrm{supp}(\pi_{V}L_{m-1}\beta)$
there exists $k\ge1$ with $W_{i}\in B_{i,k}$ for $i=1,2$;
\item the measures $\pi_{V}\Pi\beta_{\tilde{B}_{1,k}}$ and $\pi_{V}\Pi\beta_{\tilde{B}_{2,k}}$
are singular for $k\ge1$.
\end{enumerate}

For each $k\ge1$ there exists a Borel set $E_{k}\subset\mathbb{R}^{m}$
with $\pi_{V}\Pi\beta_{\tilde{B}_{1,k}}(E_{k})=0$ and $\pi_{V}\Pi\beta_{\tilde{B}_{2,k}}(\mathbb{R}^{m}\setminus E_{k})=0$.
Recall that $\mathcal{B}_{\mathbb{R}^{m}}$ is the Borel $\sigma$-algebra
of $\mathbb{R}^{m}$, and note that
\[
\tilde{E}_{k}\in\Pi^{-1}\pi_{V}^{-1}(\mathcal{B}_{\mathbb{R}^{m}})=\Pi^{-1}P_{V}^{-1}(\mathcal{B}_{\mathbb{R}^{d}}).
\]
Thus, by basic properties of disintegrations (see Section \ref{subsec:Disintegrations}),
\[
0=\beta(\tilde{B}_{1,k})\cdot\pi_{V}\Pi\beta_{\tilde{B}_{1,k}}(E_{k})=\beta(\tilde{B}_{1,k}\cap\tilde{E}_{k})=\int_{\tilde{E}_{k}}\beta_{\omega}^{V}(\tilde{B}_{1,k})\:d\beta(\omega).
\]
And similarly,
\[
0=\int_{\Lambda^{\mathbb{N}}\setminus\tilde{E}_{k}}\beta_{\omega}^{V}(\tilde{B}_{2,k})\:d\beta(\omega).
\]
Hence, for $\beta$-a.e. $\omega$
\begin{equation}
\beta_{\omega}^{V}(\tilde{B}_{1,k})=0\:\text{ or }\:\beta_{\omega}^{V}(\tilde{B}_{2,k})=0\:\text{ for all }k\ge1.\label{eq:mass(B_1,k)=00003D0 or mass(B_2,k)=00003D0}
\end{equation}
Additionally, it is clear that for $\beta$-a.e. $\omega$
\begin{equation}
\mathrm{supp}(\pi_{V}L_{m-1}\beta_{\omega}^{V})\subset\mathrm{supp}(\pi_{V}L_{m-1}\beta).\label{eq:supp(slice) sub supp(beta)}
\end{equation}

Fix $\omega\in\Lambda^{\mathbb{N}}$ for which (\ref{eq:mass(B_1,k)=00003D0 or mass(B_2,k)=00003D0})
and (\ref{eq:supp(slice) sub supp(beta)}) are both satisfied. Assume
by contradiction that $\pi_{V}L_{m-1}\beta_{\omega}^{V}$ is not a
mass point. Then there exist distinct 
\[
W_{1},W_{2}\in\mathrm{supp}(\pi_{V}L_{m-1}\beta_{\omega}^{V})\subset\mathrm{supp}(\pi_{V}L_{m-1}\beta),
\]
and so there exists $k\ge1$ with $W_{i}\in B_{i,k}$ for $i=1,2$.
Since $W_{1},W_{2}\in\mathrm{supp}(\pi_{V}L_{m-1}\beta_{\omega}^{V})$,
and since $B_{1,k}$ and $B_{2,k}$ are open in $\mathrm{Gr}_{m-1}(\mathbb{R}^{m})$,
we have $\pi_{V}L_{m-1}\beta_{\omega}^{V}(B_{i,k})>0$ for $i=1,2$.
This contradicts (\ref{eq:mass(B_1,k)=00003D0 or mass(B_2,k)=00003D0}),
which shows that $\pi_{V}L_{m-1}\beta_{\omega}^{V}$ must be a mass
point and completes the proof of the theorem.
\end{proof}

\subsection{\label{subsec:Factorization-of L_m-1 itself}Factorization of $L_{m-1}$}

In this subsection we use Theorem \ref{thm:factor of proj of L_m-1}
in order to deduce Theorem \ref{thm:L_m-1 factors via Pi}, which
is the following statement.
\begin{thm*}
Let $1\le m\le d$ and suppose that $\Sigma_{1}^{m-1}=m-1$ and $\Delta_{m}<1$.
Then there exists a Borel map $\widehat{L}_{m-1}:\mathbb{R}^{d}\rightarrow\mathrm{Gr}_{m-1}(d)$
so that $L_{m-1}(\omega)=\widehat{L}_{m-1}(\Pi\omega)$ for $\beta$-a.e.
$\omega$.
\end{thm*}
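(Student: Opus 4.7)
The plan is to combine the factorization of $P_V L_{m-1}$ via $P_V\Pi$, established in Theorem \ref{thm:factor of proj of L_m-1}, over a $\nu_m^*$-full-measure set of $V$'s, and then to show that the projections $P_V$ separate distinct elements of $\mathrm{Gr}_{m-1}(d)$ for $\nu_m^*$-a.e.\ $V$. The cases $m=1$ (trivial, since $\mathrm{Gr}_0(d)$ is a singleton) and $m=d$ (immediate from Theorem \ref{thm:factor of proj of L_m-1} applied with $V=\mathbb{R}^d$, since then $P_V=\mathrm{id}$ and $\beta_\omega^V$ is the disintegration with respect to $\Pi^{-1}\mathcal{B}_{\mathbb{R}^d}$) are handled directly, so assume $2\le m\le d-1$.

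Since $\Sigma_1^m=(m-1)+\Delta_m<m$, Theorem \ref{thm:cons of led-you intro} ensures $P_V\mu$ is exact dimensional with $\dim P_V\mu=\Sigma_1^m<m$ for $\nu_m^*$-a.e.\ $V$, so Theorem \ref{thm:factor of proj of L_m-1} applies and gives $P_V L_{m-1}\beta_\omega^V=\delta_{P_V L_{m-1}(\omega)}$ for $\nu_m^*$-a.e.\ $V$ and $\beta$-a.e.\ $\omega$. Taking the smaller subspace in \eqref{eq:cond of cond =00003D cond of orig on sym space} to be $V$ and the ambient one to be $\mathbb{R}^d$, I decompose $\beta_\omega^V=\int\beta_\eta^{\mathbb{R}^d}\,d\beta_\omega^V(\eta)$; since a convex combination of probability measures equal to a Dirac mass has every summand equal to that same Dirac, $P_V L_{m-1}\beta_\eta^{\mathbb{R}^d}$ is itself a Dirac for $\beta_\omega^V$-a.e.\ $\eta$, and integrating in $\omega$ gives the same conclusion for $\beta$-a.e.\ $\eta$. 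A Fubini argument then yields: for $\beta$-a.e.\ $\eta$, the measure $P_V L_{m-1}\beta_\eta^{\mathbb{R}^d}$ is a Dirac for $\nu_m^*$-a.e.\ $V$.

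The main technical obstacle is the following separation lemma: \emph{for any distinct $M_1,M_2\in\mathrm{Gr}_{m-1}(d)$, one has $\nu_m^*\{V:P_V M_1=P_V M_2\}=0$}. To prove it, note that for $V$ in the $\nu_m^*$-conull set where $V^\perp\cap M_i=\{0\}$ for $i=1,2$ (which itself follows from the Pl\"ucker consequence quoted before the statement of Theorem \ref{thm:L_m-1 factors via Pi}), the equality $P_V M_1=P_V M_2$ is equivalent to $M_1+V^\perp=M_2+V^\perp$, hence to $M_2\subset M_1+V^\perp$. Picking any $u\in M_2\setminus M_1$ and a basis $v_1,\dots,v_{m-1}$ of $M_1$, the linear map $T:\wedge^{d-m}\mathbb{R}^d\to\wedge^{d}\mathbb{R}^d$ defined by $T(\zeta):=\zeta\wedge u\wedge v_1\wedge\cdots\wedge v_{m-1}$ is nonzero by \eqref{eq:equiv cond for lin indep}, so $Z:=\ker T$ is a proper linear subspace of $\wedge^{d-m}\mathbb{R}^d$. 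Another application of \eqref{eq:equiv cond for lin indep} shows that $\iota_{d-m}(V^\perp)\subset Z$ if and only if $u\in V^\perp+M_1$, so the bad set is contained in $\{V:\iota_{d-m}(V^\perp)\subset Z\}$, which is $\nu_m^*$-null by the Pl\"ucker consequence quoted above.

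Combining the two previous paragraphs, for $\beta$-a.e.\ $\eta$ the measure $\theta:=L_{m-1}\beta_\eta^{\mathbb{R}^d}$ satisfies
\[
0=\int\bigl(P_V\theta\otimes P_V\theta\bigr)\{y_1\ne y_2\}\,d\nu_m^*(V)=(\theta\otimes\theta)\{M_1\ne M_2\},
\]
where the first equality uses that $P_V\theta$ is a Dirac for $\nu_m^*$-a.e.\ $V$ and the second is Fubini together with the separation lemma. Hence $\theta$ itself is a Dirac, say $\theta=\delta_{f(\eta)}$, and by construction $f$ is a Borel $\Pi^{-1}\mathcal{B}_{\mathbb{R}^d}$-measurable map. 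Using \eqref{eq:theta_y=00003Dtheta_x theta-a.e. x}, for $\beta$-a.e.\ $\omega$ and $\beta_\omega^{\mathbb{R}^d}$-a.e.\ $\eta$ one has both $\beta_\eta^{\mathbb{R}^d}=\beta_\omega^{\mathbb{R}^d}$ (so $f(\eta)=f(\omega)$) and $L_{m-1}(\eta)=f(\omega)$, and integrating yields $L_{m-1}=f$ $\beta$-a.s. The Doob--Dynkin lemma then produces a Borel $\widehat L_{m-1}:\mathbb{R}^d\to\mathrm{Gr}_{m-1}(d)$ with $f=\widehat L_{m-1}\circ\Pi$, completing the proof.
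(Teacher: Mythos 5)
Your proof is correct, and it takes a genuinely different route from the paper's. Both proofs start from the same point: applying Theorem \ref{thm:factor of proj of L_m-1} via Theorem \ref{thm:cons of led-you intro} to get $P_V L_{m-1}\beta_\omega^V=\delta_{P_V L_{m-1}(\omega)}$ for $\nu_m^*$-a.e.\ $V$, then pushing through the refinement $\beta_\omega^V=\int\beta_\eta^{\mathbb{R}^d}\,d\beta_\omega^V(\eta)$ to reduce to the full disintegration. After that they diverge. The paper exploits the Pl\"ucker consequence $\nu_m^*\{V:\iota_{d-m}(V^\perp)\subset Z\}=0$ to pick \emph{finitely many} subspaces $V_1,\dots,V_b$ whose Pl\"ucker images $\phi_i$ span $\wedge^{d-m}\mathbb{R}^d$, and then runs a direct wedge-algebra computation ($\gamma_\omega\wedge\phi_i=t_i\gamma_\eta\wedge\phi_i$, $\gamma_\eta\wedge\phi_i\wedge x=0$, etc.) to derive a contradiction from $L_{m-1}(\eta)\neq L_{m-1}(\omega)$. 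You instead package the same Pl\"ucker input into a clean \emph{separation lemma} — $\nu_m^*\{V:P_VM_1=P_VM_2\}=0$ for distinct $M_1,M_2$ — and then convert "Dirac for a.e.\ $V$" into "Dirac" via a Fubini argument on $\theta\otimes\theta$. Conceptually your route is arguably more transparent, and the separation lemma is an attractive standalone fact; the trade-off is that it asks for slightly more measure-theoretic bookkeeping (Fubini over $\nu_m^*\times\beta$, measurability of the "$P_V L_{m-1}\beta_\eta^{\mathbb{R}^d}$ is a point mass" condition, and the $\Pi^{-1}\mathcal{B}_{\mathbb{R}^d}$-measurability of $\eta\mapsto f(\eta)$), whereas the paper's finite-basis reduction sidesteps these entirely by working with a fixed $\omega,\eta$ and a deterministic algebraic identity. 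Your separation lemma is essentially the paper's wedge computation specialised to a single comparison and then integrated; both rest on exactly the same irreducibility input, so neither buys extra generality, but yours is a pleasant reorganisation.
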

\begin{rem*}
In what follows we write $L_{m-1}$ in place of $\widehat{L}_{m-1}$.
Which $L_{m-1}$ is intended will be clear from the context.
\end{rem*}
\begin{proof}
Since $\mathrm{Gr}_{0}(d)$ consists of a single element, the theorem
holds trivially when $m=1.$ Thus, we may assume that $m\ge2$.

In order to prove the theorem it suffices to show that,
\begin{equation}
L_{m-1}(\eta)=L_{m-1}(\omega)\text{ for }\beta\text{-a.e. }\omega\text{ and }\beta_{\omega}^{\mathbb{R}^{d}}\text{-a.e. }\eta.\label{eq:facto of L_m-1 suffices to show}
\end{equation}

Since $\Delta_{m}<1$ we have $\Sigma_{1}^{m}<m$. Hence, by Theorem
\ref{thm:LY formula for SA}, for $\nu_{m}^{*}$-a.e. $V\in\mathrm{Gr}_{m}(d)$
the measure $\pi_{V}\mu$ is exact dimensional with $\dim\pi_{V}\mu<m$.
From this, since $\Sigma_{1}^{m-1}=m-1$, and by Theorem \ref{thm:factor of proj of L_m-1},
\begin{equation}
\pi_{V}(L_{m-1}(\eta))=\pi_{V}(L_{m-1}(\omega))\text{ for }\nu_{m}^{*}\text{-a.e. }V,\:\beta\text{-a.e. }\omega\text{ and }\beta_{\omega}^{V}\text{-a.e. }\eta.\label{eq:proj eq a.e. V}
\end{equation}
Note that (\ref{eq:facto of L_m-1 suffices to show}) follows directly
from (\ref{eq:proj eq a.e. V}) when $m=d$. Thus, we may assume that
$m<d$.

Let $\nu_{d-m}^{-1}$ be the Furstenberg measure on $\mathrm{Gr}_{d-m}(d)$
corresponding to $\sum_{i\in\Lambda}p_{i}\delta_{A_{i}^{-1}}$. That
is, $\nu_{d-m}^{-1}$ is the unique member of $\mathcal{M}(\mathrm{Gr}_{d-m}(d))$
satisfying
\[
\nu_{d-m}^{-1}=\sum_{i\in\Lambda}p_{i}\cdot A_{i}^{-1}\nu_{d-m}^{-1}.
\]
It is easy to verify that $\nu_{d-m}^{-1}$ is equal to pushforward of $\nu_{m}^{*}$ via the map sending $V\in\mathrm{Gr}_{m}(d)$ to $V^{\perp}$. That is, for every Borel set $B\subset\mathrm{Gr}_{d-m}(d)$
\begin{equation}
\nu_{d-m}^{-1}(B)=\nu_{m}^{*}\left\{ V\in\mathrm{Gr}_{m}(d)\::\:V^{\perp}\in B\right\} .\label{eq:char of nu^-1}
\end{equation}

Recall from Section \ref{subsec:spaces of alt forms} that for $1\le k\le d$
we denote by $\iota_{k}$ the Plücker embedding of $\mathrm{Gr}_{k}(d)$
into $\mathrm{P}(\wedge^{k}\mathbb{R}^{d})$. As in the proof of Lemma
\ref{lem:nu=00007Bkappa<del=00007D<eps}, since $\mathbf{S}_{\Phi}^{\mathrm{L}}$
is $(d-m)$-strongly irreducible and by \cite[Proposition III.2.3]{BL},
it follows that for every proper linear subspace $Z$ of $\wedge^{d-m}\mathbb{R}^{d}$
\[
\nu_{d-m}^{-1}\left\{ V\in\mathrm{Gr}_{d-m}(d)\::\:\iota_{d-m}(V)\subset Z\right\} =0.
\]
Hence, by (\ref{eq:char of nu^-1})
\begin{equation}
\nu_{m}^{*}\left\{ V\in\mathrm{Gr}_{m}(d)\::\:\iota_{d-m}(V^{\perp})\subset Z\right\} =0.\label{eq:nu_m^* of prop sub =00003D 0}
\end{equation}

From (\ref{eq:nu_m^* of prop sub =00003D 0}) and (\ref{eq:proj eq a.e. V}),
it follows that there exist $V_{1},...,V_{b}\in\mathrm{Gr}_{m}(d)$
and $\phi_{1},...,\phi_{b}\in\wedge^{d-m}\mathbb{R}^{d}\setminus\{0\}$
so that $\iota_{d-m}(V_{i}^{\perp})=\phi_{i}\mathbb{R}$ for $1\le i\le b$,
$\{\phi_{1},..,\phi_{b}\}$ is a basis for $\wedge^{d-m}\mathbb{R}^{d}$,
and
\begin{equation}
\pi_{V_{i}}(L_{m-1}(\eta))=\pi_{V_{i}}(L_{m-1}(\omega))\text{ for }1\le i\le b,\:\beta\text{-a.e. }\omega\text{ and }\beta_{\omega}^{V_{i}}\text{-a.e. }\eta.\label{eq:proj eq all i =0000231}
\end{equation}

By (\ref{eq:cond of cond =00003D cond of orig on sym space}),
\[
\beta_{\omega}^{V_{i}}=\int\beta_{\eta}^{\mathbb{R}^{d}}\:d\beta_{\omega}^{V_{i}}(\eta)\text{ for }\beta\text{-a.e. }\omega\text{ and }1\le i\le b.
\]
Moreover, by Lemma \ref{lem:nu=00007Bkappa<del=00007D<eps} and since
$L_{m-1}\beta=\nu_{m-1}$,
\[
\pi_{V_{i}}(L_{m-1}(\omega))\in\mathrm{Gr}_{m-1}(m)\text{ for }\beta\text{-a.e. }\omega\text{ and }1\le i\le b.
\]
From these facts together with (\ref{eq:proj eq all i =0000231}),
it follows easily that that for $\beta$-a.e. $\omega$ and $\beta_{\omega}^{\mathbb{R}^{d}}$-a.e. $\eta$
\begin{equation}
\pi_{V_{i}}(L_{m-1}(\eta))=\pi_{V_{i}}(L_{m-1}(\omega))\in\mathrm{Gr}_{m-1}(m)\text{ for }1\le i\le b.\label{eq:proj eq all i =0000232}
\end{equation}
Fix $\omega,\eta\in\Lambda^{\mathbb{N}}$ for which (\ref{eq:proj eq all i =0000232})
is satisfied. In order to complete the proof it suffices to show that
$L_{m-1}(\eta)=L_{m-1}(\omega)$.

Assume by contradiction that $L_{m-1}(\eta)\ne L_{m-1}(\omega)$,
and let $x\in L_{m-1}(\eta)\setminus L_{m-1}(\omega)$. Let $\gamma_{\omega},\gamma_{\eta}\in\wedge^{m-1}\mathbb{R}^{d}\setminus\{0\}$
be with $\iota_{m-1}(L_{m-1}(\omega))=\gamma_{\omega}\mathbb{R}$
and $\iota_{m-1}(L_{m-1}(\eta))=\gamma_{\eta}\mathbb{R}$. Since $x\notin L_{m-1}(\omega)$
and by (\ref{eq:equiv cond for lin indep}), there exist $\zeta\in\wedge^{d-m}\mathbb{R}^{d}$
so that $\gamma_{\omega}\wedge\zeta\wedge x\ne0$. Since $\{\phi_{1},..,\phi_{b}\}$
is a basis for $\wedge^{d-m}\mathbb{R}^{d}$, there exist $c_{1},...,c_{b}\in\mathbb{R}$
with $\zeta=\sum_{i=1}^{b}c_{i}\phi_{i}$. Given $1\le i\le b$, from
(\ref{eq:proj eq all i =0000232}) we obtain
\[
L_{m-1}(\eta)+V_{i}^{\perp}=L_{m-1}(\omega)+V_{i}^{\perp}\in\mathrm{Gr}_{d-1}(d).
\]
Thus, from $\iota_{d-m}(V_{i}^{\perp})=\phi_{i}\mathbb{R}$ and by
(\ref{eq:equiv cond for lin span}), it follows that there exists
$0\ne t_{i}\in\mathbb{R}$ so that $\gamma_{\omega}\wedge\phi_{i}=t_{i}\gamma_{\eta}\wedge\phi_{i}$.
Moreover, since $x\in L_{m-1}(\eta)$ and by (\ref{eq:equiv cond for lin indep}),
we have $\gamma_{\eta}\wedge\phi_{i}\wedge x=0$ for $1\le i\le b$.
Combining all of this we get,
\[
0\ne\gamma_{\omega}\wedge\zeta\wedge x=\sum_{i=1}^{b}c_{i}\gamma_{\omega}\wedge\phi_{i}\wedge x=\sum_{i=1}^{b}c_{i}t_{i}\gamma_{\eta}\wedge\phi_{i}\wedge x=0.
\]
This contradiction shows that we must have $L_{m-1}(\eta)=L_{m-1}(\omega)$,
which completes the proof of the theorem.
\end{proof}

\subsection{\label{subsec:Projections of components revisited}Projections of
components, revisited}

For $1\le l<m\le d$, $L\in\mathrm{Gr}_{m-1}(m)$ and $\rho>0$, set
\[
T_{l,m}(L,\rho):=\left\{ W\in\mathrm{Gr}_{l}(m)\::\:\kappa(L^{\perp},W)\ge\rho\right\} .
\]
Assuming $\Sigma_{1}^{m-1}=m-1$ and $\Delta_{m}<1$, recall that
we write $L_{m-1}$ in place of the map $\widehat{L}_{m-1}:\mathbb{R}^{d}\rightarrow\mathrm{Gr}_{m-1}(d)$
obtained in Theorem \ref{thm:L_m-1 factors via Pi}. The following
proposition is the main result of the present subsection.
\begin{prop}
\label{prop:L =000026 ent of comp}Let $1\le l<m\le d$ be given and
suppose that $\Sigma_{1}^{m-1}=m-1$ and $\Delta_{m}<1$. Then for
every $\epsilon,\rho>0$, $k\ge K(\epsilon,\rho)\ge1$, $n\ge N(\epsilon,\rho,k)\ge1$
and $V\in\mathrm{Gr}_{m}(d)$,
\[
\mathbb{P}_{1\le i\le n}\left\{ \frac{1}{k}H(\pi_{W}\pi_{V}\mu_{x,i},\mathcal{D}_{i+k})>l-\epsilon\quad\forall\;W\in T_{l,m}(\pi_{V}L_{m-1}(x),\rho)\right\} >1-\epsilon.
\]
\end{prop}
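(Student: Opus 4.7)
The plan is to combine three ingredients: the factorization of $L_{m-1}$ from Theorem \ref{thm:L_m-1 factors via Pi} (which applies since $\Sigma_1^{m-1}=m-1$ and $\Delta_m<1$) combined with Lusin's theorem, the entropy lower bound on cylinder projections from Proposition \ref{prop:inductive main proj prop}, and the general projection entropy estimate of Lemma \ref{lem:gen proj lb}. The geometric heart of the argument is that for most $u\in\Psi_m(i)$ the projection $\pi_V(\varphi_u\mu)$ is concentrated within $O(2^{-i-k})$ of a translate of $\pi_V L_{m-1}(A_u)\in\mathrm{Gr}_{m-1}(m)$, and for $x$ meeting $\varphi_u(K_\Phi)$ this latter subspace coincides up to a controllable error with $\pi_V L_{m-1}(x)$, allowing Lemma \ref{lem:gen proj lb} to be applied uniformly in $W\in T_{l,m}(\pi_V L_{m-1}(x),\rho)$.

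First I would use Lusin's theorem to obtain a compact $E\subset\mathbb{R}^d$ with $\mu(E)>1-\epsilon/10$ on which the Borel map $L_{m-1}:\mathbb{R}^d\to\mathrm{Gr}_{m-1}(d)$ is uniformly continuous, with modulus $\delta_0$ at tolerance $\rho/(100C_0)$ ($C_0$ being a Lipschitz constant for $V'\mapsto\pi_V V'$ on the relevant open subset of $\mathrm{Gr}_{m-1}(d)$; note that $\pi_V L_{m-1}(x)\in\mathrm{Gr}_{m-1}(m)$ for $\mu$-a.e.~$x$, by Lemma \ref{lem:nu=00007Bkappa<del=00007D<eps} applied to $\nu_{m-1}=L_{m-1}\beta$ with $W=\ker\pi_V$). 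I would also use Lemma \ref{lem:small mass of pts close to bd} to avoid $\mu$-mass near boundaries of dyadic cells. Next, I would invoke Proposition \ref{prop:inductive main proj prop} at level $m$ together with $\chi_{m-1}>\chi_m$ to define, for each $i$, a set $\mathcal{G}_i\subset\Psi_m(i)$ of \emph{good} words of total $p_u$-mass $>1-\epsilon'$, each $u\in\mathcal{G}_i$ satisfying: (a) $u\in\mathcal{Z}(\Sigma_1^m,\epsilon',k,i,V)$; (b) $\alpha_m(A_u)<2^{-2k}\alpha_{m-1}(A_u)$, so that $\varphi_u(K_\Phi)\subset x_u+L_{m-1}(A_u)^{(2^{-i-k})}$ for some $x_u$; and (c) for $\beta$-most $\omega'$, the subspace $A_u L_{m-1}(\omega')$ is $\rho/(100C_0)$-close to $L_{m-1}(A_u)$. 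Condition (c) uses the quantitative estimate
\[
d_{\mathrm{Gr}_{m-1}}\!\bigl(A_u L_{m-1}(\omega'),L_{m-1}(A_u)\bigr)=O\!\left(\frac{\alpha_m(A_u)}{\alpha_{m-1}(A_u)}\cdot\frac{1}{\kappa(L_{m-1}(\omega'),L_{d-m+1}(A_u^*))}\right)
\]
together with Lemma \ref{lem:nu=00007Bkappa<del=00007D<eps} to control transversality uniformly in $A_u$.

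Now for $x\in E$ and $2^{-i}<\delta_0$, using the cylinder decomposition
\[
\mu_{x,i}=\sum_{u\in\Psi_m(i)}c_u\,(\varphi_u\mu)_{\mathcal{D}_i(x)},
\]
most of the $c_u$-mass lies on $u\in\mathcal{G}_i$ meeting $\mathcal{D}_i(x)$ in a $\varphi_u\mu$-``good'' way (meaning that $x$ satisfies (a) for that $u$ and $\varphi_u(K_\Phi)$ meets both $\mathcal{D}_i(x)$ and $E$, so some $y\in\varphi_u(K_\Phi)\cap E$ has $|y-x|=O(2^{-i})\le\delta_0$). Via the factorization identity $\widehat{L}_{m-1}(y)=A_u L_{m-1}(\omega')$ for $y=\varphi_u(\Pi\omega')$, (c), and uniform continuity on $E$, I get $d_{\mathrm{Gr}_{m-1}}(L_{m-1}(x),L_{m-1}(A_u))<\rho/(10C_0)$, whence $W\in T_{l,m}(\pi_V L_{m-1}(x),\rho)$ implies $W\in T_{l,m}(\pi_V L_{m-1}(A_u),\rho/2)$. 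Applying Lemma \ref{lem:gen proj lb} (its ``$V$'' $=\pi_V L_{m-1}(A_u)$, its ``$l$'' $=m-1$, its ``$q$'' $=l$) to $\pi_V(\varphi_u\mu)_{\mathcal{D}_i(x)}$ then gives
\[
\tfrac{1}{k}H\!\bigl(\pi_W\pi_V(\varphi_u\mu)_{\mathcal{D}_i(x)},\mathcal{D}_{i+k}\bigr)\ge \tfrac{1}{k}H\!\bigl(\pi_V(\varphi_u\mu)_{\mathcal{D}_i(x)},\mathcal{D}_{i+k}\bigr)-(m-1)+l-O_\rho(1/k),
\]
uniformly in such $W$. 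By (a), for $(\varphi_u\mu)$-most $x$ the right-hand side exceeds $l+\Delta_m-\epsilon'-O_\rho(1/k)\ge l-\epsilon'-O_\rho(1/k)$. Averaging the good $u$ via the concavity of entropy and taking $\epsilon'$ small and $k$ large then yields the claimed bound simultaneously for all $W\in T_{l,m}(\pi_V L_{m-1}(x),\rho)$.

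The main obstacle is ingredient (c) — propagating the identity $\widehat{L}_{m-1}(\varphi_u(\Pi\omega'))=A_u L_{m-1}(\omega')$ into the statement that $L_{m-1}(y)$ equals $L_{m-1}(A_u)$ up to a prescribed error, for $y$ in a $\varphi_u\mu$-large subset of $\varphi_u(K_\Phi)$. This requires making the convergence $A_u L_{m-1}(\omega')\to L_{m-1}(A_u)$ quantitative in the singular value gap of $A_u$, while simultaneously controlling the transversality of $L_{m-1}(\omega')$ to the ``stable'' subspace $L_{d-m+1}(A_u^*)$ \emph{uniformly in $A_u$}; the latter is where Lemma \ref{lem:nu=00007Bkappa<del=00007D<eps} — giving $\nu_{m-1}$-mass bounds on $\{V:\kappa(V,W)\le\delta\}$ uniform in $W$ — enters decisively, and is the step demanding the most careful bookkeeping.
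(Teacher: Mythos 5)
Your overall architecture matches the paper's---Lusin's theorem for $L_{m-1}$ (the paper packages this as Lemma \ref{lem:uni cont of pi_V comp L}), Proposition \ref{prop:inductive main proj prop} to get cylinders with large projected entropy, and Lemma \ref{lem:gen proj lb} to convert that into lower bounds for $\pi_{W}\pi_{V}$. But there is a genuine error at the heart of the argument: you take the cut-set $\Psi_{m}(i)$ and invoke Proposition \ref{prop:inductive main proj prop} at level $m$, i.e.\ with $\mathcal{Z}(\Sigma_{1}^{m},\epsilon',k,i,V)$. For $u\in\Psi_{m}(i)$ one has $\alpha_{m}(A_{u})\asymp 2^{-i}$, so the cylinder $\varphi_{u}(K_{\Phi})$ is thick of order $2^{-i}$ in the direction transverse to $L_{m-1}(A_{u})$; the gap condition $\alpha_{m}(A_{u})<2^{-2k}\alpha_{m-1}(A_{u})$ controls only the \emph{ratio} $\alpha_m/\alpha_{m-1}$ and does not make $\alpha_m(A_u)$ smaller than $2^{-i-k}$. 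Consequently the concentration $\varphi_{u}(K_{\Phi})\subset x_{u}+L_{m-1}(A_{u})^{(2^{-i-k})}$ asserted in your item (b) fails, and the hypothesis $\mathrm{supp}(\theta)\subset x+V^{(2^{-n-k})}$ of Lemma \ref{lem:gen proj lb} is not met. (Equivalently: under that concentration the projected component can have entropy at most $(m-1)$ per scale, so one cannot simultaneously impose $\mathcal{Z}(\Sigma_{1}^{m},\cdot)$ when $\Delta_{m}>0$.) The proof of the proposition in the paper uses the finer cut-set $\Psi_{m-1}(n)$ with $\mathcal{Z}(m-1,\epsilon,k,n,V)$: then $\alpha_{m-1}(A_{u})\asymp 2^{-n}$ so the gap condition gives $\alpha_{m}(A_{u})<2^{-n-2k}$, the cylinder really is $2^{-n-k}$-concentrated near a translate of $L_{m-1}(A_{u})$, and $\Sigma_{1}^{m-1}=m-1$ ensures the projected entropy is close to $m-1$, exactly the input Lemma \ref{lem:gen proj lb} expects. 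With that substitution your final computation becomes $\frac{1}{k}H(\pi_{W}\cdot)\geq (m-1-\epsilon')-(m-1)+l-O_{\rho}(1/k)=l-\epsilon'-O_{\rho}(1/k)$, which is what is wanted.

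One further comment on step (c): your proposed quantitative bound $d_{\mathrm{Gr}_{m-1}}(A_u L_{m-1}(\omega'),L_{m-1}(A_u))=O\bigl(\tfrac{\alpha_m(A_u)}{\alpha_{m-1}(A_u)}\cdot\kappa(L_{m-1}(\omega'),L_{d-m+1}(A_u^*))^{-1}\bigr)$ combined with Lemma \ref{lem:nu=00007Bkappa<del=00007D<eps} is a legitimate but heavier route; the paper instead uses the pointwise convergence $L_{m-1}(\omega)=\lim_n L_{m-1}(A_{\Psi_{m-1}(n;\omega)})$ from property (\ref{enu:def of L_m via L_m of matrices}) in Section \ref{subsec:Coding and Furstenberg maps} together with an Egoroff-type absorption into the large-$n$ regime, plus Lemma \ref{lem:proj of close sub are close} to pass to the projected Grassmannian. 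Both are viable once you switch to $\Psi_{m-1}$; the paper's version avoids having to make the rate of convergence explicit.
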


\begin{rem}
\label{rem:L_m-1mu=00003Dnu_m-1}Under the conditions of the proposition
and by Theorem \ref{thm:L_m-1 factors via Pi},
\[
L_{m-1}\mu=L_{m-1}\Pi\beta=L_{m-1}\beta=\nu_{m-1}.
\]
Thus, by Lemma \ref{lem:nu=00007Bkappa<del=00007D<eps}
\[
\mu\{x\::\:\pi_{V}L_{m-1}(x)\in\mathrm{Gr}_{m-1}(m)\}=1\text{ for all }V\in\mathrm{Gr}_{m}(d).
\]
\end{rem}

For the proof of the proposition we need the following two lemmas.
The first one follows easily by compactness and its proof is therefore
omitted.

\begin{lem}
\label{lem:proj of close sub are close}Let $2\le m\le d$ and $\epsilon,\rho>0$
be given. Then there exists $\delta=\delta(\epsilon,\rho)>0$ so that
$d_{\mathrm{Gr}_{m-1}}(\pi_{V}W_{1},\pi_{V}W_{2})<\epsilon$ for all
$V\in\mathrm{Gr}_{m}(d)$ and $W_{1},W_{2}\in\mathrm{Gr}_{m-1}(d)$
with $\kappa(V^{\perp},W_{1})\ge\rho$, $\kappa(V^{\perp},W_{2})\ge\rho$
and $d_{\mathrm{Gr}_{m-1}}(W_{1},W_{2})<\delta$.
\end{lem}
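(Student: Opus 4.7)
The plan is to run a straightforward compactness--continuity argument by contradiction. First I would introduce the closed subset
\[
K_\rho:=\left\{(V,W_1,W_2)\in\mathrm{Gr}_m(d)\times\mathrm{Gr}_{m-1}(d)\times\mathrm{Gr}_{m-1}(d)\::\:\kappa(V^\perp,W_i)\ge\rho\text{ for }i=1,2\right\},
\]
which is compact since $\kappa$ is continuous and the ambient product of Grassmannians is compact. Next, assuming the lemma fails, I would take sequences $(V_n,W_{1,n},W_{2,n})\in K_\rho$ with $d_{\mathrm{Gr}_{m-1}}(W_{1,n},W_{2,n})\to 0$ but $d_{\mathrm{Gr}_{m-1}}(\pi_{V_n}W_{1,n},\pi_{V_n}W_{2,n})\ge\epsilon$ for all $n$; passing to a convergent subsequence, the limit has the form $(V,W,W)\in K_\rho$, since the two $W_i$-coordinates collapse.

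The key technical point is the continuity of the assignment $(V,W)\mapsto\pi_V W\in\mathrm{Gr}_{m-1}(m)$ on the closed set $\{\kappa(V^\perp,W)\ge\rho\}\subset\mathrm{Gr}_m(d)\times\mathrm{Gr}_{m-1}(d)$. I would verify this via the Pl\"ucker embedding: if $\iota_{m-1}(W)=(w_1\wedge\cdots\wedge w_{m-1})\mathbb{R}$ for any orthonormal basis $\{w_j\}$ of $W$, then $\iota_{m-1}(\pi_V W)=(\wedge^{m-1}\pi_V)(w_1\wedge\cdots\wedge w_{m-1})\mathbb{R}$ whenever this vector is nonzero. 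Under the transversality hypothesis, Lemma \ref{lem:gen low bd by comp} (applied with $q=l=m-1$, $V\in\mathrm{Gr}_m(d)$, $W^\perp\in\mathrm{Gr}_1(d)$ playing the role of $W$ there, after swapping the roles via Remark \ref{rem:m-con equiv to (d-m)-cond}-type duality, or directly by noting $\alpha_{m-1}(\pi_V|_W)$ is bounded below in terms of $\rho$) combined with \eqref{eq:norm =00003D prod of sing vals} gives that this representative has norm bounded below uniformly on $K_\rho$. Hence the map into $\mathrm{P}(\wedge^{m-1}\mathbb{R}^m)$, and therefore into $\mathrm{Gr}_{m-1}(m)$ via $\iota_{m-1}^{-1}$, is continuous.

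Granted this continuity, the contradiction is immediate: $\pi_{V_n}W_{1,n}\to\pi_V W$ and $\pi_{V_n}W_{2,n}\to\pi_V W$, so
\[
d_{\mathrm{Gr}_{m-1}}(\pi_{V_n}W_{1,n},\pi_{V_n}W_{2,n})\longrightarrow 0,
\]
contradicting the lower bound $\epsilon$. The only step requiring any real work is the quantitative lower bound on $\alpha_{m-1}(\pi_V|_W)$ under the $\kappa$-hypothesis, which I expect to be the main (mild) obstacle; everything else is pure compactness.
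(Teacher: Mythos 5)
Your overall compactness strategy is the intended one (the paper explicitly says the proof ``follows easily by compactness'' and omits it), and the idea of controlling things through wedge products is sound. But there is a genuine flaw in the execution: you claim continuity of the assignment $(V,W)\mapsto\pi_V W$ on $\{\kappa(V^\perp,W)\ge\rho\}$, and this is false. The map $\pi_V$ is \emph{not} a continuous function of $V$: by the definition in Section \ref{subsec:Spaces-of-affine maps}, $\pi_V$ is built from an arbitrary orthonormal basis of $V$ fixed once for each $V$, and no globally continuous choice of orthonormal frame exists on a Grassmannian (already on $\mathrm{Gr}_1(2)$ this is the nontriviality of the tautological line bundle). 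Consequently $\wedge^{m-1}\pi_V$ is also discontinuous in $V$, the Pl\"ucker-based verification you sketch breaks down, and the convergences $\pi_{V_n}W_{1,n}\to\pi_V W$ and $\pi_{V_n}W_{2,n}\to\pi_V W$ invoked at the end do not follow.

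The repair is simple once noticed. Since $d_{\mathrm{Gr}_{m-1}}$ is $\mathrm{O}(m)$-invariant and any two admissible choices of $\pi_V$ differ by post-composition with an element of $\mathrm{O}(m)$, the quantity $d_{\mathrm{Gr}_{m-1}}(\pi_V W_1,\pi_V W_2)$ is independent of the choice of $\pi_V$ and equals $\Vert P_{P_V W_1}-P_{P_V W_2}\Vert_{op}$ computed for the subspaces $P_V W_1,P_V W_2\subset V\subset\mathbb{R}^d$. The map $(V,W)\mapsto P_V W\in\mathrm{Gr}_{m-1}(d)$ \emph{is} continuous on $\{\kappa(V^\perp,W)\ge\rho\}$, because $P_V$ depends continuously on $V$ and the transversality hypothesis keeps the rank constant; so your argument by contradiction goes through once you replace $\pi_V$ by $P_V$ throughout (or, equivalently, work on a small neighborhood of the limit point $(V,W,W)$ on which $\pi_{V'}$ can be chosen continuously, and use the independence-of-choice). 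A secondary point: the invocation of Lemma \ref{lem:gen low bd by comp} does not quite fit your parameters (its hypothesis $q\le l$ forces the target dimension to be at most the source dimension, whereas you want to project the $(m-1)$-dimensional $W$ into the $m$-dimensional $V$), but the uniform lower bound on the nonzero singular values of $P_V|_W$ follows by the same elementary compactness argument as in that lemma's proof, or is simply subsumed by the continuity of $(V,W)\mapsto P_V W$.
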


\begin{lem}
\label{lem:uni cont of pi_V comp L}Let $2\le m\le d$ and $\epsilon>0$
be given and suppose that $\Sigma_{1}^{m-1}=m-1$ and $\Delta_{m}<1$.
Then there exists $\eta=\eta(\epsilon)>0$ so that for all $V\in\mathrm{Gr}_{m}(d)$
there exists a Borel set $F_{V}\subset\mathbb{R}^{d}$ with $\mu(F_{V})>1-\epsilon$
and
\[
d_{\mathrm{Gr}_{m-1}}(\pi_{V}L_{m-1}(x),\pi_{V}L_{m-1}(y))<\epsilon\text{ for all }x,y\in F_{V}\text{ with }|x-y|<\eta.
\]
\end{lem}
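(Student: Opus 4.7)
The plan is to combine Lusin's theorem applied to the Borel map $L_{m-1}\colon\mathbb{R}^d\to\mathrm{Gr}_{m-1}(d)$ provided by Theorem \ref{thm:L_m-1 factors via Pi} with Lemma \ref{lem:proj of close sub are close}, which says that $\pi_V$ is almost an isometry on pairs of $(m-1)$-subspaces that are uniformly transverse to $V^{\perp}$. The first step is to obtain a $V$-independent transversality threshold. Given $V\in\mathrm{Gr}_m(d)$, fix any $\widetilde V\in\mathrm{Gr}_{d-m+1}(d)$ with $\widetilde V\supset V^{\perp}$; then $\mathrm{Gr}_1(V^{\perp})\subseteq\mathrm{Gr}_1(\widetilde V)$, so $\kappa(L,V^{\perp})\ge\kappa(L,\widetilde V)$ for every linear subspace $L$. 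Applying Lemma \ref{lem:nu=00007Bkappa<del=00007D<eps} to $\nu_{m-1}$ yields $\rho_0=\rho_0(\epsilon)>0$ with $\nu_{m-1}\{L:\kappa(L,W)\le\rho_0\}<\epsilon/2$ for every $W\in\mathrm{Gr}_{d-m+1}(d)$. Combined with the identity $L_{m-1}\mu=\nu_{m-1}$ from Remark \ref{rem:L_m-1mu=00003Dnu_m-1}, this gives, uniformly in $V$,
\[
\mu\{x:\kappa(L_{m-1}(x),V^{\perp})<\rho_0\}<\epsilon/2.
\]

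Next, let $\delta_0=\delta(\epsilon,\rho_0)>0$ be the constant supplied by Lemma \ref{lem:proj of close sub are close}. Since $\mu$ is a Radon probability on $\mathbb{R}^d$ and $L_{m-1}$ is a Borel map into the compact metric space $\mathrm{Gr}_{m-1}(d)$, Lusin's theorem produces a compact set $K\subset\mathbb{R}^d$ with $\mu(K)>1-\epsilon/2$ such that $L_{m-1}|_K$ is continuous, and hence uniformly continuous. Choose $\eta=\eta(\epsilon)>0$ so that $x,y\in K$ with $|x-y|<\eta$ imply $d_{\mathrm{Gr}_{m-1}}(L_{m-1}(x),L_{m-1}(y))<\delta_0$. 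Finally, set
\[
F_V:=K\cap\{x:\kappa(L_{m-1}(x),V^{\perp})\ge\rho_0\},
\]
so that $\mu(F_V)>1-\epsilon$ by the two preceding estimates. For $x,y\in F_V$ with $|x-y|<\eta$, the subspaces $W_1:=L_{m-1}(x)$ and $W_2:=L_{m-1}(y)$ satisfy the hypotheses of Lemma \ref{lem:proj of close sub are close} with parameters $\rho_0$ and $\delta_0$, hence $d_{\mathrm{Gr}_{m-1}}(\pi_VL_{m-1}(x),\pi_VL_{m-1}(y))<\epsilon$, as required.

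The only delicate point is to ensure uniformity of $\eta$ in $V$ while still allowing $F_V$ to depend on $V$. This is handled by splitting the two sources of $V$-dependence: the modulus $\eta$ comes only from the $V$-independent Lusin compact set $K$, and all $V$-dependence is absorbed into the transversality event used to define $F_V$. The small technical subtlety is that Lemma \ref{lem:nu=00007Bkappa<del=00007D<eps} applied to $\nu_{m-1}$ is uniform over $W\in\mathrm{Gr}_{d-m+1}(d)$ but not directly over $\mathrm{Gr}_{d-m}(d)$, which is what forces the embedding $V^{\perp}\hookrightarrow\widetilde V$ in the first paragraph; once this trick is in place the argument is routine and no dynamical or entropy input beyond the factorization theorem is needed.
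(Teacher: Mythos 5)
Your proof is correct and follows essentially the same approach as the paper: apply Lemma \ref{lem:nu=00007Bkappa<del=00007D<eps} (via $L_{m-1}\mu=\nu_{m-1}$) to get a $V$-uniform transversality threshold, use Lusin's theorem to get a $V$-independent set of uniform continuity for $L_{m-1}$, and combine these with Lemma \ref{lem:proj of close sub are close}. The only difference is that you explicitly spell out the embedding $V^{\perp}\hookrightarrow\widetilde V\in\mathrm{Gr}_{d-m+1}(d)$ needed to reconcile the dimension of $V^{\perp}$ with the hypothesis of Lemma \ref{lem:nu=00007Bkappa<del=00007D<eps} applied to $\nu_{m-1}$, a step the paper leaves implicit.
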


\begin{proof}
Since $L_{m-1}\mu=\nu_{m-1}$ and by Lemma \ref{lem:nu=00007Bkappa<del=00007D<eps},
there exists $\rho>0$ so that
\[
\mu\left\{ x\::\:\kappa(V^{\perp},L_{m-1}(x))\le\rho\right\} <\epsilon/2\text{ for all }V\in\mathrm{Gr}_{m}(d).
\]
Let $\delta=\delta(\epsilon,\rho)>0$ be as obtained in Lemma \ref{lem:proj of close sub are close}.
Since $L_{m-1}:\mathbb{R}^{d}\rightarrow\mathrm{Gr}_{m-1}(d)$ is
Borel measurable and by Lusin's theorem, there exist $\eta>0$ and
a Borel set $F\subset\mathbb{R}^{d}$ so that $\mu(F)>1-\epsilon/2$
and 
\[
d_{\mathrm{Gr}_{m-1}}(L_{m-1}(x),L_{m-1}(y))<\delta\text{ for all }x,y\in F\text{ with }|x-y|<\eta.
\]
The lemma now follows by setting
\[
F_{V}:=\{x\in F\::\:\kappa(V^{\perp},L_{m-1}(x))\ge\rho\}
\]
for each $V\in\mathrm{Gr}_{m}(d)$.
\end{proof}
\begin{proof}[Proof of Proposition \ref{prop:L =000026 ent of comp}]
Let $\epsilon>0$ be small, let $\rho>0$, and let $k\ge1$ be large with respect to
$\epsilon,\rho$. Given $V\in\mathrm{Gr}_{m}(d)$, let $\mathcal{Q}_{V}$
be the set of all integers $n\ge1$ so that
\[
\mathbb{P}_{i=n}\left\{ \mathbf{I}_{m-1}(i)\in\mathcal{Z}\left(m-1,\epsilon,k,i,V\right)\right\} >1-\epsilon,
\]
where we have used here the notation introduced in Definition \ref{def:def of mathcal(Z)}.
From $\Sigma_{1}^{m-1}=m-1$, by assuming that $k$ is sufficiently
large, and by Proposition \ref{prop:inductive main proj prop},
\begin{equation}
\underset{M\rightarrow\infty}{\liminf}\:\inf\left\{ \lambda_{M}(\mathcal{N}_{M}\cap\mathcal{Q}_{V})\::\:V\in\mathrm{Gr}_{m}(d)\right\} >1-\epsilon.\label{eq:inf of lambda_N}
\end{equation}

Let $N\ge1$ be large with respect to $k$, fix $V\in\mathrm{Gr}_{m}(d)$,
and let $n\in\mathcal{Q}_{V}$ be with $n\ge N$. We emphasise that
$n$ is not assumed to be large with respect to $V$. For $u\in\Psi_{m-1}(n)$
and $D\in\mathcal{D}_{n}^{d}$ with $\varphi_{u}\mu(D)>0$ set $\theta_{u,D}:=(\varphi_{u}\mu)_{D}$.
Let $\Theta\in\mathcal{M}(\Psi_{m-1}(n)\times\mathcal{D}_{n}^{d})$
be with
\[
\Theta:=\sum_{u\in\Psi_{m-1}(n)}p_{u}\sum_{D\in\mathcal{D}_{n}^{d}}\varphi_{u}\mu(D)\cdot\delta_{u}\times\delta_{D}.
\]
It is easy to verify that $\mu=\int\theta_{u,D}\:d\Theta(u,D)$. Moreover,
since $n\in\mathcal{Q}_{V}$
\begin{equation}
\Theta\left\{ (u,D)\::\:\frac{1}{k}H\left(\pi_{V}\theta_{u,D},\mathcal{D}_{n+k}\right)>m-1-\epsilon\right\} >1-O(\epsilon).\label{eq:large ent for theta_u,D}
\end{equation}

Set $L_{m-1}^{V}:=\pi_{V}\circ L_{m-1}$, and recall that for $\omega\in\Lambda^{\mathbb{N}}$
we write $\Psi_{m-1}(n;\omega)$ for the unique $u\in\Psi_{m-1}(n)$
with $\omega\in[u]$. In what follows, given linear subspaces $W_{1},W_{2}$
of $\mathbb{R}^{m}$ we set $d_{\mathrm{Gr}_{m-1}}(W_{1},W_{2}):=\infty$
whenever $\dim W_{1}<m-1$ or $\dim W_{2}<m-1$.

From (\ref{enu:def of L_m via L_m of matrices}) in Section \ref{subsec:Coding and Furstenberg maps},
by Lemmata \ref{lem:nu=00007Bkappa<del=00007D<eps} and \ref{lem:proj of close sub are close},
and by assuming that $N$ is large enough with respect to $\epsilon,\rho$,
\[
\beta\left\{ \omega\::\:d_{\mathrm{Gr}_{m-1}}\left(L_{m-1}^{V}(A_{\Psi_{m-1}(n;\omega)}),L_{m-1}^{V}(\omega)\right)<\rho/3\right\} >1-\epsilon.
\]
Thus, by the decomposition $\beta=\mathbb{E}_{i=n}\left(\beta_{[\mathbf{I}_{m-1}(i)]}\right)$,
\[
\mathbb{E}_{i=n}\left( \beta_{[\mathbf{I}_{m-1}(i)]}\left\{ \omega\::\:d_{\mathrm{Gr}_{m-1}}\left(L_{m-1}^{V}(A_{\mathbf{I}_{m-1}(i)}),L_{m-1}^{V}(\omega)\right)<\rho/3\right\} \right) >1-\epsilon.
\]
From this, since $L_{m-1}(\omega)=L_{m-1}(\Pi\omega)$ for $\beta$-a.e.
$\omega$, and since $\Pi\beta_{[u]}=\varphi_{u}\mu$ for $u\in\Lambda^{*}$,
\[
\mathbb{E}_{i=n}\left( \varphi_{\mathbf{I}_{m-1}(i)}\mu\left\{ y\::\:d_{\mathrm{Gr}_{m-1}}\left(L_{m-1}^{V}(A_{\mathbf{I}_{m-1}(i)}),L_{m-1}^{V}(y)\right)<\rho/3\right\} \right) >1-\epsilon.
\]
Hence, since $\varphi_{u}\mu=\sum_{D\in\mathcal{D}_{n}^{d}}\varphi_{u}\mu(D)\cdot\theta_{u,D}$
for $u\in\Psi_{m-1}(n)$,
\begin{equation}
\int\theta_{u,D}\left\{ y\::\:d_{\mathrm{Gr}_{m-1}}\left(L_{m-1}^{V}(A_{u}),L_{m-1}^{V}(y)\right)<\rho/3\right\} \:d\Theta(u,D)>1-\epsilon.\label{eq:1st int dTheta}
\end{equation}

By Lemma \ref{lem:uni cont of pi_V comp L} and by assuming that $N$
is large enough with respect to $\epsilon,\rho$, there exists a Borel
set $F\subset\mathbb{R}^{d}$ with $\mu(F)>1-\epsilon$ so that
\begin{equation}
d_{\mathrm{Gr}_{m-1}}(L_{m-1}^{V}(x),L_{m-1}^{V}(y))<\rho/3\text{ for all }D\in\mathcal{D}_{n}^{d}\text{ and }x,y\in\overline{D}\cap F.\label{eq:dist is small for x,y in D cap F}
\end{equation}
Since $\mu=\int\theta_{u,D}\:d\Theta(u,D)$,
\begin{equation}
\int\theta_{u,D}(F)\:d\Theta(u,D)>1-\epsilon.\label{eq:theta_u,D(F_V) is large on avg}
\end{equation}

Let $\mathcal{Y}$ be the set of all $(u,D)\in\Psi_{m-1}(n)\times\mathcal{D}_{n}^{d}$
so that $\varphi_{u}\mu(D)>0$ and,
\begin{enumerate}
\item \label{enu:alpha_m<alpha_m-1}$\alpha_{m}(A_{u})<2^{-2k}\alpha_{m-1}(A_{u})$;
\item \label{enu:large ent thet_u,D}$\frac{1}{k}H\left(\pi_{V}\theta_{u,D},\mathcal{D}_{n+k}\right)>m-1-\epsilon$;
\item \label{enu:>1/2=0000231}$\theta_{u,D}\left\{ y\::\:d_{\mathrm{Gr}_{m-1}}\left(L_{m-1}^{V}(A_{u}),L_{m-1}^{V}(y)\right)<\rho/3\right\} >1/2$;
\item \label{enu:>1/2=0000232}$\theta_{u,D}(F)>1/2$.
\end{enumerate}
Since $\chi_{m}<\chi_{m-1}$, from (\ref{eq:large ent for theta_u,D}),
(\ref{eq:1st int dTheta}) and (\ref{eq:theta_u,D(F_V) is large on avg}),
and by replacing $\epsilon$ with a larger quantity (which is still
small) without changing the notation, we may assume that $\Theta(\mathcal{Y})>1-\epsilon$.

For $D\in\mathcal{D}_{n}^{d}$ with $\mu(D)>0$ let $\Theta_{D}\in\mathcal{M}(\Psi_{m-1}(n))$
be with
\[
\Theta_{D}:=\frac{1}{\mu(D)}\sum_{u\in\Psi_{m-1}(n)}p_{u}\cdot\varphi_{u}\mu(D)\cdot\delta_{u}.
\]
Since $\mu(F)>1-\epsilon$ and $\Theta(\mathcal{Y})>1-\epsilon$,
from the equality
\[
\Theta=\int\Theta_{\mathcal{D}_{n}(x)}\times\delta_{\mathcal{D}_{n}(x)}\:d\mu(x),
\]
and by replacing $\epsilon$ with a larger quantity without changing the notation, we may assume that $\mu(E)>1-\epsilon$, where 
$E$ is the set of all $x\in F$ so that $\mu(\mathcal{D}_{n}(x))>0$ and
\[
\Theta_{\mathcal{D}_{n}(x)}\left\{ u\::\:(u,\mathcal{D}_{n}(x))\in\mathcal{Y}\right\} >1-\epsilon.
\]

Let $x\in E$ be given and set $D=\mathcal{D}_{n}(x)$. Let $W\in T_{l,m}(L_{m-1}^{V}(x),\rho)$,
so that $W\in\mathrm{Gr}_{l}(m)$ and $\kappa(L_{m-1}^{V}(x)^{\perp},W)\ge\rho$.
Let $u\in\Psi_{m-1}(n)$ be with $(u,D)\in\mathcal{Y}$. By properties
(\ref{enu:>1/2=0000231}) and (\ref{enu:>1/2=0000232}) in the definition
of $\mathcal{Y}$, there exists $y\in F\cap\mathrm{supp}(\theta_{u,D})$
with
\[
d_{\mathrm{Gr}_{1}}\left(L_{m-1}^{V}(A_{u})^{\perp},L_{m-1}^{V}(y)^{\perp}\right)=d_{\mathrm{Gr}_{m-1}}\left(L_{m-1}^{V}(A_{u}),L_{m-1}^{V}(y)\right)<\rho/3.
\]
From (\ref{eq:dist is small for x,y in D cap F}) and since $x,y\in\overline{D}\cap F$,
\[
d_{\mathrm{Gr}_{m-1}}(L_{m-1}^{V}(x)^{\perp},L_{m-1}^{V}(y)^{\perp})=d_{\mathrm{Gr}_{m-1}}(L_{m-1}^{V}(x),L_{m-1}^{V}(y))<\rho/3.
\]
These inequalities, together with $\kappa(L_{m-1}^{V}(x)^{\perp},W)\ge\rho$,
imply that $\kappa(L_{m-1}^{V}(A_{u})^{\perp},W)\ge\rho/3$.

By property (\ref{enu:alpha_m<alpha_m-1}) in the definition of $\mathcal{Y}$,
since $u\in\Psi_{m-1}(n)$, and by assuming that $k$ is sufficiently
large, it follows that $\varphi_{u}\mu$ is supported on $z+L_{m-1}(A_{u})^{(2^{-n-k})}$
for some $z\in\mathbb{R}^{d}$. Thus, $\pi_{V}\theta_{u,D}$ is supported
on $\pi_{V}z+L_{m-1}^{V}(A_{u})^{(2^{-n-k})}$. From this and $\kappa(L_{m-1}^{V}(A_{u})^{\perp},W)\ge\rho/3$,
by Lemma \ref{lem:gen proj lb}, and from property (\ref{enu:large ent thet_u,D})
in the definition of $\mathcal{Y}$,
\begin{equation}
\frac{1}{k}H\left(\pi_{W}\pi_{V}\theta_{u,D},\mathcal{D}_{n+k}\right)\ge l-2\epsilon\text{ for }u\in\Psi_{m-1}(n)\text{ with }(u,D)\in\mathcal{Y}.\label{eq:all u in Psi with (u,D)inY}
\end{equation}

Note that $\mu_{D}=\int\theta_{u,D}\:d\Theta_{D}(u)$. Hence, by the
concavity of entropy, from (\ref{eq:all u in Psi with (u,D)inY}),
and since $x\in E$, 
\[
\frac{1}{k}H\left(\pi_{W}\pi_{V}\mu_{\mathcal{D}_{n}(x)},\mathcal{D}_{n+k}\right)\ge l-O(\epsilon)\text{ for all }W\in T_{l,m}(L_{m-1}^{V}(x),\rho).
\]
Since this holds for all $x\in E$ and $\mu(E)>1-\epsilon$,
we have thus shown that for all $V\in\mathrm{Gr}_{m}(d)$ and $n\in\mathcal{Q}_{V}$
with $n\ge N$
\[
\mathbb{P}_{i=n}\left\{ \frac{1}{k}H(\pi_{W}\pi_{V}\mu_{x,i},\mathcal{D}_{i+k})>l-O(\epsilon)\quad\forall\;W\in T_{l,m}(L_{m-1}^{V}(x),\rho)\right\} >1-\epsilon.
\]
This together with (\ref{eq:inf of lambda_N}) completes the proof
of the proposition.
\end{proof}
For the proof of Theorem \ref{thm:ent increase result}, we shall need the following version of Proposition \ref{prop:L =000026 ent of comp}
for components of projections of components.
\begin{cor}
\label{cor:ent of proj of comp of proj of comp of mu}Let $1\le l<m\le d$
be given and suppose that $\Sigma_{1}^{m-1}=m-1$ and $\Delta_{m}<1$.
Then for every $\epsilon,\rho>0$, $q\ge Q(\epsilon,\rho)\ge1$, $k\ge1$,
$n\ge N(\epsilon,\rho,q,k)\ge1$ and $V\in\mathrm{Gr}_{m}(d)$ we
have $\lambda_{n}\times\mu(Y)>1-\epsilon$, where $Y$ is the set
of all $(i,x)\in\mathcal{N}_{n}\times\mathbb{R}^{d}$ so that
\[
\mathbb{P}_{i\le j\le i+k}\left\{ \frac{1}{q}H\left(\pi_{W}(\pi_{V}\mu_{x,i})_{y,j},\mathcal{D}_{j+q})\right)>l-\epsilon\quad\forall\;W\in T_{l,m}(\pi_{V}L_{m-1}(x),\rho)\right\} >1-\epsilon.
\]
\end{cor}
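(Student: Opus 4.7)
My plan is to mimic the proof of Lemma \ref{lem:lb on ent of comp of proj of comp of mu}, substituting Proposition \ref{prop:L =000026 ent of comp} for Lemma \ref{lem:lb on ent of proj of comp of mu} and handling the uniform-in-$W$ condition via Lemma \ref{lem:uni cont of pi_V comp L}.

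Given $\epsilon, \rho > 0$, I would first invoke Lemma \ref{lem:uni cont of pi_V comp L} with parameter $\min\{\epsilon, \rho/3\}$ to obtain $\eta > 0$ and, for each $V \in \mathrm{Gr}_m(d)$, a Borel set $F_V \subset \mathbb{R}^d$ with $\mu(F_V) > 1 - \epsilon$ such that $d_{\mathrm{Gr}_{m-1}}(\pi_V L_{m-1}(x), \pi_V L_{m-1}(z)) < \rho/3$ whenever $x, z \in F_V$ and $|x - z| < \eta$. A triangle-inequality computation in the metric $d_{\mathrm{Gr}_1}$ underlying $\kappa$ then gives
\[
T_{l,m}(\pi_V L_{m-1}(x), \rho) \subset T_{l,m}(\pi_V L_{m-1}(z), 2\rho/3)
\]
whenever $x, z \in F_V$ and $|x - z| < \eta$. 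Next I would fix $\delta > 0$ small with respect to $\epsilon, \rho$, choose an auxiliary integer $q'' = q''(\epsilon, \rho)$ large enough with respect to $\delta$ (so that $\pi_V(D)$ has diameter $<\delta \cdot 2^{-j}$ for every $V$ and every $D \in \mathcal{D}_{j + q''}^d$), and take $Q(\epsilon, \rho)$ large enough that $q \ge Q$ forces $q' := q - q'' \ge K(\epsilon, 2\rho/3)$ (the threshold from Proposition \ref{prop:L =000026 ent of comp}) and $q''/q \le \epsilon$; finally let $n$ be large with respect to $\epsilon, \rho, q, k$.

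By Proposition \ref{prop:L =000026 ent of comp} at scale $q'$ with direction parameter $2\rho/3$, together with the total variation comparison of Lemma \ref{lem:distribution-of-components-of-components} for $\mu$, most tuples $(i, x, s, z)$ in the two-level sampling $i \in [1, n]$, $x \sim \mu$, $s \in [i + q'', i + q'' + k]$, $z \sim \mu_{x, i}$ satisfy
\[
\tfrac{1}{q'} H(\pi_W \pi_V \mu_{z, s}, \mathcal{D}_{s + q'}) > l - \epsilon \quad \text{for all } W \in T_{l,m}(\pi_V L_{m-1}(z), 2\rho/3),
\]
where $\pi_V \mu_{z, s} = \pi_V (\mu_{x, i})_{z, s}$ because $\mathcal{D}_s^d$ refines $\mathcal{D}_i^d$. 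Further restricting to the typical subset where $x, z \in F_V$ and $2^{-i} < \eta/\sqrt{d}$, the containment above upgrades the bound to all $W \in T_{l, m}(\pi_V L_{m-1}(x), \rho)$.

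Finally, setting $s = j + q''$ with $j \in [i, i + k]$ so that $s + q' = j + q$, Lemma \ref{lem:small mass of pts close to bd} applied to $\pi_V \mu$ (uniformly in $V$, since $\{\pi_V : V \in \mathrm{Gr}_m(d)\}$ lies in a compact subset of $\mathrm{A}_{d, m}$) together with Fubini control the $\pi_V \mu_{x, i}$-mass near boundaries of $\mathcal{D}_j^m$-cells. Thus for typical $y$ sampled from $\pi_V \mu_{x, i}$, the measure $(\pi_V \mu_{x, i})_{y, j}$ is, up to total variation error $O(\epsilon)$, a convex combination of the measures $\pi_V(\mu_{x, i})_{z, s}$ whose supports lie inside $\mathcal{D}_j^m(y)$. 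Concavity of entropy (applied uniformly in $W$, since the set $T_{l, m}(\pi_V L_{m-1}(x), \rho)$ is the same for each summand) and Lemma \ref{lem:ent cont wrt tot var} for the total variation defect then yield
\[
\tfrac{1}{q} H(\pi_W (\pi_V \mu_{x, i})_{y, j}, \mathcal{D}_{j + q}) \ge \tfrac{q'}{q}(l - \epsilon) - O(\epsilon) \ge l - O(\epsilon)
\]
uniformly in $W \in T_{l, m}(\pi_V L_{m-1}(x), \rho)$, using $q''/q \le \epsilon$, and then absorbing constants into $\epsilon$. The main obstacle is the simultaneous control of three approximation errors — the $O(k/n)$ defect from Lemma \ref{lem:distribution-of-components-of-components}, the boundary-mass loss from Lemma \ref{lem:small mass of pts close to bd}, and the rescaling loss $q''/q$ incurred in passing from entropy at scale $\mathcal{D}_{s + q'}$ normalized by $1/q'$ to the target scale $\mathcal{D}_{j + q}$ normalized by $1/q$ — all while preserving the uniformity in $W$ required by the statement; the Lusin-type regularity of Lemma \ref{lem:uni cont of pi_V comp L} is precisely what permits this uniformity by converting the direction condition at $x$ to one at $z$ at the price of shrinking $\rho$ to $2\rho/3$.
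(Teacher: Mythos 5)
Your proposal is correct in outline but takes a genuinely different route from the paper at the crucial step, namely the conversion from high entropy of $\pi_W\pi_V(\mu_{x,i})_{y,j}$ (where $(\mu_{x,i})_{y,j}$ is a $d$-dimensional component, $y\in\mathbb{R}^d$) to high entropy of $\pi_W(\pi_V\mu_{x,i})_{y,j}$ (an $m$-dimensional component, $y\in\mathbb{R}^m$). Both proofs open identically: Proposition~\ref{prop:L =000026 ent of comp} plus Lemma~\ref{lem:distribution-of-components-of-components} give the two-level entropy bound with the direction condition anchored at the sub-level point $z$ (or $y$), and Lemma~\ref{lem:uni cont of pi_V comp L} transfers the anchor to $x$ at the cost of shrinking $\rho$. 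The paper then works at a single scale $j$: it observes that $\sigma:=\pi_V(\mu_{x,i})_{y,j}$ is supported on only $O(1)$ cells of $\mathcal{D}_j^m$, and uses the almost-convexity inequality (\ref{eq:conc =000026 almo conv of ent}) in the ``reverse Jensen'' direction — if $\frac{1}{q}H(\sigma,\pi_W^{-1}\mathcal{D}_{j+q})>l-\epsilon$, each cell-piece is bounded above by $l+O(1/q)$, and there are $O(1)$ pieces, then every cell $D$ with $\sigma(D)\ge\epsilon^{1/2}$ satisfies $\frac{1}{q}H(\sigma_D,\pi_W^{-1}\mathcal{D}_{j+q})>l-2\epsilon^{1/2}$. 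Since $\sigma_D=(\pi_V\mu_{x,i})_D$ this delivers exactly the required $m$-dimensional component bound after one more averaging. You instead insert an auxiliary scale $q''$, fine enough that $\pi_V(D')$ for $D'\in\mathcal{D}_{j+q''}^d$ fits inside a single $\mathcal{D}_j^m$-cell away from boundaries, so that $(\pi_V\mu_{x,i})_{y,j}$ becomes a convex combination of the finer $d$-components $\pi_V(\mu_{x,i})_{z,j+q''}$, and then apply ordinary concavity. Both are sound, but your version pays for the extra scale: you need a variant of Lemma~\ref{lem:distribution-of-components-of-components} with the offset range $s\in[i+q'',i+q''+k]$ rather than $[i,i+k]$ (true, but not stated as such), a Markov/Fubini cascade to localize the small-total-mass estimates for boundary cells and atypical $z$ into the conditional measures on each slab $\pi_V^{-1}(\mathcal{D}_j^m(y))$, and the rescaling loss $q''/q$. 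None of this is a gap — the pieces fit — but the paper's $O(1)$-cell count and reverse-Jensen trick sidestep all of it and keep the whole argument at scale $j$.
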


\begin{proof}
Let $\epsilon>0$ be small, let $\rho>0$, let $q\ge1$ be large with respect
to $\epsilon,\rho$, let $k\ge1$, let $n\ge1$ be large with respect
to $k$ and $q$, and let $V\in\mathrm{Gr}_{m}(d)$. By Proposition
\ref{prop:L =000026 ent of comp},
\[
\mathbb{P}_{1\le i\le n}\left\{ \frac{1}{q}H(\pi_{W}\pi_{V}\mu_{x,i},\mathcal{D}_{i+q})>l-\epsilon\quad\forall\;W\in T_{l,m}(\pi_{V}L_{m-1}(x),\rho)\right\} >1-\epsilon.
\]
Hence, from Lemma \ref{lem:distribution-of-components-of-components}
and by replacing $\epsilon$ with a larger quantity (which is still
small) without changing the notation, we get $\lambda_{n}\times\mu(Y_{1})>1-\epsilon$,
where $Y_{1}$ is the set of all $(i,x)\in\mathcal{N}_{n}\times\mathbb{R}^{d}$
so that
\[
\mathbb{P}_{i\le j\le i+k}\left\{ \frac{1}{q}H(\pi_{W}\pi_{V}(\mu_{x,i})_{y,j},\mathcal{D}_{j+q})>l-\epsilon\quad\forall\;W\in T_{l,m}(\pi_{V}L_{m-1}(y),\rho)\right\} >1-\epsilon.
\]

Let $E$ be the set of $(x,y)\in\mathbb{R}^{d}\times\mathbb{R}^{d}$
with
\[
d_{\mathrm{Gr}_{m-1}}(\pi_{V}L_{m-1}(x),\pi_{V}L_{m-1}(y))<\rho,
\]
and let $Y_{2}$ be the set of $(i,x)\in\mathcal{N}_{n}\times\mathbb{R}^{d}$
so that $\mu_{x,i}\{y:(x,y)\in E\}>1-\epsilon$. By Lemma \ref{lem:uni cont of pi_V comp L}
we may assume that $\lambda_{n}\times\mu(Y_{2})>1-\epsilon$. Set
$Y:=Y_{1}\cap Y_{2}$, then $\lambda_{n}\times\mu(Y)>1-2\epsilon$.
Moreover, given $(x,y)\in E$ and $W\in T_{l,m}(\pi_{V}L_{m-1}(x),2\rho)$
we have $W\in T_{l,m}(\pi_{V}L_{m-1}(y),\rho)$. Thus, for $(i,x)\in Y$
\[
\mathbb{P}_{i\le j\le i+k}\left\{ \frac{1}{q}H(\pi_{W}\pi_{V}(\mu_{x,i})_{y,j},\mathcal{D}_{j+q})>l-\epsilon\quad\forall\;W\in T_{l,m}(\pi_{V}L_{m-1}(x),2\rho)\right\} >1-2\epsilon.
\]

The rest of the proof basically follows from the concavity and almost
convexity of entropy. We provide full details for completeness. Fix
$(i,x)\in Y$ and set $\theta:=\mu_{x,i}$ and $T:=T_{l,m}(\pi_{V}L_{m-1}(x),2\rho)$.
Let $J$ be the set of $i\le j\le i+k$ so that
\[
\mathbb{P}\left\{ \frac{1}{q}H(\pi_{W}\pi_{V}\theta_{y,j},\mathcal{D}_{j+q})>l-\epsilon\quad\forall\;W\in T\right\} >1-\epsilon.
\]
Recall the notation $\lambda_{i,i+k}$ from Section \ref{subsec:Component-measures}.
Since $(i,x)\in Y$ and by replacing $\epsilon$ with a larger quantity
without changing the notation, we may assume that $\lambda_{i,i+k}(J)>1-\epsilon$.

Fix $j\in J$, let $y\in\mathbb{R}^{d}$ be with
\begin{equation}
\frac{1}{q}H(\pi_{W}\pi_{V}\theta_{y,j},\mathcal{D}_{j+q})>l-\epsilon\text{ for all }W\in T,\label{eq:1/qH(pi pi thet_y)>l-ep}
\end{equation}
and set $\sigma:=\pi_{V}\theta_{y,j}$. Let $D\in\mathcal{D}_{j}^{m}$
be with $\sigma(D)\ge\epsilon^{1/2}$ and let $W\in T$. Since 
\begin{equation}
\#\left\{ D'\in\mathcal{D}_{j}^{m}\::\:D'\cap\mathrm{supp}(\sigma)\ne\emptyset\right\} =O(1),\label{eq:=000023=00007BD : D=00005Ccap=00005Csigma=00005Cne=00005Cempt=00007D=00003DO(1)}
\end{equation}
and by the almost convexity of entropy (see (\ref{eq:conc =000026 almo conv of ent})),
\[
l-\epsilon<\frac{1}{q}H(\sigma,\pi_{W}^{-1}\mathcal{D}_{j+q})\le\sum_{D'\in\mathcal{D}_{j}^{m}}\sigma(D')\cdot\frac{1}{q}H(\sigma_{D'},\pi_{W}^{-1}\mathcal{D}_{j+q})+O(1/q).
\]
Since $\frac{1}{q}H(\sigma_{D'},\pi_{W}^{-1}\mathcal{D}_{j+q})\le l+O(1/q)$
for $D'\in\mathcal{D}_{j}^{m}$,
\[
l-2\epsilon<(1-\sigma(D))l+\sigma(D)\frac{1}{q}H(\sigma_{D},\pi_{W}^{-1}\mathcal{D}_{j+q}).
\]
Thus, from $\sigma(D)\ge\epsilon^{1/2}$
\[
l-2\epsilon^{1/2}<\frac{1}{q}H(\sigma_{D},\pi_{W}^{-1}\mathcal{D}_{j+q}).
\]
From this and since
\[
\sigma_{D}=(\pi_{V}\theta_{y,j})_{D}=\pi_{V}((\theta_{y,j})_{\pi_{V}^{-1}(D)})=\pi_{V}\theta_{\mathcal{D}_{j}(y)\cap\pi_{V}^{-1}(D)},
\]
we obtain
\begin{equation}
\frac{1}{q}H(\theta_{\mathcal{D}_{j}(y)\cap\pi_{V}^{-1}(D)},\pi_{V}^{-1}\pi_{W}^{-1}\mathcal{D}_{j+q})>l-2\epsilon^{1/2}.\label{eq:1/qH(theta_cup)>l-eps}
\end{equation}
Note that this holds for all $D\in\mathcal{D}_{j}^{m}$ with $\sigma(D)\ge\epsilon^{1/2}$
and $W\in T$.

From (\ref{eq:=000023=00007BD : D=00005Ccap=00005Csigma=00005Cne=00005Cempt=00007D=00003DO(1)})
it follows that,
\[
\theta_{y,j}\left(\bigcup\left\{ \pi_{V}^{-1}(D)\::\:D\in\mathcal{D}_{j}^{m}\text{ and }\sigma(D)\ge\epsilon^{1/2}\right\} \right)\ge1-O(\epsilon^{1/2}).
\]
Thus, from (\ref{eq:1/qH(theta_cup)>l-eps})
\begin{multline*}
\theta_{y,j}\left(\bigcup\left\{ \pi_{V}^{-1}(D):\begin{array}{c}
D\in\mathcal{D}_{j}^{m}\text{ and for each }W\in T\text{ we have}\\
\frac{1}{q}H(\theta_{\mathcal{D}_{j}(y)\cap\pi_{V}^{-1}(D)},\pi_{V}^{-1}\pi_{W}^{-1}\mathcal{D}_{j+q})>l-2\epsilon^{1/2}
\end{array}\right\} \right)\\
>1-O(\epsilon^{1/2}).
\end{multline*}
Since this holds for all $y\in\mathbb{R}^{d}$ with (\ref{eq:1/qH(pi pi thet_y)>l-ep}),
from $j\in J$, and by replacing $\epsilon$ with a larger quantity
without changing the notation, we may assume that
\begin{equation}
\theta\left(\cup_{D\in\mathcal{E}_{j}^{m}}\pi_{V}^{-1}(D)\right)>1-\epsilon,\label{eq:theta-mass of E_j^m}
\end{equation}
where $\mathcal{E}_{j}^{m}$ is the set of all $D\in\mathcal{D}_{j}^{m}$
so that
\begin{multline*}
\theta_{\pi_{V}^{-1}(D)}\left(\bigcup\left\{ D'\in\mathcal{D}_{j}^{d}:\frac{1}{q}H(\theta_{D'\cap\pi_{V}^{-1}(D)},\pi_{V}^{-1}\pi_{W}^{-1}\mathcal{D}_{j+q})>l-\epsilon\quad\forall\;W\in T\right\} \right)\\
>1-\epsilon.
\end{multline*}

By the concavity of entropy,
\[
\frac{1}{q}H(\theta_{\pi_{V}^{-1}(D)},\pi_{V}^{-1}\pi_{W}^{-1}\mathcal{D}_{j+q})>l-O(\epsilon)\text{ for all }W\in T\text{ and }D\in\mathcal{E}_{j}^{m}.
\]
Hence, from (\ref{eq:theta-mass of E_j^m}) and since $\pi_{V}\theta_{\pi_{V}^{-1}(D)}=(\pi_{V}\theta)_{D}$
for $D\in\mathcal{D}_{j}^{m}$,
\[
\mathbb{P}\left\{ \frac{1}{q}H(\pi_{W}(\pi_{V}\theta)_{z,j},\mathcal{D}_{j+q})>l-O(\epsilon)\quad\forall\:W\in T\right\} >1-\epsilon.
\]
Since this holds for all $j\in J$, from $\lambda_{i,i+k}(J)>1-\epsilon$,
and by recalling that $\theta:=\mu_{x,i}$,
\[
\mathbb{P}_{i\le j\le i+k}\left\{ \frac{1}{q}H(\pi_{W}(\pi_{V}\mu_{x,i})_{z,j},\mathcal{D}_{j+q})>l-O(\epsilon)\quad\forall\:W\in T\right\} >1-O(\epsilon).
\]
As this holds for all $(i,x)\in Y$ and $\lambda_{n}\times\mu(Y)>1-2\epsilon$,
we have thus completed the proof of the corollary.
\end{proof}

\section{\label{sec:Entropy-growth-under conv}Entropy growth under convolution}

The purpose of the present section is to prove Theorem \ref{thm:ent increase result}.
In Sections \ref{subsec:Entropy-growth-under conv in R^m} to \ref{subsec:The-non-affinity-of L_m-1}
we make the necessary preparations for the proof of the theorem, which
is given in Section \ref{subsec:Proof-of-ent inc thm}.

For the rest of this section fix $1\le m\le d$ such that $\Sigma_{1}^{m-1}=m-1$
and $\Delta_{m}<1$. Recall from Theorem \ref{thm:L_m-1 factors via Pi}
that there exists a Borel map $L_{m-1}:\mathbb{R}^{d}\rightarrow\mathrm{Gr}_{m-1}(d)$
so that $L_{m-1}(\omega)=L_{m-1}(\Pi\omega)$ for $\beta$-a.e. $\omega$.

\subsection{\label{subsec:Entropy-growth-under conv in R^m}Entropy growth under
linear convolution in $\mathbb{R}^{m}$}
\begin{defn}
Let $\epsilon>0$, $\theta\in\mathcal{M}(\mathbb{R}^{m})$, and a
linear subspace $V\subset\mathbb{R}^{m}$ be given. We say that $\theta$
is $(V,\epsilon)$-concentrated if there exists $x\in\mathbb{R}^{m}$
so that $\theta(x+V^{(\epsilon)})\ge1-\epsilon$.
\end{defn}

\begin{defn}
Let $\epsilon>0$, $k\ge1$, $\theta\in\mathcal{M}(\mathbb{R}^{m})$,
and a linear subspace $V\subset\mathbb{R}^{m}$ be given. We say that
$\theta$ is $(V,\epsilon,k)$-saturated if
\[
\frac{1}{k}H\left(\theta,\mathcal{D}_{k}\right)\ge\frac{1}{k}H\left(\pi_{V^{\perp}}\theta,\mathcal{D}_{k}\right)+\dim V-\epsilon.
\]
\end{defn}

The following result of Hochman plays a key role in the proof of Theorem
\ref{thm:ent increase result}. Recall from Section \ref{subsec:Component-measures}
that for $\theta\in\mathcal{M}(\mathbb{R}^{m})$ we denote its re-scaled
components by $\theta^{x,n}$.
\begin{thm}[{\cite[Theorem 2.8]{Ho}}]
\label{thm:Hoch inv thm R^m}For each $\epsilon,R>0$ and $k\ge1$
there exists $\delta=\delta(\epsilon,R,k)>0$ so that for each $n\ge N(\epsilon,R,k,\delta)\ge1$
the following holds. Let $i\ge0$ and $\theta,\sigma\in\mathcal{M}(\mathbb{R}^{m})$
be such that
\[
\mathrm{diam}(\mathrm{supp}(\theta)),\mathrm{diam}(\mathrm{supp}(\sigma))\le R2^{-i},
\]
and
\[
\frac{1}{n}H(\theta*\sigma,\mathcal{D}_{i+n})<\frac{1}{n}H(\sigma,\mathcal{D}_{i+n})+\delta.
\]
Then there exist linear subspaces $V_{i},...,V_{i+n}$ of $\mathbb{R}^{m}$
so that,
\[
\mathbb{P}_{i\le j\le i+n}\left\{ \begin{array}{c}
\sigma^{x,j}\text{ is }(V_{j},\epsilon,k)\text{-saturated and}\\
\theta^{y,j}\text{ is }(V_{j},\epsilon)\text{-concentrated}
\end{array}\right\} >1-\epsilon.
\]
\end{thm}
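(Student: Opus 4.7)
The plan is to follow the multi-scale philosophy that dominates this subject: reduce the global hypothesis to a scale-by-scale statement, prove a \emph{local} inverse theorem at a single scale, and then stitch the local structure into a choice of subspaces $V_i,\dots,V_{i+n}$. First, I would apply the multi-scale entropy formula (Lemma \ref{lem:multiscale-entropy-formula}) at the intermediate scale $k$ to both $\theta*\sigma$ and $\sigma$, obtaining
\[
\frac{1}{n}H(\theta*\sigma,\mathcal{D}_{i+n}) \;=\; \mathbb{E}_{i\le j\le i+n}\!\left[\tfrac{1}{k}H((\theta*\sigma)_{z,j},\mathcal{D}_{j+k})\right] + O_R(k/n),
\]
and similarly for $\sigma$. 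The hypothesis then says that the two averages differ by at most $\delta+O_R(k/n)$. By Markov, at most scales $j\in[i,i+n]$ the scale-$k$ entropies of $(\theta*\sigma)_{z,j}$ and $\sigma_{y,j}$ are close (in expectation over the random centres). Using the basic identity $\theta*\sigma=\int T_x\sigma\,d\theta(x)$ together with the concavity/almost-convexity of entropy, a component $(\theta*\sigma)_{z,j}$ can be compared, up to translation and an $O(1)$ loss, with $\theta^{x,j}*\sigma^{y,j}$ after rescaling to unit scale; this converts the hypothesis into: for most scales $j$, the pair $(\theta^{x,j},\sigma^{y,j})$ satisfies $\tfrac{1}{k}H(\theta^{x,j}*\sigma^{y,j},\mathcal{D}_k) \le \tfrac{1}{k}H(\sigma^{y,j},\mathcal{D}_k)+\delta'$ with $\delta'\to 0$ as $\delta\to 0$ and $n\to\infty$.

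The main work is then a \emph{local inverse theorem}: for every $\epsilon,R>0$ and $k\ge 1$ there is $\delta'(\epsilon,R,k)>0$ so that if $\theta_1,\theta_2\in\mathcal{M}(\mathbb{R}^m)$ have supports of diameter $\le R$ and
\[
\tfrac{1}{k}H(\theta_1*\theta_2,\mathcal{D}_k)\le \tfrac{1}{k}H(\theta_2,\mathcal{D}_k)+\delta',
\]
then there is a linear subspace $V\subset\mathbb{R}^m$ with $\theta_1$ being $(V,\epsilon)$-concentrated and $\theta_2$ being $(V,\epsilon,k)$-saturated. I would prove this by a compactness-and-contradiction argument. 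If the claim failed, one would produce sequences $\theta_1^{(N)},\theta_2^{(N)}$ with $\delta'=1/N$ and no admissible $V$; passing to weak limits (Prokhorov, using the common support bound $R$) and using continuity of scale-$k$ entropy under weak convergence, the limit would satisfy $H(\theta_1^\infty*\theta_2^\infty,\mathcal{D}_k)=H(\theta_2^\infty,\mathcal{D}_k)$. At this exact equality one argues directly: writing $\theta_1^\infty*\theta_2^\infty=\int T_x\theta_2^\infty\,d\theta_1^\infty(x)$ and invoking the concavity of entropy, equality forces $T_x\theta_2^\infty$ to have the same coarse distribution over $\mathcal{D}_k$-cells as $\theta_2^\infty$ for $\theta_1^\infty$-a.e.\ $x$; the set of such $x$ lies in a subspace $V$ along which $\theta_2^\infty$ is necessarily saturated (by a Fourier/averaging computation), while $\theta_1^\infty$ is concentrated on $V$ by construction. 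A semicontinuity argument transfers the conclusion to the approximants, yielding the required contradiction.

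Finally, I would assemble the two pieces: for each scale $j\in[i,i+n]$ let $V_j$ be the subspace produced by the local theorem applied to the typical component pair $(\theta^{x,j},\sigma^{y,j})$ at that scale (existence follows from the scale-$j$ hypothesis derived in paragraph one). Discarding the $O(\delta+k/n)$-fraction of bad scales and bad components yields the stated conclusion
\[
\mathbb{P}_{i\le j\le i+n}\!\left\{\sigma^{x,j}\text{ is }(V_j,\epsilon,k)\text{-saturated and }\theta^{y,j}\text{ is }(V_j,\epsilon)\text{-concentrated}\right\}>1-\epsilon
\]
once $\delta,k/n$ are small enough relative to $\epsilon$. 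The principal obstacle is the local inverse theorem: the compactness argument sketched above only proves existence of \emph{some} $\delta'$, so a genuine difficulty is arranging that the limiting entropy-equality forces a \emph{single} subspace $V$ in a uniform way — this is where Fourier/Plancherel on $\mathbb{R}^m$ or a careful analysis of the Radon-Nikodym derivative of $\theta_2^\infty$ with respect to its translates is needed. A subtler point is that the $V_j$'s are only defined for "good" components, so one must arrange (via a measurable selection and a union-bound over the remaining exceptional scales) that a single deterministic sequence $V_i,\dots,V_{i+n}$ works with probability $>1-\epsilon$; fortunately the definitions of $(V,\epsilon)$-concentration and $(V,\epsilon,k)$-saturation are stable under small perturbations of $V$, so a covering of $\bigcup_{0\le\ell\le m}\mathrm{Gr}_\ell(m)$ by finitely many $\epsilon$-balls suffices.
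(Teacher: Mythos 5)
This theorem is imported verbatim from Hochman's paper \cite{Ho} (it is Theorem~2.8 there); the present paper does not supply a proof, so there is no argument here to compare against. What you have written is therefore a proposed independent proof of Hochman's inverse theorem for convolutions in $\mathbb{R}^m$, and it contains a genuine gap.

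The central problem is the quantitative form of your reduction. You claim that a component $(\theta*\sigma)_{z,j}$ can be compared, up to translation and an $O(1)$ loss, with $\theta^{x,j}*\sigma^{y,j}$; but the components of a convolution are not convolutions of components, and this identification is not available. What the concavity of conditional entropy actually gives, via $\theta*\sigma=\mathbb{E}_{x,y}[\theta_{x,j}*\sigma_{y,j}]$, is the one-sided inequality
\[
H(\theta*\sigma,\mathcal{D}_{j+k}\mid\mathcal{D}_j)\;\ge\;\mathbb{E}_{x,y}\bigl[H(\theta_{x,j}*\sigma_{y,j},\mathcal{D}_{j+k}\mid\mathcal{D}_j)\bigr],
\]
and combining this with Lemma \ref{lem:multiscale-entropy-formula} and the global hypothesis yields only
\[
\mathbb{E}_{i\le j\le i+n}\,\mathbb{E}_{x,y}\Bigl[\tfrac1k\bigl(H(\theta_{x,j}*\sigma_{y,j},\mathcal{D}_{j+k}\mid\mathcal{D}_j)-H(\sigma_{y,j},\mathcal{D}_{j+k})\bigr)\Bigr]\;\le\;\delta+O_R(k/n).
\]
Crucially, the summands on the left are not nonnegative: translation by $T_x$ can shift dyadic entropy by $O(1)$, so each summand may be negative by $O(1/k)$. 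After Markov one concludes only that for a $1-\epsilon$ fraction of $(j,x,y)$ the local gain is at most $C/k$ for some constant $C>0$ that does \emph{not} go to zero as $\delta\to0$ and $n\to\infty$ with $k$ held fixed — contrary to your claim that $\delta'\to0$.

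This defeats the proposed local inverse theorem at exactly the slack level the reduction supplies. In $\mathbb{R}$, take $\theta_1=\theta_2$ to be the uniform measure on the $2^{\lfloor k/2\rfloor}$ equally spaced points of $[0,1)$. The scale-$k$ entropy of $\theta_1*\theta_2$ exceeds that of $\theta_2$ by only $O(1)$ bits, so the local hypothesis with slack $C/k$ holds; yet for $\epsilon<1/3$ there is no admissible $V$: with $V=\{0\}$ the mass of $\theta_1$ is spread over all of $[0,1)$ so $(\{0\},\epsilon)$-concentration fails, while with $V=\mathbb{R}$ the normalized entropy of $\theta_2$ is $\approx\frac12$, so $(\mathbb{R},\epsilon,k)$-saturation fails. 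So the implication you want to feed into the assembly step is simply false. Relatedly, the compactness argument is not a route around this: exact equality of scale-$k$ dyadic entropies does not force any subspace structure at scale $k$ (the arithmetic-progression example survives passage to a weak limit), and no Fourier identity recovers a subspace from a single-scale entropy collapse.

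Hochman's actual argument avoids all of this by proving a Kaimanovich--Vershik-type \emph{lower} bound for $H(\theta*\sigma,\mathcal{D}_n)-H(\sigma,\mathcal{D}_n)$ whose scale-by-scale contributions are built from the local concentration/saturation dichotomy and are genuinely nonnegative; the multi-scale averaging is carried out against that lower bound, which eliminates the $O(1/k)$ slack. If you want to give an independent proof, that is the mechanism you need, not a naive component-by-component decomposition of the convolution.
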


\subsection{Auxiliary lemmas}

Given $\theta\in\mathcal{M}(\mathrm{A}_{d,m}^{\mathrm{vec}})$ and
$\sigma\in\mathcal{M}(\mathbb{R}^{d})$, recall from Section \ref{subsec:Spaces-of-affine maps}
that we write $\theta.\sigma$ for the pushforward of $\theta\times\sigma$
via the map $(\psi,x)\rightarrow\psi(x)$.

The proof of the following lemma is similar to the proof of \cite[Lemma 6.9]{HR} and is therefore omitted.
\begin{lem}
\label{lem:ent of conv >=00003D ent of conv of comp}Let $R>0$, $\theta\in\mathcal{M}(\mathrm{A}_{d,m})$
and $\sigma\in\mathcal{M}(\mathbb{R}^{d})$ be given. Suppose that
$\mathrm{supp}(\theta)\subset B(\pi_{d,m},R)$ and $\mathrm{supp}(\sigma)\subset B(0,R)$.
Then for every $n\ge k\ge1$,
\[
\frac{1}{n}H(\theta\ldotp\sigma,\mathcal{D}_{n})\ge\mathbb{E}_{1\le i\le n}\left(\frac{1}{k}H(\theta_{\psi,i}\ldotp\sigma_{x,i},\mathcal{D}_{i+k})\right)+O_{R}(\frac{1}{k}+\frac{k}{n}).
\]
\end{lem}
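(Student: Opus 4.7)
The plan is to combine the multiscale entropy formula (Lemma \ref{lem:multiscale-entropy-formula}) with the concavity of conditional entropy applied to the natural convex decomposition
\[
\theta\ldotp\sigma \;=\; \mathbb{E}\bigl(\theta_{\psi,i}\ldotp\sigma_{x,i}\bigr),
\]
where $\psi$ and $x$ are drawn independently from $\theta$ and $\sigma$. Since $\theta\ldotp\sigma$ is supported on a set of diameter $O_R(1)$, Lemma \ref{lem:multiscale-entropy-formula} together with (\ref{eq:cond ent as avg of ent of comp}) gives
\[
\tfrac{1}{n}H\bigl(\theta\ldotp\sigma,\mathcal{D}_{n}\bigr)
 \;=\; \mathbb{E}_{1\le i\le n}\!\left(\tfrac{1}{k}H\bigl(\theta\ldotp\sigma,\mathcal{D}_{i+k}\mid\mathcal{D}_{i}\bigr)\right)+O_{R}(k/n).
\]
Thus the task reduces to bounding each conditional entropy from below by $\mathbb{E}\bigl(H(\theta_{\psi,i}\ldotp\sigma_{x,i},\mathcal{D}_{i+k})\bigr)$ up to an additive $O_R(1)$ error.

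Next I would invoke concavity of conditional entropy (see (\ref{eq:conc =000026 almo conv of ent}) and the remarks following it) on the decomposition $\theta\ldotp\sigma=\mathbb{E}(\theta_{\psi,i}\ldotp\sigma_{x,i})$ to obtain
\[
H\bigl(\theta\ldotp\sigma,\mathcal{D}_{i+k}\mid\mathcal{D}_{i}\bigr)\;\ge\;\mathbb{E}\bigl(H(\theta_{\psi,i}\ldotp\sigma_{x,i},\mathcal{D}_{i+k}\mid\mathcal{D}_{i})\bigr).
\]
The key geometric step, which I expect to be the main (but routine) obstacle, is to show that for $\theta$-typical $\psi$ and $\sigma$-typical $x$ the measure $\theta_{\psi,i}\ldotp\sigma_{x,i}$ is concentrated on $O_R(1)$ atoms of $\mathcal{D}_{i}^{m}$. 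Let $\psi\in D\in\mathcal{D}_{i}^{\mathrm{A}_{d,m}}$ and $x\in E\in\mathcal{D}_{i}^{d}$. By Lemma \ref{lem: def of inv met and prop}(2), on the compact set $B(\pi_{d,m},R+O(1))$ the metrics $d_{\mathrm{A}_{d,m}}$ and $\Vert\cdot\Vert$ are comparable, so $\Vert\psi_{1}-\psi_{2}\Vert=O_{R}(2^{-i})$ for any $\psi_{1},\psi_{2}\in D$. Since $\mathrm{supp}(\sigma)\subset B(0,R)$, for $x_{1},x_{2}\in E$ and $\psi_{1},\psi_{2}\in D$
\[
|\psi_{1}(x_{1})-\psi_{2}(x_{2})|\le\Vert A_{\psi_{1}}\Vert_{op}|x_{1}-x_{2}|+\Vert\psi_{1}-\psi_{2}\Vert(|x_{2}|+1)=O_{R}(2^{-i}),
\]
so $\theta_{\psi,i}\ldotp\sigma_{x,i}$ lies in a ball of radius $O_R(2^{-i})$, which meets $O_R(1)$ atoms of $\mathcal{D}_{i}^{m}$.

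This concentration estimate implies $H(\theta_{\psi,i}\ldotp\sigma_{x,i},\mathcal{D}_{i})=O_{R}(1)$, and hence by (\ref{eq:cond ent form})
\[
H(\theta_{\psi,i}\ldotp\sigma_{x,i},\mathcal{D}_{i+k}\mid\mathcal{D}_{i})\;=\;H(\theta_{\psi,i}\ldotp\sigma_{x,i},\mathcal{D}_{i+k})-O_{R}(1).
\]
Substituting into the previous inequality, dividing by $k$, taking expectations over $1\le i\le n$, and combining with the multiscale formula yields
\[
\tfrac{1}{n}H\bigl(\theta\ldotp\sigma,\mathcal{D}_{n}\bigr)\;\ge\;\mathbb{E}_{1\le i\le n}\!\left(\tfrac{1}{k}H(\theta_{\psi,i}\ldotp\sigma_{x,i},\mathcal{D}_{i+k})\right)+O_{R}\!\left(\tfrac{1}{k}+\tfrac{k}{n}\right),
\]
as required. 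The only mildly delicate point is making the ``compact envelope'' argument uniform in $i$, i.e.\ ensuring that for every $i\ge 1$ the components $D\in\mathcal{D}_{i}^{\mathrm{A}_{d,m}}$ meeting $\mathrm{supp}(\theta)$ are contained in a single compact set depending only on $R$; this follows from the second defining property of $\{\mathcal{D}_{n}^{\mathrm{A}_{d,m}}\}$ together with $\mathrm{supp}(\theta)\subset B(\pi_{d,m},R)$.
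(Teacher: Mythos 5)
Your proof is correct and follows the natural route for such "component entropy" lower bounds: multiscale formula plus concavity of conditional entropy plus the concentration estimate showing $\theta_{\psi,i}\ldotp\sigma_{x,i}$ occupies $O_R(1)$ atoms of $\mathcal{D}_i$, which converts the conditional entropy to unconditional at cost $O_R(1/k)$. This is the same structure as the proof of \cite[Lemma~6.9]{HR} to which the paper defers, so the approaches agree; your only slips are cosmetic (the bound on $|\psi_1(x_2)-\psi_2(x_2)|$ should carry a factor $2|x_2|+1$ rather than $|x_2|+1$, and the multiscale formula naturally gives $\mathbb{E}_{0\le i\le n}$ rather than $\mathbb{E}_{1\le i\le n}$), neither of which affects the $O_R$ constants.
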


The proof of the following lemma is similar to the proof of \cite[Lemma 4.2]{BHR}
and is therefore omitted.
\begin{lem}
\label{lem:linearization}Let $Z\subset\mathrm{A}_{d,m}\times\mathbb{R}^{d}$
be compact. Then for every $\epsilon>0$, $k>K(\epsilon)\ge1$,
and $0<\delta<\delta(Z,\epsilon,k)$ the following holds. Let $(\psi_{0},x_{0})\in Z$,
$\theta\in\mathcal{M}(B(\psi_{0},\delta))$, and $\sigma\in\mathcal{M}(B(x_{0},\delta))$
be given. Then,
\[
\left|\frac{1}{k}H(\theta\ldotp\sigma,\mathcal{D}_{k-\log\delta})-\frac{1}{k}H((\theta\ldotp x_{0})*(\psi_{0}\sigma),\mathcal{D}_{k-\log\delta})\right|<\epsilon.
\]
\end{lem}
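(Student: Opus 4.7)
The plan is to replace the action map $F(\psi,x):=\psi(x)$ by its first-order linearization around $(\psi_0,x_0)$, namely $G(\psi,x):=\psi(x_0)+\psi_0(x)-\psi_0(x_0)$, and use the near-continuity of scale-$n$ entropy in the map. The key algebraic identity is
\[
F(\psi,x)-G(\psi,x)=(A_{\psi}-A_{\psi_{0}})(x-x_{0}),
\]
which follows from $\psi(x)=\psi(x_{0})+A_{\psi}(x-x_{0})$ and $\psi_{0}(x)-\psi_{0}(x_{0})=A_{\psi_{0}}(x-x_{0})$. Moreover, $G_{\#}(\theta\times\sigma)$ has an explicit convolution form: since the random variables $\psi(x_{0})$ and $\psi_{0}(x)$ are independent under $\theta\times\sigma$ with distributions $\theta\ldotp x_{0}$ and $\psi_{0}\sigma$ respectively, we obtain $G_{\#}(\theta\times\sigma)=T_{-\psi_{0}(x_{0})}\bigl((\theta\ldotp x_{0})*(\psi_{0}\sigma)\bigr)$.

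For the size estimate, the projection of $Z$ onto $\mathrm{A}_{d,m}$ is compact, so Lemma \ref{lem: def of inv met and prop}(2) provides a constant $C_{Z}>0$ with $\Vert A_{\psi}-A_{\psi_{0}}\Vert\le C_{Z}d_{\mathrm{A}_{d,m}}(\psi,\psi_{0})$ whenever $\psi$ lies in a bounded neighbourhood of the projection. Combined with $\mathrm{supp}(\theta)\subset B(\psi_{0},\delta)$ and $\mathrm{supp}(\sigma)\subset B(x_{0},\delta)$ (shrinking $\delta$ so that $\sigma$ is supported in a Euclidean ball of radius $O_Z(\delta)$), this yields $|F(\psi,x)-G(\psi,x)|\le C_{Z}'\delta^{2}$ uniformly on $\mathrm{supp}(\theta\times\sigma)$.

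Set $n:=\lceil k-\log\delta\rceil$, and choose $\delta<\delta(Z,k)$ small enough that $C_{Z}'\delta^{2}\le2^{-n}$, which amounts to $\delta\lesssim_{Z}2^{-k}$. Then (\ref{eq:ent is cont in map}) gives
\[
\bigl|H(F_{\#}(\theta\times\sigma),\mathcal{D}_{n})-H(G_{\#}(\theta\times\sigma),\mathcal{D}_{n})\bigr|=O(1),
\]
while (\ref{eq:entropy-under-translation}) yields
\[
H(G_{\#}(\theta\times\sigma),\mathcal{D}_{n})=H\bigl((\theta\ldotp x_{0})*(\psi_{0}\sigma),\mathcal{D}_{n}\bigr)+O(1).
\]
Since $F_{\#}(\theta\times\sigma)=\theta\ldotp\sigma$ by definition, dividing by $k$ and choosing $k>K(\epsilon)$ makes the $O(1/k)$ error term smaller than $\epsilon$, which completes the argument.

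I do not expect any serious obstacle here; the proof is a routine linearization of the same flavour as \cite[Lemma 4.2]{BHR}. The only point requiring a bit of care is the bookkeeping of constants: the dependence on $Z$ enters only through the admissible range of $\delta$ (via $C_{Z}'$ and the constraint $\delta\lesssim_{Z}2^{-k}$), whereas the threshold $K(\epsilon)$ on $k$ depends solely on $\epsilon$ because the errors produced by (\ref{eq:ent is cont in map}) and (\ref{eq:entropy-under-translation}) are genuine $O(1)$ quantities in the paper's convention, matching the form of the statement.
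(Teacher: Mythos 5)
Your proof is correct and is exactly the standard linearization argument the paper is alluding to when it cites \cite[Lemma 4.2]{BHR} and omits the details: Taylor-expand the action map around $(\psi_0,x_0)$, note that the error $(A_\psi-A_{\psi_0})(x-x_0)$ is $O_Z(\delta^2)$ on the supports (using Lemma~\ref{lem: def of inv met and prop}(2) to convert the $d_{\mathrm{A}_{d,m}}$-ball into a $\Vert\cdot\Vert$-ball), tune $\delta\lesssim_Z 2^{-k}$ so this error is below the scale $2^{-(k-\log\delta)}$, and invoke the near-continuity of scale-$n$ entropy in the map together with translation-invariance. The only micro-issue is that the paper's convention gives $\mathcal{D}_{k-\log\delta}=\mathcal{D}_{\lfloor k-\log\delta\rfloor}$ rather than the ceiling, but this costs only another $O(1)$ and changes nothing.
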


\begin{lem}
\label{lem:d(V,W^perp)<eps}Assume $m\ge 2$, and let $1\le l<m$ and $\epsilon>0$ be given.
Then there exists $\delta>0$ so that $d_{\mathrm{Gr}_{l}}(V,\mathrm{Gr}_{l}(W^{\perp}))<\epsilon$
for all $W\in\mathrm{Gr}_{1}(m)$ and $V\in\mathrm{Gr}_{l}(m)$ with
$\kappa(W,V^{\perp})\le\delta$.
\end{lem}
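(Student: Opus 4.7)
I will argue by compactness and contradiction. Assume the statement fails for some $1\le l<m$ and $\epsilon>0$. Then there exist sequences $\{W_n\}_{n\ge1}\subset\mathrm{Gr}_1(m)$ and $\{V_n\}_{n\ge1}\subset\mathrm{Gr}_l(m)$ with $\kappa(W_n,V_n^{\perp})\le 1/n$ for every $n\ge1$ and
\[
d_{\mathrm{Gr}_l}(V_n,\mathrm{Gr}_l(W_n^{\perp}))\ge\epsilon\text{ for all }n\ge1.
\]
Since $\mathrm{Gr}_1(m)$ and $\mathrm{Gr}_l(m)$ are compact, after passing to subsequences we may assume that $W_n\to W$ in $\mathrm{Gr}_1(m)$ and $V_n\to V$ in $\mathrm{Gr}_l(m)$. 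The orthogonal complement map is continuous on Grassmannians, so $V_n^{\perp}\to V^{\perp}$ in $\mathrm{Gr}_{m-l}(m)$, and it follows directly from the definition of $\kappa$ and continuity of $d_{\mathrm{Gr}_1}$ that $\kappa(W,V^{\perp})=0$. Since $W$ is a line and $\mathrm{Gr}_1(V^{\perp})$ is compact, this forces $W\subset V^{\perp}$, equivalently $V\subset W^{\perp}$, so $V\in\mathrm{Gr}_l(W^{\perp})$.

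It now suffices to produce a sequence $V_n'\in\mathrm{Gr}_l(W_n^{\perp})$ with $V_n'\to V$, for then
\[
d_{\mathrm{Gr}_l}(V_n,\mathrm{Gr}_l(W_n^{\perp}))\le d_{\mathrm{Gr}_l}(V_n,V_n')\le d_{\mathrm{Gr}_l}(V_n,V)+d_{\mathrm{Gr}_l}(V,V_n')\overset{n}{\longrightarrow}0,
\]
contradicting the lower bound $\epsilon$. To construct $V_n'$, note that since $W_n^{\perp}\to W^{\perp}$ in $\mathrm{Gr}_{m-1}(m)$, we may pick orthonormal bases $(u_{n,1},\ldots,u_{n,m-1})$ of $W_n^{\perp}$ converging entry-wise to an orthonormal basis $(u_1,\ldots,u_{m-1})$ of $W^{\perp}$. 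Write $V=\mathrm{span}\{v_1,\ldots,v_l\}$ with $v_i=\sum_{j=1}^{m-1}c_{i,j}u_j$, set $v_{n,i}:=\sum_{j=1}^{m-1}c_{i,j}u_{n,j}$, and let $V_n':=\mathrm{span}\{v_{n,1},\ldots,v_{n,l}\}\subset W_n^{\perp}$. Then $v_{n,i}\to v_i$ for each $i$, which yields $V_n'\to V$ in $\mathrm{Gr}_l(m)$ (e.g.\ by expressing the orthogonal projections onto $V_n'$ via Gram--Schmidt on the $v_{n,i}$, which depends continuously on the input vectors as long as they remain independent).

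The only step requiring any care is the continuity of the map $W\mapsto W^{\perp}$ and the stability of the Gram--Schmidt construction, but both are standard and routine. No serious obstacle is expected.
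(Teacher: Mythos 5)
Your proof is correct and takes essentially the same compactness-and-contradiction approach as the paper; the paper's version is terser (it simply asserts that compactness yields a limit pair $(W,V)$ with $\kappa(W,V^\perp)=0$ and $V\nsubseteq W^\perp$), while you fill in the justification for the second part by explicitly constructing $V_n'\in\mathrm{Gr}_l(W_n^\perp)$ with $V_n'\to V$, which is exactly the step the paper leaves implicit.
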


\begin{proof}
Assume by contradiction that the lemma is false. Then by a compactness
argument, there exist $W\in\mathrm{Gr}_{1}(m)$ and $V\in\mathrm{Gr}_{l}(m)$
so that $\kappa(W,V^{\perp})=0$ and $V\nsubseteq W^{\perp}$. But
from $\kappa(W,V^{\perp})=0$ we get $W\subset V^{\perp}$, which
implies $V\subset W^{\perp}$. This contradiction completes the proof
of the lemma.
\end{proof}
The proof of the following lemma is similar to the proof of \cite[Proposition 5.5]{Ho}
and is therefore omitted. See also the discussion in \cite[Section 6.2]{HR}.
\begin{lem}
\label{lem:from conc on eu to cont on A_d,m}For every $\epsilon,R>0$
there exist $\delta=\delta(\epsilon,R)>0$ and $b=b(\epsilon,R)\ge1$
so that the following holds. Let $\theta\in\mathcal{M}(A_{d,m})$,
$x\in\mathbb{R}^{d}$, $k\ge1$, and a linear subspace $V\subset\mathbb{R}^{m}$
be given. Suppose that $|x|\le R$, $\mathrm{supp}(\theta)\subset B(\pi_{d,m},R)$,
and
\[
\mathbb{P}_{i=k}\left((\theta.x)^{y,i}\text{ is }(V,\delta)\text{-concentrated}\right)>1-\delta.
\]
Then,
\[
\mathbb{P}_{k\le i\le k+b}\left(S_{2^{i}}\left(\theta_{\psi,i}\ldotp x\right)\text{ is }(V,\epsilon)\text{-concentrated}\right)>1-\epsilon.
\]
\end{lem}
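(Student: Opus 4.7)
The plan is to transfer the concentration hypothesis from rescaled components of the Euclidean measure $\theta\ldotp x$ back to components of $\theta$ on $\mathrm{A}_{d,m}$, by exploiting the evaluation map $\mathrm{ev}_x:\mathrm{A}_{d,m}\to\mathbb{R}^m$ defined by $\mathrm{ev}_x(\psi)=\psi(x)$. Observe that $\mathrm{ev}_x\theta=\theta\ldotp x$. First I would check that $\mathrm{ev}_x$ is Lipschitz with constant $L=L(R)$ on $B(\pi_{d,m},R)$: this follows from Lemma \ref{lem: def of inv met and prop}(2), since on this compact set $d_{\mathrm{A}_{d,m}}$ is bi-Lipschitz equivalent to $\Vert\cdot\Vert$, and $|\psi_1(x)-\psi_2(x)|\le\Vert\psi_1-\psi_2\Vert(1+|x|)$. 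Consequently, for every level-$i$ atom $D\in\mathcal{D}_i^{\mathrm{A}_{d,m}}$, the set $\mathrm{ev}_x(D)\subset\mathbb{R}^m$ has diameter $O_R(2^{-i})$, so $\mathrm{supp}(\theta_{\psi,i}\ldotp x)$ has diameter $O_R(2^{-i})$ and meets only $O_R(1)$ cells of $\mathcal{D}_i^m$.

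Next I would fix $b=b(\epsilon,R)$ so large that $L\cdot 2^{-b}<\epsilon/10$, and choose $\delta=\delta(\epsilon,R)>0$ much smaller than $\epsilon\cdot 2^{-(b+1)m}$. The hypothesis gives $(V,\delta)$-concentration of $(\theta\ldotp x)^{y,k}$ with probability $>1-\delta$; I would upgrade this to $(V,\epsilon/2)$-concentration of $(\theta\ldotp x)^{y,i}$ with probability $>1-\epsilon/4$ simultaneously for every $k\le i\le k+b$, by noting that a level-$k$ cell in $\mathbb{R}^m$ contains at most $2^{bm}$ level-$(k+b)$ cells, so the bad mass at level $i\le k+b$ is at most $2^{bm}\delta$, and every sub-cell lying inside a good level-$k$ cell inherits $(V,\epsilon/2)$-concentration (concentration on a cell passes to its sub-cells, with the deficit rescaled by at most a factor $2^{b}$).

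The main step is to transfer this from components indexed by $y\sim\theta\ldotp x$ to components indexed by $\psi\sim\theta$. Given $\psi$ and $i\in[k,k+b]$, write $y_\psi:=\psi(x)$ and decompose
\[
\theta_{\psi,i}\ldotp x\;=\;\sum_{D\in\mathcal{D}_i^m} c_{\psi,D}\cdot(\theta_{\psi,i}\ldotp x)_D,
\]
where only $O_R(1)$ of the coefficients $c_{\psi,D}$ are nonzero by the Lipschitz estimate. Each nonzero term equals, up to a normalising factor, the restriction of $\theta\ldotp x$ to $D$ re-weighted by the conditional distribution of $\theta_{\psi,i}$, and for the dominant cell $D=\mathcal{D}_i^m(y_\psi)$ the measure $(\theta_{\psi,i}\ldotp x)_D$ is absolutely continuous with respect to $(\theta\ldotp x)_{y_\psi,i}$ with Radon--Nikodym derivative bounded by $O_R(1)$ (it is a restriction of a component of $\theta\ldotp x$). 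Therefore, if $(\theta\ldotp x)^{y_\psi,i}$ is $(V,\epsilon/2)$-concentrated, then $S_{2^i}(\theta_{\psi,i}\ldotp x)$ places all but an $O_R(\epsilon/2)$-fraction of its mass within $\epsilon/2$ of a translate of $V$, i.e.\ is $(V,\epsilon)$-concentrated after adjusting constants.

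Finally, I would quantify the pushforward of the uniform measure on $\{\psi : \psi\in\mathrm{supp}(\theta)\}\times[k,k+b]$ against the measure on $\{y\in\mathrm{supp}(\theta\ldotp x)\}\times[k,k+b]$: for each $i$, the map $\psi\mapsto y_\psi=\psi(x)$ pushes $\theta$ to $\theta\ldotp x$, so the set of $\psi$ for which $y_\psi$ lies in a "bad" cell (where $(V,\epsilon/2)$-concentration fails) has $\theta$-mass at most the $\theta\ldotp x$-mass of that union, which is $<\epsilon/4$. Averaging over $i\in[k,k+b]$ yields the required conclusion with probability $>1-\epsilon$. I expect the main obstacle to be step three, namely rigorously bounding the Radon--Nikodym derivative between $(\theta_{\psi,i}\ldotp x)_D$ and $(\theta\ldotp x)_{y,i}$ uniformly on a large-probability set of $\psi$, since a single $\psi$ corresponds to a whole fibre of $\mathrm{ev}_x$, whose restriction to a cell need not coincide with the conditional measure induced by $\theta_{\psi,i}$; controlling this slack is exactly the content of the analogous \cite[Proposition 5.5]{Ho}.
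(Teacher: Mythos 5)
The overall plan — push concentration of the Euclidean measure $\theta\ldotp x$ backward through the evaluation map $\mathrm{ev}_x$ — is the right one, but the transfer you carry out in step three is not merely a technical obstacle to be deferred; the claim that $(\theta_{\psi,i}\ldotp x)_D$ has Radon--Nikodym derivative $O_R(1)$ with respect to $(\theta\ldotp x)_{y_\psi,i}$ is false. Both measures are normalised pushforwards of restrictions of $\theta$: the former restricts to $\mathcal{D}_i^{\mathrm{A}_{d,m}}(\psi)\cap\mathrm{ev}_x^{-1}(D)$, the latter to $\mathrm{ev}_x^{-1}(D)$, and the resulting density is the ratio
\[
\frac{\theta\bigl(\mathcal{D}_i^{\mathrm{A}_{d,m}}(\psi)\mid\mathrm{ev}_x^{-1}(z)\bigr)}{\theta\bigl(\mathcal{D}_i^{\mathrm{A}_{d,m}}(\psi)\mid\mathrm{ev}_x^{-1}(D)\bigr)},
\]
whose denominator can be arbitrarily small. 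The evaluation map is Lipschitz but very far from bi-Lipschitz (its fibres are positive-codimension affine subspaces), so $\mathcal{D}_i^{\mathrm{A}_{d,m}}$-cells and $\mathrm{ev}_x^{-1}(\mathcal{D}_i^m)$-cells are not commensurable and no pointwise density bound exists; the hypothesis $\mathrm{supp}(\theta)\subset B(\pi_{d,m},R)$ does not help here.

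What the argument actually needs is averaging in place of the density bound. For each $i$ one has $\theta\ldotp x=\mathbb{E}_{\psi\sim\theta}\bigl(\theta_{\psi,i}\ldotp x\bigr)$, and the hypothesis bounds the $\theta\ldotp x$-mass off a union of $\delta 2^{-k}$-thick slabs (one per concentrated level-$k$ cell) by $O(\delta)$; Markov then gives that for all but an $O(\sqrt\delta)$-fraction of $\psi$ the measure $\theta_{\psi,i}\ldotp x$ likewise places $1-O(\sqrt\delta)$ of its mass in that union. The parameter $b$ then serves a different purpose from the one you assign it: one needs $b$ large enough that for all but an $\epsilon$-fraction of $i\in[k,k+b]$ the $O_R(2^{-i})$-diameter support of $\theta_{\psi,i}\ldotp x$ is small relative to $2^{-k}$ (so that it typically meets only one slab), and $\delta$ small relative to $2^{-b}$ (so that the slab's thickness $\delta 2^{-k}$ becomes $\le\epsilon 2^{-i}$). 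Your choice $L\cdot 2^{-b}<\epsilon/10$ controls the diameter only at $i=k+b$; the averaged conclusion requires the "bad" initial range of $i$'s to be an $\epsilon$-fraction of $[k,k+b]$, roughly $b\gtrsim\epsilon^{-1}\log(L/\epsilon)$. Your step two also needs the Markov bound rather than the cell count: the conditional bad mass in a sub-cell $D'\subset D$ is $\delta/\sigma_D(D')$, which can blow up for light sub-cells, so "concentration passes to sub-cells with deficit rescaled by $2^b$" does not hold uniformly. You correctly flag the first of these problems as the crux, but the argument as written leaves it unresolved — it is exactly the work the omitted reference to \cite[Proposition 5.5]{Ho} is meant to supply.
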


Recall from Section \ref{subsec:Spaces-of-affine maps} that the vector
space $\mathrm{A}_{d,m}^{\mathrm{vec}}$ is equipped with a norm $\Vert\cdot\Vert$.
\begin{defn}
Given $\theta\in\mathcal{M}(\mathrm{A}_{d,m}^{\mathrm{vec}})$ and
$\delta>0$, we say that $\theta$ is $(\{0\},\delta)$-concentrated
in $\mathrm{A}_{d,m}^{\mathrm{vec}}$ if $\theta(B)>1-\delta$ for
some closed ball $B$ of radius $\delta$ in $(\mathrm{A}_{d,m}^{\mathrm{vec}},\Vert\cdot\Vert)$.
\end{defn}

In the following lemma, for $\psi\in\mathrm{A}_{d,m}$ set $V_{\psi}:=(\ker A_{\psi})^{\perp}\in\mathrm{Gr}_{m}(d)$
and let $f_{\psi}\in\mathrm{A}_{m,m}$ be such that $f_{\psi}^{-1}\circ\psi=\pi_{V_{\psi}}$. Recall that given $\theta\in\mathcal{M}(\mathrm{A}_{d,m})$ and $\varphi\in\mathrm{A}_{m,m}$, we denote by $\varphi\theta\in\mathcal{M}(\mathrm{A}_{d,m})$ the pushforward of $\theta$ via the map $\psi\rightarrow\varphi\circ\psi$.
\begin{lem}
\label{lem:comp not conc}For every $\epsilon,R>0$ there exists
$\delta=\delta(\epsilon,R)>0$ such that for $k\ge K(\epsilon,R,\delta)\ge1$
and $n\ge N(\epsilon,R,\delta,k)\ge1$ the following holds. Let
$\theta\in\mathcal{M}(\mathrm{A}_{d,m})$ be such that $\mathrm{diam}(\mathrm{supp}(\theta))\leq R$
and $\frac{1}{n}H(\theta,\mathcal{D}_{n})>\epsilon$. Then $\lambda_{n}\times\theta(F)>\delta$,
where $F$ is the set of all $(i,\psi)\in\mathcal{N}_{n}\times\mathrm{A}_{d,m}$
such that
\[
\mathbb{P}_{i\le j\le i+k}\left\{ S_{2^{j}}((f_{\psi}^{-1}\theta_{\psi,i})_{\varphi,j})\text{ is not }(\{0\},\delta)\text{\mbox{-concentrated} in }\mathrm{A}_{d,m}^{\mathrm{vec}}\right\} >\delta.
\]
\end{lem}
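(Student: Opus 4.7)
The plan is to argue by contradiction via two nested applications of a multiscale entropy formula. Suppose the conclusion fails: for every $\delta>0$ and all sufficiently large $k, n$, there exists $\theta \in \mathcal{M}(\mathrm{A}_{d,m})$ with $\mathrm{diam}(\mathrm{supp}(\theta)) \le R$ and $\tfrac{1}{n}H(\theta, \mathcal{D}_n) > \epsilon$, yet $\lambda_n \times \theta(F) \le \delta$. Then on all but a $\delta$-fraction of pairs $(i, \psi)$, the rescaled sub-components $S_{2^j}((f_\psi^{-1}\theta_{\psi,i})_{\varphi, j})$ are $(\{0\}, \delta)$-concentrated in $\mathrm{A}_{d,m}^{\mathrm{vec}}$ with probability $\ge 1 - \delta$ over $(j, \varphi)$, and by a Markov argument this means that for most such $(i,\psi)$, most scales $j \in [i, i+k]$, and most $\varphi$'s, the sub-component is concentrated.

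The key step is to translate $(\{0\}, \delta)$-concentration of $S_{2^j}((f_\psi^{-1}\theta_{\psi,i})_{\varphi, j})$ into a small bound on $\tfrac{1}{l}H((f_\psi^{-1}\theta_{\psi,i})_{\varphi,j}, \mathcal{D}_{j+l}^{\mathrm{A}_{d,m}})$ for $l := \lceil \log(1/\delta) \rceil$. Undoing the rescaling, such concentration means that mass at least $1-\delta$ of the sub-component sits in a $\Vert\cdot\Vert$-ball of radius $\delta 2^{-j}$. Because $\pi_{V_\psi}$ ranges over the compact set $\{\pi_V : V \in \mathrm{Gr}_m(d)\}$ for every $\psi$, the support of $f_\psi^{-1}\theta_{\psi, i}$ lies in a single compact subset of $\mathrm{A}_{d,m}$, on which $\Vert\cdot\Vert$ and $d_{\mathrm{A}_{d,m}}$ are equivalent by Lemma \ref{lem: def of inv met and prop}(2). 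Hence the high-mass ball intersects $O(1)$ cells of $\mathcal{D}_{j+l}^{\mathrm{A}_{d,m}}$, whereas the full sub-component lives in $O(2^{lD})$ cells with $D := \dim \mathrm{A}_{d,m}$; the concavity and almost convexity of entropy then yield
\[
\tfrac{1}{l} H\!\left((f_\psi^{-1}\theta_{\psi,i})_{\varphi,j}, \mathcal{D}_{j+l}^{\mathrm{A}_{d,m}}\right) = O\!\left(\sqrt{\delta} + \tfrac{1}{l}\right),
\]
which tends to $0$ as $\delta \to 0$.

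I would then combine two applications of the multiscale entropy formula (the analog of Lemma \ref{lem:multiscale-entropy-formula} for the dyadic-like partitions of $\mathrm{A}_{d,m}$) with the $\mathrm{A}_{m,m}$-invariance up to $O(1)$-commensurability from Lemma \ref{lem:commens partiti of A_d,m}(2), the latter allowing one to replace $H(\theta_{\psi, i}, \mathcal{D}_{i+k})$ by $H(f_\psi^{-1}\theta_{\psi, i}, \mathcal{D}_{i+k})$ at cost $O(1)$. Applying the concentration estimate on the large $(1 - O(\sqrt{\delta}))$-fraction of good quadruples $(i, \psi, j, \varphi)$ and the trivial bound $D + O(1/l)$ on the exceptional set (whose combined $\lambda_n\times\theta$-mass is $O(\sqrt{\delta})$), I expect to conclude
\[
\tfrac{1}{n}H(\theta, \mathcal{D}_n) \le O\!\left(\sqrt{\delta} + \tfrac{1}{\log(1/\delta)}\right) + O_R\!\left(\tfrac{1}{k} + \tfrac{l}{k} + \tfrac{k}{n}\right).
\]
Choosing $\delta$ small in terms of $\epsilon, R$, then $k$ large, then $n$ large, forces the right-hand side strictly below $\epsilon$, producing the desired contradiction.

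The conceptual core is the metric-equivalence transfer in the second paragraph, which crucially relies on the fact that, although $\psi$ may range anywhere in $\mathrm{A}_{d,m}$, the reference point $\pi_{V_\psi}$ always lies in a fixed compact set independent of $\psi$; the remainder of the argument is standard multiscale and commensurability bookkeeping.
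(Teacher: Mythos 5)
Your proposal is correct and takes essentially the same approach as the paper, which argues the contrapositive directly: it invokes the $\mathrm{A}_{d,m}$ analogue of the nested multiscale formula (citing \cite{HR}, Lemma~6.8) to pass from $\frac{1}{n}H(\theta,\mathcal{D}_n)>\epsilon$ to positive normalized entropy of a positive fraction of sub-components, and then shows that concentration would force that entropy to be $O(1/l)$ using exactly the compact-set metric-equivalence (Lemma~\ref{lem: def of inv met and prop}) and commensurability (Lemma~\ref{lem:commens partiti of A_d,m}) observations you identify as the conceptual core.
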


\begin{proof}
Let $C>1$ be a large global constant, let $\epsilon,R>0$, let
$l\ge1$ be large with respect $\epsilon,R$, let $k\ge1$ be large
with respect to $l$, let $n\ge1$ be large with respect to $k$,
and let $\theta\in\mathcal{M}(\mathrm{A}_{d,m})$ be with $\mathrm{diam}(\mathrm{supp}(\theta))\leq R$
and $\frac{1}{n}H(\theta,\mathcal{D}_{n})>\epsilon$. By an argument
similar to the one given in the proof of \cite[Lemma 6.8]{HR} we
may assume that $\lambda_{n}\times\theta(F')>\epsilon/C$, where $F'$
is the set of all $(i,\psi)\in\mathcal{N}_{n}\times\mathrm{A}_{d,m}$
such that
\[
\mathbb{P}_{i\le j\le i+k}\left\{ \frac{1}{l}H\left((f_{\psi}^{-1}\theta_{\psi,i})_{\varphi,j},\mathcal{D}_{j+l}\right)>\frac{\epsilon}{C}\right\} >\frac{\epsilon}{C}.
\]

Let $(i,\psi)\in\mathcal{N}_{n}\times\mathrm{A}_{d,m}$ and $(j,\varphi)\in\mathcal{N}_{i,i+k}\times\mathrm{A}_{d,m}$
be such that $\sigma:=(f_{\psi}^{-1}\theta_{\psi,i})_{\varphi,j}$
is well defined and $\frac{1}{l}H\left(\sigma,\mathcal{D}_{j+l}\right)>\frac{\epsilon}{C}$.
In order to complete the proof of the lemma it suffices to show that
$S_{2^{j}}\sigma$ is not $(\{0\},2^{-l})$-concentrated in $\mathrm{A}_{d,m}^{\mathrm{vec}}$.
Assume by contradiction that this is not the case, then $\sigma(B')>1-2^{-l}$
for some closed ball $B'$ of radius $2^{-j-l}$ in $(\mathrm{A}_{d,m}^{\mathrm{vec}},\Vert\cdot\Vert)$.
Since $\mathrm{supp}(\sigma)$ is contained in a ball of radius $O(1)$
around $\pi_{d,m}$ and by Lemma \ref{lem: def of inv met and prop},
we have $\sigma(B)>1-2^{-l}$ for some closed ball $B$ of radius
$O(2^{-j-l})$ in $(\mathrm{A}_{d,m},d_{\mathrm{A}_{d,m}})$.

Since $\mathrm{diam}(B)=O(2^{-j-l})$, by Lemma \ref{lem: def of inv met and prop},
and by the defining properties of the partitions $\{\mathcal{D}_{q}^{\mathrm{A}_{d,m}}\}_{q\ge0}$
(see Section \ref{subsec:Dyadic-partitions}),
\[
\#\left\{ D\in\mathcal{D}_{j+l}^{\mathrm{A}_{d,m}}\::\:D\cap B\ne\emptyset\right\} =O(1),
\]
which implies $H\left(\sigma_{B},\mathcal{D}_{j+l}\right)=O(1)$.
Since $\sigma$ is supported on some $D\in\mathcal{D}_{j}^{\mathrm{A}_{d,m}}$
and by Lemma \ref{lem:commens partiti of A_d,m},
\[
\frac{1}{l}H\left(\sigma_{B^{c}},\mathcal{D}_{j+l}\right)=O(1).
\]
From all of this and by the almost convexity of entropy,
\begin{multline*}
\frac{\epsilon}{C}<\frac{1}{l}H\left(\sigma,\mathcal{D}_{j+l}\right)\\
\le\frac{1}{l}\left(\sigma(B)H\left(\sigma_{B},\mathcal{D}_{j+l}\right)+\sigma(B^{c})H\left(\sigma_{B^{c}},\mathcal{D}_{j+l}\right)+O(1)\right)=O(l^{-1}).
\end{multline*}
Since $l$ is assumed to be large with respect $\epsilon$ this implies
a contradiction, which completes the proof of the lemma.
\end{proof}

\subsection{A key proposition}

The proof of Theorem \ref{thm:ent increase result} relies on the
following proposition.
\begin{prop}
\label{prop:key prep for ent inc}For every $\epsilon,R>0$ there
exists $\epsilon'=\epsilon'(\epsilon,R)>0$, such that for all $\sigma>0$
there exists $\delta=\delta(\epsilon,R,\sigma)>0$, so that for all
$n\ge N(\epsilon,R,\sigma)\ge1$ the following holds. Let $\theta\in\mathcal{M}(\mathrm{A}_{d,m})$
be such that $\mathrm{supp}(\theta)\subset B(\pi_{d,m},R)$, $\frac{1}{n}H(\theta,\mathcal{D}_{n})\ge\epsilon$
and $\frac{1}{n}H(\theta\ldotp\mu,\mathcal{D}_{n})\le\Sigma_{1}^{m}+\delta$.
Then there exist $\xi\in\mathcal{M}(\mathrm{A}_{d,m})$ and $V\in\mathrm{Gr}_{m}(d)$
so that,
\begin{enumerate}
\item $\mathrm{diam}(\mathrm{supp}(\xi))=O(1)$ in $(\mathrm{A}_{d,m}^{\mathrm{vec}},\Vert\cdot\Vert)$;
\item $\mu\left\{ x\::\:\xi\ldotp x\mbox{ is }(\pi_{V}L_{m-1}(x),\sigma)\text{-concentrated}\right\} >1-\sigma$;
\item $\xi$ is not $(\{0\},\epsilon')$-concentrated in $\mathrm{A}_{d,m}^{\mathrm{vec}}$.
\end{enumerate}
\end{prop}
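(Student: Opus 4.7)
The plan is to extract $\xi$ as a rescaled second-order component of $\theta$, combining entropy bookkeeping with Hochman's $\mathbb{R}^{m}$ inverse theorem. First I would convert the hypothesis $\frac{1}{n}H(\theta.\mu,\mathcal{D}_{n})\le\Sigma_{1}^{m}+\delta$ into a multi-scale statement via Lemma~\ref{lem:ent of conv >=00003D ent of conv of comp}, which bounds the left-hand side from below (up to $O_{R}(1/k+k/n)$) by an average of $\frac{1}{k}H(\theta_{\psi,i}.\mu_{x,i},\mathcal{D}_{i+k})$ over scales $i$ and components $\theta_{\psi,i}$, $\mu_{x,i}$. Lemmas~\ref{lem:lb on ent of proj of comp of mu} and~\ref{lem:lb on ent of comp of proj of comp of mu} provide the matching lower bound $\frac{1}{k}H(\psi\mu_{x,i},\mathcal{D}_{i+k})\ge\Sigma_{1}^{m}-\epsilon$ on most $(i,\psi,x)$, so that on a set of $(i,\psi,x)$ of measure close to $1$ the ``convolution gain'' $\frac{1}{k}[H(\theta_{\psi,i}.\mu_{x,i},\mathcal{D}_{i+k})-H(\psi\mu_{x,i},\mathcal{D}_{i+k})]$ is arbitrarily small. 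Lemma~\ref{lem:linearization} then replaces $\theta_{\psi,i}.\mu_{x,i}$ at inner scales by the Euclidean convolution $(\theta_{\psi,i}.x)*(\psi\mu_{x,i})$ in $\mathbb{R}^{m}$, with only $O(\epsilon)$ entropy error.

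Theorem~\ref{thm:Hoch inv thm R^m} produces, at most inner scales $j\in[i,i+k]$, subspaces $V_{j}\subset\mathbb{R}^{m}$ so that $(\psi\mu_{x,i})^{y,j}$ is $(V_{j},\epsilon,k)$-saturated and $(\theta_{\psi,i}.x)^{y,j}$ is $(V_{j},\epsilon)$-concentrated. Setting $V:=(\ker A_{\psi})^{\perp}\in\mathrm{Gr}_{m}(d)$ and $f_{\psi}\in\mathrm{A}_{m,m}$ with $\psi=f_{\psi}\circ\pi_{V}$, and writing $U:=f_{\psi}^{-1}(V_{j})\subset\mathbb{R}^{m}$, the factorization $\pi_{V_{j}^{\perp}}f_{\psi}=(\pi_{V_{j}^{\perp}}f_{\psi}|_{U^{\perp}})\circ\pi_{U^{\perp}}$ with first factor a bilipschitz isomorphism of bounded distortion (since $\theta$ is supported in a compact set) converts saturation of $\psi\mu_{x,i}$ with respect to $V_{j}$ into saturation of $(\pi_{V}\mu_{x,i})_{y,j}$ with respect to $U$, up to an $O(1/k)$ error. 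I claim $U\subset\pi_{V}L_{m-1}(x)$ angularly; otherwise, on a positive fraction of $(i,x,j,y)$ one has $U^{\perp}\in T_{\dim U^{\perp},m}(\pi_{V}L_{m-1}(x),\rho)$ for some fixed $\rho>0$, and Corollary~\ref{cor:ent of proj of comp of proj of comp of mu} yields $\frac{1}{k}H(\pi_{U^{\perp}}(\pi_{V}\mu_{x,i})_{y,j},\mathcal{D}_{j+k})\ge\dim U^{\perp}-\epsilon$. Combining with the (transferred) saturation forces $\frac{1}{k}H((\pi_{V}\mu_{x,i})_{y,j},\mathcal{D}_{j+k})\ge m-2\epsilon$ on the same positive-measure set, contradicting $\dim_{e}\pi_{V}\mu=\Sigma_{1}^{m}<m$ via Lemmas~\ref{lem:multiscale-entropy-formula} and~\ref{lem:distribution-of-components-of-components}. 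By Lemma~\ref{lem:d(V,W^perp)<eps}, the Euclidean concentration of $(\theta_{\psi,i}.x)^{y,j}$ in $V_{j}=f_{\psi}(U)$ then upgrades to concentration in a translate of $f_{\psi}(\pi_{V}L_{m-1}(x))$, which Lemma~\ref{lem:from conc on eu to cont on A_d,m} transfers to $S_{2^{j}}((\theta_{\psi,i})_{\varphi,j}.x)$.

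Thus for a set of quadruples $(i,\psi,j,\varphi)$ of measure close to $1$ the candidate $\xi_{i,\psi,j,\varphi}:=f_{\psi}^{-1}\circ S_{2^{j}}((\theta_{\psi,i})_{\varphi,j})$ satisfies property~(1) (rescaling a $d_{\mathrm{A}_{d,m}}$-ball of radius $O(2^{-j})$ by $S_{2^{j}}$ and pre-composing with $f_{\psi}^{-1}$ produces a set of $\Vert\cdot\Vert$-diameter $O(1)$ in $\mathrm{A}_{d,m}^{\mathrm{vec}}$) and property~(2) with $V=(\ker A_{\psi})^{\perp}$, after a Fubini argument picking a quadruple for which the concentration of $\xi_{i,\psi,j,\varphi}.x$ near a translate of $\pi_{V}L_{m-1}(x)$ holds for $\mu$-measure at least $1-\sigma$ of $x$. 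For property~(3), Lemma~\ref{lem:comp not conc}---which uses only the hypothesis $\frac{1}{n}H(\theta,\mathcal{D}_{n})\ge\epsilon$---furnishes $\epsilon'=\epsilon'(\epsilon,R)>0$ and a set of quadruples of measure at least $\epsilon'$ on which $\xi_{i,\psi,j,\varphi}$ fails to be $(\{0\},\epsilon')$-concentrated in $\mathrm{A}_{d,m}^{\mathrm{vec}}$. The main technical obstacle is the quantifier order: since $\epsilon'$ must be chosen depending only on $(\epsilon,R)$ (before $\sigma$), one must shrink $\delta$ in the hypothesis---depending on $\sigma$ and on the already-fixed $\epsilon'$---so that the Hochman-derived good set of quadruples has measure exceeding $1-\epsilon'/2$; this guarantees a non-empty intersection with the Lemma~\ref{lem:comp not conc} good set, and any quadruple in the intersection yields the desired $\xi$ and $V$.
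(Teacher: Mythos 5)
Your proposal is correct and follows essentially the same strategy as the paper's proof: the same chain (Lemma~\ref{lem:ent of conv >=00003D ent of conv of comp}, Lemma~\ref{lem:linearization}, Theorem~\ref{thm:Hoch inv thm R^m}, the saturation/concentration dichotomy via Corollary~\ref{cor:ent of proj of comp of proj of comp of mu}, Lemma~\ref{lem:from conc on eu to cont on A_d,m}, and Lemma~\ref{lem:comp not conc}), with the same quantifier fix at the end. The only cosmetic deviation is that you apply Hochman's inverse theorem to the pair $(\theta_{\psi,i}.x,\psi\mu_{x,i})$ and then conjugate by $f_{\psi}^{-1}$, whereas the paper applies it directly to $(f_{\psi}^{-1}\theta_{\psi,i}.x,\pi_{V_{\psi}}\mu_{x,i})$, which avoids your $\pi_{V_{j}^{\perp}}f_{\psi}=(\pi_{V_{j}^{\perp}}f_{\psi}|_{U^{\perp}})\circ\pi_{U^{\perp}}$ transfer step.
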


\begin{proof}
Throughout the proof we use the notations introduced just before Lemma
\ref{lem:comp not conc}. That is, for $\psi\in\mathrm{A}_{d,m}$
we set $V_{\psi}:=(\ker A_{\psi})^{\perp}\in\mathrm{Gr}_{m}(d)$ and
let $f_{\psi}\in\mathrm{A}_{m,m}$ be such that $f_{\psi}^{-1}\circ\psi=\pi_{V_{\psi}}$.

Let $\epsilon,R>0$, let $\delta>0$ be small with respect to $\epsilon$
and $R$, let $k\ge1$ be large with respect to $\delta$, let $n\ge1$
be large with respect to $k$, and let $\theta\in\mathcal{M}(\mathrm{A}_{d,m})$
be such that $\mathrm{supp}(\theta)\subset B(\pi_{d,m},R)$, $\frac{1}{n}H(\theta,\mathcal{D}_{n})\ge\epsilon$
and $\frac{1}{n}H(\theta\ldotp\mu,\mathcal{D}_{n})\le\Sigma_{1}^{m}+\delta^{2}$.
By Lemma \ref{lem:ent of conv >=00003D ent of conv of comp}, we may
assume
\[
\Sigma_{1}^{m}+2\delta^{2}\ge\mathbb{E}_{1\le i\le n}\left(\frac{1}{k}H(\theta_{\psi,i}\ldotp\mu_{x,i},\mathcal{D}_{i+k})\right).
\]
From this and Lemma \ref{lem:linearization},
\[
\Sigma_{1}^{m}+3\delta^{2}\ge\mathbb{E}_{1\le i\le n}\left(\frac{1}{k}H((\theta_{\psi,i}\ldotp x)*\psi\mu_{x,i},\mathcal{D}_{i+k})\right).
\]
Since $\mathrm{supp}(\theta)\subset B(\pi_{d,m},R)$, the map $f_{\psi}^{-1}$
is $O_{R}(1)$-Lipschitz for all $\psi\in\mathrm{supp}(\theta)$.
Thus by (\ref{eq:ent under lip map}),
\begin{equation}
\Sigma_{1}^{m}+4\delta^{2}\ge\mathbb{E}_{1\le i\le n}\left(\frac{1}{k}H((f_{\psi}^{-1}\theta_{\psi,i}\ldotp x)*\pi_{V_{\psi}}\mu_{x,i},\mathcal{D}_{i+k})\right).\label{eq:ub on avg of conv of proj}
\end{equation}

Write $\Gamma:=\lambda_{n}\times\mu\times\theta$ and let $E_{0}$
be the set of all $(i,x,\psi)\in\mathcal{N}_{n}\times\mathbb{R}^{d}\times\mathrm{A}_{d,m}$
so that,
\[
\frac{1}{k}H((f_{\psi}^{-1}\theta_{\psi,i}\ldotp x)*\pi_{V_{\psi}}\mu_{x,i},\mathcal{D}_{i+k})<\frac{1}{k}H(\pi_{V_{\psi}}\mu_{x,i},\mathcal{D}_{i+k})+\delta.
\]
By (\ref{eq:conv does not dec ent}) we may assume that for $\Gamma$-a.e.
$(i,x,\psi)$,
\begin{equation}
\frac{1}{k}H((f_{\psi}^{-1}\theta_{\psi,i}\ldotp x)*\pi_{V_{\psi}}\mu_{x,i},\mathcal{D}_{i+k})\ge\frac{1}{k}H(\pi_{V_{\psi}}\mu_{x,i},\mathcal{D}_{i+k})-\delta^{2}.\label{eq:conv dont dec ent}
\end{equation}
From this, by the definition of $E_{0}$ and from $\eqref{eq:ub on avg of conv of proj}$,
\[
\Sigma_{1}^{m}+5\delta^{2}\ge\int\mathbb{E}_{1\le i\le n}\left(\frac{1}{k}H(\pi_{V_{\psi}}\mu_{x,i},\mathcal{D}_{i+k})\right)\:d\theta(\psi)+\delta\Gamma(E_{0}^{c}).
\]
Moreover, by Lemma \ref{lem:lb on ent of proj of comp of mu}
\[
\int\mathbb{E}_{1\le i\le n}\left(\frac{1}{k}H(\pi_{V_{\psi}}\mu_{x,i},\mathcal{D}_{i+k})\right)\:d\theta(\psi)\ge\Sigma_{1}^{m}-\delta^{2}.
\]
Thus, by replacing $\delta$ with a larger quantity without changing the notation, we get $\Gamma(E_{0})\ge1-\delta$.

Let $\sigma>0$ be small with respect to $\epsilon,R$ and suppose
that $\delta$ is small with respect to $\sigma$. Let $q\ge1$ be
an integer which is large with respect to $\sigma$ and suppose that
$\delta$ is small with respect to $q$. By Theorem \ref{thm:Hoch inv thm R^m},
for every $(i,x,\psi)=u\in E_{0}$ there exist linear subspaces $V_{i}^{u},...,V_{i+k}^{u}$
of $\mathbb{R}^{m}$ so that,
\begin{equation}
\mathbb{P}_{i\le j\le i+k}\left\{ \begin{array}{c}
(\pi_{V_{\psi}}\mu_{x,i})^{y,j}\text{ is }(V_{j}^{u},\sigma,q)\text{-saturated and}\\
(f_{\psi}^{-1}\theta_{\psi,i}\ldotp x)^{z,j}\text{ is }(V_{j}^{u},\sigma)\text{-concentrated}
\end{array}\right\} >1-\sigma.\label{eq:subspaces for u in E_0}
\end{equation}
\begin{lem}
We can assume that $\Gamma(E_{1})>1-\sigma$, where $E_{1}$ is the
set of all $(i,x,\psi)\in\mathcal{N}_{n}\times\mathbb{R}^{d}\times\mathrm{A}_{d,m}$
with
\begin{equation}
\mathbb{P}_{i\le j\le i+k}\left\{ (f_{\psi}^{-1}\theta_{\psi,i}\ldotp x)^{z,j}\text{ is }(\pi_{V_{\psi}}L_{m-1}(x),\sigma)\text{-concentrated}\right\} >1-\sigma.\label{eq:im of comp is L_m-1 conc}
\end{equation}
\end{lem}

\begin{proof}
Let $Z$ be the set of all $(i,x,\psi)\in\mathcal{N}_{n}\times\mathbb{R}^{d}\times\mathrm{A}_{d,m}$
so that,
\[
\mathbb{P}_{i\le j\le i+k}\left\{ \left|\frac{1}{q}H\left((\pi_{V_{\psi}}\mu_{x,i})^{y,j},\mathcal{D}_{q}\right)-\Sigma_{1}^{m}\right|<\sigma\right\} >1-\sigma.
\]
Let $\sigma'>0$ be small with respect to $\sigma$ and suppose that
$q$ is large with respect to $\sigma'$. From (\ref{eq:ub on avg of conv of proj})
and (\ref{eq:conv dont dec ent}),
\[
\Sigma_{1}^{m}+\delta\ge\int\mathbb{E}_{1\le i\le n}\left(\frac{1}{k}H(\pi_{V_{\psi}}\mu_{x,i},\mathcal{D}_{i+k})\right)\:d\theta(\psi).
\]
Thus, by Lemma \ref{lem:multiscale-entropy-formula},
\[
\Sigma_{1}^{m}+\sigma'\ge\int\mathbb{E}_{i\le j\le i+k}\left(\frac{1}{q}H\left((\pi_{V_{\psi}}\mu_{x,i})^{y,j},\mathcal{D}_{q}\right)\right)\:d\Gamma(i,x,\psi).
\]
Moreover, by Lemma \ref{lem:lb on ent of comp of proj of comp of mu},
\[
\int\mathbb{P}_{i\le j\le i+k}\left\{ \frac{1}{q}H\left((\pi_{V_{\psi}}\mu_{x,i})^{y,j},\mathcal{D}_{q}\right)>\Sigma_{1}^{m}-\sigma'\right\} \:d\Gamma(i,x,\psi)>1-\sigma'.
\]
From the last two inequalities, and since $\sigma'$ is small with
respect to $\sigma$, we obtain $\Gamma(Z)>1-\sigma$.

As in Section \ref{subsec:Projections of components revisited}, for
$1\le l<m$, $L\in\mathrm{Gr}_{m-1}(m)$ and $\eta>0$ let
\[
T_{l,m}(L,\eta):=\left\{ W\in\mathrm{Gr}_{l}(m)\::\:\kappa(L^{\perp},W)\ge\eta\right\} .
\]
Write $Y_{l}$ for the set of all $(i,x,\psi)\in\mathcal{N}_{n}\times\mathbb{R}^{d}\times\mathrm{A}_{d,m}$
with
\begin{multline*}
\mathbb{P}_{i\le j\le i+k}\left\{ \frac{1}{q}H\left(\pi_{W}((\pi_{V_{\psi}}\mu_{x,i})^{y,j}),\mathcal{D}_{q}\right)>l-\sigma\quad\forall\;W\in T_{l,m}(\pi_{V_{\psi}}L_{m-1}(x),\sigma')\right\} \\
>1-\sigma/m,
\end{multline*}
and set $Y:=\cap_{l=1}^{m-1}Y_{l}$. By Corollary \ref{cor:ent of proj of comp of proj of comp of mu}
it follows that $\Gamma(Y)>1-\sigma$. Note that $\Gamma(E_{0}\cap Z\cap Y)>1-3\sigma$,
hence it suffices to show that (\ref{eq:im of comp is L_m-1 conc})
is satisfied for $(i,x,\psi)\in E_{0}\cap Z\cap Y$.

Fix $(i,x,\psi)=u\in E_{0}\cap Z\cap Y$ and let $F_{u}$ be the set
of all $(j,y)\in\mathcal{N}_{i,i+k}\times\mathbb{R}^{m}$ such that,
\begin{itemize}
\item $(\pi_{V_{\psi}}\mu_{x,i})^{y,j}$ is $(V_{j}^{u},\sigma,q)$-saturated;
\item $\left|\frac{1}{q}H\left((\pi_{V_{\psi}}\mu_{x,i})^{y,j},\mathcal{D}_{q}\right)-\Sigma_{1}^{m}\right|<\sigma$;
\item $\frac{1}{q}H\left(\pi_{W}((\pi_{V_{\psi}}\mu_{x,i})^{y,j}),\mathcal{D}_{q}\right)>\dim W-\sigma$
for each $1\le l<m$ and $W\in T_{l,m}(\pi_{V_{\psi}}L_{m-1}(x),\sigma')$.
\end{itemize}
Since $u\in E_{0}\cap Z\cap Y$, we have $\lambda_{i,i+k}\times\pi_{V_{\psi}}\mu_{x,i}(F_{u})>1-3\sigma$.

Let $(j,y)\in F_{u}$ and set $l_{j}^{u}:=\dim V_{j}^{u}$. Assume
by contradiction that $l_{j}^{u}=m$ or $0<l_{j}^{u}<m$ with
\[
d_{\mathrm{Gr}_{l_{j}^{u}}}(V_{j}^{u},\mathrm{Gr}_{l_{j}^{u}}(\pi_{V_{\psi}}L_{m-1}(x)))\ge\sigma.
\]
If $0<l_{j}^{u}<m$ then by Lemma \ref{lem:d(V,W^perp)<eps}
\[
\kappa((\pi_{V_{\psi}}L_{m-1}(x))^{\perp},(V_{j}^{u})^{\perp})>\sigma',
\]
which implies $(V_{j}^{u})^{\perp}\in T_{m-l_{j}^{u},m}(\pi_{V_{\psi}}L_{m-1}(x),\sigma')$.
Thus,
\begin{eqnarray*}
\Sigma_{1}^{m} & > & \frac{1}{q}H\left((\pi_{V_{\psi}}\mu_{x,i})^{y,j},\mathcal{D}_{q}\right)-\sigma\\
 & \ge & \dim V_{j}^{u}+\frac{1}{q}H\left(\pi_{(V_{j}^{u})^{\perp}}(\pi_{V_{\psi}}\mu_{x,i})^{y,j},\mathcal{D}_{q}\right)-2\sigma\\
 & > & \dim V_{j}^{u}+\dim(V_{j}^{u})^{\perp}-3\sigma=m-3\sigma,
\end{eqnarray*}
where the last inequality holds trivially when $l_{j}^{u}=m$. This
contradicts $\Delta_{m}<1$ if $\sigma$ is sufficiently small, and
so we must have
\begin{equation}
0\le l_{j}^{u}<m\text{ with }d_{\mathrm{Gr}_{l_{j}^{u}}}(V_{j}^{u},\mathrm{Gr}_{l_{j}^{u}}(\pi_{V_{\psi}}L_{m-1}(x)))<\sigma.\label{eq:l^u<n and V^u close}
\end{equation}

Write
\[
S:=\left\{ j\in\mathcal{N}_{i,i+k}\::\:\pi_{V_{\psi}}\mu_{x,i}\{y\::\:(y,j)\in F_{u}\}>0\right\} ,
\]
then $\lambda_{i,i+k}(S)>1-3\sigma$ since $\lambda_{i,i+k}\times\pi_{V_{\psi}}\mu_{x,i}(F_{u})>1-3\sigma$.
Note that (\ref{eq:l^u<n and V^u close}) holds for each $j\in S$.
Thus, $(f_{\psi}^{-1}\theta_{\psi,i}\ldotp x)^{z,j}$ is $(\pi_{V_{\psi}}L_{m-1}(x),O(\sigma))$-concentrated
for every $(j,z)\in\mathcal{N}_{i,i+k}\times\mathbb{R}^{m}$ so that
$j\in S$ and $(f_{\psi}^{-1}\theta_{\psi,i}\ldotp x)^{z,j}$ is $(V_{j}^{u},\sigma)$-concentrated.
From this, $\lambda_{i,i+k}(S)>1-3\sigma$, and (\ref{eq:subspaces for u in E_0}),
it follows that (\ref{eq:im of comp is L_m-1 conc}) is satisfied
for $u=(i,x,\psi)$ with $\sigma$ replaced by $O(\sigma)$. This
completes the proof of the lemma.
\end{proof}
\begin{lem}
We can assume that $\Gamma(E_{2})>1-\sigma$, where $E_{2}$ is the
set of all $(i,x,\psi)\in\mathcal{N}_{n}\times\mathbb{R}^{d}\times\mathrm{A}_{d,m}$
with
\begin{equation}
\mathbb{P}_{i\le j\le i+k}\left\{ \begin{array}{c}
S_{2^{j}}(f_{\psi}^{-1}\theta_{\psi,i})_{\varphi,j}\ldotp x\text{ is }\\
(\pi_{V_{\psi}}L_{m-1}(x),\sigma)\text{-concentrated}
\end{array}\right\} >1-\sigma.\label{eq:conc in A_d,m near pi L(x)}
\end{equation}
\end{lem}

\begin{proof}
Fix $(i,x,\psi)\in E_{1}$ with $x\in K_{\Phi}$, write $\tau:=f_{\psi}^{-1}\theta_{\psi,i}$,
and set
\[
S:=\left\{ j\in\mathcal{N}_{i,i+k}\::\:\mathbb{P}_{l=j}\left\{ (\tau\ldotp x)^{y,l}\text{ is }(\pi_{V_{\psi}}L_{m-1}(x),\sigma)\text{-concentrated}\right\} \ge1-\sqrt{\sigma}\right\} .
\]
By (\ref{eq:im of comp is L_m-1 conc}) it follows that $\lambda_{i,i+k}(S)\ge1-\sqrt{\sigma}$.
Let $\sigma'>0$ be small with respect to $\epsilon,R$ and suppose
that $\sigma$ is small with respect to $\sigma'$. By Lemma \ref{lem:from conc on eu to cont on A_d,m},
there exists an integer $b=b(\sigma')\ge1$ such that,
\begin{equation}
\mathbb{P}_{j\le l\le j+b}\left(S_{2^{l}}\left(\tau_{\varphi,l}\ldotp x\right)\text{ is }(\pi_{V_{\psi}}L_{m-1}(x),\sigma')\text{-concentrated}\right)\ge1-\sigma'\text{ for }j\in S.\label{eq:conc for j in S}
\end{equation}

Let $\sigma''>0$ be small with respect to $\epsilon,R$ and suppose
that $\sigma'$ is small with respect to $\sigma''$. From $\lambda_{i,i+k}(S)\ge1-\sqrt{\sigma}$
and (\ref{eq:conc for j in S}), by assuming that $\sigma,\sigma'$
are sufficiently small with respect to $\sigma''$, and by assuming
that $k$ is sufficiently large with respect to $b$, it follows by
a statement similar to Lemma \ref{lem:distribution-of-components-of-components}
that (\ref{eq:conc in A_d,m near pi L(x)}) is satisfied with $\sigma''$
in place of $\sigma$. This completes the proof of the lemma.
\end{proof}
By the previous lemma, by Fubini's theorem, and by replacing $\sigma$
with a larger quantity (which is still small with respect to $\epsilon,R$)
without changing the notation, we may assume that $\lambda_{n}\times\theta(F_{1})>1-\sigma$,
where $F_{1}$ is the set of all $(i,\psi)\in\mathcal{N}_{n}\times\mathrm{A}_{d,m}$
such that
\[
\mathbb{P}_{i\le j\le i+k}\left(\mu\left\{ x\::\:\begin{array}{c}
S_{2^{j}}(f_{\psi}^{-1}\theta_{\psi,i})_{\varphi,j}\ldotp x\mbox{ is }\\
(\pi_{V_{\psi}}L_{m-1}(x),\sigma)\text{-concentrated}
\end{array}\right\} >1-\sigma\right)>1-\sigma.
\]

Let $\epsilon'>0$ be small with respect to $\epsilon,R$, and
suppose that $\sigma$ is small with respect to $\epsilon'$. By Lemma
\ref{lem:comp not conc}, and the assumptions $\mathrm{supp}(\theta)\subset B(\pi_{d,m},R)$
and $\frac{1}{n}H(\theta,\mathcal{D}_{n})\ge\epsilon$, it follows
that $\lambda_{n}\times\theta(F_{2})>\epsilon'$, where $F_{2}$ is
the set of all $(i,\psi)\in\mathcal{N}_{n}\times\mathrm{A}_{d,m}$
such that
\[
\mathbb{P}_{i\le j\le i+k}\left\{ S_{2^{j}}(f_{\psi}^{-1}\theta_{\psi,i})_{\varphi,j}\text{ is not }(\{0\},\epsilon')\mbox{-concentrated in }\mathrm{A}_{d,m}^{\mathrm{vec}}\right\} >\epsilon'.
\]
Since $\sigma$ is small with respect to $\epsilon'$, we get $\lambda_{n}\times\theta(F_{1}\cap F_{2})>\epsilon'/2$.
Moreover, for every $(i,\psi)\in F_{1}\cap F_{2}$ there exist $i\le j\le i+k$
and $\varphi\in\mathrm{supp}(f_{\psi}^{-1}\theta_{\psi,i})$ so that
$\xi:=S_{2^{j}}(f_{\psi}^{-1}\theta_{\psi,i})_{\varphi,j}$ is not
$(\{0\},\epsilon')$-concentrated in $\mathrm{A}_{d,m}^{\mathrm{vec}}$,
and
\[
\mu\left\{ x\::\:\xi\ldotp x\mbox{ is }(\pi_{V_{\psi}}L_{m-1}(x),\sigma)\text{-concentrated}\right\} >1-\sigma.
\]
By Lemma \ref{lem: def of inv met and prop} and since
\[
\mathrm{supp}(S_{2^{-j}}\xi)\subset B(\pi_{d,m},O(1))\text{ and }\mathrm{diam}(\mathrm{supp}(S_{2^{-j}}\xi))=O(2^{-j}),
\]
it follows that $\mathrm{diam}(\mathrm{supp}(S_{2^{-j}}\xi))=O(2^{-j})$
in $(\mathrm{A}_{d,m}^{\mathrm{vec}},\Vert\cdot\Vert)$. Hence, $\mathrm{diam}(\mathrm{supp}(\xi))=O(1)$
in $(\mathrm{A}_{d,m}^{\mathrm{vec}},\Vert\cdot\Vert)$, which completes
the proof of the proposition.
\end{proof}

\subsection{\label{subsec:The-non-affinity-of L_m-1}The non-affinity of $L_{m-1}$}

The purpose of the present subsection is to prove the following proposition,
which will be needed in the proof of Theorem \ref{thm:ent increase result}.
\begin{prop}
\label{prop:no psi exist}Let $\pi\in\mathrm{A}_{d,m}$ be linear.
Then there does not exist $0\ne\psi\in\mathrm{A}_{d,m}^{\mathrm{vec}}$
so that $\psi(x)\in\pi L_{m-1}(x)$ for $\mu$-a.e. $x$.
\end{prop}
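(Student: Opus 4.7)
The plan is to assume for contradiction that some $0\ne\psi\in\mathrm{A}_{d,m}^{\mathrm{vec}}$ satisfies $\psi(x)\in\pi L_{m-1}(x)$ for $\mu$-a.e.\ $x$, upgrade this pointwise containment to an algebraic identity valid on the Zariski closure $\mathbf{Z}_{\Phi}\subset\mathrm{A}_{d,d}$ of the semigroup generated by $\Phi$, and then exploit the fact that this closure contains all translations $T_{a}$ together with the $(m-1)$-strong irreducibility of $\mathbf{S}_{\Phi}^{\mathrm{L}}$ to reach a contradiction.

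First I would push the hypothesis through the equivariance of $L_{m-1}$. From (\ref{eq:equivar of bd maps}), Theorem \ref{thm:L_m-1 factors via Pi}, and $\Pi(u\omega)=\varphi_{u}\Pi\omega$, one deduces $L_{m-1}(\varphi_{u}x)=A_{u}L_{m-1}(x)$ for $\mu$-a.e.\ $x$ and every $u\in\Lambda^{*}$. Since $\varphi_{u}\mu\ll\mu$ and $\Lambda^{*}$ is countable, this yields a Borel set $X_{0}\subset\mathbb{R}^{d}$ of full $\mu$-measure such that $\psi(\varphi_{u}x)\in\pi A_{u}L_{m-1}(x)$ for all $u\in\Lambda^{*}$ and $x\in X_{0}$. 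Fixing $x\in X_{0}$ with $\dim L_{m-1}(x)=m-1$ (automatic by Lemma \ref{lem:nu=00007Bkappa<del=00007D<eps} and Remark \ref{rem:L_m-1mu=00003Dnu_m-1}) and a basis $x_{1},\ldots,x_{m-1}$ of $L_{m-1}(x)$, this containment becomes
\[
\psi(\varphi_{u}x)\wedge\pi A_{u}x_{1}\wedge\ldots\wedge\pi A_{u}x_{m-1}=0\text{ in }\wedge^{m}\mathbb{R}^{m}\text{ for every }u\in\Lambda^{*}.
\]

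Next I would define $\mathbf{Z}_{\Phi}$ and observe that the displayed expression is polynomial in the matrix/vector coordinates of $\varphi\in\mathrm{A}_{d,d}$, so its vanishing locus is Zariski closed and therefore contains $\mathbf{Z}_{\Phi}$. At this point I would invoke the forthcoming Lemma \ref{lem:T_a in Z_Phi all a} to conclude that $T_{a}\in\mathbf{Z}_{\Phi}$ for every $a\in\mathbb{R}^{d}$. Plugging $\varphi=T_{a}$ (for which $A_{\varphi}=I$ and $T_{a}x=x+a$) into the extended relation and subtracting the $a=0$ case gives
\[
A_{\psi}a\wedge\pi x_{1}\wedge\ldots\wedge\pi x_{m-1}=0\text{ for every }a\in\mathbb{R}^{d},
\]
i.e.\ $A_{\psi}(\mathbb{R}^{d})\subset\pi L_{m-1}(x)$. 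Combined with $\psi(x)=A_{\psi}x+a_{\psi}\in\pi L_{m-1}(x)$, this shows that the fixed subspace $W:=A_{\psi}(\mathbb{R}^{d})+\mathbb{R}a_{\psi}\subset\mathbb{R}^{m}$, which is nonzero because $\psi\ne 0$, satisfies $W\subset\pi L_{m-1}(x)$ for every $x\in X_{0}$.

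Finally I would contradict this using strong irreducibility. The case $\dim W=m$ is immediate since $\dim\pi L_{m-1}(x)\le m-1$. When $1\le\dim W\le m-1$, the containment $W\subset\pi V$ translates via Pl\"ucker coordinates into the linear condition $\iota_{m-1}(V)\subset Z_{W}$, where $Z_{W}$ denotes the set of $\zeta\in\wedge^{m-1}\mathbb{R}^{d}$ with $w\wedge\wedge^{m-1}\pi(\zeta)=0$ for all $w\in W$; since $\pi$ has rank $m$, extending a nonzero $w\in W$ to a basis of $\mathbb{R}^{m}$ and lifting the remaining basis vectors through $\pi$ exhibits a $\zeta$ outside $Z_{W}$, so $Z_{W}$ is a proper subspace of $\wedge^{m-1}\mathbb{R}^{d}$. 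Then the $(m-1)$-strong irreducibility of $\mathbf{S}_{\Phi}^{\mathrm{L}}$ together with \cite[Proposition III.2.3]{BL} forces $\nu_{m-1}\{V:W\subset\pi V\}=0$, which via $L_{m-1}\mu=\nu_{m-1}$ contradicts $W\subset\pi L_{m-1}(x)$ on the full-measure set $X_{0}$. The technical heart of this approach, and its main obstacle, is Lemma \ref{lem:T_a in Z_Phi all a}: extracting from the algebraic structure of $\mathbf{Z}_{\Phi}$ (a group, with finitely many metric components by Whitney \cite{Wh}, whose Lie algebra cannot be semisimple since $\Phi$ consists of strict contractions) together with assumptions (\ref{eq:no com fix asump}) and (\ref{eq:m-ired and m-prox assump}) that every translation belongs to $\mathbf{Z}_{\Phi}$; the remaining pieces amount to formal manipulations with the Pl\"ucker embedding and the equivariance of $L_{m-1}$.
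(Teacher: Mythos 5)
Your proposal is correct, and it coincides with the paper's proof in everything except the final contradiction-extraction. Both start from the same reduction: propagate $\psi(x)\in\pi L_{m-1}(x)$ by equivariance of $L_{m-1}$ and the coding map to a polynomial identity
\[
\psi(\varphi(x))\wedge\pi A_{\varphi}x_{1}\wedge\cdots\wedge\pi A_{\varphi}x_{m-1}=0
\]
holding for all $\varphi$ in the Zariski closure $\mathbf{Z}_{\Phi}$, and both hinge on Lemma \ref{lem:T_a in Z_Phi all a} ($T_{a}\in\mathbf{Z}_{\Phi}$ for all $a$). Where you diverge is in the endgame. The paper's argument works at a single fixed $x=\Pi\omega$: it uses the $(m-1)$-strong irreducibility of $\mathbf{Z}_{\Phi}^{\mathrm{L}}$ to find $\phi\in\mathbf{Z}_{\Phi}$ with $\psi(y)\wedge\pi A_{\phi}x_{1}\wedge\cdots\wedge\pi A_{\phi}x_{m-1}\ne 0$ for some $y$ with $\psi(y)\ne 0$, then translates by $T_{y-\phi(x)}$ to exhibit a $\varphi\in\mathbf{Z}_{\Phi}$ violating the relation. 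You instead specialize only to $\varphi=T_{a}$; linearity in $a$ then yields $A_{\psi}(\mathbb{R}^{d})\subset\pi L_{m-1}(x)$, hence the fixed nonzero subspace $W:=A_{\psi}(\mathbb{R}^{d})+\mathbb{R}a_{\psi}$ lies in $\pi L_{m-1}(x)$ for $\mu$-a.e.\ $x$, and you derive the contradiction from $L_{m-1}\mu=\nu_{m-1}$ (Remark \ref{rem:L_m-1mu=00003Dnu_m-1}) together with $\nu_{m-1}$ not charging the proper Pl\"ucker subvariety $\{\iota_{m-1}(V)\subset Z_{W}\}$. Your route is slightly tidier in that it uses translations alone and never needs to produce a concrete rotating element of $\mathbf{Z}_{\Phi}$, at the cost of explicitly invoking the identity $L_{m-1}\mu=\nu_{m-1}$ (itself a consequence of Theorem \ref{thm:L_m-1 factors via Pi}); both applications of strong irreducibility ultimately reduce to the same fact from \cite[Proposition III.2.3]{BL}. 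Two small points worth making precise when you write this up: (i) you need $W$ nonzero, which holds because $\psi=0$ iff $A_{\psi}=0$ and $a_{\psi}=0$; (ii) the implication $W\subset\pi V\Rightarrow\iota_{m-1}(V)\subset Z_{W}$ uses that $\pi v_{1},\ldots,\pi v_{m-1}$ span $\pi V$ — the degenerate case $\dim\pi V<m-1$ is harmless since it also lands in $Z_{W}$, so the containment of the full-$\nu_{m-1}$-measure set in $\{\iota_{m-1}(V)\subset Z_{W}\}$ holds regardless.
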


The proof of the proposition requires some preparations. In what follows,
write $I$ and $1_{\mathrm{A}}$ for the identity elements of $\mathrm{GL}(d,\mathbb{R}$)
and $\mathrm{A}_{d,d}$ respectively. Let $\mathbf{S}_{\Phi}$ be
the subsemigroup of $\mathrm{A}_{d,d}$ generated by the maps in $\Phi$
with respect to composition, and recall that $\mathbf{S}_{\Phi}^{\mathrm{L}}$
denotes the semigroup generated by the linear parts $\{A_{i}\}_{i\in\Lambda}$.

We denote by $\mathbf{Z}_{\Phi}$ the Zariski closure of $\mathbf{S}_{\Phi}$
in $\mathrm{A}_{d,d}$ (see e.g. \cite[Section 6]{BQ}). That is,
$\mathbf{Z}_{\Phi}$ is the set of all $\psi\in\mathrm{A}_{d,d}$
so that $P(\psi)=0$ for every polynomial map $P:\mathrm{A}_{d,d}^{\mathrm{vec}}\rightarrow\mathbb{R}$
with $\mathbf{S}_{\Phi}\subset P^{-1}\{0\}$. By \cite[Lemma 6.15]{BQ}
it follows that $\mathbf{Z}_{\Phi}$ is a group.
\begin{lem}
\label{lem:exist map with scal lin part}There exist $\phi\in\mathbf{Z}_{\Phi}\setminus\{1_{\mathrm{A}}\}$
and $c_{\phi}\in\mathbb{R}\setminus\{0\}$ so that $A_{\phi}=c_{\phi}I$,
where recall that $A_{\phi}$ is the linear part of $\phi$.
\end{lem}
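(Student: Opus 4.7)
The strategy is to identify a non-identity scalar matrix inside the identity component of the Zariski closure of $\mathbf{S}_\Phi^{\mathrm{L}}$ and lift it along the linear-part projection. Let $\mathbf{G}$ denote the Zariski closure of $\mathbf{S}_\Phi^{\mathrm{L}}$ in $\mathrm{GL}(d,\mathbb{R})$, which is an algebraic group by \cite[Lemma 6.15]{BQ}. Since the linear-part map $A$ is a morphism of algebraic groups taking the Zariski-closed subgroup $\mathbf{Z}_\Phi$ to a Zariski-closed subgroup containing $\mathbf{S}_\Phi^{\mathrm{L}}$, we have $A(\mathbf{Z}_\Phi)=\mathbf{G}$. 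Write $G:=\mathbf{G}^{0}$; by Whitney's theorem $\mathbf{G}$ has finitely many connected components (applied just as the paper does for $\mathbf{Z}_\Phi$), so $G$ has finite index in $\mathbf{G}$ and some power $A_i^{N}$ lies in $G$. I would then verify in order: (i) $G$ acts irreducibly on $\mathbb{R}^d$, because the finitely many $\mathbf{G}$-conjugates of any proper $G$-invariant subspace would form a finite family preserved by $\mathbf{S}_\Phi^{\mathrm{L}}$, contradicting strong irreducibility from (\ref{eq:m-ired and m-prox assump}); (ii) $G$ is reductive, since the unipotent radical $R_u(G)$ is normal and admits a nonzero fixed vector, so its fixed-point subspace is $G$-invariant and, by (i), equals $\mathbb{R}^d$; (iii) $G$ is not semisimple, since semisimplicity would force $\det|_G\equiv 1$ while $|\det A_i^{N}|=|\det A_i|^{N}<1$. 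Together with the decomposition $G=Z(G)^{0}\cdot[G,G]$ for connected reductive $G$, step (iii) yields $\dim Z(G)^{0}\ge 1$.

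The technical heart is to show $Z(G)^{0}\subset \mathbb{R}_{>0}I$. By Schur's lemma the commutant $\mathrm{End}_G(\mathbb{R}^d)$ is a finite-dimensional real division algebra, hence one of $\mathbb{R},\mathbb{C},\mathbb{H}$, and $Z(G)\subset \mathrm{End}_G(\mathbb{R}^d)^{\times}\cap\mathrm{GL}(d,\mathbb{R})$. To exclude $\mathbb{C}$ and $\mathbb{H}$, assume $J\in\mathrm{End}_G(\mathbb{R}^d)$ satisfies $J^2=-I$, and fix a $G$-invariant inner product on $\mathbb{R}^d$ in which $J$ is orthogonal (e.g.\ the real part of a Hermitian product compatible with $J$); then $J^{T}=-J$, and combining this with $Jg=gJ$ for $g\in G$ gives $Jg^{T}g=g^{T}gJ$. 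Every eigenspace of $g^{T}g$ is therefore $J$-invariant and of even real dimension, forcing $\alpha_1(g)=\alpha_2(g)$ for every $g\in G$. Since singular-value ratios in different inner products are comparable up to a multiplicative constant, this contradicts proximality of $\mathbf{S}_\Phi^{\mathrm{L}}\subset\mathbf{G}$ (proximality passes to Zariski closure). Hence $\mathrm{End}_G(\mathbb{R}^d)=\mathbb{R}$, so $Z(G)\subset\mathbb{R}^{*}I$, and the positive-dimensional connected subgroup $Z(G)^{0}$ must equal $\mathbb{R}_{>0}I$.

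To conclude, pick any $c\in\mathbb{R}_{>0}\setminus\{1\}$; then $cI\in G\subset \mathbf{G}=A(\mathbf{Z}_\Phi)$, so there exists $\phi\in\mathbf{Z}_\Phi$ with $A_\phi=cI$. Since $cI\ne I$, $\phi\ne 1_{\mathrm{A}}$, and setting $c_\phi:=c$ completes the proof. The main obstacle will be the proximality-to-scalar-commutant step: one has to justify that working in a $J$-invariant inner product is harmless for the singular-value argument, and that singular-value pairing forced by a complex or quaternionic structure really rules out proximality of $\mathbf{G}$. These points are standard but delicate, and they may be shortcut by invoking the corresponding structural results in \cite{BL} or \cite{BQ}.
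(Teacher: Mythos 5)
Your proposal takes a genuinely different route from the paper. The paper works directly with $\mathbf{Z}_{\Phi}$ at the Lie-algebra level: it uses the character $\psi\mapsto|\det A_{\psi}|$ to show $\mathfrak{z}=\mathrm{Lie}(\mathbf{Z}_{\Phi})$ is not semisimple, takes the connected subgroup $\mathbf{N}$ integrating a nonzero abelian ideal, and then combines normality of $\mathbf{N}$ in $\mathbf{Z}_{\Phi}^{0}$, connectedness of the orbit $\mathbf{Z}_{\Phi}^{0,\mathrm{L}}v$, irreducibility, and the existence (from proximality, via \cite[Lemmata 6.25 and 6.27]{BQ}) of a matrix in $\mathbf{Z}_{\Phi}^{0,\mathrm{L}}$ with $d$ distinct real eigenvalues to force $\mathbf{N}^{\mathrm{L}}\subset\mathbb{R}^{*}I$. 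You instead pass to $\mathbf{G}$, the Zariski closure of the linear parts, establish that $G:=\mathbf{G}^{0}$ is connected reductive with positive-dimensional centre, pin down $Z(G)^{0}$ as scalars via Schur/Frobenius plus a singular-value argument from proximality, and lift back. Your route is structurally cleaner and rests on standard reductive-group theory; the paper's is more elementary in the sense that it avoids the reductive structure theorem and the classification of real division algebras.

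Two steps, however, need repair. The phrase ``$G$-invariant inner product in which $J$ is orthogonal'' describes an object that does not exist: $G$ contains an element with $|\det|\neq 1$ (a suitable power of some $A_{i}$), so $G$ is not contained in any orthogonal group. Fortunately your argument never actually uses $G$-invariance, only that $J$ is orthogonal; such an inner product always exists (replace $\langle\cdot,\cdot\rangle_{0}$ by $\langle\cdot,\cdot\rangle_{0}+\langle J\cdot,J\cdot\rangle_{0}$), and everything downstream, namely $J$ commuting with $g^{T}g$, evenness of eigenspace dimensions forcing $\alpha_{1}(g)=\alpha_{2}(g)$, and comparability of singular-value ratios across inner products, goes through unchanged. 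As written the step invokes a nonexistent object, so it should be rephrased.

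The more substantive gap is the assertion that $A(\mathbf{Z}_{\Phi})=\mathbf{G}$ ``since the linear-part map $A$ is a morphism of algebraic groups taking the Zariski-closed subgroup $\mathbf{Z}_{\Phi}$ to a Zariski-closed subgroup containing $\mathbf{S}_{\Phi}^{\mathrm{L}}$.'' Over $\mathbb{R}$ this inference is invalid: the image of a real algebraic group under a morphism need not be Zariski closed in the real points (the squaring map on $\mathbb{R}^{\times}$ has image $\mathbb{R}_{>0}$, which is Zariski dense but not closed). What you actually need, and what is true, is $G\subset A(\mathbf{Z}_{\Phi})$. One clean route: $A(\mathbf{Z}_{\Phi})$ is Zariski dense in $\mathbf{G}$, so after complexification the morphism $\mathbf{Z}_{\Phi}^{\mathbb{C}}\to\mathbf{G}^{\mathbb{C}}$ is surjective, hence $dA:\mathrm{Lie}(\mathbf{Z}_{\Phi})\to\mathrm{Lie}(\mathbf{G})$ is surjective; therefore $A(\mathbf{Z}_{\Phi}^{0})$ is an open, hence closed, hence (by connectedness) full subgroup of $G$. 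Alternatively observe that $\ker(A|_{\mathbf{Z}_{\Phi}})$ is a vector group and invoke vanishing of Galois $H^{1}$ for vector groups. Either way this step requires a real argument rather than the one-line claim given. Once $\mathbb{R}_{>0}I=Z(G)^{0}\subset G\subset A(\mathbf{Z}_{\Phi})$ is in place, the concluding lift is correct.
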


\begin{proof}
Equip $\mathbf{Z}_{\Phi}$ with the subspace topology induced by the
norm topology on $\mathrm{A}_{d,d}^{\mathrm{vec}}$. Since $\mathbf{Z}_{\Phi}$
is a closed subgroup of $\mathrm{A}_{d,d}$ it follows that $\mathbf{Z}_{\Phi}$
is a Lie group, whose Lie algebra we denote by $\mathfrak{z}$. Write
$\mathbf{Z}_{\Phi}^{0}$ for the connected component of $\mathbf{Z}_{\Phi}$
containing the identity element $1_{\mathrm{A}}$. Then $\mathbf{Z}_{\Phi}^{0}$
is a closed normal subgroup of $\mathbf{Z}_{\Phi}$. Moreover, from
\cite[Theorem 3]{Wh} and since $\mathbf{Z}_{\Phi}$ can be naturally
identified with a Zariski closed subgroup of $\mathrm{SL}(d+2,\mathbb{R})$,
it follows that $\mathbf{Z}_{\Phi}^{0}$ has finite index in $\mathbf{Z}_{\Phi}$.

Write $\mathbb{R}_{>0}$ for the set of positive real numbers. With
respect to multiplication, it is a Lie group whose Lie algebra is
identified with $\mathbb{R}$. Let $F:\mathbf{Z}_{\Phi}\rightarrow\mathbb{R}_{>0}$
be with $F(\psi):=\left|\det(A_{\psi})\right|$ for $\psi\in\mathbf{Z}_{\Phi}$,
then $F$ is a Lie group homomorphism. From $\Phi\subset\mathbf{Z}_{\Phi}$
and since the maps in $\Phi$ are strict contractions, it follows
that $F(\psi)\ne1$ for some $\psi\in\mathbf{Z}_{\Phi}$. Since $\mathbf{Z}_{\Phi}/\mathbf{Z}_{\Phi}^{0}$
is finite, this clearly implies that $F(\psi)\ne1$ for some $\psi\in\mathbf{Z}_{\Phi}^{0}$.
Since $\mathbf{Z}_{\Phi}^{0}$ is connected, it follows that $F|_{\mathbf{Z}_{\Phi}^{0}}$
is surjective. From this and by \cite[Theorems 4.14 and 7.5]{Le}
we obtain that $dF_{1_{\mathrm{A}}}:\mathfrak{z}\rightarrow\mathbb{R}$
is surjective, where $dF_{1_{\mathrm{A}}}$ is the differential of
$F$ at $1_{\mathrm{A}}$.

Since $F$ is a Lie group homomorphism, it follows that $dF_{1_{\mathrm{A}}}$
is a Lie algebra homomorphism. Thus, since $dF_{1_{\mathrm{A}}}$
is surjective, we obtain that the lie algebras $\mathfrak{z}/\ker(dF_{1_{\mathrm{A}}})$
and $\mathbb{R}$ are isomorphic. In particular, $\mathfrak{z}/\ker(dF_{1_{\mathrm{A}}})$
is nonzero and abelian. Hence, since a quotient of a semisimple Lie
algebra remains semisimple, it follows that $\mathfrak{z}$ is not
semisimple. This implies that $\mathfrak{z}$ has a nonzero abelian
ideal $\mathfrak{i}$. Let $\mathbf{N}$ be the unique connected Lie
subgroup of $\mathbf{Z}_{\Phi}^{0}$ whose lie algebra is $\mathfrak{i}$.
Since $\mathfrak{i}$ is a nonzero ideal of $\mathfrak{z}$, it follows
that $\mathbf{N}\ne\{1_{\mathrm{A}}\}$ and that $\mathbf{N}$ is
normal in $\mathbf{Z}_{\Phi}^{0}$. Moreover, since $\mathfrak{i}$
is abelian we get that $\mathbf{N}$ is abelian.

Given a subgroup $\mathbf{H}$ of $\mathbf{Z}_{\Phi}$ we write $\mathbf{H}^{\mathrm{L}}:=\{A_{\psi}\::\:\psi\in\mathbf{H}\}$,
which is a subgroup of $\mathrm{GL}(d,\mathbb{R})$. Since $\mathbf{N}$
is abelian, $\mathbf{N}^{\mathrm{L}}$ is also abelian. Thus, there
exists $0\ne v\in\mathbb{C}^{d}$ so that for each $A\in\mathbf{N}^{\mathrm{L}}$
we have $Av=c_{A}v$ for some $c_{A}\in\mathbb{C}$. Since $\mathbf{Z}_{\Phi}^{0}$
is connected, it clearly follows that $\mathbf{Z}_{\Phi}^{0,\mathrm{L}}v:=\{Bv\::\:B\in\mathbf{Z}_{\Phi}^{0,\mathrm{L}}\}$
is a connected subset of $\mathbb{C}^{d}$.

Let $A\in\mathbf{N}^{\mathrm{L}}$ be fixed for the moment. Since
$\mathbf{N}$ is normal in $\mathbf{Z}_{\Phi}^{0}$, the group $\mathbf{N}^{\mathrm{L}}$
is normal in $\mathbf{Z}_{\Phi}^{0,\mathrm{L}}$. Hence, for each
$B\in\mathbf{Z}_{\Phi}^{0,\mathrm{L}}$ there exists $A'\in\mathbf{N}^{\mathrm{L}}$
so that $AB=BA'$, and so $ABv=BA'v=c_{A'}Bv$. Thus, for each $x\in\mathbf{Z}_{\Phi}^{0,\mathrm{L}}v$
there exist $c_{A,x}\in\mathbb{C}$ so that $Ax=c_{A,x}x$. Since
$c_{A,x}=\left\langle Ax,x\right\rangle /|x|^{2}$, the map $x\rightarrow c_{A,x}$
is continuous. From this and since $\mathbf{Z}_{\Phi}^{0,\mathrm{L}}v$
is connected, it follows that $C_{A}:=\{c_{A,x}\::\:x\in\mathbf{Z}_{\Phi}^{0,\mathrm{L}}v\}$
is a connected subset of $\mathbb{C}$. Moreover, since $C_{A}$ consists
of eigenvalues of $A$ it is clearly finite. This together with the
connectedness of $C_{A}$ implies that $C_{A}$ is a singleton. We
have thus shown that
\begin{equation}
Ax=c_{A}x\text{ for each }A\in\mathbf{N}^{\mathrm{L}}\text{ and }x\in\mathbf{Z}_{\Phi}^{0,\mathrm{L}}v.\label{eq:Ax=00003Dc_Ax}
\end{equation}

Denote by $V$ the $\mathbb{C}$-span of $\mathbf{Z}_{\Phi}^{0,\mathrm{L}}v$.
Clearly we have $B(V)=V$ for each $B\in\mathbf{Z}_{\Phi}^{0,\mathrm{L}}$.
Let us show that $V=\mathbb{C}^{d}$. Since $\mathbf{Z}_{\Phi}/\mathbf{Z}_{\Phi}^{0}$
is finite, $\mathbf{Z}_{\Phi}^{\mathrm{L}}/\mathbf{Z}_{\Phi}^{0,\mathrm{L}}$
is also finite. Since $\mathbf{S}_{\Phi}^{\mathrm{L}}$ is $k$-strongly
irreducible and $k$-proximal for each $1\le k<d$, the same clearly
holds for $\mathbf{Z}_{\Phi}^{\mathrm{L}}$. Thus, since $\mathbf{Z}_{\Phi}^{\mathrm{L}}/\mathbf{Z}_{\Phi}^{0,\mathrm{L}}$
is finite, it follows that $\mathbf{Z}_{\Phi}^{0,\mathrm{L}}$ is
$k$-strongly irreducible and $k$-proximal for each $1\le k<d$.
From this and \cite[Lemmata 6.25 and 6.27]{BQ}, we obtain that there
exists $D\in\mathbf{Z}_{\Phi}^{0,\mathrm{L}}$ with $d$ distinct
real eigenvalues.

From $D(V)=V$, we get that $V$ contains an eigenvector of $D$.
Moreover, since $D\in\mathrm{GL}(d,\mathbb{R})$ and since the eigenvalues
of $D$ are real and distinct, it follows that the eigenspaces of
$D$ are $1$-dimensional and are spanned by vectors in $\mathbb{R}^{d}$.
Thus, there exists $0\ne w\in\mathbb{R}^{d}\cap V$. Since $B(V)=V$
for each $B\in\mathbf{Z}_{\Phi}^{0,\mathrm{L}}$, it follows that
$\mathbf{Z}_{\Phi}^{0,\mathrm{L}}w:=\{Bw\::\:B\in\mathbf{Z}_{\Phi}^{0,\mathrm{L}}\}$
is contained in $V$. Additionally, from the irreducibility of $\mathbf{Z}_{\Phi}^{0,\mathrm{L}}$
it follows that the $\mathbb{R}$-span of $\mathbf{Z}_{\Phi}^{0,\mathrm{L}}w$
is equal to $\mathbb{R}^{d}$, which clearly implies that $V=\mathbb{C}^{d}$.
From this, by (\ref{eq:Ax=00003Dc_Ax}), and since $\mathbf{N}\ne\{1_{\mathrm{A}}\}$,
it follows that there exist $\phi\in\mathbf{Z}_{\Phi}\setminus\{1_{\mathrm{A}}\}$
and $c_{\phi}\in\mathbb{R}\setminus\{0\}$ with $A_{\phi}=c_{\phi}I$,
which completes the proof of the lemma.
\end{proof}
\begin{lem}
\label{lem:K not cont in V}There does not exist a proper affine subspace
$V$ of $\mathbb{R}^{d}$ so that $K_{\Phi}\subset V$.
\end{lem}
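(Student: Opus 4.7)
Assume toward a contradiction that there exists a proper affine subspace of $\mathbb{R}^{d}$ containing $K_{\Phi}$, and let $W$ be the affine hull of $K_{\Phi}$, so that $W$ is itself a proper affine subspace of $\mathbb{R}^{d}$. The plan is to show that the linear part of $W$ must be invariant under every $A_{i}$, and then to derive a contradiction from assumptions (\ref{eq:no com fix asump}) and (\ref{eq:m-ired and m-prox assump}).

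First I would show that $\varphi_{i}(W)=W$ for each $i\in\Lambda$. Since $K_{\Phi}=\cup_{j\in\Lambda}\varphi_{j}(K_{\Phi})$, we have $\varphi_{i}(K_{\Phi})\subset K_{\Phi}\subset W$, and hence $\varphi_{i}(W)\subset W$ because the affine hull of $\varphi_{i}(K_{\Phi})$ is $\varphi_{i}(W)$. As $\varphi_{i}\in\mathrm{A}_{d,d}$ is invertible, $\varphi_{i}(W)$ is an affine subspace of the same dimension as $W$; two affine subspaces of equal dimension with one contained in the other must coincide, giving $\varphi_{i}(W)=W$.

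Next, write $W=x_{0}+U$ for some $x_{0}\in W$ and some linear subspace $U\subset\mathbb{R}^{d}$. The identity $\varphi_{i}(x_{0}+U)=x_{0}+U$ unwinds to $A_{i}(U)=U$ (as linear subspaces) together with $(A_{i}-I)x_{0}+a_{i}\in U$. In particular, every product $A_{u}$ with $u\in\Lambda^{*}$ satisfies $A_{u}(U)=U$, so the orbit of $U$ under $\mathbf{S}_{\Phi}^{\mathrm{L}}$ consists of the single subspace $U$.

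Finally, I would split into two cases. If $U=\{0\}$, then $W=\{x_{0}\}$ and $\varphi_{i}(x_{0})=x_{0}$ for every $i\in\Lambda$, contradicting (\ref{eq:no com fix asump}). Otherwise $U$ is a proper nonzero linear subspace of $\mathbb{R}^{d}$ whose $\mathbf{S}_{\Phi}^{\mathrm{L}}$-orbit is finite, contradicting the strong irreducibility of $\mathbf{S}_{\Phi}^{\mathrm{L}}$ guaranteed by (\ref{eq:m-ired and m-prox assump}) with $m=1$. The only step requiring any care is the dimension-equality argument establishing $\varphi_{i}(W)=W$; everything else is a direct appeal to the standing hypotheses.
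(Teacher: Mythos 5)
Your proposal is correct and follows essentially the same route as the paper's own argument: take the affine hull of $K_{\Phi}$, observe that each $\varphi_{i}$ preserves it (hence $A_{i}$ preserves its linear part $U$), and then rule out $U=\{0\}$ via (\ref{eq:no com fix asump}) and rule out a proper nonzero $U$ via irreducibility. The only cosmetic difference is that you explicitly invoke strong irreducibility to get a contradiction from the finite orbit, while the paper just uses ordinary irreducibility (no proper invariant subspace), which already suffices here.
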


\begin{proof}
Let $V$ be the affine span of $K_{\Phi}$. There exists $x\in\mathbb{R}^{d}$
and a linear subspace $W$ of $\mathbb{R}^{d}$ so that $V=x+W$.
By assumption (\ref{eq:no com fix asump}) it follows that $K_{\Phi}$
is not a singleton, which implies that $W\ne\{0\}$.

Given $i\in\Lambda$ the affine span of $\varphi_{i}(K_{\Phi})$ is
equal to $\varphi_{i}(x)+A_{i}(W)$. From this and $\varphi_{i}(K_{\Phi})\subset K_{\Phi}$,
it follows that $\varphi_{i}(x)+A_{i}(W)\subset x+W$, which gives
$A_{i}(W)=W$. Thus, from $W\ne\{0\}$ and since $\mathbf{S}_{\Phi}^{\mathrm{L}}$
is irreducible, it follows that $W=\mathbb{R}^{d}$. Hence $V=\mathbb{R}^{d}$,
which completes the proof of the lemma.
\end{proof}
Recall that for $a\in\mathbb{R}^{d}$ the map $T_{a}\in\mathrm{A}_{d,d}$
is given by $T_{a}x=x+a$ for $x\in\mathbb{R}^{d}$.
\begin{lem}
\label{lem:T_a in Z_Phi all a}For each $a\in\mathbb{R}^{d}$ we have
$T_{a}\in\mathbf{Z}_{\Phi}$.
\end{lem}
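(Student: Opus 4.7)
The plan is to show that the set of translations in $\mathbf{Z}_\Phi$ corresponds, via $T_a \leftrightarrow a$, to an $\mathbf{S}_\Phi^{\mathrm{L}}$-invariant subspace of $\mathbb{R}^d$, and then eliminate the possibility that this subspace is trivial. Concretely, let $T := \{\psi \in \mathbf{Z}_\Phi : A_\psi = I\}$. Since $\mathbf{Z}_\Phi$ is a group, $T$ is a subgroup of $\mathbf{Z}_\Phi$, and because $\psi T_a \psi^{-1} = T_{A_\psi a}$ for every $\psi \in \mathbf{Z}_\Phi$ and $a \in \mathbb{R}^d$, the set $W := \{a \in \mathbb{R}^d : T_a \in T\}$ is a linear subspace of $\mathbb{R}^d$ stable under $\mathbf{Z}_\Phi^{\mathrm{L}}$, and in particular under $\mathbf{S}_\Phi^{\mathrm{L}}$. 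Since $\mathbf{S}_\Phi^{\mathrm{L}}$ is strongly irreducible, no nonzero proper subspace is invariant, so either $W = \mathbb{R}^d$ (which is what we want) or $W = \{0\}$. The remainder of the proof rules out the latter.

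Assume for contradiction $W = \{0\}$. Apply Lemma~\ref{lem:exist map with scal lin part} to obtain $\phi \in \mathbf{Z}_\Phi \setminus \{1_{\mathrm{A}}\}$ with $A_\phi = c_\phi I$ for some $c_\phi \neq 0$. If $c_\phi = 1$, then $\phi$ is itself a nontrivial translation in $\mathbf{Z}_\Phi$, contradicting $W = \{0\}$; hence $c_\phi \neq 1$. For any $\psi \in \mathbf{Z}_\Phi$ the commutator $[\phi,\psi] = \phi\psi\phi^{-1}\psi^{-1}$ has linear part $(c_\phi I) A_\psi (c_\phi I)^{-1} A_\psi^{-1} = I$, so $[\phi,\psi] \in T = \{1_{\mathrm{A}}\}$. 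Thus $\phi$ is central in $\mathbf{Z}_\Phi$, and in particular commutes with every $\varphi_i \in \Phi$.

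Writing $\phi(x) = c_\phi x + a_\phi$ and equating the translation parts of $\phi \circ \varphi_i$ and $\varphi_i \circ \phi$ gives $(c_\phi - 1) a_i = (A_i - I) a_\phi$ for every $i \in \Lambda$. Setting $x_0 := a_\phi / (1 - c_\phi)$ (well-defined since $c_\phi \neq 1$), this rearranges to $a_i = (I - A_i) x_0$, i.e.\ $\varphi_i(x_0) = x_0$ for every $i \in \Lambda$. This common fixed point contradicts assumption (\ref{eq:no com fix asump}), completing the proof. The only real conceptual input is the existence of a nontrivial element of $\mathbf{Z}_\Phi$ with scalar linear part, and that is exactly what Lemma~\ref{lem:exist map with scal lin part} provides by exploiting non-semisimplicity of the Lie algebra of $\mathbf{Z}_\Phi$; conditional on that lemma, the argument is a short two-step reduction (irreducibility dichotomy, then centrality forces a common fixed point).
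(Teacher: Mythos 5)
Your proposal takes a genuinely different route from the paper's in the crucial step of showing $W := \{a : T_a \in \mathbf{Z}_\Phi\}$ is nonzero. The paper proves this directly: it picks $z \in K_\Phi$ with $\phi(z) \neq z$ (which exists by Lemma~\ref{lem:K not cont in V}), forms the commutators $\tau_n := \phi\circ\varphi_{\omega|_n}\circ\phi^{-1}\circ\varphi_{\omega|_n}^{-1}$ with $\Pi\omega=z$, and passes to the limit $n\to\infty$ inside the closed set $\mathbf{Z}_\Phi$ to produce the nontrivial translation $T_{\phi(z)-z}$. You instead run the argument by contradiction: if $W=\{0\}$ then every commutator $[\phi,\psi]$ (which has trivial linear part because $A_\phi$ is scalar) must be $1_\mathrm{A}$, so $\phi$ is central in $\mathbf{Z}_\Phi$; commuting $\phi$ with each $\varphi_i$ and solving $(c_\phi-1)a_i = (A_i-I)a_\phi$ produces a common fixed point $x_0 = a_\phi/(1-c_\phi)$, contradicting (\ref{eq:no com fix asump}). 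This is a clean, purely algebraic replacement for the paper's analytic limit, and the computation checks out (including the trivial sub-case $c_\phi=1$). Both arguments ultimately rest on assumption (\ref{eq:no com fix asump}) --- the paper indirectly through Lemma~\ref{lem:K not cont in V}, you directly --- so neither buys weaker hypotheses, but yours is more self-contained since it avoids the coding map entirely.

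There is, however, one gap you should close. You assert that $W$ is a \emph{linear subspace} of $\mathbb{R}^d$, but your argument only establishes that it is an additive subgroup (via $T_{a+b}=T_a\circ T_b$, $T_{-a}=T_a^{-1}$) that is stable under $\mathbf{Z}_\Phi^\mathrm{L}$ (via $\psi T_a\psi^{-1}=T_{A_\psi a}$). Closure under multiplication by arbitrary real scalars is genuinely nontrivial, and you need it: strong irreducibility only rules out proper invariant \emph{linear subspaces}, not proper invariant additive subgroups (an invariant subgroup could a priori be dense without being all of $\mathbb{R}^d$). The paper closes this exactly: for $x\in W$ and $s\in\mathbb{R}$, take any polynomial $P$ vanishing on $\mathbf{Z}_\Phi$, observe that $Q(t):=P(T_{tx})$ is a one-variable polynomial vanishing at every positive integer (since $T_x^n = T_{nx}\in\mathbf{Z}_\Phi$), hence identically zero, so $P(T_{sx})=0$ and $T_{sx}\in\mathbf{Z}_\Phi$. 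Alternatively one can invoke that $\{ \psi\in\mathbf{Z}_\Phi : A_\psi = I \}$ is a Zariski-closed, torsion-free (hence connected) subgroup of the vector group $(\mathbb{R}^d,+)$ and so is a linear subspace, but some such argument must appear; as written, the step from ``$W$ is a subgroup'' to ``the irreducibility dichotomy applies'' is unjustified.
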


\begin{proof}
Set $W:=\{x\in\mathbb{R}^{d}\::\:T_{x}\in\mathbf{Z}_{\Phi}\}$, so
that we need to show that $W=\mathbb{R}^{d}$. Let us first show that
$W\ne\{0\}$. By Lemma \ref{lem:exist map with scal lin part}, there
exist $\phi\in\mathbf{Z}_{\Phi}\setminus\{1_{\mathrm{A}}\}$ and $c_{\phi}\in\mathbb{R}\setminus\{0\}$
so that $A_{\phi}=c_{\phi}I$. From $\phi\ne1_{\mathrm{A}}$ and by
Lemma \ref{lem:K not cont in V}, it follows that there exists $z\in K_{\Phi}$
so that $\phi(z)\ne z$.

Let $\omega\in\Lambda^{\mathbb{N}}$ be with $\Pi\omega=z$, and for
$n\ge1$ set $\tau_{n}:=\phi\circ\varphi_{\omega|_{n}}\circ\phi^{-1}\circ\varphi_{\omega|_{n}}^{-1}$.
Direct computation gives
\[
\tau_{n}(0)=c_{\phi}\varphi_{\omega|_{n}}(0)+a_{\phi}-\varphi_{\omega|_{n}}(0)-A_{\omega|_{n}}a_{\phi},
\]
where recall that $a_{\phi}$ denotes the translation part of $\phi$.
Thus, since $z=\Pi\omega=\underset{n\rightarrow\infty}{\lim}\:\varphi_{\omega|_{n}}(0)$
and $\underset{n\rightarrow\infty}{\lim}\:A_{\omega|_{n}}a_{\phi}=0$,
\[
\underset{n\rightarrow\infty}{\lim}\:\tau_{n}(0)=c_{\phi}z+a_{\phi}-z=\phi(z)-z\ne0.
\]
From this and since $\tau_{n}\in\mathbf{Z}_{\Phi}$ and $A_{\tau_{n}}=I$
for each $n\ge1$, it follows that $W\ne\{0\}$.

Let us next show that $W$ is a linear subspace of $\mathbb{R}^{d}$.
For $x,y\in W$ we have $T_{x+y}=T_{x}\circ T_{y}\in\mathbf{Z}_{\Phi}$, and so $x+y\in W$.
Let $x\in W$ and $s\in\mathbb{R}$ be given. We next show that $T_{sx}\in\mathbf{Z}_{\Phi}$,
which implies $sx\in W$. Since $\mathbf{Z}_{\Phi}$ is a
Zariski closed in $\mathrm{A}_{d,d}$, it suffices to show that $P(T_{sx})=0$
for every polynomial map $P:\mathrm{A}_{d,d}^{\mathrm{vec}}\rightarrow\mathbb{R}$
with $\mathbf{Z}_{\Phi}\subset P^{-1}\{0\}$.

Let $P$ be such a polynomial map, and for $t\in\mathbb{R}$ set $Q(t):=P(T_{tx})$.
Since $Q$ is a polynomial of a single real variable, it is either
the zero polynomial or it has at most finitely many roots. Since $x\in W$
we have $T_{x}\in\mathbf{Z}_{\Phi}$, and so $T_{x}^{n}\in\mathbf{Z}_{\Phi}$
for each $n\ge1$. Thus, from $\mathbf{Z}_{\Phi}\subset P^{-1}\{0\}$,
\[
Q(n)=P(T_{nx})=P(T_{x}^{n})=0\text{ for all }n\ge1.
\]
This shows that $Q$ is the zero polynomial, and in particular that
$P(T_{sx})=Q(s)=0$. As noted above, this implies that $sx\in W$,
which shows that $W$ is a linear subspace of $\mathbb{R}^{d}$.

We can now complete the proof. Given $\psi\in\mathbf{S}_{\Phi}$ and
$x\in W$ we have $\psi\circ T_{x}\circ\psi^{-1}\in\mathbf{Z}_{\Phi}$.
Moreover,
\[
\psi\circ T_{x}\circ\psi^{-1}(y)=y+A_{\psi}x\text{ for every }y\in\mathbb{R}^{d},
\]
which implies that $A_{\psi}x\in W$.
We have thus shown that $A(W)=W$ for every $A\in\mathbf{S}_{\Phi}^{\mathrm{L}}$.
From this, since $\mathbf{S}_{\Phi}^{\mathrm{L}}$ is irreducible,
and since $W$ is a nonzero linear subspace of $\mathbb{R}^{d}$,
it follows that $W=\mathbb{R}^{d}$, which completes the proof of
the lemma.
\end{proof}
\begin{proof}[Proof of Proposition \ref{prop:no psi exist}]
Let $\pi\in\mathrm{A}_{d,m}$ be linear, and assume by contradiction
that there exists $0\ne\psi\in\mathrm{A}_{d,m}^{\mathrm{vec}}$ so
that $\psi(x)\in\pi L_{m-1}(x)$ for $\mu$-a.e. $x$. Then for $\beta$-a.e.
$\omega$,
\[
\psi(\Pi\omega)\in\pi L_{m-1}(\Pi\omega)=\pi L_{m-1}(\omega).
\]
Thus, by (\ref{eq:equivar of coding map}) and (\ref{eq:equivar of bd maps}) and since $\beta_{[u]}\ll\beta$ for $u\in\Lambda^*$,
it follows that for all $u\in\Lambda^{*}$ and $\beta$-a.e. $\omega$
\begin{equation}
\psi(\varphi_{u}(\Pi\omega))=\psi(\Pi(u\omega))\in\pi L_{m-1}(u\omega)=\pi A_{u}L_{m-1}(\omega).\label{eq:equivariant for all u}
\end{equation}
Fix an $\omega\in\Lambda^{\mathbb{N}}$ for which \ref{eq:equivariant for all u}
holds for all $u\in\Lambda^{*}$, set $x=\Pi\omega$, and let $x_{1},...,x_{m-1}\in L_{m-1}(\omega)$
be independent vectors.

Recall from Section \ref{subsec:spaces of alt forms} that $\wedge^{m}\mathbb{R}^{m}$
is equipped with a norm $\Vert\cdot\Vert$. Let $P:\mathrm{A}_{d,d}^{\mathrm{vec}}\rightarrow\mathbb{R}$
be with
\[
P(\varphi):=\Vert\psi(\varphi(x))\wedge\pi A_{\varphi}x_{1}\wedge...\wedge\pi A_{\varphi}x_{m-1}\Vert^{2}\text{ for }\varphi\in\mathrm{A}_{d,d}^{\mathrm{vec}},
\]
and note that $P$ is a polynomial map. By the choice of $\omega$
\[
\psi(\varphi_{u}(x))\in\mathrm{span}\{\pi A_{\varphi_{u}}x_{1},...,\pi A_{\varphi_{u}}x_{m-1}\}\text{ for all }u\in\Lambda^{*}.
\]
This together with (\ref{eq:equiv cond for lin indep}) implies that
$P(\varphi)=0$ for all $\varphi\in\mathbf{S}_{\Phi}$. Thus, since
$\mathbf{Z}_{\Phi}$ the Zariski closure of $\mathbf{S}_{\Phi}$,
\begin{equation}
\psi(\varphi(x))\wedge\pi A_{\varphi}x_{1}\wedge...\wedge\pi A_{\varphi}x_{m-1}=0\:\text{ for all }\varphi\in\mathbf{Z}_{\Phi}.\label{eq:wedge =00003D0 for all varphi}
\end{equation}

Since $\psi\ne0$, there exists $y\in\mathbb{R}^{d}$ so that $\psi(y)\ne0$.
By Lemma \ref{lem:T_a in Z_Phi all a}, we have $T_{a}\in\mathbf{Z}_{\Phi}$
for all $a\in\mathbb{R}^{d}$. In particular there exists $\varphi\in\mathbf{Z}_{\Phi}$
so that $\varphi(x)=y$, and so (\ref{eq:wedge =00003D0 for all varphi})
is not possible when $m=1$. Thus, we may assume that $m>1$.

Let $F:\wedge^{m-1}\mathbb{R}^{d}\rightarrow\wedge^{m}\mathbb{R}^{m}$
be the linear map with
\[
F(z_{1}\wedge...\wedge z_{m-1})=\psi(y)\wedge\pi z_{1}\wedge...\wedge\pi z_{m-1}\text{ for }z_{1},...,z_{m-1}\in\mathbb{R}^{d}.
\]
Since $\pi$ is surjective and $\psi(y)\ne0$, we have $\ker(F)\ne\wedge^{m-1}\mathbb{R}^{d}$.
From this, since $\mathbf{S}_{\Phi}^{\mathrm{L}}$ is $(m-1)$-irreducible,
and since $x_{1}\wedge...\wedge x_{m-1}\ne0$, it follows that there
exists $\phi\in\mathbf{Z}_{\Phi}$ so that
\[
F(\wedge^{m-1}A_{\phi}(x_{1}\wedge...\wedge x_{m-1}))\ne0.
\]
That is,
\[
\psi(y)\wedge\pi A_{\phi}x_{1}\wedge...\wedge\pi A_{\phi}x_{m-1}\ne0.
\]

Now set $\varphi:=T_{y-\phi(x)}\circ\phi$, then by Lemma \ref{lem:T_a in Z_Phi all a}
we have $\varphi\in\mathbf{Z}_{\Phi}$. Moreover, since $A_{\varphi}=A_{\phi}$
and $\varphi(x)=y$,
\[
\psi(\varphi(x))\wedge\pi A_{\varphi}x_{1}\wedge...\wedge\pi A_{\varphi}x_{m-1}=\psi(y)\wedge\pi A_{\phi}x_{1}\wedge...\wedge\pi A_{\phi}x_{m-1}\ne0.
\]
But this contradicts (\ref{eq:wedge =00003D0 for all varphi}), which
completes the proof of the proposition.
\end{proof}

\subsection{\label{subsec:Proof-of-ent inc thm}Proof of Theorem \ref{thm:ent increase result}}

For the proof of the theorem we also need the following two lemmas.
\begin{lem}
\label{lem:limit step in ent inc pf}Let $B\subset\mathrm{A}_{d,m}^{\mathrm{vec}}$
be compact, let $\xi,\xi_{1},\xi_{2},...\in\mathcal{M}(B)$ be such
that $\xi_{k}\overset{k}{\rightarrow}\xi$ weakly, let $\pi,\pi_{1},\pi_{2},...\in\mathrm{A}_{d,m}$
be linear and with $\Vert\pi-\pi_{k}\Vert_{op}\overset{k}{\rightarrow}0$,
and let $\delta_{1},\delta_{2},...\in(0,\infty)$ be with $\delta_{k}\overset{k}{\rightarrow}0$.
Suppose that for each $k\ge1$,
\begin{equation}
\mu\left\{ x\::\:\xi_{k}\ldotp x\mbox{ is }(\pi_{k}L_{m-1}(x),\delta_{k})\text{-concentrated}\right\} >1-\delta_{k}.\label{eq:conc assumption}
\end{equation}
Then there exists a map $T:\mathbb{R}^{d}\rightarrow\mathbb{R}^{m}$
with
\[
\xi\ldotp x\left(T(x)+\pi L_{m-1}(x)\right)=1\text{ for }\mu\text{-a.e. }x.
\]
\end{lem}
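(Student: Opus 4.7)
The strategy has three main parts: first, to arrange (using strong irreducibility) that $\pi$ and the $\pi_k$ all restrict to full-rank maps on $L_{m-1}(x)$, so that $\pi_k L_{m-1}(x)\to\pi L_{m-1}(x)$ in $\mathrm{Gr}_{m-1}(m)$; second, to extract a subsequence along which the concentration holds $\mu$-almost surely and along which bounded centers $y_k(x)$ converge to the desired $T(x)$; and third, to use weak convergence of $\xi_{k}\ldotp x\to\xi\ldotp x$ together with the shrinking neighborhoods $(\pi_{k}L_{m-1}(x))^{(\delta_{k})}$ to pin $\xi\ldotp x$ onto $T(x)+\pi L_{m-1}(x)$.

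For the rank-stability step, I would extend the argument of Lemma \ref{lem:nu=00007Bkappa<del=00007D<eps}. The condition $V\cap W\ne\{0\}$ for $V\in\mathrm{Gr}_{m-1}(d)$ and $W\in\mathrm{Gr}_{d-m}(d)$ is equivalent to $\iota_{m-1}(V)\wedge\gamma=0$ in $\wedge^{d-1}\mathbb{R}^{d}$, where $\iota_{d-m}(W)=\gamma\mathbb{R}$. The linear map $T:\wedge^{m-1}\mathbb{R}^{d}\to\wedge^{d-1}\mathbb{R}^{d}$ given by $T(\zeta)=\zeta\wedge\gamma$ is nonzero, so $\ker T$ is a proper subspace of $\wedge^{m-1}\mathbb{R}^{d}$. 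By $(m-1)$-strong irreducibility of $\mathbf{S}_{\Phi}^{\mathrm{L}}$ and \cite[Proposition III.2.3]{BL}, $\iota_{m-1}\nu_{m-1}\{\zeta\mathbb{R}:\zeta\mathbb{R}\subset\ker T\}=0$, so $\nu_{m-1}\{V:V\cap W\ne\{0\}\}=0$ for every $W\in\mathrm{Gr}_{d-m}(d)$. Since $L_{m-1}\mu=\nu_{m-1}$ (Remark \ref{rem:L_m-1mu=00003Dnu_m-1}) and by a countable intersection over $k$, there is a full-$\mu$-measure set of $x\in\mathbb{R}^{d}$ on which $L_{m-1}(x)\cap\ker\pi=\{0\}$ and $L_{m-1}(x)\cap\ker\pi_{k}=\{0\}$ for every $k$. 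On this set the restrictions of $\pi$ and $\pi_{k}$ to $L_{m-1}(x)$ are injective, so $\pi_{k}L_{m-1}(x),\,\pi L_{m-1}(x)\in\mathrm{Gr}_{m-1}(m)$ and, by continuity of $(A,V)\mapsto A(V)$ on pairs of constant rank, $\pi_{k}L_{m-1}(x)\to\pi L_{m-1}(x)$ in $\mathrm{Gr}_{m-1}(m)$.

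Next, pass to a subsequence (still called $k$) with $\sum_{k}\delta_{k}<\infty$. By Borel--Cantelli, for $\mu$-a.e.\ $x$ the measure $\xi_{k}\ldotp x$ is $(\pi_{k}L_{m-1}(x),\delta_{k})$-concentrated for all sufficiently large $k$. Fix such a typical $x$. Since $B\subset\mathrm{A}_{d,m}^{\mathrm{vec}}$ is compact, the set $\{\psi(x):\psi\in B\}$ lies in a ball $B(0,R)$ with $R=R(x,B)$, so $\mathrm{supp}(\xi_{k}\ldotp x)\subset B(0,R)$ for all $k$. After translating along $\pi_{k}L_{m-1}(x)$ we may assume the concentration center $y_{k}(x)$ lies in $B(0,R+1)$. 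Extract a subsequence $y_{k_{j}}(x)\to T(x)\in\overline{B(0,R+1)}$. Since the evaluation map $\psi\mapsto\psi(x)$ is continuous on $B$, weak convergence $\xi_{k_{j}}\to\xi$ gives $\xi_{k_{j}}\ldotp x\to\xi\ldotp x$ weakly.

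It remains to show $\xi\ldotp x\bigl(T(x)+\pi L_{m-1}(x)\bigr)=1$. Fix $\epsilon>0$ and consider the closed set $C_{\epsilon}=\{z\in\mathbb{R}^{m}:d(z,T(x)+\pi L_{m-1}(x))\ge\epsilon\}$. Using the convergence $\pi_{k_{j}}L_{m-1}(x)\to\pi L_{m-1}(x)$ in the Grassmannian together with $y_{k_{j}}(x)\to T(x)$ and $\delta_{k_{j}}\to 0$, one checks that for all large $j$,
\[
\bigl(y_{k_{j}}(x)+(\pi_{k_{j}}L_{m-1}(x))^{(\delta_{k_{j}})}\bigr)\cap B(0,2R)\subset\bigl(T(x)+\pi L_{m-1}(x)\bigr)^{(\epsilon)},
\]
whence $\xi_{k_{j}}\ldotp x(C_{\epsilon})\le\delta_{k_{j}}$. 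By the Portmanteau theorem, $\xi\ldotp x(C_{\epsilon})\le\liminf_{j}\xi_{k_{j}}\ldotp x(C_{\epsilon})=0$. Letting $\epsilon\downarrow 0$ yields the desired equality. The map $T$ can be chosen Borel via a standard measurable selection argument applied to the subsequential-limit construction (the lemma only asserts existence as a map, not measurability). The main obstacle is the rank-stability step, which is the only point where the strong irreducibility hypotheses are used essentially; without it, the neighborhoods $(\pi_{k}L_{m-1}(x))^{(\delta_{k})}$ need not shrink to $\pi L_{m-1}(x)$ within bounded regions and the conclusion would fail.
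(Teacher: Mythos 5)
Your proof follows essentially the same strategy as the paper: pass to a summable subsequence and apply Borel--Cantelli; use compactness of the set of possible centers to extract a subsequential limit $T(x)$; use weak convergence of $\xi_{k_j}\ldotp x$ to $\xi\ldotp x$; and pin $\xi\ldotp x$ onto the limiting affine subspace by shrinking the concentration windows. (The paper phrases the last step via the distance functions $f_{y,\pi'}(\psi)=\inf\{|\psi(x)-z|:z\in y+\pi'L_{m-1}(x)\}$ and their uniform convergence, while you phrase it directly in terms of neighborhoods, but this is the same idea.)

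There is one genuine error in the final step. For a \emph{closed} set $C$, the Portmanteau theorem gives $\limsup_{j}\xi_{k_j}\ldotp x(C)\le\xi\ldotp x(C)$, i.e.\ upper semicontinuity; it does \emph{not} give $\xi\ldotp x(C)\le\liminf_j\xi_{k_j}\ldotp x(C)$, which is the inequality you invoke for $C_{\epsilon}$. As written, the conclusion $\xi\ldotp x(C_{\epsilon})=0$ does not follow. The fix is to apply Portmanteau in the correct direction to the closed set $F_{\epsilon}:=\left(T(x)+\pi L_{m-1}(x)\right)^{(\epsilon)}$: from your inclusion one gets $\xi_{k_j}\ldotp x(F_{\epsilon})\ge 1-\delta_{k_j}$ for large $j$, and then $\xi\ldotp x(F_{\epsilon})\ge\limsup_j\xi_{k_j}\ldotp x(F_{\epsilon})=1$; finally $\bigcap_{\epsilon>0}F_{\epsilon}=T(x)+\pi L_{m-1}(x)$ and continuity of measure from above give the claim. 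With this correction the argument is sound. The rank-stability paragraph, while a reasonable precaution, is more than is strictly needed (it suffices to ensure $\dim\pi L_{m-1}(x)=m-1$ for $\mu$-a.e.\ $x$, which is what the paper does via Lemma \ref{lem:nu=00007Bkappa<del=00007D<eps}; the $\pi_k$ need not be handled, since the concentration assumption is about neighborhoods of subspaces and does not require those subspaces to be $(m-1)$-dimensional).
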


\begin{proof}
By moving to a subsequence without changing the notation, we may assume
that $\sum_{k\ge1}\delta_{k}<\infty$. Thus, by (\ref{eq:conc assumption})
and by the Borel-Cantelli lemma, there exists a Borel subset $E$
of $\mathbb{R}^{d}$ with $\mu(E)=1$, so that for each $x\in E$
there exists $K_{x}\ge1$ such that
\begin{equation}
\xi_{k}\ldotp x\text{ is }(\pi_{k}L_{m-1}(x),\delta_{k})\text{-concentrated for each }k\ge K_{x}.\label{eq:main prop of E}
\end{equation}
Since $L_{m-1}\mu=\nu_{m-1}$ (see Remark \ref{rem:L_m-1mu=00003Dnu_m-1})
and by Lemma \ref{lem:nu=00007Bkappa<del=00007D<eps}, we may also
assume that $\dim(\pi L_{m-1}(x))=m-1$ for all $x\in E$ while still
having $\mu(E)=1$.

Fix $x\in E$, and set $B.x:=\{\psi(x)\::\:\psi\in B\}$. For
$y\in\mathbb{R}^{m}$ and $\pi'\in\mathrm{A}_{d,m}$ let $f_{y,\pi'}:\mathrm{A}_{d,m}^{\mathrm{vec}}\rightarrow\mathbb{R}$
be such that
\[
f_{y,\pi'}(\psi)=\inf\left\{ \left|\psi(x)-z\right|\::\:z\in y+\pi'L_{m-1}(x)\right\} \text{ for }\psi\in\mathrm{A}_{d,m}^{\mathrm{vec}}.
\]
By (\ref{eq:main prop of E}), it follows that for each $k\ge K_{x}$
there exists $T_{k}(x)\in B.x$ so that
\begin{equation}
\int f_{T_{k}(x),\pi_{k}}\:d\xi_{k}=O_{B,x}(\delta_{k}).\label{eq:=00003DO(delta_k)}
\end{equation}

Since $B.x$ is compact, there exist an increasing sequence $\{k_{j}\}_{j\ge1}\subset\mathbb{N}$
and $T(x)\in\mathbb{R}^{m}$ so that $T_{k_{j}}(x)\overset{j}{\rightarrow}T(x)$.
Since $f_{T(x),\pi}$ is continuous and $\xi_{k}\overset{k}{\rightarrow}\xi$
weakly,
\begin{equation}
\int f_{T(x),\pi}\:d\xi=\underset{k\rightarrow\infty}{\lim}\int f_{T(x),\pi}\:d\xi_{k}.\label{eq:from conv in dest}
\end{equation}
From $\dim(\pi L_{m-1}(x))=m-1$, $\Vert\pi-\pi_{k}\Vert_{op}\overset{k}{\rightarrow}0$
and $T_{k_{j}}(x)\overset{j}{\rightarrow}T(x)$, it follows that $f_{T_{k_{j}}(x),\pi_{k_{j}}}\overset{j}{\rightarrow}f_{T(x),\pi}$
uniformly on $B$. Thus,
\[
\underset{j\rightarrow\infty}{\lim}\left|\int f_{T(x),\pi}\:d\xi_{k_{j}}-\int f_{T_{k_{j}}(x),\pi_{k_{j}}}\:d\xi_{k_{j}}\right|=0.
\]
From this, (\ref{eq:=00003DO(delta_k)}), (\ref{eq:from conv in dest})
and $\delta_{k}\overset{k}{\rightarrow}0$, it follows that $\int f_{T(x),\pi}\:d\xi=0$.
By the definition of $f_{T(x),\pi}$, this implies
\[
\xi\ldotp x\left(T(x)+\pi L_{m-1}(x)\right)=1.
\]
Since this holds for all $x\in E$ and $\mu(E)=1$, the proof
is now complete.
\end{proof}
\begin{lem}
\label{lem:xi eval to lines}Let $\xi\in\mathcal{M}(\mathrm{A}_{d,m}^{\mathrm{vec}})$
and a linear $\pi\in\mathrm{A}_{d,m}$ be given. Suppose that $\xi$
is not a mass point, and that there exists a map $T:\mathbb{R}^{d}\rightarrow\mathbb{R}^{m}$
with
\begin{equation}
\xi\ldotp x\left(T(x)+\pi L_{m-1}(x)\right)=1\text{ for }\mu\text{-a.e. }x.\label{eq:xi.x supp on proj line}
\end{equation}
Then there exists $0\ne\psi\in\mathrm{A}_{d,m}^{\mathrm{vec}}$ so
that $\psi(x)\in\pi L_{m-1}(x)$ for $\mu$-a.e. $x$.
\end{lem}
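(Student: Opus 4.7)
The plan is to exploit the assumption that $\xi$ is not a mass point by taking differences of two independent samples. The key observation is that if $\psi_1,\psi_2$ both take the value $x$ into the \emph{affine} subspace $T(x)+\pi L_{m-1}(x)$, then the difference $\psi_1-\psi_2\in\mathrm{A}_{d,m}^{\mathrm{vec}}$ sends $x$ into the \emph{linear} subspace $\pi L_{m-1}(x)$, which is exactly the conclusion we want. Furthermore $T$ disappears from the picture, which is convenient because we are not told that $T$ is Borel.

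More precisely, set
\[
E:=\left\{(\psi_1,\psi_2,x)\in\mathrm{A}_{d,m}^{\mathrm{vec}}\times\mathrm{A}_{d,m}^{\mathrm{vec}}\times\mathbb{R}^{d}\::\:(\psi_1-\psi_2)(x)\in\pi L_{m-1}(x)\right\}.
\]
First I would verify that $E$ is Borel: the map $(\psi_1,\psi_2,x)\mapsto(\psi_1-\psi_2)(x)$ is continuous, the map $x\mapsto\pi L_{m-1}(x)$ is Borel (using Theorem \ref{thm:L_m-1 factors via Pi} and the fact that $\pi$ is linear), and the incidence set $\{(y,W)\in\mathbb{R}^{m}\times\mathrm{Gr}_{m-1}(m)\::\:y\in W\}$ is closed, so $E$ is the preimage of a closed set under a Borel map. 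Next, by the hypothesis, for $\mu$-a.e. $x$ we have $\xi\ldotp x(T(x)+\pi L_{m-1}(x))=1$, hence for $\xi\times\xi$-a.e. $(\psi_1,\psi_2)$ both $\psi_1(x)$ and $\psi_2(x)$ lie in the affine subspace $T(x)+\pi L_{m-1}(x)$, so their difference lies in $\pi L_{m-1}(x)$. Therefore $\xi\times\xi\times\mu(E)=1$, and by Fubini, for $\xi\times\xi$-a.e.\ $(\psi_1,\psi_2)$,
\[
(\psi_1-\psi_2)(x)\in\pi L_{m-1}(x)\text{ for }\mu\text{-a.e. }x.
\]

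Finally I would use the non-atomic hypothesis to extract a useful pair. Since $\xi$ is not a mass point we have $\max_{\psi}\xi\{\psi\}<1$, whence
\[
\xi\times\xi(\Delta)=\sum_{\psi}\xi\{\psi\}^{2}\le\max_{\psi}\xi\{\psi\}<1,
\]
where $\Delta$ is the diagonal in $\mathrm{A}_{d,m}^{\mathrm{vec}}\times\mathrm{A}_{d,m}^{\mathrm{vec}}$. Consequently the set $\Delta^{c}$ has positive $\xi\times\xi$-measure, so there exists $(\psi_1,\psi_2)$ with $\psi_1\ne\psi_2$ satisfying the above displayed property. Setting $\psi:=\psi_1-\psi_2\in\mathrm{A}_{d,m}^{\mathrm{vec}}\setminus\{0\}$ completes the proof.

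The argument is essentially a soft Fubini/non-atomicity trick and I do not anticipate any serious obstacle; the only point requiring a little care is the Borel measurability of $E$, which as explained above is immediate once one recalls that $L_{m-1}:\mathbb{R}^{d}\rightarrow\mathrm{Gr}_{m-1}(d)$ is Borel by Theorem \ref{thm:L_m-1 factors via Pi}.
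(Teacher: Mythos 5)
Your proof is correct. The core idea — eliminating the unknown $T$ by taking a difference of two samples so that the affine subspace $T(x)+\pi L_{m-1}(x)$ becomes the linear subspace $\pi L_{m-1}(x)$ — is the same as in the paper, but you implement it by a different mechanism. The paper picks two distinct points $\psi_1,\psi_2\in\mathrm{supp}(\xi)$ (which exist since $\xi$ is not a mass point) and uses a topological argument: for $\mu$-a.e. $x$, the set $\{\psi:\psi(x)\in T(x)+\pi L_{m-1}(x)\}$ is closed and carries full $\xi$-measure, hence contains $\mathrm{supp}(\xi)$, so $\psi_1(x),\psi_2(x)$ both lie in the affine subspace. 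You instead work in the product $\xi\times\xi\times\mu$, verify the incidence set $E$ is Borel, show it has full product measure, apply Fubini to obtain a full-$\xi\times\xi$-measure set of good pairs, and then intersect with the complement of the diagonal, which has positive $\xi\times\xi$-measure because $\xi\times\xi(\Delta)=\sum_\psi\xi\{\psi\}^2\le\max_\psi\xi\{\psi\}<1$. Your route trades the paper's (very short) support-topology step for a Fubini argument, which is more elementary and measure-theoretic but requires the explicit check that $E$ is Borel; both are complete and yield the same conclusion. One small point worth flagging: strictly speaking $\pi L_{m-1}(x)$ need not lie in $\mathrm{Gr}_{m-1}(m)$ for every $x$ (only for $\mu$-a.e.\ $x$), so to make the closed-incidence-set argument for Borel measurability of $E$ airtight, one should restrict to the Borel set of $x$ where $\pi L_{m-1}(x)\in\mathrm{Gr}_{m-1}(m)$, which has full $\mu$-measure; this is harmless but should be stated.
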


\begin{proof}
Since $\xi$ is not a mass point, there exist distinct $\psi_{1},\psi_{2}\in\mathrm{supp}(\xi)$.
For every $x\in\mathbb{R}^{d}$, $y\in\mathbb{R}^{m}$ and $L\in\mathrm{Gr}_{m-1}(m)$,
the set
\[
\left\{ \psi\in\mathrm{A}_{d,m}^{\mathrm{vec}}\::\:\psi(x)\in y+L\right\} 
\]
is closed in $\mathrm{A}_{d,m}^{\mathrm{vec}}$. From this and (\ref{eq:xi.x supp on proj line}),
it follows that for $\mu$-a.e. $x$
\[
\psi_{1}(x),\psi_{2}(x)\in\left\{ \psi(x)\::\:\psi\in\mathrm{supp}(\xi)\right\} \subset T(x)+\pi L_{m-1}(x).
\]
Thus
\[
\psi_{1}(x)-\psi_{2}(x)\in\pi L_{m-1}(x)\text{ for }\mu\text{-a.e. }x,
\]
which completes the proof of the lemma by taking $\psi:=\psi_{1}-\psi_{2}$.
\end{proof}
We are now ready to prove Theorem \ref{thm:ent increase result},
which is the following statement.
\begin{thm*}
Let $1\le m\le d$ be such that $\Sigma_{1}^{m-1}=m-1$ and $\Delta_{m}<1$,
and let $\epsilon,R>0$ be given. Then there exists $\delta=\delta(\epsilon,R)>0$
so that for all $n\ge N(\epsilon,R,\delta)$ and $\theta\in\mathcal{M}(\mathrm{A}_{d,m})$
with $\mathrm{supp}(\theta)\subset B(\pi_{d,m},R)$ and $\frac{1}{n}H(\theta,\mathcal{D}_{n})\ge\epsilon$,
we have $\frac{1}{n}H(\theta\ldotp\mu,\mathcal{D}_{n})>\Sigma_{1}^{m}+\delta$.
\end{thm*}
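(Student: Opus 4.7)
I will argue by contradiction, assembling Proposition \ref{prop:key prep for ent inc}, the passage-to-limit Lemmas \ref{lem:limit step in ent inc pf} and \ref{lem:xi eval to lines}, and the non-affinity statement Proposition \ref{prop:no psi exist}. Assume the conclusion fails for some $\epsilon, R > 0$. Pick a sequence $\sigma_k \searrow 0$, let $\delta_k := \delta(\epsilon, R, \sigma_k)$ and $N_k := N(\epsilon, R, \sigma_k)$ be as given by Proposition \ref{prop:key prep for ent inc}, and use the failure of the theorem to produce integers $n_k \ge N_k$ (which may be taken to tend to infinity by shrinking $\delta_k$ if necessary) and measures $\theta_k \in \mathcal{M}(\mathrm{A}_{d,m})$ with $\mathrm{supp}(\theta_k) \subset B(\pi_{d,m}, R)$, $\frac{1}{n_k} H(\theta_k, \mathcal{D}_{n_k}) \ge \epsilon$, and $\frac{1}{n_k} H(\theta_k \ldotp \mu, \mathcal{D}_{n_k}) \le \Sigma_1^m + \delta_k$. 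Applying Proposition \ref{prop:key prep for ent inc} with parameter $\sigma_k$ yields $\xi_k \in \mathcal{M}(\mathrm{A}_{d,m})$ and $V_k \in \mathrm{Gr}_m(d)$ such that $\mathrm{diam}(\mathrm{supp}(\xi_k)) = O(1)$ uniformly in $(\mathrm{A}_{d,m}^{\mathrm{vec}}, \|\cdot\|)$, the measure $\xi_k \ldotp x$ is $(\pi_{V_k} L_{m-1}(x), \sigma_k)$-concentrated for a $(1-\sigma_k)$-mass set of $x$, and $\xi_k$ is not $(\{0\}, \epsilon')$-concentrated in $\mathrm{A}_{d,m}^{\mathrm{vec}}$ for some fixed $\epsilon' = \epsilon'(\epsilon, R) > 0$.

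The next task is to pass to a limit. Since translating $\xi_k$ in $\mathrm{A}_{d,m}^{\mathrm{vec}}$ by some $c_k \in \mathrm{A}_{d,m}^{\mathrm{vec}}$ just translates each $\xi_k \ldotp x$ by $c_k(x) \in \mathbb{R}^m$, both the concentration near $\pi_{V_k} L_{m-1}(x)$ and the non-concentration at $\{0\}$ are preserved; so I may assume that all the supports $\mathrm{supp}(\xi_k)$ lie in a single compact $B \subset \mathrm{A}_{d,m}^{\mathrm{vec}}$. By compactness of $\mathcal{M}(B)$ and of $\mathrm{Gr}_m(d)$, I extract a subsequence along which $V_k \to V$ and $\xi_k \to \xi$ weakly. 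The failure of $(\{0\}, \epsilon')$-concentration is preserved in the weak limit, so $\xi$ is not a mass point. Assuming for the moment that a linear $\pi \in \mathrm{A}_{d,m}$ can be chosen so that $\|\pi - \pi_{V_k}\|_{op} \to 0$, Lemma \ref{lem:limit step in ent inc pf} produces a map $T : \mathbb{R}^d \to \mathbb{R}^m$ with $\xi \ldotp x(T(x) + \pi L_{m-1}(x)) = 1$ for $\mu$-a.e.\ $x$, and Lemma \ref{lem:xi eval to lines} then produces a nonzero $\psi \in \mathrm{A}_{d,m}^{\mathrm{vec}}$ satisfying $\psi(x) \in \pi L_{m-1}(x)$ for $\mu$-a.e.\ $x$. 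This contradicts Proposition \ref{prop:no psi exist}, completing the proof.

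The main obstacle—really the only nontrivial point outside the quoted results—is the convergence of the projections $\pi_{V_k}$. The assignment $V \mapsto \pi_V$ involves an arbitrary choice of orthonormal basis and so is a priori discontinuous. To circumvent this I write $\pi_{V_k} = O_k P_{V_k}$ with $O_k : V_k \to \mathbb{R}^m$ a linear isometry, use that $P_{V_k} \to P_V$ in operator norm when $V_k \to V$ in $\mathrm{Gr}_m(d)$, and pick isometries $\tilde O_k : V_k \to \mathbb{R}^m$ so that $\tilde O_k P_{V_k}$ converges to some linear $\pi \in \mathrm{A}_{d,m}$. Replacing $\xi_k$ by the pushforward $R_k \xi_k$ under the orthogonal map $R_k := \tilde O_k O_k^{-1} \in \mathrm{O}(m)$, which acts on $\mathrm{A}_{d,m}^{\mathrm{vec}}$ by post-composition and is an isometry of $(\mathrm{A}_{d,m}^{\mathrm{vec}}, \|\cdot\|)$, preserves the uniform diameter bound and the $(\{0\}, \epsilon')$-non-concentration, and replaces the concentration condition with concentration of $(R_k \xi_k) \ldotp x$ near $\tilde O_k P_{V_k} L_{m-1}(x)$, whose ``projection direction'' now converges to $\pi$. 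With this adjustment the compactness argument closes exactly as described above.
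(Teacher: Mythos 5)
Your proposal is correct and takes essentially the same route as the paper: contradiction, Proposition \ref{prop:key prep for ent inc} with $\sigma_k\to 0$, translation of the $\xi_k$ into a common compact set, extraction of a weakly convergent subsequence, then Lemmas \ref{lem:limit step in ent inc pf} and \ref{lem:xi eval to lines} to contradict Proposition \ref{prop:no psi exist}. Your final paragraph about adjusting the isometries $O_k$ is sound but unnecessary: the set $\{\pi_V : V \in \mathrm{Gr}_m(d)\}$ is contained in the compact set of linear maps $P$ with $PP^*=I_m$, so one can simply extract a subsequence along which $\pi_{V_k}$ converges in operator norm to some linear surjective $\pi$ (which need not be of the form $\pi_W$), exactly as the paper does, without any post-composition by $R_k$.
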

\begin{proof}
Let $\epsilon,R>0$ be given, and let $\epsilon':=\epsilon'(\epsilon,R)>0$
be as obtained in Proposition \ref{prop:key prep for ent inc}. Additionally,
for each $k\ge1$ let $\delta_{k}:=\delta(\epsilon,R,1/k)>0$ and
$N_{k}:=N(\epsilon,R,1/k)\ge1$ be as obtained in Proposition \ref{prop:key prep for ent inc}.

Assume by contradiction that the theorem is false. Then for each $k\ge1$
there exist $n_{k}\ge N_{k}$ and $\theta_{k}\in\mathcal{M}(\mathrm{A}_{d,m})$
so that $\mathrm{supp}(\theta_{k})\subset B(\pi_{d,m},R)$, $\frac{1}{n_{k}}H(\theta_{k},\mathcal{D}_{n_{k}})\ge\epsilon$
and $\frac{1}{n_{k}}H(\theta_{k}\ldotp\mu,\mathcal{D}_{n_{k}})\le\Sigma_{1}^{m}+\delta_{k}$.
From this and by Proposition \ref{prop:key prep for ent inc}, it
follows that for each $k\ge1$ there exist $\xi_{k}\in\mathcal{M}(\mathrm{A}_{d,m})$
and $V_{k}\in\mathrm{Gr}_{m}(d)$ such that,
\begin{enumerate}
\item \label{enu:xi_k p1}$\mathrm{diam}(\mathrm{supp}(\xi_{k}))=O(1)$
in $(\mathrm{A}_{d,m}^{\mathrm{vec}},\Vert\cdot\Vert)$;
\item \label{enu:xi_k p2}$\mu\left\{ x\::\:\xi_{k}\ldotp x\mbox{ is }(\pi_{V_{k}}L_{m-1}(x),1/k)\text{-concentrated}\right\} >1-1/k$;
\item \label{enu:xi_k p3}$\xi_{k}$ is not $(\{0\},\epsilon')$-concentrated
in $\mathrm{A}_{d,m}^{\mathrm{vec}}$.
\end{enumerate}
By property (\ref{enu:xi_k p1}) and by translating each of the measures
$\xi_{k}$ appropriately without changing the notation, we may assume
that they are all supported on a single compact subset $B$ of $\mathrm{A}_{d,m}^{\mathrm{vec}}$.
From this, by compactness, and by moving to a subsequence without
changing the notation, we may assume that there exist a linear $\pi\in\mathrm{A}_{d,m}$
and $\xi\in\mathcal{M}(\mathrm{A}_{d,m}^{\mathrm{vec}})$ so that
$\Vert\pi-\pi_{V_{k}}\Vert_{op}\overset{k}{\rightarrow}0$ and $\xi_{k}\overset{k}{\rightarrow}\xi$
weakly.

From property (\ref{enu:xi_k p2}) and by Lemma \ref{lem:limit step in ent inc pf},
there exists a map $T:\mathbb{R}^{d}\rightarrow\mathbb{R}^{m}$ with
\[
\xi\ldotp x\left(T(x)+\pi L_{m-1}(x)\right)=1\text{ for }\mu\text{-a.e. }x.
\]
From property (\ref{enu:xi_k p3}) it clearly follows that $\xi$
is not a mass point. Thus, by Lemma \ref{lem:xi eval to lines}, there
exists $0\ne\psi\in\mathrm{A}_{d,m}^{\mathrm{vec}}$ so that $\psi(x)\in\pi L_{m-1}(x)$
for $\mu$-a.e. $x$. But this contradicts Proposition \ref{prop:no psi exist},
which completes the proof of the theorem.
\end{proof}

\section{\label{sec:Asymptotic-entropies-of}Asymptotic entropies of convolutions}

The purpose of this section is to prove Theorems \ref{thm:asympt of cond ent}
and \ref{thm:=00003DRW ent}. For the proof of Theorem \ref{thm:asympt of cond ent},
in Section \ref{subsec:Non-conformal-partitions} we consider the
entropy of measures of the form $\theta\ldotp\mu$, where $\theta\in\mathcal{M}(\mathrm{A}_{d,m})$,
with respect to certain non-conformal partitions of $\mathbb{R}^{m}$.
In order to deduce Theorem \ref{thm:=00003DRW ent} from Theorem \ref{thm:asympt of cond ent},
in Section \ref{subsec:Projections-of-Diophantine} we show that in
a certain sense projections of Diophantine systems remain Diophantine
almost surely. In Section \ref{subsec:Proofs-of-Theorems asym ent of conv}
we carry out the proofs of the two Theorems.

\subsection{\label{subsec:Non-conformal-partitions}Non-conformal partitions}

Let $1\le m\le d$ and $\psi\in\mathrm{A}_{d,m}$ be with $\alpha_{1}(A_{\psi})>...>\alpha_{m}(A_{\psi})$.
Let $UD'U'$ be a singular value decomposition of $A_{\psi}$ (see
Section \ref{subsec:Singular-values-and SVD}), and set
\[
D:=\mathrm{diag}_{m,m}(\alpha_{1}(A_{\psi}),...,\alpha_{m}(A_{\psi}))\in\mathrm{GL}(m,\mathbb{R}).
\]
For $n\ge1$ we write $\mathcal{D}_{n}^{\psi}$ in place of
\[
UD(\mathcal{D}_{n}^{m}):=\{UD(E)\::\:E\in\mathcal{D}_{n}^{m}\}.
\]
The following proposition is the main result of this subsection.
\begin{prop}
\label{prop:non conf par}Let $1\le m\le d$ be with $\Sigma_{1}^{m-1}=m-1$.
Then for every $\epsilon,R>0$ and $n\ge N(\epsilon,R)\ge1$ the following
holds. Let $\theta\in\mathcal{M}(\mathrm{A}_{d,m})$ be such that
$\mathrm{diam}(\mathrm{supp}(\theta))\le R$, let $\psi\in\mathrm{supp}(\theta)$
be with $\alpha_{1}(A_{\psi})>...>\alpha_{m}(A_{\psi})$, and for
$1\le i\le m$ set $c_{i}:=-\frac{1}{n}\log\alpha_{i}(A_{\psi})$.
If $m>1$ suppose also that $\epsilon\le c_{m}-c_{1}\le R$. Then
for every $1\le M\le R$,
\[
\left|\frac{1}{Mn}H(\theta\ldotp\mu,\mathcal{D}_{(M+c_{m})n}\mid\mathcal{D}_{c_{m}n})-\frac{1}{Mn}H(\theta\ldotp\mu,\mathcal{D}_{Mn}^{\psi})\right|<\epsilon.
\]
\end{prop}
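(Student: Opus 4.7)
The plan is to adapt the approach of \cite[Section 7]{HR} for $d=m=2$ to higher dimensions, with the hypothesis $\Sigma_1^{m-1}=m-1$ playing the role of full-dimensionality in the stretched subspace. First, using $\mathrm{SO}(m)$-commensurability of dyadic partitions, I reduce to the case $U=I$, so that $\mathcal{D}_{Mn}^\psi = D(\mathcal{D}_{Mn}^m)$ is the anisotropic box partition with side $\alpha_i(A_\psi)\, 2^{-Mn} = 2^{-(M+c_i)n}$ in direction $e_i$. Next, using the $\mathrm{A}_{m,m}$-invariance of $d_{\mathrm{A}_{d,m}}$ (Lemma \ref{lem: def of inv met and prop}(1)), I apply the diagonal renormalization $\varphi := \mathrm{diag}_{m,m}(\alpha_1^{-1}, \ldots, \alpha_m^{-1})$. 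Writing $\nu' := \varphi(\theta\ldotp\mu) = (\varphi\theta)\ldotp\mu$, the measure $\varphi\theta$ retains $d_{\mathrm{A}_{d,m}}$-diameter $\le R$ but is now supported near $\varphi\psi$, whose linear part $\pi_{d,m}U_2$ has orthonormal rows and hence lies in a fixed compact region of $\mathrm{A}_{d,m}$ independent of $n$. Under this transformation, $\mathcal{D}_{Mn}^\psi$ becomes the standard dyadic $\mathcal{D}_{Mn}^m$, while $\mathcal{D}_{sn}$ becomes $\varphi\mathcal{D}_{sn}$, the anisotropic partition with side $2^{-(s-c_i)n}$ in direction $e_i$.

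The key structural observation is that $\varphi\mathcal{D}_{(M+c_m)n}$ is a uniform refinement of $\varphi\mathcal{D}_{c_m n}$ by a factor of $2^{Mn}$ in every direction: each atom $A$ of $\varphi\mathcal{D}_{c_m n}$ is subdivided into a $2^{Mn}\times\cdots\times 2^{Mn}$ grid whose image under the anisotropic affine rescaling $T_A\colon A \to [0,1]^m$ is precisely $\mathcal{D}_{Mn}^m$. Consequently
\[
H(\nu', \varphi\mathcal{D}_{(M+c_m)n} \mid \varphi\mathcal{D}_{c_m n}) = \int H\bigl((T_A)_*(\nu')_A,\, \mathcal{D}_{Mn}^m\bigr)\, d\nu'(A),
\]
and the proposition reduces to showing that the scale-$Mn$ entropy of these rescaled conditional measures $(T_A)_*(\nu')_A$ agrees, on average, with that of $\nu'$ itself, up to error $\epsilon Mn$.

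This is where $\Sigma_1^{m-1}=m-1$ enters. For every linear subspace $W\subset\mathbb{R}^m$ with $\dim W\le m-1$, the renormalized projected map $\pi_W\varphi\psi' \in \mathrm{A}_{d,\dim W}$ stays in $B(\pi_{d,\dim W}, O(R))$ uniformly in $n$: indeed $\pi_W\varphi\psi$ has orthonormal rows (hence lies in a fixed compact set), and Lemma \ref{lem: def of inv met and prop}(2) gives a bounded $\Vert\cdot\Vert$-vs-$d_{\mathrm{A}}$ comparison on the compact ball $B(\varphi\psi, R)$, allowing the bound to propagate to all $\psi' \in \mathrm{supp}(\theta)$. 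Applying Lemma \ref{lem:lb on ent of psi mu} in dimension $\dim W$ to each $\pi_W\varphi\psi'\mu$ and integrating via concavity of entropy yields $\frac{1}{s}H(\pi_W\nu',\mathcal{D}_s) \ge \dim W - \epsilon$ for $s\ge N(\epsilon,R)$. This marginal saturation, propagated through the chain rule across coordinate directions together with Lemma \ref{lem:multiscale-entropy-formula}, shows that the anisotropic entropy $H(\nu',\varphi\mathcal{D}_{sn})$ takes its expected value (the sum of coordinatewise contributions) up to $o(Mn)$, and the same saturation holds after rescaling by each $T_A$; averaging over $A$ gives the desired identity.

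The principal obstacle is the non-conformal bookkeeping of error terms: because $\varphi\mathcal{D}_{(M+c_m)n}$ and $\varphi\mathcal{D}_{c_m n}$ have different side lengths in every coordinate, naive subadditive bounds produce errors of order $\sum_{i<m}(c_m-c_i)n$, which may be comparable to $Mn$ and would be fatal. The proof must therefore cancel the contributions of the two entropies direction by direction, leveraging the saturation in the $m-1$ stretched directions. The hypothesis $\epsilon \le c_m - c_1 \le R$ is used crucially here: it provides a quantitative separation of the singular-value scales, and, together with $1\le M \le R$, keeps the renormalization $\varphi$ within a parameter regime where the Lemma \ref{lem: def of inv met and prop}(2) comparison constants remain uniformly bounded, so that $N(\epsilon,R)$ can be chosen independently of $n$.
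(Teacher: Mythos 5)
The core idea you identify — that $\Sigma_1^{m-1}=m-1$ forces saturation of $(m-1)$-dimensional projections and hence pins down the entropy contribution of the strongly contracting directions — is indeed the mechanism the paper uses, and your renormalization by $\varphi=UD$ (reducing $\mathcal{D}^\psi$ to isotropic) matches the paper's manipulation. However, the reduction you propose in the middle step is where the argument breaks down.

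You reduce the claim to showing $\sum_A \nu'(A)\, H\bigl((T_A)_*(\nu')_A,\mathcal{D}_{Mn}^m\bigr)\approx H(\nu',\mathcal{D}_{Mn}^m)$, and then assert that both sides equal ``the sum of coordinatewise contributions'' up to $o(Mn)$. This cannot work as stated: the contribution of the $m$-th (slowest contracting) direction to $H(\nu',\varphi\mathcal{D}_{(M+c_m)n})$ is not determined by the hypothesis. The proposition is assumed to hold under $\Sigma_1^{m-1}=m-1$ alone, so $\Delta_m$ may be strictly less than $1$ and the $e_m$-marginal of $\theta.\mu$ can fail to saturate; the value of $H(\nu',\varphi\mathcal{D}_{sn})$ therefore depends on $\theta$ in a way that your coordinatewise decomposition does not control. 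What \emph{is} pinned down deterministically (independently of $\theta$) is only the conditional entropy of the $(m-1)$-dimensional part given the coarser anisotropic partition — namely the quantity the paper calls $\gamma_Q$.

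The paper's mechanism, which your outline does not reproduce, is the following cancellation. Using the $O(1)$-commensurability of $\mathcal{D}_{(Q+c_m)n}$ with $\mathcal{D}_{Qn}^\psi\vee\pi_{L_{m-1}(A_\psi)}^{-1}\mathcal{D}_{(Q+c_m)n}$, one writes, for $Q=0$ and $Q=M$,
\[
H(\theta.\mu,\mathcal{D}_{(Q+c_m)n})=H(\theta.\mu,\mathcal{D}_{Qn}^\psi)+\gamma_Q+O(1),\qquad
\gamma_Q:=H\bigl(\theta.\mu,\pi_{L_{m-1}(A_\psi)}^{-1}\mathcal{D}_{(Q+c_m)n}\mid\mathcal{D}_{Qn}^\psi\bigr).
\]
One then shows, using Lemma \ref{lem:proj prep for nonconf part} (which is exactly where $\Sigma_1^{m-1}=m-1$ enters, together with concavity of conditional entropy over the convolution) plus a trivial cardinality upper bound, that $\gamma_Q/n$ lies within $O(\epsilon)$ of the fixed value $\sum_{i<m}(c_m-c_i)$ for both $Q=0$ and $Q=M$. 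Subtracting the $Q=0$ identity from the $Q=M$ identity and using $H(\theta.\mu,\mathcal{D}_0^\psi)=O_R(1)$ kills $\gamma_M-\gamma_0$ to $O(\epsilon n)$ and gives the conclusion after dividing by $Mn\ge n$. Your outline never forms the quantity $\gamma_Q$ (or its renormalized analogue $H(\nu',\bigvee_{i<m}\pi_{m,i}^{-1}\mathcal{D}_{(Q+c_m-c_i)n}\mid\mathcal{D}_{Qn})$), and without isolating it the $e_m$-direction contribution does not cancel; it merely gets absorbed into ``coordinatewise contributions'' that you cannot evaluate. Making the argument rigorous requires identifying this deterministic pinned quantity and observing that it is the same for both scales $Q=0$ and $Q=M$.
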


For the proof of the proposition we need the following lemma.
\begin{lem}
\label{lem:proj prep for nonconf part}Let $2\le m\le d$ be given.
Then for every $\epsilon,R>0$, $n\ge N(\epsilon,R)\ge1$, $0\le M\le R$,
$c\ge\epsilon$, and $\psi\in B(\pi_{d,m},R)$,
\[
\frac{1}{cn}H\left(\psi\mu,\pi_{m,m-1}^{-1}\mathcal{D}_{(M+c)n}\mid\mathcal{D}_{Mn}\right)\ge\Sigma_{1}^{m-1}-\epsilon.
\]
\end{lem}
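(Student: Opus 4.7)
To prove this lemma, let $\tilde\psi := \pi_{m,m-1}\circ\psi \in \mathrm{A}_{d,m-1}$; since $\pi_{m,m-1}$ is a coordinate projection of operator norm one, I have $\tilde\psi \in B(\pi_{d,m-1}, O(R))$. My plan is to reduce the conditional entropy on the left to an integrated entropy of $\tilde\psi$-pushforwards of level-$Mn$ components of $\mu$, and then invoke the entropy lower bounds from Section~\ref{sec:Entropy-estimates}.

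First, I decompose $\mu = \int \mu_{x,Mn}\,d\mu(x)$ into its level-$Mn$ components in $\mathbb{R}^d$. Each pushforward $\psi\mu_{x,Mn}$ is supported in a subset of $\mathbb{R}^m$ of diameter $O_R(2^{-Mn})$, so it meets only $O_R(1)$ atoms of $\mathcal{D}_{Mn}^m$. Combining concavity of conditional entropy with the elementary inequality $H(\theta,\mathcal{D}\mid\mathcal{E}) \ge H(\theta,\mathcal{D}) - O(1)$, valid whenever $\theta$ is supported on $O(1)$ atoms of $\mathcal{E}$, I obtain
\[
H\!\left(\psi\mu,\pi_{m,m-1}^{-1}\mathcal{D}_{(M+c)n}^{m-1}\bigm|\mathcal{D}_{Mn}^m\right)\ge\int H\!\left(\tilde\psi\mu_{x,Mn},\mathcal{D}_{(M+c)n}^{m-1}\right)d\mu(x)-O(1).
\]
This reduces the lemma to showing $\frac{1}{cn}\int H(\tilde\psi\mu_{x,Mn},\mathcal{D}_{Mn+cn}^{m-1})\,d\mu(x) \ge \Sigma_1^{m-1} - \epsilon$.

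To establish the reduced bound I would invoke Lemma~\ref{lem:lb on ent of proj of comp of mu} applied to $\tilde\psi$ with scale parameter $k = cn$ (which is large since $c \ge \epsilon$ and $n \ge N$), giving that the bound $\frac{1}{cn}H(\tilde\psi\mu_{x,i},\mathcal{D}_{i+cn}^{m-1}) > \Sigma_1^{m-1} - \epsilon$ holds for a fraction $1-\epsilon$ of pairs $(i,x) \sim \lambda_{n_0} \times \mu$. To transfer this averaged-over-scales bound to the specific scale $i = Mn$ required in the reduced claim, I would use Lemma~\ref{lem:multiscale-entropy-formula} to rewrite $\frac{1}{cn}H(\tilde\psi\mu_{x,Mn},\mathcal{D}_{Mn+cn}^{m-1})$ as a multiscale average over components of $\tilde\psi\mu_{x,Mn}$ at nearby scales, and combine it with Lemma~\ref{lem:distribution-of-components-of-components} on near-invariance of the component distribution under iterated sampling, which effectively washes out any single-scale irregularity. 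The hardest part will be precisely this transfer step: Lemma~\ref{lem:lb on ent of proj of comp of mu} only averages over scales $i \in \{1, \dots, n_0\}$, whereas the reduced bound lives at the single fixed scale $Mn$, so care is needed in matching parameters so that the multiscale average actually recovers the integrated bound at $Mn$. All other manipulations (passage from $\psi\mu$ to $\tilde\psi$-pushforwards, concavity, and $O(1)$-commensurability of dyadic partitions) are routine within the entropy toolkit of Section~\ref{sec:Preliminaries}.
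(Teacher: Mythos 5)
The first half of your proposal is correct and essentially matches the paper's final reduction step: decomposing $\psi\mu$ into level-$Mn$ components, using concavity of conditional entropy, and then discarding the $\mathcal{D}_{Mn}^m$-conditioning via the observation that each $\psi\mu_{x,Mn}$ occupies $O_R(1)$ atoms of $\mathcal{D}_{Mn}^m$. This correctly reduces the claim to showing that $\mathbb{E}_{i=Mn}\big(\frac{1}{cn}H(\tilde\psi\mu_{x,Mn},\mathcal{D}_{(M+c)n})\big)\ge\Sigma_1^{m-1}-O(\epsilon)$.

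The second half has a genuine gap. Applying Lemma~\ref{lem:lb on ent of proj of comp of mu} with $k=cn$ does not work: the lemma requires the averaging parameter $n_0$ to satisfy $n_0\ge N(\epsilon,R,k)$, which with $k=cn$ could force $n_0$ to grow arbitrarily fast in $n$; but more fundamentally, the conclusion is only an average over scales $i\in\{1,\dots,n_0\}$ with $n_0\gg Mn$, and since $Mn$ is a single fixed scale buried in a much longer range of scales, this says nothing about what happens at $i=Mn$ specifically. Your proposed repair via Lemmata~\ref{lem:multiscale-entropy-formula} and~\ref{lem:distribution-of-components-of-components} does not close this gap: Lemma~\ref{lem:multiscale-entropy-formula} applied to $\tilde\psi\mu_{x,Mn}$ would express $\frac{1}{cn}H(\tilde\psi\mu_{x,Mn},\mathcal{D}_{(M+c)n})$ as an average of small-scale entropies over $j\in[Mn,(M+c)n]$, a sub-interval of length $cn$; but a fraction-$(1-\epsilon)$ statement over the full range $[1,n_0]$ cannot be forced to localize $(1-\epsilon)$ of good scales inside this much shorter sub-interval.

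The key idea you are missing is the one the paper actually uses: keep $k$ \emph{small and fixed}, $k = K(\epsilon,R)$, apply Lemma~\ref{lem:lb on ent of proj of comp of mu} with averaging range $[1,(M+c)n]$ and with failure probability $\frac{c\epsilon}{c+M}$ rather than $\epsilon$. Since $[Mn,(M+c)n]$ occupies a fraction $\frac{c}{M+c}$ of $[1,(M+c)n]$, shrinking the failure probability by exactly this factor guarantees a $(1-\epsilon)$-fraction of good scales \emph{within} $[Mn,(M+c)n]$. One then conditions on $D\in\mathcal{D}_{Mn}^d$, uses that $\sigma_{x,j}$ for $\sigma:=\mu_D$ and $j\ge Mn$ is itself a component of $\mu$, and applies Lemma~\ref{lem:multiscale-entropy-formula} per cell $D$ to convert the multiscale average of $\frac{1}{k}H(\tilde\psi\sigma_{x,j},\mathcal{D}_{j+k})$ into $\frac{1}{cn}H(\tilde\psi\sigma,\mathcal{D}_{(M+c)n})$. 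Without the shrunken failure-probability trick, the localization to the short interval $[Mn,(M+c)n]$ fails.
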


\begin{proof}
Let $\epsilon>0$ be small, let $R>0$, let $k\ge1$ be large with respect to $\epsilon,R$,
and let $n\ge1$ be large with respect to $k$. Additionally, let
$0\le M\le R$, $c\ge\epsilon$, and $\psi\in B(\pi_{d,m},R)$ be
given. Setting $\varphi:=\pi_{m,m-1}\circ\psi\in\mathrm{A}_{d,m-1}$,
it is clear that 
\begin{equation}
d_{\mathrm{A}_{d,m-1}}(\pi_{d,m-1},\varphi)=O_{R}(1).\label{eq:d_A_d,m-1(pi,phi)=00003DO_R(1)}
\end{equation}
Thus, by Lemma \ref{lem:lb on ent of proj of comp of mu},
\[
\mathbb{P}_{1\le i\le(M+c)n}\left\{ \frac{1}{k}H\left(\varphi\mu_{x,i},\mathcal{D}_{i+k}\right)>\Sigma_{1}^{m-1}-\epsilon\right\} >1-\frac{c\epsilon}{c+M}.
\]
And so,
\begin{equation}
\mathbb{P}_{Mn\le i\le(M+c)n}\left\{ \frac{1}{k}H\left(\varphi\mu_{x,i},\mathcal{D}_{i+k}\right)>\Sigma_{1}^{m-1}-\epsilon\right\} >1-\epsilon.\label{eq:most comps bet M to M+c}
\end{equation}

Let $\mathcal{E}$ be the set of all $D\in\mathcal{D}_{Mn}$ such
that $\mu(D)>0$ and for $\sigma:=\mu_{D}$,
\[
\mathbb{P}_{Mn\le i\le(M+c)n}\left\{ \frac{1}{k}H\left(\varphi\sigma_{x,i},\mathcal{D}_{i+k}\right)>\Sigma_{1}^{m-1}-\epsilon\right\} >1-\epsilon.
\]
By (\ref{eq:most comps bet M to M+c}), and by replacing $\epsilon$
with a larger quantity (which is still small) without changing the
notation, we may assume that $\mu(\cup\mathcal{E})>1-\epsilon$. Moreover,
given $D\in\mathcal{E}$ it follows by Lemma \ref{lem:multiscale-entropy-formula}
and (\ref{eq:cond ent as avg of ent of comp}), from (\ref{eq:d_A_d,m-1(pi,phi)=00003DO_R(1)}), and by the concavity of
conditional entropy, that for $\sigma:=\mu_{D}$
\begin{eqnarray*}
\frac{1}{cn}H\left(\varphi\sigma,\mathcal{D}_{(M+c)n}\right) & = & \mathbb{E}_{Mn\le i\le(M+c)n}\left(\frac{1}{k}H\left(\varphi\sigma,\mathcal{D}_{i+k}\mid\mathcal{D}_{i}\right)\right)-O_R\left(\frac{k}{cn}\right)\\
 & \ge & \mathbb{E}_{Mn\le i\le(M+c)n}\left(\frac{1}{k}H\left(\varphi\sigma_{x,i},\mathcal{D}_{i+k}\right)\right)-O_{R}\left(\frac{k}{cn}+\frac{1}{k}\right)\\
 & \ge & \left(\Sigma_{1}^{m-1}-\epsilon\right)(1-\epsilon)-\epsilon.
\end{eqnarray*}
From concavity and $d_{\mathrm{A}_{d,m}}(\pi_{d,m},\psi)\le R$ we
also get,
\begin{eqnarray*}
\frac{1}{cn}H\left(\psi\mu,\pi_{m,m-1}^{-1}\mathcal{D}_{(M+c)n}\mid\mathcal{D}_{Mn}\right) & \ge & \mathbb{E}_{i=Mn}\left(\frac{1}{cn}H\left(\psi\mu_{x,i},\pi_{m,m-1}^{-1}\mathcal{D}_{(M+c)n}\mid\mathcal{D}_{Mn}\right)\right)\\
 & = & \mathbb{E}_{i=Mn}\left(\frac{1}{cn}H\left(\varphi\mu_{x,i},\mathcal{D}_{(M+c)n}\right)\right)-O_{R,\epsilon}(\frac{1}{n}).
\end{eqnarray*}
The lemma now follows from the last two inequalities, from $\mu(\cup\mathcal{E})>1-\epsilon$,
and by replacing $\epsilon$ with a larger quantity without changing
the notation.
\end{proof}
\begin{proof}[Proof of Proposition \ref{prop:non conf par}]
Let $\epsilon,R>0$, let $n\ge1$ be large with respect to $\epsilon,R$,
and let $\theta$, $\psi$, $c_{1},...,c_{m}$, and $M$ be as in
the statement of the proposition. Let $UD'U'$ be a singular value
decomposition of $A_{\psi}$, set
\[
D:=\mathrm{diag}_{m,m}(\alpha_{1}(A_{\psi}),...,\alpha_{m}(A_{\psi})),
\]
and let $\varphi\in\mathrm{A}_{m,m}$ be with $\varphi(x)=UDx+a_{\psi}$
for $x\in\mathbb{R}^{m}$.

Since $d_{\mathrm{A}_{d,m}}$ is $\mathrm{A}_{m,m}$-invariant (see
Lemma \ref{lem: def of inv met and prop}) and from $\mathrm{diam}(\mathrm{supp}(\theta))\le R$,
it follows that for $f\in\mathrm{supp}(\varphi^{-1}\theta)$
\begin{eqnarray}
d_{\mathrm{A}_{d,m}}(\pi_{d,m},f) & \le & d_{\mathrm{A}_{d,m}}(\pi_{d,m},\varphi^{-1}\circ\psi)+d_{\mathrm{A}_{d,m}}(\varphi^{-1}\circ\psi,f)\nonumber \\
 & = & O(1)+d_{\mathrm{A}_{d,m}}(\psi,\varphi\circ f)=O_{R}(1).\label{eq:=00003DO_R(1) all f}
\end{eqnarray}
This implies that $\mathrm{diam}(\mathrm{supp}(\varphi^{-1}\theta\ldotp\mu))=O_{R}(1)$,
which gives
\begin{equation}
H(\theta\ldotp\mu,\mathcal{D}_{0}^{\psi})=H(\varphi^{-1}\theta\ldotp\mu,\mathcal{D}_{0})+O(1)=O_{R}(1).\label{eq:nonconf theta.mu ent D_0}
\end{equation}

Assuming $m=1$, we have for $Q=0,M$
\[
H(\theta\ldotp\mu,\mathcal{D}_{Qn}^{\psi})=H(\theta\ldotp\mu,\mathcal{D}_{(Q+c_{m})n})+O(1).
\]
This together with (\ref{eq:nonconf theta.mu ent D_0}) yields the
proposition in the case $m=1$. For the rest of the proof suppose
that $m\ge2$.

Recall from Section \ref{subsec:Singular-values-and SVD} the definition
of $L_{m-1}(A_{\psi})\in\mathrm{Gr}_{m-1}(m)$. It is easy to verify
that for $Q=0,M$ the partitions $\mathcal{D}_{(Q+c_{m})n}^{m}$ and
$\mathcal{D}_{Qn}^{\psi}\vee\pi_{L_{m-1}(A_{\psi})}^{-1}\mathcal{D}_{(Q+c_{m})n}^{m-1}$
are $O(1)$-commensurable. Thus,
\begin{eqnarray}
H(\theta\ldotp\mu,\mathcal{D}_{(M+c_{m})n}\mid\mathcal{D}_{c_{m}n}) & = & H(\theta\ldotp\mu,\mathcal{D}_{(M+c_{m})n})-H(\theta\ldotp\mu,\mathcal{D}_{c_{m}n})\nonumber \\
 & = & H(\theta\ldotp\mu,\mathcal{D}_{Mn}^{\psi}\vee\pi_{L_{m-1}(A_{\psi})}^{-1}\mathcal{D}_{(M+c_{m})n})\nonumber \\
 & - & H(\theta\ldotp\mu,\mathcal{D}_{0}^{\psi}\vee\pi_{L_{m-1}(A_{\psi})}^{-1}\mathcal{D}_{c_{m}n})+O(1)\nonumber \\
 & = & H(\theta\ldotp\mu,\mathcal{D}_{Mn}^{\psi})+H(\theta\ldotp\mu,\pi_{L_{m-1}(A_{\psi})}^{-1}\mathcal{D}_{(M+c_{m})n}\mid\mathcal{D}_{Mn}^{\psi})\label{eq:nonconf part first step}\\
 & - & H(\theta\ldotp\mu,\mathcal{D}_{0}^{\psi})-H(\theta\ldotp\mu,\pi_{L_{m-1}(A_{\psi})}^{-1}\mathcal{D}_{c_{m}n}\mid\mathcal{D}_{0}^{\psi})+O(1).\nonumber 
\end{eqnarray}

Let $Q$ be either $M$ or $0$, and set
\begin{equation}
\gamma_{Q}:=H(\theta\ldotp\mu,\pi_{L_{m-1}(A_{\psi})}^{-1}\mathcal{D}_{(Q+c_{m})n}\mid\mathcal{D}_{Qn}^{\psi}).\label{eq:def of gamma_Q}
\end{equation}
Note that $\pi_{L_{m-1}(A_{\psi})}=F\pi_{m,m-1}U^{-1}$ for some linear
isometry $F:\mathbb{R}^{m-1}\rightarrow\mathbb{R}^{m-1}$. Moreover,
the partitions
\[
D^{-1}\pi_{m,m-1}^{-1}\mathcal{D}_{(Q+c_{m})n}\:\text{ and }\:\bigvee_{i=1}^{m-1}\pi_{m,i}^{-1}\mathcal{D}_{(Q+c_{m}-c_{i})n}
\]
are $O(1)$-commensurable. Thus,
\begin{eqnarray}
\gamma_{Q} & = & H\left(\theta\ldotp\mu,U\pi_{m,m-1}^{-1}\mathcal{D}_{(Q+c_{m})n}\mid UD\mathcal{D}_{Qn}\right)+O(1)\nonumber \\
 & = & H\left(\varphi^{-1}\theta\ldotp\mu,D^{-1}\pi_{m,m-1}^{-1}\mathcal{D}_{(Q+c_{m})n}\mid\mathcal{D}_{Qn}\right)+O(1)\nonumber \\
 & = & H\left(\varphi^{-1}\theta\ldotp\mu,\bigvee_{i=1}^{m-1}\pi_{m,i}^{-1}\mathcal{D}_{(Q+c_{m}-c_{i})n}\mid\mathcal{D}_{Qn}\right)+O(1).\label{eq:gamma_Q =00003D}
\end{eqnarray}

For every $E\in\mathcal{D}_{Qn}^{m}$,
\[
\log\left|\left\{ E'\in\bigvee_{i=1}^{m-1}\pi_{m,i}^{-1}\mathcal{D}_{(Q+c_{m}-c_{i})n}\::\:E\cap E'\ne\emptyset\right\} \right|=\sum_{i=1}^{m-1}(c_{m}-c_{i})n+O(1).
\]
Hence, from (\ref{eq:gamma_Q =00003D})
\begin{equation}
\gamma_{Q}\le\sum_{i=1}^{m-1}(c_{m}-c_{i})n+O(1).\label{eq:ub for gamma_Q}
\end{equation}

Let $f\in\mathrm{supp}(\varphi^{-1}\theta)$ be given. By (\ref{eq:=00003DO_R(1) all f}),
Lemma \ref{lem:proj prep for nonconf part}, and $\Sigma_{1}^{m-1}=m-1$,
\begin{eqnarray*}
(m-1)(c_{m}-c_{1})-\epsilon & \le & \frac{1}{n}H\left(f\mu,\pi_{m,m-1}^{-1}\mathcal{D}_{(Q+c_{m}-c_{1})n}\mid\mathcal{D}_{Qn}\right)\\
 & = & \frac{1}{n}H\left(f\mu,\bigvee_{i=1}^{m-1}\pi_{m,i}^{-1}\mathcal{D}_{(Q+c_{m}-c_{i})n}\mid\mathcal{D}_{Qn}\right)\\
 & + & \frac{1}{n}H\left(f\mu,\pi_{m,m-1}^{-1}\mathcal{D}_{(Q+c_{m}-c_{1})n}\mid\bigvee_{i=1}^{m}\pi_{m,i}^{-1}\mathcal{D}_{(Q+c_{m}-c_{i})n}\right).
\end{eqnarray*}
Moreover, for every $E\in\bigvee_{i=1}^{m}\pi_{m,i}^{-1}\mathcal{D}_{(Q+c_{m}-c_{i})n}$
\[
\log\left|\left\{ E'\in\pi_{m,m-1}^{-1}\mathcal{D}_{(Q+c_{m}-c_{1})n}\::\:E\cap E'\ne\emptyset\right\} \right|=\sum_{i=1}^{m-1}(c_{i}-c_{1})n+O(1).
\]
Thus,
\[
\frac{1}{n}H\left(f\mu,\bigvee_{i=1}^{m-1}\pi_{m,i}^{-1}\mathcal{D}_{(Q+c_{m}-c_{i})n}\mid\mathcal{D}_{Qn}\right)\ge\sum_{i=1}^{m-1}(c_{m}-c_{i})-2\epsilon.
\]

From the last inequality, from (\ref{eq:gamma_Q =00003D}), and by
the concavity of conditional entropy,
\[
\frac{1}{n}\gamma_{Q}\ge\sum_{i=1}^{m-1}(c_{m}-c_{i})-3\epsilon.
\]
Hence, from (\ref{eq:ub for gamma_Q}),
\[
\left|\frac{1}{n}\gamma_{Q}-\sum_{i=1}^{m-1}(c_{m}-c_{i})\right|\le3\epsilon.
\]
Since this holds for $Q=M,0$, the proposition now follows from (\ref{eq:nonconf theta.mu ent D_0}),
(\ref{eq:nonconf part first step}) and (\ref{eq:def of gamma_Q}). 
\end{proof}

\subsection{\label{subsec:Projections-of-Diophantine}Projections of Diophantine
systems}

The purpose of this subsection is to prove the following proposition.
\begin{prop}
\label{prop:diophantine --> diophantine for projs}Suppose that $\Phi$
is Diophantine (see Definition \ref{def:Diophantine =000026 ESC}), and let $1\le m\le d$ be given. Then there exists
$\epsilon>0$ so that for $\nu_{m}^{*}$-a.e. $V$ and all $n\ge N(V)\ge1$,
\[
d_{\mathrm{A}_{d,m}}(\pi_{V}\varphi_{u},\pi_{V}\varphi_{w})\ge\epsilon^{n}\text{ for all }u,w\in\Lambda^{n}\text{ with }\varphi_{u}\ne\varphi_{w}.
\]
\end{prop}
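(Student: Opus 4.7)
The plan is to combine the Diophantine bound for $\Phi$ with a Guivarc\'h-type regularity estimate for $\nu_m^*$, via a union bound and Borel--Cantelli over pairs of words of length $n$. First I would reduce to the vector-space norm $\|\cdot\|$ on $\mathrm{A}_{d,m}^{\mathrm{vec}}$: since each $A_i$ is a strict contraction and $\{a_i\}_{i\in\Lambda}$ is bounded, the family $\{\varphi_u : u \in \Lambda^*\}$ lies in a bounded subset of $\mathrm{A}_{d,d}^{\mathrm{vec}}$ and each $\pi_V\varphi_u$ sits in a compact set $K \subset \mathrm{A}_{d,m}$ independent of $V$. By Lemma \ref{lem: def of inv met and prop}(2) the metrics $d_{\mathrm{A}_{d,m}}$ and $\|\cdot\|$ are comparable on $K$, so it suffices to produce some $\epsilon > 0$ with $\|\pi_V(\varphi_u - \varphi_w)\| \ge \epsilon^n$ for $\nu_m^*$-a.e.\ $V$ and all large $n$.

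The next step is to extend Guivarc\'h's regularity from $\nu_1^*$ to $\nu_m^*$. Monotonicity $|P_{V_1}x| \le |P_{V_m}x|$ for $V_1 \subset V_m$, combined with the identity $\nu_m^*(\cdot) = \nu^*\{(V_j) : V_m \in \cdot\}$ (and the analogous one for $\nu_1^*$), yields for every unit $x \in \mathbb{R}^d$ and every $r > 0$,
\[
\nu_m^*\bigl\{V \in \mathrm{Gr}_m(d) : |\pi_V x| \le r\bigr\}
\le \nu_1^*\bigl\{L \in \mathrm{Gr}_1(d) : |\pi_L x| \le r\bigr\}
\le C\, r^{\alpha},
\]
where the last bound is Lemma \ref{lem:There-exist-alpha =000026 C} (Guivarc\'h) with universal $\alpha, C > 0$. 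Now for $u, w \in \Lambda^n$ with $\varphi_u \ne \varphi_w$ set $\psi := \varphi_u - \varphi_w$, so that $\|\psi\| \ge \epsilon_\Phi^n$ by the Diophantine assumption, where $\epsilon_\Phi > 0$ is the Diophantine constant. From $\|\psi\| \le |a_\psi| + \|A_\psi\|_{op}$, at least one of $|a_\psi|$ and $\|A_\psi\|_{op}$ is $\ge \epsilon_\Phi^n/2$; using $\|\pi_V\psi\| \ge |\pi_V a_\psi|$ and $\|\pi_V\psi\| \ge \tfrac12\|\pi_V A_\psi\|_{op}$ (the latter from $\pi_V A_\psi y = \pi_V\psi(y) - \pi_V\psi(0)$), I would extract a unit vector $x = x(u,w) \in \mathbb{R}^d$ with
\[
\|\pi_V \psi\| \ge \tfrac14\, \epsilon_\Phi^n\,|\pi_V x|,
\]
taking $x = a_\psi/|a_\psi|$ in the translation case and $x = A_\psi y_0/|A_\psi y_0|$ for a unit maximizer $y_0$ of $y \mapsto |A_\psi y|$ in the other case.

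To conclude, I fix $\epsilon \in (0, \epsilon_\Phi)$ small enough that $|\Lambda|^2(4\epsilon/\epsilon_\Phi)^{\alpha} < 1$ and set
\[
E_n := \bigl\{V \in \mathrm{Gr}_m(d) : \|\pi_V(\varphi_u-\varphi_w)\| < \epsilon^n \text{ for some } u,w \in \Lambda^n \text{ with } \varphi_u \ne \varphi_w\bigr\}.
\]
The previous two steps give, for each such pair, $\nu_m^*\{V : \|\pi_V\psi\| < \epsilon^n\} \le C(4\epsilon/\epsilon_\Phi)^{n\alpha}$, and a union bound over the at most $|\Lambda|^{2n}$ pairs yields $\nu_m^*(E_n) \le C\bigl(|\Lambda|^{2}(4\epsilon/\epsilon_\Phi)^{\alpha}\bigr)^n$, which is summable. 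By Borel--Cantelli, $\nu_m^*$-a.e.\ $V$ belongs to only finitely many $E_n$, and the reduction in the first paragraph converts this into the desired lower bound for $d_{\mathrm{A}_{d,m}}$ with any $\epsilon' < \epsilon$. The main obstacle is the uniformity in the direction $x$ of the Guivarc\'h bound for $\nu_1^*$: in the planar case \cite{BHR} mere positive dimensionality of $\nu_1^*$ was enough, but beating the $|\Lambda|^{2n}$ pair count here requires precisely the quantitative H\"older-type estimate supplied by Lemma \ref{lem:There-exist-alpha =000026 C}.
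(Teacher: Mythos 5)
Your overall strategy matches the paper's: use Guivarc'h's regularity of $\nu_1^*$ (Lemma \ref{lem:There-exist-alpha =000026 C}) together with a union bound over the $|\Lambda|^{2n}$ pairs and Borel--Cantelli, then lift from $\nu_1^*$ to $\nu_m^*$ via the monotonicity $\|\pi_{V_m}\psi\|\ge\|\pi_{V_1}\psi\|$ along flags and the flag measure $\nu^*$. The step where you extract a unit vector $x(u,w)$ and bound $\|\pi_V\psi\|\ge\tfrac14\epsilon_\Phi^n|\pi_V x|$ is slightly more elaborate than necessary (the paper simply takes $x_{u,w}$ realizing $\|\psi\|=|\psi(x_{u,w})|$ and uses the unit vector $\psi(x_{u,w})/|\psi(x_{u,w})|$), but the casework you propose does work.

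There is, however, a genuine gap in your first paragraph. You claim that the family $\{\pi_V\varphi_u : u\in\Lambda^*,\ V\in\mathrm{Gr}_m(d)\}$ lies in a compact subset of $\mathrm{A}_{d,m}$, so that Lemma \ref{lem: def of inv met and prop}(2) gives two-sided comparability of $d_{\mathrm{A}_{d,m}}$ and $\|\cdot\|$ there. This is false: while the set is bounded in the norm $\|\cdot\|$ on $\mathrm{A}_{d,m}^{\mathrm{vec}}$, we have $\alpha_m(\pi_V A_u)\le\alpha_m(A_u)\to 0$ as $|u|\to\infty$, so the maps accumulate at the boundary $\{\psi:\operatorname{rank}A_\psi<m\}$ of $\mathrm{A}_{d,m}$ and escape every compact subset of $(\mathrm{A}_{d,m},d_{\mathrm{A}_{d,m}})$ (closed and $d_{\mathrm{A}_{d,m}}$-bounded sets are compact by Lemma \ref{lem: def of inv met and prop}(4), but this family is not $d_{\mathrm{A}_{d,m}}$-bounded). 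Thus Lemma \ref{lem: def of inv met and prop}(2) does not directly give the comparability you invoke, and the conversion of a norm lower bound into a $d_{\mathrm{A}_{d,m}}$ lower bound needs a separate argument. The paper supplies exactly this in Lemma \ref{lem:inv met >=00003D norm}, which proves $d_{\mathrm{A}_{d,m}}(\pi_V\varphi_u,\pi_V\varphi_w)\ge\min\{1,c\,\|\pi_V\varphi_u-\pi_V\varphi_w\|\}$: the key idea is to use the $\mathrm{A}_{m,m}$-invariance to conjugate so that $\pi_V\varphi_u$ becomes some $\pi_W$ (hence lies in a fixed compact set), and to observe that since $\varphi_u$ is a strict contraction this conjugation only increases the norm difference; then either $\psi^{-1}\pi_V\varphi_w$ is within distance $1$ of $\pi_W$, where two-sided comparability applies, or the $d_{\mathrm{A}_{d,m}}$-distance is already at least $1$, which trivially beats $\epsilon^n$. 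With this lemma in place of your compactness claim, the rest of your argument goes through and coincides with the paper's proof.
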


The proof of the proposition requires some preparations.
\begin{lem}
\label{lem:There-exist-alpha =000026 C}There exist $\alpha,C>1$
so that,
\[
\nu_{1}^{*}\left\{ V\in\mathrm{Gr}_{1}(d)\::\:|\pi_{V}(x)|\le r^{\alpha}\right\} <Cr\text{ for all }r>0\text{ and }x\in\mathbb{R}^{d}\text{ with }|x|=1.
\]
\end{lem}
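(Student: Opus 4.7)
The plan is to derive this lemma directly from the classical Hölder regularity theorem of Guivarc'h \cite{Gu} for Furstenberg measures on projective space. First I would unpack the set in question. For $V \in \mathrm{Gr}_1(d)$ with unit direction $v_V$, one has $|\pi_V(x)| = |\langle x, v_V\rangle|$, which is the sine of the angle between $V$ and the hyperplane $x^\perp$. Writing $v_V = av' + bx$ with $v' \in x^\perp$, $|v'|=1$, and $a^2+b^2=1$, a short direct computation with $d_{\mathrm{Gr}_1}(V_1,V_2) = \Vert P_{V_1} - P_{V_2}\Vert_{op}$ shows that when $|b|$ is small, $d_{\mathrm{Gr}_1}(V, v'\mathbb{R}) \asymp |b| = |\pi_V(x)|$. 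Hence the set $\{V \in \mathrm{Gr}_1(d) : |\pi_V(x)| \le r^\alpha\}$ is contained in the closed $(Cr^\alpha)$-neighbourhood of $\mathrm{Gr}_1(x^\perp)$ in $(\mathrm{Gr}_1(d), d_{\mathrm{Gr}_1})$, for some absolute constant, provided $r^\alpha$ is small.

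Next I would invoke Guivarc'h's theorem. Under assumption (\ref{eq:m-ired and m-prox assump}) with $m=1$, the semigroup $\mathbf{S}_\Phi^{\mathrm{L}}$ is strongly irreducible and proximal, and by Remark \ref{rem:semi of trans also SI =000026 prox} so is the semigroup generated by $\{A_i^*\}_{i\in\Lambda}$, of which $\nu_1^*$ is the stationary measure. Guivarc'h's regularity theorem \cite{Gu} then asserts the existence of constants $\alpha_0, C_0 > 0$ so that
\[
\nu_1^*\left\{ V \in \mathrm{Gr}_1(d) \::\: |\langle x, v_V\rangle| \le s\right\} \le C_0 s^{\alpha_0}
\]
for every unit vector $x \in \mathbb{R}^d$ and every $s > 0$ (equivalently, $\nu_1^*$ is Hölder with exponent $\alpha_0$ around every projective hyperplane).

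Finally, pick $\alpha := \max\{2, 2/\alpha_0\}$, which automatically satisfies $\alpha > 1$ and $\alpha\alpha_0 \ge 2$. For any unit $x \in \mathbb{R}^d$ and any $r \in (0,1]$, substituting $s = r^\alpha$ in Guivarc'h's bound gives
\[
\nu_1^*\left\{ V \::\: |\pi_V(x)| \le r^\alpha\right\} \le C_0 r^{\alpha\alpha_0} \le C_0 r^2 \le C_0 r.
\]
For $r > 1$ the bound is trivial since $\nu_1^*$ is a probability measure. Choosing $C := \max\{C_0, 2\} > 1$ gives the claim uniformly in $r > 0$ and in the unit vector $x$.

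Essentially no real obstacle is present here: the only subtle point is the comparison between $|\pi_V(x)|$ and the Grassmannian distance to $\mathrm{Gr}_1(x^\perp)$, but this is a purely linear-algebraic computation, and everything else is packaged in the statement of Guivarc'h's theorem. The role of this lemma inside the paper is more significant than its proof, since it is what will be used in Proposition \ref{prop:diophantine --> diophantine for projs} to control, in the higher-dimensional setting, the $\nu_1^*$-measure of directions along which a given nonzero vector shrinks faster than polynomially.
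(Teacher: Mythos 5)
Your proof is correct and takes essentially the same route as the paper: both rest entirely on Guivarc'h's regularity theorem for the Furstenberg measure $\nu_1^*$ on projective space. The one cosmetic difference is that the paper cites Guivarc'h's result in moment form, namely $\int |\pi_V(x)|^{-\gamma}\,d\nu_1^*(V)<C$ uniformly over unit vectors $x$, and then passes to the tail bound by Markov's inequality, choosing $\alpha=1/\gamma$; you cite the (equivalent) tail/H\"older form directly and choose $\alpha=\max\{2,2/\alpha_0\}$ with a little slack. Either choice works. The preliminary paragraph relating $|\pi_V(x)|$ to the Grassmannian distance from $V$ to $\mathrm{Gr}_1(x^\perp)$ is unnecessary for this route — Guivarc'h's bound is stated directly in terms of $|\langle x,v_V\rangle|=|\pi_V(x)|$ — but it is not wrong and does no harm.
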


\begin{proof}
By a theorem of Guivarc\textquoteright h \cite[Section 2.8]{Gu} (see
also \cite[Theorem 14.1]{BQ}), there exist $0<\gamma<1$ and $C>1$
so that
\[
\int|\pi_{V}(x)|^{-\gamma}\:d\nu_{1}^{*}(V)<C\text{ for all }x\in\mathbb{R}^{d}\text{ with }|x|=1.
\]
Thus, by Markov's inequality, for every $r>0$ and $x\in\mathbb{R}^{d}$
with $|x|=1$
\[
r^{-1}\cdot\nu_{1}^{*}\left\{ V\::\:|\pi_{V}(x)|^{-\gamma}\ge r^{-1}\right\} <C.
\]
By taking $\alpha:=1/\gamma$, this completes the proof of the lemma.
\end{proof}
Recall from Section \ref{subsec:Spaces-of-affine maps} that for $1\le m\le d$
and $\psi\in\mathrm{A}_{d,m}^{\mathrm{vec}}$,
\[
\Vert\psi\Vert:=\sup\left\{ |\psi(x)|\::\:x\in\mathbb{R}^{d}\text{ and }|x|\le1\right\} .
\]

\begin{lem}
\label{lem:diophantine for proj to lines}Suppose that $\Phi$ is
Diophantine. Then there exists $\delta>0$ so that for $\nu_{1}^{*}$-a.e.
$V$ and all $n\ge N(V)\ge1$,
\[
\Vert\pi_{V}\varphi_{u}-\pi_{V}\varphi_{w}\Vert\ge\delta^{n}\text{ for all }u,w\in\Lambda^{n}\text{ with }\varphi_{u}\ne\varphi_{w}.
\]
\end{lem}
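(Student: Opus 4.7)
The plan is to combine the Diophantine hypothesis with the H\"older regularity of $\nu_1^*$ from Lemma \ref{lem:There-exist-alpha =000026 C}, together with a union bound over all pairs of words of length $n$ and an application of Borel--Cantelli. Throughout, $\alpha,C>1$ are the constants supplied by Lemma \ref{lem:There-exist-alpha =000026 C}, and $\epsilon_{0}>0$ is the constant from Definition \ref{def:Diophantine =000026 ESC} such that $\Vert\varphi_{u}-\varphi_{w}\Vert\ge\epsilon_{0}^{n}$ whenever $u,w\in\Lambda^{n}$ satisfy $\varphi_{u}\ne\varphi_{w}$.

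Fix distinct $u,w\in\Lambda^{n}$ with $\varphi_{u}\ne\varphi_{w}$, and write $\psi_{u,w}:=\varphi_{u}-\varphi_{w}\in\mathrm{A}_{d,d}^{\mathrm{vec}}$, with linear part $B_{u,w}=A_{u}-A_{w}$ and translation part $c_{u,w}=a_{u}-a_{w}$. Since $\Vert\psi_{u,w}\Vert$ is comparable to $\Vert B_{u,w}\Vert_{op}+|c_{u,w}|$, one of these two summands is at least $\Vert\psi_{u,w}\Vert/C'\ge\epsilon_{0}^{n}/C'$, for some absolute constant $C'>0$. Writing $\pi_{V}(y)=\langle y,u_{V,1}\rangle$, the affine map $\pi_{V}\psi_{u,w}$ has linear part $x\mapsto\langle x,B_{u,w}^{*}u_{V,1}\rangle$ and translation part $\langle c_{u,w},u_{V,1}\rangle$, so $\Vert\pi_{V}\psi_{u,w}\Vert$ is comparable to $|B_{u,w}^{*}u_{V,1}|+|\langle c_{u,w},u_{V,1}\rangle|$. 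In the case that $|c_{u,w}|$ dominates, set $y_{u,w}:=c_{u,w}/|c_{u,w}|$; in the case that $\Vert B_{u,w}\Vert_{op}$ dominates, pick a unit vector $x_{u,w}$ with $|B_{u,w}x_{u,w}|=\Vert B_{u,w}\Vert_{op}$ and set $y_{u,w}:=B_{u,w}x_{u,w}/|B_{u,w}x_{u,w}|$, so that $|B_{u,w}^{*}u_{V,1}|\ge|B_{u,w}x_{u,w}|\cdot|\pi_{V}(y_{u,w})|$. In either case, establishing $\Vert\pi_{V}\psi_{u,w}\Vert\ge\delta^{n}$ reduces to the bound $|\pi_{V}(y_{u,w})|\ge C''\delta^{n}/\epsilon_{0}^{n}$ for the unit vector $y_{u,w}$.

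By Lemma \ref{lem:There-exist-alpha =000026 C}, for any fixed unit vector $y$ and $r\in(0,1]$,
\[
\nu_{1}^{*}\{V\in\mathrm{Gr}_{1}(d)\::\:|\pi_{V}(y)|\le r^{\alpha}\}<Cr.
\]
Choosing $\delta<\epsilon_{0}$ and setting $r^{\alpha}=C''\delta^{n}/\epsilon_{0}^{n}$ (which lies in $(0,1]$ for large enough $n$), the $\nu_{1}^{*}$-measure of the set of $V$ for which the conclusion fails for the specific pair $(u,w)$ is at most $C_{1}(\delta/\epsilon_{0})^{n/\alpha}$ for some constant $C_{1}>0$. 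Union-bounding over the at most $|\Lambda|^{2n}$ pairs $(u,w)\in\Lambda^{n}\times\Lambda^{n}$ with $\varphi_{u}\ne\varphi_{w}$, the set
\[
\mathcal{B}_{n}:=\left\{V\in\mathrm{Gr}_{1}(d)\::\:\exists u,w\in\Lambda^{n}\text{ with }\varphi_{u}\ne\varphi_{w}\text{ and }\Vert\pi_{V}\varphi_{u}-\pi_{V}\varphi_{w}\Vert<\delta^{n}\right\}
\]
satisfies $\nu_{1}^{*}(\mathcal{B}_{n})\le C_{1}\bigl(|\Lambda|^{2\alpha}\delta/\epsilon_{0}\bigr)^{n/\alpha}$.

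Now choose $\delta>0$ so small that $|\Lambda|^{2\alpha}\delta/\epsilon_{0}<1$; then $\sum_{n\ge1}\nu_{1}^{*}(\mathcal{B}_{n})<\infty$, and by the Borel--Cantelli lemma, for $\nu_{1}^{*}$-a.e.\ $V$ we have $V\notin\mathcal{B}_{n}$ for all $n\ge N(V)$, which is exactly the conclusion. The main obstacle in this argument is that the crude union bound over the exponentially many pairs in $\Lambda^{n}\times\Lambda^{n}$ must be absorbed by the H\"older-type estimate coming from Guivarc\textquoteright h's regularity; this is precisely what forces $\delta$ to be chosen small in terms of both $\epsilon_{0}$, $|\Lambda|$ and the exponent $\alpha$, and is the reason the stronger regularity of $\nu_{1}^{*}$ (rather than mere positive dimensionality, as sufficed in \cite[Section 5]{BHR} in the planar case) is needed here.
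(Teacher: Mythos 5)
Your proposal is correct and follows essentially the same strategy as the paper: extract from each pair $(u,w)$ a unit vector whose $\pi_V$-image controls $\Vert\pi_V\varphi_u - \pi_V\varphi_w\Vert$, bound the bad set of $V$ via Guivarc'h's H\"older regularity (Lemma \ref{lem:There-exist-alpha =000026 C}), take a union bound over $|\Lambda|^{2n}$ pairs, and apply Borel--Cantelli. The only cosmetic difference is how you produce the unit vector: you split into cases according to whether the linear part $\Vert A_u-A_w\Vert_{op}$ or the translation part $|a_u-a_w|$ dominates, whereas the paper avoids the case analysis by simply evaluating $\varphi_u-\varphi_w$ at a point $x_{u,w}$ in the closed unit ball where $\Vert\varphi_u-\varphi_w\Vert$ is attained and applying Guivarc'h directly to the normalized image vector; both routes give the same estimate.
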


\begin{proof}
For $n\ge1$ set
\[
\mathcal{Y}_{n}:=\left\{ (u,w)\in\Lambda^{n}\times\Lambda^{n}\::\:\varphi_{u}\ne\varphi_{w}\right\} .
\]
Given $u,w\in\Lambda^{*}$, let $x_{u,w}\in\mathbb{R}^{d}$ be with
$|x_{u,w}|\le1$ and
\[
\Vert\varphi_{u}-\varphi_{w}\Vert=|\varphi_{u}(x_{u,w})-\varphi_{w}(x_{u,w})|.
\]
Since $\Phi$ is Diophantine, there exists $\epsilon>0$ so that
\begin{equation}
|\varphi_{u}(x_{u,w})-\varphi_{w}(x_{u,w})|\ge\epsilon^{n}\text{ for all }n\ge1\text{ and }(u,w)\in\mathcal{Y}_{n}.\label{eq:val at x_u,w >=00003D}
\end{equation}

Let $\alpha,C>1$ be as obtained in Lemma \ref{lem:There-exist-alpha =000026 C}.
For $n\ge1$ and $(u,w)\in\mathcal{Y}_{n}$ let
\[
E_{n}^{u,w}:=\left\{ V\in\mathrm{Gr}_{1}(d)\::\:\frac{\left|\pi_{V}\circ(\varphi_{u}-\varphi_{w})(x_{u,w})\right|}{\left|(\varphi_{u}-\varphi_{w})(x_{u,w})\right|}\le(2|\Lambda|^{2})^{-n\alpha}\right\} ,
\]
and set
\[
E_{n}:=\cup_{(u,w)\in\mathcal{Y}_{n}}E_{n}^{u,w}\text{ and }E:=\cap_{N\ge1}\cup_{n\ge N}E_{n}.
\]
By Lemma \ref{lem:There-exist-alpha =000026 C},
\begin{eqnarray*}
\sum_{n\ge1}\nu_{1}^{*}(E_{n}) & \le & \sum_{n\ge1}\sum_{(u,w)\in\mathcal{Y}_{n}}\nu_{1}^{*}(E_{n}^{u,w})\\
 & < & \sum_{n\ge1}\sum_{(u,w)\in\mathcal{Y}_{n}}C2^{-n}|\Lambda|^{-2n}<\infty.
\end{eqnarray*}
Thus, by the Borel-Cantelli lemma, we have $\nu_{1}^{*}(E)=0$.

Given $V\in\mathrm{Gr}_{1}(d)\setminus E$, there exists $N_{V}\ge1$
so that $V\notin\cup_{n\ge N_{V}}E_{n}$. Thus, by the definition
of the sets $E_{n}$ and from (\ref{eq:val at x_u,w >=00003D}), it
follows that for all $n\ge N_{V}$ and $(u,w)\in\mathcal{Y}_{n}$
\begin{eqnarray*}
\Vert\pi_{V}\varphi_{u}-\pi_{V}\varphi_{w}\Vert & \ge & \left|\pi_{V}\circ(\varphi_{u}-\varphi_{w})(x_{u,w})\right|\\
 & \ge & (2|\Lambda|^{2})^{-n\alpha}\left|(\varphi_{u}-\varphi_{w})(x_{u,w})\right|\\
 & \ge & (2^{-\alpha}|\Lambda|^{-2\alpha}\epsilon)^{n}.
\end{eqnarray*}
By taking $\delta:=2^{-\alpha}|\Lambda|^{-2\alpha}\epsilon$ this
completes the proof of the lemma.
\end{proof}
\begin{lem}
\label{lem:inv met >=00003D norm}Let $1\le m\le d$ be given. Then
there exists $c>0$ so that for all $V\in\mathrm{Gr}_{m}(d)$ and
$u,w\in\Lambda^{*}$,
\[
d_{\mathrm{A}_{d,m}}(\pi_{V}\varphi_{u},\pi_{V}\varphi_{w})\ge\min\left\{ 1,c\Vert\pi_{V}\varphi_{u}-\pi_{V}\varphi_{w}\Vert\right\} .
\]
\end{lem}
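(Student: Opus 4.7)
The plan is to reduce the inequality, by means of the $\mathrm{A}_{m,m}$-invariance of $d_{\mathrm{A}_{d,m}}$, to the local equivalence of $d_{\mathrm{A}_{d,m}}$ with the norm $\|\cdot\|$ on a fixed compact subset of $\mathrm{A}_{d,m}$. Set $\psi_1:=\pi_V\varphi_u$ and $\psi_2:=\pi_V\varphi_w$. If $d_{\mathrm{A}_{d,m}}(\psi_1,\psi_2)\ge 1$ the inequality is immediate, so assume $d_{\mathrm{A}_{d,m}}(\psi_1,\psi_2)<1$.

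By (\ref{eq:rep of psi via pi_V}) there exists $f\in\mathrm{A}_{m,m}$ with $\psi_1=f\circ\pi_{V_{\psi_1}}$, where $V_{\psi_1}:=(\ker A_{\psi_1})^{\perp}$. Set $\chi_i:=f^{-1}\circ\psi_i$, so that $\chi_1=\pi_{V_{\psi_1}}$ and, by the $\mathrm{A}_{m,m}$-invariance (Lemma \ref{lem: def of inv met and prop}(1)),
\[
d_{\mathrm{A}_{d,m}}(\chi_1,\chi_2)=d_{\mathrm{A}_{d,m}}(\psi_1,\psi_2)<1.
\]
Let $Q_0:=\{\pi_W:W\in\mathrm{Gr}_m(d)\}$. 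Since each $\pi_W$ factors as $F_W\circ P_W$ for a linear isometry $F_W$, the set $Q_0$ is contained in the compact set $\{F\circ P_W:W\in\mathrm{Gr}_m(d),\;F\in\mathrm{Iso}(W,\mathbb{R}^m)\}\subset\mathrm{A}_{d,m}$, so $\overline{Q_0}$ is compact. Let $Q:=(\overline{Q_0})^{(1)}$ denote the closed $1$-neighbourhood of $\overline{Q_0}$ in $d_{\mathrm{A}_{d,m}}$; then $Q$ is bounded and closed, hence compact by Lemma \ref{lem: def of inv met and prop}(4). By construction $\chi_1,\chi_2\in Q$. By Lemma \ref{lem: def of inv met and prop}(2) there exists $C=C(Q)>1$ with
\[
d_{\mathrm{A}_{d,m}}(\chi_1,\chi_2)\ge C^{-1}\|\chi_1-\chi_2\|.
\]

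It remains to convert $\|\chi_1-\chi_2\|$ back into $\|\psi_1-\psi_2\|$. A direct computation using $\psi_i=f\circ\chi_i$ gives, for $x\in\mathbb{R}^d$,
\[
(\psi_1-\psi_2)(x)=f(\chi_1(x))-f(\chi_2(x))=A_f\bigl((\chi_1-\chi_2)(x)\bigr),
\]
so $\|\psi_1-\psi_2\|\le\|A_f\|_{op}\cdot\|\chi_1-\chi_2\|$. The key observation, and the only point requiring any care, is that $\|A_f\|_{op}\le 1$. Indeed, from $A_{\psi_1}=A_f\circ\pi_{V_{\psi_1}}$ and the fact that $\pi_{V_{\psi_1}}$ restricts to a linear isometry of $V_{\psi_1}$ onto $\mathbb{R}^m$ while $A_{\psi_1}$ vanishes on $V_{\psi_1}^{\perp}$, we get $\|A_f\|_{op}=\|A_{\psi_1}\|_{op}=\|\pi_VA_u\|_{op}\le\|A_u\|_{op}\le 1$, since each $A_i$ is a contraction. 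Combining the two inequalities yields
\[
d_{\mathrm{A}_{d,m}}(\psi_1,\psi_2)=d_{\mathrm{A}_{d,m}}(\chi_1,\chi_2)\ge C^{-1}\|\chi_1-\chi_2\|\ge C^{-1}\|\psi_1-\psi_2\|,
\]
and the lemma follows with $c:=C^{-1}$. The only delicate point is that the compact set $Q$ must be chosen so that it automatically contains $\chi_1=\pi_{V_{\psi_1}}$ for every possible subspace $V_{\psi_1}$, which is precisely why $Q$ is built as a neighbourhood of $Q_0$ rather than of a single point.
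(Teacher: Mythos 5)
Your proof is correct and follows essentially the same route as the paper's: conjugate by the element $f\in\mathrm{A}_{m,m}$ (the paper's $\psi$) so that one of the two maps becomes a $\pi_W$, invoke the local comparability of $d_{\mathrm{A}_{d,m}}$ with $\Vert\cdot\Vert$ on a compact $1$-neighbourhood of the set of coordinate projections, and use $\Vert A_f\Vert_{op}=\Vert\pi_V A_u\Vert_{op}\le 1$ to transfer the norm estimate back. The paper phrases the final step as a two-case dichotomy (whether the conjugated second map lies in the compact set $K$ or not) rather than dispatching the case $d_{\mathrm{A}_{d,m}}(\psi_1,\psi_2)\ge1$ at the outset, but this is only a cosmetic reorganization.
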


\begin{proof}
Write
\[
K:=\left\{ \psi\in\mathrm{A}_{d,m}\::\:d_{\mathrm{A}_{d,m}}(\psi,\pi_{W})\le1\text{ for some }W\in\mathrm{Gr}_{m}(d)\right\} .
\]
By Lemma \ref{lem: def of inv met and prop}, there exists $c>0$
so that
\begin{equation}
d_{\mathrm{A}_{d,m}}(\psi_{2},\psi_{2})\ge c\Vert\psi_{1}-\psi_{2}\Vert\text{ for all }\psi_{1},\psi_{2}\in K.\label{eq:d >=00003D c norm}
\end{equation}

Let $V\in\mathrm{Gr}_{m}(d)$ and $u,w\in\Lambda^{*}$ be given. By
(\ref{eq:rep of psi via pi_V}), there exist $W\in\mathrm{Gr}_{m}(d)$
and $\psi\in\mathrm{A}_{m,m}$ so that $\psi^{-1}\pi_{V}\varphi_{u}=\pi_{W}$.
Note that since $\varphi_{u}$ is a contraction we must have $\Vert A_{\psi}\Vert_{op}<1$,
which implies
\begin{equation}
\Vert\psi^{-1}\pi_{V}\varphi_{u}-\psi^{-1}\pi_{V}\varphi_{w}\Vert\ge\Vert\pi_{V}\varphi_{u}-\pi_{V}\varphi_{w}\Vert.\label{eq:psi ^-1 increases norm}
\end{equation}

If $\psi^{-1}\pi_{V}\varphi_{w}\in K$, then from $\psi^{-1}\pi_{V}\varphi_{u}=\pi_{W}\in K$,
(\ref{eq:d >=00003D c norm}) and (\ref{eq:psi ^-1 increases norm}),
\[
d_{\mathrm{A}_{d,m}}(\psi^{-1}\pi_{V}\varphi_{u},\psi^{-1}\pi_{V}\varphi_{w})\ge c\Vert\pi_{V}\varphi_{u}-\pi_{V}\varphi_{w}\Vert.
\]
Otherwise, if $\psi^{-1}\pi_{V}\varphi_{w}\notin K$ then from $\psi^{-1}\pi_{V}\varphi_{u}=\pi_{W}$
and the definition of $K$,
\[
d_{\mathrm{A}_{d,m}}(\psi^{-1}\pi_{V}\varphi_{u},\psi^{-1}\pi_{V}\varphi_{w})\ge1.
\]
By the $\mathrm{A}_{m,m}$-invariance of $d_{\mathrm{A}_{d,m}}$,
this completes the proof of the lemma.
\end{proof}
\begin{proof}[Proof of Proposition \ref{prop:diophantine --> diophantine for projs}]
It is clear that for every flag $(V_{j})_{j=0}^{d}\in\mathrm{F}(d)$,
\[
\Vert\pi_{V_{m}}\psi\Vert\ge\Vert\pi_{V_{1}}\psi\Vert\text{ for all }\psi\in\mathrm{A}_{d,d}^{\mathrm{vec}}.
\]
Thus, by (\ref{eq:fur =00003D proj of fur}) and Lemma \ref{lem:diophantine for proj to lines},
there exists $\delta>0$ so that for $\nu^{*}$-a.e. $(V_{j})_{j=0}^{d}$,
all $n\ge N(V_{1})\ge1$, and all $u,w\in\Lambda^{n}$ with $\varphi_{u}\ne\varphi_{w}$,
\[
\Vert\pi_{V_{m}}\varphi_{u}-\pi_{V_{m}}\varphi_{w}\Vert\ge\Vert\pi_{V_{1}}\varphi_{u}-\pi_{V_{1}}\varphi_{w}\Vert\ge\delta^{n}.
\]
The proposition now follows directly from this, (\ref{eq:fur =00003D proj of fur})
and Lemma \ref{lem:inv met >=00003D norm}.
\end{proof}

\subsection{\label{subsec:Proofs-of-Theorems asym ent of conv}Proofs of Theorems
\ref{thm:asympt of cond ent} and \ref{thm:=00003DRW ent}}

First we need the following lemmas.
\begin{lem}
\label{lem:lb on norm of wedge_pi}Let $1\le k\le m\le d$
and $\epsilon>0$ be given. Then there exists $\delta=\delta(\epsilon)>0$ so that
$\Vert\wedge^{k}\pi_{V}(u_{1}\wedge...\wedge u_{k})\Vert\ge\delta$
for every $V\in\mathrm{Gr}_{m}(d)$ and orthonormal set $\{u_{1},...,u_{k}\}\subset\mathbb{R}^{d}$
with $\kappa\left(V^{\perp},\mathrm{span}\{u_{1},...,u_{k}\}\right)\ge\epsilon$.
\end{lem}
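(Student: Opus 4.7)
The plan is to prove the lemma by a straightforward compactness and continuity argument, reducing everything to the observation that the norm on the left-hand side vanishes exactly when $V^\perp$ and $W := \mathrm{span}\{u_1,\dots,u_k\}$ have nontrivial intersection.

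First I would unpack the quantity $\|\wedge^{k}\pi_{V}(u_{1}\wedge\cdots\wedge u_{k})\|$. Writing $\pi_V = F\circ P_V$ for the linear isometry $F:V\to\mathbb{R}^m$ from Section \ref{subsec:Spaces-of-affine maps}, and using the definition of $\wedge^k$ of a linear map in Section \ref{subsec:spaces of alt forms}, we obtain
\[
\wedge^{k}\pi_{V}(u_{1}\wedge\cdots\wedge u_{k}) = (FP_Vu_1)\wedge\cdots\wedge(FP_Vu_k).
\]
Since $F$ is a linear isometry, $\wedge^kF$ is an isometry between the appropriate inner product spaces, so
\[
\|\wedge^{k}\pi_{V}(u_{1}\wedge\cdots\wedge u_{k})\| = \|P_Vu_1\wedge\cdots\wedge P_Vu_k\|,
\]
which equals the $k$-dimensional volume of the parallelotope spanned by $P_Vu_1,\dots,P_Vu_k$ in $V$. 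By (\ref{eq:equiv cond for lin indep}), this quantity vanishes iff $P_Vu_1,\dots,P_Vu_k$ are linearly dependent, which happens iff there is a nonzero vector $\sum c_iu_i\in W$ lying in $\ker P_V=V^\perp$, i.e.\ iff $W\cap V^\perp\ne\{0\}$.

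Now I would argue by contradiction. Suppose no such $\delta(\epsilon)>0$ exists. Then there exist sequences $V_n\in\mathrm{Gr}_m(d)$ and orthonormal sets $\{u_{n,1},\dots,u_{n,k}\}$ with $\kappa(V_n^\perp,W_n)\ge\epsilon$, where $W_n := \mathrm{span}\{u_{n,1},\dots,u_{n,k}\}$, but with $\|\wedge^k\pi_{V_n}(u_{n,1}\wedge\cdots\wedge u_{n,k})\|\to 0$. By compactness of $\mathrm{Gr}_m(d)$ and of the Stiefel manifold of orthonormal $k$-frames, and passing to subsequences, we may assume $V_n\to V$ in $\mathrm{Gr}_m(d)$ and $u_{n,i}\to u_i$ for an orthonormal set $\{u_1,\dots,u_k\}$; set $W := \mathrm{span}\{u_1,\dots,u_k\}\in\mathrm{Gr}_k(d)$. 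By continuity of $V\mapsto P_V$ (which is built into the metric $d_{\mathrm{Gr}_m}$) and continuity of the wedge and norm operations, one also has $\kappa(V_n^\perp,W_n)\to\kappa(V^\perp,W)$, so $\kappa(V^\perp,W)\ge\epsilon>0$, while the first paragraph gives $\|P_Vu_1\wedge\cdots\wedge P_Vu_k\|=0$.

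The latter forces $W\cap V^\perp\ne\{0\}$, by the equivalence noted above. But then any line $L\subset W\cap V^\perp$ lies simultaneously in $\mathrm{Gr}_1(W)$ and $\mathrm{Gr}_1(V^\perp)$, so by the definition of $\kappa$ in Section \ref{subsec:Grassmannians-and-the flag space} we get $\kappa(V^\perp,W)\le d_{\mathrm{Gr}_1}(L,L)=0$, contradicting $\kappa(V^\perp,W)\ge\epsilon$. The only mildly delicate step is the continuity of $\kappa$ under convergence in the Grassmannian, which follows from joint continuity of $d_{\mathrm{Gr}_1}$ and the compactness of the unit spheres in $V^\perp$ and $W$; this is the step I would write out carefully, but it is routine.
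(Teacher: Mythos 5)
Your proof is correct and follows essentially the same route as the paper's: a compactness/limiting argument reducing to the observation that vanishing of $\wedge^{k}\pi_{V}(u_{1}\wedge\cdots\wedge u_{k})$ is equivalent to $W\cap V^{\perp}\ne\{0\}$ via (\ref{eq:equiv cond for lin indep}). You spell out a few steps (the isometry reduction, continuity of $\kappa$) that the paper leaves implicit, but the idea is identical.
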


\begin{proof}
Assume by contradiction that the lemma is false. Then by a compactness
argument, there exist $V\in\mathrm{Gr}_{m}(d)$ and an orthonormal set $\{u_{1},...,u_{k}\}\subset\mathbb{R}^{d}$
so that $\wedge^{k}\pi_{V}(u_{1}\wedge...\wedge u_{k})=0$ and for
$W:=\mathrm{span}\{u_{1},...,u_{k}\}$ we have $V^{\perp}\cap W=\{0\}$.
From $\wedge^{k}\pi_{V}(u_{1}\wedge...\wedge u_{k})=0$ together with
(\ref{eq:equiv cond for lin indep}), it follows that $\pi_{V}x=0$
for some $0\ne x\in W$. But this contradicts $V^{\perp}\cap W=\{0\}$,
which completes the proof of the lemma.
\end{proof}
\begin{lem}
\label{lem:sing vals of proj}Let $1\le m\le d$ and $V\in\mathrm{Gr}_{m}(d)$
be given. Then for $\beta$-a.e. $\omega$,
\[
\underset{n\rightarrow\infty}{\lim}\:\frac{1}{n}\log\alpha_{k}\left(\pi_{V}A_{\omega|_{n}}\right)=\chi_{k}\text{ for each }1\le k\le m.
\]
\end{lem}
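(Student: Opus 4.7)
The strategy is to compare the products of singular values $\prod_{j=1}^{k}\alpha_{j}(\pi_{V}A_{\omega|_{n}})$ with $\prod_{j=1}^{k}\alpha_{j}(A_{\omega|_{n}})$ up to multiplicative constants (depending on $\omega$), and then extract each individual $\alpha_{k}$ by subtracting consecutive telescoping terms. Recall from the definition of the Lyapunov exponents that $\frac{1}{n}\log\prod_{j=1}^{k}\alpha_{j}(A_{\omega|_{n}})\to\chi_{1}+\cdots+\chi_{k}$ for $\beta$-a.e.\ $\omega$.

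For the upper bound, I would use Lemma \ref{lem:bounds on sing vals of prod} (or its obvious rectangular analogue via (\ref{eq:sing val as min max})) together with $\alpha_{1}(\pi_{V})\le 1$ to deduce that $\alpha_{j}(\pi_{V}A_{\omega|_{n}})\le\alpha_{j}(A_{\omega|_{n}})$ for every $1\le j\le k$. For the lower bound, I would proceed via wedge products. By (\ref{eq:norm =00003D prod of sing vals}),
\[
\prod_{j=1}^{k}\alpha_{j}(\pi_{V}A_{\omega|_{n}})=\Vert\wedge^{k}(\pi_{V}A_{\omega|_{n}})\Vert_{op}=\Vert(\wedge^{k}\pi_{V})\circ(\wedge^{k}A_{\omega|_{n}})\Vert_{op}.
\]
Let $u_{1}^{(n)},\ldots,u_{k}^{(n)}$ be orthonormal right singular vectors of $A_{\omega|_{n}}$ corresponding to $\alpha_{1},\ldots,\alpha_{k}$, and set $\zeta_{n}:=u_{1}^{(n)}\wedge\cdots\wedge u_{k}^{(n)}$. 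Then $\Vert\zeta_{n}\Vert=1$ and $\wedge^{k}A_{\omega|_{n}}(\zeta_{n})=\prod_{j=1}^{k}\alpha_{j}(A_{\omega|_{n}})\cdot w_{1}^{(n)}\wedge\cdots\wedge w_{k}^{(n)}$, where $w_{1}^{(n)},\ldots,w_{k}^{(n)}$ is an orthonormal basis of $L_{k}(A_{\omega|_{n}})$. Consequently,
\[
\prod_{j=1}^{k}\alpha_{j}(\pi_{V}A_{\omega|_{n}})\ge\prod_{j=1}^{k}\alpha_{j}(A_{\omega|_{n}})\cdot\Vert\wedge^{k}\pi_{V}\left(w_{1}^{(n)}\wedge\cdots\wedge w_{k}^{(n)}\right)\Vert.
\]
So by Lemma \ref{lem:lb on norm of wedge_pi}, it suffices to show that for $\beta$-a.e.\ $\omega$ there exists $\epsilon=\epsilon(\omega)>0$ with $\kappa(V^{\perp},L_{k}(A_{\omega|_{n}}))\ge\epsilon$ for all large $n$.

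Since $\chi_{k}>\chi_{k+1}$, for $\beta$-a.e.\ $\omega$ we have $\alpha_{k}(A_{\omega|_{n}})>\alpha_{k+1}(A_{\omega|_{n}})$ eventually, so the subspaces $L_{k}(A_{\omega|_{n}})$ are well defined, and by property (\ref{enu:def of L_m via L_m of matrices}) of Section \ref{subsec:Coding and Furstenberg maps} they converge to $L_{k}(\omega)$. Thus the above reduces to showing $\kappa(V^{\perp},L_{k}(\omega))>0$ for $\beta$-a.e.\ $\omega$. Since $L_{k}\beta=\nu_{k}$, this is equivalent to $\nu_{k}\{L:L\cap V^{\perp}\ne\{0\}\}=0$. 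When $k=m$ this follows directly from Lemma \ref{lem:nu=00007Bkappa<del=00007D<eps}. For $k<m$, by (\ref{eq:equiv cond for lin indep}) the set $\{L\in\mathrm{Gr}_{k}(d):L\cap V^{\perp}\ne\{0\}\}$ corresponds under $\iota_{k}$ to $\{\zeta\mathbb{R}:\zeta\in\ker(\wedge^{k}\pi_{V})\setminus\{0\}\}$, and since $m<d$ forces $\ker(\wedge^{k}\pi_{V})$ to be a proper subspace of $\wedge^{k}\mathbb{R}^{d}$, the $k$-strong irreducibility of $\mathbf{S}_{\Phi}^{\mathrm{L}}$ combined with \cite[Proposition III.2.3]{BL} (applied as in the proof of Lemma \ref{lem:nu=00007Bkappa<del=00007D<eps}) yields the claim. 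The case $m=d$ is trivial because then $V^{\perp}=\{0\}$.

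Combining the upper and lower bounds gives
\[
\lim_{n\to\infty}\frac{1}{n}\log\prod_{j=1}^{k}\alpha_{j}(\pi_{V}A_{\omega|_{n}})=\sum_{j=1}^{k}\chi_{j}\text{ for }\beta\text{-a.e. }\omega,\text{ for each }1\le k\le m.
\]
Subtracting the $k$ and $k-1$ versions of this identity (with the convention that the empty product is $1$) yields $\frac{1}{n}\log\alpha_{k}(\pi_{V}A_{\omega|_{n}})\to\chi_{k}$ for every $1\le k\le m$, as desired. The only delicate point is the almost-sure non-intersection of $L_{k}(\omega)$ with $V^{\perp}$, which is exactly where assumption (\ref{eq:m-ired and m-prox assump}) enters.
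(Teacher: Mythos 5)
Your proof is correct and follows essentially the same path as the paper: sandwich $\Vert\wedge^{k}\pi_{V}A_{\omega|_{n}}\Vert_{op}$ between $\prod_{j\le k}\alpha_{j}(A_{\omega|_{n}})$ and $\delta\prod_{j\le k}\alpha_{j}(A_{\omega|_{n}})$ using the SVD, Lemma \ref{lem:lb on norm of wedge_pi}, and the a.e.\ transversality $\kappa(V^{\perp},L_{\cdot}(\omega))>0$. One small simplification you missed: since $L_{k}(A_{\omega|_{n}})\subset L_{m}(A_{\omega|_{n}})$ and $\kappa$ is monotone in the second argument (larger subspace means smaller $\kappa$-distance to $V^{\perp}$), controlling $\kappa(V^{\perp},L_{m}(A_{\omega|_{n}}))$ alone covers every $k\le m$; so your separate Pl\"ucker/irreducibility argument for $k<m$ — which is just replaying the proof of Lemma \ref{lem:nu=00007Bkappa<del=00007D<eps} — is unnecessary, and is exactly what the paper avoids.
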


\begin{proof}
For $\beta$-a.e. $\omega$,
\begin{equation}
\underset{n\rightarrow\infty}{\lim}\:\frac{1}{n}\log\alpha_{k}(A_{\omega|_{n}})=\chi_{k}\text{ for each }1\le k\le m.\label{eq:lim=00003Dchi_k asy of sin}
\end{equation}
Moreover, by Lemma \ref{lem:nu=00007Bkappa<del=00007D<eps}, since
$L_{m}\beta=\nu_{m}$, and from (\ref{enu:def of L_m via L_m of matrices})
in Section \ref{subsec:Coding and Furstenberg maps}, it follows that
for $\beta$-a.e. $\omega$
\begin{equation}
\underset{n\rightarrow\infty}{\lim}\:\kappa(V^{\perp},L_{m}(A_{\omega|_{n}}))>0.\label{eq:lim kappa >0 asy of sin}
\end{equation}
Fix $\omega\in\Lambda^{\mathbb{N}}$ for which (\ref{eq:lim=00003Dchi_k asy of sin})
and (\ref{eq:lim kappa >0 asy of sin}) are satisfied.

Let $1\le k\le m$ be given. For $n\ge1$ let $U_{1,n}D_{n}U_{2,n}$
be a singular value decomposition of $A_{\omega|_{n}}$. From (\ref{eq:lim kappa >0 asy of sin})
and by Lemma \ref{lem:lb on norm of wedge_pi}, there exist $\delta>0$
and $N\ge1$ so that
\[
\Vert\wedge^{k}\pi_{V}(U_{1,n}e_{d,1}\wedge...\wedge U_{1,n}e_{d,k})\Vert\ge\delta\text{ for }n\ge N.
\]
Thus, from (\ref{eq:norm =00003D prod of sing vals}) and since $\Vert\wedge^{k}\pi_{V}\Vert_{op}\le1$,
it follows that for $n\ge N$
\begin{multline*}
\prod_{j=1}^{k}\alpha_{j}(A_{\omega|_{n}})=\Vert\wedge^{k}A_{\omega|_{n}}\Vert_{op}\ge\Vert\wedge^{k}\pi_{V}A_{\omega|_{n}}\Vert_{op}\\
\ge\left\Vert \wedge^{k}\pi_{V}A_{\omega|_{n}}\left(U_{2,n}^{-1}e_{d,1}\wedge...\wedge U_{2,n}^{-1}e_{d,k}\right)\right\Vert \ge\delta\prod_{j=1}^{k}\alpha_{j}(A_{\omega|_{n}}).
\end{multline*}
Hence, by (\ref{eq:lim=00003Dchi_k asy of sin})
\[
\underset{n\rightarrow\infty}{\lim}\:\frac{1}{n}\log\Vert\wedge^{k}\pi_{V}A_{\omega|_{n}}\Vert_{op}=\sum_{j=1}^{k}\chi_{j}\text{ for each }1\le k\le m.
\]
This together with (\ref{eq:norm =00003D prod of sing vals}) completes
the proof of the lemma.
\end{proof}
Recall from Section \ref{subsec:Full-dimensionality} that
\[
p_{\Phi}:=\sum_{i\in\Lambda}p_{i}\delta_{\varphi_{i}}\in\mathcal{M}(\mathrm{A}_{d,d}),
\]
and that for $n\ge1$ we write $p_{\Phi}^{*n}$ for the convolution
of $p_{\Phi}$ with itself $n$ times. We can now prove Theorem \ref{thm:asympt of cond ent},
which is the following statement.
\begin{thm*}
Let $1\le m\le d$ be with $\Sigma_{1}^{m-1}=m-1$ and $\Delta_{m}<1$,
and let $V\in\mathrm{Gr}_{m}(d)$ be such that $\pi_{V}\mu$ is exact
dimensional with $\dim\pi_{V}\mu=\Sigma_{1}^{m}$. Then for all $M\ge1$,
\[
\underset{n\rightarrow\infty}{\lim}\:\frac{1}{n}H\left(\pi_{V}p_{\Phi}^{*n},\mathcal{D}_{Mn}\mid\mathcal{D}_{0}\right)=0.
\]
\end{thm*}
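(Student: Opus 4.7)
The plan is to argue by contradiction using Theorem \ref{thm:ent increase result} together with Proposition \ref{prop:non conf par}, exploiting the self-similarity $\pi_V\mu=(\pi_V p_\Phi^{*n})\ldotp\mu$ valid for every $n\ge 1$ (since $\mu=p_\Phi\ldotp\mu$). Decomposing $\pi_V p_\Phi^{*n}$ along $\mathcal{D}_0^{\mathrm{A}_{d,m}}$ and writing $\theta_{n,D}:=(\pi_V p_\Phi^{*n})_D$, $p_{n,D}:=\pi_V p_\Phi^{*n}(D)$, we obtain $\pi_V\mu=\sum_D p_{n,D}\,\theta_{n,D}\ldotp\mu$ and
\[
H(\pi_V p_\Phi^{*n},\mathcal{D}_{Mn}\mid\mathcal{D}_0)=\sum_D p_{n,D}\,H(\theta_{n,D},\mathcal{D}_{Mn}).
\]
Each $\theta_{n,D}$ has support of diameter $O(1)$, and by Lemma \ref{lem:sing vals of proj} applied to $\beta$-typical sequences, for every $\eta>0$ the $p_{n,D}$-mass of atoms admitting a base point $\psi_D\in\mathrm{supp}(\theta_{n,D})$ with $c_i(\psi_D):=-\tfrac{1}{n}\log\alpha_i(A_{\psi_D})$ within $\eta$ of $-\chi_i/\log 2$ (and in particular $\alpha_1(A_{\psi_D})>\cdots>\alpha_m(A_{\psi_D})$) tends to $1$ as $n\to\infty$.

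The first ingredient is an upper bound. Exact dimensionality of $\pi_V\mu$ of dimension $\Sigma_1^m$, together with Lemma \ref{lem:dim_e=00003Ddim}, yields $\tfrac{1}{Mn}H(\pi_V\mu,\mathcal{D}_{(M+c_m)n}\mid\mathcal{D}_{c_m n})\to\Sigma_1^m$. Concavity of conditional entropy in the measure argument gives $\sum_D p_{n,D}\,\tfrac{1}{Mn}H(\theta_{n,D}\ldotp\mu,\mathcal{D}_{(M+c_m)n}\mid\mathcal{D}_{c_m n})\le\Sigma_1^m+o(1)$, which by Proposition \ref{prop:non conf par} applied to each typical $\theta_{n,D}$ with base $\psi_D$ converts to
\[
\sum_D p_{n,D}\,\tfrac{1}{Mn}H(\theta_{n,D}\ldotp\mu,\mathcal{D}_{Mn}^{\psi_D})\le\Sigma_1^m+o(1).
\]

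The second ingredient is a normalization. For each typical $D$, choose $f_D\in\mathrm{A}_{m,m}$ with $f_D^{-1}\psi_D=\pi_{V_{\psi_D}}$ and linear part coming from the singular value decomposition of $A_{\psi_D}$, and set $\tilde\theta_{n,D}:=f_D^{-1}\theta_{n,D}$. By $\mathrm{A}_{m,m}$-invariance of $d_{\mathrm{A}_{d,m}}$, $\mathrm{supp}(\tilde\theta_{n,D})\subset B(\pi_{V_{\psi_D}},O(1))\subset B(\pi_{d,m},R)$ for a universal $R$; by the definition of $\mathcal{D}_{Mn}^{\psi_D}$, the partition $f_D^{-1}\mathcal{D}_{Mn}^{\psi_D}$ of $\mathbb{R}^m$ is $O(1)$-commensurable with $\mathcal{D}_{Mn}^m$ (provided the successive ratios $\alpha_i/\alpha_{i+1}$ are bounded, which holds for typical $D$); and Lemma \ref{lem:commens partiti of A_d,m} makes $f_D^{-1}\mathcal{D}_{Mn}^{\mathrm{A}_{d,m}}$ and $\mathcal{D}_{Mn}^{\mathrm{A}_{d,m}}$ $O(1)$-commensurable. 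Hence
\[
H(\tilde\theta_{n,D}\ldotp\mu,\mathcal{D}_{Mn})=H(\theta_{n,D}\ldotp\mu,\mathcal{D}_{Mn}^{\psi_D})+O(1)
\]
and $H(\tilde\theta_{n,D},\mathcal{D}_{Mn})=H(\theta_{n,D},\mathcal{D}_{Mn})+O(1)$.

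Finally, assume toward contradiction that $\tfrac{1}{n_k}H(\pi_V p_\Phi^{*n_k},\mathcal{D}_{Mn_k}\mid\mathcal{D}_0)\ge\epsilon$ along a subsequence $n_k\to\infty$. Averaging forces a $p_{n_k,\cdot}$-positive mass of typical atoms to satisfy $\tfrac{1}{Mn_k}H(\tilde\theta_{n_k,D},\mathcal{D}_{Mn_k})\ge\epsilon'$. Theorem \ref{thm:ent increase result}, applied at scale $Mn_k$ with $Q=B(\pi_{d,m},R)$, then produces a fixed $\delta>0$ with $\tfrac{1}{Mn_k}H(\tilde\theta_{n_k,D}\ldotp\mu,\mathcal{D}_{Mn_k})>\Sigma_1^m+\delta$ for each such $D$; passing back through the normalization and summing contradicts the upper bound. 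The main obstacle will be the normalization step: the non-conformal partitions $\mathcal{D}_{Mn}^{\psi_D}$ depend on the base $\psi_D$, so commensurability of $f_D^{-1}\mathcal{D}_{Mn}^{\psi_D}$ with $\mathcal{D}_{Mn}^m$ must be established with constants uniform across a set of $p_{n,D}$-mass $\to 1$. This requires quantitative control, for $\beta$-typical $\omega$, of the multiplicative ratios $\alpha_i(A_{\pi_V\varphi_{\omega|_n}})/\alpha_{i+1}(A_{\pi_V\varphi_{\omega|_n}})$, which should follow from Lemma \ref{lem:sing vals of proj} via a standard Egorov/large-deviation argument together with the strict inequalities $\chi_i>\chi_{i+1}$ guaranteed by assumption (\ref{eq:m-ired and m-prox assump}).
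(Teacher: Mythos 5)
Your strategy is essentially the paper's: decompose $\pi_V p_\Phi^{*n}$ along level-$0$ atoms of $\mathcal{D}^{\mathrm{A}_{d,m}}$, combine the self-similarity identity $\pi_V\mu=(\pi_V p_\Phi^{*n}).\mu$ with exact-dimensionality and concavity to get an upper bound of $\Sigma_1^m+o(1)$ on the averaged conditional entropy, pass to the non-conformal partitions of Proposition \ref{prop:non conf par}, normalize by $f_\psi^{-1}$ to land in a fixed ball around $\pi_{d,m}$, and then run Theorem \ref{thm:ent increase result} on the positive-mass set of components whose entropy contribution exceeds $\epsilon$. Lemma \ref{lem:sing vals of proj} plus $\chi_1>\cdots>\chi_m$ is indeed what makes the typical-component set (where $c_{i,\psi}$ is within $\delta$ of $-\chi_i$) have mass $\to 1$, which is exactly how the paper verifies the hypotheses of Proposition \ref{prop:non conf par} and the containment in $B(\pi_{d,m},R)$; no Egorov or large-deviation machinery is needed, only the pointwise limit fed through the identity $\int\delta_{A_\psi}\,d\theta(\psi)=\int\delta_{\pi_V A_{\omega|_n}}\,d\beta(\omega)$. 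Incidentally, the concern you single out as the "main obstacle" is not one: by construction $f_\psi^{-1}(\mathcal{D}_{Mn}^\psi)$ is an exact translate of $\mathcal{D}_{Mn}^m$ regardless of singular value ratios, so that commensurability is automatic and uniform; and the second part of Lemma \ref{lem:commens partiti of A_d,m} already gives uniform commensurability of $\varphi^{-1}\mathcal{D}_n^{\mathrm{A}_{d,m}}$ with $\mathcal{D}_n^{\mathrm{A}_{d,m}}$ for arbitrary $\varphi\in\mathrm{A}_{m,m}$.

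There is, however, a genuine gap at the very end. You apply Theorem \ref{thm:ent increase result} to a set of atoms $D$ of positive $p_{n,D}$-mass, obtaining $\frac{1}{Mn}H(\tilde\theta_{n,D}.\mu,\mathcal{D}_{Mn})>\Sigma_1^m+\delta$ there, and then claim "passing back through the normalization and summing contradicts the upper bound." This does not follow: the weighted average $\sum_D p_{n,D}\,\frac{1}{Mn}H(\theta_{n,D}.\mu,\mathcal{D}_{Mn}^{\psi_D})$ could still be $\le\Sigma_1^m+o(1)$ if the remaining atoms contributed entropy substantially below $\Sigma_1^m$. You need a matching lower bound on the complementary terms. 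The paper supplies this via Lemma \ref{lem:lb on ent of psi mu} (for every $\psi$ in the fixed ball $B(\pi_{d,m},R)$ and $n$ large, $\frac{1}{n}H(\psi\mu,\mathcal{D}_n)>\Sigma_1^m-\epsilon$), which together with the concavity of conditional entropy yields $\frac{1}{Mn}H(f_\psi^{-1}\theta_{\psi,0}.\mu,\mathcal{D}_{Mn})>\Sigma_1^m-\delta$ for every typical component, not just the high-entropy ones; only then does the average exceed $\Sigma_1^m$ by a definite amount $\epsilon\eta/2-O(\delta)>0$, contradicting the upper bound. Once you add that one lemma to the final bookkeeping, your proof closes.
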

\begin{proof}
Let $M\ge1$, and assume by contradiction that
\begin{equation}
\underset{n\rightarrow\infty}{\limsup}\:\frac{1}{n}H\left(\pi_{V}p_{\Phi}^{*n},\mathcal{D}_{Mn}\mid\mathcal{D}_{0}\right)>0.\label{eq:liminf 1/n cond ent > 0}
\end{equation}
By the first part of Lemma \ref{lem:commens partiti of A_d,m}, there
exists $C>1$ so that
\[
\frac{1}{n}H\left(\sigma,\mathcal{D}_{n}\right)\le C\text{ for all }n\ge1,\:D\in\mathcal{D}_{0}^{\mathrm{A}_{d,m}}\text{ and }\sigma\in\mathcal{M}(D).
\]
Thus, from (\ref{eq:liminf 1/n cond ent > 0}) it follows that there
exists $\epsilon>0$ so that $|\mathcal{Q}_{\epsilon}|=\infty$, where
$\mathcal{Q}_{\epsilon}$ is the set of all integers $n\ge1$ with
\[
\mathbb{P}_{i=0}\left\{ \frac{1}{Mn}H\left((\pi_{V}p_{\Phi}^{*n})_{\psi,i},\mathcal{D}_{Mn}\right)>\epsilon\right\} >\epsilon.
\]

Let
\[
0<\delta<\frac{1}{4}\inf\{\chi_{i}-\chi_{i+1}\::\:1\le i<m\}
\]
be small with respect to $\epsilon$, let $n\in\mathcal{Q}_{\epsilon}$
be large with respect to $\delta$ and $M$, and set $\theta:=\pi_{V}p_{\Phi}^{*n}$.
From the relation $\mu=p_{\Phi}^{*n}.\mu$, since $\pi_{V}\mu$ has
exact dimension $\Sigma_{1}^{m}$, and by Lemma \ref{lem:dim_e=00003Ddim},
we may assume that
\[
\Sigma_{1}^{m}+\delta>\frac{1}{Mn}H\left(\theta.\mu,\mathcal{D}_{(M-\chi_{m})n}\mid\mathcal{D}_{-\chi_{m}n}\right).
\]
Thus, by the concavity of conditional entropy,
\begin{equation}
\Sigma_{1}^{m}+\delta>\mathbb{E}_{i=0}\left(\frac{1}{Mn}H\left(\theta_{\psi,i}.\mu,\mathcal{D}_{(M-\chi_{m})n}\mid\mathcal{D}_{-\chi_{m}n}\right)\right).\label{eq:E_i=00003D0(cond ent)}
\end{equation}

For $1\le i\le m$ and $\psi\in\mathrm{A}_{d,m}$ write $c_{i,\psi}:=-\frac{1}{n}\log\alpha_{i}(A_{\psi})$,
and let $E$ be the set of all $\psi\in\mathrm{supp}(\theta)$ so
that $|c_{i,\psi}+\chi_{i}|<\delta$ for $1\le i\le m$. By Lemma
\ref{lem:sing vals of proj} and since
\[
\int\delta_{A_{\psi}}\:d\theta(\psi)=\int\delta_{\pi_{V}A_{\omega|_{n}}}\:d\beta(\omega),
\]
we may assume that $\theta(E)>1-\delta$. Moreover, from (\ref{eq:E_i=00003D0(cond ent)})
\[
\Sigma_{1}^{m}+O(\delta)>\int_{E}\frac{1}{Mn}H\left(\theta_{\psi,0}.\mu,\mathcal{D}_{(M+c_{m,\psi})n}\mid\mathcal{D}_{c_{m,\psi}n}\right)\:d\theta(\psi).
\]
Thus, by Proposition \ref{prop:non conf par},
\begin{equation}
\Sigma_{1}^{m}+O(\delta)>\int_{E}\frac{1}{Mn}H\left(\theta_{\psi,0}.\mu,\mathcal{D}_{Mn}^{\psi}\right)\:d\theta(\psi).\label{eq:cons of non-conf part}
\end{equation}

By the definition of the non-conformal partitions given in Section
\ref{subsec:Non-conformal-partitions}, for every $\psi\in E$ there
exists $f_{\psi}\in\mathrm{A}_{m,m}$ so that
\begin{equation}
\left|\frac{1}{Mn}H\left(\theta_{\psi,0}.\mu,\mathcal{D}_{Mn}^{\psi}\right)-\frac{1}{Mn}H\left(f_{\psi}^{-1}\theta_{\psi,0}.\mu,\mathcal{D}_{Mn}\right)\right|<\delta\label{eq:dif bet non-conf and conf}
\end{equation}
and
\begin{equation}
\mathrm{supp}(f_{\psi}^{-1}\theta_{\psi,0})\subset B(\pi_{d,m},R),\label{eq:sub of B(pi,R)}
\end{equation}
where $R>1$ is a global constant. From (\ref{eq:cons of non-conf part})
and (\ref{eq:dif bet non-conf and conf}), we get
\begin{equation}
\Sigma_{1}^{m}+O(\delta)>\int_{E}\frac{1}{Mn}H\left(f_{\psi}^{-1}\theta_{\psi,0}.\mu,\mathcal{D}_{Mn}\right)\:d\theta(\psi).\label{eq:>H(f^-1theta)}
\end{equation}

Let $\eta>0$ be small with respect to $\epsilon,R$, and suppose
that $\delta$ is small with respect to $\eta$. Let $F$ be the set
of all $\psi\in E$ so that $\frac{1}{Mn}H(\theta_{\psi,0},\mathcal{D}_{Mn})>\epsilon$.
From $n\in\mathcal{Q}_{\epsilon}$ and $\theta(E)>1-\delta$, we obtain
$\theta(F)>\epsilon/2$. Additionally, by the second part of Lemma
\ref{lem:commens partiti of A_d,m},
\[
\frac{1}{Mn}H(f_{\psi}^{-1}\theta_{\psi,0},\mathcal{D}_{Mn})>\epsilon/2\text{ for all }\psi\in F.
\]
Thus, from (\ref{eq:sub of B(pi,R)}) and by Theorem \ref{thm:ent increase result},
\[
\frac{1}{Mn}H\left(f_{\psi}^{-1}\theta_{\psi,0}.\mu,\mathcal{D}_{Mn}\right)>\Sigma_{1}^{m}+\eta\text{ for all }\psi\in F.
\]
Moreover, by the concavity of conditional entropy, from (\ref{eq:sub of B(pi,R)}),
and by Lemma \ref{lem:lb on ent of psi mu},
\[
\frac{1}{Mn}H\left(f_{\psi}^{-1}\theta_{\psi,0}.\mu,\mathcal{D}_{Mn}\right)>\Sigma_{1}^{m}-\delta\text{ for all }\psi\in E.
\]

Now, from (\ref{eq:>H(f^-1theta)}), by the last two formulas, and
from $\theta(E)>1-\delta$ and $\theta(F)>\epsilon/2$, we obtain
\begin{eqnarray*}
\Sigma_{1}^{m}+O(\delta) & > & \theta(F)(\Sigma_{1}^{m}+\eta)+\theta(E\setminus F)(\Sigma_{1}^{m}-\delta)\\
 & \ge & (1-\delta)\Sigma_{1}^{m}+\epsilon\eta/2-\delta.
\end{eqnarray*}
But since $\delta$ is small with respect to $\epsilon,\eta$ we have
thus arrived at a contradiction, which completes the proof of the
theorem.
\end{proof}
Recall from Section \ref{subsec:Full-dimensionality} that $h(p_{\Phi})$
denotes the random walk entropy of $p_{\Phi}$. We next prove Theorem \ref{thm:=00003DRW ent}, which is the following statement.
\begin{thm*}
Assume that $\Phi$ is Diophantine, and let $1\le m\le d$ be with
$\Sigma_{1}^{m-1}=m-1$ and $\Delta_{m}<1$. Then for $\nu_{m}^{*}$-a.e.
$V\in\mathrm{Gr}_{m}(d)$,
\[
\underset{n\rightarrow\infty}{\lim}\:\frac{1}{n}H\left(\pi_{V}p_{\Phi}^{*n},\mathcal{D}_{0}\right)=h(p_{\Phi}).
\]
\end{thm*}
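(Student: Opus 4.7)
The plan is to deduce the theorem by combining Theorem \ref{thm:asympt of cond ent} with the projection-Diophantine estimate from Proposition \ref{prop:diophantine --> diophantine for projs}. First note that the assumption $\Delta_{m}<1$ forces $\dim\mu<d$ by Remark \ref{rem:exists unique m}, so the hypotheses of Theorem \ref{thm:asympt of cond ent} are in force. By Theorem \ref{thm:cons of led-you intro}, $\pi_{V}\mu$ is exact dimensional with $\dim\pi_{V}\mu=\Sigma_{1}^{m}$ for $\nu_{m}^{*}$-a.e.\ $V$, and by Proposition \ref{prop:diophantine --> diophantine for projs} there exists $\epsilon>0$ (independent of $V$) such that for $\nu_{m}^{*}$-a.e.\ $V$ and all $n\ge N(V)$,
\[
d_{\mathrm{A}_{d,m}}(\pi_{V}\varphi_{u},\pi_{V}\varphi_{w})\ge\epsilon^{n}\text{ for all }u,w\in\Lambda^{n}\text{ with }\varphi_{u}\ne\varphi_{w}.
\]
I will fix $V$ in the intersection of these two full-measure sets.

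The upper bound is routine: $\pi_{V}p_{\Phi}^{*n}$ is a pushforward of $p_{\Phi}^{*n}$, hence $H(\pi_{V}p_{\Phi}^{*n},\mathcal{D}_{0})\le H(\pi_{V}p_{\Phi}^{*n})\le H(p_{\Phi}^{*n})$, and dividing by $n$ and passing to the limit yields $\limsup_{n}n^{-1}H(\pi_{V}p_{\Phi}^{*n},\mathcal{D}_{0})\le h(p_{\Phi})$.

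For the lower bound, I would fix an integer $M\ge1$ with $2^{-M}<\epsilon/(3C)$, where $C>1$ is the constant from the second defining property of the dyadic-like partitions $\{\mathcal{D}_{n}^{\mathrm{A}_{d,m}}\}$ recalled in Section \ref{subsec:Dyadic-partitions}. For large $n$, every atom of $\mathcal{D}_{Mn}^{\mathrm{A}_{d,m}}$ has diameter at most $2C\cdot 2^{-Mn}<\epsilon^{n}$, so the Diophantine estimate forces distinct elements of $\{\pi_{V}\varphi_{u}:u\in\Lambda^{n}\}$ to lie in distinct atoms of $\mathcal{D}_{Mn}$. In particular, the map $\psi\mapsto\pi_{V}\circ\psi$ is injective on $\mathrm{supp}(p_{\Phi}^{*n})$, and every atom of the discrete measure $\pi_{V}p_{\Phi}^{*n}$ sits in its own $\mathcal{D}_{Mn}$-cell. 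Consequently, for all sufficiently large $n$,
\[
H(\pi_{V}p_{\Phi}^{*n},\mathcal{D}_{Mn})\;=\;H(\pi_{V}p_{\Phi}^{*n})\;=\;H(p_{\Phi}^{*n}),
\]
so that $n^{-1}H(\pi_{V}p_{\Phi}^{*n},\mathcal{D}_{Mn})\to h(p_{\Phi})$ by the definition of $h(p_\Phi)$.

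It remains to pass from scale $Mn$ down to scale $0$. By Theorem \ref{thm:asympt of cond ent} (whose hypotheses we verified above),
\[
\frac{1}{n}H\!\left(\pi_{V}p_{\Phi}^{*n},\mathcal{D}_{Mn}\mid\mathcal{D}_{0}\right)\;\longrightarrow\;0,
\]
and since $\mathcal{D}_{Mn}$ refines $\mathcal{D}_{0}$, the conditional entropy formula (\ref{eq:cond ent form}) gives
\[
\frac{1}{n}H(\pi_{V}p_{\Phi}^{*n},\mathcal{D}_{0})\;=\;\frac{1}{n}H(\pi_{V}p_{\Phi}^{*n},\mathcal{D}_{Mn})-\frac{1}{n}H\!\left(\pi_{V}p_{\Phi}^{*n},\mathcal{D}_{Mn}\mid\mathcal{D}_{0}\right)\;\longrightarrow\;h(p_{\Phi}),
\]
which is the required limit. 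With Theorem \ref{thm:asympt of cond ent} and Proposition \ref{prop:diophantine --> diophantine for projs} in hand, the only genuinely new step is the scale comparison that converts metric $\epsilon^n$-separation in $(\mathrm{A}_{d,m},d_{\mathrm{A}_{d,m}})$ into combinatorial separation by atoms of $\mathcal{D}_{Mn}$, letting the full Shannon entropy $H(p_{\Phi}^{*n})$ be recovered at scale $Mn$. I do not anticipate any serious obstacle beyond this routine comparison.
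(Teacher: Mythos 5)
Your proof is correct and follows essentially the same route as the paper's: both use Proposition \ref{prop:diophantine --> diophantine for projs} to choose a uniform $M$ so that, for a.e.\ $V$ and large $n$, the images $\pi_{V}\varphi_{u}$ are separated by $\mathcal{D}_{Mn}^{\mathrm{A}_{d,m}}$-atoms (yielding $H(\pi_{V}p_{\Phi}^{*n},\mathcal{D}_{Mn})=H(p_{\Phi}^{*n})$), and then invoke Theorem \ref{thm:asympt of cond ent} together with the conditional entropy formula. Your separate upper-bound paragraph is redundant since the final chain of equalities already gives the two-sided limit, and for the deduction $\Delta_{m}<1\Rightarrow\dim\mu<d$ the sharper citation is Remark \ref{rem:0<=00003DDelta_m<=00003D1}, but neither affects the correctness of the argument.
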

\begin{proof}
For every $n\ge0$ and $D\in\mathcal{D}_{n}^{\mathrm{A}_{d,m}}$ we
have $\mathrm{diam}(D)=O(2^{-n})$. Hence, by Proposition \ref{prop:diophantine --> diophantine for projs},
there exist a Borel set $E_{1}\subset\mathrm{Gr}_{m}(d)$ and $M\ge1$
so that $\nu_{m}^{*}(E_{1})=1$ and for all $V\in E_{1}$ and $n\ge N(V)\ge1$,
\[
\mathcal{D}_{Mn}^{\mathrm{A}_{d,m}}(\pi_{V}\varphi_{u})\ne\mathcal{D}_{Mn}^{\mathrm{A}_{d,m}}(\pi_{V}\varphi_{w})\text{ for each }u,w\in\Lambda^{n}\text{ with }\varphi_{u}\ne\varphi_{w}.
\]
Thus,
\begin{equation}
H\left(\pi_{V}p_{\Phi}^{*n},\mathcal{D}_{Mn}\right)=H(p_{\Phi}^{*n})\text{ for all }V\in E_{1}\text{ and }n\ge N(V),\label{eq:aff ent scale Mn}
\end{equation}
where recall that $H(p_{\Phi}^{*n})$ is the Shannon entropy of $p_{\Phi}^{*n}$.

Let $E_{2}$ be the set of all $V\in\mathrm{Gr}_{m}(d)$ so that $\pi_{V}\mu$
is exact dimensional with $\dim\pi_{V}\mu=\Sigma_{1}^{m}$. By Theorem
\ref{thm:LY formula for SA} we have $\nu_{m}^{*}(E_{2})=1$. Moreover,
by Theorem \ref{thm:asympt of cond ent},
\[
\underset{n\rightarrow\infty}{\lim}\:\frac{1}{n}H\left(\pi_{V}p_{\Phi}^{*n},\mathcal{D}_{Mn}\mid\mathcal{D}_{0}\right)=0\text{ for all }V\in E_{2}.
\]
From this, from (\ref{eq:aff ent scale Mn}), and by the definition
of $h(p_{\Phi})$, it follows that for all $V\in E_{1}\cap E_{2}$,
\begin{eqnarray*}
h(p_{\Phi}) & = & \underset{n\rightarrow\infty}{\lim}\:\frac{1}{n}\left(H\left(\pi_{V}p_{\Phi}^{*n},\mathcal{D}_{Mn}\right)-H\left(\pi_{V}p_{\Phi}^{*n},\mathcal{D}_{Mn}\mid\mathcal{D}_{0}\right)\right)\\
 & = & \underset{n\rightarrow\infty}{\lim}\:\frac{1}{n}H\left(\pi_{V}p_{\Phi}^{*n},\mathcal{D}_{0}\right).
\end{eqnarray*}
Since $\nu_{m}^{*}(E_{1}\cap E_{2})=1$, this completes the proof
of the Theorem.
\end{proof}

\section{\label{sec:Decompositions-of-sub-linear}Decompositions of sub-linear
diameter}

The following proposition is the main result of this section. For
$\beta$-a.e. $\omega$ it yields decompositions of $\beta_{\omega}^{V}$
into a controllable number of pieces, so that most pieces project
onto a subset of $\mathrm{A}_{d,m}$ of sub-linear diameter. Recall
from Section \ref{subsec:Coding and Furstenberg maps} that for $n\ge1$
the map $\Pi_{n}:\Lambda^{\mathbb{N}}\rightarrow\mathrm{A}_{d,d}$
is defined by $\Pi_{n}(\omega)=\varphi_{\omega|_{n}}$ for $\omega\in\Lambda^{\mathbb{N}}$.
\begin{prop}
\label{prop:sub lin decomp}Let $1\le m\le d$, $V\in\mathrm{Gr}_{m}(d)$,
$0\le b<m$ and $1\le k_{1}<...<k_{b}<m$ be given. Set
\[
\mathcal{K}:=\{1,...,m-1\}\setminus\{k_{1},...,k_{b}\},
\]
and suppose that for $\beta$-a.e. $\omega$
\[
\pi_{V}L_{k}\beta_{\omega}^{V}=\delta_{\pi_{V}L_{k}(\omega)}\text{ for each }k\in\mathcal{K}.
\]
Then for $\beta$-a.e. $\omega$, each $\epsilon>0$, and all $n\ge N(V,\omega,\epsilon)\ge1$,
there exist a Borel set $S\subset\Lambda^{\mathbb{N}}$ and a Borel
partition $\mathcal{E}$ of $S$ so that,
\begin{enumerate}
\item $\beta_{\omega}^{V}(S)>1-\epsilon$;
\item $|\mathcal{E}|\le2^{n(\epsilon+q(\chi_{1}-\chi_{m}))}$, where $q=0$
if $b=0$ and $q=\sum_{l=1}^{b}k_{l}(k_{l+1}-k_{l})$ with $k_{b+1}:=k_{b}+1$
if $b>0$;
\item $d_{\mathrm{A}_{d,m}}(\pi_{V}\Pi_{n}\eta,\pi_{V}\Pi_{n}\zeta)\le\epsilon n$
for all $E\in\mathcal{E}$ and $\eta,\zeta\in E$.
\end{enumerate}
\end{prop}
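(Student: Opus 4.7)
The plan is to construct $\mathcal{E}$ by grouping $\eta \in \Lambda^{\mathbb{N}}$ according to the positions of the ``variable'' boundary data $\pi_V L_{k_l}(\eta)$ for $l=1,\dots,b$, within a fixed dyadic-like partition of the relevant flag space, then to verify a distance estimate that yields sub-linear diameter. The hypothesis that $\pi_V L_k \beta_\omega^V = \delta_{\pi_V L_k(\omega)}$ for $k\in\mathcal{K}$ lets us restrict to a good set on which these coordinates of the boundary data are literally constant, so they contribute nothing to the cardinality of $\mathcal{E}$.

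First I would develop the requisite distance estimates in $(\mathrm{A}_{d,m},d_{\mathrm{A}_{d,m}})$. Given $\eta,\zeta$ in the slice support, the $\mathrm{A}_{m,m}$-invariance of $d_{\mathrm{A}_{d,m}}$ combined with the factorization $\pi_V\varphi_{\eta|_n}=f_{\eta,n}\circ\pi_{W_{\eta,n}}$ coming from (\ref{eq:rep of psi via pi_V}) reduces the bound on $d_{\mathrm{A}_{d,m}}(\pi_V\Pi_n\eta,\pi_V\Pi_n\zeta)$ to a bound on $\lVert f_{\eta,n}^{-1}\circ \pi_V\varphi_{\zeta|_n} - \pi_{W_{\eta,n}}\rVert$. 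After an SVD change of basis for $A_{\pi_V\varphi_{\eta|_n}}$, this norm is controlled by the quantity $\alpha_1/\alpha_m \asymp 2^{n(\chi_1-\chi_m)+o(n)}$ multiplied by the Grassmannian displacements $d_{\mathrm{Gr}_{k}}(\pi_V L_k(A_{\eta|_n}),\pi_V L_k(A_{\zeta|_n}))$ for $1 \le k \le m-1$. Ruelle's lemma \cite[Lemma I.4]{Ru} is then invoked to compare the boundary data $L_k(A_{\omega|_n})$ of the finite products with the limit $L_k(\omega)$, giving quantitative rates of convergence (together with a Lusin/Egorov extraction, producing a good set $S$ with $\beta_\omega^V(S)>1-\epsilon$ on which all such estimates hold uniformly).

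Next I would specify the combinatorial part. Fix $W\in\mathrm{Gr}_{k_{b+1}}(d)$ (say $W = \pi_V L_{k_{b+1}}(\omega)$, using a good-set restriction again) and consider the compact smooth manifold
\[
\mathrm{F} := \bigl\{(V_l)_{l=1}^{b+1} \in \mathrm{Gr}_{k_1}(W)\times\cdots\times\mathrm{Gr}_{k_{b+1}}(W)\;:\;V_1\subset\cdots\subset V_{b+1}\bigr\},
\]
of dimension $q=\sum_{l=1}^b k_l(k_{l+1}-k_l)$. Equip it with dyadic-like partitions $\{\mathcal{D}_j^{\mathrm{F}}\}_{j\ge 0}$ having $|\mathcal{D}_j^{\mathrm{F}}|=O(2^{qj})$ and atoms of diameter $O(2^{-j})$. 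Choose $j:=\lceil n(\chi_1-\chi_m)+\log(1/\epsilon)\rceil$, so that atoms have diameter small enough that the product with the expansion factor $2^{n(\chi_1-\chi_m)}$ in the distance estimate above is $O(\epsilon)$, and hence $\le \epsilon n$. Then define $\mathcal{E}$ as the partition of $S$ whose atoms consist of all $\eta\in S$ for which $(\pi_V L_{k_1}(\eta),\dots,\pi_V L_{k_{b+1}}(\eta))$ lies in a fixed atom of $\mathcal{D}_j^{\mathrm{F}}$; the factorization assumption on $\mathcal{K}$ ensures the remaining coordinates $\pi_V L_k$ for $k\in\mathcal{K}$ are constant on $S$ and so do not enter the count. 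This yields $|\mathcal{E}|\le |\mathcal{D}_j^{\mathrm{F}}| \le 2^{n(\epsilon + q(\chi_1-\chi_m))}$.

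The main obstacle will be the distance estimate itself. Turning the heuristic into a bound $d_{\mathrm{A}_{d,m}}(\pi_V\Pi_n\eta,\pi_V\Pi_n\zeta)\le \epsilon n$ requires reconciling two opposite effects: the non-conformal expansion factor $2^{n(\chi_1-\chi_m)}$ that appears when one SVD-normalizes $\pi_V\varphi_{\eta|_n}$, and the Ruelle-type decay of the Grassmannian displacements which is only polynomial in the singular-value gaps $2^{-n(\chi_k-\chi_{k+1})}$. The delicate point is to handle the ``in-between'' flag data $\pi_V L_k(\eta)$ for $k\in\mathcal{K}$: here the factorization hypothesis must be combined with the Ruelle estimates, so that the good set $S$ captures both (i) uniform convergence of $L_k(A_{\eta|_n})\to L_k(\eta)$ with rate independent of $\eta\in S$, and (ii) the a.s.\ equality of $\pi_V L_k(\eta)$ with the common slice value, allowing the $k\in\mathcal{K}$ coordinates to cancel in the distance estimate. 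Once the bookkeeping of scales is arranged so that the expansion factor is exactly absorbed by the partition scale $j$, the claim follows. The case $d=m=2$, $\mathcal{K}=\{1\}$, corresponding to \cite[Proposition 8.5]{HR}, serves as the template and a consistency check.
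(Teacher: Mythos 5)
Your plan for the flag--space partition is essentially the one used in the paper: a good set $S$ is carved out on which the $\mathcal{K}$-coordinates of the boundary flag are constant (by the factorization hypothesis) and the Ruelle-type convergence is uniform, and the cells of $\mathcal{E}$ are preimages under $\eta\mapsto\bigl(\pi_V L_{k_1}(\eta),\dots,\pi_V L_{k_{b+1}}(\eta)\bigr)$ of a dyadic-like partition of the flag manifold $\mathrm{F}$ at scale $\approx n(\chi_1-\chi_m)$, giving the count $2^{qn(\chi_1-\chi_m)+O(n\epsilon)}$.

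However, there is a genuine gap: you treat $\pi_V\Pi_n\eta$ as if it were determined, up to negligible error, by the Grassmannian data $\{\pi_V L_k(A_{\eta|_n})\}$, but these are \emph{affine} maps and the translation parts $a_\eta:=\pi_V\varphi_{\eta|_n}(0)$ vary non-trivially across $\eta\in S$. After the $\mathrm{A}_{m,m}$-normalization $f_{\eta,n}^{-1}$ (which expands by up to $\alpha_m(\pi_V A_{\eta|_n})^{-1}\asymp 2^{-n\chi_m}$ in the slowest direction), the translation discrepancy $f_{\eta,n}^{-1}(a_\zeta - a_\eta)$ need not be $O(1)$: the slice constraint only forces $\pi_V\Pi\eta=\pi_V\Pi\omega$, and the component of $a_\zeta-a_\eta$ transverse to $W_{\eta,k-1}=\pi_V L_{k-1}(\eta)$ is bounded by $2^{n(\chi_k+\delta)}$ with a non-removable multiplicative error $2^{n\delta}$; this only compensates the $2^{-n\chi_k}$ expansion up to that error. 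So the flag partition alone does not make $d_{\mathrm{A}_{d,m}}(\pi_V\Pi_n\eta,\pi_V\Pi_n\zeta)$ small on its cells. The paper handles this by an additional estimate on the translation parts along the flag directions and then a second-level partition of each flag cell into $\le 2^{n\epsilon}$ sub-cells within which $|P_{W_{k-1}^\perp}(a_\zeta-a_\tau)|\le 2^{n\chi_k}$ for every $k$; only then does the triangle inequality (through the model maps $T_{a_\eta}\pi_V U_\eta E_n$) give $O_{\delta,C}(1)\le\epsilon n$. Your proposal must include both the translation-part estimate and the extra refinement; without them the required diameter bound fails.
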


The proof of the proposition is carried out in Section \ref{subsec:Proof of prop sub lin decomp}.
It relies on certain distance estimates in the space $\mathrm{A}_{d,m}$,
which are obtained in the next subsection.

\subsection{\label{subsec:Distance-estimates-in A_d,m}Distance estimates}

The main results of the present subsection are Lemmata \ref{lem:d(piA_omega,piUE_n)=00003Do(n)},
\ref{lem:dist of proj of trans part} and \ref{lem:ub on d(TpiU_omE,TpiU_etE)}.
The proofs of these lemmas require some preparations.
\begin{lem}
\label{lem:lb on alpha_m(pi T)}Let $1\le m\le d$ and $T\in\mathrm{GL}(d,\mathbb{R})$
be given. Then $\alpha_{m}(\pi_{d,m}T)\ge\alpha_{1}(T)^{1-m}\alpha_{d}(T)^{m}$.
\end{lem}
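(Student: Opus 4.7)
The plan is to reduce the lemma to the identity $\prod_{k=1}^{m}\alpha_{k}(\pi_{d,m}T)=\Vert\wedge^{m}(\pi_{d,m}T)\Vert_{op}$, which is a special case of (\ref{eq:norm =00003D prod of sing vals}), and then control the two factors separately. Rewriting gives
\[
\alpha_{m}(\pi_{d,m}T)\;=\;\frac{\Vert\wedge^{m}(\pi_{d,m}T)\Vert_{op}}{\prod_{k=1}^{m-1}\alpha_{k}(\pi_{d,m}T)},
\]
so it suffices to bound the numerator from below and the denominator from above.

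The denominator is easy: since $\Vert\pi_{d,m}\Vert_{op}=1$, the min--max characterisation (\ref{eq:sing val as min max}) yields $\alpha_{k}(\pi_{d,m}T)\le\alpha_{k}(T)\le\alpha_{1}(T)$ for every $1\le k\le m-1$, giving
\[
\prod_{k=1}^{m-1}\alpha_{k}(\pi_{d,m}T)\;\le\;\alpha_{1}(T)^{m-1}.
\]

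For the numerator I would evaluate the alternating form $\wedge^{m}(\pi_{d,m}T)$ at the distinguished element $T^{-1}e_{d,1}\wedge\ldots\wedge T^{-1}e_{d,m}\in\wedge^{m}\mathbb{R}^{d}$. By definition of $\wedge^{m}$ and of $\pi_{d,m}$, this element is sent to
\[
\pi_{d,m}(e_{d,1})\wedge\ldots\wedge\pi_{d,m}(e_{d,m})\;=\;e_{m,1}\wedge\ldots\wedge e_{m,m},
\]
which has norm $1$. On the other hand, by (\ref{eq:ub on norm of wedge}) and $|T^{-1}e_{d,i}|\le\Vert T^{-1}\Vert_{op}=\alpha_{d}(T)^{-1}$,
\[
\Vert T^{-1}e_{d,1}\wedge\ldots\wedge T^{-1}e_{d,m}\Vert\;\le\;\alpha_{d}(T)^{-m}.
\]
Dividing, $\Vert\wedge^{m}(\pi_{d,m}T)\Vert_{op}\ge\alpha_{d}(T)^{m}$, and combining with the previous display finishes the proof. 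There is no real obstacle here; this is essentially a bookkeeping exercise in linear algebra, and the only point to be careful about is choosing the right test vector in $\wedge^{m}\mathbb{R}^{d}$ so that the action of $\pi_{d,m}T$ is transparent.
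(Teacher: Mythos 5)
Your proof is correct and follows essentially the same route as the paper: both use the identity $\prod_{k=1}^{m}\alpha_{k}(\pi_{d,m}T)=\Vert\wedge^{m}(\pi_{d,m}T)\Vert_{op}$, bound the operator norm from below by testing on $T^{-1}e_{d,1}\wedge\ldots\wedge T^{-1}e_{d,m}$, and then control the first $m-1$ singular values of $\pi_{d,m}T$ by $\alpha_1(T)$. The only cosmetic difference is that the paper bounds all of $\alpha_1(\pi_{d,m}T),\ldots,\alpha_{m-1}(\pi_{d,m}T)$ by $\alpha_1(\pi_{d,m}T)\le\alpha_1(T)$ rather than invoking $\alpha_k(\pi_{d,m}T)\le\alpha_k(T)$ term by term.
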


\begin{proof}
We have,
\[
\left\Vert \wedge^{m}\pi_{d,m}T\left((T^{-1}e_{d,1})\wedge...\wedge(T^{-1}e_{d,m})\right)\right\Vert =\left\Vert e_{m,1}\wedge...\wedge e_{m,m}\right\Vert =1.
\]
Moreover, by (\ref{eq:ub on norm of wedge}),
\[
\left\Vert (T^{-1}e_{d,1})\wedge...\wedge(T^{-1}e_{d,m})\right\Vert \le\prod_{i=1}^{m}\left|T^{-1}e_{d,i}\right|\le\Vert T^{-1}\Vert_{op}^{m}=\alpha_{d}(T)^{-m}.
\]
Thus,
\[
\left\Vert \wedge^{m}\pi_{d,m}T\right\Vert _{op}\ge\frac{\left\Vert \wedge^{m}\pi_{d,m}T\left((T^{-1}e_{d,1})\wedge...\wedge(T^{-1}e_{d,m})\right)\right\Vert }{\left\Vert (T^{-1}e_{d,1})\wedge...\wedge(T^{-1}e_{d,m})\right\Vert }\ge\alpha_{d}(T)^{m}.
\]
From this, since $\alpha_1(T)\ge\alpha_1(\pi_{d,m}T)$ and by (\ref{eq:norm =00003D prod of sing vals}), we get
\[
\alpha_{1}(T)^{m-1}\alpha_{m}(\pi_{d,m}T)\ge\prod_{i=1}^{m}\alpha_{i}(\pi_{d,m}T)=\left\Vert \wedge^{m}\pi_{d,m}T\right\Vert _{op}\ge\alpha_{d}(T)^{m},
\]
which proves the lemma.
\end{proof}
\begin{lem}
\label{lem:d(pi_d,m,T)=00003DO()}Let $1\le m\le d$ and let $T\in\mathrm{A}_{d,m}$
be linear. Then,
\[
d_{\mathrm{A}_{d,m}}\left(\pi_{d,m},T\right)=O\left(1+\max\left\{ \log\alpha_{1}(T),\log\alpha_{m}(T)^{-1}\right\} \right).
\]
\end{lem}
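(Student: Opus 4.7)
The plan is to decompose $T$ via its singular value decomposition and separately bound the contributions of the orthogonal factors (using compactness) and of the diagonal scaling (using the Riemannian structure of $d_{\mathrm{A}_{d,m}}$). Write $T = U_1 \tilde{D} \pi_{d,m} U_2$ with $U_1 \in \mathrm{O}(m)$, $U_2 \in \mathrm{O}(d)$, and $\tilde{D} := \mathrm{diag}_{m,m}(\alpha_1(T),\ldots,\alpha_m(T)) \in \mathrm{A}_{m,m}$. Observe that $Q := \{U \pi_{d,m} V : U \in \mathrm{O}(m),\, V \in \mathrm{O}(d)\}$ is the continuous image of a compact group and so has finite diameter in $(\mathrm{A}_{d,m}, d_{\mathrm{A}_{d,m}})$ by Lemma \ref{lem: def of inv met and prop}.

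By the $\mathrm{A}_{m,m}$-invariance of $d_{\mathrm{A}_{d,m}}$ applied with $\varphi = U_1^{-1}$ together with the triangle inequality,
\[
d_{\mathrm{A}_{d,m}}(\pi_{d,m}, T) \le d_{\mathrm{A}_{d,m}}(\pi_{d,m}, \pi_{d,m} U_2) + d_{\mathrm{A}_{d,m}}(U_1^{-1} \pi_{d,m} U_2, \pi_{d,m} U_2) + d_{\mathrm{A}_{d,m}}(\pi_{d,m} U_2, \tilde{D} \pi_{d,m} U_2).
\]
Since $U_1^{-1} \pi_{d,m} = \pi_{d,m}(U_1^{-1} \oplus I_{d-m})$, both $\pi_{d,m} U_2$ and $U_1^{-1} \pi_{d,m} U_2$ lie in $Q$, so the first two summands are $O(1)$. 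The case $m=d$ is handled identically, replacing $\pi_{d,d}$ by the identity element of $\mathrm{A}_{d,d}$ and using that $\mathrm{O}(d) \subset \mathrm{A}_{d,d}$ is compact.

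The remaining estimate $d_{\mathrm{A}_{d,m}}(\psi_0, \tilde{D}\psi_0) = O(\max\{\log\alpha_1(T),\log\alpha_m(T)^{-1}\})$, with $\psi_0 := \pi_{d,m} U_2 \in Q$, will be obtained by bounding the length of the smooth curve $\gamma(t) := \exp(tX)\psi_0$ for $t \in [0,1]$, where $X := \mathrm{diag}(\log\alpha_1(T),\ldots,\log\alpha_m(T)) \in \mathfrak{gl}(m,\mathbb{R})$, so that $\gamma(0)=\psi_0$ and $\gamma(1)=\tilde{D}\psi_0$. Identifying $T_\psi \mathrm{A}_{d,m}$ with $\mathrm{A}_{d,m}^{\mathrm{vec}}$ one has $\dot\gamma(t) = X\gamma(t)$; since $L_{\exp(-tX)}$ is an isometry of the Riemannian metric $g$ inducing $d_{\mathrm{A}_{d,m}}$ and sends $\gamma(t)$ to $\psi_0$, its differential maps $\dot\gamma(t)$ to $X\psi_0$. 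Hence $\|\dot\gamma(t)\|_{g,\gamma(t)} = \|X\psi_0\|_{g,\psi_0}$ is independent of $t$, and the length of $\gamma$ equals $\|X\psi_0\|_{g,\psi_0}$. By compactness of $Q$ and continuity of $(X,\psi) \mapsto \|X\psi\|_{g,\psi}$, this quantity is $O(\|X\|_{\mathrm{op}}) = O(\max_i |\log\alpha_i(T)|) = O(\max\{\log\alpha_1(T),\log\alpha_m(T)^{-1}\})$, where the last equality uses the ordering of the $\alpha_i(T)$ together with the observation that $\alpha_1(T) \ge \alpha_m(T)$ forces at least one of these logarithms to be nonnegative.

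The main subtlety is the length computation for the exponential path: everything else is a routine triangle inequality plus compactness. Once one correctly identifies how the left $\mathrm{A}_{m,m}$-action on $\mathrm{A}_{d,m}$ pushes forward tangent vectors, the left-invariance of $g$ immediately gives that $\|\dot\gamma(t)\|_{g,\gamma(t)}$ is constant in $t$, and the stated bound falls out (the additive $1$ in the target bound absorbing the $O(1)$ contribution from the first two summands).
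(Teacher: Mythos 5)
Your proof is correct, and it reaches the bound by a genuinely different route from the paper. You compute the Riemannian length of the one-parameter curve $\gamma(t)=\exp(tX)\psi_0$ joining $\psi_0$ to $\tilde D\psi_0$, using the $\mathrm{A}_{m,m}$-invariance of $g$ to show the speed $\|\dot\gamma(t)\|_{g,\gamma(t)}=\|X\psi_0\|_{g,\psi_0}$ is constant, and then bound that single norm uniformly over the compact set $Q$. The paper instead avoids any explicit appeal to tangent spaces or curve lengths: it takes the $k$-th root $E$ of the diagonal factor with $k:=1+\lfloor\max\{\log\alpha_1(T),\log\alpha_m(T)^{-1}\}\rfloor$, so that each $\alpha_i(T)^{1/k}\in[\tfrac12,2]$ and hence $d_{\mathrm{A}_{d,m}}(\pi_{d,m}U',E\pi_{d,m}U')=O(1)$, and then chains $k$ applications of the triangle inequality together with left-invariance, $d_{\mathrm{A}_{d,m}}(E^{i}\pi_{d,m}U',E^{i+1}\pi_{d,m}U')=d_{\mathrm{A}_{d,m}}(\pi_{d,m}U',E\pi_{d,m}U')$, giving $O(k)$. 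The two arguments are morally the same --- the paper's telescoping chain is a polygonal discretisation of your exponential path, and both hinge on the same invariance property --- but the paper's version is self-contained at the level of the metric, whereas yours requires unpacking the differential of the left-action and the identification $T_\psi\mathrm{A}_{d,m}\cong\mathrm{A}_{d,m}^{\mathrm{vec}}$; in exchange, yours is perhaps cleaner conceptually and avoids the explicit choice of $k$. Both are valid; one small presentational remark is that you do not actually need the identity $U_1^{-1}\pi_{d,m}=\pi_{d,m}(U_1^{-1}\oplus I_{d-m})$ once you define $Q:=\{U\pi_{d,m}V\}$, since $U_1^{-1}\pi_{d,m}U_2$ visibly lies in $Q$ already.
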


\begin{proof}
Let $UDU'$ be a singular value decomposition of $T$, and set
\[
k:=1+\left\lfloor \max\left\{ \log\alpha_{1}(T),\log\alpha_{m}(T)^{-1}\right\} \right\rfloor
\]
and
\[
E:=\mathrm{diag}_{m,m}(\alpha_{1}(T)^{1/k},...,\alpha_{m}(T)^{1/k}).
\]
Since $\frac{1}{2}\le\alpha_{i}(T)^{1/k}\le2$ for $1\le i\le m$,
we have $d_{\mathrm{A}_{d,m}}\left(\pi_{d,m}U',E\pi_{d,m}U'\right)=O(1)$.
From this, since $D=E^{k}\pi_{d,m}$, and by the $\mathrm{A}_{m,m}$-invariance
of $d_{\mathrm{A}_{d,m}}$,
\begin{eqnarray*}
d_{\mathrm{A}_{d,m}}\left(\pi_{d,m},T\right) & \le & d_{\mathrm{A}_{d,m}}\left(\pi_{d,m},U\pi_{d,m}U'\right)+d_{\mathrm{A}_{d,m}}\left(U\pi_{d,m}U',UE^{k}\pi_{d,m}U'\right)\\
 & \le & O(1)+\sum_{i=0}^{k-1}d_{\mathrm{A}_{d,m}}\left(E^{i}\pi_{d,m}U',E^{i+1}\pi_{d,m}U'\right)\\
 & = & O(1)+k\cdot d_{\mathrm{A}_{d,m}}\left(\pi_{d,m}U',E\pi_{d,m}U'\right)=O(k),
\end{eqnarray*}
which completes the proof of the lemma.
\end{proof}
The following lemma follows directly from \cite[Lemma I.4]{Ru}, which
is part of Ruelle's proof of the multiplicative ergodic theorem.
\begin{lem}
\label{lem:from Ruelle's proof}Let $\omega\in\Lambda^{\mathbb{N}}$
be such that,
\[
\underset{n\rightarrow\infty}{\lim}\:\frac{1}{n}\log\alpha_{k}(A_{\omega|_{n}})=\chi_{k}\text{ for }1\le k\le d.
\]
For $n\ge1$ let $U_{n}D_{n}U_{n}'$ be a singular value decomposition
of $A_{\omega|_{n}}$, and for $1\le k\le d$ set $u_{n,k}:=U_{n}e_{d,k}$.
Then for each $\epsilon>0$ there exists $N=N(\omega,\epsilon)\ge1$
so that for all $n_{2}\ge n_{1}\ge N$,
\[
\left|\left\langle u_{n_{1},k},u_{n_{2},l}\right\rangle \right|\le2^{-n_{1}(|\chi_{k}-\chi_{l}|-\epsilon)}\text{ for all }1\le k,l\le d.
\]
\end{lem}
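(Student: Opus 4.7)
The plan is to reduce the statement directly to Ruelle's Lemma I.4 from \cite{Ru}. Set $M_n := A_{\omega|_n}$, so that $M_n M_n^*$ is symmetric positive definite with eigenvalues $\alpha_k(M_n)^2$ and orthonormal eigenvectors $u_{n,k} = U_n e_{d,k}$. By hypothesis, $\frac{1}{n}\log\alpha_k(M_n) \to \chi_k$ for each $k$, and under the standing assumption \eqref{eq:m-ired and m-prox assump} these Lyapunov exponents are distinct, so the ``top-$k$'' eigenspaces $V_{n,k} := \operatorname{span}\{u_{n,1},\dots,u_{n,k}\}$ are well-defined for all sufficiently large $n$ and converge to an Oseledets flag. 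Ruelle's Lemma I.4 provides the exponential rate of this convergence.

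My first step would be to make the reduction precise: observe that for $n_2 \ge n_1$, writing $M_{n_2} = B M_{n_1}$ with $B = A_{\omega_{n_1}}\cdots A_{\omega_{n_2-1}}$, the action of $M_{n_2}^*$ on $u_{n_2,l}$ yields $M_{n_2}^* u_{n_2,l} = \alpha_l(M_{n_2}) U_{n_2}' e_{d,l}$, while $M_{n_1}^* u_{n_1,k}$ has norm $\alpha_k(M_{n_1}) \sim 2^{n_1\chi_k}$. Comparing these two operations against the same vectors, Ruelle's argument gives the quantitative bound
\[
|\langle u_{n_1,k}, u_{n_2,l}\rangle| \le 2^{-n_1(|\chi_k - \chi_l| - \epsilon)}
\]
whenever $n_1 \ge N(\omega,\epsilon)$, uniformly in $n_2 \ge n_1$. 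When $k = l$ the right-hand side is $\ge 1$ and the bound is trivial, so one only needs to check $k \ne l$, where (by symmetry under transposing the roles of $k$ and $l$) one may assume $\chi_k > \chi_l$.

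The main conceptual step — and the one requiring the full strength of Ruelle's lemma — is obtaining the sharp exponent $|\chi_k - \chi_l|$ rather than the coarser gap $\chi_k - \chi_{k+1}$ that one would get by naively comparing $V_{n_1,k}$ and $V_{n_2,k}$. This sharper exponent is recovered via the layer-by-layer peeling in Ruelle's inductive argument: the component of $u_{n_1,k}$ along $u_{n_2,l}$ is controlled by applying $M_{n_1}^*$ and using that this map expands $u_{n_1,k}$ at rate $\chi_k$ while contracting directions transverse to $V_{\infty,l-1}$ at rate at most $\chi_l$. Since this is precisely the content of \cite[Lemma I.4]{Ru} once translated into our notation, no new argument is needed; the only care required is to verify that the hypothesis on $\omega$ matches the one used by Ruelle, which holds because $\chi_1 > \cdots > \chi_d$ under our assumption \eqref{eq:m-ired and m-prox assump}.
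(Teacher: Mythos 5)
Your approach matches the paper's exactly: the paper offers no proof beyond the assertion that the lemma "follows directly from \cite[Lemma I.4]{Ru}," which is also the substance of your proposal. One small correction worth making if you flesh this out: with $A_{\omega|_n}=A_{\omega_0}\cdots A_{\omega_{n-1}}$ the factorization is $M_{n_2}=M_{n_1}B$, not $BM_{n_1}$; the $u_{n,k}$ are thus the \emph{left} singular vectors of $M_n$, equivalently the right singular vectors of the left cocycle $M_n^{*}=A_{\omega_{n-1}}^{*}\cdots A_{\omega_0}^{*}$, and it is to this transposed cocycle that Ruelle's lemma applies in the form you describe.
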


We now proceed to the main results of this subsection, which require
the following notations. Recall from Section \ref{subsec:Coding and Furstenberg maps}
that for $\omega\in\Lambda^{\mathbb{N}}$ we have $L_{0}(\omega)\subset...\subset L_{d}(\omega)$.
For $1\le k\le d$ let $u_{\omega,k}\in L_{k}(\omega)\cap(L_{k-1}(\omega)^{\perp})$
be a unit vector, and let $U_{\omega}\in\mathrm{O}(d)$ be such that
$U_{\omega}e_{d,k}=u_{\omega,k}$ for $1\le k\le d$. Since the Furstenberg
boundary maps $L_{0},...,L_{d}$ are Borel measurable, we may clearly
assume that the map $\omega\rightarrow U_{\omega}$ is also Borel
measurable. For $n\ge1$ set,
\[
E_{n}:=\mathrm{diag}_{d,d}(2^{n\chi_{1}},...,2^{n\chi_{d}}).
\]

\begin{lem}
\label{lem:d(piA_omega,piUE_n)=00003Do(n)}For $\beta$-a.e. $\omega$,
every $1\le m\le d$ and all $V\in\mathrm{Gr}_{m}(d)$,
\[
\underset{n\rightarrow\infty}{\lim}\:\frac{1}{n}d_{\mathrm{A}_{d,m}}\left(\pi_{V}A_{\omega|_{n}},\pi_{V}U_{\omega}E_{n}\right)=0.
\]
\end{lem}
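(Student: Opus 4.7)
Fix $\omega$ in the conull set where the multiplicative ergodic theorem holds ($\tfrac{1}{n}\log\alpha_{k}(A_{\omega|_{n}})\to\chi_{k}$ for every $k$, which is exactly the hypothesis of Lemma \ref{lem:from Ruelle's proof}) and where the SVD frames converge $U_{n}e_{d,k}\to\pm u_{\omega,k}$; this last convergence follows from $L_{k}(A_{\omega|_{n}})\to L_{k}(\omega)$ together with $\chi_{1}>\cdots>\chi_{d}$. Let $V\in\mathrm{Gr}_{m}(d)$ and $\epsilon>0$ be given, and write $A_{\omega|_{n}}=U_{n}D_{n}U_{n}'$ with $U_{n},U_{n}'\in\mathrm{SO}(d)$ (flip signs as needed). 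Passing to the limit $n_{2}\to\infty$ in Lemma \ref{lem:from Ruelle's proof} and using continuity, for all $n\ge N(\omega,\epsilon)$ and $k\ne l$,
\[
|\langle u_{n,k},u_{\omega,l}\rangle|\le 2^{-n(|\chi_{k}-\chi_{l}|-\epsilon)},
\]
while the diagonal inner products tend to $\pm 1$.

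\textbf{Step 1.} Set $R_{n}:=U_{\omega}^{-1}U_{n}$, $F_{n}:=E_{n}^{-1}D_{n}$, and
\[
M_{n}:=(U_{\omega}E_{n})^{-1}A_{\omega|_{n}}=(E_{n}^{-1}R_{n}E_{n})\,F_{n}\,U_{n}'\in\mathrm{GL}(d,\mathbb{R}),
\]
so that $\pi_{V}A_{\omega|_{n}}=\pi_{V}U_{\omega}E_{n}\cdot M_{n}$. The entries of the diagonal matrix $F_{n}$ are of order $2^{o(n)}$ by the multiplicative ergodic theorem. The $(l,k)$-entry of $E_{n}^{-1}R_{n}E_{n}$ equals $(R_{n})_{lk}\cdot 2^{n(\chi_{k}-\chi_{l})}$; using the bound on $(R_{n})_{lk}$ just derived, it has absolute value at most $2^{n\epsilon}$ when $l>k$, decays exponentially when $l<k$ (take $\epsilon$ smaller than twice the smallest gap), and the diagonal entries stay bounded and bounded away from $0$. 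The operator norm is therefore $O(2^{n\epsilon})$, and since $\det(E_{n}^{-1}R_{n}E_{n})=\det R_{n}=\pm 1$ the smallest singular value is at least $2^{-O(n\epsilon)}$. Combining,
\[
\log\alpha_{1}(M_{n}),\ -\log\alpha_{d}(M_{n})\ \le\ O(n\epsilon)+o(n).
\]

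\textbf{Step 2.} We exploit the invariances of $d_{\mathrm{A}_{d,m}}$: the construction in Lemma \ref{lem: def of inv met and prop} produces a metric that is both left $\mathrm{A}_{m,m}$-invariant and right $\mathrm{SO}(d)$-invariant (for $m<d$; the case $m=d$ is simpler, see below). Using (\ref{eq:rep of psi via pi_V}), write $\pi_{V}U_{\omega}E_{n}=\tilde\varphi_{n}\pi_{W_{n}}$ with $\tilde\varphi_{n}\in\mathrm{A}_{m,m}$ and $W_{n}=(\ker\pi_{V}U_{\omega}E_{n})^{\perp}\in\mathrm{Gr}_{m}(d)$, and further $\pi_{W_{n}}=F_{n}^{(0)}\pi_{d,m}U_{n}^{(W)}$ for some $F_{n}^{(0)}\in\mathrm{O}(m)$, $U_{n}^{(W)}\in\mathrm{SO}(d)$. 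Applying left $\mathrm{A}_{m,m}$-invariance and then right $\mathrm{SO}(d)$-invariance,
\[
d_{\mathrm{A}_{d,m}}(\pi_{V}A_{\omega|_{n}},\pi_{V}U_{\omega}E_{n})=d_{\mathrm{A}_{d,m}}(\pi_{d,m}\tilde M_{n},\pi_{d,m}),
\]
where $\tilde M_{n}:=U_{n}^{(W)}M_{n}(U_{n}^{(W)})^{-1}$ has the same singular values as $M_{n}$. By Lemma \ref{lem:lb on alpha_m(pi T)} and submultiplicativity of $\alpha_{1}$, $\alpha_{1}(\pi_{d,m}\tilde M_{n})\le\alpha_{1}(M_{n})$ and $\alpha_{m}(\pi_{d,m}\tilde M_{n})\ge\alpha_{1}(M_{n})^{1-m}\alpha_{d}(M_{n})^{m}$. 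Lemma \ref{lem:d(pi_d,m,T)=00003DO()} therefore yields
\[
d_{\mathrm{A}_{d,m}}(\pi_{V}A_{\omega|_{n}},\pi_{V}U_{\omega}E_{n})=O\!\bigl(1+m\log\alpha_{1}(M_{n})-m\log\alpha_{d}(M_{n})\bigr)=O(n\epsilon)+o(n).
\]
Dividing by $n$ and letting $\epsilon\downarrow 0$ gives the claim. For $m=d$ the map $\pi_{V}$ is an isometry, so left $\mathrm{A}_{d,d}$-invariance reduces the distance directly to $d_{\mathrm{A}_{d,d}}(M_{n},1_{\mathrm{A}})$, which is handled by the $m=d$ case of Lemma \ref{lem:d(pi_d,m,T)=00003DO()}.

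\textbf{Main obstacle.} The delicate point is Step 1: one must extract off-diagonal decay of $R_{n}$ from the a priori symmetric-looking bound in Lemma \ref{lem:from Ruelle's proof}, then track how the conjugation by $E_{n}$ amplifies each entry by $2^{n(\chi_{k}-\chi_{l})}$, and finally use the determinant identity $\det(E_{n}^{-1}R_{n}E_{n})=\pm 1$ to turn a one-sided bound on $\alpha_{1}$ into a matching bound on $\alpha_{d}^{-1}$. Only with the resulting symmetric control $\alpha_{1}(M_{n}),\alpha_{d}(M_{n})^{-1}\le 2^{O(n\epsilon)}$ can the $m$-fold loss from Lemma \ref{lem:lb on alpha_m(pi T)} be absorbed into an $O(n\epsilon)$ error that vanishes as $\epsilon\downarrow 0$.
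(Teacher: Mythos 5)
Your proof is correct and follows essentially the same route as the paper: bound the ``discrepancy'' matrix $M_n=(U_\omega E_n)^{-1}A_{\omega|_n}$ via the Ruelle frame estimates (Lemma \ref{lem:from Ruelle's proof}), then convert a two-sided singular-value bound on $M_n$ into a bound on $d_{\mathrm{A}_{d,m}}$ using the factorisation (\ref{eq:rep of psi via pi_V}) and Lemmata \ref{lem:lb on alpha_m(pi T)}, \ref{lem:d(pi_d,m,T)=00003DO()}. Two stylistic differences from the paper's argument are worth noting. First, you invoke right $\mathrm{SO}(d)$-invariance of $d_{\mathrm{A}_{d,m}}$; this does hold for the metric built in the proof of Lemma \ref{lem: def of inv met and prop} (for $m<d$ it is invariant under the $\mathrm{A}_{m,m}^{+}\times\mathrm{SO}(d)$ action, and the extra pull-back commutes with the right $\mathrm{SO}(d)$ action), but it is \emph{not} part of the stated conclusion of that lemma. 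The paper avoids this by the cheaper triangle inequality $d(\pi_{d,m}\tilde U,\,\cdot\,)\le d(\pi_{d,m}\tilde U,\pi_{d,m})+d(\pi_{d,m},\,\cdot\,)$, with the first term $O(1)$ by compactness of $\mathrm{O}(d)$; adopting that step would make your proof self-contained with respect to the stated lemma. Second, you obtain the lower bound on $\alpha_d(M_n)$ from the determinant identity $\det(E_n^{-1}R_nE_n)=\pm1$ combined with the upper bound on $\alpha_1$, whereas the paper estimates $\Vert M_n^{-1}\Vert_{op}$ directly by an entry-wise computation symmetric to the one for $\Vert M_n\Vert_{op}$. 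Both yield the required two-sided $2^{O(n\epsilon)}$ control, so this is a matter of taste.
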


\begin{proof}
For $\beta$-a.e. $\omega$
\begin{equation}
\underset{n\rightarrow\infty}{\lim}\:\frac{1}{n}\log\alpha_{k}(A_{\omega|_{n}})=\chi_{k}\text{ for }1\le k\le d,\label{eq:conv of alpha_m all m}
\end{equation}
and
\begin{equation}
\underset{n\rightarrow\infty}{\lim}\:L_{k}(A_{\omega|_{n}})=L_{k}(\omega)\text{ for }1\le k\le d.\label{eq:conv of L_m all m}
\end{equation}
Fix such an $\omega$ for the rest of the proof. Since $\omega$ is
fixed, we write $\{u_{k}\}_{k=1}^{d}$ and $U$ in place of $\{u_{\omega,k}\}_{k=1}^{d}$
and $U_{\omega}$.

Let $\epsilon>0$ be given. By (\ref{eq:conv of alpha_m all m}),
there exists $N_{1}\ge1$ so that for all $n\ge N_{1}$
\begin{equation}
2^{n(\chi_{k}-\epsilon)}\le\alpha_{k}(A_{\omega|_{n}})\le2^{n(\chi_{k}+\epsilon)}\text{ for }1\le k\le d.\label{eq:<=00003D alpha_m <=00003D}
\end{equation}

For each $n\ge1$ let $U_{n}D_{n}U_{n}'$ be a singular value decomposition
of $A_{\omega|_{n}}$, and for $1\le k\le d$ set $u_{n,k}:=U_{n}e_{d,k}$.
By (\ref{eq:conv of L_m all m}) it follows easily that,
\[
\underset{n\rightarrow\infty}{\lim}\:d_{\mathrm{Gr}_{1}}(u_{k}\mathbb{R},u_{n,k}\mathbb{R})=0\text{ for }1\le k\le d.
\]
Moreover, by Lemma \ref{lem:from Ruelle's proof} there exists $N_{2}\ge1$
so that for all $n_{2}\ge n_{1}\ge N_{2}$,
\[
\left|\left\langle u_{n_{1},k},u_{n_{2},l}\right\rangle \right|\le2^{-n_{1}(|\chi_{k}-\chi_{l}|-\epsilon)}\text{ for all }1\le k,l\le d.
\]
Thus, for every $n\ge N_{2}$
\begin{equation}
\left|\left\langle u_{k},u_{n,l}\right\rangle \right|\le2^{-n(|\chi_{k}-\chi_{l}|-\epsilon)}\text{ for all }1\le k,l\le d.\label{eq:ub on <u_n, u_n,k>}
\end{equation}

Let $n\ge\max\{N_{1},N_{2}\}$ and $1\le k\le d$ be given. Since
$\{u_{n,1},...,u_{n,d}\}$ is an orthonormal basis of $\mathbb{R}^{d}$,
\begin{eqnarray*}
D_{n}^{-1}U_{n}^{-1}UE_{n}e_{d,k} & = & 2^{n\chi_{k}}D_{n}^{-1}U_{n}^{-1}u_{k}\\
 & = & 2^{n\chi_{k}}D_{n}^{-1}U_{n}^{-1}\left(\sum_{l=1}^{d}\left\langle u_{k},u_{n,l}\right\rangle u_{n,l}\right)\\
 & = & \sum_{l=1}^{d}\frac{2^{n\chi_{k}}}{\alpha_{l}(A_{\omega|_{n}})}\left\langle u_{k},u_{n,l}\right\rangle e_{d,l}.
\end{eqnarray*}
From this, (\ref{eq:<=00003D alpha_m <=00003D}) and (\ref{eq:ub on <u_n, u_n,k>}),
we get
\begin{eqnarray*}
\left|A_{\omega|_{n}}^{-1}UE_{n}e_{d,k}\right| & = & \left|D_{n}^{-1}U_{n}^{-1}UE_{n}e_{d,k}\right|\\
 & \le & \sum_{l=1}^{d}\frac{2^{n\chi_{k}}}{\alpha_{l}(A_{\omega|_{n}})}\left|\left\langle u_{k},u_{n,l}\right\rangle \right|\\
 & \le & \sum_{l=1}^{d}2^{n\chi_{k}}2^{-n(\chi_{l}-\epsilon)}2^{-n(|\chi_{k}-\chi_{l}|-\epsilon)}\le d2^{2n\epsilon},
\end{eqnarray*}
which gives $\Vert A_{\omega|_{n}}^{-1}UE_{n}\Vert_{op}=O(2^{2n\epsilon})$.
In a similar manner it can be shown that,
\[
\alpha_{d}(A_{\omega|_{n}}^{-1}UE_{n})^{-1}=\Vert(A_{\omega|_{n}}^{-1}UE_{n})^{-1}\Vert_{op}=O(2^{2n\epsilon}).
\]

Now let $n\ge\max\{N_{1},N_{2}\}$, $1\le m\le d$ and $V\in\mathrm{Gr}_{m}(d)$
be given. There exists $B\in\mathrm{GL}(m,\mathbb{R})$ and $\tilde{U}\in\mathrm{O}(d)$
so that $B\pi_{V}A_{\omega|_{n}}=\pi_{d,m}\tilde{U}$. Thus, by the
$\mathrm{A}_{m,m}$-invariance of $d_{\mathrm{A}_{d,m}}$,
\begin{eqnarray}
d_{\mathrm{A}_{d,m}}\left(\pi_{V}A_{\omega|_{n}},\pi_{V}UE_{n}\right) & = & d_{\mathrm{A}_{d,m}}\left(B\pi_{V}A_{\omega|_{n}},B\pi_{V}A_{\omega|_{n}}A_{\omega|_{n}}^{-1}UE_{n}\right)\nonumber \\
 & = & d_{\mathrm{A}_{d,m}}\left(\pi_{d,m}\tilde{U},\pi_{d,m}\tilde{U}A_{\omega|_{n}}^{-1}UE_{n}\right)\nonumber \\
 & \le & O(1)+d_{\mathrm{A}_{d,m}}\left(\pi_{d,m},\pi_{d,m}\tilde{U}A_{\omega|_{n}}^{-1}UE_{n}\right).\label{eq:ub 1 on d_A(piA,piUE_n)}
\end{eqnarray}

Set $T:=\tilde{U}A_{\omega|_{n}}^{-1}UE_{n}$. From the estimates
above and since $\tilde{U}\in\mathrm{O}(d)$,
\[
\alpha_{1}(T),\alpha_{d}(T)^{-1}=O(2^{2n\epsilon}).
\]
From this and by Lemma \ref{lem:lb on alpha_m(pi T)},
\[
\alpha_{m}\left(\pi_{d,m}T\right)^{-1}\le\alpha_{1}(T)^{m-1}\alpha_{d}(T)^{-m}=O(2^{4mn\epsilon}).
\]
Moreover,
\[
\alpha_{1}\left(\pi_{d,m}T\right)\le\alpha_{1}(T)=O(2^{2n\epsilon}).
\]
Thus, by Lemma \ref{lem:d(pi_d,m,T)=00003DO()},
\[
d_{\mathrm{A}_{d,m}}\left(\pi_{d,m},\pi_{d,m}T\right)=O(1+n\epsilon).
\]
This together with (\ref{eq:ub 1 on d_A(piA,piUE_n)}) gives
\[
\frac{1}{n}d_{\mathrm{A}_{d,m}}\left(\pi_{V}A_{\omega|_{n}},\pi_{V}UE_{n}\right)=O\left(\frac{1}{n}+\epsilon\right),
\]
which completes the proof of the lemma. 
\end{proof}
\begin{lem}
\label{lem:dist of proj of trans part}Let $1\le m\le d$ and $V\in\mathrm{Gr}_{m}(d)$
be given, then for $\beta$-a.e. $\omega$ the following holds. For
$0\le k<m$ set $W_{k}:=\pi_{V}(L_{k}(\omega))$, then for each $1\le k\le m$
\[
\underset{n\rightarrow\infty}{\limsup}\frac{1}{n}\log\left|P_{W_{k-1}^{\perp}}\pi_{V}\left(\Pi\omega-\varphi_{\omega|_{n}}(0)\right)\right|\le\chi_{k}.
\]
\end{lem}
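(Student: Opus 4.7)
\medskip

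The plan is to unfold the difference $\Pi\omega-\varphi_{\omega|_n}(0)$ using the singular value decomposition of $A_{\omega|_n}$, and to quantify how fast the $l$-th left singular vector approaches $L_l(\omega)$ via Lemma \ref{lem:from Ruelle's proof}.

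First I would observe that $\varphi_{\omega|_n}(\Pi\sigma^n\omega)=\Pi\omega$, where $\sigma$ is the shift, so
\[
\Pi\omega-\varphi_{\omega|_n}(0)=A_{\omega|_n}\Pi(\sigma^n\omega),\qquad|\Pi(\sigma^n\omega)|=O(1)
\]
since $K_\Phi$ is bounded. Next, fix $\epsilon>0$ and restrict to a full-$\beta$-measure set of $\omega$ on which (a) $\tfrac1n\log\alpha_l(A_{\omega|_n})\to\chi_l$ for each $l$, (b) $L_l(A_{\omega|_n})\to L_l(\omega)$ for each $l$, and (c) the conclusion of Lemma \ref{lem:from Ruelle's proof} holds. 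Write the SVD as $A_{\omega|_n}=U_nD_nU_n'$ and set $u_{n,l}:=U_ne_{d,l}$, $v_{n,l}:=(U_n')^{-1}e_{d,l}$. Then for any $x\in\mathbb{R}^d$,
\[
A_{\omega|_n}x=\sum_{l=1}^{d}\alpha_l(A_{\omega|_n})\langle x,v_{n,l}\rangle u_{n,l},
\]
so applying $P_{W_{k-1}^\perp}\pi_V$ with $x=\Pi(\sigma^n\omega)$ reduces the problem to estimating $\alpha_l(A_{\omega|_n})\cdot|P_{W_{k-1}^\perp}\pi_V u_{n,l}|$ for each $l\in\{1,\dots,d\}$.

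The key step is the claim that for $l\le k-1$,
\[
|P_{W_{k-1}^\perp}\pi_V u_{n,l}|\le C\cdot 2^{-n(\chi_l-\chi_k-\epsilon)}.
\]
To see this, expand $u_{n,l}=\sum_{j=1}^d\langle u_{n,l},u_{\omega,j}\rangle u_{\omega,j}$ in the orthonormal basis $\{u_{\omega,j}\}_{j=1}^d$. The components with $j\le k-1$ lie in $L_{k-1}(\omega)$, hence are annihilated by $P_{W_{k-1}^\perp}\pi_V$. For $j\ge k>l$, applying Lemma \ref{lem:from Ruelle's proof} with $n_1=n$ and letting $n_2\to\infty$ (using $u_{n_2,j}\to u_{\omega,j}$) gives
\[
|\langle u_{n,l},u_{\omega,j}\rangle|\le 2^{-n(\chi_l-\chi_j-\epsilon)},
\]
and the geometric tail is dominated by the $j=k$ term. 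For $l\ge k$, we use only the trivial bound $|P_{W_{k-1}^\perp}\pi_V u_{n,l}|\le 1$.

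Combining with $\alpha_l(A_{\omega|_n})\le 2^{n(\chi_l+\epsilon)}$ and $|\langle\Pi(\sigma^n\omega),v_{n,l}\rangle|=O(1)$: the terms with $l\le k-1$ contribute at most $2^{n(\chi_k+2\epsilon)}$ each, while the terms with $l\ge k$ contribute at most $2^{n(\chi_l+\epsilon)}\le 2^{n(\chi_k+\epsilon)}$ each. Summing over $l$ yields
\[
\bigl|P_{W_{k-1}^\perp}\pi_V\bigl(\Pi\omega-\varphi_{\omega|_n}(0)\bigr)\bigr|\le C'\cdot 2^{n(\chi_k+2\epsilon)},
\]
and dividing by $n$, taking $\limsup$, and sending $\epsilon\downarrow 0$ completes the proof. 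The main obstacle is the exponential estimate on $|P_{W_{k-1}^\perp}\pi_V u_{n,l}|$ for $l\le k-1$; once Lemma \ref{lem:from Ruelle's proof} is converted into a bound involving the limiting basis $\{u_{\omega,j}\}$ by a suitable passage to the limit in the second index, the rest is bookkeeping with singular values and the triangle inequality.
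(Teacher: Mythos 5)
Your proof is correct and follows essentially the same route as the paper: both express $\Pi\omega-\varphi_{\omega|_n}(0)$ as $A_{\omega|_n}$ applied to a bounded vector, invoke Lemma \ref{lem:from Ruelle's proof} to control the alignment of the $n$-th singular frame $\{u_{n,l}\}$ with the limiting Furstenberg frame $\{u_{\omega,j}\}$, and exploit the orthogonality $\pi_V u_{\omega,j}\in W_{k-1}$ for $j<k$. The only difference is organizational: you bound $|P_{W_{k-1}^\perp}\pi_V u_{n,l}|$ directly by expanding $u_{n,l}$ in the limiting frame, while the paper first bounds $|\langle y,u_{n,l}\rangle|$ and then transfers to the coefficients $|\langle y,u_{\omega,j}\rangle|$; both orderings of the same Ruelle estimate yield the identical exponent $\chi_k+2\epsilon$.
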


\begin{proof}
For $\beta$-a.e. $\omega$
\[
\underset{n\rightarrow\infty}{\lim}\:\frac{1}{n}\log\alpha_{k}(A_{\omega|_{n}})=\chi_{k}\text{ and }\underset{n\rightarrow\infty}{\lim}\:L_{k}(A_{\omega|_{n}})=L_{k}(\omega)\text{ for }1\le k\le d.
\]
Fix such an $\omega$ for the rest of the proof, and write $\{u_{k}\}_{k=1}^{d}$
and $U$ in place of $\{u_{\omega,k}\}_{k=1}^{d}$ and $U_{\omega}$.

Let $\epsilon>0$ be given. For each $n\ge1$ let $U_{n}D_{n}U_{n}'$
be a singular value decomposition of $A_{\omega|_{n}}$, and for $1\le k\le d$
set $u_{n,k}:=U_{n}e_{d,k}$. As in the proof of Lemma \ref{lem:d(piA_omega,piUE_n)=00003Do(n)},
there exists $N\ge1$ so that for all $n\ge N$
\begin{equation}
\alpha_{k}(A_{\omega|_{n}})\le2^{n(\chi_{k}+\epsilon)}\text{ for }1\le k\le d\label{eq:ub on sing vals}
\end{equation}
and
\begin{equation}
\left|\left\langle u_{k},u_{n,l}\right\rangle \right|\le2^{-n(|\chi_{k}-\chi_{l}|-\epsilon)}\text{ for all }1\le k,l\le d.\label{eq:ub on <u_n, u_n,k> 2}
\end{equation}

Fix $n\ge N$, set $y:=\Pi\omega-\varphi_{\omega|_{n}}(0)$, and let
$B\subset\mathbb{R}^{d}$ be the closed ball with centre $0$ are
radius $R:=\max_{x\in K_{\Phi}}|x|$. Note that $\Pi\omega,\varphi_{\omega|_{n}}(0)\in\varphi_{\omega|_{n}}(B)$,
and that for each $x\in\varphi_{\omega|_{n}}(B)$ there exist $c_{1},...,c_{d}\in\mathbb{R}$
so that $x=\varphi_{\omega|_{n}}(0)+\sum_{l=1}^{d}c_{l}u_{n,l}$ and
$|c_{l}|\le R\alpha_{l}(A_{\omega|_{n}})$ for $1\le l\le d$. Hence,
by (\ref{eq:ub on sing vals}),
\[
\left|\left\langle y,u_{n,l}\right\rangle \right|=O\left(2^{n(\chi_{l}+\epsilon)}\right)\text{ for }1\le l\le d.
\]
From this and (\ref{eq:ub on <u_n, u_n,k> 2}), it follows that for
each $1\le k\le d$
\begin{equation}
\left|\left\langle y,u_{k}\right\rangle \right|\le\sum_{l=1}^{d}\left|\left\langle y,u_{n,l}\right\rangle \right|\cdot\left|\left\langle u_{k},u_{n,l}\right\rangle \right|=O\left(2^{n(\chi_{k}+2\epsilon)}\right).\label{eq:|<y,u_k>|<=00003D}
\end{equation}

Now let $1\le k\le m$ be given and set $W_{k-1}:=\pi_{V}(L_{k-1}(\omega))$.
For each $1\le l<k$ we have $\pi_{V}u_{l}\in\pi_{V}(L_{l}(\omega))\subset W_{k-1}$.
Thus,
\[
P_{W_{k-1}^{\perp}}\pi_{V}(y)=P_{W_{k-1}^{\perp}}\pi_{V}\left(\sum_{l=1}^{d}\left\langle y,u_{l}\right\rangle u_{l}\right)=\sum_{l=k}^{d}\left\langle y,u_{l}\right\rangle P_{W_{k-1}^{\perp}}\pi_{V}u_{l}.
\]
This together with (\ref{eq:|<y,u_k>|<=00003D}) gives
\[
\left|P_{W_{k-1}^{\perp}}\pi_{V}(y)\right|=O\left(2^{n(\chi_{k}+2\epsilon)}\right),
\]
which completes the proof of the lemma.
\end{proof}
Recall from Section \ref{subsec:Algebraic-notations} that $\mathrm{F}(m)$
denotes the manifold of complete flags in $\mathbb{R}^{m}$, and that
for $a,x\in\mathbb{R}^{m}$ we write $T_{a}x=a+x$.
\begin{lem}
\label{lem:ub on d(TpiU_omE,TpiU_etE)}Let $\epsilon>0$, $C>1$,
$1\le m\le d$, $V\in\mathrm{Gr}_{m}(d)$, $(W_{k})_{k=0}^{m}\in\mathrm{F}(m)$,
$\omega,\eta\in\Lambda^{\mathbb{N}}$, $n\ge1$ and $a_{\omega},a_{\eta}\in\mathbb{R}^{m}$
be given. Suppose that,
\begin{enumerate}
\item $\kappa(V^{\perp},L_{m}(\omega)),\kappa(V^{\perp},L_{m}(\eta))\ge\epsilon$;
\item $d_{\mathrm{Gr}_{k}}(\pi_{V}(L_{k}(\omega)),W_{k}),d_{\mathrm{Gr}_{k}}(\pi_{V}(L_{k}(\eta)),W_{k})\le C2^{n(\chi_{m}-\chi_{k})}$
for $1\le k<m$;
\item $\left|P_{W_{k-1}^{\perp}}(a_{\omega}-a_{\eta})\right|\le C2^{n\chi_{k}}$
for $1\le k\le m$.
\end{enumerate}
Then,
\[
d_{\mathrm{A}_{d,m}}\left(T_{a_{\omega}}\pi_{V}U_{\omega}E_{n},T_{a_{\eta}}\pi_{V}U_{\eta}E_{n}\right)=O_{\epsilon,C}(1).
\]
\end{lem}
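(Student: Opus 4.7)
The plan is to use the $\mathrm{A}_{m,m}$-invariance of $d_{\mathrm{A}_{d,m}}$ to simultaneously normalize $\psi_\omega := T_{a_\omega}\pi_V U_\omega E_n$ and $\psi_\eta := T_{a_\eta}\pi_V U_\eta E_n$, reducing the claim to the statement that after normalization both maps lie in a subset of $\mathrm{A}_{d,m}$ whose $d_{\mathrm{A}_{d,m}}$-diameter depends only on $\epsilon$ and $C$. Fix an orthonormal basis $w_1,\dots,w_m$ of $\mathbb{R}^m$ compatible with the flag (so $W_k=\mathrm{span}\{w_1,\dots,w_k\}$), let $U_W\in\mathrm{O}(m)$ send $e_{m,k}$ to $w_k$, and set $F_n:=\mathrm{diag}_{m,m}(2^{n\chi_1},\dots,2^{n\chi_m})$ and $L:=F_n^{-1}U_W^{-1}$. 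Left-multiplication by $\varphi_n:=T_{-La_\omega}L\in\mathrm{A}_{m,m}$ yields $\varphi_n\psi_\omega=L\pi_V U_\omega E_n$ and $\varphi_n\psi_\eta=T_{L(a_\eta-a_\omega)}L\pi_V U_\eta E_n$, so by invariance it suffices to bound the distance between these two elements.

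The translation is handled by condition (3). Decomposing $a_\eta-a_\omega=\sum_{k=1}^m c_k w_k$ and using $W_{k-1}^\perp=\mathrm{span}\{w_k,\dots,w_m\}$, condition (3) at each $k$ forces $|c_k|\le C2^{n\chi_k}$, whence $|L(a_\eta-a_\omega)|=\bigl|\sum_k c_k 2^{-n\chi_k}e_{m,k}\bigr|\le C\sqrt{m}$. For the linear parts, expand $\pi_V u_{\omega,k}=\sum_j d^\omega_{k,j}w_j$; letting $\iota:\mathbb{R}^m\hookrightarrow\mathbb{R}^d$ be inclusion into the first $m$ coordinates, a direct calculation shows the $(j,k)$-entry of $M_\omega:=L\pi_V U_\omega E_n\,\iota$ equals $2^{n(\chi_k-\chi_j)}d^\omega_{k,j}$. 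Combining condition (2) with the elementary inequality $|P_{W_k^\perp}x|\le d_{\mathrm{Gr}_k}(\pi_V L_k(\omega),W_k)\,|x|$ for $x\in\pi_V L_k(\omega)$ yields $|d^\omega_{k,j}|\le C2^{n(\chi_m-\chi_k)}$ whenever $j>k$; a short case analysis then shows every entry of $M_\omega$ has absolute value $O(C)$, while the remaining columns of $L\pi_V U_\omega E_n$ (those with index $k>m$) contribute entries of order $2^{n(\chi_{m+1}-\chi_m)}$. Hence $\alpha_1(L\pi_V U_\omega E_n)=O(C)$, and similarly for $\eta$.

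The main obstacle is the uniform lower bound $\alpha_m(L\pi_V U_\omega E_n)\ge c(\epsilon,C)>0$ needed to invoke Lemma \ref{lem:d(pi_d,m,T)=00003DO()}. I obtain it from condition (1): since $\{u_{\omega,k}\}_{k=1}^m$ is orthonormal in $L_m(\omega)$ and $\kappa(V^\perp,L_m(\omega))\ge\epsilon$ forces every singular value of $\pi_V|_{L_m(\omega)}$ to be at least $\epsilon$, the $m$-volume of $\{\pi_V u_{\omega,k}\}_{k=1}^m$ is at least $\epsilon^m$. Using $E_n\iota=\iota F_n$ one sees $|\det M_\omega|=|\det(\pi_V u_{\omega,1},\dots,\pi_V u_{\omega,m})|\ge\epsilon^m$, which combined with $\|M_\omega\|_{op}=O(C)$ gives $\alpha_m(M_\omega)\ge\epsilon^m/O(C)^{m-1}$. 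Since $(L\pi_V U_\omega E_n)(L\pi_V U_\omega E_n)^*\succeq M_\omega M_\omega^*$ (the difference is a sum of rank-one positive semidefinite terms coming from the deleted columns), $\alpha_m(L\pi_V U_\omega E_n)\ge\alpha_m(M_\omega)$. Lemma \ref{lem:d(pi_d,m,T)=00003DO()} then produces $d_{\mathrm{A}_{d,m}}(\pi_{d,m},L\pi_V U_\omega E_n),\,d_{\mathrm{A}_{d,m}}(\pi_{d,m},L\pi_V U_\eta E_n)=O_{\epsilon,C}(1)$, placing both linear parts in a fixed compact subset $Q\subset\mathrm{A}_{d,m}$. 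On $Q$, Lemma \ref{lem: def of inv met and prop}(2) converts the $\|\cdot\|$-bound $|L(a_\eta-a_\omega)|=O(C)$ on the translation into a $d_{\mathrm{A}_{d,m}}$-bound of size $O_{\epsilon,C}(1)$, and the triangle inequality completes the proof.
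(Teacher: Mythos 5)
Your proof is correct and takes essentially the same route as the paper's: both normalize by an $\mathrm{A}_{m,m}$-element that rescales along the flag $W_0\subset\dots\subset W_m$ (your $L=F_n^{-1}U_W^{-1}$ differs from the paper's $M$ only by a fixed orthogonal factor), both use condition (3) to bound the translation in the same way, both bound $\alpha_1$ of the normalized linear part via condition (2), and both obtain the $\alpha_m$ lower bound from condition (1) via a wedge-product/determinant argument before invoking Lemma \ref{lem:d(pi_d,m,T)=00003DO()}. The only real variation is that you derive the explicit bound $|\det M_\omega|\ge\epsilon^m$ directly from $\kappa(V^\perp,L_m(\omega))\ge\epsilon$, whereas the paper cites its compactness-based Lemma \ref{lem:lb on norm of wedge_pi}; your computation is a clean, quantitative replacement for that lemma in the $k=m$ case.
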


\begin{proof}
For each $1\le k\le m$ let $w_{k}\in W_{k-1}^{\perp}\cap W_{k}$
be a unit vector, let $M\in\mathrm{GL}(m,\mathbb{R})$ be such that
$Mw_{k}=2^{-n\chi_{k}}w_{k}$ for $1\le k\le m$, and set $a:=a_{\omega}-a_{\eta}$.
By the $\mathrm{A}_{m,m}$-invariance of $d_{\mathrm{A}_{d,m}}$,
in order to prove the lemma it suffices to show that
\[
d_{\mathrm{A}_{d,m}}\left(\pi_{d,m},MT_{a}\pi_{V}U_{\omega}E_{n}\right),d_{\mathrm{A}_{d,m}}\left(\pi_{d,m},M\pi_{V}U_{\eta}E_{n}\right)=O_{\epsilon,C}(1).
\]
We only estimate the first term, as the estimation of the second term
is similar.

For each $1\le k\le m$, 
\[
\left|\left\langle a,w_{k}\right\rangle \right|\le\left|P_{W_{k-1}^{\perp}}(a)\right|\le C2^{n\chi_{k}}.
\]
Hence,
\[
|Ma|=\left|M\left(\sum_{k=1}^{m}\left\langle a,w_{k}\right\rangle w_{k}\right)\right|\le\sum_{k=1}^{m}\left|\left\langle a,w_{k}\right\rangle \right|\cdot|Mw_{k}|=O_{C}(1),
\]
and so
\[
d_{\mathrm{A}_{d,m}}\left(\pi_{d,m},T_{Ma}\pi_{d,m}\right)=O_{C}(1).
\]
Setting $S:=M\pi_{V}U_{\omega}E_{n}$, we thus have
\begin{eqnarray*}
d_{\mathrm{A}_{d,m}}\left(\pi_{d,m},MT_{a}\pi_{V}U_{\omega}E_{n}\right) & = & d_{\mathrm{A}_{d,m}}\left(\pi_{d,m},T_{Ma}S\right)\\
 & \le & d_{\mathrm{A}_{d,m}}\left(\pi_{d,m},T_{Ma}\pi_{d,m}\right)+d_{\mathrm{A}_{d,m}}\left(T_{Ma}\pi_{d,m},T_{Ma}S\right)\\
 & = & O_{C}(1)+d_{\mathrm{A}_{d,m}}\left(\pi_{d,m},S\right).
\end{eqnarray*}

We turn to show that $d_{\mathrm{A}_{d,m}}\left(\pi_{d,m},S\right)=O_{\epsilon,C}(1)$,
which will complete the proof. For $1\le k\le m$ we have $\pi_{V}U_{\omega}e_{d,k}\in\pi_{V}L_{k}(\omega)$.
From this and
\[
d_{\mathrm{Gr}_{k}}(\pi_{V}(L_{k}(\omega)),W_{k})\le C2^{n(\chi_{m}-\chi_{k})},
\]
it follows that there exists $w\in W_{k}$ with $|w|\le1$ and
\[
|\pi_{V}U_{\omega}e_{d,k}-w|\le C2^{n(\chi_{m}-\chi_{k})}.
\]
Since $w\in W_{k}$ and $|w|\le1$, we obtain that $|Mw|\le2^{-n\chi_{k}}$.
Thus,
\begin{eqnarray*}
\left|Se_{d,k}\right| & = & 2^{n\chi_{k}}\left|M\pi_{V}U_{\omega}e_{d,k}\right|\\
 & \le & 2^{n\chi_{k}}\left|M\left(\pi_{V}U_{\omega}e_{d,k}-w\right)\right|+2^{n\chi_{k}}\left|Mw\right|\\
 & \le & 2^{n\chi_{k}}\Vert M\Vert_{op}C2^{n(\chi_{m}-\chi_{k})}+1\le2C.
\end{eqnarray*}
Moreover, for $m<k\le d$
\[
\left|Se_{d,k}\right|=2^{n\chi_{k}}\left|M\pi_{V}U_{\omega}e_{d,k}\right|\le2^{n\chi_{k}}\Vert M\Vert_{op}\le2^{n(\chi_{k}-\chi_{m})}\le1.
\]
From the last two inequalities it follows that $\alpha_{1}(S)=O_{C}(1)$.

We next estimate $\alpha_{m}(S)^{-1}$ from above. Since $U_{\omega}e_{d,k}=u_{\omega,k}$
for $1\le k\le d$,
\[
\wedge^{m}U_{\omega}E_{n}(e_{d,1}\wedge...\wedge e_{d,m})=u_{\omega,1}\wedge...\wedge u_{\omega,m}\prod_{k=1}^{m}e^{n\chi_{k}}.
\]
Note that $\{w_{1},...,w_{m}\}$ is an orthonormal basis for $\mathbb{R}^{m}$,
and that the vector space $\wedge^{m}\mathbb{R}^{m}$ is $1$-dimensional.
Hence $\Vert w_{1}\wedge...\wedge w_{m}\Vert=1$ and $\wedge^{m}\mathbb{R}^{m}=w_{1}\wedge...\wedge w_{m}\mathbb{R}$,
which implies
\[
\wedge^{m}\pi_{V}\left(u_{\omega,1}\wedge...\wedge u_{\omega,m}\right)=\pm\left\Vert \wedge^{m}\pi_{V}\left(u_{\omega,1}\wedge...\wedge u_{\omega,m}\right)\right\Vert w_{1}\wedge...\wedge w_{m}.
\]
Since $\kappa(V^{\perp},L_{m}(\omega))\ge\epsilon$ and by Lemma \ref{lem:lb on norm of wedge_pi}, there exists $\delta=\delta(\epsilon)>0$
so that
\[
\left\Vert \wedge^{m}\pi_{V}\left(u_{\omega,1}\wedge...\wedge u_{\omega,m}\right)\right\Vert \ge\delta.
\]
By the definition of $M$,
\[
\wedge^{m}M\left(w_{1}\wedge...\wedge w_{m}\right)=w_{1}\wedge...\wedge w_{m}\prod_{k=1}^{m}e^{-n\chi_{k}}.
\]
By combining all of this,
\begin{eqnarray*}
\left\Vert \wedge^{m}S(e_{d,1}\wedge...\wedge e_{d,m})\right\Vert  & = & \left\Vert \wedge^{m}M\pi_{V}(u_{\omega,1}\wedge...\wedge u_{\omega,m})\right\Vert \prod_{k=1}^{m}e^{n\chi_{k}}\\
 & \ge & \left\Vert \wedge^{m}M(w_{1}\wedge...\wedge w_{m})\right\Vert \delta\prod_{k=1}^{m}e^{n\chi_{k}}\\
 & = & \left\Vert w_{1}\wedge...\wedge w_{m}\right\Vert \delta=\delta.
\end{eqnarray*}

From (\ref{eq:norm =00003D prod of sing vals}) and by the last inequality,
\[
\alpha_{1}(S)^{m-1}\alpha_{m}(S)\ge\prod_{k=1}^{m}\alpha_{k}(S)=\Vert\wedge^{m}S\Vert_{op}\ge\delta.
\]
Thus, since $\delta$ depends only on $\epsilon$ and from $\alpha_{1}(S)=O_{C}(1)$,
it follows that $\alpha_{m}(S)^{-1}=O_{\epsilon,C}(1)$. From Lemma
\ref{lem:d(pi_d,m,T)=00003DO()} we now get $d_{\mathrm{A}_{d,m}}(\pi_{d,m},S)=O_{\epsilon,C}(1)$.
As noted above, this completes the proof of the lemma.
\end{proof}

\subsection{\label{subsec:Proof of prop sub lin decomp}Proof of Proposition
\ref{prop:sub lin decomp}}

Given integers $0\le b<m$ and $1\le k_{1}<...<k_{b+1}\le m$ and a
linear subspace $W\in\mathrm{Gr}_{k_{b+1}}(m)$, we write
\[
\mathrm{F}(W;k_{b+1},...,k_{1}):=\left\{ (V_{l})_{l=1}^{b+1}\in\mathrm{Gr}_{k_{1}}(W)\times...\times\mathrm{Gr}_{k_{b+1}}(W)\::\:V_{1}\subset...\subset V_{b+1}\right\} .
\]
The flag space $\mathrm{F}(W;k_{b+1},...,k_{1})$ has a natural smooth
structure (see \cite[Example 21.22]{Le}), which makes it into a smooth
manifold of dimension $\sum_{l=1}^{b}k_{l}(k_{l+1}-k_{l})$. When
$b=0$, the last sum should of course be interpreted as $0$.
\begin{lem}
\label{lem:part of flag}Let $0\le b<m$, $1\le k_{1}<...<k_{b+1}\le m$
and $W\in\mathrm{Gr}_{k_{b+1}}(m)$ be given, and set $\mathrm{F}:=\mathrm{F}(W;k_{b+1},...,k_{1})$.
Then there exist $C=C(\mathrm{F})>1$ and a sequence $\left\{ \mathcal{D}_{n}^{\mathrm{F}}\right\} _{n\ge0}$
of Borel partitions of $\mathrm{F}$, so that for each $n\ge0$
\begin{enumerate}
\item $d_{\mathrm{Gr}_{k_{l}}}(V_{l},V_{l}')\le C2^{-n}$ for all $D\in\mathcal{D}_{n}^{\mathrm{F}}$,
$(V_{j})_{j=1}^{b+1},(V_{j}')_{j=1}^{b+1}\in D$ and $1\le l\le b$;
\item $|\mathcal{D}_{n}^{\mathrm{F}}|\le C2^{qn}$, where $q=\sum_{l=1}^{b}k_{l}(k_{l+1}-k_{l})$.
\end{enumerate}
\end{lem}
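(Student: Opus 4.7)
The plan is to reduce the statement to a general fact about dyadic-like partitions on compact smooth manifolds. Equip $\mathrm{F}:=\mathrm{F}(W;k_{b+1},\ldots,k_1)$ with any Riemannian metric $\rho$ (for instance, the restriction of a product of $\mathrm{O}(k_{b+1})$-invariant Riemannian metrics on $\mathrm{Gr}_{k_l}(W)$, $1\le l\le b+1$, under the natural embedding $\mathrm{F}\hookrightarrow\prod_l\mathrm{Gr}_{k_l}(W)$). Since $\mathrm{F}$ is compact of dimension $q$ and the metric $\rho$ is smooth, the space $(\mathrm{F},\rho)$ is a compact doubling metric space. By the same construction invoked in Section~\ref{subsec:Dyadic-partitions} for $\mathrm{A}_{d,m}$ (cf.\ \cite[Remark 2.2]{KRS}), one obtains a sequence $\{\mathcal{D}_n^{\mathrm{F}}\}_{n\ge0}$ of Borel partitions of $\mathrm{F}$ and a constant $C_0>1$ so that each $D\in\mathcal{D}_n^{\mathrm{F}}$ contains a $\rho$-ball of radius $C_0^{-1}2^{-n}$ and is contained in a $\rho$-ball of radius $C_0 2^{-n}$.

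Given this, property (1) follows from the Lipschitz continuity of the projection maps. For each $1\le l\le b$ the canonical projection $\mathrm{pr}_l:\mathrm{F}\to\mathrm{Gr}_{k_l}(W)$ sending $(V_j)_{j=1}^{b+1}$ to $V_l$ is smooth on the compact manifold $\mathrm{F}$, so it is Lipschitz with some constant $L_l$ when $\mathrm{Gr}_{k_l}(W)$ is equipped with $d_{\mathrm{Gr}_{k_l}}$. Consequently, for any $D\in\mathcal{D}_n^{\mathrm{F}}$ and $(V_j),(V_j')\in D$, the $\rho$-diameter bound of $2C_0 2^{-n}$ yields
\[
d_{\mathrm{Gr}_{k_l}}(V_l,V_l')\le L_l\cdot\rho\bigl((V_j)_{j},(V_j')_{j}\bigr)\le 2L_lC_0 2^{-n},
\]
which gives property (1) with $C:=\max_{1\le l\le b}2L_lC_0$.

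Property (2) is a standard volume‑counting argument. Let $\mathrm{vol}_\rho$ denote the Riemannian volume on $\mathrm{F}$; it is finite since $\mathrm{F}$ is compact. Because $\rho$ is smooth of bounded curvature on a compact manifold, there exists $c>0$ such that every $\rho$-ball $B(x,r)$ with $0<r\le 1$ satisfies $\mathrm{vol}_\rho(B(x,r))\ge c\,r^q$. Since each $D\in\mathcal{D}_n^{\mathrm{F}}$ contains such a ball of radius $C_0^{-1}2^{-n}$, for $n\ge 0$ we obtain
\[
|\mathcal{D}_n^{\mathrm{F}}|\cdot c(C_0^{-1}2^{-n})^q\le \sum_{D\in\mathcal{D}_n^{\mathrm{F}}}\mathrm{vol}_\rho(D)=\mathrm{vol}_\rho(\mathrm{F}),
\]
so $|\mathcal{D}_n^{\mathrm{F}}|\le\mathrm{vol}_\rho(\mathrm{F})\cdot c^{-1}C_0^q\,2^{qn}$, and enlarging $C$ if necessary yields property (2). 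The only genuine input is the existence of dyadic‑like partitions on $(\mathrm{F},\rho)$, which is where I expect the slight technical friction to lie; but this is exactly the abstract construction already used for $\mathrm{A}_{d,m}$, and it applies verbatim to any compact (hence doubling) Riemannian manifold.
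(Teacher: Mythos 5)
Your proof is correct and follows essentially the same approach as the paper: invoke the dyadic-like partition construction of \cite[Remark 2.2]{KRS} on the compact manifold $\mathrm{F}$, then obtain (1) from Lipschitz control and (2) from a volume-counting argument in dimension $q$. The only cosmetic difference is that you build the partitions with respect to a Riemannian metric and then appeal to the Lipschitz continuity of the coordinate projections $\mathrm{pr}_l$, whereas the paper builds them directly with respect to the sum metric $\sum_{l}d_{\mathrm{Gr}_{k_l}}$ (making (1) immediate) and then uses Lipschitz equivalence with a Riemannian metric for the volume count.
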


\begin{proof}
Given $F\in\mathrm{F}$, $r>0$ and a metric $\rho$ on $\mathrm{F}$,
we write
\[
B_{\rho}(F,r):=\{F'\in\mathrm{F}\::\:\rho(F,F')\le r\}.
\]
For $(V_{l})_{l=1}^{b+1}=F,(V_{l}')_{l=1}^{b+1}=F'\in\mathrm{F}$
set
\[
d_{1}(F,F'):=\sum_{l=1}^{b}d_{\mathrm{Gr}_{k_{l}}}(V_{l},V_{l}'),
\]
so that $(\mathrm{F},d_{1})$ is a compact metric space. Thus, by \cite[Remark 2.2]{KRS},
there exist $C_{1}>1$ and a sequence $\left\{ \mathcal{D}_{n}^{\mathrm{F}}\right\} _{n\ge0}$
of Borel partitions of $\mathrm{F}$ so that for each $n\ge0$ and
$D\in\mathcal{D}_{n}^{\mathrm{F}}$ there exists $F_{n,D}\in D$ so
that
\[
B_{d_{1}}(F_{n,D},C_{1}^{-1}2^{-n})\subset D\subset B_{d_{1}}(F_{n,D},C_{1}2^{-n}).
\]
It is clear that the first property in the statement of the lemma
is satisfied. It remains to establish the second property.

Let $g$ be a Riemannian metric on $\mathrm{F}$. Denote by $d_{2}$
and $\gamma$ the Riemannian distance function and Riemannian volume
corresponding to $g$. Since $\mathrm{F}$ is a compact smooth manifold
of dimension $q$, we have $\gamma(\mathrm{F})<\infty$ and there
exists $C_{2}>1$ so that
\[
C_{2}^{-1}r^{q}\le\gamma(B_{d_{2}}(F,r))\le C_{2}r^{q}\text{ for all }F\in\mathrm{F}\text{ and }0<r<1.
\]
Moreover, by the definition of $d_{1}$ and by using the compactness
of $\mathrm{F}$, it is easy to show that $d_{1}$ and $d_{2}$ are
Lipschitz equivalent. Thus there exists $C_{3}>1$ so that
\[
B_{d_{2}}(F,C_{3}^{-1}r)\subset B_{d_{1}}(F,r)\subset B_{d_{2}}(F,C_{3}r)\text{ for all }F\in\mathrm{F}\text{ and }r>0.
\]

From all of this it follows that for each $n\ge0$,
\begin{eqnarray*}
\gamma(\mathrm{F})=\sum_{D\in\mathcal{D}_{n}^{\mathrm{F}}}\gamma(D) & \ge & \sum_{D\in\mathcal{D}_{n}^{\mathrm{F}}}\gamma(B_{d_{1}}(F_{n,D},C_{1}^{-1}2^{-n}))\\
 & \ge & \sum_{D\in\mathcal{D}_{n}^{\mathrm{F}}}\gamma(B_{d_{2}}(F_{n,D},C_{3}^{-1}C_{1}^{-1}2^{-n}))\\
 & \ge & |\mathcal{D}_{n}^{\mathrm{F}}|C_{2}^{-1}\left(C_{3}^{-1}C_{1}^{-1}2^{-n}\right)^{q},
\end{eqnarray*}
which implies $|\mathcal{D}_{n}^{\mathrm{F}}|\le\gamma(\mathrm{F})C_{2}C_{3}^{q}C_{1}^{q}2^{qn}$.
Since $\gamma(\mathrm{F})<\infty$, this completes the proof of the
lemma.
\end{proof}
\begin{proof}[Proof of Proposition \ref{prop:sub lin decomp}]
As in the statement of the proposition, let $1\le m\le d$, $V\in\mathrm{Gr}_{m}(d)$,
$0\le b<m$ and $1\le k_{1}<...<k_{b}<m$ be given, set
\[
\mathcal{K}:=\{1,...,m-1\}\setminus\{k_{1},...,k_{b}\},
\]
and suppose that
\begin{equation}
\pi_{V}L_{k}\beta_{\omega}^{V}=\delta_{\pi_{V}L_{k}(\omega)}\text{ for }\beta\text{-a.e. }\omega\text{ and each }k\in\mathcal{K}.\label{eq:main assump decomp prop}
\end{equation}

For $\omega\in\Lambda^{\mathbb{N}}$ and $0\le k\le m$ set $W_{\omega,k}:=\pi_{V}(L_{k}(\omega))$.
Since $L_{m}\beta=\nu_{m}$ and by Lemma \ref{lem:nu=00007Bkappa<del=00007D<eps},
we have $L_{m}(\omega)\cap V^{\perp}=\{0\}$ for $\beta$-a.e. $\omega$.
From this, by (\ref{eq:main assump decomp prop}), by Lemmata \ref{lem:d(piA_omega,piUE_n)=00003Do(n)}
and \ref{lem:dist of proj of trans part}, and from basic properties
of disintegrations (see Section \ref{subsec:Disintegrations}), it
follows that for $\beta$-a.e. $\omega$
\begin{enumerate}
\item $\pi_{V}(\Pi\eta)=\pi_{V}(\Pi\omega)$ for $\beta_{\omega}^{V}$-a.e.
$\eta$;
\item $\pi_{V}(L_{k}(\eta))=\pi_{V}(L_{k}(\omega))$ for $\beta_{\omega}^{V}$-a.e.
$\eta$ and each $k\in\mathcal{K}$;
\item $L_{m}(\omega)\cap V^{\perp}=\{0\}$ and $L_{m}(\eta)\cap V^{\perp}=\{0\}$
for $\beta_{\omega}^{V}$-a.e. $\eta$;
\item $\underset{n\rightarrow\infty}{\lim}\:\frac{1}{n}d_{\mathrm{A}_{d,m}}\left(\pi_{V}A_{\eta|_{n}},\pi_{V}U_{\eta}E_{n}\right)=0$
for $\beta_{\omega}^{V}$-a.e. $\eta$, where $U_{\eta}$ and $E_{n}$
are defined just before the statement of Lemma \ref{lem:d(piA_omega,piUE_n)=00003Do(n)};
\item for $\beta_{\omega}^{V}$-a.e. $\eta$ and each $1\le k\le m$,
\[
\underset{n\rightarrow\infty}{\limsup}\frac{1}{n}\log\left|P_{W_{\eta,k-1}^{\perp}}\pi_{V}\left(\Pi\eta-\varphi_{\eta|_{n}}(0)\right)\right|\le\chi_{k}.
\]
\end{enumerate}
Fix an $\omega$ with these properties for the rest of the proof.

Let $\epsilon>0$, let $\delta>0$ be small with respect to $\epsilon$
and $\omega$, and let $n\ge1$ be large with respect to all previous
parameters. Let $S$ be the Borel set of all $\eta\in\Lambda^{\mathbb{N}}$
so that,
\begin{enumerate}
\item $\pi_{V}(L_{k}(\eta))=\pi_{V}(L_{k}(\omega))$ for $k\in\mathcal{K}$;
\item $\kappa(V^{\perp},L_{m}(\eta))\ge\delta$;
\item $d_{\mathrm{A}_{d,m}}\left(\pi_{V}A_{\eta|_{n}},\pi_{V}U_{\eta}E_{n}\right)\le\epsilon n$;
\item $\left|P_{W_{\eta,k-1}^{\perp}}\pi_{V}\left(\Pi\omega-\varphi_{\eta|_{n}}(0)\right)\right|\le2^{n(\chi_{k}+\delta)}$
for $1\le k\le m$.
\end{enumerate}
By the choice of $\omega$ we may assume that $\beta_{\omega}^{V}(S)>1-\epsilon$.

Let $C>1$ be a large constant, and assume that $n$ is large also
with respect to $C$. If $b>0$ set $k_{b+1}:=k_{b}+1$. Otherwise,
if $b=0$ set $k_{b+1}:=m$. By Lemma \ref{lem:part of flag}, there
exists a Borel partition $\mathcal{D}$ of $\mathrm{F}(W_{\omega,k_{b+1}};k_{b+1},...,k_{1})$
so that,
\begin{enumerate}
\item $d_{\mathrm{Gr}_{k_{l}}}(V_{l},V_{l}')\le C2^{n(\chi_{m}-\chi_{1})}$
for $D\in\mathcal{D}$, $(V_{j})_{j=1}^{b+1},(V_{j}')_{j=1}^{b+1}\in D$
and $1\le l\le b$;
\item $|\mathcal{D}|\le C2^{qn(\chi_{1}-\chi_{m})}$, where $q=\sum_{l=1}^{b}k_{l}(k_{l+1}-k_{l})$.
\end{enumerate}
Let $g:S\rightarrow\mathrm{F}(W_{\omega,k_{b+1}};k_{b+1},...,k_{1})$
be with $g(\eta)=\left(\pi_{V}(L_{k_{l}}(\eta))\right)_{l=1}^{b+1}$
for $\eta\in S$. Then
\[
g^{-1}\mathcal{D}:=\{g^{-1}(D)\::\:D\in\mathcal{D}\}
\]
is a Borel partition of $S$. In order to complete the proof, it suffices
to show that for each $D\in\mathcal{D}$ there exists a Borel partition
$\mathcal{Q}_{D}$ of $g^{-1}(D)$ so that $|\mathcal{Q}_{D}|\le2^{n\epsilon}$
and
\[
d_{\mathrm{A}_{d,m}}(\pi_{V}\Pi_{n}\eta,\pi_{V}\Pi_{n}\zeta)=O(\epsilon n)\text{ for all }Q\in\mathcal{Q}_{D}\text{ and }\eta,\zeta\in Q.
\]

Fix $D\in\mathcal{D}$ and $\eta\in g^{-1}(D)$ for the rest of the
proof. Note that,
\begin{equation}
d_{\mathrm{Gr}_{k}}(W_{\eta,k},W_{\zeta,k})\le C2^{n(\chi_{m}-\chi_{1})}\text{ for }\zeta\in g^{-1}(D)\text{ and }0\le k\le m.\label{eq:dist of Ws}
\end{equation}
For $\zeta\in g^{-1}(D)$ set $a_{\zeta}:=\pi_{V}\varphi_{\zeta|_{n}}(0)$.
Let us show that,
\begin{equation}
\left|P_{W_{\eta,k-1}^{\perp}}\left(a_{\zeta}-a_{\tau}\right)\right|=O_{C}(2^{n(\chi_{k}+\delta)})\text{ for }\zeta,\tau\in g^{-1}(D)\text{ and }1\le k\le m.\label{eq:dist of b's}
\end{equation}
Given $\zeta\in g^{-1}(D)$ and $1\le k\le m$,
\begin{eqnarray*}
\left|P_{W_{\eta,k-1}^{\perp}}\left(\pi_{V}\Pi\omega-a_{\zeta}\right)\right| & \le & \left|P_{W_{\zeta,k-1}^{\perp}}\left(\pi_{V}\Pi\omega-a_{\zeta}\right)\right|\\
 & + & d_{\mathrm{Gr}_{m-k+1}}\left(W_{\eta,k-1}^{\perp},W_{\zeta,k-1}^{\perp}\right)\cdot\left|\pi_{V}\Pi\omega-a_{\zeta}\right|.
\end{eqnarray*}
Since $\zeta\in S$,
\[
\left|P_{W_{\zeta,k-1}^{\perp}}\left(\pi_{V}\Pi\omega-a_{\zeta}\right)\right|\le2^{n(\chi_{k}+\delta)}\text{ and }\left|\pi_{V}\Pi\omega-a_{\zeta}\right|\le2^{n(\chi_{1}+\delta)}.
\]
Moreover, by (\ref{eq:dist of Ws})
\[
d_{\mathrm{Gr}_{m-k+1}}\left(W_{\eta,k-1}^{\perp},W_{\zeta,k-1}^{\perp}\right)=d_{\mathrm{Gr}_{k-1}}\left(W_{\eta,k-1},W_{\zeta,k-1}\right)\le C2^{n(\chi_{m}-\chi_{1})}.
\]
Thus
\[
\left|P_{W_{\eta,k-1}^{\perp}}\left(\pi_{V}\Pi\omega-a_{\zeta}\right)\right|\le2^{n(\chi_{k}+\delta)}+C2^{n(\chi_{m}+\delta)}=O_{C}(2^{n(\chi_{k}+\delta)}),
\]
which implies (\ref{eq:dist of b's}).

From (\ref{eq:dist of b's}) and by assuming that $\delta$ is sufficiently
small with respect to $\epsilon$, it follows that there exists a
Borel partition $\mathcal{Q}_{D}$ of $g^{-1}(D)$ with $|\mathcal{Q}_{D}|\le2^{n\epsilon}$,
so that for each $Q\in\mathcal{Q}_{D}$ and $\zeta,\tau\in Q$,
\begin{equation}
\left|P_{W_{\eta,k-1}^{\perp}}\left(a_{\zeta}-a_{\tau}\right)\right|\le2^{n\chi_{k}}\text{ for }1\le k\le m.\label{eq:better dist of b's}
\end{equation}
Let $Q\in\mathcal{Q}_{D}$ and $\zeta,\tau\in Q$ be given, and set
$\psi_{\zeta}:=\pi_{V}\Pi_{n}\zeta$ and $\psi_{\tau}:=\pi_{V}\Pi_{n}\tau$.
In order to complete the proof of the proposition it suffices to show
that $d_{\mathrm{A}_{d,m}}(\psi_{\zeta},\psi_{\tau})=O(\epsilon n)$.

Note that,
\[
\psi_{\zeta}=T_{a_{\zeta}}\pi_{V}A_{\zeta|_{n}}\text{ and }\psi_{\tau}=T_{a_{\tau}}\pi_{V}A_{\tau|_{n}}.
\]
Thus, by the $\mathrm{A}_{m,m}$-invariance of $d_{\mathrm{A}_{d,m}}$,
\begin{eqnarray}
d_{\mathrm{A}_{d,m}}(\psi_{\zeta},\psi_{\tau}) & \le & d_{\mathrm{A}_{d,m}}\left(\pi_{V}A_{\zeta|_{n}},\pi_{V}U_{\zeta}E_{n}\right)\nonumber \\
 & + & d_{\mathrm{A}_{d,m}}\left(T_{a_{\zeta}}\pi_{V}U_{\zeta}E_{n},T_{a_{\tau}}\pi_{V}U_{\tau}E_{n}\right)+d_{\mathrm{A}_{d,m}}\left(\pi_{V}U_{\tau}E_{n},\pi_{V}A_{\tau|_{n}}\right).\label{eq:first bd on d(psi_1,psi_2)}
\end{eqnarray}
Since $\zeta,\tau\in S$,
\[
d_{\mathrm{A}_{d,m}}\left(\pi_{V}A_{\zeta|_{n}},\pi_{V}U_{\zeta}E_{n}\right),d_{\mathrm{A}_{d,m}}\left(\pi_{V}U_{\tau}E_{n},\pi_{V}A_{\tau|_{n}}\right)\le\epsilon n.
\]
From $\zeta,\tau\in S$ it also follows that,
\[
\kappa(V^{\perp},L_{m}(\zeta)),\kappa(V^{\perp},L_{m}(\tau))\ge\delta.
\]
From this, from (\ref{eq:dist of Ws}), from (\ref{eq:better dist of b's}),
by Lemma \ref{lem:ub on d(TpiU_omE,TpiU_etE)}, and since $n$ is
large with respect to $\epsilon$, $\delta$ and $C$,
\[
d_{\mathrm{A}_{d,m}}\left(T_{a_{\zeta}}\pi_{V}U_{\zeta}E_{n},T_{a_{\tau}}\pi_{V}U_{\tau}E_{n}\right)=O_{\delta,C}(1)\le\epsilon n.
\]
All of this gives $d_{\mathrm{A}_{d,m}}(\psi_{\zeta},\psi_{\tau})\le3n\epsilon$,
which completes the proof of the proposition.
\end{proof}

\section{\label{sec:Proofs-of-the main}Proofs of the main results}

In this section we prove the main results stated in Section \ref{subsec:Statement-of-results}.
Firstly, in the next subsection we prove Theorem \ref{thm:gen dim result}
from which all of the results will follow.

Throughout this section we denote by $\mathcal{S}^{\mathrm{A}}$ the
partition of $\mathrm{A}_{d,d}$ into singletons. Thus, $H(\theta)=H(\theta,\mathcal{S}^{\mathrm{A}})$
for any discrete $\theta\in\mathcal{M}(\mathrm{A}_{d,d})$.

\subsection{Proof of Theorem \ref{thm:gen dim result}}

Let us restate the theorem before giving its proof.
\begin{thm*}
Assume that $\Phi$ is Diophantine, and let $1\le m\le d$ be with
$\Sigma_{1}^{m-1}=m-1$ and $\Delta_{m}<1$. Let $0\le b<m$ and $1\le k_{1}<...<k_{b}<m$
be integers, and set
\[
\mathcal{K}:=\{1,...,m-1\}\setminus\{k_{1},...,k_{b}\}.
\]
Suppose that for $\nu_{m}^{*}$-a.e. $V$,
\begin{equation}
\pi_{V}L_{k}\beta_{\omega}^{V}=\delta_{\pi_{V}L_{k}(\omega)}\text{ for }\beta\text{-a.e. }\omega\text{ and each }k\in\mathcal{K}.\label{eq:factor of pi L_k}
\end{equation}
Then for $\nu_{m}^{*}$-a.e. $V$
\[
\underset{n\rightarrow\infty}{\limsup}\int\frac{1}{n}H\left(\Pi_{n}\beta_{\omega}^{V}\right)\:d\beta(\omega)\le q(\chi_{1}-\chi_{m}),
\]
where $q=0$ if $b=0$ and $q=\sum_{l=1}^{b}k_{l}(k_{l+1}-k_{l})$
with $k_{b+1}:=k_{b}+1$ if $b>0$.
\end{thm*}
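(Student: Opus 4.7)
The plan is to identify the integral as a conditional entropy on $(\Lambda^{\mathbb{N}},\beta)$, reduce it via a short chain-rule argument and Theorem \ref{thm:=00003DRW ent} to the analogous quantity for the coarser partition induced by $\pi_V\Pi_n$ at scale $0$, and then bound that coarser quantity using the sub-linear decomposition of Proposition \ref{prop:sub lin decomp}. Write $\mathcal{P}_n:=\Pi_n^{-1}\mathcal{S}^{\mathrm{A}}$, $\mathcal{Q}_n^V:=(\pi_V\Pi_n)^{-1}\mathcal{D}_0^{\mathrm{A}_{d,m}}$, and $\mathcal{F}_V:=\Pi^{-1}P_V^{-1}\mathcal{B}_{\mathbb{R}^d}$. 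By (\ref{eq:cond ent via disinteg}), $\int H(\Pi_n\beta_\omega^V)\,d\beta(\omega)=H(\beta,\mathcal{P}_n\mid\mathcal{F}_V)$, and since $\mathcal{P}_n$ refines $\mathcal{Q}_n^V$, applying (\ref{eq:extended cond ent form}), (\ref{eq:monot of ent in cond sig-alg}), and (\ref{eq:cond ent form}) yields
$$H(\beta,\mathcal{P}_n\mid\mathcal{F}_V) \;\le\; H(\beta,\mathcal{Q}_n^V\mid\mathcal{F}_V) + \bigl[H(p_\Phi^{*n})-H(\pi_V p_\Phi^{*n},\mathcal{D}_0^{\mathrm{A}_{d,m}})\bigr].$$

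By definition of the random-walk entropy $\frac{1}{n}H(p_\Phi^{*n})\to h(p_\Phi)$, and by Theorem \ref{thm:=00003DRW ent} (using the Diophantine hypothesis) the same limit holds for $\frac{1}{n}H(\pi_V p_\Phi^{*n},\mathcal{D}_0^{\mathrm{A}_{d,m}})$ at $\nu_m^*$-a.e.\ $V$; so the bracketed term on the right is $o(n)$, and it suffices to show that for $\nu_m^*$-a.e.\ $V$,
$$\limsup_{n\to\infty}\int\frac{1}{n}H(\pi_V\Pi_n\beta_\omega^V,\mathcal{D}_0^{\mathrm{A}_{d,m}})\,d\beta(\omega) \;\le\; q(\chi_1-\chi_m).$$

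For this I would argue pointwise: for a fixed $\epsilon>0$ and a $\beta$-typical $\omega$ (for which (\ref{eq:factor of pi L_k}) and Proposition \ref{prop:sub lin decomp} hold at our $V$), for $n\ge N(V,\omega,\epsilon)$ extract $S\subset\Lambda^{\mathbb{N}}$ and the Borel partition $\mathcal{E}$ of $S$ from the proposition. Decomposing $\beta_\omega^V$ as $\sum_{E\in\mathcal{E}}\beta_\omega^V(E)(\beta_\omega^V)_E+\beta_\omega^V(S^c)(\beta_\omega^V)_{S^c}$ and invoking (\ref{eq:conc =000026 almo conv of ent}) gives
$$H(\pi_V\Pi_n\beta_\omega^V,\mathcal{D}_0^{\mathrm{A}_{d,m}}) \le \sum_{E}\beta_\omega^V(E)H(\pi_V\Pi_n(\beta_\omega^V)_E,\mathcal{D}_0^{\mathrm{A}_{d,m}}) + \beta_\omega^V(S^c)H(\pi_V\Pi_n(\beta_\omega^V)_{S^c},\mathcal{D}_0^{\mathrm{A}_{d,m}}) + \log(|\mathcal{E}|+1).$$
The third property of the decomposition forces $\mathrm{diam}(\pi_V\Pi_n(E))\le\epsilon n$ in $(\mathrm{A}_{d,m},d_{\mathrm{A}_{d,m}})$, so by Lemma \ref{lem:ball int at most exp many 0-atoms} every summand in the first sum is $O(\epsilon n)$; the second property supplies $\log(|\mathcal{E}|+1)\le n(\epsilon+q(\chi_1-\chi_m))+O(1)$; and because $\alpha_1(A_{\omega|_n})\le 1$ while $\alpha_m(A_{\omega|_n})\ge 2^{-O(n)}$ uniformly (by Lemma \ref{lem:bounds on sing vals of prod}), Lemma \ref{lem:d(pi_d,m,T)=00003DO()} places $\pi_V\Pi_n(\Lambda^{\mathbb{N}})$ inside $B(\pi_{d,m},O(n))$, so a second use of Lemma \ref{lem:ball int at most exp many 0-atoms} bounds the $S^c$-term by $\epsilon\cdot O(n)=O(\epsilon n)$. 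Dividing by $n$, letting $n\to\infty$ and then $\epsilon\to 0$ gives the pointwise bound $\limsup_n\frac{1}{n}H(\pi_V\Pi_n\beta_\omega^V,\mathcal{D}_0^{\mathrm{A}_{d,m}})\le q(\chi_1-\chi_m)$; the uniform $O(1)$ bound on these normalized entropies then lets reverse Fatou pass the limsup through the $\beta$-integral, finishing the proof. The main obstacle is this last step: carefully verifying that the $S^c$ contribution, despite potentially carrying $O(n)$ entropy, is dominated by its small mass $\epsilon$, so that the principal $nq(\chi_1-\chi_m)$ term from $\log|\mathcal{E}|$ remains leading and is not obscured by accumulated $O(\epsilon n)$ errors.
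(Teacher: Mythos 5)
Your proposal is correct and follows essentially the same route as the paper's proof: both rely on Theorem \ref{thm:=00003DRW ent} to kill the conditional random-walk entropy term $H(p_\Phi^{*n},\mathcal{S}^{\mathrm{A}}\mid\pi_V^{-1}\mathcal{D}_0^{\mathrm{A}_{d,m}})$ at $\nu_m^*$-a.e.\ $V$, and on Proposition \ref{prop:sub lin decomp} plus Lemma \ref{lem:ball int at most exp many 0-atoms} to bound the remaining $\pi_V$-projected term by $q(\chi_1-\chi_m)$. The only difference is organizational: you perform the conditional chain-rule split $H(\beta,\mathcal{P}_n\mid\mathcal{F}_V)\le H(\beta,\mathcal{Q}_n^V\mid\mathcal{F}_V)+H(p_\Phi^{*n},\mathcal{S}^{\mathrm{A}}\mid\pi_V^{-1}\mathcal{D}_0)$ once at the global level and then estimate $H(\beta,\mathcal{Q}_n^V\mid\mathcal{F}_V)$ pointwise in $\omega$ with reverse Fatou, whereas the paper applies the sub-linear decomposition pointwise first, pushes the chain rule through each component $\theta_{\omega,j}^n$, and reassembles via concavity of conditional entropy applied to the identity $p_\Phi^{*n}=\int\sum_j\alpha_{\omega,j}^n\theta_{\omega,j}^n\,d\beta(\omega)$; also, for the $S^c$ piece the paper simply uses the cardinality bound $|\mathrm{supp}(\Pi_n\sigma)|\le|\Lambda|^n$ rather than your ball-radius argument, which is a shorter way to get the same $O(\epsilon n)$ estimate.
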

\begin{proof}
By Theorem \ref{thm:=00003DRW ent}, for $\nu_{m}^{*}$-a.e. $V$
\begin{equation}
\underset{n\rightarrow\infty}{\lim}\:\frac{1}{n}H\left(\pi_{V}p_{\Phi}^{*n},\mathcal{D}_{0}\right)=h(p_{\Phi}).\label{eq:lim =00003D h(p_Phi)}
\end{equation}
Fix $V\in\mathrm{Gr}_{m}(d)$ for which (\ref{eq:factor of pi L_k})
and (\ref{eq:lim =00003D h(p_Phi)}) are satisfied for the rest of
the proof.

Let $C>1$ be a large constant, and let $\epsilon>0$ be small with
respect to $C$. By (\ref{eq:factor of pi L_k}) and Proposition \ref{prop:sub lin decomp},
there exists a Borel set $F\subset\Lambda^{\mathbb{N}}$ with $\beta(F)=1$ such
that for all $\omega\in F$ and $n\ge N(V,\omega,\epsilon)\ge1$ there
exist a Borel set $S_{\omega}^{n}\subset\Lambda^{\mathbb{N}}$ and
a Borel partition $\mathcal{E}_{\omega}^{n}$ of $S_{\omega}^{n}$
so that $\beta_{\omega}^{V}(S_{\omega}^{n})>1-\epsilon$, $|\mathcal{E}_{\omega}^{n}|\le2^{n(\epsilon+q(\chi_{1}-\chi_{m}))}$,
and
\[
d_{\mathrm{A}_{d,m}}(\pi_{V}\Pi_{n}\eta,\pi_{V}\Pi_{n}\zeta)\le\epsilon n\text{ for all }E\in\mathcal{E}_{\omega}^{n}\text{ and }\eta,\zeta\in E.
\]
By the proof of Proposition \ref{prop:sub lin decomp}, we may clearly assume that the map taking $\omega\in F$ to $N(V,\omega,\epsilon)$ is Borel measurable.

For $\omega\in F$ and $n\ge N(V,\omega,\epsilon)\ge1$ set $J_{\omega}^{n}:=|\mathcal{E}_{\omega}^{n}|$,
let $\{E_{\omega,j}^{n}\}_{j=1}^{J_{\omega}^{n}}$ be an enumeration
of $\mathcal{E}_{\omega}^{n}$, set
\[
\alpha_{\omega,0}^{n}:=\beta_{\omega}^{V}(\Lambda^{\mathbb{N}}\setminus S_{\omega}^{n})\text{ and }\theta_{\omega,0}^{n}:=\Pi_{n}(\beta_{\omega}^{V})_{\Lambda^{\mathbb{N}}\setminus S_{\omega}^{n}},
\]
for each $1\le j\le J_{\omega}^{n}$ set
\[
\alpha_{\omega,j}^{n}:=\beta_{\omega}^{V}(E_{\omega,j}^{n})\text{ and }\theta_{\omega,j}^{n}:=\Pi_{n}(\beta_{\omega}^{V})_{E_{\omega,j}^{n}},
\]
and write $\alpha_{\omega}^{n}:=(\alpha_{\omega,j}^{n})_{j=0}^{J_{\omega}^{n}}$.
Note that $J_{\omega}^{n}\le2^{n(\epsilon+q(\chi_{1}-\chi_{m}))}$,
$\alpha_{\omega,0}^{n}<\epsilon$,
\begin{equation}
\mathrm{diam}(\mathrm{supp}(\pi_{V}\theta_{\omega,j}^{n}))\le\epsilon n\text{ for each }1\le j\le J_{\omega}^{n},\label{eq:diam <=00003D eps n all j}
\end{equation}
and $\Pi_{n}\beta_{\omega}^{V}=\sum_{j=0}^{J_{\omega}^{n}}\alpha_{\omega,j}^{n}\theta_{\omega,j}^{n}$.

Let $\omega\in F$ and $n\ge N(V,\omega,\epsilon)$ be given. By the
almost convexity of entropy (see (\ref{eq:conc =000026 almo conv of ent}))
\begin{equation}
\frac{1}{n}H(\Pi_{n}\beta_{\omega}^{V})\le\sum_{j=0}^{J_{\omega}^{n}}\alpha_{\omega,j}^{n}\frac{1}{n}H(\theta_{\omega,j}^{n})+\frac{1}{n}H(\alpha_{\omega}^{n}),\label{eq:by the ent conv bd}
\end{equation}
where $H(\alpha_{\omega}^{n})$ is the entropy of the probability vector $\alpha_{\omega}^{n}$. Since $J_{\omega}^{n}+1\le2^{1+n(\epsilon+q(\chi_{1}-\chi_{m}))}$
and by (\ref{eq:card ub for ent}),
\[
\frac{1}{n}H(\alpha_{\omega}^{n})\le\frac{1}{n}+\epsilon+q(\chi_{1}-\chi_{m}).
\]
From $\alpha_{\omega,0}^{n}\le\epsilon$, since $\mathrm{supp}(\theta_{\omega,0}^{n})\subset\Pi_{n}(\Lambda^{\mathbb{N}})$,
and since the cardinality of $\Pi_{n}(\Lambda^{\mathbb{N}})$ is at
most $|\Lambda|^{n}$,
\[
\alpha_{\omega,0}^{n}\frac{1}{n}H(\theta_{\omega,0}^{n})\le\epsilon\log|\Lambda|.
\]

By (\ref{eq:diam <=00003D eps n all j}) and Lemma \ref{lem:ball int at most exp many 0-atoms},
we may assume that for all $1\le j\le J_{\omega}^{n}$
\[
\log\#\left\{ D\in\mathcal{D}_{0}^{\mathrm{A}_{d,m}}\::\:D\cap\left(\mathrm{supp}(\pi_{V}\theta_{\omega,j}^{n})\right)\ne\emptyset\right\} \le C(\epsilon n+1),
\]
which implies
\[
\frac{1}{n}H(\pi_{V}\theta_{\omega,j}^{n},\mathcal{D}_{0})\le C(\epsilon+1/n).
\]
Hence, by (\ref{eq:by the ent conv bd}) and the inequalities following
it,
\begin{eqnarray}
\frac{1}{n}H(\Pi_{n}\beta_{\omega}^{V}) & \le & \frac{1}{n}+(1+\log|\Lambda|)\epsilon+q(\chi_{1}-\chi_{m})\nonumber \\
 & + & \frac{1}{n}\sum_{j=1}^{J_{\omega}^{n}}\alpha_{\omega,j}^{n}\left(H(\theta_{\omega,j}^{n},\mathcal{S}^{\mathrm{A}}\mid\pi_{V}^{-1}\mathcal{D}_{0}^{\mathrm{A}_{d,m}})+H(\pi_{V}\theta_{\omega,j}^{n},\mathcal{D}_{0}^{\mathrm{A}_{d,m}})\right)\nonumber \\
 & \le & 2C(\epsilon+1/n)+q(\chi_{1}-\chi_{m})+\frac{1}{n}\sum_{j=1}^{J_{\omega}^{n}}\alpha_{\omega,j}^{n}H(\theta_{\omega,j}^{n},\mathcal{S}^{\mathrm{A}}\mid\pi_{V}^{-1}\mathcal{D}_{0}^{\mathrm{A}_{d,m}}),\label{eq:ub all om in F =000026 n >=00003D N(om)}
\end{eqnarray}
where
\[
\pi_{V}^{-1}\mathcal{D}_{0}^{\mathrm{A}_{d,m}}:=\left\{ \left\{ \psi\in\mathrm{A}_{d,d}\::\:\pi_{V}\psi\in D\right\} \::\:D\in\mathcal{D}_{0}^{\mathrm{A}_{d,m}}\right\} .
\]

In what follows, given a Borel set $B\subset\Lambda^{\mathbb{N}}$ and a function $f:B\rightarrow[0,\infty)$ we write
\[
\int_B^*f\:d\beta:=\inf\left\{\int_Bg\:d\beta\::\:g:B\rightarrow[0,\infty)\text{ is Borel measurable and }f\le g \right\}.
\]
Let $n\ge1$ and set,
\[
F_{n}:=\{\omega\in F\::\:N(V,\omega,\epsilon)\le n\}.
\]
Since the map $\omega\rightarrow N(V,\omega,\epsilon)$ is Borel measurable, the set $F_n$ is also Borel measurable.
For every $\sigma\in\mathcal{M}(\Lambda^{\mathbb{N}})$ the cardinality
of the support of $\Pi_{n}\sigma$ is at most $|\Lambda|^{n}$, which
gives
\[
\frac{1}{n}H(\Pi_{n}\sigma)\le\log|\Lambda|.
\]
From this and since (\ref{eq:ub all om in F =000026 n >=00003D N(om)})
holds for all $\omega\in F_{n}$,
\begin{eqnarray}
\int\frac{1}{n}H(\Pi_{n}\beta_{\omega}^{V})\:d\beta(\omega) & \le & \beta(\Lambda^{\mathbb{N}}\setminus F_{n})\log|\Lambda|+2C(\epsilon+1/n)+q(\chi_{1}-\chi_{m})\nonumber \\
 & + & \int_{F_{n}}^*\sum_{j=1}^{J_{\omega}^{n}}\alpha_{\omega,j}^{n}\frac{1}{n}H(\theta_{\omega,j}^{n},\mathcal{S}^{\mathrm{A}}\mid\pi_{V}^{-1}\mathcal{D}_{0}^{\mathrm{A}_{d,m}})\:d\beta(\omega).\label{eq:ub all n on integral}
\end{eqnarray}
Note also that from $F=\cup_{n\ge1}F_{n}$ and $\beta(F)=1$, it follows
that
\begin{equation}
\underset{n\rightarrow\infty}{\lim}\:\beta(\Lambda^{\mathbb{N}}\setminus F_{n})=0.\label{eq:lim beta(F_n^c)=00003D0}
\end{equation}

On the other hand, by (\ref{eq:lim =00003D h(p_Phi)}) and since
\[
\underset{n\rightarrow\infty}{\lim}\:\frac{1}{n}H\left(p_{\Phi}^{*n}\right)=h(p_{\Phi}),
\]
we obtain
\begin{equation}
\underset{n\rightarrow\infty}{\lim}\:\frac{1}{n}H\left(p_{\Phi}^{*n},\mathcal{S}^{\mathrm{A}}\mid\pi_{V}^{-1}\mathcal{D}_{0}^{\mathrm{A}_{d,m}}\right)=0.\label{eq:cond ent of p^n =00003D0}
\end{equation}
Note that for all $n\ge1$,
\[
p_{\Phi}^{*n}=\Pi_{n}\beta=\int\Pi_{n}\beta_{\omega}^{V}\:d\beta(\omega)=\int_{F_{n}}\sum_{j=0}^{J_{\omega}^{n}}\alpha_{\omega,j}^{n}\theta_{\omega,j}^{n}\:d\beta(\omega)+\int_{\Lambda^{\mathbb{N}}\setminus F_{n}}\Pi_{n}\beta_{\omega}^{V}\:d\beta(\omega).
\]
Thus, by (\ref{eq:cond ent of p^n =00003D0}) and the concavity of
conditional entropy,
\[
\underset{n\rightarrow\infty}{\lim}\:\int_{F_{n}}^*\sum_{j=1}^{J_{\omega}^{n}}\alpha_{\omega,j}^{n}\frac{1}{n}H\left(\theta_{\omega,j}^{n},\mathcal{S}^{\mathrm{A}}\mid\pi_{V}^{-1}\mathcal{D}_{0}^{\mathrm{A}_{d,m}}\right)\:d\beta(\omega)=0.
\]

From the last equality and from \ref{eq:ub all n on integral} and
\ref{eq:lim beta(F_n^c)=00003D0}, we obtain
\[
\underset{n\rightarrow\infty}{\limsup}\int\frac{1}{n}H(\Pi_{n}\beta_{\omega}^{V})\:d\beta(\omega)\le2C\epsilon+q(\chi_{1}-\chi_{m}).
\]
Since $\epsilon$ is arbitrarily small with respect to $C$, this
completes the proof of the theorem.
\end{proof}

\subsection{Proof of Theorems \ref{thm:main d=00003D3}, \ref{thm:proj onto 1 =000026 2 dim subspaces}
and \ref{thm:gen criteria}}

In order to apply Theorem \ref{thm:gen dim result} we shall need
the following lemma. Recall from Section \ref{subsec:Symbolic-notations}
that for $n\ge1$ we denote by $\mathcal{P}_{n}$ the partition of
$\Lambda^{\mathbb{N}}$ into $n$-cylinders, and from Section \ref{subsec:Ledrappier-Young-formula}
the definition of the constants $\mathrm{H}_{0},...,\mathrm{H}_{d}$.
\begin{lem}
\label{lem:by =00005BFe, lem 4.6=00005D}Let $0\le m\le d$ be given.
Then for $\nu_{m}^{*}$-a.e. $V$,
\[
\underset{n\rightarrow\infty}{\lim}\frac{1}{n}\int H(\beta_{\omega}^{V},\mathcal{P}_{n})\:d\beta(\omega)=\mathrm{H}_{m}.
\]
\end{lem}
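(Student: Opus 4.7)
The plan is to recognize the integral as a conditional entropy and reduce the question to a Birkhoff-type convergence on a skew product. Writing $\mathcal{A}_V := \Pi^{-1}P_V^{-1}\mathcal{B}_{\mathbb{R}^d}$, equation (\ref{eq:cond ent via disinteg}) gives
\[
\int H(\beta_\omega^V,\mathcal{P}_n)\,d\beta(\omega) = H(\beta,\mathcal{P}_n\mid\mathcal{A}_V),
\]
and by the chain rule (\ref{eq:extended cond ent form}),
\[
H(\beta,\mathcal{P}_n\mid\mathcal{A}_V) = \sum_{k=0}^{n-1} H\bigl(\beta,\sigma^{-k}\mathcal{P}_1 \mid \mathcal{A}_V\vee\mathcal{P}_k\bigr),
\]
where $\sigma$ is the shift. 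The key structural observation is that on a cylinder $[u]$ with $u\in\Lambda^k$ one has $P_V\Pi = (P_V A_u)\Pi\sigma^k + P_V a_u$, and since $\ker(P_V A_u) = (A_u^*V)^\perp$, the restrictions of $\mathcal{A}_V$ and $\sigma^{-k}\mathcal{A}_{A_u^*V}$ to $[u]$ generate the same $\sigma$-algebra (modulo $\beta$-null sets). Combining this with the Bernoulli property $\sigma^k\beta|_{[u]} = p_u\beta$ and the pushforward identity (\ref{eq:push oh slice =00003D slice of push}) for disintegrations, I would establish the key identity
\[
H\bigl(\beta,\sigma^{-k}\mathcal{P}_1\mid \mathcal{A}_V\vee\mathcal{P}_k\bigr) = \sum_{u\in\Lambda^k} p_u\,H(\beta,\mathcal{P}_1\mid\mathcal{A}_{A_u^*V}).
\]

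Next I would introduce the skew product $T(\omega,V) := (\sigma\omega, A_{\omega_0}^*V)$ on $\Lambda^{\mathbb{N}}\times\mathrm{Gr}_m(d)$; iterating gives $T^k(\omega,V) = (\sigma^k\omega, A_{\omega|_k}^*V)$. Using shift-invariance of $\beta$ and the stationarity $\nu_m^* = \sum_i p_i A_i^*\nu_m^*$, $T$ preserves $\beta\times\nu_m^*$. Setting $F(\omega,V) := H(\beta_\omega^V,\mathcal{P}_1)$ (a bounded function, since $F\le \log|\Lambda|$), one checks from the definition of $\mathrm{H}_m$ that $\int F\,d(\beta\times\nu_m^*) = \mathrm{H}_m$, and a direct change of variable yields
\[
\int F\bigl(T^k(\omega,V)\bigr)\,d\beta(\omega) = \sum_{u\in\Lambda^k} p_u\,H(\beta,\mathcal{P}_1\mid\mathcal{A}_{A_u^*V}).
\]
The chain rule and the identity of the previous paragraph then combine to give, for every $V$,
\[
\frac{1}{n}\int H(\beta_\omega^V,\mathcal{P}_n)\,d\beta(\omega) = \frac{1}{n}\sum_{k=0}^{n-1}\int F\bigl(T^k(\omega,V)\bigr)\,d\beta(\omega).
\]

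The remaining step is the ergodicity of $T$ with respect to $\beta\times\nu_m^*$. This follows from the uniqueness of $\nu_m^*$ as the $\theta^*$-stationary measure on $\mathrm{Gr}_m(d)$ (guaranteed by (\ref{eq:m-ired and m-prox assump}) together with Remark \ref{rem:semi of trans also SI =000026 prox} and \cite[Theorem IV.1.2]{BL}); a standard argument (see e.g. \cite{BL,BQ}) shows that uniqueness of the stationary measure forces the skew product to be ergodic. Granting this, Birkhoff's ergodic theorem applied to $F\in L^\infty(\beta\times\nu_m^*)$ yields, for $(\beta\times\nu_m^*)$-a.e.\ $(\omega,V)$,
\[
\frac{1}{n}\sum_{k=0}^{n-1} F\bigl(T^k(\omega,V)\bigr) \;\longrightarrow\; \mathrm{H}_m.
\]
By Fubini the convergence holds, for $\nu_m^*$-a.e.\ $V$, at $\beta$-a.e.\ $\omega$. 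Since the partial averages are uniformly bounded by $\log|\Lambda|$, dominated convergence lets me integrate in $\omega$ to conclude that, for $\nu_m^*$-a.e.\ $V$,
\[
\frac{1}{n}\int H(\beta_\omega^V,\mathcal{P}_n)\,d\beta(\omega) \;\longrightarrow\; \mathrm{H}_m,
\]
which is the desired statement.

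The main obstacle is the bookkeeping for the key algebraic identity in the first paragraph: verifying that under the factorization $P_V A_u = Q_u\circ P_{A_u^*V}$ the conditional-entropy contribution on the cylinder $[u]$ really equals $H(\beta,\mathcal{P}_1\mid\mathcal{A}_{A_u^*V})$ requires carefully combining the pushforward identity (\ref{eq:push oh slice =00003D slice of push}), the nestedness identity (\ref{eq:cond of cond =00003D cond of orig}), and the Bernoulli independence $\sigma^k\beta|_{[u]} = p_u\beta$. Verifying ergodicity of the skew product is standard but also requires some care; fortunately both inputs are part of the well-developed machinery of random matrix products in \cite{BL,BQ}.
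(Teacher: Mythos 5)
Your proposal is correct, but it takes a genuinely different route from the paper's proof, which is considerably shorter. The paper invokes Feng's pointwise Shannon--McMillan--Breiman-type result \cite[Lemma 4.6]{Fe} (together with (\ref{eq:theta_y=00003Dtheta_x theta-a.e. x})) to get, for $\nu_m^*$-a.e.\ $V$ and $\beta$-a.e.\ $\omega$, the a.e.\ convergence $-\frac{1}{n}\log\beta_\omega^V([\eta|_n]) \to \mathrm{H}_m$ for $\beta_\omega^V$-a.e.\ $\eta$. It then observes that the sequence $f_n(\eta):=-\frac{1}{n}\log\beta_\omega^V([\eta|_n])$ is uniformly integrable with respect to $\beta_\omega^V$, applies the Vitali convergence theorem to pass from a.e.\ to $L^1$ convergence (giving $\frac{1}{n}H(\beta_\omega^V,\mathcal{P}_n)\to\mathrm{H}_m$ for a.e.\ $(V,\omega)$), and finishes with dominated convergence in $\omega$. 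Your argument, by contrast, never cites the pointwise SMB theorem: you prove the integral statement from scratch by (i) identifying $\int H(\beta_\omega^V,\mathcal{P}_n)\,d\beta$ with the conditional entropy $H(\beta,\mathcal{P}_n\mid\mathcal{A}_V)$; (ii) using the kernel identity $\ker(P_V A_u)=(A_u^*V)^\perp$ on each cylinder $[u]$ to reduce the $k$-th chain-rule increment to $\sum_{u\in\Lambda^k}p_u H(\beta,\mathcal{P}_1\mid\mathcal{A}_{A_u^*V})$; (iii) recognising this as $\int F\circ T^k\,d\beta$ for the skew product $T(\omega,V)=(\sigma\omega,A_{\omega_0}^*V)$ with $F(\omega,V)=H(\beta_\omega^V,\mathcal{P}_1)$; and (iv) invoking Birkhoff's ergodic theorem, using that uniqueness of the stationary measure $\nu_m^*$ makes $\beta\times\nu_m^*$ ergodic under $T$. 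All the steps check out (the Bernoulli identity $\sigma^k\beta_{[u]}=\beta$ is what makes (ii) work, and the $T$-invariance of $\beta\times\nu_m^*$ follows from stationarity). What your route buys is self-containment at the level of the integral statement --- you do not need the pointwise SMB theorem, only the ergodicity of the skew product, which is standard once uniqueness of $\nu_m^*$ is in hand. What the paper's route buys is brevity: it leans on an existing black box that it already needs elsewhere (e.g.\ Theorem~\ref{thm:LY formula for SA} cites \cite[Theorem 1.4]{Fe}), so there is nothing saved by avoiding the reference, and the proof collapses to a two-sentence uniform-integrability argument.
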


\begin{proof}
By \cite[Lemma 4.6]{Fe} and (\ref{eq:theta_y=00003Dtheta_x theta-a.e. x}),
it follows that for $\nu_{m}^{*}$-a.e. $V$ and $\beta$-a.e. $\omega$
\begin{equation}
-\underset{n\rightarrow\infty}{\lim}\:\frac{1}{n}\log\beta_{\omega}^{V}([\eta|_{n}])=\mathrm{H}_{m}\text{ for }\beta_{\omega}^{V}\text{-a.e. }\eta.\label{eq:by Feng lem 4.6}
\end{equation}
Fix such $V\in\mathrm{Gr}_{m}(d)$ and $\omega\in\Lambda^{\mathbb{N}}$,
and for $n\ge1$ and $\eta\in\Lambda^{\mathbb{N}}$ set $f_{n}(\eta):=-\frac{1}{n}\log\beta_{\omega}^{V}([\eta|_{n}])$.

It is easy to verify that the sequence $\{f_{n}\}_{n\ge1}$ is uniformly
integrable with respect to $\beta_{\omega}^{V}$. Hence, by (\ref{eq:by Feng lem 4.6})
and the Vitali convergence theorem,
\[
\underset{n\rightarrow\infty}{\lim}\frac{1}{n}H(\beta_{\omega}^{V},\mathcal{P}_{n})=\underset{n\rightarrow\infty}{\lim}\int f_{n}\:d\beta_{\omega}^{V}=\mathrm{H}_{m}.
\]
Since this holds for $\nu_{m}^{*}$-a.e. $V$ and $\beta$-a.e. $\omega$,
the lemma now follows by dominated convergence.
\end{proof}
We shall also need the following lemma. Recall from Section \ref{subsec:Background} the definition of the
Lyapunov dimension $\dim_{L}(\Phi,p)$.
\begin{lem}
\label{lem:H_m=00003D0 imp dim of proj =00003D dim_L}Suppose that
$1\le m\le d$ satisfies $\Sigma_{1}^{m-1}=m-1$ and $\mathrm{H}_{m}=0$.
Then $\Sigma_{1}^{m}=\dim_{L}(\Phi,p)$.
\end{lem}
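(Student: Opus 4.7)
The argument is purely computational: I will unwind the definitions of $\mathrm{H}_k$, $\Sigma_1^m$, and $\dim_L(\Phi,p)$, use the constraint $\Sigma_1^{m-1}=m-1$ together with the bound $\Delta_k\in[0,1]$ from Remark \ref{rem:0<=00003DDelta_m<=00003D1} to pin down $\mathrm{H}_{m-1}$ exactly, and then match the resulting expression for $\Sigma_1^m$ against the piecewise definition of $\dim_L(\Phi,p)$.

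First I will identify $\mathrm{H}_0$. Since $\mathrm{Gr}_0(d)$ is the singleton $\{\{0\}\}$, the orthogonal projection $P_{\{0\}}$ is identically zero, so $P_{\{0\}}^{-1}\mathcal{B}_{\mathbb{R}^d}=\{\emptyset,\mathbb{R}^d\}$ is trivial and $\mathrm{H}_0=H(\beta,\mathcal{P}_1)=H(p)$. Next, from $\Sigma_1^{m-1}=\sum_{k=1}^{m-1}\Delta_k=m-1$ and $\Delta_k\in[0,1]$, each $\Delta_k=1$ for $1\le k<m$, i.e.\ $\mathrm{H}_k=\mathrm{H}_{k-1}+\chi_k$. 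Telescoping yields
\[
\mathrm{H}_{m-1}=H(p)+\chi_1+\ldots+\chi_{m-1}.
\]
Combining with $\mathrm{H}_m=0$ gives $\Delta_m=-\mathrm{H}_{m-1}/\chi_m$ and hence $\Sigma_1^m=(m-1)-\mathrm{H}_{m-1}/\chi_m$.

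Now I will determine the index $m^{*}:=\max\{0\le j\le d:H(p)+\chi_1+\ldots+\chi_j\ge 0\}$ appearing in the definition of $\dim_L(\Phi,p)$. Since $\mathrm{H}_{m-1}\ge 0$ (as an integral of nonnegative entropies), $m^{*}\ge m-1$. Conversely, $\Delta_m\le 1$ forces $\mathrm{H}_{m-1}\le -\chi_m$, which together with $\chi_{m+1},\chi_{m+2},\ldots<0$ (strict separation of Lyapunov exponents, by assumption \eqref{eq:m-ired and m-prox assump}) shows $m^{*}\in\{m-1,m\}$. In the case $m^{*}=m-1$, the first branch of the definition of $\dim_L(\Phi,p)$ is applicable (note $m-1\le d-1$ unless $m=d$ and $\mathrm{H}_{d-1}=-\chi_d$, which falls into the other case) and yields $\dim_L=(m-1)-\mathrm{H}_{m-1}/\chi_m=\Sigma_1^m$. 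In the case $m^{*}=m$, one has $H(p)+\chi_1+\ldots+\chi_m=0$, forcing $\Delta_m=1$ and $\Sigma_1^m=m$; for $m<d$ the first branch then gives $\dim_L=m-0=m$, while for $m=d$ the second branch gives $\dim_L=-dH(p)/(\chi_1+\ldots+\chi_d)=d$ upon substituting $H(p)=-(\chi_1+\ldots+\chi_d)$.

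There is no real obstacle in this argument; it is entirely formal. The only place demanding care is the boundary case $m=d$, where the definition of $\dim_L(\Phi,p)$ switches formulas depending on whether $m^{*}<d$ or $m^{*}=d$, and one must verify that the identity $H(p)=-\sum_{k=1}^{d}\chi_k$ (which holds precisely when $m^{*}=d$) reconciles the two branches with $\Sigma_1^d=d$.
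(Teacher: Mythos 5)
Your proof is correct and follows essentially the same route as the paper's: both pin down $\mathrm{H}_{m-1}=H(p)+\chi_1+\cdots+\chi_{m-1}$ by forcing $\Delta_k=1$ for $k<m$, use $0\le\Delta_m\le1$ together with $\mathrm{H}_m=0$ to sandwich $H(p)$ between $-\sum_{k=1}^{m-1}\chi_k$ and $-\sum_{k=1}^{m}\chi_k$, and then read off $\dim_L(\Phi,p)=(m-1)+\mathrm{H}_{m-1}/(-\chi_m)=\Sigma_1^m$ from the piecewise formula. You spell out the case analysis on the critical index $m^*$ (including the $m^*=m$ boundary case and the $m=d$ branch) a bit more explicitly than the paper, which folds that verification into the single line after the sandwich inequality, but the substance is identical.
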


\begin{proof}
As pointed out in Remark \ref{rem:0<=00003DDelta_m<=00003D1}, we
have $0\le\Delta_{k}\le1$ for each $1\le k\le d$. Thus, from $\Sigma_{1}^{m-1}=m-1$
and $\mathrm{H}_{m}=0$,
\[
\frac{\mathrm{H}_{k}-\mathrm{H}_{k-1}}{\chi_{k}}=\Delta_{k}=1\text{ for }1\le k<m
\]
and
\[
\frac{\mathrm{H}_{m-1}}{-\chi_{m}}=\frac{\mathrm{H}_{m}-\mathrm{H}_{m-1}}{\chi_{m}}=\Delta_{m}\le1.
\]

Moreover, by the definition of $\mathrm{H}_{0}$,
\[
H(p)=\mathrm{H}_{0}=\sum_{k=1}^{m-1}(\mathrm{H}_{k-1}-\mathrm{H}_{k})+\mathrm{H}_{m-1}.
\]
Hence $-\sum_{k=1}^{m-1}\chi_{k}\le H(p)\le-\sum_{k=1}^{m}\chi_{k}$,
which implies
\[
\dim_{L}(\Phi,p)=m-1+\frac{H(p)+\sum_{k=1}^{m-1}\chi_{k}}{-\chi_{m}}.
\]

From the formulas obtained above we also get
\[
\Sigma_{1}^{m}=\Sigma_{1}^{m-1}+\Delta_{m}=m-1+\frac{\mathrm{H}_{m-1}}{-\chi_{m}}=m-1+\frac{H(p)+\sum_{k=1}^{m-1}\chi_{k}}{-\chi_{m}},
\]
which completes the proof of the lemma.
\end{proof}
We now restate and prove Theorem \ref{thm:gen criteria}. Recall that
we always assume that (\ref{eq:no com fix asump}) and (\ref{eq:m-ired and m-prox assump})
are satisfied. Moreover, recall from Definition \ref{def:Diophantine =000026 ESC}
that $\Phi$ is said to satisfy the exponential separation condition
(ESC) if it is Diophantine and generates a free semigroup.
\begin{thm*}
Let $1\le m\le d$ be with $\Sigma_{1}^{m-1}=m-1$ and $\Delta_{m}<1$.
Suppose that $\Phi$ satisfies the ESC, and that for $\nu_{m}^{*}$-a.e.
$V$
\[
\pi_{V}L_{k}\beta_{\omega}^{V}=\delta_{\pi_{V}L_{k}(\omega)}\text{ for }\beta\text{-a.e. }\omega\text{ and each }1\le k<m.
\]
Then $\dim\mu=\Sigma_{1}^{m}=\dim_{L}(\Phi,p)$.
\end{thm*}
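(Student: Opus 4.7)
The plan is to derive the theorem by feeding the hypothesis directly into Theorem \ref{thm:gen dim result} in the extremal case $\mathcal{K}=\{1,\ldots,m-1\}$ (so $b=0$ and $q=0$), extracting vanishing of the entropy functional $\mathrm{H}_m$, and then invoking the Ledrappier--Young machinery to squeeze $\dim\mu$ between $\dim_L(\Phi,p)$ from above and $\dim P_V\mu$ from below.

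First, since the ESC implies that $\Phi$ is Diophantine, Theorem \ref{thm:gen dim result} applies with the full set $\mathcal{K}=\{1,\ldots,m-1\}$, the list $k_1,\ldots,k_b$ being empty. In this case $q=0$, so for $\nu_m^*$-a.e. $V\in\mathrm{Gr}_m(d)$ we obtain
\[
\limsup_{n\to\infty}\int \frac{1}{n}H\!\left(\Pi_n\beta_\omega^V\right)\,d\beta(\omega)\le 0.
\]
Next, the free semigroup part of the ESC ensures that $u\mapsto \varphi_u$ is injective on $\Lambda^n$, so $\Pi_n\sigma=\sum_{u\in\Lambda^n}\sigma([u])\,\delta_{\varphi_u}$ for every $\sigma\in\mathcal{M}(\Lambda^{\mathbb N})$, and hence $H(\Pi_n\sigma)=H(\sigma,\mathcal{P}_n)$. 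Applying this with $\sigma=\beta_\omega^V$ and combining with Lemma \ref{lem:by =00005BFe, lem 4.6=00005D} yields
\[
\mathrm{H}_m=\lim_{n\to\infty}\frac{1}{n}\int H(\beta_\omega^V,\mathcal{P}_n)\,d\beta(\omega)\le 0,
\]
so by non-negativity $\mathrm{H}_m=0$.

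With $\mathrm{H}_m=0$ and the assumption $\Sigma_1^{m-1}=m-1$, Lemma \ref{lem:H_m=00003D0 imp dim of proj =00003D dim_L} immediately gives $\Sigma_1^m=\dim_L(\Phi,p)$. To pass from this to $\dim\mu$, I would use Theorem \ref{thm:cons of led-you intro} to pick $V\in\mathrm{Gr}_m(d)$ for which $P_V\mu$ is exact dimensional with $\dim P_V\mu=\Sigma_1^m$; since $P_V$ is $1$-Lipschitz, $\dim\mu\ge\dim P_V\mu=\Sigma_1^m=\dim_L(\Phi,p)$. The unconditional upper bound $\dim\mu\le\dim_L(\Phi,p)$ of Jordan--Pollicott--Simon then closes the chain, giving $\dim\mu=\Sigma_1^m=\dim_L(\Phi,p)$.

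There is no serious obstacle at this final stage---all genuine difficulty has been absorbed by Theorem \ref{thm:gen dim result}, which itself relies on the random-walk entropy identity (Theorem \ref{thm:=00003DRW ent}), the entropy-increase theorem (Theorem \ref{thm:ent increase result}), and the sub-linear-diameter decompositions of Proposition \ref{prop:sub lin decomp}. The only points requiring minor care are the identification $H(\Pi_n\sigma)=H(\sigma,\mathcal{P}_n)$ under ESC (a direct consequence of the free semigroup condition) and the fact that exchanging $\limsup$ and $\lim$ on the two sides of the inequality $\mathrm{H}_m\le 0$ is legitimate, which is immediate because both sides describe the same asymptotic quantity up to the equality $H(\Pi_n\sigma)=H(\sigma,\mathcal{P}_n)$.
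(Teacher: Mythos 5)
Your proof is correct and follows essentially the same route as the paper's: apply Theorem \ref{thm:gen dim result} with $\mathcal{K}=\{1,\ldots,m-1\}$ (so $q=0$), use the free-semigroup part of the ESC to identify $H(\Pi_n\beta_\omega^V)$ with $H(\beta_\omega^V,\mathcal{P}_n)$, invoke Lemma \ref{lem:by =00005BFe, lem 4.6=00005D} to conclude $\mathrm{H}_m=0$, then Lemma \ref{lem:H_m=00003D0 imp dim of proj =00003D dim_L} and the JPS upper bound. The only cosmetic difference is at the very end: you pass through projections ($\dim\mu\ge\dim P_V\mu=\Sigma_1^m$), whereas the paper uses the monotonicity $\dim\mu=\Sigma_1^d\ge\Sigma_1^m$ coming directly from Theorem \ref{thm:LY formula for SA} and nonnegativity of the $\Delta_k$; both steps are valid and essentially equivalent.
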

\begin{proof}
Recall that we denote by $\mathcal{S}^{\mathrm{A}}$ the partition
of $\mathrm{A}_{d,d}$ into singletons. Since $\Phi$ satisfies the
ESC, it is Diophantine and $\Pi_{n}^{-1}\mathcal{S}^{\mathrm{A}}=\mathcal{P}_{n}$
for all $n\ge1$. Thus, by applying Theorem \ref{thm:gen dim result}
with $\mathcal{K}=\{1,...,m-1\}$, it follows that for $\nu_{m}^{*}$-a.e.
$V$
\[
\underset{n\rightarrow\infty}{\lim}\int\frac{1}{n}H(\beta_{\omega}^{V},\mathcal{P}_{n})\:d\beta(\omega)=0.
\]
Moreover, by Lemma \ref{lem:by =00005BFe, lem 4.6=00005D}, for $\nu_{m}^{*}$-a.e.
$V$
\[
\underset{n\rightarrow\infty}{\lim}\frac{1}{n}\int H(\beta_{\omega}^{V},\mathcal{P}_{n})\:d\beta(\omega)=\mathrm{H}_{m},
\]
which gives $\mathrm{H}_{m}=0$. This together with Lemma \ref{lem:H_m=00003D0 imp dim of proj =00003D dim_L}
implies $\Sigma_{1}^{m}=\dim_{L}(\Phi,p)$.

By Theorem \ref{thm:LY formula for SA},
\[
\dim\mu=\Sigma_{1}^{d}\ge\Sigma_{1}^{m}=\dim_{L}(\Phi,p).
\]
Since $\dim_{L}(\Phi,p)$ is always an upper bound for $\dim\mu$
(see \cite{JPS}), this completes the proof of the theorem.
\end{proof}
Next we prove Theorem \ref{thm:proj onto 1 =000026 2 dim subspaces},
which is the following statement. Given $\theta\in\mathcal{M}(\mathbb{R}^{m})$,
recall from Section \ref{subsec:Entropy-in-Rd} the definition of
its entropy dimensions $\dim_{e}\theta$, $\overline{\dim}_{e}\theta$
and $\underline{\dim}_{e}\theta$.
\begin{thm*}
Suppose that $\Phi$ satisfies the ESC. Then for every linear subspace
$V\subset\mathbb{R}^{d}$ with $\dim V\le2$,
\[
\dim_{e}\pi_{V}\mu=\min\left\{ \dim V,\dim_{L}(\Phi,p)\right\} .
\]
\end{thm*}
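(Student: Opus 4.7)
The strategy is to sandwich $\dim_e P_V\mu$ between the universal lower bound from Remark~\ref{rem:lb all proj} and the elementary upper bound, then use Theorem~\ref{thm:gen criteria} to show they coincide. Write $m = \dim V \le 2$. Since $P_V$ is $1$-Lipschitz, (\ref{eq:ent under lip map}) gives $\overline{\dim}_e P_V\mu \le \dim\mu \le \dim_L(\Phi,p)$, and trivially $\overline{\dim}_e P_V\mu \le m$. Remark~\ref{rem:lb all proj} yields $\underline{\dim}_e P_V\mu \ge \Sigma_1^m$. So the task reduces to showing $\Sigma_1^m \ge \min\{m,\dim_L(\Phi,p)\}$.

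I would then split into two cases. If $\Sigma_1^m = m$, the chain $\dim_L(\Phi,p) \ge \dim\mu = \Sigma_1^d \ge \Sigma_1^m = m$ forces $\min\{m,\dim_L(\Phi,p)\} = m$, and the two bounds match. Otherwise $\Sigma_1^m < m$, which forces $\dim\mu < d$, so Remark~\ref{rem:exists unique m} supplies a unique $m_0$ with $\Sigma_1^{m_0-1} = m_0-1$ and $\Delta_{m_0} < 1$. Because each $\Delta_k \in [0,1]$ (Remark~\ref{rem:0<=00003DDelta_m<=00003D1}), the assumption $\Sigma_1^m < m$ forces $m_0 \le m \le 2$, hence $m_0 \in \{1,2\}$.

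The plan is then to invoke Theorem~\ref{thm:gen criteria} with this $m_0$, after verifying its hypothesis (\ref{eq:factor cond in gen criteria thm}). For $m_0 = 1$ the condition is vacuous, since no $k$ satisfies $1 \le k < 1$. For $m_0 = 2$ one needs $P_V L_1 \beta_\omega^V = \delta_{P_V L_1(\omega)}$ for $\nu_2^*$-a.e.\ $V$ and $\beta$-a.e.\ $\omega$, which is exactly the conclusion of Theorem~\ref{thm:factor of proj of L_m-1} applied with $m = 2$; its hypotheses, namely $\Sigma_1^1 = 1$ (automatic from $m_0 = 2$, which gives $\Delta_1 = 1$) and exact dimensionality of $P_V\mu$ with $\dim P_V\mu < 2$ for $\nu_2^*$-a.e.\ $V$, follow from Theorem~\ref{thm:cons of led-you intro} combined with $\Sigma_1^2 < 2$. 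Theorem~\ref{thm:gen criteria} then yields $\Sigma_1^{m_0} = \dim_L(\Phi,p)$. Since $\Sigma_1^m \ge \Sigma_1^{m_0}$ (by nonnegativity of the $\Delta_k$) and $\Sigma_1^m < m$, we conclude $\min\{m,\dim_L(\Phi,p)\} = \dim_L(\Phi,p) \le \Sigma_1^m$, as required.

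The main obstacle in this deduction is essentially bureaucratic: all of the real work has been packaged into Theorems~\ref{thm:gen criteria} and~\ref{thm:factor of proj of L_m-1}. What makes $\dim V \le 2$ the right hypothesis is precisely that it caps $m_0$ at $2$, so the only instance of (\ref{eq:factor cond in gen criteria thm}) one must check is $k = m_0 - 1$, which is exactly what Theorem~\ref{thm:factor of proj of L_m-1} provides.
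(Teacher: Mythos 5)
Your proposal is correct and follows essentially the same route as the paper's own proof: reduce to $\Sigma_1^m \ge \min\{m,\dim_L(\Phi,p)\}$ via the universal bounds, locate the unique index $m_0 \le 2$ with $\Sigma_1^{m_0-1}=m_0-1$ and $\Delta_{m_0}<1$ (the case $\Sigma_1^m=m$ being trivial), observe that hypothesis (\ref{eq:factor cond in gen criteria thm}) is vacuous for $m_0=1$ and is exactly Theorem~\ref{thm:factor of proj of L_m-1} for $m_0=2$, and then apply Theorem~\ref{thm:gen criteria}. The only cosmetic difference is that you organize the argument by comparing $\Sigma_1^m$ with $m$, whereas the paper splits on $\Delta_1$ and then $\Delta_2$, but the case structure and every substantive step coincide.
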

\begin{proof}
Since $\dim\mu\le\dim_{L}(\Phi,p)$, we have $\overline{\dim}_{e}\pi_{V}\mu\le\min\left\{ \dim V,\dim_{L}(\Phi,p)\right\} $
for every linear subspace $V\subset\mathbb{R}^{d}$. Moreover, by
Lemma \ref{lem:lb on ent of psi mu} it follows that $\underline{\dim}_{e}\pi_{V}\mu\ge\Sigma_{1}^{m}$
for all $1\le m\le d$ and $V\in\mathrm{Gr}_{m}(d)$. Hence, in order
to prove the theorem it suffices to show that $\Sigma_{1}^{m}\ge\min\left\{ m,\dim_{L}(\Phi,p)\right\} $
for $1\le m\le\min\{2,d\}$.

If $\Delta_{1}<1$ then it follows directly from Theorem \ref{thm:gen criteria}
that $\Sigma_{1}^{1}=\dim_{L}(\Phi,p)$. Thus, we may assume that
$d\ge2$, $\Sigma_{1}^{1}=1$ and $\Delta_{2}<1$.

Since $\Delta_{2}<1$ and by Theorem \ref{thm:LY formula for SA},
it follows that $\pi_{V}\mu$ is exact dimensional with $\dim\pi_{V}\mu<2$
for $\nu_{2}^{*}$-a.e. $V$. Thus, by Theorem \ref{thm:factor of proj of L_m-1},
\[
\pi_{V}L_{1}\beta_{\omega}^{V}=\delta_{\pi_{V}L_{1}(\omega)}\text{ for }\nu_{2}^{*}\text{-a.e. }V\text{ and }\beta\text{-a.e. }\omega.
\]
This together with Theorem \ref{thm:gen criteria} gives $\Sigma_{1}^{2}=\dim_{L}(\Phi,p)$,
which completes the proof of the theorem.
\end{proof}
Finally, we restate and prove Theorem \ref{thm:main d=00003D3}. Recall
from Definition \ref{def:OSC,  SOSC and SSC } the definition of the
strong open set condition (SOSC).
\begin{thm*}
Suppose that $d=3$, $\mathbf{S}_{\Phi}^{\mathrm{L}}$ is strongly
irreducible and proximal, and $\Phi$ satisfies the SOSC. Then $\dim\mu=\dim_{L}(\Phi,p)$.
\end{thm*}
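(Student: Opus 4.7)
The plan is to reduce Theorem \ref{thm:main d=00003D3} to Theorems \ref{thm:proj onto 1 =000026 2 dim subspaces} and \ref{thm:cons of led-you intro}, using the SOSC to establish $\mathrm{H}_{3}=0$. I first verify the standing hypotheses (\ref{eq:no com fix asump}) and (\ref{eq:m-ired and m-prox assump}): since $d=3$, Remark \ref{rem:m-con equiv to (d-m)-cond} makes $1$-strong irreducibility and $1$-proximality equivalent to $2$-strong irreducibility and $2$-proximality, giving (\ref{eq:m-ired and m-prox assump}); a common fixed point of the $\varphi_{i}$ would lie in every $\varphi_{i}(U)$ and violate the SOSC disjointness, so (\ref{eq:no com fix asump}) also holds. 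The SOSC implies the ESC: picking $x_{0}\in U\cap K_{\Phi}$ with $\overline{B(x_{0},r)}\subset U$, the inclusions $B(\varphi_{u}(x_{0}),r\alpha_{d}(A_{u}))\subset\varphi_{u}(U)$ together with disjointness of $\{\varphi_{u}(U)\}_{u\in\Lambda^{n}}$ give
$$|\varphi_{u}(x_{0})-\varphi_{w}(x_{0})|\ge r\bigl(\min_{i\in\Lambda}\alpha_{d}(A_{i})\bigr)^{n}$$
for distinct $u,w\in\Lambda^{n}$, so $\Phi$ generates a free semigroup and is Diophantine.

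With the ESC in hand, Theorem \ref{thm:proj onto 1 =000026 2 dim subspaces} gives $\dim_{e}\pi_{V}\mu=\min\{2,\dim_{L}(\Phi,p)\}$ for every $V\in\mathrm{Gr}_{2}(3)$, while Theorem \ref{thm:cons of led-you intro} gives $\dim\pi_{V}\mu=\Sigma_{1}^{2}$ for $\nu_{2}^{*}$-a.e.\ $V$; exact dimensionality (Lemma \ref{lem:dim_e=00003Ddim}) forces these to agree, so
$$\Sigma_{1}^{2}=\min\{2,\dim_{L}(\Phi,p)\}.$$
Applying Theorem \ref{thm:LY formula for SA} with $l=0$, $m=d=3$, $V_{0}=\{0\}$ (so that $\mu_{x}^{V_{0}}=\mu$) also yields the unconditional identity $\dim\mu=\Sigma_{1}^{3}$.

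If $\dim_{L}(\Phi,p)\le 2$, then $\Sigma_{1}^{2}=\dim_{L}(\Phi,p)$ and $\dim\mu=\Sigma_{1}^{3}\ge\Sigma_{1}^{2}=\dim_{L}(\Phi,p)$, which together with the universal upper bound $\dim\mu\le\dim_{L}(\Phi,p)$ of \cite{JPS} yields equality. In the substantive case $\dim_{L}(\Phi,p)>2$, one has $\Sigma_{1}^{2}=2$, forcing $\Delta_{1}=\Delta_{2}=1$ by Remark \ref{rem:0<=00003DDelta_m<=00003D1} and hence $\mathrm{H}_{2}=H(p)+\chi_{1}+\chi_{2}$. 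At this point I invoke the SOSC to conclude that the coding map $\Pi$ is $\beta$-a.s.\ injective, so that $\Pi^{-1}\mathcal{B}_{\mathbb{R}^{3}}$ agrees $\bmod\,\beta$ with the Borel $\sigma$-algebra of $\Lambda^{\mathbb{N}}$; since $\nu_{3}^{*}=\delta_{\mathbb{R}^{3}}$, this gives $\mathrm{H}_{3}=H(\beta,\mathcal{P}_{1}\mid\Pi^{-1}\mathcal{B}_{\mathbb{R}^{3}})=0$. Consequently
$$\Delta_{3}=\frac{\mathrm{H}_{3}-\mathrm{H}_{2}}{\chi_{3}}=\frac{H(p)+\chi_{1}+\chi_{2}}{-\chi_{3}},$$
and the bound $\Delta_{3}\le 1$ forces $H(p)\le-\chi_{1}-\chi_{2}-\chi_{3}$, which makes the index $m$ in the definition of $\dim_{L}$ equal to $2$ and gives
$$\dim_{L}(\Phi,p)=2+\frac{H(p)+\chi_{1}+\chi_{2}}{-\chi_{3}}=\Sigma_{1}^{2}+\Delta_{3}=\Sigma_{1}^{3}=\dim\mu.$$

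The only delicate step will be the identification $\mathrm{H}_{3}=0$ under SOSC, i.e.\ essential injectivity of $\Pi$. This is a classical fact going back to Schief in the self-similar setting and extending to the self-affine case; it is the one place in the argument where the SOSC, rather than merely the ESC, is genuinely used.
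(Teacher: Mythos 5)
Your proof is correct and follows essentially the same route as the paper: deduce $\Sigma_{1}^{2}=\min\{2,\dim_{L}(\Phi,p)\}$ from Theorem \ref{thm:proj onto 1 =000026 2 dim subspaces} together with the Ledrappier--Young formula, and in the substantive case use the SOSC to conclude $\mathrm{H}_{3}=0$ (the paper cites \cite[Corollary 2.8]{BK} for this, which is the precise reference rather than Schief, and then invokes Lemma \ref{lem:H_m=00003D0 imp dim of proj =00003D dim_L} where you unroll the same computation of $\Delta_{3}$ by hand). One small gap: your argument that (\ref{eq:no com fix asump}) follows from the SOSC tacitly assumes $|\Lambda|\ge 2$, since for a single map a common fixed point does not contradict the disjointness of $\{\varphi_{i}(U)\}$; you should dispose of the case $|\Lambda|=1$ separately (there $\dim\mu=0=\dim_{L}(\Phi,p)$ trivially), as the paper does.
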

\begin{proof}
We may assume that $\Phi$ consists of at least two maps, otherwise
$\dim\mu=0=\dim_{L}(\Phi,p)$. Thus, since $\Phi$ satisfies the SOSC,
assumption (\ref{eq:no com fix asump}) clearly holds. As pointed
out in Remark \ref{rem:m-con equiv to (d-m)-cond}, since $d=3$ the
strong irreducibility and proximality of $\mathbf{S}_{\Phi}^{\mathrm{L}}$
are equivalent to (\ref{eq:m-ired and m-prox assump}). Since $\Phi$
satisfies the SOSC, it is easy to show that it also satisfies the
ESC (see \cite[Section 6.2]{BHR}). Hence, the conditions of Theorem
\ref{thm:proj onto 1 =000026 2 dim subspaces} are all satisfied.
From that theorem, from Theorem \ref{thm:LY formula for SA} and by
Lemma \ref{lem:dim_e=00003Ddim}, we get that $\Sigma_{1}^{2}=\min\left\{ 2,\dim_{L}(\Phi,p)\right\} $.

If $\Sigma_{1}^{2}=\dim_{L}(\Phi,p)$, then from $\Sigma_{1}^{3}\ge\Sigma_{1}^{2}$,
Theorem \ref{thm:LY formula for SA} and $\dim\mu\le\dim_{L}(\Phi,p)$,
it follows that $\dim\mu=\dim_{L}(\Phi,p)$. Thus, we may assume that
$\Sigma_{1}^{2}=2$. Moreover, by \cite[Corollary 2.8]{BK} and since
$\Phi$ satisfies the SOSC, it follows that $\mathrm{H}_{3}=0$. Now
from Lemma \ref{lem:H_m=00003D0 imp dim of proj =00003D dim_L} and
Theorem \ref{thm:LY formula for SA} we again get $\dim\mu=\dim_{L}(\Phi,p)$,
which completes the proof of the theorem.
\end{proof}

\subsection{Proof of Theorem \ref{thm:main all d}}

Let us first restate the theorem. Recall that for $1\le m\le d$ we
write,
\[
\varrho_{m}:=(\chi_{1}-\chi_{m})\frac{(m-1)(m-2)}{2}-\sum_{k=1}^{m}\chi_{k}.
\]

\begin{thm*}
Suppose that $\Phi$ is Diophantine. Then for all $1\le m\le d$ with
$h(p_{\Phi})\ge\varrho_{m}$,
\[
\dim_{e}\pi_{V}\mu=m\text{ for each }V\in\mathrm{Gr}_{m}(d).
\]
In particular, if $h(p_{\Phi})\ge\varrho_{d}$ then $\dim\mu=d$.
\end{thm*}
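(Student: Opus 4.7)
My plan is to reduce the statement to showing $\Sigma_1^m = m$, apply Theorem \ref{thm:gen dim result} with $\mathcal{K} = \{m-1\}$, and combine the resulting bound with Feng's asymptotic formula (Lemma \ref{lem:by =00005BFe, lem 4.6=00005D}) via a clean conditional-entropy manipulation. I first note that it suffices to establish $\Sigma_1^m = m$: the upper bound $\overline{\dim}_e \pi_V\mu \le m$ is trivial since $\dim V = m$, while Lemma \ref{lem:lb on ent of psi mu} applied with $\psi = \pi_V$ gives $\underline{\dim}_e \pi_V\mu \ge \Sigma_1^m$ for every $V \in \mathrm{Gr}_m(d)$. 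By Remark \ref{rem:0<=00003DDelta_m<=00003D1}, $\Sigma_1^m = m$ is equivalent to $\Delta_k = 1$ for all $k \le m$, which is automatic when $\dim\mu = d$. Otherwise Remark \ref{rem:exists unique m} produces a unique $m_0$ with $\Sigma_1^{m_0-1} = m_0-1$ and $\Delta_{m_0} < 1$, and I argue by contradiction assuming $m_0 \le m$.

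\textbf{Applying the general criterion.} For $\nu_{m_0}^*$-a.e.\ $V$, Theorem \ref{thm:LY formula for SA} gives that $\pi_V\mu$ is exact-dimensional with $\dim\pi_V\mu = \Sigma_1^{m_0} < m_0$, so Theorem \ref{thm:factor of proj of L_m-1} yields $\pi_V L_{m_0-1}\beta_\omega^V = \delta_{\pi_V L_{m_0-1}(\omega)}$ for $\beta$-a.e.\ $\omega$ (when $m_0 = 1$ this step is vacuous). Applying Theorem \ref{thm:gen dim result} with this $m_0$ and $\mathcal{K} = \{m_0-1\}$, so that $b = m_0-2$, $k_l = l$ for $1\le l \le b$, and $q = \sum_{l=1}^{m_0-2}l = \frac{(m_0-1)(m_0-2)}{2}$, produces for $\nu_{m_0}^*$-a.e.\ $V$
\[
\limsup_{n\to\infty}\; \frac{1}{n}\int H(\Pi_n\beta_\omega^V)\,d\beta(\omega) \;\le\; q(\chi_1 - \chi_{m_0}).
\]

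\textbf{Deducing the contradiction.} Fix such a $V$, write $\mathcal{S}^{\mathrm A}$ for the partition of $\mathrm{A}_{d,d}$ into singletons, and set $\mathcal{A} := \Pi^{-1}\pi_V^{-1}\mathcal{B}_{\mathbb{R}^d}$. Since $\mathcal{P}_n$ refines $\Pi_n^{-1}\mathcal{S}^{\mathrm A}$, a standard conditional-entropy manipulation --- the $\omega$-wise identity from \eqref{eq:cond ent form}, integrated via \eqref{eq:cond ent via disinteg} and combined with \eqref{eq:monot of ent in cond sig-alg} --- yields
\[
\int H(\beta_\omega^V,\mathcal{P}_n)\,d\beta - \int H(\Pi_n\beta_\omega^V)\,d\beta \;\le\; H(\beta,\mathcal{P}_n \mid \Pi_n^{-1}\mathcal{S}^{\mathrm A}) \;=\; nH(p) - H(p_\Phi^{*n}).
\]
Dividing by $n$ and passing to the limit using Lemma \ref{lem:by =00005BFe, lem 4.6=00005D} and the bound of the preceding step gives $\mathrm H_{m_0} - q(\chi_1 - \chi_{m_0}) \le H(p) - h(p_\Phi)$. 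Since $\Sigma_1^{m_0-1} = m_0-1$ forces $\Delta_k = 1$ for $1 \le k < m_0$, one has $\mathrm H_{m_0} = H(p) + \sum_{k=1}^{m_0-1}\chi_k + \Delta_{m_0}\chi_{m_0}$, and a short computation reduces the displayed inequality to
\[
h(p_\Phi) \;\le\; \varrho_{m_0} + (1-\Delta_{m_0})\chi_{m_0} \;<\; \varrho_{m_0} \;\le\; \varrho_m,
\]
the strict inequality from $\Delta_{m_0} < 1$ and $\chi_{m_0} < 0$, and the final one from a direct check that $\varrho_\cdot$ is strictly increasing (using $\chi_1 > \cdots > \chi_d$). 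This contradicts the hypothesis $h(p_\Phi) \ge \varrho_m$.

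\textbf{Main obstacle.} All the substantive content is already packaged inside Theorem \ref{thm:gen dim result}, whose proof relies on the entropy-increase result (Theorem \ref{thm:ent increase result}) and the sub-linear decomposition machinery of Sections \ref{sec:Entropy-growth-under conv}--\ref{sec:Decompositions-of-sub-linear}; granted that, the deduction above is a routine conditional-entropy manipulation combined with Feng's formula and elementary bookkeeping with the Lyapunov exponents. The only minor point beyond invoking the black boxes is verifying $\varrho_{m+1} > \varrho_m$, which follows from a direct computation.
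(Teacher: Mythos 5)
Your proof is correct and takes essentially the same route as the paper: reduce to showing $\Sigma_1^m=m$, locate the unique $m_0$ with $\Sigma_1^{m_0-1}=m_0-1$ and $\Delta_{m_0}<1$, invoke Theorems \ref{thm:LY formula for SA} and \ref{thm:factor of proj of L_m-1} to get the factorization, apply Theorem \ref{thm:gen dim result} with $\mathcal{K}=\{m_0-1\}$ (giving $q=\tfrac{(m_0-1)(m_0-2)}{2}$), and combine with Lemma \ref{lem:by =00005BFe, lem 4.6=00005D} and the concavity bound $\int H(\beta_\omega^V,\mathcal{P}_n\mid\Pi_n^{-1}\mathcal{S}^{\mathrm A})\,d\beta\le nH(p)-H(p_\Phi^{*n})$ to force $h(p_\Phi)<\varrho_{m_0}\le\varrho_m$, contradicting the hypothesis. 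The only cosmetic difference from the paper's argument is the exact form in which the final inequality is packaged ($h(p_\Phi)\le\varrho_{m_0}+(1-\Delta_{m_0})\chi_{m_0}$ versus $\varrho_{m_0}>h(p_\Phi)$ directly), and both use the monotonicity of $\varrho_\cdot$ at the end.
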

\begin{proof}
For every linear subspace $V\subset\mathbb{R}^{d}$ we have $\overline{\dim}_{e}\pi_{V}\mu\le\dim V$.
Moreover, by Lemma \ref{lem:lb on ent of psi mu} it follows that
$\underline{\dim}_{e}\pi_{V}\mu\ge\Sigma_{1}^{l}$ for all $1\le l\le d$
and $V\in\mathrm{Gr}_{l}(d)$. From these facts, since $\mu$ is exact
dimensional, and by Lemma \ref{lem:dim_e=00003Ddim}, it follows that
in order to prove the theorem it suffices to show that $\Sigma_{1}^{l}=l$
for every $1\le l\le d$ with $h(p_{\Phi})\ge\varrho_{l}$.

Let $1\le l\le d$ be with $h(p_{\Phi})\ge\varrho_{l}$, and assume
by contraction that $\Sigma_{1}^{l}<l$. Since $0\le\Delta_{k}\le1$
for each $1\le k\le d$, there exists $1\le m\le l$ so that $\Sigma_{1}^{m-1}=m-1$
and $\Delta_{m}<1$. By Theorem \ref{thm:LY formula for SA}, the
measure $\pi_{V}\mu$ is exact dimensional with $\dim\pi_{V}\mu=\Sigma_{1}^{m}<m$
for $\nu_{m}^{*}$-a.e. $V$. Hence, from Theorem \ref{thm:factor of proj of L_m-1}
and since $\mathrm{Gr}_{m-1}(m)$ is a singleton when $m=1$,
\begin{equation}
\pi_{V}L_{m-1}\beta_{\omega}^{V}=\delta_{\pi_{V}L_{m-1}(\omega)}\text{ for }\nu_{m}^{*}\text{-a.e. }V\text{ and }\beta\text{-a.e. }\omega.\label{eq:factor of L_m-1 in last proof}
\end{equation}

Write
\[
\alpha:=(\chi_{1}-\chi_{m})\frac{(m-1)(m-2)}{2}.
\]
From (\ref{eq:factor of L_m-1 in last proof}) and by applying Theorem
\ref{thm:gen dim result} with $\mathcal{K}=\{m-1\}$ if $m\ge2$
and $\mathcal{K}=\emptyset$ if $m=1$, we get that for $\nu_{m}^{*}$-a.e.
$V$
\[
\underset{n\rightarrow\infty}{\limsup}\int\frac{1}{n}H(\Pi_{n}\beta_{\omega}^{V})\:d\beta(\omega)\le(\chi_{1}-\chi_{m})\sum_{j=1}^{m-2}j=\alpha.
\]

Note that for all $n\ge1$ the partition $\mathcal{P}_{n}$ is finer
than the partition $\Pi_{n}^{-1}\mathcal{S}^{\mathrm{A}}$. Thus,
from (\ref{eq:cond ent form}) and by the last inequality, for $\nu_{m}^{*}$-a.e.
$V$
\begin{equation}
\alpha\ge\underset{n\rightarrow\infty}{\limsup}\frac{1}{n}\int H(\beta_{\omega}^{V},\mathcal{P}_{n})-H(\beta_{\omega}^{V},\mathcal{P}_{n}\mid\Pi_{n}^{-1}\mathcal{S}^{\mathrm{A}})\:d\beta(\omega).\label{eq:alpha >=00003D limsup}
\end{equation}
By Lemma \ref{lem:by =00005BFe, lem 4.6=00005D},
\begin{equation}
\underset{n\rightarrow\infty}{\lim}\frac{1}{n}\int H(\beta_{\omega}^{V},\mathcal{P}_{n})\:d\beta(\omega)=\mathrm{H}_{m}\text{ for }\nu_{m}^{*}\text{-a.e. }V.\label{eq:by feng's paper}
\end{equation}
By the definition of $\Delta_{1},...,\Delta_{d}$, since $\Delta_{k}\le1$
for $1\le k\le d$, and since $\Delta_{m}<1$,
\[
\mathrm{H}_{k}-\mathrm{H}_{k-1}\ge\chi_{k}\text{ for each }1\le k\le d\text{ and }\mathrm{H}_{m}-\mathrm{H}_{m-1}>\chi_{m}.
\]
From this and since $\mathrm{H}_{0}=H(p)$,
\[
\mathrm{H}_{m}=\mathrm{H}_{0}+\sum_{k=1}^{m}(\mathrm{H}_{k}-\mathrm{H}_{k-1})>H(p)+\sum_{k=1}^{m}\chi_{k}.
\]
Hence, by (\ref{eq:alpha >=00003D limsup}) and (\ref{eq:by feng's paper}),
we get that for $\nu_{m}^{*}$-a.e. $V$
\begin{equation}
\alpha>H(p)+\sum_{k=1}^{m}\chi_{k}-\underset{n\rightarrow\infty}{\liminf}\frac{1}{n}\int H(\beta_{\omega}^{V},\mathcal{P}_{n}\mid\Pi_{n}^{-1}\mathcal{S}^{\mathrm{A}})\:d\beta(\omega).\label{eq:alpha >=00003D H + expo - liminf}
\end{equation}

Let $V\in\mathrm{Gr}_{m}(d)$ be given. By the concavity of conditional
entropy and since $\beta=\int\beta_{\omega}^{V}\:d\beta$, it follows
that for $n\ge1$
\begin{multline*}
\int H(\beta_{\omega}^{V},\mathcal{P}_{n}\mid\Pi_{n}^{-1}\mathcal{S}^{\mathrm{A}})\:d\beta(\omega)\le H\left(\beta,\mathcal{P}_{n}\mid\Pi_{n}^{-1}\mathcal{S}^{\mathrm{A}}\right)\\
=H(\beta,\mathcal{P}_{n})-H(\beta,\Pi_{n}^{-1}\mathcal{S}^{\mathrm{A}})
=nH(p)-H(p_{\Phi}^{*n}).
\end{multline*}
Thus, by the definition of $h(p_{\Phi})$,
\[
\underset{n\rightarrow\infty}{\liminf}\frac{1}{n}\int H(\beta_{\omega}^{V},\mathcal{P}_{n}\mid\Pi_{n}^{-1}\mathcal{S}^{\mathrm{A}})\:d\beta(\omega)\le H(p)-h(p_{\Phi}).
\]
From (\ref{eq:alpha >=00003D H + expo - liminf}) we now get
\[
\alpha>\sum_{k=1}^{m}\chi_{k}+h(p_{\Phi}),
\]
or equivalently $\varrho_{m}>h(p_{\Phi})$. But since $m\le l$ this
contradicts $h(p_{\Phi})\ge\varrho_{l}$, which completes the proof
of the theorem.
\end{proof}
\bibliographystyle{plain}
\bibliography{bibfile}

$\newline$\textsc{Department of Mathematics, Technion, Haifa, Israel}$\newline$$\newline$\textit{E-mail: }
\texttt{arapaport@technion.ac.il}
\end{document}